\definecolor{OxBlue}{HTML}{002147}
\newtheorem{thm}{Theorem}[section]
\newtheorem{cor}[thm]{Corollary}
\newtheorem{prop}[thm]{Proposition}
\newtheorem{lm}[thm]{Lemma}
\newtheorem{con}[thm]{Conjecture}
\theoremstyle{definition}
\newtheorem{de}[thm]{Definition}
\newtheorem{ex}[thm]{Example}
\newtheorem{nota}[thm]{Notation}
\theoremstyle{remark}
\newtheorem{rmk}[thm]{Remark}
\renewcommand{\qedsymbol}{$\blacksquare$}
\newcommand\mapsfrom{\mathrel{\reflectbox{\ensuremath{\mapsto}}}}
\def\det{\text{det}}
\def\Hom{\text{Hom}}
\def\codim{\mathrm{codim}}
\def\rk{\text{rk}}
\def\ad{\text{ad}}
\def\z{\zeta}
\def\la{\langle}
\def\ra{\rangle}
\def\Q{\mathbb Q}
\def\R{\mathbb R}
\def\C{\mathbb C}
\def\N{\mathbb N}
\def\Z{\mathbb Z}
\def\H{\mathbb H}
\def\KH{K\"{a}hler }
\def\Kh{K\"{a}hler}
\def\P{{\mathbb P}}
\def\DD{\mathbb D}
\def\HKL{hyperk\"{a}hler}
\def\HK{hyperk\"{a}hler }
\def\eps{\varepsilon}
\def\a{\alpha}
\def\b{\beta}
\def\c{\beta}
\def\Fi{\varphi}
\def\W{\mathbb{W}}
\def\fun{\rightarrow}
\newcommand{\fja}[1]{\xrightarrow{#1}}
\def\dejstvo{\curvearrowright}
\def\BB{Bia\l{}ynicki-Birula }
\def\M{\mathfrak{M}}
\def\Aff{\mathrm{Aff}}
\def\CM0{\C[\M_0]}
\def\L{\mathfrak{L}}
\def\Fa{\mathfrak{F}_\alpha}
\def\F{\mathfrak{F}}
\def\Con1{Con_1(\M)}
\def\Hm{\mathcal{H}_m}
\def\Ymc{Y_{m,\beta}}
\def\sl{\mathfrak{sl}}
\def\B{\mathcal{B}}
\def\O{\mathcal{O}}
\def\FF{\mathscr{F}}
\def\FFFF{{\mathscr{F}_{\mathbb{B},\lambda}}}
\def\val{\mathrm{ini}}
\def\NN{\mathcal{N}}
\def\aa{\vec{a}}
\def\x{{\color{white}1}}
\def\MM0{\mathfrak{M}_{\tiny{(\zeta_{\mathbb{R}},0})}(Q,{\normalfont\textbf{v}},{\normalfont\textbf{w}})}
\def\MG0{\mathcal{M}_{0,\z_\C}(Q,{\normalfont\textbf{v}},{\normalfont\textbf{w}})}
\def\iso{\cong}
\def\Sym{\text{Sym}} %
\def\Hilb{\mathrm{Hilb}}
\def\GIT{/\!\!/}
\def\om{\omega}
\def\k{\mathbb{K}}
\def\J{I}
\def\ph{pseudoholomorphic}
\def\CC{$\C^*$}
\def\H{F}
\def\Fil{\FF}
\def\HL{F_\lambda}
\def\PartII{\cite{RZ2}}
\def\ku{\k [\![u]\!]}
\def\kuu{\k(\!(u)\!)}
\def\Fmin{\F_{\min}}
\def\m{k}
\def\tr{\mathrm{tr}}
\def\Id{\mathrm{Id}}
\def\Aff{\mathrm{Aff}}
\def\MM{\mathcal{M}}
\def\e{k}
\def\Y{Y}
\def\CP{\mathbb{C}P}
\def\omI{\om}
\def\aa{\min}
\def\yinfty{y_{0}}
\def\xinfty{x_{0}}
\def\iinfty{0}
\def\f{F}
\def\ff{y}
\def\ell{m}
\def\p{y}
\def\x{y}
\def\CoreY{\mathrm{Core}(Y)}
\def\PL{Poincaré--Lefschetz}
\def\AB{Atiyah--Bott}
\def\MB{Morse--Bott}
\def\MBF{Morse--Bott--Floer}
\def\HV{Hausel--Rodriguez-Villegas}
\def\RS{Robbin--Salamon}
\def\BPW{Braden--Proudfoot--Webster}
\def\G{\Gamma}
\def\Omin{\overline{\mathcal{O}_{min}}}
\newcommand{\doublewidetilde}[1]{{%
  \mathpalette\double@widetilde{#1}%
}}
\newcommand{\double@widetilde}[2]{%
  \sbox\z@{$\m@th#1\widetilde{#2}$}%
  \ht\z@=.9\ht\z@
  \widetilde{\box\z@}%
}
\renewcommand{\arraystretch}{0.85}
\begin{document}

\title
[Filtrations on quantum cohomology]
{Filtrations on quantum cohomology \\
from the Floer theory of $\C^*$-actions}
\author{Alexander F. Ritter}
\address{A. F. Ritter, 
Mathematical Institute, University of Oxford, 
OX2 6GG, U.K.}
\email{ritter@maths.ox.ac.uk}
\author{Filip \v{Z}ivanovi\'{c}}
\address{F. T. \v{Z}ivanovi\'{c}, 
Simons Center for Geometry and Physics, 
Stony Brook, NY 11794-3636, U.S.A.}
\email{fzivanovic@scgp.stonybrook.edu} 

\begin{abstract} 
We construct a filtration by ideals on quantum cohomology for symplectic manifolds with a Hamiltonian $S^1$-action that extends to a {\ph } $\C^*$-action. These spaces include all Conical Symplectic Resolutions, in particular all Quiver Varieties. In particular, we obtain a family of filtrations on singular cohomology for any Conical Symplectic Resolution, that is sensitive to the choice of $\C^*$-action.
The symplectic form is rarely exact at infinity for these spaces, so substantial foundational work is carried out to rigorously define Floer theory, in particular symplectic cohomology. 
Using Floer theory, we construct a periodic persistence module, giving rise to a graded periodic barcode associated to the $\C^*$-action. This encodes birth-death phenomena of Floer invariants.  
Our filtrations can be viewed as a Floer-theoretic analogue of {\AB } filtrations, arising from stratifying a manifold by gradient flowlines of a 
{\MB } function, but they are distinct from those and they can detect non-topological properties of the quantum product.
\end{abstract}
\maketitle
\setcounter{secnumdepth}{3}
\setcounter{tocdepth}{1}
\vspace{-11mm}
\tableofcontents  %
\vspace{-15mm}
\section{Overview of the results}\label{Introduction}

\subsection{Motivation}
\label{Subsection intro motivation}

We will apply Floer-theoretic techniques to describe the topology and cohomology of a large new class of open symplectic manifolds. 
This class brings under one umbrella many interesting families of spaces: cotangent bundles of flag varieties,
negative complex vector bundles, Conical Symplectic Resolutions (CSRs),
Moduli spaces of Higgs bundles,
crepant resolutions of quotient singularities,\footnote{arising in the generalised McKay Correspondence.}
 and semiprojective toric manifolds. The family of CSRs itself includes:
ADE resolutions, hypertoric varieties, Nakajima quiver varieties, and Springer resolutions of Slodowy varieties. 

To illustrate some features, recall that
a \textbf{weight-$s$ CSR} is a projective $\C^*$-equivariant resolution $\pi:\M\to \M_0$ of a normal affine variety $\M_0$, whose $\C^*$-action contracts $\M_0$ to a point, and having a holomorphic symplectic structure $(\M,\om_\C)$ compatible with the $\C^*$-action: $t \cdot \omega_{\C}=t^s \omega_{\C}$.
These spaces are much studied in Geometric Representation Theory, whilst their symplectic topology is less investigated. All known examples of CSRs are hyperk\"{a}hler: the complex structures $J, K$ give rise to exact symplectic forms $\omega_J$, $\omega_K$, making these Liouville manifolds. These have special exact Lagrangian submanifolds called minimal Lagrangians \cite{vzivanovic2022exact} relevant to the A-side of Homological Mirror Symmetry.
In this paper we instead study directly the B-side: we consider the complex structure $I$ with its typically non-exact K\"{a}hler form $\omega_I$.
These spaces are almost never convex at infinity,\footnote{See \cref{Cor CSR is almost never convex}.} so they have been out of reach of Floer-theoretic study so far.
The above minimal Lagrangians now become $I$-holomorphic $\omega_I$-symplectic submanifolds. These spaces are very rich in $I$-holomorphic curves, whereas there are no non-constant closed $J$- or $K$-holomorphic curves as $\omega_J$, $\omega_K$ are exact. 

The approach of studying cohomological invariants directly on the B-side using Floer theory was explored in the context of the cohomological McKay correspondence by McLean--Ritter \cite{McLR18}. That paper studied crepant resolutions $Y\to \C^n/G$ of isolated quotient singularities, where a $\C^*$-action on $Y$ plays a prominent role, but it was still a setup where $Y$ is convex at infinity. Our original hope was that it should be fairly straightforward to adapt those constructions to quiver varieties, which are not convex at infinity, but still admit a very nice $\C^*$-action. As it turned out, the endeavour was much harder than expected, so we ventured on to establish this foundational paper, of wider scope. 

We call our class of spaces {\bf symplectic $\C^*$-manifolds} as they admit a {\ph } $\C^*$-action
$$
\Fi: \C^*\times Y \to Y,\; (t,y) \mapsto \Fi_t(y),
$$
whose $S^1$-part %
is a Hamiltonian action.
More precisely, we have a symplectic manifold $(Y,\omega)$ admitting an $\omega$-compatible almost complex structure $I$, with respect to which $\varphi$ is {\ph}.

The moment map for the $1$-periodic $S^1$-action is denoted
\begin{equation}
\label{Equation intro moment map}
H: Y \to \R.
\end{equation}
The subgroups $\R_+$ and $S^1$ of $\C^*$  yield commuting vector fields $X_{\R_+}=\nabla H$ and $X_{S^1}=X_H=IX_{\R_+}$.
To control the behaviour of $Y$ at infinity, we assume that $\Fi$ is {\bf contracting}: the $-\nabla H$ flow of any point eventually lands in a prescribed compact subset. This is equivalent to requiring that $H$ is bounded below and that the set $\F=\mathrm{Crit}(H)$ is compact. The subset $\F\subset Y$ is in fact the fixed locus of the action,
and its connected components $\F_\a$ are $I$-holomorphic and $\omega$-symplectic submanifolds of $Y.$
$$
\F = \mathrm{Crit}(H) = Y^{S^1} = Y^{\C^*} = \sqcup_\a \F_\alpha.
$$

There is a partition of $Y$ by the stable manifolds $U_\a$ of the $\F_\a$ under the $-\nabla H$ flow, whereas the unstable manifolds $D_\a$ partition a path-connected compact subset $\mathrm{Core}(Y) \subset Y$ called the \textbf{core}.
Unlike $\F$, the core can be singular. We prove that the inclusion recovers \v{C}ech cohomology,  $H^*(Y)\cong \check{H}^*(\mathrm{Core}(Y))$, and in many reasonable situations we show this is singular cohomology and that $Y$ deformation retracts onto $\mathrm{Core}(Y)$.
For weight-$1$ CSRs, $\mathrm{Core}(Y)$ is a singular $\omega_J$-Lagrangian subvariety, and the minimal $\omega_J$-Lagrangian $\F_{\min}:=\min H$ from \cite{vzivanovic2022exact} is a smooth irreducible component of it. 

Although the inspiration for this paper originated from a combination of \cite{McLR18,R10} with the goal of studying quiver varieties, the above level of generality owes much to the algebro-geometrical analogue of such spaces, called semiprojective varieties, 
which were first 
investigated in detail by {\HV} \cite{Hausel-Villegas} (building upon work by Simpson and Nakajima).

The topology of closed K\"{a}hler manifolds with Hamiltonian $S^1$-actions is well-understood, by Frankel \cite{Frankel59}, Atiyah--Bott \cite{AtB83} and Kirwan \cite{Ki84}. Using those methods we show that $H$ is a {\MB } function, whose {\MB } submanifolds are the $\F_\a$, and together with their (even) {\MB } indices $\mu_\a\geq 0$ they determine the 
cohomology of $Y$ as a vector space (working over a field),\footnote{Here $A[d]$ means we shift a graded group $A$ down by $d$, so $(A[d])_n=A_{n+d}$.}
\begin{equation}\label{EqnFrankelIntro}
H^*(Y)\cong \oplus H^*(\F_\a)[-\mu_\a].
\end{equation}
An intuitive explanation is that the unstable manifold of a cycle in $\F_\a$, which would normally have real codimension one boundary, has codimension two boundary due to the $S^1$-action, and can therefore be viewed as a cycle in $Y$. More generally, taking suitable unions of stable manifolds $U_\a$ of the $\F_\a$, the cohomology admits a filtration by subspaces \cite{AtB83}, the {\AB} filtration, %
arising from ordering the summands of \eqref{EqnFrankelIntro} essentially by the $H$-values of the $\F_\a$.
We may assume $Y$ is connected, so only one summand $H^*(\F_\a)$ in \eqref{EqnFrankelIntro} has $\mu_{\a}=0$: the one containing the unit $1\in H^0(Y)$, and it is $\F_{\min} =\min H$. 
It will be convenient to pick \eqref{Equation intro moment map} so that $\min H=0$ (by adding a constant to $H$), thus $H\geq 0$ on $Y$.

One goal of this paper is to construct a filtration $\Fil^{\Fi}_{\lambda}$ by ideals on quantum cohomology $QH^*(Y)$, ordered by $\lambda\in \R\cup \{\infty\}$. 
There is an on-going interest in Algebraic Geometry in filtrations on cohomology for such spaces, for example Higgs moduli admit the ``$P=W$'' perverse filtration \cite{de2012topology,hausel2022p,maulik2022p} which we compare with ours in \cite{RZ2}.
Our construction relies on Floer theory: symplectic cohomology $SH^*(Y,\Fi)$, whose chain level generators are loosely $S^1$-orbits; and its positive version $SH^*_+(Y,\Fi)$ which ignores constant $S^1$-orbits in $\F$. We anticipate the big picture relating them:

\begin{thm}\!\!\!\footnote{The technical assumption, needed for Floer theory and to make sense of $QH^*(Y)$, is that $Y$ satisfies a certain weak+ monotonicity property (\cref{Rmk technical symplectic assumptions on Y}). This includes for example all non-compact Calabi-Yau and non-compact Fano $Y$.}
\label{Prop vanishing of SH}
The canonical algebra homomorphism $c^*:QH^*(Y)\to SH^*(Y,\Fi)$ is surjective, equal to localisation at a  
Gromov--Witten invariant 
$Q_{\Fi}\in QH^{2\mu}(Y),$
where $\mu$ is the Maslov index of $\Fi$,
$$
SH^*(Y,\Fi) \cong QH^*(Y)/E_0(Q_{\Fi}) \cong QH^*(Y)_{Q_{\Fi}},
$$
where
$
E_0(Q_{\Fi})=\ker c^* \subset QH^*(Y)$ 
is the generalised $0$-eigenspace of quantum product by $Q_{\Fi}$. As a $\k$-module, $SH^{*-1}_+(Y,\Fi)\cong E_0(Q_{\Fi})$, yielding a Floer-theoretic presentation of $QH^*(Y)$ as a $\k$-module,
\begin{equation}\label{Equation Introduction QH is SH+}
QH^*(Y)\cong SH^{*-1}_+(Y,\Fi) \oplus SH^*(Y,\Fi).
\end{equation}
Moreover, for $N^+\in \R$ just above $N\in \N$, the continuation maps $c^*_{N^+}$ (whose direct limit is $c^*$), 
\begin{equation}\label{Equation cNplus maps intro}
c^*_{N^+}: QH^*(Y) \to HF^*(H_{N^+}),
\end{equation}
can be identified with quantum product $N$ times by $Q_{\Fi}$ on $QH^*(Y)$. In particular,
$$
SH^*(Y,\Fi)=0 \;\Longleftrightarrow  \;(\FF_{\lambda}^{\Fi}=QH^*(Y)\textrm{ for some }\lambda<\infty)
 \;\Longleftrightarrow \; 
(Q_{\Fi}\in QH^{2\mu}(Y)\textrm{ is nilpotent}),
$$
which occurs if $c_1(Y)\!=\!0$\footnote{See \cref{Intro vanishing of SH} and the paragraph before it.}\,(e.g.\;CSRs, crepant resolutions of quotient singularities, Higgs moduli spaces).
\end{thm}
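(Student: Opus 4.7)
The plan is to reduce every assertion to one Floer-theoretic identification: that the continuation map $c^*_{N^+}$ of \eqref{Equation cNplus maps intro} coincides with $N$-fold quantum multiplication by a distinguished class $Q_\Fi\in QH^{2\mu}(Y)$ built from Gromov--Witten numbers. To set up, I would exploit $\Fi$-equivariance to choose admissible representatives $H_{N^+}$ of each slope class, whose time-$1$ orbits consist of the constants in $\F$ together with a controlled Morse--Bott family of non-constant orbits clustered near the $\F_\a$ and coming from the $S^1$-action iterated up to $N$ times. For an $\epsilon$-small Hamiltonian a standard PSS argument gives $HF^*(\epsilon H)\cong QH^*(Y)$, while $SH^*(Y,\Fi)$ is obtained as the direct limit of $HF^*(H_{N^+})$ along the continuation maps \eqref{Equation cNplus maps intro}.

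The heart of the proof is the identification of $c^*_{1^+}\colon QH^*(Y)\to HF^*(H_{1^+})$ with the operator $x\mapsto Q_\Fi\star x$. I would run a neck-stretching/cobordism argument on the moduli of continuation cylinders: as the slope is degenerated past the first critical threshold, Floer trajectories from constant orbits to the newly born orbits over $\F_\a$ break into a configuration whose principal component is an $I$-holomorphic sphere capped by a $\Fi$-orbit, with Maslov number of the cap equal to $\mu$. Summing the resulting Gromov--Witten counts in $QH^*$ defines
\begin{equation*}
Q_\Fi \;=\; \sum_A N_A\, q^A \;\in\; QH^{2\mu}(Y),
\end{equation*}
and yields the desired $N=1$ identification. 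The hard step is compactness of these moduli: since $\om$ is not exact at infinity and $Y$ is not geometrically convex, the usual $C^0$ maximum principle is unavailable. I would instead invoke the foundational admissibility framework developed earlier in the paper, using $\Fi$-equivariance of $I$ together with an integrated maximum principle tailored to the contracting flow to confine Floer trajectories to a sublevel set $\{H\leq c\}$. The case of general $N$ is then obtained from $N=1$ by iterating, using functoriality of continuation and TQFT-compatibility of the Floer continuation with the pair-of-pants product.

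Everything else is formal. Passing to the colimit $N\to\infty$ identifies $SH^*(Y,\Fi)$ with the localisation $QH^*(Y)_{Q_\Fi}$, whence $\ker c^*=E_0(Q_\Fi)$ by standard commutative algebra and $c^*$ is automatically surjective. Surjectivity truncates the tautological long exact sequence
\begin{equation*}
\cdots\to SH^{*-1}_+(Y,\Fi)\to QH^*(Y)\xrightarrow{c^*} SH^*(Y,\Fi)\to SH^*_+(Y,\Fi)\to\cdots
\end{equation*}
to a short exact sequence $0\to SH^{*-1}_+(Y,\Fi)\to QH^*(Y)\to SH^*(Y,\Fi)\to 0$, which splits as $\k$-modules and gives \eqref{Equation Introduction QH is SH+}. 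The equivalence between $SH^*(Y,\Fi)=0$, finiteness of the filtration, and nilpotency of $Q_\Fi$ is then immediate from the localisation description together with the identification $\ker c^*_{N^+}=\ker(Q_\Fi^{\star N}\star\cdot)$. Finally, when $c_1(Y)=0$ the quantum cohomology is $\Z$-graded and concentrated in degrees $\leq \dim_\R Y$, while $Q_\Fi$ sits in strictly positive degree $2\mu>0$ for any non-trivial contracting $\C^*$-action, so a pure grading argument forces $Q_\Fi^{\star N}=0$ once $2N\mu>\dim_\R Y$.
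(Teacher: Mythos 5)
Your overall architecture matches the paper: reduce to identifying $c^*_{N^+}$ with $N$-fold quantum multiplication by $Q_{\Fi}$, invoke PSS for the small slope, pass to the colimit for the localisation description, use the long exact sequence for the splitting \eqref{Equation Introduction QH is SH+}, and finish with degree arguments. The localisation-to-kernel-to-surjectivity chain and the three-way equivalence at the end are handled correctly.

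The genuine difference — and the place where your argument has a gap — is how you propose to establish the key step $c^*_{1^+} = Q_{\Fi}\star\cdot$. The paper does not degenerate continuation cylinders. It follows Ritter \cite{R14,R16}: one constructs the Seidel map $\mathcal{S}_{\widetilde{\Fi}}\colon HF^*(\lambda H)\xrightarrow{\;\cong\;} HF^{*+2\mu}((\lambda-1)H)$ directly from the $S^1$-loop of Hamiltonian diffeomorphisms (conjugating the Floer data by $\Fi_t$ is a chain-level isomorphism; no compactness or gluing degeneration is required to build it). The class $Q_{\Fi}$ is then \emph{defined} algebraically as $r_{\widetilde{\Fi}}(1)$, the image of the unit under the composite $c^*_{\lambda}\mapsto\psi\circ\mathcal{S}_{\widetilde{\Fi}}^{-1}\circ c^*_{\lambda}$ in the commutative naturality diagram of \cref{Theorem neg lb paper summary}, and it is a \emph{separate} theorem (the Seidel representation comparison) that $Q_{\Fi}$ coincides with a fibred Gromov--Witten count of sections of a Hamiltonian $S^1$-fibration over $S^2$. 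Your proposal instead wants to stretch the neck of the continuation moduli, see ``$I$-holomorphic spheres capped by a $\Fi$-orbit'' appear, and directly equate the count with Gromov--Witten numbers. This degeneration is not carried out anywhere, and it is not obviously workable: you would need to control the Morse--Bott breakings of continuation cylinders for the non-generic intermediate slopes, handle the obstruction bundles over the fixed loci $\F_\a$ with multiple weights, and then \emph{prove} the limit configuration count equals the section-counting invariant. That equivalence is precisely the content of the Seidel/Ritter comparison and is the hard part of \cite{Sei97}; asserting it as the outcome of an unspecified neck-stretching is where the proof is incomplete.

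Two smaller points. First, your maximum-principle sentence is in the right spirit but inverts the mechanism: the paper confines Floer solutions not by working directly with sublevel sets of $H$, but by projecting via $\Psi$ to the convex base $B$ and invoking the extended maximum principle there (\cref{Subsec max principle}) — $H$ and the radial coordinate $R\circ\Psi$ are not in general related, so a bound on $H$ along the cylinder is not what one gets. Second, your final grading argument is fine as far as it goes, but the paper's version is slightly cleaner and worth noting: $c^*(Q_{\Fi})$ is \emph{invertible} of strictly positive degree $2\mu$ on the \emph{finite-rank} $\k$-module $SH^*(Y)$, which forces $SH^*(Y)=0$ at once without appealing to a specific upper bound on the support of $H^*(Y)$ (and for general symplectic $\C^*$-manifolds with $c_1(Y)=0$, $H^*(Y)$ can occupy degrees up to $2\dim_{\C}Y-2$, not $\dim_{\R}Y$, so the bound you quote needs care).
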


The construction of the {\bf rotation class} $Q_{\Fi}$, and its usage to compute symplectic cohomology, goes back to Ritter \cite{R14}, see \cref{Theorem neg lb paper summary}; the localisation result generalises the theorem from \cite{R16}. This construction generalises the so-called \emph{Seidel elements}: invertible elements in quantum cohomology for \emph{closed} symplectic manifolds due to Seidel  \cite{Sei97}. The interesting feature of the non-compact setting is that the rotation class $Q_{\Fi}$ is usually not invertible in $QH^*(Y)$, but it becomes invertible in $SH^*(Y)$.

For CSRs, $QH^*(Y)\cong H^*(Y)$ as algebras (with suitable coefficients),
so we obtain a $\Fi$-dependent filtration on $H^*(Y)$ by ideals with respect to cup-product, and $H^*(Y)\cong SH^{*-1}_+(Y,\Fi)$.

In the sequel \cite{RZ2} we use the foundational results from this paper to construct a {\MBF } spectral sequence that converges to $QH^*(Y)$, whose $E_1$-page involves the cohomologies of {\MB } manifolds of $1$-orbits of the moment map \eqref{Equation intro moment map} intersected with higher and higher level sets of $H$.
This can be interpreted as a Floer-theoretic generalisation of \eqref{EqnFrankelIntro}, which instead arises from a 
{\MB } spectral sequence for ordinary {\MB } cohomology for the moment map.

In general, there is an obstacle to defining quantum cohomology, let alone Floer cohomology, due to the non-compactness of $Y$. The danger is the non-compactness of moduli spaces of PDE solutions which escape to infinity. The symplectic form $\omega$ is \textbf{very rarely exact at infinity} for these spaces, indeed for CSRs almost always\footnote{Exceptions being $\C^{2n},$ ADE resolutions and $T^*\CP^n$, see \cref{Preliminaries}.} one finds closed $I$-holomorphic curves appearing at infinity.

We, therefore, require one final condition, and in retrospect arriving at this definition was the decisive idea to open up this large class of examples to Floer-theoretic study going beyond just quiver varieties.
We say $Y$ is a symplectic $\C^*$-manifold {\textbf{over a convex base}} if outside of a compact subset $Y^{\mathrm{in}}$, so on $Y^{\mathrm{out}}:=Y\setminus \mathrm{int}(Y^{\mathrm{in}})$, there is a {\ph } proper map
\begin{equation}\label{Equation intro Psi}
\Psi: Y^{\mathrm{out}} \to B=\Sigma \times [R_0,\infty)
\end{equation}
to the positive symplectisation of a closed contact manifold $\Sigma$,
such that $X_{S^1}$ maps to the Reeb field,
\begin{equation}\label{Equation psi Xs1 is Reeb}
    \Psi_* X_{S^1} = \mathcal{R}_B.
\end{equation}
The existence of such a map $\Psi$ automatically ensures that the action $\Fi$ on $Y$ is contracting.
We say $\Psi$ is \textbf{globally defined} over a convex base $B$, if \eqref{Equation intro Psi} extends to a $\C^*$-equivariant {\ph } proper map
$
\Psi: Y \to B
$
to an open symplectic manifold $B$ which is convex at infinity (e.g.\;a Liouville manifold), admitting a $\C^*$-action whose $S^1$-part generates the Reeb flow at infinity. 
\begin{rmk}
We do not require any conditions on the dimension of $B$, and $\Psi_*$ is allowed to have a large kernel with varying ranks. We do not assume that $\Psi$ is surjective, so 
although the Reeb flow will be periodic on the image of $\Psi$, this need not hold everywhere on $B$.
One can allow a positive constant factor in \eqref{Equation psi Xs1 is Reeb}, but after a rescaling argument we may assume \eqref{Equation psi Xs1 is Reeb} holds as written. 
We caution the reader that level sets of $H$ are in general not preserved by the $\R_+$-action, so level sets of $H$ need not map into slices $\Sigma \times \{R\}$ via $\Psi$  (an exception
are cotangent bundles %
and negative vector bundles, Examples \ref{Example cotangent bundles intro} and \ref{Negative vector bundles intro}). In \eqref{Equation psi Xs1 is Reeb}, $\mathcal{R}_B=X_R$ is a Hamiltonian vector field for the radial coordinate $R\in [R_0,\infty)$.
In this paper, but not in \PartII, we can allow a slightly more general condition in \eqref{Equation psi Xs1 is Reeb}:
\begin{equation}\label{Equation psi Xs1 is Reeb 2}
    \strut\hspace{25ex} \Psi_* X_{S^1}
= X_{fR}%
\qquad \textrm{ for a Reeb-invariant function }f: \Sigma \to (0,\infty). 
\end{equation}

One can identify $Y^{\mathrm{out}}$ with the complement of a disc bundle of a complex line bundle over an orbifold 
$\{H=const\}/S^1$ 
(assuming $H$ is proper), e.g.\;viewing it as a subset of the symplectic cut, however one usually cannot hope to use this as $B$: the identification is rarely {\ph } \cite[Rmk.1.1]{lerman1995symplectic}. 
\end{rmk}

\subsection{Examples of symplectic \CC-manifolds over a convex base} \label{Intro Examples Section}

\begin{ex}[Equivariant projective morphisms]\label{Example intro CSR have global map} Let $X$ be an affine variety\footnote{\label{non-point affine}Which is not equal to a point. This ensures $Y$ is non-compact. We may assume $Y$ is connected.}
 with a contracting\footnote{The $\C^*$-action contracts $X$ to a single fixed point. Algebraically, the coordinate ring $\C[X]$ is $\N$-graded with $\C[X]_0=\C$.} 
 algebraic $\C^*$-action. Any %
equivariant projective morphism
$\pi: Y\fun X$ satisfies our assumptions, for a globally defined $\Psi$. Indeed, the coordinate ring $\C[X]$ is $\N$-graded by the $\C^*$-action; we choose homogeneous generators $(f_i)_{i=1}^N$ %
with
weights $w_i\geq 1$; and
we obtain a proper\footnote{$Y\fun X$ is proper because it is a projective morphism.} $I$-holomorphic map 
$$
\Psi: Y \to X \hookrightarrow \C^N \to \C^N,\quad y \mapsto \pi(y) \mapsto (f_1,\ldots,f_N)|_{\pi(y)} \mapsto (f_1^{w/w_1},\ldots,f_N^{w/w_N})|_{\pi(y)},
$$
that is $\C^*$-equivariant, using the diagonal action on $\C^N$ of weight $w:=\mathrm{lcm}_i\{w_i\}$. 
The quasi-projective variety $Y$  
admits\footnote{ %
The proof of these claims is the same as the proof written for CSRs in %
\cref{Subsection Explicit S1 invt Kahler form for CSR}.
} a $\C^*$-equivariant embedding $i:Y\hookrightarrow X \times \C P^m \hookrightarrow \C^n \times \C P^m$ for some $n,m$, and we pull back the standard $S^1$-invariant K\"{a}hler form from $\C^n \times \C P^m$ so that $S^1\subset \C^*$ defines
a Hamiltonian $S^1$-action on $Y$.
We study equivariant projective morphisms in detail in \cite{RZ4}.
Particular examples, arising as equivariant projective resolutions, are
CSRs
and crepant resolutions of quotient singularities.
\end{ex}

\begin{ex}
[\Kh/GIT quotients of $\C^n$]
\label{Example intro toric varieties}
An instance of \cref{Example intro CSR have global map} are
\KH reductions of unitary actions
$K \subset U(V)$ on a complex vector space $V$, or equivalently 
(by the Kempf--Ness theorem) GIT quotients of the complexified actions %
$K_\C=:G \subset GL(V)$.
By the GIT picture, they always come with a projective morphism
$\pi: Y= V\GIT_{\chi} G \fun V \GIT_0 G = X $ to the affine GIT quotient, where $\chi:G\fun \C^*$ is a character.
The scalar matrices $\C^* \iso S \leq GL(V)$ cannot be a subgroup of $G$, as otherwise
$X$ would be a single point,\footnote{Given any $z\in V,$ 
the closure of its orbit $G z \supset S z$ would contain the origin ($\lim_{s\fun 0} s \cdot z =0$), 
thus by definition
$[z]=[0]$ in the GIT quotient $X.$} which we do not allow (\cref{non-point affine}).
Hence, there is an induced $\pi$-equivariant action of $S$ on $Y$ and $X$, which is contracting.\footnote{$\lim_{s\fun 0} s \cdot [z]=
\lim_{s\fun 0} [s\cdot z]=[0].$}
Particular instances of this are 
\textbf{semiprojective toric manifolds}, for which $G$ is a torus \cite{HS02}. We discuss these spaces in detail in \cite{RZ4}.
\end{ex}

\begin{ex}[Submanifolds]\label{Example intro submfds}
Given any space $Y$ from our class of spaces, any $\C^*$-invariant properly embedded $I$-{\ph} submanifold $j: S\hookrightarrow Y$ will also belong to our class.\footnote{The $I$-compatible form $j^*\omega$ makes $S$ symplectic; the $S^1$-action is Hamiltonian with moment map $H\circ j$; and the $\Psi$-map for $S$ is $\Psi\circ j: S^{\mathrm{out}}:=S\cap j^{-1}(Y^{\mathrm{out}})\to \Sigma \times [R_0,\infty)$.}
\end{ex}

\begin{ex}[Blow-ups]
Our class of spaces is closed under blow-ups of compact subvarieties, provided that the $\C^*$-action lifts to the blow-up, since \eqref{Equation psi Xs1 is Reeb 2} is only a condition at infinity. In the toric case, \cite[Sec.3E-3F]{R16} described the effect of such blow-ups on quantum and symplectic cohomology.
\end{ex}

\begin{ex}[$A_2$-singularity]\label{Example running example of intro}
CSRs of the lowest dimension are ADE resolutions.\footnote{Minimal resolutions of quotient singularities $M\fun \C^2/\Gamma,$ where $\Gamma \leq SL(2,\C)$ is a finite subgroup.} The $A_2$ case is %
the minimal resolution $\pi:M \fun \C^2/(\Z/3)$ (third roots of unity $\zeta$ acting on $\C^2$ by $(x,y)\mapsto (\zeta x,\zeta^{-1}y)$). We can define $M $ as the blow up at $0$ of the image $V(XY-Z^3)\subset \C^3$ of the embedding $\C^2/(\Z/3)\hookrightarrow \C^3$, $[x,y]\mapsto (x^3,y^3,xy)$. The McKay $\C^*$-action on $M $ is the lift of the $\C^*$-action $(x,y)\mapsto (tx,ty)$. The $\C^*$-equivariant map $\Psi: M \to \C^3$ is $(X^2,Y^2,Z^3)$, for the weight $6$ diagonal $\C^*$-action on $\C^3$.
\end{ex}

\begin{ex}[Springer resolutions]\label{Example cotangent bundles intro}
A simple instance of \cref{Example intro CSR have global map}, and of a weight-1 CSR,
is
\begin{equation}\label{TCPn example resolution}
    T^*\CP^N \fun \Omin:=\{A\mid A^2=0, \ \rk(A)=1\}\subset \mathfrak{sl}_{N+1}.
\end{equation}
Here $\Fi$ is the standard $\C^*$-action that contracts fibres, and $\C^*$ acts by dilation on $\mathfrak{sl}_{N+1}.$ The zero section of $T^*\C P^N$ is $I$-holomorphic and $\omega$-symplectic, so $\omega$ is not the canonical exact form of $T^*\C P^N$.

A more general class of such examples are Springer resolutions, i.e.\;cotangent bundles of flag varieties. For example, the variety
$\mathcal{B}$ of all flags $0\subset F_1 \subset F_2 \subset \C^3$: 
there is a resolution of singularities
$\nu: T^*\mathcal{B}\to \NN=\{3\times 3\textrm{ nilpotent matrices in }\mathfrak{sl}_3\}$. 
The singular affine variety $\NN$ has three strata: (i) the point $\mathcal{O}_{1,1,1}:=0$ 
to which $\NN$ contracts to under the $\C^*$-action; (ii) the stratum $\mathcal{O}_{2,1}$ which is the adjoint orbit 
of the nilpotent Jordan normal form with blocks of sizes 2,1; and (iii) a generic stratum $\mathcal{O}_3$.
The orbit $\mathcal{O}_{2,1}$ is a singular stratum of $\NN$ that goes to infinity. Any transverse slice to $\mathcal{O}_{2,1}$ 
(entering from $\mathcal{O}_3$, avoiding $\mathcal{O}_{1,1,1}$) is an $A_2$-singularity; its preimage via $\nu$ is its resolution; so the fibre above the chosen point in $\O_{2,1}$ consists of two holomorphic $\C\P^1$'s intersecting transversely.
This illustrates how our spaces $Y$ may have
$I$-holomorphic spheres arbitrarily far at infinity, in fibres of \eqref{Equation intro Psi}.
\end{ex}

\begin{ex}[Negative Vector Bundles]\label{Negative vector bundles intro}
In \cref{Moment map is function of radial coordinate for TCPn} we explain why $T^*\C P^N$ in \cref{Example cotangent bundles intro} is a negative vector bundle. Other examples of negative vector bundles are the duals of ample vector bundles, e.g.\,conormal bundles of projective algebraic varieties $V\subset \C P^N$. One can also make a vector bundle negative by tensoring with a large power of a negative line bundle.
Following \cite[Sec.11]{R14},
for a negative complex vector bundle $Y=\mathrm{Tot}(\pi:E\fun B)$ with the standard contracting $\C^*$-action on fibres, we use the biholomorphism $\Psi:E\setminus 0 \to L\setminus 0$ where $L \to \P(E)$ is the tautological line bundle over the complex projectivisation of $E$. This $\Psi$ typically does not extend globally, and $\Psi$ is not symplectic when $\mathrm{rank}_{\C}E\geq 2$.\footnote{The natural symplectic form $\omega$ on $E$ is non-exact at infinity whereas for $L$ it is.} 
This map was used in \cite[Sec.11.2]{R14} to define quantum/Floer cohomology.
\end{ex} 

\begin{ex} (Trivial vector bundles). Given any\footnote{Assuming, as we do for general $Y$, the weak+ monotonicity condition, see \cref{Rmk technical symplectic assumptions on Y}.} 
closed symplectic $B$, a trivial vector bundle $Y= B \times \C^r$ 
satisfies our assumptions, with the weight-1 $\C^*$-action on fibres and projection $\Psi: Y \fun \C^r.$
\end{ex}

\begin{ex} (Hilbert schemes) Given a Riemann surface $\Sigma,$ the Hilbert scheme of a trivial bundle from the last example, $Y=\Hilb^n(\Sigma \times \C)$ satisfies our assumptions, with the action coming from weight-1 $\C^*$-action on $\C,$ and the projection 
$\Psi: Y\fun \Sym^n(\Sigma \times \C) \fun \Sym^n(\C)\iso \C^n$ being a composition of the Hilbert-Chow morphism, and the $\Sym^n$-functorial projection.
\end{ex}

\begin{ex}[Higgs moduli]\label{Example Intro Higgs moduli I} 
Another important class of examples are various
moduli spaces $\MM$ of Higgs bundles 
over a Riemann surface $\Sigma.$
Roughly, elements of these spaces are
(conjugacy classes) of stable pairs $(V,\Phi),$ where $V$ is a vector bundle over $\Sigma$ of fixed coprime rank and degree,
and $\Phi\in \Hom(V,V\otimes K_{\Sigma})$ 
is a Higgs field, which potentially can have some poles (here, $K_{\Sigma}$ is the canonical bundle of $\Sigma$).
The space $\mathcal{M}$ is a 
complete 
{\HK}manifold,
with a natural $I$-holomorphic \CC-action given by $t\cdot (V,\Phi)=(V,t \Phi),$ whose $S^1$-part is $\om_I$-Hamiltonian 
with a proper moment map. 
 It also admits the so-called Hitchin fibration, given as the characteristic polynomial of the Higgs field, 
 \begin{equation}\label{Intro Hitchin fibration}
\qquad \qquad \Psi: \mathcal{M}\to B\iso \C^{N}   \quad \textrm{ where }N:=\tfrac{1}{2} \dim_\C\MM,
 \end{equation}
which is a proper surjective $I$-holomorphic map, and it is $\C^*$-equivariant for a certain linear 
$\C^*$-action on $B.$ This makes $(\MM,\om_I)$ a symplectic \CC-manifold over a convex base, with $\Psi$ globally defined. 
\end{ex}

\begin{ex}[Crepant resolutions of quotient singularities]\label{Quotient singularities}
Let $\pi:Y\to \C^n/G$ be a crepant resolution,\footnote{a non-singular quasi-projective variety $Y$ together with a proper, birational morphism $\pi$ which is a
biholomorphism away from the singular locus. The crepant condition here is equivalent to $c_1(Y)=0$.} where $G\subset SL(n,\C)$ is a finite subgroup. 
The diagonal $\C^*$-action on $\C^n/G$ lifts\footnote{by Batyrev \cite[Prop.8.2]{Batyrev}, or \cite[Prop.3.6]{McLR18}.} to a $\C^*$-action on $Y$, 
so $\pi$ is a proper $\C^*$-equivariant $I$-holomorphic map.
These are much studied in Algebraic Geometry literature due to the generalised McKay Correspondence (see \cite{McLR18} for references).
A subclass of these, when $n=2k$ and $G \leq Sp(2k,\C)$, give rise to CSRs $Y$ (\cref{Rmk Refinement of the McKay correspondence}), studied in %
\cite{KaledinMcKay, %
BezKaledinMcKay}. Another subclass is when $G$ acts freely outside of $0\in \C^n$, so isolated singularities $\C^n/G$, see %
\cite{McLR18} 
(here $Y$ is a CSR only\footnote{see the paragraph before \cref{Cor CSR is almost never convex} for an explanation.} for $n=2$).
The isolatedness condition implies $Y$ is convex at infinity, i.e.\,\eqref{Equation intro Psi} is a symplectic isomorphism with $\Sigma=S^{2n-1}/G$ (conjugating $G$ so that $G\subset SU(n)$), which was needed to study $Y$ using Floer theory. 
In our new framework, we can apply Floer theory to all crepant resolutions, without the isolatedness condition, since they are a special case of \cref{Example intro CSR have global map}.
\end{ex}

\begin{ex}[Torsion submanifolds]\label{Intro Torsion submanifolds}
Given a symplectic \CC-manifold $Y$ over a convex base, and an integer $m\geq 2,$ the fixed locus of the subgroup $\Z/m\leq \C^*$
decomposes into finitely many connected $I$-{\ph} submanifolds called \textbf{torsion submanifolds} $\Ymc.$ 
They are symplectic $\C^*$-submanifolds of $Y$ over the same convex base, by restricting \eqref{Equation intro Psi} (where we could also replace the restricted $\C^*$-action by its $m$-th root).
Each $\Ymc$ contains a subcollection of the $\F_\a$, and has strata that converge to different $\F_\a$. 
Looking at one such point of convergence $y_0\in \F_\a$,
and viewing $T_{y_0} Y$ as a complex representation for the linearised $S^1$-action,
one obtains the weight decomposition 
\begin{equation}\label{Intro weight spaces}
T_{y_0} Y = \oplus_{k\in \Z} H_k.
\end{equation}
It follows that only finitely many weights $m$ can arise (the weights only depend on the component $\F_\a$, not the choice of $y_0\in \F_\a$),
and that locally near $\F_\a$ the subbundle $\oplus_{b\in \Z} H_{mb}$ parametrises $\Ymc$.
\end{ex}

\subsection{Floer theory is possible for symplectic $\C^*$-manifolds}
\label{Subsection Floer theory is possible for symplectic C-manifolds}
\begin{prop}\label{intro max princ}
For symplectic $\C^*$-manifolds over a convex base, a maximum principle holds on $Y^{\mathrm{out}}$ for the PDEs used in Gromov-Witten theory and in Floer theory, provided one uses the almost complex structure $I$ and the Hamiltonian vector field on $Y^{\mathrm{out}}$ is a positive constant multiple of $X_{S^1}$.   
\end{prop}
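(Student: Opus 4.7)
The plan is to push solutions forward to the convex base $B$ via $\Psi$ and invoke the classical maximum principle for Floer trajectories on a manifold convex at infinity. Concretely, let $u: D \to Y$ solve the pseudoholomorphic curve equation or Floer's equation $\partial_s u + I(\partial_t u - X_H(u)) = 0$ on some Riemann surface $D$, and set $D' := u^{-1}(Y^{\mathrm{out}})$. On the interior of $D'$ consider the composition $\bar u := \Psi \circ u: D' \to B$. Because $\Psi$ is $I$-pseudoholomorphic, it carries $I$ to an almost complex structure $J_B$ on $B$ with $d\Psi \circ I = J_B \circ d\Psi$; by hypothesis $X_H = c\, X_{S^1}$ on $Y^{\mathrm{out}}$ for some positive constant $c$, so combining with \eqref{Equation psi Xs1 is Reeb} yields $d\Psi(X_H) = c\, \mathcal{R}_B = X_{cR}$. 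Applying $d\Psi$ to the PDE for $u$ then shows that $\bar u$ solves Floer's equation on $B$ with Hamiltonian $cR$ (or, in the Gromov--Witten case, is simply $J_B$-holomorphic).

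The next step is to apply the standard maximum principle on $B$. Since $B$ is either the positive symplectisation $\Sigma \times [R_0,\infty)$ or, in the globally defined case, a symplectic manifold convex at infinity admitting such a symplectisation end, and $cR$ is linear in the radial coordinate with constant positive slope, the classical subharmonic computation yields an inequality $\Delta \rho \geq 0$ for $\rho := R \circ \bar u$, the key inputs being that $X_{cR}$ is tangent to level sets of $R$ and that $J_B$ is of contact type near infinity so that $-dR \circ J_B$ is proportional to the contact form on $\Sigma$. Hence $\rho$ cannot attain a strict interior maximum on $D'$. Since $\Psi$ is proper, any bound on $\rho$ along the ``boundary'' of $D'$ (obtained e.g.\;from asymptotic Hamiltonian orbits, Lagrangian boundary conditions, or initial bubbling configurations) propagates to a uniform $C^0$-bound on $u$ inside $Y^{\mathrm{out}}$.

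The main subtlety I expect will lie in the generalised hypothesis \eqref{Equation psi Xs1 is Reeb 2}, where $\Psi_* X_{S^1} = X_{fR}$ for a Reeb-invariant function $f: \Sigma \to (0,\infty)$. Here the pushed-forward Hamiltonian becomes $cfR$, which is no longer strictly linear in $R$; but because $f$ is $R$-independent and strictly positive, $X_{fR}$ remains tangent to level sets of $R$ and the subharmonic computation for $\rho$ goes through with only a positive multiplicative correction coming from $f$. A secondary technical point is ensuring that $J_B$ can be chosen of contact type near infinity in a manner compatible with $\Psi$; this can always be arranged by first fixing a contact-type $J_B$ on $B$ and pulling it back via $\Psi$ on $Y^{\mathrm{out}}$, then interpolating to any desired tamed almost complex structure on the compact $Y^{\mathrm{in}}$ with a standard partition-of-unity argument, without affecting the validity of the pushforward computation on $Y^{\mathrm{out}}$.
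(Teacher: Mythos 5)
Your proof takes essentially the same approach as the paper: project the solution to $B$ via $\Psi$, observe that pseudoholomorphicity of $\Psi$ together with $\Psi_* X_{S^1}=X_{fR}$ turns the equation on $Y^{\mathrm{out}}$ into a Floer-type equation on $B^{\mathrm{out}}$ with Hamiltonian $\lambda fR$, then invoke a maximum principle for $R\circ v$ on the conical end of $B$, and finally use properness of $\Psi$ to pull the resulting $C^0$-bound back to $Y$. That is exactly the paper's argument (the paper cites the \emph{extended} maximum principle of \cite[Thm.C2, Lem.C7]{R16} for the general $fR$ case, and the simpler \cite[App.D]{R13} when $f\equiv 1$).

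Two small corrections. First, the extended maximum principle for $H_B=fR$ with $f$ Reeb-invariant is more delicate than ``a positive multiplicative correction'' to the usual subharmonicity computation: $X_{fR}=f\mathcal{R}_B+RX_f$ is no longer tangent only to the Reeb direction, and the argument of \cite[App.C2--C4]{R16} carefully exploits Reeb-invariance of $f$ to salvage the maximum principle; you correctly flag this as the main subtlety, but the sketch elides the real work. Second, your ``secondary technical point'' about constructing a contact-type $J_B$ compatible with $\Psi$ does not arise, and the proposed fix (pulling back $J_B$ to $Y^{\mathrm{out}}$) is not viable: the definition of a symplectic $\C^*$-manifold over a convex base already includes a choice of $\omega$-compatible $I$ on $Y$ and a contact-type $I_B$ on $B^{\mathrm{out}}$ together with the requirement that $\Psi$ is $(I,I_B)$-holomorphic, and since $\Psi_*$ is allowed to have large kernel of varying rank one cannot recover $I$ as a pullback of $I_B$. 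All the data needed is part of the hypothesis; the only point one must take care of (which the paper notes) is that perturbations of $I$ needed for transversality should be supported in the compact region where Floer solutions are already trapped, so that the projected equation on $B^{\mathrm{out}}$ is preserved at infinity.
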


The hard part of this Proposition was not the proof, but rather finding the right statement (requiring the existence of the map $\Psi$).
Indeed we can summarise the proof: locally at infinity we have a PDE\vspace{-0.8mm}
\begin{equation}\label{Equation PDE upstairs}
\partial_s u + I(\partial_t u - k X_{S^1})=0,\vspace{-0.8mm}
\end{equation}
where $z=s+it$ is a local holomorphic coordinate for the domain of $u$. As $\Psi_*$ commutes with $I$ and satisfies \eqref{Equation psi Xs1 is Reeb 2}, abbreviating $H_B:=fR$, the projected curve $v=\Psi(u)$ in $B$ satisfies\vspace{-0.8mm}
\begin{equation}\label{Equation PDE downstairs}
\partial_s v + I(\partial_t v - k\,X_{H_B})=0.\vspace{-0.8mm}
\end{equation}
By the extended maximum principle due to \cite[App.C2-C4]{R16}, $R\circ v$ cannot have a local maximum. 
\begin{rmk}\label{Remark max principle for Fukaya cat}
The same argument holds for the PDE equations used to define Lagrangian Floer cohomology and the $A_{\infty}$-equations for the (wrapped) Fukaya category.
For the purpose of proving a maximum principle as above, one does not require a $\C^*$-action, one just requires \eqref{Equation intro Psi} and $\Psi_*X=X_{H_B}$ for the Hamiltonian vector field $X$ that one wishes to use on $Y$.
\end{rmk}

The moment map $H$ in \eqref{Equation intro moment map} plays a role similar to the radial coordinate $R$ for symplectic manifolds convex at infinity.\footnote{In the case of Liouville manifolds \cite{Sei08} and of symplectic manifolds convex at infinity \cite{R10}, one works with the class of Hamiltonians $H_{\lambda}:M \to \R$ which at infinity are ``radial'': they only depend on $R$ and they are eventually linear in $R$ with a generic slope $\lambda>0$. The chain-level generators of 
Hamiltonian Floer cohomology $HF^*(H_{\lambda})$ are the $1$-periodic orbits of $H_{\lambda}$. Due to a 1-to-1 correspondence between $T$-periodic Reeb orbits in $\Sigma=\{R=1\}$ and $1$-orbits of a radial Hamiltonian arising with slope value $T:=H_{\lambda}'(R)$, one has control over those generators. The generic slope $\lambda$ ensures that there are no $1$-orbits in the region at infinity where $H_{\lambda}$ is linear. The construction is motivated by cotangent bundles $M=T^*N$, where the Reeb flow is  the geodesic flow for $N$ on the sphere bundle $\Sigma\cong STN$; the slope $\lambda$ corresponds to the period of the geodesic \cite{Vi96}; and the limit of the $HF^*(H_{\lambda})$ recovers the homology of the free loop space of $N$.}
So it is natural to consider Hamiltonians which are functions $c(H)$ of $H$, that become linear at infinity. We have control over $1$-orbits arising at slope $c'(H)=\lambda$, as follows.
\begin{lm}
By considering the $1$-periodic $S^1$-flow, and the subgroup $G_{\lambda}:=\langle e^{2\pi i \lambda }\rangle \subset \C^*$, we have
$$
 (1\textrm{-orbits of }\lambda H) \stackrel{1:1}{\longleftrightarrow}  (\lambda\textrm{-periodic orbits of the }S^1\textrm{-flow})  \stackrel{1:1}{\longleftrightarrow}(G_{\lambda}\textrm{-fixed points in }Y).
$$
\end{lm}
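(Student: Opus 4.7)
The proof plan is to unwind each equivalence directly from the definition of the $S^1$-action and its moment map; no Floer-theoretic input is needed.

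First, I would compute the Hamiltonian vector field of $\lambda H$. Since the $1$-periodic $S^1$-action is generated by $X_H = X_{S^1}$ (moment map equation $dH = \iota_{X_{S^1}}\omega$), linearity gives $X_{\lambda H} = \lambda\, X_{S^1}$. Hence a $1$-periodic orbit of $\lambda H$ is a loop $\gamma\colon [0,1]\to Y$ with $\gamma'(t) = \lambda\, X_{S^1}(\gamma(t))$ and $\gamma(0)=\gamma(1)$. Reparametrising time by $s = \lambda t$, this is exactly a trajectory $\widetilde{\gamma}(s) := \gamma(s/\lambda)$ of $X_{S^1}$ satisfying $\widetilde{\gamma}(0) = \widetilde{\gamma}(\lambda)$, i.e.\ a $\lambda$-periodic orbit of the $S^1$-flow. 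This gives the first bijection, where the inverse reparametrises a $\lambda$-periodic orbit back to a $1$-orbit of $\lambda H$.

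For the second bijection, denote the $S^1$-action by $\psi_t(y) = e^{2\pi i t}\cdot y$, so $\psi_t$ is the time-$t$ flow of $X_{S^1}$ (with period $1$ by our normalisation of $H$). A point $y\in Y$ lies on a $\lambda$-periodic orbit of $X_{S^1}$ if and only if $\psi_\lambda(y) = y$, i.e.\ $e^{2\pi i \lambda}\cdot y = y$. The assignment that sends such a $y$ to the orbit $t\mapsto \psi_t(y)$ ($t\in [0,\lambda]$) identifies the set of $\lambda$-periodic orbits (including constants) with the set of points fixed by the single element $e^{2\pi i\lambda}\in \C^*$; two starting points $y$ and $\psi_{t_0}(y)$ give the same orbit (up to reparametrisation), but both are equally $e^{2\pi i\lambda}$-fixed.

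Finally, being fixed by $e^{2\pi i\lambda}$ is tautologically equivalent to being fixed by the whole subgroup $G_\lambda := \langle e^{2\pi i\lambda}\rangle \subset \C^*$, since a subgroup acts trivially on $y$ iff each of its generators does. Combining the three identifications yields the stated $1\!:\!1$ correspondences. The only point that merits a brief remark is that no issue arises from constant orbits: constant $1$-orbits of $\lambda H$ correspond to critical points of $H$, which are precisely the $S^1$-fixed points $\F$, and these are automatically contained in $\Fix(G_\lambda)$ for every $\lambda$, matching the constant $\lambda$-periodic orbits on both sides.
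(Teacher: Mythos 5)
Your proof is correct, and it unwinds the definitions exactly as the paper intends; the paper states this lemma in the introduction without a written proof, and the same reparametrisation-plus-flow argument underlies the proved \cref{CorSec2OrbitsOfellH} in the body. One small clarification worth making explicit: the parenthetical remark that ``$y$ and $\psi_{t_0}(y)$ give the same orbit up to reparametrisation'' could mislead a reader into thinking the bijection is with unparametrised orbits. In fact both $1$-orbits of $\lambda H$ and $\lambda$-periodic orbits of the $S^1$-flow carry their parametrisation, so the clean statement of the second bijection is simply $\widetilde{\gamma}\mapsto\widetilde{\gamma}(0)$ with inverse $y\mapsto(s\mapsto\psi_s(y))$; distinct starting points on the same underlying circle give distinct parametrised orbits, so nothing is quotiented out.
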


Non-constant $1$-orbits can only arise if $\lambda=\tfrac{k}{m}$ is rational, for $k,m$ coprime, in which case  $G_{\lambda}\cong \Z/m$. 
They %
in fact arise in {\MB } families, intersecting the torsion submanifolds $Y_{m,\beta}$ from \cref{Intro Torsion submanifolds}:
\begin{equation}\label{Equation Bkm slices intro}
    B_{\frac{k}{m},\beta}\cong Y_{m,\c}\cap \{c'(H)=\tfrac{k}{m}\},
\end{equation}
so only finitely many $m\in \N$ can arise (it is understood that we choose $c''(H)>0$ when $c'(H)=\tfrac{k}{m}$; this ensures a certain {\MB}-property).
These are connected odd-dimensional smooth manifolds. Only the \emph{non-compact} $\Ymc$ arise in \eqref{Equation Bkm slices intro}, which we call {\bf outer torsion manifolds}, and we call the $c'(H)=\tfrac{k}{m}$ in \eqref{Equation Bkm slices intro} the {\bf outer $S^1$-periods}. Compact torsion manifolds $\Ymc$, which lie in $\mathrm{Core}(Y)$, do not arise in \eqref{Equation Bkm slices intro}, as we ensure that the only $1$-orbits of $c(H)$ near $\mathrm{Core}(Y)$ are the constant $1$-orbits at points of $\F=\sqcup\F_\a.$ Call $\lambda>0$ {\bf generic} if it is not an $S^1$-period (e.g.\,irrational $\lambda$). We write $H_{\lambda}:=c(H)$ to mean that $c'(H)=\lambda$ at infinity, for generic $\lambda$, so there are no $1$-orbits at infinity. 

So far, it was not essential that $H$ in \eqref{Equation intro moment map} is proper, but we now assume it (we show in \cref{Lemma making H proper} that one can tweak $\omega$ to ensure this). We construct $HF^*(H_{\lambda})$, which at chain level is generated by the $1$-orbits of $H_{\lambda}$, and a directed system of continuation maps $HF^*(H_{\lambda}) \to HF^*(H_{\lambda'})$ for generic $\lambda\leq \lambda'$.

\begin{cor}\label{intro SH for C* mflds corrolary}
There is a well-defined symplectic cohomology algebra $SH^*(Y,\Fi):=\varinjlim HF^*(H_{\lambda})$ admitting a canonical unital algebra homomorphism $c^*:QH^*(Y)\to SH^*(Y,\Fi)$.
\end{cor}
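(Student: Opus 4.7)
The plan is to define $HF^*(H_\lambda)$ for each generic slope $\lambda>0$, assemble these into a directed system via continuation maps as $\lambda\to\infty$, and finally construct $c^*$ via a PSS-type identification of $QH^*(Y)$ with $HF^*(H_{\lambda_0})$ for a small generic $\lambda_0$. For $H_\lambda:=c(H)$ with $c$ strictly convex and $c'(H)=\lambda$ at infinity, the preceding lemma rules out non-constant $1$-orbits of $H_\lambda$ in the region where $c'(H)=\lambda$; the remaining $1$-orbits lie in the compact set $\F=\mathrm{Crit}(H)$ and are constants. I would make them nondegenerate by a small time-dependent perturbation of $H_\lambda$ supported near $\F$ (or, alternatively, by a Morse--Bott cascade argument). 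The complex $CF^*(H_\lambda)$ is then generated by the resulting finitely many $1$-orbits, over the Novikov ring $\k$ required by the weak+ monotonicity hypothesis.

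The essential technical step is $C^0$-compactness of Floer and continuation moduli at infinity. On $Y^{\mathrm{out}}$ the Hamiltonian vector field of $H_\lambda$ is $c'(H)X_{S^1}$, a positive multiple of $X_{S^1}$, so \cref{intro max princ} applies: any Floer cylinder $u$ projects via $\Psi$ to a solution $v$ of \eqref{Equation PDE downstairs}, and the extended max principle of \cite{R16} prevents $R\circ v$ from having an interior local maximum. Combined with properness of $H$ and standard energy bounds, this confines any trajectory with asymptotes in a prescribed compact set to a compact subset of $Y$. Sphere bubbling is precluded by weak+ monotonicity, and transversality is obtained by a generic perturbation of $I$ inside a compact set, which does not affect behaviour at infinity. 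The same argument carries over to monotone $s$-dependent homotopies $H_s$ between $H_{\lambda_1}$ and $H_{\lambda_2}$ (generic, $\lambda_1\leq \lambda_2$) with $\partial_s c'_s\geq 0$ at infinity, producing continuation maps $HF^*(H_{\lambda_1})\to HF^*(H_{\lambda_2})$ that are independent of homotopy and composable. Setting $SH^*(Y,\Fi):=\varinjlim_\lambda HF^*(H_\lambda)$ over an increasing sequence of generic slopes then yields a well-defined $\k$-module, and the pair-of-pants product on each $HF^*(H_\lambda)$ (compact by the same max principle) is compatible with continuation, endowing $SH^*(Y,\Fi)$ with a unital algebra structure.

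For the canonical map $c^*:QH^*(Y)\to SH^*(Y,\Fi)$, I would choose a generic $\lambda_0>0$ small enough that every $1$-orbit of $H_{\lambda_0}$ is constant and lies in an arbitrarily small neighbourhood of $\F$. The standard PSS construction, using half-infinite Floer cylinders capped at a marked point by pseudoholomorphic spheres representing quantum classes, then provides a unital algebra isomorphism $QH^*(Y)\cong HF^*(H_{\lambda_0})$; composing with the tautological map into the direct limit gives $c^*$. Unitality, multiplicativity, and independence of choices follow from the standard arguments once the PSS moduli satisfy a max principle, which is again \cref{intro max princ}. The main obstacle throughout is precisely this $C^0$-control at infinity, and it has essentially been dispatched by \cref{intro max princ}: the novelty of the setup is that the proper $I$-holomorphic map $\Psi$ to the convex base $B$ transports the infinity of $Y$ to a region where the classical radial max principle becomes applicable.
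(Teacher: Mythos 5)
Your proposal is correct and takes essentially the same route as the paper: admissible Hamiltonians linear in $H$ at infinity, the $\Psi$-projection trick to apply the extended maximum principle of \cite{R16} on the convex base, weak$+$ monotonicity for transversality, monotone homotopies for the direct system, and PSS plus pair-of-pants for the unital algebra map $c^*$. One sign slip worth flagging: with the paper's conventions ($\partial_s u + I(\partial_t u - X_{H_s})=0$, cohomological continuation from the $s=+\infty$ end to the $s=-\infty$ end), the maximum principle for a homotopy of slopes requires $\partial_s\lambda_s \le 0$, not $\ge 0$; with that sign the continuation does go from lower to higher slope, consistent with the direct limit, but as written your inequality would break the $C^0$-estimate. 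Also, minor wording: $c$ cannot be \emph{strictly} convex if $c'$ is eventually constant; what is meant is $c''\ge 0$ with $c'\equiv\lambda$ near infinity (and in fact the paper's proof leans on $F$ being genuinely linear in $H$ at infinity so that $\Psi_*X_F = \lambda X_{H_B}$ has a domain-independent slope — see the paper's Remark discussing why $F=c(H)$ with non-constant $c'$ at infinity would make $\lambda$ domain-dependent and jeopardise the maximum principle).
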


\begin{rmk}
The Corollary holds more generally if the $\C^*$-action $\Fi$ is only defined on $Y^{\mathrm{out}}$, as it only relies on using \eqref{Equation intro Psi} or \eqref{Equation psi Xs1 is Reeb 2} to achieve a maximum principle at infinity. 
However, in this generality, $Q_{\Fi}$ in \cref{Prop vanishing of SH} may no be defined and the vanishing result in
\cref{Intro vanishing of SH} may cease to hold.\footnote{E.g. this is the case for open Riemann surfaces \cite[Ex.3.3]{Sei08}.}
\end{rmk}

\begin{rmk} $SH^*(Y,\Fi)$ may depend on the choice of $\Fi.$ In \cite{RZ4} we show that commuting $\C^*$-actions over an affine variety yield isomorphic rings $SH^*(Y,\Fi)$; this includes almost all examples in \cref{Intro Examples Section}.
\end{rmk}
\begin{rmk}
Groman \cite{Gr15} defines a universal symplectic cohomology for symplectic manifolds $Y$ that are geometrically bounded. The philosophical idea is that rather than choosing a specific growth of Hamiltonians at infinity, one would like to use as many Hamiltonians as possible, and take a huge direct limit of Floer cohomologies. A stumbling block for us, however, was that it is not entirely straightforward to check all the conditions required for this construction, partly because the Riemannian geometry of our spaces $(Y,\omega)$ at infinity is not well understood. We, therefore, opted to choose a specific class of Hamiltonians, since our spaces $Y$ come with a natural choice of $H$ via \eqref{Equation intro moment map}. This allows us to obtain meaningful filtered invariants that are sensitive to the choice of $\C^*$-action.
\end{rmk}

\begin{prop}\label{Intro vanishing of SH}
When $c_1(Y)=0,$ the $\Fi$-symplectic cohomology vanishes: $SH^*(Y,\Fi)=0.$
\end{prop}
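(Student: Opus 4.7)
The plan is to invoke \cref{Prop vanishing of SH}, which reduces the statement to showing that the Seidel-type element $Q_{\Fi}\in QH^{2\mu}(Y)$ is nilpotent whenever $c_1(Y)=0$; I would do this by a pure degree count.

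First I would use $c_1(Y)=0$ to upgrade the grading on quantum cohomology to an honest $\Z$-grading rather than a $\Z/2N$-grading. Each Novikov generator $T^\beta$ has degree $2c_1(\beta)=0$, so the Novikov ring sits in degree $0$ and $QH^*(Y)\cong H^*(Y)\otimes_{\k}\Lambda$ as a graded $\k$-module. Since $Y$ has real dimension $2n$, both $H^*(Y)$ and hence $QH^*(Y)$ are supported in degrees $[0,2n]$; in particular $QH^{d}(Y)=0$ whenever $d<0$ or $d>2n$.

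Next I would check that the Maslov index $\mu$ is nonzero, which is where the shape of the action enters. Pick a point $p\in\Fmin$ of the minimum locus of $H$. Under the compatible complex structure $I$ the tangent space splits into $S^1$-weight subspaces $T_pY=\bigoplus_k H_k$ as in \eqref{Intro weight spaces}. Because $H$ attains its minimum at $p$, its Hessian there is positive semidefinite with kernel $T_p\Fmin=H_0$, and identifying the Hessian of a moment map with the generator of the linearised rotation forces every non-trivial weight $k$ to be strictly positive. The $\C^*$-action is non-trivial, so at least one positive weight appears, and consequently the weight sum at $p$ is strictly positive. This weight sum equals (up to sign convention) the Maslov index of the constant orbit at $p$, and the hypothesis $c_1(Y)=0$ renders this integer independent of all auxiliary choices, since any two capping discs, or any two fixed points connected by a sphere class $\beta$, differ by $2c_1(\beta)=0$. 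Hence $\mu\neq 0$ is an integer intrinsic to $\Fi$.

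Combining the two steps, $Q_{\Fi}^{k}$ lies in $QH^{2k\mu}(Y)$, which vanishes as soon as $|2k\mu|>2n$. Thus $Q_{\Fi}$ is nilpotent, and \cref{Prop vanishing of SH} then delivers $SH^*(Y,\Fi)\cong QH^*(Y)_{Q_{\Fi}}=0$. The only non-routine point is the second step --- the strict positivity of the weight sum at $\Fmin$ and the well-definedness of $\mu$ under $c_1=0$; the rest is a formal dimension count.
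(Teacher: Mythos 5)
Your argument is correct, but it is not the route the paper takes in the proof of \cref{PropSec2VanishingTheorem}. The paper's primary proof is more elementary: it computes $SH^*(Y,\Fi)=\varinjlim HF^*(\lambda H)$ over the cofinal family $\lambda H$ with $\lambda\to\infty$, observes that $c_1(Y)=0$ makes the continuation maps $\Z$-graded, and then invokes \cref{PropSec2RSIndices} (with the estimate $RS(x,\lambda H)\geq 2\lambda\mu-\dim_\C Y$ coming from the weight decomposition and $\mu>0$) to conclude that $HF^*(\lambda H)$ is supported in degrees tending to $-\infty$, so the graded direct limit is zero. This avoids constructing the Seidel element $Q_\Fi$ and the localisation isomorphism $SH^*(Y,\Fi)\cong QH^*(Y)_{Q_\Fi}$ altogether, which are foundationally heavier (they require the full apparatus of \cref{Theorem neg lb paper summary}, including compactness for the sections defining $Q_\Fi$ and the factorisation of $c^*$ through powers of $Q_\Fi$). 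Your route — reducing to nilpotence of $Q_\Fi$, establishing $\mu>0$ from the positivity of weights at $\F_{\min}$, and killing $Q_\Fi^{\star k}\in QH^{2k\mu}(Y)$ by the boundedness of the grading since $\k$ sits in degree zero — is precisely the ``alternative proof'' the paper records in the paragraph following \cref{Theorem neg lb paper summary}. Both arguments hinge on the same two facts ($\mu>0$ from \cref{LemmaSec2MaslovIndex}, and $c_1(Y)=0$ giving a genuine $\Z$-grading with degree-zero Novikov parameter), so you have the essential content; the only thing you lose by going through $Q_\Fi$ is self-containedness, since the index-divergence proof needs nothing beyond the well-definedness of Floer continuation maps. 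One small point worth tightening: your assertion that the Hessian-of-the-moment-map argument ``forces every non-trivial weight to be strictly positive'' should be phrased as: near $p\in\F_{\min}$ one has $H=H(p)+\pi\sum w_k|z_k|^2+O(|z|^3)$ in equivariant Darboux coordinates, so local minimality forces $w_k\geq 0$, and the $w_k=0$ directions exactly span $T_p\F_{\min}$; non-triviality of the action then supplies at least one $w_k>0$, giving $\mu=\sum w_k>0$ as in \cref{LemmaSec2MaslovIndex}.
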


To prove this result, we show that the $\Z$-grading of generators of 
$HF^*(H_{\lambda})$ becomes arbitrarily negative 
as $\lambda\fun \infty$.
This phenomenon is typical for Hamiltonian $S^1$-actions, and 
generalises previous vanishing results, seen for $\C^n$ \cite{OanceaEnsaios}, ALE spaces \cite{R10}, many non-compact symplectic Calabi-Yau manifolds %
\cite{R14}, and crepant resolutions of isolated quotient singularities \cite{McLR18}. 

\cref{Intro vanishing of SH} may imply obstructions to existence of certain exact Lagrangians in the Liouville manifold $(Y,\om_J)$, when $Y$ is a \textbf{Higgs branch moduli space} (a \HK reduction of a unitary action on a flat vector space, e.g.\;quiver and hypertoric varieties). In the special case of ADE resolutions,
\cite{R10} used that vanishing to prove closed exact Lagrangians are 2-spheres. This suggests:
\begin{con}
Let $Y$ be a Higgs branch moduli space admitting a closed exact Lagrangian $L\subset (Y,\om_J)$. We conjecture that a deformation argument as in \cite[Lem.50]{R10} and \cite[Cor.1.6]{BeRit20} implies the vanishing of twisted symplectic cohomology $SH^*(Y,\omega_J,\underline{\Lambda}_{\tau \omega_I})=0$.
If this holds, then%
\footnote{The symplectic cohomology twisted by the transgression $\tau(\omega_I)\in H^1(\mathcal{L}Y)$ was constructed by Ritter \cite{ritter2009novikov}. By \cite[Cor.17]{R10}, $\tau(j^*\omega)\neq 0$ where $j:L \hookrightarrow Y$ (using $\mathrm{char}\,k=2$ in Floer theory if $L$ is non-orientable). Also, $\tau(j^*\omega_I)$ can be identified with a homomorphism $\pi_2(L)\to \R$, via $H^2(Y;\R) \to H^2(L;\R) \to \mathrm{Hom}(\pi_2(L),\R)\subset H^1(\mathcal{L}Y;\R)$, see \cite[Sec.4.1]{R10} or \cite[Sec.6.1]{ritter2009novikov}.
That the vanishing of twisted symplectic cohomology yields a bound on the HZ-capacity is due to Irie \cite[21, Cor.3.5]{irie2014hofer} and Albers--Frauenfelder--Oancea \cite[Sec.3.1--3.2]{albers2017local}.}
$H^2(L;\R)\neq 0$, $\pi_2(L)$ is infinite, and the Hofer-Zehnder capacity of $(Y,\om_J)$ is finite and bounded above by \eqref{Equation capacity}; in particular there are 
no Lagrangian spheres when $dim_\C Y>2.$ 
\end{con}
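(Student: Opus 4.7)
The plan has two main parts: (i) establishing the vanishing $SH^*(Y,\omega_J,\underline{\Lambda}_{\tau\omega_I})=0$ through a hyperkähler deformation, and (ii) extracting the listed topological and dynamical consequences from that vanishing.

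For (i), I would begin with \cref{Intro vanishing of SH}, which applies because $c_1(Y)=0$ on any Higgs branch moduli space and therefore gives $SH^*(Y,\Fi)=0$ for the $(\omega_I,I,\Fi)$-setup used throughout this paper. The idea, following \cite[Lem.50]{R10} and \cite[Cor.1.6]{BeRit20}, is then to rotate through the twistor sphere via the family of Kähler pairs $(\omega_\theta,I_\theta)$, where $I_\theta=\cos\theta\,I+\sin\theta\,J$ and $\omega_\theta=\cos\theta\,\omega_I+\sin\theta\,\omega_J$, for $\theta\in[0,\pi/2]$. Because $L$ is $\omega_J$-exact, the cohomology class $[j^*(\omega_\theta-\omega_J)]=\cos\theta\cdot [j^*\omega_I]$ varies only by rescaling; twisting the Floer complex at each $\theta$ by the transgression of $\omega_\theta-\omega_J$ then gives a family with an invariant Novikov local system up to change of basis. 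Continuation along $\theta$ would produce an isomorphism connecting $SH^*(Y,\omega_I,\Fi)=0$ at $\theta=0$ to $SH^*(Y,\omega_J,\underline{\Lambda}_{\tau\omega_I})$ at $\theta=\pi/2$, yielding the desired vanishing.

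Granted (i), the consequences in (ii) are relatively direct. By \cite[Cor.17]{R10}, closedness and exactness of $L\subset(Y,\omega_J)$ force $\tau(j^*\omega_I)\neq 0$ in $H^1(\mathcal{L}L;\R)$. Feeding this non-vanishing through the factorization $H^2(Y;\R)\to H^2(L;\R)\to\Hom(\pi_2(L),\R)\hookrightarrow H^1(\mathcal{L}L;\R)$ yields both $H^2(L;\R)\neq 0$ and $|\pi_2(L)|=\infty$, since either would-be obstruction factors the composite through zero. The capacity statement follows from the Irie \cite{irie2014hofer} and Albers--Frauenfelder--Oancea \cite{albers2017local} principle that vanishing of twisted symplectic cohomology on a Liouville manifold containing an exact Lagrangian yields an explicit Hofer-Zehnder bound in terms of the twist, producing the bound \eqref{Equation capacity}. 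The no-sphere statement is then automatic: a Lagrangian sphere $L\cong S^n$ with $n=\dim_\C Y>2$ has $H^2(S^n;\R)=0$, contradicting $H^2(L;\R)\neq 0$.

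The principal obstacle is the deformation in (i). Unlike the ADE case of \cite{R10}, the full family $(\omega_\theta,I_\theta)$ on a general Higgs branch moduli space need not satisfy the structural hypotheses used here: the $I$-holomorphic proper map $\Psi:Y\to B$ from \eqref{Equation intro Psi} and the Reeb identity \eqref{Equation psi Xs1 is Reeb} are features of the specific complex structure $I$, and they may degenerate or fail along the twistor rotation. Consequently the maximum principle of \cref{intro max princ} and the attendant Floer-theoretic compactness must be re-established uniformly in $\theta$, which requires a substantial generalisation of the foundational work in this paper. This is the technical heart of the conjecture and the reason it is phrased as such rather than as a theorem.
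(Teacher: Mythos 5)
The statement is labelled as a \emph{conjecture} in the paper, with no proof supplied: the text only names the intended strategy (a deformation argument as in the cited results of \cite{R10} and \cite{BeRit20}) and records the consequences in a footnote. Your sketch fleshes out exactly that strategy via the twistor family $(\omega_\theta, I_\theta)$, derives the consequences from \cite[Cor.17]{R10} and the Irie/Albers--Frauenfelder--Oancea capacity principle as the footnote indicates, and correctly pinpoints the genuine technical obstacle: the convex-base structure $\Psi$ in \eqref{Equation intro Psi}--\eqref{Equation psi Xs1 is Reeb} is tied to the complex structure $I$, so the Floer-theoretic compactness established in this paper is not known to hold uniformly along the twistor family. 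As far as a sketch can, your outline matches the paper's intent.

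Two imprecisions are worth correcting. First, in the deformation step you write ``Because $L$ is $\omega_J$-exact, the cohomology class $[j^*(\omega_\theta - \omega_J)] = \cos\theta\,[j^*\omega_I]$\ldots''. The deformation invariance at stake is a \emph{closed}-string statement about $SH^*$ of $Y$: the relevant Novikov twist lives in $H^1(\mathcal{L}Y;\R)$ and is $\tau([\omega_\theta - \omega_J]) = \cos\theta\,\tau([\omega_I])$, and what makes this identity hold is that $[\omega_J] = 0 \in H^2(Y;\R)$ because $(Y,\omega_J)$ is Liouville. The Lagrangian $L$ and the $j^*$-pullback have no role in this step --- they enter only in part (ii). Second, you present \cite[Cor.17]{R10} as though closedness and exactness of $L$ alone force $\tau(j^*\omega_I)\neq 0$; the actual hypothesis of that corollary is the vanishing $SH^*(Y,\omega_J,\underline{\Lambda}_{\tau\omega_I}) = 0$ produced in part (i), which you invoke only through the opening ``Granted (i)'' and should state explicitly as the input. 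With these fixed, the remaining deductions ($H^2(L;\R)\neq 0$, $\pi_2(L)$ infinite, the Hofer--Zehnder bound via \eqref{Equation capacity}, and no Lagrangian spheres for $\dim_{\C} Y > 2$) are as in the paper's footnote.
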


\subsection{Floer theory induces an $\R$-ordered filtration by ideals on quantum cohomology}\label{Subsection intro Floer theory induces filtration}

\begin{thm}\!\!\footnote{We discuss the field of coefficients $\k$ in \cref{Remark choice of coefficients}, and mild technical assumptions on $Y$ in \cref{Rmk technical symplectic assumptions on Y}.}\label{Cor intro filtration} %
There is a filtration by graded ideals of $QH^*(Y)$, ordered by  $p\in \R\cup\{\infty\}$,\footnote{When $c_1(Y)=0,$ $\FF_\lambda^{\Fi}=QH^*(Y)$ for large enough $\lambda,$ due to the vanishing from \cref{Intro vanishing of SH}.}
\vspace{-1mm}
\begin{equation}\label{DefinitionOfFiltration}
\FF^{\varphi}_p :=\bigcap_{\mathrm{generic}\,\lambda \geq p} \left(\ker c_\lambda^*:QH^*(Y)\to HF^*(H_{\lambda})\right), \qquad \Fil_{\infty}^{\Fi}:=QH^*(Y),\vspace{-1mm}
\end{equation} 
where $c_\lambda^*$ is a continuation map, a grading-preserving $QH^*(Y)$-module homomorphism.

As $\FF^{\varphi}_p$ are ideals, if $1\in \Fil_{p}^{\Fi}$ then $\Fil_{p}^{\Fi}=QH^*(Y)$ (``unity is the last to die'').

This filtration is sensitive to the choice of $\C^*$-action $\Fi$ (e.g.\,see Examples \ref{Example running example of intro 2}, \ref{Example running example of intro 3},
\ref{Example running example of intro 3.5}).

In {\PartII} we prove that the filtration satisfies the following {\bf stability property}:
\begin{equation}\label{Intro Stability property}
    \Fil^{\varphi}_\lambda  = \Fil^{\varphi}_{\lambda'} \textrm{ if there are no outer }S^1\textrm{-periods in the interval }(\lambda,\lambda'].
\end{equation}
\end{thm}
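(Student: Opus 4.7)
The plan is to deduce the theorem from two properties of $c_\lambda^*$ that are asserted in the statement (grading preservation and the $QH^*(Y)$-module homomorphism property) together with its unitality, namely that $c_\lambda^*(1)$ is the unit of $HF^*(H_\lambda)$. Unitality is the standard PSS-type identification and holds in our setting once the Floer-theoretic foundations have been laid earlier in the paper. Everything else is formal; the real work is hidden in making the inputs well-defined.

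Given these inputs, the first sentence of the theorem is immediate. Each kernel $\ker c_\lambda^*$ is a graded submodule of $QH^*(Y)$ viewed as a module over itself, which is the same thing as a graded ideal. An arbitrary intersection of graded ideals is a graded ideal, so $\Fil_p^{\Fi}$ is a graded ideal for every $p\in \R$; setting $\Fil_\infty^{\Fi}:=QH^*(Y)$ is consistent with reading the empty intersection as the whole module. For $p\leq p'$ the indexing set of generic values $\lambda$ shrinks, so the intersection becomes larger, giving the monotonicity $\Fil_p^{\Fi}\subseteq \Fil_{p'}^{\Fi}$.

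For the ``unity is the last to die'' assertion, suppose $1\in \Fil_p^{\Fi}$, so $c_\lambda^*(1)=0$ for every generic $\lambda\geq p$. By unitality $c_\lambda^*(1)$ is the unit of $HF^*(H_\lambda)$, so that unit vanishes. The module property then gives
$$c_\lambda^*(\alpha)=c_\lambda^*(\alpha\cdot 1)=\alpha\cdot c_\lambda^*(1)=0$$
for every $\alpha\in QH^*(Y)$, so $\ker c_\lambda^*=QH^*(Y)$ for every such $\lambda$ and hence $\Fil_p^{\Fi}=QH^*(Y)$. Sensitivity to $\Fi$ I would establish through the cited examples, where different $\C^*$-actions produce different outer $S^1$-periods \eqref{Equation Bkm slices intro} and different Maslov indices at infinity, forcing different jumps in the map $p\mapsto \Fil_p^{\Fi}$. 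The stability property \eqref{Intro Stability property} is deferred to \PartII.

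The main obstacle is therefore not in this short algebraic argument but in establishing the inputs for a class of spaces that are almost never convex at infinity and admit $I$-holomorphic curves escaping to infinity along the fibres of $\Psi$. Well-definedness of $c_\lambda^*$ and of the $QH^*(Y)$-module structure on $HF^*(H_\lambda)$ requires compactness of the relevant moduli spaces of continuation cylinders and of spiked cylinders, which in turn relies on the maximum principle of \cref{intro max princ} applied as in \eqref{Equation PDE upstairs}--\eqref{Equation PDE downstairs}. This is precisely the foundational content of the paper that makes the above formal argument meaningful.
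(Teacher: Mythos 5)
Your argument is correct, and the formal deductions (intersections of graded ideals are graded ideals, monotonicity in $p$, unity-is-last-to-die) are exactly right given the inputs you assume. The interesting point is that you take a genuinely different route to the ideal property than the paper does. You take as input the assertion — stated but not proven in the theorem itself — that each $c_\lambda^*$ is a $QH^*(Y)$-module homomorphism for a slope-preserving module structure on $HF^*(H_\lambda)$. From that, each $\ker c_\lambda^*$ is individually an ideal, and the rest is elementary. The paper's own proof (the Proposition labelled \cref{filtrationByIdeals}) instead avoids invoking that module structure: it uses only the compatibility of the pair-of-pants product with continuation maps, via the commuting square with $c_\delta^*\otimes c_k^*$ and $c_{\delta+k}^*$, and then exploits the intersection-over-$\lambda>p$ in the definition to send $\delta\to 0$. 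That argument does not establish that each $\ker c_\lambda^*$ is an ideal — only that the intersection $\FF_p^{\Fi}$ is — so it proves a marginally weaker statement with a smaller foundational footprint (no spiked-cylinder module map needed, only the pair-of-pants product already built for $SH^*$). Your route buys a cleaner statement and shorter proof at the price of needing to set up the module action (and the maximum principle for the spiked cylinders defining it) as an explicit input, which you rightly flag as the real work. Both routes are valid, and both defer stability to \PartII{} and sensitivity to the worked examples, as the paper does.
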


\begin{rmk}
$x\in \Fil_p^{\varphi}(QH^*(Y))\Leftrightarrow x$
can be represented as the boundary of a Floer chain $y$ involving non-constant $S^1$-orbits of period $\leq p.$  In {\PartII} we build an explicit map $SH^{*-1}_{(0,\lambda]}(Y,\Fi)\to QH^*(Y)$ with image $\Fil^{\varphi}_\lambda$, so that $[y]\in SH^{*-1}_{(0,\lambda]}(Y,\Fi)$ becomes a cycle representing $x$ via that map.

\noindent In \cref{Quotient singularities},
 the McKay correspondence says $\mathrm{rank}\,H^{2d}(Y) = \#($conjugacy classes in $G$ with ``age grading'' $d)$. An explicit bijection between conjugacy classes and a basis of $H^*(Y)$ is only known in special families (e.g.\;Kaledin \cite{KaledinMcKay}). 
  Our filtration sometimes refines the age grading  (\cref{Rmk Refinement of the McKay correspondence}). 

In algebraic geometry, and by different methods, Bellamy--Schedler \cite{BeSch18} constructed vector-space filtrations on cohomologies of certain CSRs. 
For $A_n$-singularities, our filtration agrees with theirs degree-wise using a certain $\C^*$-action (\cref{RemarkWhyFilrationsOnCohomologyInteresting}), but for other actions we get different filtrations. 
\end{rmk}

\begin{rmk}[Capacities]
    Our filtration encodes a large collection of invariants in an algebraic way. Among those, the following invariant measures when the unit first appears, $1\in \FF_{c(Y,\omega,\Fi)}^{\Fi}$,
\begin{equation}\label{Equation capacity}
c(Y,\omega,\Fi):=\min \{p: \FF_p^{\Fi}=QH^*(Y) \} \in (0,\infty],
\end{equation}
which is finite $\Leftrightarrow SH^*(Y,\Fi)=0$ by \cref{Prop vanishing of SH}. Such numbers, in various contexts, are known as ``symplectic capacity'' or ``spectral invariant''.
The idea of asking what lowest/largest action value is needed for a Floer-theoretic class to appear in an action-filtered subgroup, goes back to Viterbo \cite{viterbo1992symplectic}, \cite[Sec.5.3]{Vi99}. It appears, among others, in Oh \cite{oh1997symplectic}; Schwarz \cite[Sec.2.2]{schwarz2000action}; for (exact) cotangent bundles in Irie \cite[Sec.3]{irie2014hofer} and Albers--Frauenfelder--Oancea \cite[Sec.3.1-3.2]{albers2017local} (with relations to 
Hofer--Zehnder capacities);
for 4-dimensional Liouville domains in Hutchings's ECH capacities \cite{hutchings2011quantitative}, and for $S^1$-equivariant symplectic cohomology for domains in $\R^{2n}$ in Hutchings--Gutt \cite[Sec.4]{gutt2018symplectic}.
\end{rmk}

\subsection{Isomorphism invariance}\label{Subsection isomorphism invariance}

The choice of $\Psi$ in \eqref{Equation intro Psi} is auxiliary information: it does not affect $c^*_{\lambda}$ in \eqref{DefinitionOfFiltration}, it  is only needed to ensure quantum/ Floer cohomology are defined. An {\bf isomorphism}\footnote{One may wish to weaken the notion of isomorphism. The standard parametrised moduli space argument, that proves $QH^*(Y)$ is invariant under deformations of $\omega$ or of the almost complex structure $I$, applies in our setting provided that this is accompanied by a deformation of the map $\Psi$ in \eqref{Equation intro Psi}, satisfying \eqref{Equation psi Xs1 is Reeb} or \eqref{Equation psi Xs1 is Reeb 2}, so a maximum principle applies. Without that, one needs to justify why parametrised moduli spaces remain compact during the deformation.}
of symplectic $\C^*$-manifolds $j:Y \to Y'$ is a {\ph} $\C^*$-equivariant symplectomorphism (no conditions on $\Psi,\Psi'$).
In particular, if $j:Y \to Y'$ is a {\ph} $\C^*$-equivariant diffeomorphism then, after replacing $\omega_Y$ with $j^*\omega_{Y'}$ on $Y$, $j$ becomes an isomorphism of symplectic $\C^*$-manifolds.

\begin{prop}
    $\Fil_\lambda^{\varphi}(QH^*(Y))$
    is an invariant of  $Y$ up to isomorphism.
\end{prop}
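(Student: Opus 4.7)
The plan is to show that any \ph{} $\C^*$-equivariant symplectomorphism $j:Y\to Y'$ induces a graded algebra isomorphism $j^*:QH^*(Y')\to QH^*(Y)$ that carries $\FF^{\varphi'}_\lambda(QH^*(Y'))$ onto $\FF^{\varphi}_\lambda(QH^*(Y))$ for every $\lambda\in\R\cup\{\infty\}$.

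First I would verify that $j^*$ is a graded algebra isomorphism of quantum cohomologies. Since $j^*\omega'=\omega$ and $j_*\circ I = I'\circ j_*$, the map $j$ biholomorphically transports $I'$-holomorphic spheres on $Y'$, together with the moduli spaces entering the quantum product, to the corresponding $I$-holomorphic data on $Y$. Because $j$ is $\C^*$-equivariant and symplectic, the pullback $j^*H'$ is a moment map for the $S^1$-action on $(Y,\omega)$, so $j^*H'=H+\mathrm{const}$, and the normalization $\min H = \min H'=0$ forces this constant to be zero. Hence $j$ identifies the fixed locus $\F$, the outer torsion submanifolds $\Ymc$, the outer $S^1$-periods, and the $-\nabla H$-flow on the two sides (since $\nabla H$ is built from $I$ and $\omega$, both preserved by $j$).

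Next, for each generic $\lambda$ I would choose a common profile function $c$ and set $H_\lambda=c(H)$, $H'_\lambda=c(H')$. Then $j$ gives a bijection of $1$-periodic orbits preserving Maslov indices and biholomorphically transports the Floer trajectories \eqref{Equation PDE upstairs} (which depend only on $I$, $\omega$, and the Hamiltonian). The same transport applies to the continuation data between two slopes and to the PSS-type maps defining $c^*_\lambda$, producing chain-level isomorphisms that intertwine $c^*_\lambda$ with $c'^*_\lambda$ through $j^*$. Thus $j^*(\ker c'^*_\lambda) = \ker c^*_\lambda$, and intersecting over generic $\lambda\geq p$ yields $j^*(\FF^{\varphi'}_p) = \FF^{\varphi}_p$, with the case $p=\infty$ being tautological.

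The hard part is that the definition of isomorphism imposes no compatibility between the auxiliary maps $\Psi,\Psi'$, so a priori the two Floer complexes are built using different compactness setups. However, as noted at the start of this subsection, the role of $\Psi$ is only to supply the maximum principle of \cref{intro max princ}; the resulting $HF^*(H_\lambda)$ and the continuation maps $c^*_\lambda$ depend only on the intrinsic data $(Y,\omega,I,\varphi)$. The step that needs careful verification is that any two valid choices of $\Psi$ on the same side yield canonically identified Floer invariants -- a parametrized-moduli-space argument interpolating between the two compactness domains, using the fact that \eqref{Equation PDE upstairs} makes sense and satisfies the maximum principle along the deformation. Once this independence from $\Psi$ is secured, the intrinsic transport under $j$ gives the claimed identification of filtrations.
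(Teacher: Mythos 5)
Your proposal is correct and follows the same logic the paper indicates in the paragraph preceding the proposition (the paper itself records no separate proof, just the observation that $\Psi$ is auxiliary to the definition of $c^*_\lambda$). Your detailed verification that $j^*H'=H$, that $j$ matches fixed loci, torsion submanifolds, orbits, Floer trajectories and PSS/continuation data, and hence intertwines $c'^*_\lambda$ with $c^*_\lambda$, is exactly the content that the paper leaves implicit.

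One small point: the ``parametrized-moduli-space argument interpolating between the two compactness domains'' in your last paragraph is unnecessary and slightly misleading. The map $\Psi$ never appears in the Floer equation \eqref{Eqn Floer eq for F and J}, in the Hamiltonians $\mathcal{H}(Y,\Fi)$, nor in the count of trajectories; it is used only \emph{a posteriori} to certify that the moduli spaces (which are already intrinsically defined by $(Y,\omega,I,\varphi)$) are compact. Two valid choices $\Psi_1,\Psi_2$ simply certify compactness of the \emph{same} moduli spaces, and perturbations of $I$ can be taken supported in $Y^{\mathrm{in}}:=Y^{\mathrm{in}}_1\cup Y^{\mathrm{in}}_2$, so both maximum principles apply simultaneously. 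No interpolation of the PDE is needed, since there is nothing PDE-level to interpolate; the standard independence of Floer cohomology from the choice of regular (compactly supported) perturbation of $I$ already gives the identification. Your own earlier sentence --- that $HF^*(H_\lambda)$ and $c^*_\lambda$ depend only on $(Y,\omega,I,\varphi)$ --- already suffices and is the correct formulation.
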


\subsection{Periodic persistence module, and barcodes}\label{Subsection persistence}

The results of our paper can be rephrased in the modern language of persistence modules. We refer the reader to \cite{Polterovich} and references therein, for an overview of this language.
In our setup, we have $\k$-vector spaces
$$
V_0:=QH^*(Y), \qquad \textrm{ and }\qquad V_{p}:= HF^*(H_{p}),
$$
defined for $0<p \in \{\textrm{generic slopes}\}\subset \R$.
We extend this to non-generic slopes $p$ by the {\bf convention} 
$$V_{p}:=HF^*(H_{p^+}), \quad \textrm{ for }p^+>p\textrm{ sufficiently close to }p.$$
It does not matter which $p^+$ we choose, up to a continuation isomorphism.
We have continuation maps
$$
\psi_{p',p}: V_{p} \to V_{p}' \qquad \textrm{ for } \; p\leq p' \qquad (\textrm{ with }\psi_{p,p}=\mathrm{id}),
$$
with the convention that we replace $p$ by $p^+$ if $p$ is not a generic slope, and similarly for $p'$.
These maps are compatible: $\psi_{p'',p'}\circ \psi_{p',p}=\psi_{p'',p}$ for $p\leq p'\leq p''.$
Extend the definition by  
$$V_{\infty}:=SH^*(Y)=\varinjlim V_{p},$$  taking the direct limit over those directed maps.
We will explain later how we wish to define $V_{p}$ for $p<0$, but for now let us temporarily declare $V_{p}:=V_0$ for $p<0$ with $\psi_{p',p}=\mathrm{id}$ for $p\leq p'\leq 0$.

This defines a {\bf persistence module} $V$, in the weak sense of a functor $(\R\cup \{\infty\},\geq )\to \mathrm{Vect}_{\k}$ to the category of finite dimensional $\k$-vector spaces.
It satisfies {\bf properness}: $\psi_{p',p}$ is an isomorphism if $p,p'$ lie in the same connected component of $\R\setminus \{\textrm{critical slopes}\}$. {\bf Critical slopes} are the outer $S^1$-periods of the $S^1$-action; they form a closed discrete subset of $[0,\infty)$. 
{\bf Spectral points}, $\mathrm{Spec}(V)\subset \R\cup \{\infty\}$, are  those $p$ which don't admit an open neighbourhood on which the $\psi$-maps are isomorphisms, so $$\mathrm{Spec}(V)\subset \{\textrm{critical slopes}\}\cup\{\infty\}.$$

We have a {\bf semi-continuity property}: given $p$, the map $\psi_{p',p}$ is an isomorphism for any $p'>p$ sufficiently close to $p$. We differ here slightly from \cite{Polterovich}, as our semi-continuity is from above (due to our convention about using $p^+$, mentioned earlier).
We do not have the additional finite-type requirement from \cite{Polterovich} that $V_{p}=0$ for $p\ll 0$, but the properness condition is almost as good.

In practice, $V_{p}$ only changes when $p$ hits a critical slope $p$, so one could reduce to a sequence of vector spaces by evaluating only at critical slopes $p_0=0<p_1<p_2<\cdots \to \infty$. However, we keep the parameter $p\in \R\cup \{\infty\}$: it encodes non-trivial information about the $\C^*$-action (see examples later).

The {\bf $(p',p)$-persistent cohomology} is 
$$
P_{p',p}:=\mathrm{image}(\psi_{p',p}) \cong V_{p}/\ker \psi_{p',p}.
$$
In particular, much of our paper studies the filtration $\FF^{\Fi}_p\subset QH^*(Y)$ which determines
\begin{equation}\label{Equation Pp0}
P_{p,0}:=\mathrm{image}(c_p^*:QH^*(Y)\to HF^*(H_p)) \cong QH^*(Y)/\FF^{\Fi}_p
\end{equation}
(using $c_{p^+}^*$ and $H_{p^+}$ if $p$ is not a generic slope).
The $S^1$-action $\Fi$ gives rise to isomorphisms 
$$\mathcal{S}_{\Fi}:HF^*(H_{\lambda})\to HF^*(H_{\lambda-1})[2\mu],$$ 
called Seidel isomorphisms, where $\mu$ is the Maslov index of $\Fi$. The $\mathcal{S}_{\Fi}$ are compatible with $\psi_{\lambda',\lambda}$: 
\begin{equation}\label{Equation seidel compatibility}
\mathcal{S}_{\Fi} \circ \psi_{\lambda',\lambda}
=
\psi_{\lambda'-1,\lambda-1}\circ \mathcal{S}_{\Fi}.
\end{equation}
See for example the naturality diagram in \cref{Subsection Naturality under full rotations} as an instance of this compatibility.
\begin{cor}
    The persistence module is $1$-periodic: for $p'\geq p\geq 1$,
\begin{equation}\label{Equation shift persistence}
\mathcal{S}_{\Fi}: V_p \stackrel{\cong}{\longrightarrow} V_{p-1}[2\mu] \quad\textrm{ and }\quad
\mathcal{S}_{\Fi}: P_{p',p} \stackrel{\cong}{\longrightarrow} P_{p'-1,p-1}[2\mu].
\end{equation}
\end{cor}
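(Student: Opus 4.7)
The plan is to deduce both isomorphisms formally from the two inputs stated immediately before the corollary: the Seidel isomorphism $\mathcal{S}_{\Fi}:HF^*(H_\lambda)\xrightarrow{\cong} HF^*(H_{\lambda-1})[2\mu]$ and its compatibility \eqref{Equation seidel compatibility} with the continuation maps. Once the definitions of $V_p$ and $P_{p',p}$ are unpacked, nothing else should be needed.

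For the first isomorphism, I would unpack $V_p$ as $HF^*(H_p)$ at generic slopes and $HF^*(H_{p^+})$ otherwise, and likewise for $V_{p-1}$, and then apply $\mathcal{S}_{\Fi}$ at slope $p$ (respectively $p^+$). The hypothesis $p\geq 1$ is used exactly to ensure $p-1\geq 0$, so that the target is a genuine Floer cohomology rather than the temporary stand-in $V_q:=V_0$ used for $q<0$. The only care needed is choosing $p^+$ close enough to $p$ that $p^+-1$ is also a generic slope, which is possible since critical slopes form a closed discrete subset of $[0,\infty)$; different choices of $p^+$ agree up to the canonical continuation isomorphism already built into the definition of $V_p$.

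For the persistence isomorphism, I would chase the commutative square
\begin{equation*}
\begin{array}{ccc}
V_p & \xrightarrow{\;\mathcal{S}_{\Fi}\;} & V_{p-1}[2\mu] \\
{\scriptstyle \psi_{p',p}}\downarrow & & \downarrow{\scriptstyle \psi_{p'-1,p-1}} \\
V_{p'} & \xrightarrow{\;\mathcal{S}_{\Fi}\;} & V_{p'-1}[2\mu],
\end{array}
\end{equation*}
whose commutativity is precisely \eqref{Equation seidel compatibility} and whose horizontal arrows are isomorphisms by the previous paragraph. Taking images of the verticals,
\[
\mathcal{S}_{\Fi}(P_{p',p})=\mathrm{im}(\mathcal{S}_{\Fi}\circ\psi_{p',p})=\mathrm{im}(\psi_{p'-1,p-1}\circ\mathcal{S}_{\Fi})=\mathrm{im}(\psi_{p'-1,p-1})=P_{p'-1,p-1}[2\mu],
\]
where the penultimate step uses surjectivity of the top $\mathcal{S}_{\Fi}$. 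Since the bottom $\mathcal{S}_{\Fi}$ is already an isomorphism on all of $V_{p'}$, its restriction to $P_{p',p}$ is an isomorphism onto $P_{p'-1,p-1}[2\mu]$, as claimed.

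The main obstacle I anticipate is cosmetic rather than substantive: the bookkeeping around the $p^+$ convention at critical slopes. One needs to record that the canonical continuation isomorphism comparing two nearby choices of $p^+$ commutes with $\mathcal{S}_{\Fi}$, which is automatic because $\mathcal{S}_{\Fi}$ is itself built from Floer continuation data twisted by the loop $\Fi$, and continuation maps compose strictly in the relevant direct system. With that compatibility recorded, the algebraic content of the corollary reduces to the two-line diagram chase above.
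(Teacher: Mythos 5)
Your proof is correct and matches the paper's intended argument: the corollary is stated without explicit proof precisely because it is the immediate diagram chase from the Seidel isomorphism $\mathcal{S}_{\Fi}:HF^*(H_{\lambda})\to HF^*(H_{\lambda-1})[2\mu]$ and its compatibility \eqref{Equation seidel compatibility} with continuation maps, which is exactly what you carry out. Your care about the $p^+$ convention near critical slopes and the constraint $p\geq 1$ to keep $p-1\geq 0$ are the right bookkeeping points.
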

In view of this, we wish to define $V_p$ for $p<0$ so that it preserves periodicity for all $p\in \R$. So we turn the isomorphisms in \eqref{Equation shift persistence} into a definition of $V_p$ for $p<0$, and we extend the definition of the $\psi$-maps by requiring that \eqref{Equation seidel compatibility} and compatibility both hold.\footnote{For example, $V_{-0.6}:=V_{0.4}[-2\mu]$ and $\mathcal{S}_{\Fi}:=\mathrm{id}:V_{0.4} \to V_{-0.6}[2\mu]$, and for example $\psi_{2.3,-0.6} := \mathcal{S}_{\Fi}\circ \psi_{3.3,0.4}$.}

It is not difficult to see that the data $(V_{\lambda})_{\lambda\in \R\cup \{\infty\}}$ is (non-canonically) isomorphic to a direct sum of {\bf basic persistence modules}. A basic persistence module is a one-dimensional vector space $W=\k$ {\bf supported} on some interval $\mathcal{I}\subset \R\cup\{\infty\}$, so: $W_p:=W$ for $p\in \mathcal{I}$, and $W_p:=\{0\}$ otherwise, with structure maps $\psi_{p',p}=\mathrm{id}:W\to W$ for $p,p'\in \mathcal{I}$, and $\psi_{p',p}=0$ otherwise. Such a decomposition is called the {\bf normal form} for $V$.
We then obtain the {\bf barcode} associated to $V$:
$$
(\mathcal{I}_j,m_j)
$$
where $\mathcal{I}_j\subset \R\cup \{\infty\}$ are intervals, and $m_j\in \N$ is the number of basic persistence modules $W$ in the decomposition of $V$ which are supported precisely on the interval $\mathcal{I}_j$.

In our case, the intervals are of the form $[a,b)$ or $(-\infty,b)$, for $a\in \R$ and $b\in \R\cup \{\infty\}$. When $Y$ is Calabi-Yau, or more generally when $SH^*(Y)=0$, the intervals are bounded. The capacity $c(Y,\omega,\Fi)$ from \eqref{Equation capacity} is the longest interval length $|\mathcal{I}_j|$ for which $0\in \mathcal{I}_j$, because the unit $1\in QH^*(Y)$ is the last to die (if it dies) so it persists the longest.

Hamiltonian Floer cohomology is always $\Z/2$-graded, but this can often be upgraded to $\Z/2N$ or $\Z$ depending on assumptions on $Y$; for example $c_1(Y)=0$ ensures a $\Z$-grading. The continuation maps $\psi_{b',b}$ preserve this grading. We can refine the barcode by $(\mathcal{I}_j,m_j,d_j)$ by making $m_j$ only count the dimensions of $\mathcal{I}_j$-supported summands $W$ that live in grading $d_j$ (we can ensure the $W$ above live entirely in some grading). We call this a {\bf graded} barcode.

\begin{cor}
    The $\mathcal{S}_{\Fi}$ map corresponds to a translation symmetry on the barcode: the number of intervals $[a,b)$ equals the number of intervals $[a-1,b-1)$; and analogously for  $(-\infty,b)$ and $(-\infty,b-1)$. If we grade the barcode, then that translation shifts the grading by $d_j \mapsto d_j + 2\mu$.
\end{cor}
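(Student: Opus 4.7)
The plan is to upgrade the Seidel isomorphisms $\mathcal{S}_\Fi$ from pointwise data into a single isomorphism of persistence modules, and then to observe that, up to relabelling, the target is a deterministic shift of the source. Since the normal form of a persistence module is an invariant under isomorphism, the shift must be a symmetry of the multiset of barcode data.

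Concretely, I would first introduce the auxiliary persistence module $V^{\mathrm{sh}}$ defined by
\begin{equation*}
V^{\mathrm{sh}}_p := V_{p-1}[2\mu], \qquad \psi^{\mathrm{sh}}_{p',p} := \psi_{p'-1,p-1},
\end{equation*}
for $p\le p'$ in $\R\cup\{\infty\}$. This is a well-defined persistence module by functoriality of reindexation and the grading shift. The compatibility relation \eqref{Equation seidel compatibility} is then literally the assertion that the family $\{\mathcal{S}_\Fi:V_p\to V^{\mathrm{sh}}_p\}_{p}$ commutes with the structure maps of $V$ and $V^{\mathrm{sh}}$, hence defines a morphism of persistence modules. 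Because each $\mathcal{S}_\Fi$ is an isomorphism on every slice (by \eqref{Equation shift persistence}), the induced map $V\xrightarrow{\cong}V^{\mathrm{sh}}$ is an isomorphism of persistence modules. For the extension $p<0$, there is nothing to check: we defined $V_p$ there precisely so that the Seidel map and the compatibility relations continue to hold, so the morphism $\mathcal{S}_\Fi:V\to V^{\mathrm{sh}}$ extends to all $p\in\R\cup\{\infty\}$.

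Next, I would read off the barcode of $V^{\mathrm{sh}}$ from that of $V$. A basic summand $W$ of $V$ supported on $\mathcal{I}=[a,b)$ in grading $d$ corresponds, under $V\mapsto V^{\mathrm{sh}}$, to a basic summand $W^{\mathrm{sh}}_p=W_{p-1}[2\mu]$ which is nonzero exactly for $p\in[a+1,b+1)$, and which lives in grading $n$ such that $n+2\mu=d$, i.e.\ in grading $d-2\mu$. The same computation applies verbatim to half-infinite supports $(-\infty,b)$, yielding $(-\infty,b+1)$. Thus
\begin{equation*}
\mathrm{bar}(V^{\mathrm{sh}}) = \bigl\{([a+1,b+1),\,m,\,d-2\mu): ([a,b),m,d)\in\mathrm{bar}(V)\bigr\},
\end{equation*}
and likewise on the half-infinite intervals.

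Finally, by the normal form / uniqueness theorem for persistence modules (in our setting this holds because each $V_p$ is finite-dimensional and the structure maps are compatible), isomorphic persistence modules have the same (graded) barcode, so $\mathrm{bar}(V)=\mathrm{bar}(V^{\mathrm{sh}})$ as multisets. Equating the two descriptions gives the asserted translation symmetry: the multiplicity of $([a,b),d)$ equals the multiplicity of $([a+1,b+1),d-2\mu)$, equivalently (applying the inverse shift once) the multiplicity of $([a-1,b-1),d+2\mu)$. The half-infinite intervals satisfy the analogous symmetry. The only mildly subtle point, and the place where I would check carefully, is that a normal-form decomposition can be chosen so that each basic summand is homogeneous for the grading, so that the graded refinement of the statement genuinely follows from the ungraded normal form theorem; this is automatic because $\mathcal{S}_\Fi$ and all $\psi$-maps are grading-preserving (with the indicated shift), so one may apply the normal form theorem in each graded piece separately.
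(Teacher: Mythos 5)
Your argument is correct and is precisely what the paper leaves implicit: the Seidel maps assemble into an isomorphism of persistence modules $V\cong V^{\mathrm{sh}}$ with $V^{\mathrm{sh}}_p=V_{p-1}[2\mu]$, and uniqueness of the (graded) barcode then forces the translation-and-degree-shift symmetry. Your care about the graded refinement (decomposing graded-piece by graded-piece, since all $\psi$-maps are degree-preserving and $\mathcal{S}_\Fi$ shifts by $2\mu$) is the right extra remark, and your final equality of multiplicities, $\#\{([a,b),d)\}=\#\{([a-1,b-1),d+2\mu)\}$, matches the stated Corollary exactly.
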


We can also define a filtration of $V_{q}$ for any $q\in \R$ by
$$
\FF^{\Fi}_{p,q}:=\ker \psi_{p,q} \;\quad \textrm{ where }p\in \R\cup \{\infty\},
$$
with the convention $\FF^{\Fi}_{p,q}:=V_q$ when $p< q$.
For example, $\FF^{\Fi}_{p}=\FF^{\Fi}_{p,0}$  is the filtration in \eqref{DefinitionOfFiltration} for $QH^*(Y),$
and it determines \eqref{Equation Pp0}, and therefore also $P_{p+k,k}\cong P_{p,0}$ for all $k\in \N$. In general, we have  
$$P_{p,q}\cong V_q/\FF^{\Fi}_{p,q} \qquad \textrm{ and }1\textrm{-periodicity:} \qquad \mathcal{S}_{\Fi}:\FF^{\Fi}_{p,q}\cong \FF^{\Fi}_{p-1,q-1}[2\mu].$$
The case when $H^*(Y)$ lies in even degrees, described in \cref{Theorem what happens in even degrees}, is particularly nice: in that case, all $V_{p}$ have the same dimension, and therefore the pictorial representation of the above barcode means that the death of bars is matched by births of bars (in possibly different gradings), since the sum of all $m_j$ for which $p\in \mathcal{I}_j$ must equal the dimension of $V_p.$

We remark that there is a notion of distance between persistence modules, called interleaving distance, which coincides with a notion of distance for barcodes, called bottleneck distance. Thus, the above assigns a notion of {\bf distance} between two $\C^*$-actions on $Y$ (we will see in \cref{Subsection Different fi actions} that $Y$ often admits discrete families of $\C^*$-actions). This will be explored in future work.

\subsection{Specialisation %
to a filtration %
on $H^*(Y)$ by cup product ideals}\label{Subsection specialisation intro}
\strut\\[0mm]\indent
The Novikov field $\k$ is a vector space over a chosen base field $\mathbb{B}$ (see Remarks \ref{Rmk about coeffs Novikov} and \ref{Remark choice of coefficients}). As a $\k$-module, $QH^*(Y)=H^*(Y;\k)\cong H^*(Y;\mathbb{B})\otimes_{\mathbb{B}} \k$. To induce a filtration on $H^*(Y;\mathbb{B})$, the naive intersection $H^*(Y;\mathbb{B})\cap \FF_{\lambda}^{\Fi}$ is not well-behaved: its rank over $\mathbb{B}$ is unrelated\footnote{E.g. $c_{\lambda}^*(x)=x$, $c_{\lambda}^*(y)=Tx$, for $x,y\in H^*(Y;\mathbb{B})$, then $y-Tx\in \ker c_{\lambda}^*$. Over $\mathbb{B}$ the map is injective on $\mathrm{span}_{\mathbb{B}}(x,y)$.} to $\mathrm{rank}_{\k}\FF_{\lambda}^{\Fi}$ and it does not respect cup product. Instead, we define the {\bf specialised $\Fi$-filtration} 
$$\FFFF^{\Fi}(H^*(Y;\mathbb{B}))=\FFFF^{\Fi}:=\val(\FF_{\lambda}^{\Fi}) \subset H^*(Y;\mathbb{B}),$$
where $\mathrm{ini}(x)$ is the initial term\footnote{For $x\neq 0\in \k$, $x=n_0 T^{a_0} + n_1 T^{a_1} + \cdots$ for $n_j\in \mathbb{B}$, $n_0\neq 0$, where $a_0,a_1,\ldots$ are an increasing unbounded sequence in $\R$. The $T$-valuation $\mathrm{val}_T(x)=a_0 \in \R$ defines the ``specialisation''
$
\textstyle
\val(x):= (T^{-\mathrm{val}_T(x)}x)|_{T=0} = 
n_0 \in \mathbb{B};
$
and $\mathrm{ini}(0)=0$.
This naturally extends to $\mathrm{ini}:QH^*(Y)=H^*(Y;\mathbb{B})\otimes_{\mathbb{B}} \k \to H^*(Y;\mathbb{B})$. For
$S\subset QH^*(Y)$, $\val(S):=\{\val(s):s\in S\}$.}
$n_0 \in H^*(Y;\mathbb{B})$ of the formal Laurent ``series'' $x:=n_0 T^{a_0} + (T^{>a_0}\textrm{-terms})$.

\begin{cor}\label{Cor intro about B filtration}
        $\FFFF^{\Fi}$ %
        is a filtration by cup-ideals on $H^*(Y,\mathbb{B})$ %
        with $
    \mathrm{rank}_{\mathbb{B}}\, \FFFF^{\Fi}
    = 
    \mathrm{rank}_{\k}\, \FF_{\lambda}^{\Fi}.
    $
\end{cor}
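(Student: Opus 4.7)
The plan is to establish three things: (i) that $\FFFF^{\Fi}$ is a $\mathbb{B}$-subspace of $H^*(Y;\mathbb{B})$ of the claimed $\mathbb{B}$-rank; (ii) that the family varies monotonically in $\lambda$, so is genuinely a filtration; and (iii) that each piece is closed under $\cup$-multiplication by $H^*(Y;\mathbb{B})$.

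For (i), I would invoke the following standard fact about the $T$-adic valuation on a Novikov field: given any $\k$-subspace $N$ of a finite-dimensional $\k$-vector space $\mathbb{B}^n \otimes_{\mathbb{B}} \k$, the specialisation $\val(N) \subseteq \mathbb{B}^n$ is a $\mathbb{B}$-subspace with $\dim_{\mathbb{B}}\val(N) = \mathrm{rank}_{\k}N$. The proof is a $T$-adic Gauss elimination: starting from any $\k$-basis $y_1,\ldots,y_r$ of $N$, any $\mathbb{B}$-linear dependence among the $\val(y_i)$ yields a $\k$-linear combination of strictly larger $T$-valuation which replaces one of the $y_i$, and the procedure terminates because the matching upper bound $\dim_{\mathbb{B}}\val(N)\leq r$ also holds (a $\mathbb{B}$-independent family of initial terms lifts to a $\k$-independent family). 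Once the $\val(y_i)$ are $\mathbb{B}$-independent they span $\val(N)$: for $m=\sum c_iy_i\in N$, its initial term is read off from the indices achieving $\min_i(\val_T(c_i)+\val_T(y_i))$. Monotonicity (ii) is immediate from \eqref{DefinitionOfFiltration}, since $\FF_{\lambda}^{\Fi}\subseteq \FF_{\lambda'}^{\Fi}$ for $\lambda\leq\lambda'$, and $\val$ preserves inclusions of sets.

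The substantive step is (iii). Given $\alpha\in\FFFF^{\Fi}$, lift to $x\in\FF_{\lambda}^{\Fi}$ with $\val(x)=\alpha$, and pick $\gamma\in H^*(Y;\mathbb{B})$ viewed as an element of $QH^*(Y)$ of $T$-valuation $0$. By \cref{Cor intro filtration}, $\FF_{\lambda}^{\Fi}$ is a $QH^*(Y)$-ideal, hence $x*_Q\gamma\in \FF_{\lambda}^{\Fi}$. I would then decompose
\[
x*_Q\gamma \;=\; x\cdot\gamma \;+\; \sum_{A\neq 0} n_A\,T^{\omega(A)},
\]
where $x\cdot\gamma$ is the $\k$-linear extension of the classical cup product and the Gromov--Witten corrections run over non-zero spherical classes $A$ with $\omega(A)>0$. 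Consequently the corrections have $T$-valuation strictly greater than $\val_T(x\cdot\gamma)=\val_T(x)$, and whenever $\alpha\cup\gamma\neq 0$ one reads off $\val(x*_Q\gamma)=\alpha\cup\gamma\in\val(\FF_{\lambda}^{\Fi})=\FFFF^{\Fi}$; the case $\alpha\cup\gamma=0$ is trivial.

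The main delicate point will be justifying that the non-classical summands of $*_Q$ indeed carry strictly positive $T$-valuation, so that the classical cup product is genuinely the leading term of $x*_Q\gamma$. This rests on the Novikov convention (exponents bounded below and increasing, so $T$ is a ``small'' parameter) together with the weak+ monotonicity hypothesis on $Y$, under which only non-constant $J$-holomorphic spheres with $\omega(A)>0$ contribute to the quantum product.
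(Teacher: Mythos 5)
Your approach is essentially the same as the paper's. For the rank equality you perform the same $T$-adic Gauss elimination as the paper's Lemma \ref{Lemma ini valuation}, and for the cup-ideal property you use the same observation: by \cref{filtrationByIdeals} the $\FF^\Fi_\lambda$ are quantum ideals, and the quantum product $r\star x$ has cup product as its $T$-leading term, so $\val(\FF^\Fi_\lambda)$ is closed under $\cup$. Your explicit handling of the degenerate case $\alpha\cup\gamma=0$ is a small clarity improvement over the paper's phrasing.

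One point in your write-up is not actually a valid argument as stated: you claim the Gauss-elimination procedure terminates ``because the matching upper bound $\dim_{\mathbb{B}}\val(N)\leq r$ also holds''. Knowing the dimension is bounded does not by itself force the algorithm to halt, since the Novikov exponents live in $\R$ and the valuation of the replaced vector could a priori increase indefinitely without ever making the initial terms independent. The paper closes this gap by observing that a non-terminating run would produce a convergent Novikov-series expression of $s_{k+1}$ in terms of $s_1,\ldots,s_k$ (using that each element's exponent set is discrete and unbounded, so the accumulated valuations go to $+\infty$), contradicting $\k$-linear independence of the $s_i$. You should supply some such convergence argument; the dimension bound alone is not sufficient.
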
 

    \subsection{Effective methods to compute the $\Fi$-filtration $\Fil^{\Fi}_{\lambda}$} %
Floer invariants are notoriously difficult to compute. Nevertheless we developed two efficient tools to compute the $\Fi$-filtration, which we compare in an explicit example in \cref{Example Slodowy S32 using mulambda Falpha}: the Springer resolution of the Slodowy variety $\mathcal{S}_{32}$.

One tool is developed in \PartII: a {\MBF } spectral sequence, whose $E_1$-page essentially consists of $H^*(Y)$ together with ordinary cohomology $H^*(B_{k/m})[-\mu(B_{p,\c})]$ of the slices  from \cref{Equation Bkm slices intro}, i.e.\,the {\MB } manifolds of period-$k/m$ $S^1$-orbits.
This spectral sequence is induced by the period-filtration we construct on Floer chain complexes. 
The columns of the spectral sequence are labelled by the slope values $\lambda=k/m$, so the $\Fi$-filtration value of a class in $H^*(Y)$ is the smallest slope needed so that the columns up to that slope have gathered enough cohomology to kill the given class in the spectral sequence.
This method relies on knowing information about the torsion $m$-submanifolds from \cref{Intro Torsion submanifolds}, and an analysis of the {\MBF } indices $\mu(B_{p,\c})$.

The other tool, developed in this paper, decomposes and computes the continuation map $c_{\lambda}^*$ from \cref{Cor intro filtration} for Hamiltonians of type $H_{\lambda}=\lambda H$. This involves knowing information about the fixed loci $\F_\a$ and analysing certain indices $\mu_{\lambda}(\F_\a)$ which are a generalisation of the {\MB } indices $\mu_\a$,  that are dependent on the slope $\lambda.$ The precise definition of these indices in terms of the weight decompositions \eqref{Intro weight spaces}, and the study of their properties, is carried out in \cref{Section RS indeces}. 

\subsection{Lower bounds for the ranks of $\Fil^{\Fi}_{\lambda}$} 
To simplify the discussion, suppose $Y$ has no odd degree cohomology. This holds for all CSRs, and all crepant resolutions of quotient singularities. 

\begin{thm}\label{Theorem what happens in even degrees}
Suppose $H^*(Y)$ lies in even degrees, then (over Novikov field coefficients) 
\begin{equation}\label{Equation intro HF lambda H splits}
    HF^*(H_{\lambda})\cong \oplus H^*(\F_\a)[-\mu_\lambda(\F_\a)].
\end{equation}
Thus,
using \eqref{EqnFrankelIntro}, the continuation map in \eqref{DefinitionOfFiltration} for generic $\lambda>0$ becomes
\begin{equation}\label{Equation clambda map intro}
\textstyle
c_{\lambda}^*:QH^*(Y)\cong \bigoplus_{\a} H^*(\F_\a)[-\mu_\a]\longrightarrow \bigoplus_{\a} H^*(\F_\a)[-\mu_\lambda(\F_\a)].
\end{equation}
\end{thm}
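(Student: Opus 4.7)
The plan is to pick $H_\lambda := \lambda H$ globally (not merely at infinity) for irrational $\lambda$, which is a special case of $c(H)$ and is admissible by \cref{intro max princ}. Since $X_{\lambda H} = \lambda X_H$ and $X_H$ generates the $1$-periodic $S^1$-action, a $1$-orbit of $X_{\lambda H}$ is a $\lambda$-periodic orbit of the $S^1$-flow, and by the lemma preceding \eqref{Equation Bkm slices intro} this is a fixed point of $G_\lambda = \langle e^{2\pi i \lambda}\rangle$. Irrationality of $\lambda$ makes $G_\lambda$ dense in $S^1$, so $\mathrm{Fix}(G_\lambda) = Y^{S^1} = \F = \sqcup_\alpha \F_\alpha$, and the $1$-orbits of $H_\lambda$ are exactly the constant orbits at the Morse--Bott critical manifolds $\F_\alpha$. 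For other generic $\lambda$ in the same interval between two consecutive outer $S^1$-periods, \eqref{Equation intro HF lambda H splits} then follows from the stability property \eqref{Intro Stability property}.

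The Morse--Bott Floer theory of the paper (compactness via \cref{intro max princ}, transversality via cascades or Kuranishi-type perturbations) produces an action-filtered spectral sequence
$$
E_1^{*,*} \; = \; \bigoplus_\alpha H^*(\F_\alpha)[-\mu_\lambda(\F_\alpha)] \; \Longrightarrow \; HF^*(H_\lambda),
$$
where $\mu_\lambda(\F_\alpha)$ is the Robbin--Salamon-type index constructed and analysed in Section~5. The hypothesis that $H^*(Y)$ lies in even degrees, combined with Frankel's decomposition \eqref{EqnFrankelIntro}, forces each $H^*(\F_\alpha)$ to lie in even degrees and each $\mu_\alpha$ to be even. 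I would then invoke the explicit formula for $\mu_\lambda(\F_\alpha) - \mu_\alpha$ from Section~5, expressed as a sum of integers built from the $S^1$-weight decomposition \eqref{Intro weight spaces} of $T_{y_0}Y$ at any $y_0\in\F_\alpha$, and check that this correction is always even. Thus every $E_1$-generator sits in even total degree, every differential on every page vanishes for parity reasons, $E_1 = E_\infty$, and \eqref{Equation intro HF lambda H splits} follows.

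For \eqref{Equation clambda map intro}, I would identify $QH^*(Y)$ with $HF^*(H_{0^+})$ for an infinitesimally small positive slope, so that $H_{0^+}$ reduces to a $C^2$-small $\C^*$-equivariant Morse--Bott perturbation of $H$. At slope $0^+$ the index $\mu_{0^+}(\F_\alpha)$ coincides with the ordinary Morse--Bott index $\mu_\alpha$, so \eqref{Equation intro HF lambda H splits} specialises to Frankel's isomorphism \eqref{EqnFrankelIntro} for $QH^*(Y)\cong H^*(Y;\k)$. The continuation map $c_\lambda^*$ is induced by a chain map between two Morse--Bott Floer complexes indexed by the same components $\F_\alpha$ (only the gradings differ), which immediately yields the displayed form of \eqref{Equation clambda map intro}.

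The main technical obstacle is the parity verification for $\mu_\lambda(\F_\alpha)$: it requires the explicit local analysis of the linearised Hamiltonian flow along constant $1$-orbits in terms of the weights in \eqref{Intro weight spaces}, and this is precisely what Section~5 is devoted to. Secondary concerns -- well-definedness of Morse--Bott Floer cohomology on a non-compact symplectic $\C^*$-manifold and independence of auxiliary perturbation data -- reduce to \cref{intro max princ} combined with standard foundations, and do not interact with the parity hypothesis used above.
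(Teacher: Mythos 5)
Your core argument — a Morse--Bott Floer spectral sequence with $E_1 = \bigoplus_\alpha H^*(\F_\alpha)[-\mu_\lambda(\F_\alpha)]$ collapsing at $E_1$ for parity reasons, combined with the small-slope coincidence $\mu_{0^+}(\F_\alpha)=\mu_\alpha$ — is precisely the route the paper takes (\cref{PropRSIndicesGoToInfty2}\eqref{MorseItem7}, \cref{LemmaSec2GradingOfFa}, and \cref{For Lambda small Floer Shift Equal To Morse}).

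The one thing to cut is the detour through irrational slopes and the stability property. It is unnecessary: for any generic slope in the paper's sense (\cref{Definition outer S1 period}, i.e.\ $\lambda$ not an $S^1$-period), \cref{CorSec2OrbitsOfLambdaH} already gives that the only $1$-orbits of $\lambda H$ are constants at $\F$, and \cref{LemmaSec2GradingOfFa} gives evenness of $\mu_\lambda(\F_\alpha)$ for exactly this set of $\lambda$ — no density of $G_\lambda$ is needed. Beyond being unnecessary, the detour as written has a gap: stability tells you that $\psi_{\lambda',\lambda}$ is an isomorphism whenever $(\lambda,\lambda']$ contains no \emph{outer} $S^1$-period, but the indices $\mu_\lambda(\F_\alpha)$ change at $\alpha$-critical times $k/m$ with $m$ any weight of $\F_\alpha$, which form a potentially larger set (non-outer $S^1$-periods, coming from compact torsion submanifolds, can be $\alpha$-critical). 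So stability alone does not let you transport the grading-splitting $\bigoplus_\alpha H^*(\F_\alpha)[-\mu_\lambda(\F_\alpha)]$ across the interval without separately checking index invariance. Stability also lives in the sequel \PartII, so relying on it here creates an avoidable dependence. Dropping the detour and working directly with all generic $\lambda$ removes both problems and matches the paper's proof.
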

Abbreviate the rank of a $\k$-module by $|V|:=\mathrm{rank}_{\k}\,V.$ For generic $\lambda>0$, define
\begin{equation}\label{Equation difference of HF ranks}
\textstyle
\delta_{\lambda}^k:
=
|QH^k(Y)|-|HF^*(H_{\lambda})|
=
\sum_{\a} |H^{\e-\mu_\a}(\F_\a)|-|H^{\e-\mu_{\lambda}(\F_\a)}(\F_\a)|
.
\end{equation}
From \eqref{Equation clambda map intro} we obtain lower bounds on the ranks of $\Fil_{\lambda}^{\Fi}$; it holds also without the above simplification: 
\begin{cor}\label{Cor estimating filtration using degrees}\label{Equation lower bound rank of filtration}\label{estimates on filtration by drops}
Assume $c_1(Y)=0$. Let $t=\max \{d: H^d(Y)\neq 0\}$,
so $0\leq t\leq 2\dim_{\C}Y-2$. Then
\begin{equation*}
\begin{array}{rclrcl}
|\Fil^{\Fi}_{\gamma}\cap QH^{\e}(Y)| & \geq & \displaystyle\max_{0<\lambda<\gamma^+}\delta_{\lambda}^k \qquad\qquad\textrm{ and }
&
|\Fil^{\Fi}_{N}\cap QH^{\e}(Y)|  & \geq &
|H^{\e}(Y)|-|H^{\e+2N\mu}(Y)|,
 \\[3mm]
 \Fil^{\Fi}_{N}\cap QH^{\e}(Y) & = &
QH^\e(Y)  \;\textrm{ for }\e\geq t+1-2N\mu,
& \textrm{ and } \qquad
 \Fil^{\Fi}_{\lambda} & = &
QH^*(Y)  \;\textrm{ for }\lambda \geq  \lceil\tfrac{t+1}{2\mu} \rceil,
\end{array}
\end{equation*}
where $N\in \N$. If the action $\Fi$ is the $m$-th power of a $\C^*$-action, the above also holds for $N\in \tfrac{1}{m}\Z_{> 0}.$
\end{cor}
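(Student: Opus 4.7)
The plan is to derive the four bounds by combining the identification of the continuation map $c_\lambda^*$ from \cref{Prop vanishing of SH} with grading-preserving rank-nullity. First I would observe that, because continuation maps factor ($c_\mu^* = c_{\mu,\lambda}^*\circ c_\lambda^*$ for $\lambda \leq \mu$), the family of kernels $\ker c_\lambda^*$ is monotone increasing in $\lambda$ through the generic slopes, so the intersection in \eqref{DefinitionOfFiltration} collapses to a single kernel: $\Fil_\gamma^\Fi = \ker c_{\gamma^+}^*$ for any generic slope $\gamma^+$ just above $\gamma$ with no critical slope in $(\gamma,\gamma^+]$. In particular, for any generic $\lambda \in (0, \gamma^+)$ one has the inclusion $\ker c_\lambda^* \subseteq \Fil_\gamma^\Fi$, and rank-nullity (using $c_1(Y)=0$ to access a $\Z$-grading) applied in degree $\e$ gives
\begin{equation*}
|\Fil_\gamma^\Fi \cap QH^\e(Y)| \;\geq\; |\ker c_\lambda^* \cap QH^\e(Y)| \;\geq\; |QH^\e(Y)| - |HF^\e(H_\lambda)| \;=\; \delta_\lambda^\e,
\end{equation*}
where the last equality is the first form of the definition in \eqref{Equation difference of HF ranks}. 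Maximising over such $\lambda$ yields the first inequality.

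For the integer-slope statements I would invoke the explicit identification from \cref{Prop vanishing of SH}: after iterating the Seidel isomorphism $N$ times, $c_{N^+}^*$ becomes the $\k$-linear endomorphism of $H^*(Y;\k)\cong QH^*(Y)$ given by quantum product with $Q_\Fi^{*N}$, which raises degree by $2N\mu$. Since $\Fil_N^\Fi = \ker c_{N^+}^*$ by the first paragraph, rank-nullity in degree $\e$ gives
\begin{equation*}
|\Fil_N^\Fi \cap QH^\e(Y)| \;\geq\; |QH^\e(Y)| - |QH^{\e+2N\mu}(Y)| \;=\; |H^\e(Y)| - |H^{\e+2N\mu}(Y)|,
\end{equation*}
which is the second inequality. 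Whenever $\e + 2N\mu > t$ the target vanishes, so the bound saturates to $\Fil_N^\Fi \cap QH^\e(Y) = QH^\e(Y)$, the third assertion. Setting $N := \lceil (t+1)/(2\mu) \rceil$ makes $t+1-2N\mu \leq 0$, so the third assertion covers all $\e \geq 0$ and hence $\Fil_N^\Fi = QH^*(Y)$; by the kernel-monotonicity from the first paragraph, this propagates to $\Fil_\lambda^\Fi = QH^*(Y)$ for every $\lambda \geq N$, giving the fourth assertion.

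Finally, for the $m$-th power refinement, if $\Fi = (\Fi_0)^m$ then $H_\Fi = m\, H_{\Fi_0}$, so the $\Fi$-slope $k/m$ and the $\Fi_0$-slope $k$ Hamiltonians $c(H)$ coincide pointwise. The corresponding continuation maps therefore agree, and the same is true of their kernels; combined with $\mu = m \mu_0$ (so $2k\mu_0 = 2(k/m)\mu$), the integer-slope bounds for $\Fi_0$ at $k$ transfer verbatim to the fractional slope $N = k/m$ for $\Fi$. The only point I anticipate needing genuine care is the compatibility of the Seidel elements $Q_\Fi$ and $Q_{\Fi_0}$ so that $Q_{\Fi_0}^{*k}$ legitimately plays the role claimed for $Q_\Fi^{*N}$, but this follows by applying \cref{Prop vanishing of SH} directly to $\Fi_0$. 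The main obstacle is thus purely organisational---keeping iterated Seidel identifications between $HF^*(H_\lambda)$ and shifted copies of $QH^*(Y)$ tidy---with no new Floer-theoretic content required beyond the results already in hand.
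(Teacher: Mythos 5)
Your proof is correct and takes essentially the same approach as the paper: rank-nullity applied to the grading-preserving continuation maps $c^*_\lambda$, together with kernel-monotonicity along the directed system and the Seidel isomorphism to identify $c^*_{N^+}$ as a degree-$2N\mu$ map for integer slopes. The paper phrases the integer case via the index identity $\mu_{N^+}(\F_\a)=\mu_\a-2N\mu$ from \cref{For Lambda small Floer Shift Equal To Morse} while you invoke the $Q_\Fi^{\star N}\star\cdot$ description directly, but these encode the same shift.
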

The result for $N$ follows from the general one, as $\mu_{N^+}(\F_\a) = \mu_\a - 2N\mu$ (where $N^+$ means $N+\delta$ for small $\delta>0$), thus the continuation map for ``$N$ full rotations'' (compare \cref{Equation cNplus maps intro}) is
$$
c_{N^+}^*: QH^*(Y)\cong HF^*(H_{\delta}) \to HF^*(H_{N^+}) \cong QH^*(Y)[2N\mu].
$$

\begin{ex}[CSRs]\label{Example CSR filtration Intro 1} For weight-$s$ CSRs, $c_1(Y)=0.$\footnote{as they are holomorphic symplectic.}
Also, $QH^*(Y)=H^*(Y)$ is ordinary cohomology,\footnote{Since by \cite{Nam08}, $(Y,I)$ can be deformed to an affine algebraic variety.
} 
which lies in even degrees up to $\dim_{\C}Y$, and $2\mu=s\cdot\dim_{\C}Y$. Thus \eqref{estimates on filtration by drops} yields
\begin{equation}\label{Intro simple estimate CSR}
    \FF_1^{\Fi} = H^*(Y) \text{ for } s\geq 2, \ \ \text{ whereas for } s=1: \ \ \FF_1^{\Fi} \supset H^{\geq 2}(Y) \textrm{ and } \ \FF_2^{\Fi}=H^*(Y).
\end{equation}
More generally:\footnote{
 \cref{Estimates of 1/s+ time indeces for weight-s CSRs} shows
$\mu_{({1}/{s})^+}(\F_\a)\leq -\dim_\R \F_\a$, 
$\mu_{({2}/{s})^+}(\F_\a)< -\dim_\R \F_\a.$ Then use \cref{Equation lower bound rank of filtration}.} 
 $\FF_{{1}/{s}}^{\Fi} \supset H^{\geq 2}(Y)$ and $\FF_{{2}/{s}}^{\Fi} = H^{*}(Y)$ for any $s\geq 1$.
\end{ex}

\begin{ex}[$A_2$-singularity]\label{Example running example of intro 2}
In the minimal resolution $M\to \C^2/(\Z/3)$ from \cref{Example running example of intro}, the core $\mathfrak{L}=\pi^{-1}(0)=S^2_1 \cup S^2_2$ consists of two copies of $S^2$ intersecting transversely at a point $p$.
There are three natural $\C^*$-actions, obtained by lifting via $\pi$ the following actions on $V(XY-Z^3)\subset \C^3$:\\
{\small
\strut\quad\qquad (a)\quad %
$(t^3 X, t^3Y, t^2Z)$;\,\quad $\F=p_1 \sqcup p \sqcup p_2$ \;(3 points),\quad where $p=\F_{\min}$ and $p_i\in S^2_i$.\;\; [CSR of weight 2]
\\
\strut\quad\qquad (b)\quad %
$(tX,t^2Y,tZ)$;\quad\quad $\F=S_1^2 \sqcup p_2$,\qquad \qquad \qquad \;\quad where $S_1^2=\F_{\min}$.\qquad\qquad\quad\;\, [CSR of weight 1]
\\
\strut\quad\qquad (c)\quad %
$(t^2X,tY,tZ)$;\,\quad\quad $\F=p_1\sqcup S^2_2$,\;\qquad \qquad\quad \qquad where $S_2^2=\F_{\min}$.\qquad\qquad\quad\;\, [CSR of weight 1]
}
\\
The map $\Psi: M \to \C^3$ from \eqref{Equation intro Psi} is respectively:\footnote{using a diagonal $\C^*$-action on $\C^3$ of weight respectively $6,2,$ and $2$.} $(X^2,Y^2,Z^3)$,
$(X^2,Y,Z^2)$,
$(X,Y^2,Z^2)$.
The weight decomposition  \eqref{Intro weight spaces} has $1$-dimensional summands.\footnote{Weights:
(a) $(1,1)$ at $\F_{\min}=p$,
and $(3,-1)$ at $p_1$ and $p_2$; 
(b-c) $(0,1)$ at the sphere $\F_{\min}$, and $(2,-1)$ at $p_i.$} %
Below is the {\AB} presentation \eqref{EqnFrankelIntro} and ours from \eqref{Equation intro HF lambda H splits}, where $\lambda^+$ denotes slopes just above $\lambda$, and $\k_m:=\k[-m]$ (the field $\k$ in degree $m$).
 \begin{center}
 \small
 \begin{tabular}[t]{rlcrl}
    (a)\; $H^*(M):$ & $\k_0\oplus \k_2\oplus \k_2$
    & \strut
    & (b-c)\; $H^*(M):$ & $H^*(S^2)\oplus \k_2$
    \\ 
    $HF^*(\tfrac{1}{3}^+H):$ & $\k_0\oplus \k_0\oplus \k_0$
& \strut
    & 
    $HF^*(\tfrac{1}{2}^+H):$ & $H^*(S^2)\oplus \k_0$
    \\
    $HF^*(\tfrac{2}{3}^+H):$ & $\k_0\oplus \k_{-2}\oplus \k_{-2}$
    & \strut
    & 
    $HF^*(1^+H):$ & $H^*(S^2)[2]\oplus \k_0 =H^*(M)[2] $;
    \\
    $HF^*(1^+H):$ & $\k_{-4}\oplus \k_{-2}\oplus \k_{-2} = H^*(M)[4] $
    & \strut
    & 
    $HF^*(2^+H):$ & $H^*(S^2)[4]\oplus \k_{-2}  = H^*(M)[4] $
    \end{tabular}     
 \end{center}  
The maps \eqref{Equation clambda map intro} are composites of grading-preserving vertical downward maps between the above groups (they need not preserve summands). We deduce the following information:
\begin{center}
 \begin{tabular}[t]{rclll}
    (a)\;&\strut & $\Fil^{\Fi}_{1/3} \supset H^2(M),\;$&$\Fil^{\Fi}_{1} = H^*(M).$
    \\
     (b-c)\;&\strut & 
     $|\Fil^{\Fi}_{1/2}\cap H^2(M)| \geq 1,\;$&$\Fil^{\Fi}_{1}\supset H^2(M),\;$&$\Fil^{\Fi}_{2}= H^*(M)
     .$
    \end{tabular}     
 \end{center}

Leter we show that $\Fil^{\Fi}_{\lambda}$ distinguishes each case; moreover $\Fil^{\Fi}_1\neq H^*(M)$ in (b-c) so our filtration is different from the {\AB} filtrations of $H^*(M)$ which do not separate the unit from $H^2(S^2)$.%
\end{ex}

\subsection{Computing the $\Fi$-filtration for full rotations: the rotation class $Q_{\Fi}$}\label{Subsection Computing the continuation map for full rotations: the QFi invariant}
\strut\\[0mm]
\noindent Computing the map $c_{\lambda}^*$ in Floer theory is usually impossible. An exception is the full rotation $c^*_{N^+}$ in \eqref{Equation cNplus maps intro}. In \cref{Subsection computation of the continuation maps} we identify $c_{N^+}^*$ with $N$-fold quantum product %
by 
$Q_{\varphi}\in QH^{2\mu}(Y),$ so
\begin{equation}\label{Equation FN is Qpower N}
\Fil^{\Fi}_{N} = \ker \left(Q_{\varphi}^{\star N}\, \star \, \cdot:QH^*(Y)\to QH^*(Y)[2N\mu]\right). 
\end{equation}
The Maslov index
$\mu>0$ of the $S^1$-action is easily computable from \eqref{Intro weight spaces} (\cref{Section RS indeces}). 
If $\Fi$ is {\bf semi-free}\footnote{$\Fi$ acts freely outside of the fixed locus.} then \eqref{Equation FN is Qpower N} completely determines the filtration by the stability property \eqref{Intro Stability property} (cf.\;Examples \ref{Remark intro c1 nonzero case}, \ref{Remark intro c1 nonzero case 2}).

Let $q_{\Fi} = \max \{ q\in \N: x^q$ divides the minimal polynomial of $Q_{\Fi}\,\star: QH^*(Y)\to QH^*(Y)\}$. Then:\vspace{-2mm}
$$E_0=\ker Q_{\Fi}^{\star q_{\Fi}} = \FF_{q_{\Fi}}^{\Fi}=\FF_{\lambda\geq q_{\Fi}}^{\Fi},\;\;\;
0 
\to
\FF_{N}^{\Fi}
\stackrel{\mathrm{incl}\,\,}{\longrightarrow}
E_0
\stackrel{Q_{\Fi}^{\star N}}{\longrightarrow}
Q_{\Fi}^{\star N}\star E_0
\to
0
\textrm{ is exact,}
\;
\textrm{ and }\;
Q_{\Fi}^{\star N}\star E_0 \subset \FF_{q_{\Fi}-N}^{\Fi}.
$$
The last inclusion can be an equality (\cref{Remark intro c1 nonzero case}) but it can be strict (\cref{Remark intro c1 nonzero case 2}).
Note that
$\FF_{q_{\Fi}}^{\Fi}=QH^*(Y)\Leftrightarrow SH^*(Y)=0$; e.g.\;it holds  if $c_1(Y)=0$; but for monotone $Y$ it only holds for nilpotent $Q_{\Fi}$, e.g.\;$Q_{\Fi}$ is \emph{not} nilpotent for monotone toric negative line bundles \cite[Cor.1.9]{R16}.

Suppressing the $\star$ symbol, by linear algebra $E_0:=E_0(Q_{\Fi})$ from \cref{Prop vanishing of SH} satisfies strict inclusions $
0\subsetneq 
\
Q_{\Fi}^{q_{\Fi}-1}E_0
\
\subsetneq 
\
Q_{\Fi}^{q_{\Fi}-2}E_0
\
\subsetneq \cdots 
\subsetneq 
\
Q_{\Fi}E_0
 \
 \subsetneq
 \Fil^{\Fi}_{q_{\Fi}}=E_0 \subseteq \Fil_{\infty}^{\Fi}=QH^*(Y),
$
yielding strict inclusions:
$$
0=\Fil^{\Fi}_0\subsetneq 
\
\Fil^{\Fi}_1
\
\subsetneq 
\
\Fil^{\Fi}_2
\
\subsetneq \cdots 
\subsetneq 
\
 \Fil^{\Fi}_{q_{\Fi}-1}
 \
 \subsetneq
 \Fil^{\Fi}_{q_{\Fi}}\subseteq \Fil_{\infty}^{\Fi}=QH^*(Y),$$
 whose ranks over $\k$ satisfy $|\Fil^{\Fi}_{N}| = |E_0| - |Q_{\Fi}^N E_0|$ by the isomorphism $Q_{\Fi}^N: E_0/\Fil^{\Fi}_{N} \to Q_{\Fi}^N E_0.$

\begin{prop}\label{Intro prop about Qfi (non) vanishing}
If $c_1(Y)=0$ and the intersection product $H_{2\mu}(Y)\otimes H_{2\dim_{\C}Y-2\mu}(Y)\to \k$ vanishes, then $Q_\Fi=0$ and $\Fil^{\Fi}_{1}=QH^*(Y).$ 

If $Y$ is K\"{a}hler Calabi-Yau or %
Fano; $\F_{\min}$ only has weights $0$ and $1$; and the Euler class $e_{\min}$ of the normal bundle of $\F_{\min}\subset Y$ is non-zero;\footnote{here PD stands for the {\PL} dual of the locally-finite cycle $[\F_{\min}]\in H^{lf}_{2\dim_{\C}Y-2\mu}(Y)$.} then $\FF_1^{\Fi}\neq QH^*(Y)$ and 
\begin{equation}\label{Equation intro Qfi nonzero} Q_{\varphi}= PD[\F_{\min}] %
+ (T^{>0}\textrm{-terms}) \neq 0\in QH^{2\mu}(Y). %
\end{equation}
\end{prop}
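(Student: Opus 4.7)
The plan is to analyse $Q_\Fi$ through its geometric realisation as a count of pseudo-holomorphic sections of the Hamiltonian fibration $E \to S^2$ associated to the loop $\Fi|_{S^1}$, or equivalently as the image $c^*_{1^+}(1)$. Decompose $Q_\Fi = \sum_A q_A T^{\omega(A)}$, summed over section homotopy classes $A$, where $q_A \in H^{2\mu - 2c_1(A)}(Y;\mathbb{B})$ under the Novikov grading. The decisive preliminary observation, used in both parts, is a compactness fact: by the maximum principle \cref{intro max princ} applied via $\Psi$, together with compactness of $\F$, the moduli spaces of Seidel sections representing each $q_A$ remain inside a fixed compact sublevel set $\{H \le C\} \subset Y$. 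Consequently each $q_A$ is represented by a compactly supported cycle and is Poincar\'e dual to a genuine \emph{ordinary} (not merely locally finite) homology class $[C_A] \in H_{2\dim_\C Y - 2\mu + 2c_1(A)}(Y;\mathbb{B})$.

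For Part 1, $c_1(Y) = 0$ forces every $q_A$ to lie in $H^{2\mu}(Y;\mathbb{B})$ and to be Poincar\'e dual to an ordinary class $[C_A] \in H_{2\dim_\C Y - 2\mu}(Y;\mathbb{B})$. For $x \in H_{2\mu}(Y;\mathbb{B})$, Poincar\'e duality identifies the Kronecker pairing $\langle q_A, x \rangle$ with the ordinary intersection number $[C_A] \cdot x$. The hypothesis thus forces $\langle q_A, x\rangle = 0$ for all $x$, and since the Kronecker pairing is perfect over the field $\mathbb{B}$ under the standing finite-rank assumption on $H^*(Y)$, I conclude $q_A = 0$ for every $A$ and hence $Q_\Fi = 0$. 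The conclusion $\Fil^\Fi_1 = QH^*(Y)$ then follows immediately from \eqref{Equation FN is Qpower N}.

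For Part 2, the strategy is to isolate the leading Novikov term. After normalising $H(\F_{\min}) = 0$, the energy of a constant Seidel section at a fixed component $\F_\a$ equals $H(\F_\a) \ge 0$, with equality iff $\a = \min$. Non-constant sections carry strictly positive symplectic area; in the Fano case, the inequality $c_1 > 0$ on effective spherical classes still forces those contributions into $T^{>0}$. Hence the $T^0$-coefficient of $Q_\Fi$ is purely the Seidel contribution of the constant section at $\F_{\min}$. Under the weight-$(0,1)$ hypothesis, the positive-weight sub-bundle of the normal bundle equals the full $N_{\min}$, and the local linearisation of $\Fi$ is the standard unit rotation on the fibres of $N_{\min}$; a localised Seidel-contribution computation (compare \cite{R14, McLR18}) then identifies this $T^0$ term with $e(N_{\min}) = e_{\min}$, which by the Thom isomorphism represents $\mathrm{PD}[\F_{\min}] \in H^{2\mu}(Y;\mathbb{B})$. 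Since $e_{\min} \neq 0$, we obtain $Q_\Fi = \mathrm{PD}[\F_{\min}] + (T^{>0}\text{-terms}) \neq 0$. Finally, $Q_\Fi \star 1 = Q_\Fi \neq 0$, so $1 \notin \Fil^\Fi_1$ and $\Fil^\Fi_1 \neq QH^*(Y)$.

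The hardest technical point is already handled by the framework of \cref{Subsection Floer theory is possible for symplectic C-manifolds}: confining Seidel section moduli spaces to a compact sublevel set of $H$ is precisely what makes $q_A$ a compactly supported cohomology class (the substance of the intersection-pairing argument in Part 1) and what lets the localised computation at $\F_{\min}$ proceed unhindered by contributions from infinity. A secondary and more concrete task is verifying that no additional strata or bubbling contribute to the $T^0$-term in Part 2: strict positivity $H(\F_\a) > 0$ for $\a \neq \min$ rules out extra constant-section contributions, and the weight-$(0,1)$ hypothesis reduces the $\F_{\min}$-contribution to a textbook Seidel computation for a rank-$\mu$ complex vector bundle under rotation.
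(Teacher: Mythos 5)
Your proof of Part 1 mirrors the paper's argument in \cref{Prop QFi vanishes if intersection form is trivial}: both use the maximum principle at infinity to confine the Seidel-section moduli spaces to a compact region, so that $Q_\Fi$ becomes a compactly-supported class, and both then combine the perfection of the Poincar\'e--Lefschetz intersection pairing with the hypothesis that the ordinary one vanishes. Part 2 likewise follows the paper's structure (\cref{Prop QFi computation first order}, \cref{Lemma PDFmin nonzero}, \cref{Remark about Seidel element and constant sections}), isolating the $T^0$-coefficient by energy stratification.

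However, Part 2 has two imprecisions worth correcting. First, the $T^0$-coefficient of $Q_\Fi$ is the class $\mathrm{PD}[\F_{\min}]\in H^{2\mu}(Y)$, whereas $e_{\min}\in H^{2\mu}(\F_{\min})$ lives in a different ring; saying "the Thom isomorphism represents" one by the other misstates the relationship. What you actually need is that restriction to $\F_{\min}$ sends $\mathrm{PD}[\F_{\min}]$ to $e_{\min}$ (the standard Thom-class restriction fact), from which $e_{\min}\neq 0$ gives $\mathrm{PD}[\F_{\min}]\neq 0$; this is the easy direction of \cref{Lemma PDFmin nonzero}. Second, and more substantively, the identification of the constant-section contribution with $\mathrm{PD}[\F_{\min}]$ (rather than with the pushforward of the Euler class of an obstruction bundle over $\F_{\min}$) requires regularity of those sections, which you wave away as "a textbook Seidel computation." The weight-$(0,1)$ hypothesis is precisely what makes this work: by \cref{Lemma when min weights are plus one}, it forces $\mu = \mathrm{codim}_{\C}\F_{\min}$, so the virtual dimension of constant sections, $\dim_\C Y - \mu$, equals the actual dimension $\dim_\C\F_{\min}$, which kills the obstruction bundle; the paper spells this out in \cref{Remark about Seidel element and constant sections}.
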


\begin{ex}[CSRs]\label{Example CSR filtration intro} 
For a weight-$s$ CSR,
$Q_{\Fi}=0$ for $s\geq 2$, for degree reasons, so $\Fil^{\Fi}_{1}=H^*(Y)$, as expected from
\cref{Example CSR filtration Intro 1}.
For weight-$1$ CSRs, 
$Q_{\Fi}\neq 0 \in H^{\dim_{\C}Y}(Y)$ by \cref{Intro prop about Qfi (non) vanishing}, so
$$ \Fil^{\Fi}_{1} = H^{\geq 2}(Y) \quad\textrm{ and }\quad \Fil^{\Fi}_{2}=H^*(Y).$$
So the filtration separates the unit $1\in H^0(Y)$. In \cref{Example CSR unit death intro} we show $\Fil^{\Fi}_{\lambda}\neq H^{\geq 2}(Y)$ for all $\lambda<1$.
\end{ex}

\begin{ex}[Higgs moduli]\label{Example Intro Higgs moduli II}
Unlike CSRs, Higgs moduli (\cref{Example Intro Higgs moduli I}) can have odd-cohomology 
and the intersection form is degenerate. 
Let $\MM:=\MM_G(d,g)$ be ordinary\footnote{Meaning that the Higgs field does not have poles.} $G$-Higgs bundles 
of degree $d$ coprime to $n$ over a genus $g$ Riemann surface for $G\in \{GL_n,SL_n\}$.
Then the intersection form vanishes,\footnote{due to \cite{hausel_1998} for $SL_n$ and 
 \cite{heinloth2016intersection} for $GL_n$.} so $Q_\Fi=0$, $\FF_1=H^*(\MM)$. The minimal $\omega_J$-Lagrangian $\Fmin$ is the stable bundles moduli. 
\end{ex}

\begin{ex}[First example with $c_1(Y)\neq 0$]\label{Remark intro c1 nonzero case}
If $c_1(Y)\neq 0$, $SH^*(Y,\Fi)$ is not $\Z$-graded.
If $Y$ is non-compact Fano (or monotone, see \cref{Rmk technical symplectic assumptions on Y}) one gets a $\Z$-grading by grading the formal variable $T$ of $\k$. 
The total space $Y$ of the negative line bundle $\pi:\mathcal{O}(-k) \to \C\P^m$ for $1\leq k \leq m$ is monotone, $c_1(Y)=(1+m-k)x\neq 0$ where $x:=\pi^*[\omega_{\C\P^m}]$ (also, $\omega$ on $Y$ satisfies $[\omega]=x\in H^2(Y;\R)$), and $|T|=2(1+m-k)$. 
For simplicity, here we use
$\k:=\Q(\!(T)\!)=\{\text{Laurent series in }T\}$. Abbreviate
$y:=x^{1+m-k}-(-k)^kT$.
By \cite[Cor.4.14]{R16}, for the standard $\C^*$-action on fibres,
$$QH^*(Y)=\k[x]/(x^k y),\quad Q_{\varphi}=-kx,\quad E_0(Q_{\varphi})=\langle y \rangle \subset QH^*(Y),\quad \textrm{ and }\quad SH^*(Y)\cong \k[x]/(y).$$
As $\Fi$ is semi-free, the filtration is determined by \eqref{Equation FN is Qpower N}; we get\footnote{For negative line bundles $Y=\mathrm{Tot}(\pi:E\to X)$, 
refining symplectic cohomology via Archimedean norms as in Venkatesh \cite[Thm.1]{venkatesh2021quantitative}, one might get a filtration of other sums of generalised eigensummands of $\pi^*c_1(E)\in QH^*(Y)$.} $q_{\Fi}=k$ and 
$Q_{\Fi}^{\star N}\star E_0 = \FF_{k-N}^{\Fi}$:
\begin{equation}
\label{filtrVecBund}
0\subset 
\langle x^{k-1}y \rangle 
\subset 
\langle x^{k-2}y \rangle 
\subset
\cdots
\subset
\langle x y \rangle
\subset 
E_0=\langle y \rangle 
\subset QH^*(Y).
\end{equation}
These are not cup-ideals: %
in $H^*(Y)=\k[x]/(x^{1+m})$, ordinary cup-product gives 
$x\cdot (x^{k-1}y)= -(-k)^kTx^k$, which is not in the ideal $\FF_{1}^{\Fi}=\langle x^{k-1}y \rangle=\k x^{k-1}y$ of $QH^*(Y)$. 
\end{ex}

Let $Y$ be any negative complex vector bundle, $\mathrm{Tot}(\pi:E\fun B)$, over a closed symplectic manifold. 
Assume that $Y$ is monotone or Calabi--Yau. Let $\Fi$ be the fiberwise $\C^*$-action.
Let $c:=\pi^*c_{\mathrm{top}}(E)$ (e.g.\;$\pi^*c_1(E)$ for line bundles). Then $Q_{\Fi}=c$ up to rescaling by $\k\setminus\{0\}$,\footnote{By \cite[Thm.72]{R14}.} and by \cref{Subsection The filtration for negative vector bundles}: %

\begin{prop}[Negative vector bundles]\label{Proposition filtration for negative line bundles}
The filtration jumps precisely at $\lambda=N\in \{1,2,\ldots,q_{\Fi}\}$, 
$$\FF_{N}^{\Fi}=\ker c^{\star N} \qquad \textrm{ and } \qquad
\FF_{\mathbb{B},N}^{\Fi} \subset \ker c^N,
$$
If $\mathrm{Im}(c^{\star j}\star )\subset \mathrm{Im}(c^j\,\cup)$ in $QH^*(Y)$, then
$\FF_{\mathbb{B},j}^{\Fi} =\ker (c^j\,\cup)$. If that holds for all $1\leq j\leq q_{\Fi}$, then $\FF_{\mathbb{B},N}^{\Fi} =\ker c^N$ and the ranks of $\FF_{\lambda}^\Fi$ (but not the ideals) are determined topologically by \cref{Cor intro about B filtration}.
\end{prop}

\begin{ex}\label{Remark intro c1 nonzero case 2}
The final hypothesis above occurs for $\mathcal{O}(-1,-1)\to \C P^1 \times \C P^1$. Let $x_j:=\pi_j^*\omega_{\C P^1}$, via the two projections.
Let $X:=x_1+x_2$, so $c=-X$. 
We use
$\k:=\Q(\!(T)\!)$. %
By \cite[Cor.4.16]{R16},\footnote{In that paper, there are two typos: $E=\mathcal{O}(-1,-1)$ not $\mathcal{O}(-1,1)$, and $SH^*(E)\cong \Lambda$ as $x_1,x_2$ both get identified with $2T$ (there $t=T$, and $\Lambda=\k$).}
$$
QH^*(Y) \cong \k[x_1,x_2]/(x_1^2+TX,x_2^2+TX),\quad |T|=2\quad\textrm{ and }\quad SH^*(Y) \cong QH^*(Y)_{-X} \cong \k.
$$
The minimal polynomial for $X\star$ 
is $X^2(X+4T)$, so $q_{\Fi}=2$ and 
$E_0 = \ker c^{\star 2} = \langle X+4T \rangle \subset QH^*(Y)$ is of dimension $3$ (codimension $1$). 
The filtration, showing the precise slopes where it jumps, is
$$0\subset \FF_1^{\Fi} = \la X(X+4T), x_1-x_2 \ra 
\subset 
\FF_2^{\Fi} = E_0= \la X+4T \ra
\subset \FF_{\infty}^{\Fi} =  QH^*(Y).$$
Note that\footnote{The last equality is due to 
$X(X+4T)x_1=X(X+4T)x_2=0$ (by direct computation in $QH^*(Y)$).} 
$Q_{\Fi}\star E_0 = \la X(X+4T) \ra =\k X(X+4T) \subsetneq \FF_1^{\Fi}$ is a strict inclusion (compare \cref{Remark intro c1 nonzero case}).
\end{ex}

\begin{ex}\label{Example CP1 times C}
    The trivial complex line bundle $Y=\C P^1 \times \C\to \C P^1$ is a non-compact toric manifold, but there are closed holomorphic curves at infinity: $\C P^1 \times \{\textrm{constant}\}$. Thus, $Y$ cannot be made convex at infinity. It is however a Fano symplectic $\C^*$-manifold for the natural symplectic form, using the fiberwise $\C^*$-action (so acting only on the $\C$ factor, by weight one),  
    and by \cite{RZ4} we get:
    $$QH^*(Y)\cong \k[x]/(x^2-T^2)\cong QH^*(\C P^1),\;\;\; Q_{\Fi}=0,\;\;\; SH^*(Y)=QH^*(Y)_{Q_{\Fi}}=0.$$
    This generalises to $Y=\C P^m \times \C^n$, 
    giving a family of examples with $SH^*(Y)=0$ with\footnote{Compare with \cref{Intro vanishing of SH}. We clarify that $c_1(Y)$ refers as usual to $c_1(TY)$, not to $c_1($line bundle$)=0$.} $c_1(Y) \neq 0$.
\end{ex}

\subsection{Lowest order approximations of continuation maps}\label{Subsection Lowest order approximations of continuation maps}
We now try to determine the ideals $\FF^{\lambda}_{\lambda}$ rather than just estimate ranks. We try to describe the lowest order $T$-terms of $c_{\lambda}^*$.\footnote{Quantum/Floer solutions are counted with a factor $T^{\textrm{ energy}\geq 0}$. For continuation maps, under suitable conditions the factor is also $T^{\geq 0}$ but there is a subtle issue that constant solutions may not involve the lowest power (\cref{Subsection about what powers of T are used in continuation solutions}).}
We preface that continuation maps factorise well: for generic slopes $\lambda<\gamma$ there is a continuation map $\psi_{\gamma,\lambda}$ with
\begin{align}
c_{\gamma}^* =\psi_{\gamma,\lambda}\circ c_{\lambda}^*&:QH^*(Y)\to HF^*(\lambda H) \to HF^*(\gamma H),\label{Equation 1 cont intro} %
\\
\label{Equation 2 cont intro}
\textrm{If }H^*(Y)\textrm{ lies in even degrees, } \psi_{\gamma,\lambda}&:\oplus_\a H^*(\F_\a)[-\mu_{\lambda}(\F_\a)]
\to \oplus_\a H^*(\F_\a)[-\mu_{\gamma}(\F_\a)].
\end{align}

\begin{prop}\!\!\!\footnote{See 
\cref{Prop filtration precise information}
for a more general statement.}\label{Prop intro about continuation map}
Suppose $H^*(Y)$ lies in even degrees, and 
$(\lambda,\gamma)\cap \tfrac{1}{\m}\Z=\emptyset$
for weights $\m$ of $\F_{\min}$. 
Then $\mu_{\lambda}(\F_{\min})=\mu_{\gamma}(\F_{\min})$ and $\psi_{\gamma,\lambda}=\mathrm{id}_{H^*(\F_{\min})[-\mu_{\lambda}(\F_{\min})]}+(T^{>0}\textrm{-terms})$ (so $\psi_{\gamma,\lambda}|_{H^*(\F_{\min})}$ is injective).
\end{prop}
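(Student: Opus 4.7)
The plan splits cleanly into two parts, matching the two assertions. First, I would verify the grading equality $\mu_\lambda(\F_{\min})=\mu_\gamma(\F_{\min})$ by analysing the Robbin--Salamon-type index function defined in \cref{Section RS indeces}. By construction this index is a sum of contributions over the weight summands $H_\m$ in the decomposition \eqref{Intro weight spaces} at a base point $y_0\in \F_{\min}$; each weight-$\m$ contribution is a piecewise constant function of the slope with jumps only at $\lambda\in \tfrac{1}{\m}\Z$. The hypothesis excludes every such jump value from the open interval $(\lambda,\gamma)$ for every weight of $\F_{\min}$, and hence $\mu_\lambda(\F_{\min})=\mu_\gamma(\F_{\min})$. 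In particular, the source and target of $\psi_{\gamma,\lambda}|_{H^*(\F_{\min})[-\mu_\lambda(\F_{\min})]}$ live in the same grading.

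Next, I would analyse the continuation map itself. Choose an $s$-monotone homotopy of Hamiltonians $H_s$ interpolating $\lambda H$ and $\gamma H$, extended at infinity so that the maximum principle of \cref{intro max princ} applies. At chain level, $\psi_{\gamma,\lambda}$ counts parameterised Floer cylinders, each weighted by $T^{E}$ with $E\geq 0$ the topological energy. A zero-energy continuation cylinder is necessarily $(s,t)$-constant and lands in $\F=\mathrm{Crit}(H)$; at any point of $\F_{\min}$ the constant map is a genuine solution of the $s$-family since $X_{H_s}$ vanishes along $\F$. All other continuation cylinders with one end on $\F_{\min}$ carry strictly positive energy, and so appear only in higher $T$-order.

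To extract the identity statement, I would invoke the Morse--Bott decomposition that produces \eqref{Equation intro HF lambda H splits}: the summand $H^*(\F_\alpha)[-\mu_\lambda(\F_\alpha)]$ arises from an auxiliary Morse--Bott complex on $\F_\alpha$, whose generators are the Morse critical points of an auxiliary function on $\F_\alpha$. The moduli of zero-energy continuation trajectories localises to the diagonal $\F_{\min}\hookrightarrow \F_{\min}\times \F_{\min}$ and, after accounting for the auxiliary Morse flow in the cascade picture, reduces to the identity on the Morse complex of $\F_{\min}$. Combined with the index equality, this gives $\psi_{\gamma,\lambda}|_{H^*(\F_{\min})[-\mu_\lambda(\F_{\min})]}=\mathrm{id}+(T^{>0}\textrm{-terms})$. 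Injectivity of the restriction follows at once by the valuation argument: for $0\neq x$ the leading $T$-coefficient satisfies $\val(\psi_{\gamma,\lambda}(x))=\val(x)\neq 0$ in $H^*(\F_{\min};\mathbb{B})$, so $\psi_{\gamma,\lambda}(x)\neq 0$.

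The main obstacle is identifying the zero-energy continuation moduli at $\F_{\min}$ with the identity of the Morse--Bott complex, and ruling out any other $T^0$-contributions. Concretely one must check: (i) transversality and compactness for the parameterised moduli in the non-exact $\C^*$-manifold setting, which requires the maximum principle and energy bounds from \cref{intro max princ}; (ii) that every non-constant continuation cylinder has strictly positive topological energy, which follows from the $s$-monotonicity of $H_s$ together with the standard identity relating topological energy to the $s$-derivative of the action; and (iii) that constant solutions at $\F_\alpha\neq \F_{\min}$ do not leak off-diagonal $T^0$-terms into the $\F_{\min}$-summand, which holds because such constants live entirely in their own $\F_\alpha$-sector of the Morse--Bott complex. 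These steps are routine in Morse--Bott Floer theory but must be carried out in the present $\C^*$-framework, for which the foundational setup established earlier in the paper and in \PartII is designed.
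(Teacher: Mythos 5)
Your plan matches the high-level structure of the paper's proof (carried out in \cref{Prop filtration precise information}): establish the index equality $\mu_\lambda(\F_{\min})=\mu_\gamma(\F_{\min})$, use a monotone homotopy $\lambda_s H$ so that the $T$-weight of continuation cascades is $T^{E_0}$ with $E_0\geq 0$, argue that constant cascades at $\F_{\min}$ attain the minimal value, and extract the identity at lowest order. There are two places where the proposal as written does not close.

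First, the energy bookkeeping is slightly misattributed. You assert that non-constant continuation cylinders have $E_0>0$ ``from $s$-monotonicity''; but $s$-monotonicity alone gives $E_0\geq E\geq 0$, and the $s$-independent constant cylinder $v_x\equiv x$ at any $\F_\alpha$ already has $E=0$ while $E_0(v_x)=(\gamma-\lambda)H(\F_\alpha)$. The paper's estimate \eqref{Equation estimate energy continuation} relies on the normalization $H\geq H_{\min}=0$, and only the constants at $\F_{\min}$ achieve $E_0=0$; this is exactly why the result is stated for $\F_{\min}$ rather than arbitrary $\F_\alpha$ (see the remark immediately after \cref{Prop filtration precise information}). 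So your step (iii), which brushes aside constants at $\F_\alpha\neq\F_{\min}$ on the grounds of ``sector separation,'' is on the right track, but the real mechanism is the strict $E_0$ gap coming from $H(\F_\alpha)>0$.

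Second, and more substantially, the step you label ``routine in Morse--Bott Floer theory'' is where the paper's proof does the genuine work, and the hypothesis $(\lambda,\gamma)\cap\tfrac{1}{\m}\Z=\emptyset$ enters there too, not merely in the index computation. To show the $T^0$-part of the continuation restricted to $H^*(\F_{\min})$ is the identity, one must show the constant continuation cylinder is regular, equivalently that the low-energy local continuation near $\F_{\min}$ is an isomorphism. The paper does this by using the local model $T_{y_0}Y\cong\C^{\dim_\C\F_{\min}}\oplus\bigoplus_i\C_{w_i}$, along which the Floer equation decouples, reducing to the continuation map $HF^*(\C;\pi w_i\lambda|z|^2)\to HF^*(\C;\pi w_i\gamma|z|^2)$ in each weight factor. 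That map is an isomorphism precisely when $(w_i\lambda,w_i\gamma)\cap\Z=\emptyset$, i.e.\ when no $1$-orbits are created in between. If a crossing occurs, the constant cylinder ceases to be regular (the local continuation map can vanish), and the whole ``identity plus $T^{>0}$'' conclusion fails. Your proposal treats the index equality as the sole use of the hypothesis, and thereby misses the point that the same hypothesis is exactly what makes the constant cylinder a transverse, rigid, and uncancelled $T^0$-contribution. Without this local model argument, the proof has a genuine gap.
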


\begin{cor}\label{Cor intro about Fmin surviving}
Let $\lambda_{\min}:=1/(\textrm{maximal absolute weight of }\F_{\min})$.
If $H^*(Y)$ lies in even degrees, and $\lambda<\lambda_{\min}$, then $c_{\lambda}^*:H^*(\F_{\min})\to HF^*(H_{\lambda})$ is injective, so $\FF_{\lambda}^{\Fi}\cap H^*(\F_{\min})=\{0\}$, and in \cref{Cor intro about B filtration}:
$$
\FFFF^{\Fi} \subset \oplus_{\a\neq \min} H^*(\F_{\a};\mathbb{B})[-\mu_{\lambda}(\F_{\a})]. 
$$
\end{cor}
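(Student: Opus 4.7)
The plan is to reduce the statement to an application of \cref{Prop intro about continuation map} for a single pair of generic slopes straddling $0$ and $\lambda$, together with the fact that the small-Hamiltonian Floer cohomology PSS-identifies with $QH^*(Y)$.

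First I observe that $\lambda_{\min}$ is engineered so that $(0,\lambda_{\min})\cap \tfrac{1}{\m}\Z=\emptyset$ for every nonzero weight $\m$ of $\F_{\min}$: at the minimum of $H$ the normal-bundle weights are positive, so the smallest strictly positive element of any $\tfrac{1}{\m}\Z$ is $1/|\m|\geq 1/\max|\m|=\lambda_{\min}$. Consequently, for any $0<\lambda<\lambda_{\min}$, I can pick a generic $\eps\in(0,\lambda)$ and a generic $\lambda'\in[\lambda,\lambda_{\min})$ so that both intervals $(\eps,\lambda)$ and $(\eps,\lambda')$ avoid $\tfrac{1}{\m}\Z$ for every nonzero weight $\m$ of $\F_{\min}$.

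Second, I apply \cref{Prop intro about continuation map} to these intervals. It yields $\mu_\eps(\F_{\min})=\mu_\lambda(\F_{\min})=\mu_{\min}$, together with
\[
\psi_{\lambda,\eps}
=\mathrm{id}_{H^*(\F_{\min})[-\mu_{\min}]}+(T^{>0}\textrm{-terms}),
\]
and likewise for $\psi_{\lambda',\eps}$; both restrictions to $H^*(\F_{\min})$ are therefore injective (indeed invertible over the Novikov field via a geometric series). For $\eps\to 0^+$, $H_\eps$ is essentially a small Morse-Bott perturbation of $0$, so $c_\eps^*:QH^*(Y)\to HF^*(H_\eps)$ is the PSS isomorphism, compatible with the Frankel decomposition \eqref{EqnFrankelIntro}. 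Functoriality $c_{\lambda'}^*=\psi_{\lambda',\eps}\circ c_\eps^*$ then shows that $c_{\lambda'}^*|_{H^*(\F_{\min})}$ is injective. Since $\FF_\lambda^{\Fi}\subseteq \ker c_{\lambda'}^*$ by \eqref{DefinitionOfFiltration}, we conclude $\FF_\lambda^{\Fi}\cap H^*(\F_{\min})=\{0\}$.

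Third, for the displayed inclusion on $\FFFF^{\Fi}$: given $x\in \FF_\lambda^{\Fi}$ with $T$-valuation $a$, the coefficient $\val(x)\in H^*(Y;\mathbb{B})$ decomposes via Frankel as $v_{\min}+v'$ with $v'\in \oplus_{\a\neq\min}H^*(\F_\a;\mathbb{B})$. From $c_{\lambda'}^*(x)=0$, extracting the $T^a$-coefficient and projecting onto the $\F_{\min}$-summand of $HF^*(H_{\lambda'})$ recovers $v_{\min}$, because the "$\mathrm{id}+T^{>0}$" shape of \cref{Prop intro about continuation map} prevents any $T^0$ contribution into the $\F_{\min}$-summand from the other Frankel summands. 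Hence $v_{\min}=0$ and $\val(x)\in \oplus_{\a\neq\min}H^*(\F_\a;\mathbb{B})[-\mu_\a]$, giving the asserted inclusion (up to the grading-convention choice in the statement).

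The main obstacle is the last step: one must interpret the identity in \cref{Prop intro about continuation map} as ruling out any $T^0$ cross-contribution $(c_{\lambda'}^*)_{\F_{\min}\leftarrow \F_\a}^{(0)}$ from $\a\neq\min$, not merely controlling the diagonal $\F_{\min}\to\F_{\min}$ block. Equivalently, one must verify that the leading $T^0$ piece of the continuation map lands entirely in the diagonal block indexed by $\F_{\min}$; this is precisely what the unrestricted equality "$\psi_{\gamma,\lambda}=\mathrm{id}_{H^*(\F_{\min})[-\mu_\lambda(\F_{\min})]}+(T^{>0}\textrm{-terms})$" in \cref{Prop intro about continuation map} is designed to encode, so the work has already been localised there.
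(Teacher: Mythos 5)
Your proposal is essentially correct and follows the same route as the paper, which derives the corollary from \cref{Prop filtration precise information} (the body-text version of \cref{Prop intro about continuation map}). You correctly flag the only subtle point: the corollary needs control not just of the diagonal block $\psi_{\gamma,\lambda}|_{H^*(\F_{\min})}$ but also a vanishing of $T^0$-cross-terms $\F_\a \to \F_{\min}$ for $\a \neq \min$, since $\mathrm{ini}(x)$ for $x\in \FF_\lambda^{\Fi}$ may have both a nonzero $\F_{\min}$-component and nonzero components elsewhere, and one must rule out a $T^0$-level cancellation.

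Where you hedge (``the work has already been localised there''), the paper is actually more explicit about the mechanism: the \emph{stated} form of \cref{Prop filtration precise information} only asserts the shape of the restriction $H^*(\F_\a)[-\mu_\lambda(\F_\a)]\to \oplus_\beta H^*(\F_\beta)[-\mu_\gamma(\F_\beta)]$, but its proof establishes the needed unrestricted claim via the energy estimate in \eqref{Equation estimate energy continuation}. Since $H$ is normalised with $\min H=0$, \emph{every} continuation cascade has topological energy $E_0(v)\geq 0$ with equality only for constant solutions at $\F_{\min}$; in particular constant solutions at $\F_\beta$ with $\beta\neq\min$ have $E_0=(\gamma-\lambda)H(\F_\beta)>0$ and all non-constant solutions have $E_0\geq E(v)>0$. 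So the entire $T^0$-part of the continuation map is the $\F_{\min}\to\F_{\min}$ identity; all other blocks (including cross-terms $\F_\a\to\F_{\min}$) are $T^{>0}$. That is precisely what the proof of \cref{Prop filtration precise information} records in the sentence ``only the constant cascades with asymptotes in $\F_\a$ are counted with power $T^{\delta_\a}$, any other cascade is counted with a power $T^{>\delta_\a}$,'' and it is what justifies both the displayed inclusion on $\FFFF^{\Fi}$ and the ``in particular'' claim $\mathrm{ini}(\ker\psi_{\gamma,\lambda})\subset\oplus_{\beta\neq\a}H^*(\F_\beta;\mathbb{B})[-\mu_\lambda(\F_\beta)]$, which the paper then quotes. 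Your detour through an auxiliary small slope $\eps$ and a second interval $(\eps,\lambda)$ is superfluous --- one can apply \cref{Prop filtration precise information} directly with endpoints $(\eps,\lambda')$, where $c^*_\eps$ is already the PSS isomorphism by \cref{SmallHam} --- and the parenthetical ``indeed invertible'' should be understood as invertibility of the diagonal block only, since the codomain is larger; but neither affects correctness.
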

\begin{ex}(CSRs)\label{Example CSR unit death intro}
For weight-1 CSRs,
abbreviating $d=\dim_{\C}Y$,
we deduce\footnote{%
For weight-1 CSRs, the weights at $\F_{\min}$ are $0$ or $1$. 
In the $d$-case, \cref{Lemma Fma is jump in grading}\eqref{Shifts just before the integer time} implies $\mu_{1^-}(\F_\a) = 0$ (whereas the original shift was $\mu_\a\geq 2$ for $\a\neq \min$).
For $d\neq \dim_{\C}Y$ the statement fails in examples \cite{RZ2}. 
}
\begin{equation}\label{Intro CSR weight 1 time 1- filtration upper bound estimate}
\mathscr{F}_{\mathbb{B},1^-}^{\Fi} \subset \oplus_{\a\neq \min} H^*(\F_{\a};\mathbb{B})[-\mu_{1^-}(\F_{\a})],
\quad \textrm{in particular} \quad
\mathscr{F}_{\mathbb{B},1^-}^{\Fi}\cap H^d(Y)=\oplus_{\a\neq \min} H^{d-\mu_\a }(\F_\a),
\end{equation}
so $H^d(\F_{\min})\subset H^d(Y)$ dies last.
For weight-2 CSRs\footnote{We combine \eqref{Intro CSR weight 1 time 1- filtration upper bound estimate} and
\cref{Example CSR filtration Intro 1}. All weights of $\F_{\min}$ are equal to 1, due to a certain $\om_\C$-duality of weight spaces $H_k\leftrightarrow H_{2-k}$, and we use $H^*(\F_{\min})=H^0(Y)$.} with $\F_{\min}=\{\mathrm{point}\}$, $\Fil^{\Fi}_{{1}/{2}}=\Fil^{\Fi}_{1^-}= H^{\geq 2}(Y)$.
\end{ex}

\begin{ex}[$A_2$-singularity]\label{Example running example of intro 3}
Continuing \cref{Example running example of intro 2}, \cref{Cor intro about Fmin surviving} yields sharper results:\footnote{we also use \cref{Example CSR filtration intro} to get $\Fil^{\Fi}_{1}$ for (b-c), and we use the stability result in \cref{Intro Stability property}.}
\begin{center}
 \begin{tabular}[t]{rcllll}
    (a)\;&\strut & 
    $\Fil^{\Fi}_{1/3^-}=0,\;$&$
    \Fil^{\Fi}_{1/3} = H^2(M) = \Fil^{\Fi}_{1^-},\;$&$\Fil^{\Fi}_{1} = H^*(M).$
    \\
     (b-c)\;&\strut & 
     $\Fil^{\Fi}_{1/2^-}=0,\;$&$\Fil^{\Fi}_{1/2}=\k p_i =  \Fil^{\Fi}_{1^-},
     \;$&$\Fil^{\Fi}_{1}= H^{\geq 2}(M),
     \;$&$\Fil^{\Fi}_{2}= H^*(M)
     ,$
    \end{tabular}     
 \end{center} 
 where %
 $i=1,2$ for (c),(b),  respectively, and $p_i$
 denotes the summand 
 $H^*(p_i)[-2]\cong \k_2$ in \eqref{EqnFrankelIntro}.
 
Moreover,\footnote{ $H^*(M)=H^*(S^2_1)\oplus H^*(p_2)[-2]$ in (b). By {\AB }, $H^*(p_2)[-2]$ arises from the unstable cell of $p_2$ 
which yields  $H^2(S^2_2)$.
So $H^*(M)=H^*(S^2_1)\oplus H^2(S^2_2)$, using pull-backs to identify classes. For (c), $H^*(M)=H^2(S^2_1)\oplus H^*(S^2_2)$.
}
(b-c) are distinguished:
$\Fil^{\Fi_{(b)}}_{1/2}=H^2(S^2_2)$ but
$\Fil^{\Fi_{(c)}}_{1/2}=H^2(S^2_1)$.
\end{ex}
Call $\F_\a$ {\bf $p$-stable} if $\tfrac{1}{p}\Z_{\neq 0}\cap ($weights($\F_\a))=\emptyset$.
Call $\F_\a$ {\bf $m$-minimal} if $-m\N_{\neq 0}\cap ($weights($\F_\a))=\emptyset$ (then $\F_\a=\min H|_{\Ymc}$ for the torsion $m$-submanifold $\Ymc$ containing $\F_\a$). We seek generalisations of \cref{Prop intro about continuation map}
when passing a `non-critical period'' $p$, or a ``critical period'' $\tfrac{k}{m}$ where $m\in \mathrm{weights}(\F_\a)$:
\begin{prop}\!\!%
\label{Prop injectivity of c map on stable Fa} For $p$-stable $\F_\a$ the two indices $\mu_{p^{\pm}}(\F_\a)$ agree. The continuation $\psi_{p^-,p^+}$ in \eqref{Equation 2 cont intro} is injective on $\oplus_{(p\textrm{-stable }\F_\a)} H^*(\F_\a)[-\mu_{p^-}(\F_\a)]$.
In particular, for the smallest period $p>0$ for which $S^1$-orbits exist, the total rank $\|\FF^{\Fi}_{p}\|\leq \|H^*(Y)\|-\sum_{(p\textrm{-stable }\F_\a )} \|H^*(\F_\a)\|.$ (See also \cref{Prop filtration precise information 2}.)
\end{prop}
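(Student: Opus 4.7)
The plan is to extend the leading-order analysis of \cref{Prop intro about continuation map} from $\F_{\min}$ to an arbitrary $p$-stable fixed component $\F_\a$. The three claims—equality of indices, injectivity of $\psi_{p^-,p^+}$ on the $p$-stable summands, and the rank bound at the smallest period—will be proved in sequence.

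First I would prove the equality $\mu_{p^-}(\F_\a) = \mu_{p^+}(\F_\a)$ directly from the explicit formula for the slope-dependent index given in \cref{Section RS indeces}. That formula expresses $\mu_\lambda(\F_\a)$ as a sum over the weight decomposition \eqref{Intro weight spaces}, each summand of which jumps only when $\lambda$ crosses a threshold $k/m$ with $k \in \Z$ and $m$ a nonzero weight of $\F_\a$. The hypothesis $\tfrac{1}{p}\Z_{\neq 0} \cap \mathrm{weights}(\F_\a) = \emptyset$ states exactly that $p$ avoids all such thresholds, so $\mu_\lambda(\F_\a)$ is locally constant near $p$.

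Second, I would identify the lowest-$T$-order term of $\psi_{p^-,p^+}$ in the splitting \eqref{Equation 2 cont intro}. The geometric input is that for $p$-stable $\F_\a$, no new Morse--Bott family $B_{p,\beta}$ of non-constant $1$-orbits from \eqref{Equation Bkm slices intro} meets $\F_\a$ at slope $p$: such a family would require a weight $m \in \mathrm{weights}(\F_\a)$ and $k \in \Z$ with $k/m = p$, contradicting $p$-stability. Consequently, the only $1$-orbits near $\F_\a$ at slopes in a neighbourhood of $p$ are the constant orbits at points of $\F_\a$. A Piunikhin--Salamon--Schwarz-style cascade computation with an auxiliary Morse function on $\F_\a$ then identifies the $(\F_\a,\F_\a)$-block of $\psi_{p^-,p^+}$ with $\mathrm{id}_{H^*(\F_\a)[-\mu_{p^-}(\F_\a)]}$ modulo $T^{>0}$. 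Off-diagonal blocks, and higher-energy corrections on the diagonal, carry strictly positive Novikov weight because the relevant Floer trajectories have positive topological energy.

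Injectivity on $\oplus_{p\text{-stable}}H^*(\F_\a)[-\mu_{p^-}(\F_\a)]$ then follows by a standard Novikov valuation argument: if $x = \sum_\a x_\a$ lies in that subspace and $\psi_{p^-,p^+}(x) = 0$, projecting onto each $p$-stable $\F_\a$-summand of the target gives $x_\a + (T^{>0}\text{-terms}) = 0$, and minimising the $T$-valuation on the left forces $x_\a = 0$. For the rank estimate at the smallest period $p > 0$ admitting $S^1$-orbits, the continuation $c^*_{p^-} : QH^*(Y) \to HF^*(H_{p^-})$ is an isomorphism (no critical slope is crossed below $p$), so $\FF^{\Fi}_p = \ker c^*_p = \ker(\psi_{p^-,p^+} \circ c^*_{p^-}) \cong \ker \psi_{p^-,p^+}$, which by the injectivity just proved is contained in the sum of the non-$p$-stable summands, yielding $\|\FF^{\Fi}_p\| \leq \|H^*(Y)\| - \sum_{p\text{-stable }\F_\a} \|H^*(\F_\a)\|$.

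The main obstacle will be step two: the PSS-type identification of the constant-cascade contribution with the identity on $H^*(\F_\a)$ in a fully Morse--Bott, non-compact, $\C^*$-equivariant setting. One must show that no other $T^0$-contributions arise in the $(\F_\a,\F_\a)$ block, control compactness and orientations of the relevant parametrised moduli, and verify compatibility with the auxiliary Morse data chosen on $\F_\a$. Once that identification is in place, the remaining combinatorics of weights and the Novikov-valuation argument are routine.
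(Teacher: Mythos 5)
Your overall plan is the same as the paper's: prove that the two indices agree from the weight formula, show the continuation map has a block-triangular leading term via cascade analysis near $\F_\a$, then use a Novikov-valuation argument for injectivity and rank-nullity plus stability for the final bound. However, the central step as you state it contains a gap, and there is a minor error in the last step.

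The gap is in your claim that ``off-diagonal blocks, and higher-energy corrections on the diagonal, carry strictly positive Novikov weight because the relevant Floer trajectories have positive topological energy.'' This is not a sufficient justification, and in fact your description of the $(\F_\a,\F_\a)$-block as $\mathrm{id} + T^{>0}$ is already wrong for $\F_\a \neq \F_{\min}$: the constant continuation cascade at a point of $\F_\a$ contributes with Novikov weight $T^{\delta_\a}$ where $\delta_\a = (\gamma-\lambda)\cdot H(\F_\a)$, which is strictly positive whenever $H(\F_\a) > 0$ (see the energy discussion in \cref{Subsection about what powers of T are used in continuation maps}, where $E_0(v_x) = H_\a(\lambda_- - \lambda_+)$ for constant solutions). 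So ``positive topological energy'' fails to single out the non-constant corrections, since the constant cascade has it too. What one actually needs, and what the paper supplies in its proof of \cref{Prop filtration precise information 2}, is two ingredients you omit: (i) a monotonicity-lemma estimate showing that any continuation cascade with an end on $\F_\a$ that leaves a fixed tubular neighbourhood $N_\a$ has geometric energy $\geq \epsilon > 0$, hence topological energy $> \epsilon$ (since $\partial_s H_s \leq 0$ gives $E_0 \geq E$); and (ii) taking $\gamma - \lambda = p^+ - p^-$ small enough that $\delta_\a < \epsilon$. Then cascades inside $N_\a$ are handled by the low-energy local Floer cohomology continuation, which is an isomorphism precisely by the $p$-stability hypothesis (no integer lies in $(|m|\lambda, |m|\gamma)$ for each weight $m$ of $\F_\a$). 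Without the monotonicity lemma and the smallness of $\gamma - \lambda$, you cannot rule out non-constant contributions to the diagonal block in $T$-weight $\leq T^{\delta_\a}$.

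A secondary issue: you conclude from injectivity of $\psi_{p^-,p^+}$ on the $p$-stable subspace $V_{\mathrm{st}}$ that $\ker\psi_{p^-,p^+}$ is ``contained in the sum of the non-$p$-stable summands.'' That implication is false in general (an element of the kernel could have both stable and unstable components). The correct conclusion is via rank-nullity: $\dim\ker\psi_{p^-,p^+} \leq \|H^*(Y)\| - \dim V_{\mathrm{st}}$, which is what the paper invokes and what yields the stated bound on $\|\FF^{\Fi}_p\|$. Also note that your Novikov-valuation injectivity argument, as phrased (``projecting onto each $p$-stable summand''), ignores the dependence of $\delta_\a$ on $\a$; the safe version of the argument extracts the minimal $T$-power of the total output of $\psi_{p^-,p^+}$ and uses $\mathbb{B}$-independence of the cycles $x_\a$ across distinct summands, as in the footnote to \cref{Prop filtration precise information 2}.

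The first step (index equality from $p$-stability and the formula $\mu_\lambda(\F_\a) = \sum_{k\neq 0}(1-\mathbb{W}(\lambda k))h_k^\a$) is correct, and the use of the stability property to reduce $\FF^{\Fi}_p$ to $\ker\psi_{p^-,p^+}$ matches the paper.
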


\begin{con}\!\!\!%
\label{Introduction conjecture about cup with Euler class}
For $\F_\a$ $m$-minimal, if $(m\lambda, m\gamma)\cap \Z=\{k\}$ is coprime to $m$,
then the lowest order $T$ term of $H^*(\F_\a)[-\mu_{\lambda}(\F_\a)]
\to H^*(\F_\a)[-\mu_{\gamma}(\F_\a)]$ in \eqref{Equation 2 cont intro} is cup-product by the Euler class of $\Ymc$, viewed as a local 
complex vector bundle over $\F_\a$. (See also \cref{Remark explaining conj about Euler class in intro}.)
\end{con}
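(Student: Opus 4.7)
The plan is to reduce the statement to a local Morse--Bott--Floer computation in a $\C^*$-equivariant tubular neighbourhood of $\F_\a$, and to identify the lowest $T$-order contribution to $\psi_{\gamma,\lambda}$ via a linearised Fredholm model. By an equivariant tubular neighbourhood theorem, a neighbourhood of $\F_\a\subset Y$ is $\C^*$-equivariantly diffeomorphic to the normal bundle $N\F_\a=\bigoplus_{k\neq 0}H_k$ with the diagonal linear $S^1$-action of weights $k$ on $H_k$. The $m$-minimality hypothesis rules out weights in $-m\N_{\neq 0}$, so the $\Z/m$-fixed subbundle (that is, $\Ymc$ near $\F_\a$) is exactly $T\F_\a\oplus E$ with $E:=\bigoplus_{b\geq 1}H_{mb}$. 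The complex Euler class $e(E)\in H^{2\mathrm{rank}_\C E}(\F_\a)$ is the candidate leading coefficient. The interval condition $(m\lambda,m\gamma)\cap \Z=\{k\}$ with $\gcd(k,m)=1$ isolates the weights $w$ with $m\mid w$ as the only ones whose Maslov crossing contributions change across the continuation; using the Robbin--Salamon calculus from \cref{Section RS indeces} one checks that $\mu_\gamma(\F_\a)-\mu_\lambda(\F_\a)=-2\mathrm{rank}_\C E$, matching the degree of cup product by $e(E)$.

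Next I would pick a monotone homotopy of Hamiltonians between $\lambda H$ and $\gamma H$ crossing the slope $k/m$ exactly once, and analyse the continuation map on the $\F_\a$-summand via the Morse--Bott--Floer cascade formalism of \PartII. At the critical slope, the new $1$-orbits form a Morse--Bott family parametrised locally by a disc-subbundle of $E\to \F_\a$, and the restriction of $\psi_{\gamma,\lambda}$ to the $\F_\a$-summand factorises through these new orbits via broken cascades. The structural point is that the $T^0$ contribution comes only from zero-area configurations, which in the linear model reduce to solutions of a linearised Cauchy--Riemann equation on $N\F_\a$: these are rigid in the $E$-directions but index-zero-trivial in the other weight subbundles $H_w$ with $m\nmid w$, since their linearised operators have matching kernel and cokernel across the slope $k/m$ (because $\lambda w$ does not cross an integer). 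A standard Fredholm computation of Atiyah--Bott flavour then identifies the signed count of these zero-area solutions with evaluation of the top Chern class $e(E)$ against cycles in $\F_\a$, giving the cup product claim.

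The final step is to verify that all non-zero-area contributions carry $T^a$ with $a>0$, by standard energy estimates: $\omega$-positivity on non-constant $I$-holomorphic configurations, together with the maximum principle of \cref{intro max princ}, confines them to a compact region and forces strictly positive $\omega$-area, hence strictly positive $T$-valuation. Thus the leading $T$-order of $\psi_{\gamma,\lambda}$ restricted to the $\F_\a$-summand is exactly $e(E)\smile \bullet$.

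The main obstacle will be the cascade analysis at the critical slope: one has to establish (a) that the local linearisation is genuinely controlled by the complex bundle $E$ alone rather than some more complicated symmetric function of Chern roots of the entire $N\F_\a$, and (b) that an equivariant, Floer-transverse perturbation scheme exists whose signed count on the zero-area stratum realises the Euler number of $E$. Point (a) rests on the ``Maslov matching'' that the non-$m$-divisible weight subbundles contribute trivially across a slope of the form $k/m$ with $\gcd(k,m)=1$; point (b) is the classical identification of the Euler class with the count of zeros of a generic section, transplanted into the Floer setting as in the Seidel-element computations of \cite{R14,R16} and the negative-vector-bundle arguments summarised in \cref{Subsection Computing the continuation map for full rotations: the QFi invariant}. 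Once (a) and (b) are in place, the identification of the leading $T$-term with $e(E)\smile\bullet$ follows.
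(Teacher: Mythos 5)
The statement you set out to prove is labelled a \emph{Conjecture} in the paper (\cref{Introduction conjecture about cup with Euler class}); the authors do not prove it. The only supporting material is the heuristic in \cref{Remark explaining conj about Euler class in intro}, which records (i) that when $\F_\a$ is $m$-minimal the index drops by $2\,\mathrm{rk}(\Ymc)$ across the critical time $k/m$, and (ii) that the $m$-th root $\varphi^{1/m}$ of the action restricted to $\Ymc$ produces a full-rotation class $Q_{\varphi^{1/m}}$ which one \emph{expects} (by analogy with \cite{R14}) to equal the Euler class up to higher $T$-order. Your plan reproduces exactly this intuition — the degree match, the reduction to a linearised model along the normal bundle, the identification of the signed zero count with the Euler class — so you are following the authors' heuristic rather than offering an alternative route.

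However, what you have written is a plan, not a proof, and your closing points (a) and (b) are precisely the analytic content that would be needed to turn the conjecture into a theorem; neither follows from cited standard machinery without new work. Three concrete concerns. First, your assertion that the non-$m$-divisible weight subbundles ``contribute trivially ... since their linearised operators have matching kernel and cokernel across the slope $k/m$'' is plausible but not an argument: the paper's proof of the weaker \cref{Prop filtration precise information} (the case where \emph{no} critical time is crossed, so the answer is the identity rather than $e(E)\smile\cdot$) already required a careful monotonicity-lemma estimate and a decoupling of the Floer equation in weight coordinates; upgrading ``identity'' to ``cup with Euler class'' needs a genuine obstruction-bundle/Gromov--Witten analysis that you have not supplied. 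Second, your ``zero-area configurations contribute $T^0$'' claim is imprecise: as discussed in \cref{Subsection about what powers of T are used in continuation solutions}, constant continuation solutions at $\F_\a$ come with weight $T^{\delta_\a}$ for $\delta_\a=(\gamma-\lambda)H_\a$, and one cannot a priori exclude non-constant solutions sharing that $E_0$-value; the paper only proves this exclusion for $\F_{\min}$ via \eqref{Equation estimate energy continuation}, not for a general $m$-minimal $\F_\a$. Third, your identity $\mu_\gamma(\F_\a)-\mu_\lambda(\F_\a)=-2\,\mathrm{rank}_\C E$ implicitly assumes no other $\a$-critical time (for a weight $m'\neq m$ of $\F_\a$) is crossed in $(\lambda,\gamma)$; the stated hypothesis only controls $m$-divisible crossings, so this should be added explicitly or itself argued. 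In summary, the proposal reproduces the heuristic already present in the paper's remark but does not close the gaps that make the statement a conjecture.
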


\subsection{Naturality under full rotations}\label{Subsection Naturality under full rotations}
\begin{prop}
    For each integer $N\in \N$, there is a commutative diagram 
    $$
    \xymatrix@C=45pt@R=18pt{
    QH^*(Y)\cong HF^*(0^+H) 
    \ar@{->}[r]^-{c_{\lambda^+}} 
    \ar@{->}[d]_-{\mathcal{S}_{\Fi}^{-N}}^-{\cong} 
    &
    HF^*(\lambda^+ H)
    \ar@{->}[d]^-{\mathcal{S}_{\Fi}^{-N}}_-{\cong}  
    \\
    HF^*(N^+H)[-2N\mu]
    \ar@{->}[r]^-{\psi_{(N+\lambda)^+,N^+}} 
    &
    HF^*((N+\lambda)^+ H)[-2N\mu]
    }
    $$
    Horizontal arrows are continuations, vertical arrows are chain isomorphisms \cite[Thm.18]{R14}. Thus
    $$
    \Fil^{\Fi}_{N+\lambda} = 
    \ker c^*_{(N+\lambda)^+} = 
    \ker \left(c^*_{\lambda^+} \circ (Q_{\Fi}^{\star N} \star \cdot): QH^*(Y) \to HF^*(\lambda^+H)[2N\mu]\right).
    $$
\end{prop}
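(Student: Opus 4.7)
\smallskip

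\noindent\textbf{Proof proposal.}
The plan is to obtain the commutative diagram by iterating the Seidel naturality identity \eqref{Equation seidel compatibility} $N$ times, and then deduce the kernel formula for $\Fil^{\Fi}_{N+\lambda}$ by combining this with the factorisation \eqref{Equation 1 cont intro} of continuation maps and the identification of $c^*_{N^+}$ with $N$-fold quantum product by $Q_{\Fi}$ supplied by \cref{Prop vanishing of SH}.

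First I would rewrite \eqref{Equation seidel compatibility} as
$\mathcal{S}_{\Fi}^{N}\circ \psi_{\lambda',\lambda}=\psi_{\lambda'-N,\lambda-N}\circ \mathcal{S}_{\Fi}^{N}$, which follows by an immediate induction on $N$ from the $N=1$ case. Specialising to $\lambda'=(N+\lambda)^+$ and $\lambda=N^+$, so that $\lambda'-N=\lambda^+$ and $\lambda-N=0^+$, and using $\psi_{\lambda^+,0^+}=c^*_{\lambda^+}$, this rearranges as
$\psi_{(N+\lambda)^+,N^+}\circ \mathcal{S}_{\Fi}^{-N}=\mathcal{S}_{\Fi}^{-N}\circ c^*_{\lambda^+}$, which is precisely commutativity of the displayed square; the fact that $\mathcal{S}_{\Fi}^{\pm N}$ shifts grading by $\mp 2N\mu$ matches the grading shifts in the diagram.

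For the kernel formula, I would factor $c^*_{(N+\lambda)^+}=\psi_{(N+\lambda)^+,N^+}\circ c^*_{N^+}$ via \eqref{Equation 1 cont intro} and substitute the diagram relation from the previous step to obtain
$c^*_{(N+\lambda)^+}=\mathcal{S}_{\Fi}^{-N}\circ c^*_{\lambda^+}\circ\bigl(\mathcal{S}_{\Fi}^{N}\circ c^*_{N^+}\bigr).$
By \cref{Prop vanishing of SH}, under the Seidel identification $\mathcal{S}_{\Fi}^{N}:HF^*(N^+H)\stackrel{\cong}{\to}QH^*(Y)[2N\mu]$, the composite $\mathcal{S}_{\Fi}^{N}\circ c^*_{N^+}$ is precisely the quantum multiplication map $Q_{\Fi}^{\star N}\star\cdot$. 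Since $\mathcal{S}_{\Fi}^{-N}$ is an isomorphism it has trivial kernel, so taking kernels yields
$\Fil^{\Fi}_{N+\lambda}=\ker c^*_{(N+\lambda)^+}=\ker\bigl(c^*_{\lambda^+}\circ (Q_{\Fi}^{\star N}\star\cdot)\bigr),$
where the first equality is the definition \eqref{DefinitionOfFiltration}.

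The only substantive point to verify is that the Seidel chain isomorphism of \cite{R14} used to identify $HF^*(N^+H)$ with $QH^*(Y)[2N\mu]$ in \cref{Prop vanishing of SH} is literally the same $\mathcal{S}_{\Fi}^{N}$ that enters the diagram (and not, say, its inverse or a nonzero rescaling). Once this convention is pinned down, the whole argument is formal: the analytical content has already been absorbed into \cref{Prop vanishing of SH} and into \eqref{Equation seidel compatibility}, so I expect the main obstacle to be purely bookkeeping of grading and sign conventions rather than any new Floer-theoretic input.
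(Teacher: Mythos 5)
Your proposal is correct and takes the same route as the paper: the commutative square is obtained by iterating the Seidel naturality identity \eqref{Equation seidel compatibility}, and the kernel formula follows by combining this with the continuation factorisation \eqref{Equation 1 cont intro} and the identification $\mathcal{S}_{\Fi}^{N}\circ c^*_{N^+}=Q_{\Fi}^{\star N}\star\cdot$ from \cref{Prop vanishing of SH} (which in turn rests on \cite[Thm.18]{R14}). The paper leaves exactly these intermediate steps implicit, citing \cite[Thm.18]{R14} for the chain-level Seidel isomorphisms and writing ``Thus'' for the kernel computation, so your unpacking is faithful rather than an alternative argument. Your closing caveat about pinning down that the $\mathcal{S}_{\Fi}^{N}$ used in \cref{Prop vanishing of SH} is literally the inverse of the one in the diagram (and not a rescaling) is reasonable diligence, but it is settled by the construction in \cite[Sec.3.1, 7.8]{R14} and the choice of lift $\widetilde{\Fi}$ fixed in \cref{Theorem neg lb paper summary}, which both the diagram and \cref{Prop vanishing of SH} inherit from there.
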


\begin{ex}\label{Example running example of intro 3.1}
For weight-$1$ CSRs, naturality implies\footnote{$\ker c^*_{2^-}=\ker (c^*_{1^-}\circ (Q_{\Fi}\star \cdot))$ by naturality, $Q_{\Fi}=\mathrm{PD}[\F_{\min}]+(T^{>0}\textrm{-terms})$ by \cref{Intro prop about Qfi (non) vanishing}, and $c^*_{1^-}(\mathrm{PD}[\F_{\min}])=\mathrm{PD}[\F_{\min}]+(T^{>0}\textrm{-terms})$ by \cref{Cor intro about Fmin surviving}. So $c^*_{1^-}(Q_{\Fi} \star 1)=\mathrm{PD}[\F_{\min}]+(\textrm{terms with }T^{>0}) \neq 0$. Now use \cref{Example CSR filtration intro}.} $\Fil^{\Fi}_{1}=\Fil^{\Fi}_{2^-}=H^{\geq 2}(Y)$, and $\Fil^{\Fi}_{2}=H^*(Y)$.
    This applies to (b-c) in \cref{Example running example of intro 3}, so the filtration is now fully determined also for those two cases.
\end{ex}

\subsection{Existence of periodic orbits, and upper bounds for the ranks of $\Fil^{\Fi}_{\lambda}$} \label{Subsection existence of orbits and upper bounds for ranks}
\strut\\[0mm]
\indent The Weinstein conjecture states that any closed contact manifold admits a closed Reeb orbit.
For Liouville manifolds $M$, if $c^*:H^*(M)\to SH^*(M)$ is not an isomorphism, the Weinstein conjecture holds for the contact hypersurface used to model $M$ at infinity, for generic contact forms \cite{Vi96} (e.g.\;for $M=T^*N$, it implies the existence of non-constant closed geodesics in $N$). This relies on deformation invariance results for $SH^*(M)$ (see \cite{BeRit20} and references therein).
It is beyond our current scope to prove an invariance statement stronger than the one in \cref{Subsection isomorphism invariance}, but the expectation of this leads one to ask what cohomological constraints the filtration imposes on the existence of periodic orbits. 

By \cref{Subsection Floer theory is possible for symplectic C-manifolds}, $HF^*(H_{\lambda})$ at chain level involves {\MB} manifolds $\F_\a$ of constant orbits and {\MB} manifolds $B_{p,\beta}$ of non-constant orbits for various $S^1$-period values $p<\lambda$. We build a filtration on the chain complex in \cref{filtrationFloer} (with technical details relegated to \cite{RZ2}), so that 
$$A:=CF^*(H_{\lambda})\subset CF^*(H_{\gamma})=:A\oplus B$$ 
is a subcomplex for any generic slopes $0<\lambda<\gamma$, where $B$ is generated by the orbits with period-values in $(\lambda,\gamma)$. On cohomology, $A\subset A\oplus B$ induces the continuation $\psi_{\lambda,\mu}:HF^*(H_{\lambda})\to HF^*(H_{\gamma})$. So $HF^*(H_{\gamma})$ is the homology of the cone of a map $\nu:B \to A[1]$.
Passing from $HF^*(H_{\lambda})$ to $HF^*(H_{\gamma})$, the $B$ kills some old classes of $A$ and creates some new classes. 
If $H^*(Y)$ lies in even degrees, then  $\|\ker \psi_{\gamma,\lambda}\|
 = \tfrac{1}{2} \|H_*(B)\|$ (abbreviating total ranks by $\|V_*\|:=\sum |V_n|$), and
\begin{equation}\label{Equation Intro lower bounds on ranks of F}
\|\FF^{\Fi}_{\gamma}\|\leq \|\FF^{\Fi}_{\lambda}\|+\tfrac{1}{2}\|H_*(B)\|.
\end{equation}
It is clear however from \cite{RZ2}
that the best approach to this problem is the spectral sequence method, rather than investigating the uncomputable $\psi_{\lambda,\mu}$ and $c^*_{\lambda}$.
When $H^*(Y)$ lies in even degrees, we get\footnote{The first inequality follows from the long exact sequence for the cone, the second inequality is a consequence of \cite{RZ2}. If we omit the middle expression, the inequality is a consequence of Poincar\'{e} duality and gradings: only odd classes of the $B_{p,\beta}$ can kill $H^*(Y)$-classes in the spectral sequence.}
\begin{equation}\label{Equation Intro upper bound on F ranks via B}
\|\FF^{\Fi}_{\gamma}\| \leq \tfrac{1}{2} \|H_*(B)\| \leq \tfrac{1}{2}\sum_{0<p<\gamma^+} \sum_{\beta} \|H^*(B_{p,\beta})\|.
\end{equation}
Thus the $\FF^{\Fi}_{\lambda}$-filtration imposes cohomological constraints on the {\MB} manifolds of non-constant $S^1$-orbits, and vice-versa in \cite{RZ2} we use the knowledge of the $H^*(B_{p,\beta})$ to find the ranks of $\FF^{\Fi}_{\lambda}$.

In \cref{Subsection rank estimates via H of Fa} we show that the $\F_\a$ and their weights give lower bounds for the ranks of $H_*(B)$. With the exception of a phenomenon we call ``vertical differentials'', it predicts the spectral sequences that we computed (by finding all $H^*(B_{p,\beta})$) in \cite{RZ2}. We illustrate this in an example in \cref{Example Intro Slodowy S32 two tables}.
\subsection{Different $\Fi$-actions and their filtrations}\label{Subsection Different fi actions}
\strut\\[0mm]\indent
In summary, given any symplectic \CC-manifold satisfying \eqref{Equation intro Psi}, and \eqref{Equation psi Xs1 is Reeb} or \eqref{Equation psi Xs1 is Reeb 2}, we have built a map
\begin{equation}
\label{filtration functor}
\{\text{contracting } \C^*\textrm{-actions on }Y\} \to \{\R_{\infty}\textrm{-ordered filtrations of }QH^*(Y)\}, \ \ \Fi \mapsto \FF_\lambda^{\Fi}. 
\end{equation}
Powers $\Fi^N$ cause slope-dilations: $\Fil^{\Fi^N}_{\lambda}=\Fil^{\Fi}_{N\lambda}$.
In \cref{Example running example of intro 3}
 we distinguished 
actions via fixed loci or the $\Fil_{\lambda}$ ideals. Even when the latter coincide, sometimes the $\R_{\infty}$-ordering distinguishes actions:
\begin{ex}($A_1$-singularity) \label{Example running example of intro 3.5}
Consider actions %
$\Fi_1:=(t^3 X, t Y, t^2 Z)$ and $\Fi_2:=(t^5 X, t Y, t^3 Z)$
on the  minimal resolution of $A_1$ singularity 
$\pi: T^*\CP^1 \fun V(XY+Z^2)\subset \C^3$.
They have the same fixed locus
$\F=p_1 \sqcup p_2$ (two fixed points on the $\CP^1$),
and the same {\MB} indices.\footnote{The weight decompositions of $p_1, p_2$ are $(3,-1),(1,1)$ for 
$\Fi_1$ and  $(5,-2),(2,1)$
for $\Fi_2$.}
We find: 
$$
\FF_{1/3}^{\Fi_1}=\k p_1 , \
\FF_{1}^{\Fi_1}=\k p_1 + \k p_2
\;\;
\text{   whereas  } \; \ \FF_{1/5}^{\Fi_2}= \k p_1, \ \FF_{3/5}^{\Fi_2}=\k p_1 + \k p_2$$ 
with no filtered levels in between, using
Corollaries \ref{Cor estimating filtration using degrees}, \ref{Cor intro about Fmin surviving} and stability property\footnote{Stability is needed for $\FF_{3/5}^{\Fi_2}=\k p_1 + \k p_2$, as other tools only imply that
$\FF_{(1/2)^-}^{\Fi_2} = \k p_1$
(\cref{Cor intro about Fmin surviving}) and 
$\FF_{3/5}^{\Fi_2}=\k p_1 + \k p_2$ (\cref{Cor estimating filtration using degrees}), without determining that the filtration changes exactly at $\lambda=3/5$.
} 
\eqref{Intro Stability property}.
So the $\R$-ordering distinguishes the actions (the {\AB } filtration does not distinguish them).
\end{ex}

Often $Y$ admits several $\C^*$-actions $\Fi$, inducing different filtrations $\Fil^{\Fi}_{\lambda}$ on $QH^*(Y)$. 
Algebraically, two filtrations by ideals $I_i,J_j$ of $QH^*(Y)$, for ordered indices $i,j$, yield a bigraded filtration $K_{ij}=I_i\cap J_j$ of ideals of $QH^*(Y)$, so $K_{ij}\subset K_{i'j'}$ for $i\leq i'$, $j\leq j'$, which leads to a collection of filtrations of $QH^*(Y)$ by ideals.
Geometrically, comparing actions is difficult: asking that two $\C^*$-actions $\Fi,\psi$ must satisfy \eqref{Equation psi Xs1 is Reeb 2} for the \emph{same} $\Psi$ is too restrictive. 
In \cite{RZ3} we compare filtrations induced by $1$-parameter subgroups of algebraic torus actions (i.e.\;filtrations for commuting $\C^*$-actions), even though the $\Psi$-maps change.

\begin{ex}[$A_2$-singularity]\label{Example running example of intro 4}
In \cref{Example running example of intro 3} call $A,B,C$ the three actions in $(a),(b),(c)$, and note that $A=B\circ C$. In \cite{RZ3} we explain why the filtrations for $B,C$ are naturally sub-ideals of the filtration for $A$, after rescaling periods by $2/3$. Here we merely wish to illustrate an example of how the filtrations relate:  we see that sometimes these two sub-ideals generate the ideal for $A$, 
$$
\Fil^{B}_{1/2} + \Fil^{C}_{1/2}
=
\k p_2 + \k p_1
= H^2(M) =  \Fil^{A}_{1/3}, \quad \qquad
\Fil^{B}_{1} + \Fil^{C}_{1} = H^{\geq 2}(M) = \Fil^{A}_{2/3},
$$
and sometimes they do not: $\Fil^{B}_{3/2} + \Fil^{C}_{3/2} = H^{\geq 2}(M)\subsetneq H^*(M) = \Fil^{A}_{1}$ (using \cref{Example running example of intro 3.1}).
\end{ex}
\begin{ex}(CSRs) A CSR $Y$ typically contains many \CC-actions, arising from compositions $\Fi:=\phi^k \circ G$ of a 
given contracting \CC-action $\phi$ and any $1$-parameter \CC-subgroup $G$ %
of a maximal torus $T \leq \mathrm{Symp}_{\phi}(Y,\om_\C)$ of symplectomorphisms commuting with $\phi$,
which is usually non-trivial.\footnote{This holds for the main examples of CSRs: quiver varieties, hypertoric varieties, resolutions of Slodowy varieties.} %
Subgroups $G$ for which $\Fi$ is contracting, for fixed $k$, constitute a convex subset $K_k$ %
of the lattice $\Lambda$ of all \CC-subgroups of $T.$ 
Via \eqref{filtration functor} we get a $K_k$-labelled family of filtrations on $H^*(Y)$ by cup-product ideals,
where $\cup_{k\in\N} K_k=\Lambda$ exhausts the whole lattice. 
We discuss this for $Y=T^*\CP^n$ in \cite{RZ2}.
\end{ex}

\subsection{$S^1$-equivariant symplectic cohomology}\label{Subsection intro S1 equiv SH}
\strut\\[0mm]\indent
There is an $S^1\times S^1$-action in Floer theory for $Y$: the $S^1$-action on $Y$ via $\Fi$ and the $S^1$-action by loop-reparametrisation.\footnote{i.e.\,acting by translation on the domains of the Hamiltonian $1$-orbits $x=x(t)$ that generate Hamiltonian Floer cohomology, with some chosen weight $b\in \Z$, so $(e^{i\theta}\cdot x)(t)=x(t-b\theta)$. In our current discussion we take $b=1$.} 
In this paper we use only the latter; we consider  the general action and various equivariant theories in \cite{RZ3}. Using only loop-reparameterisation, constant orbits give rise to ordinary cohomology rather than equivariant cohomology. In the conventions of \cite{McLR18}, $S^1$-equivariant symplectic cohomology $ESH^*(Y,\Fi)$ is a $\ku$-module (not a ring) with a $\ku$-module homomorphism $$Ec^*:H^*(Y)\otimes_{\k}\mathbb{F} \to ESH^*(Y,\Fi),$$ 
where $\mathbb{F}:=\kuu /u\k [\![u]\!]\cong H_{-*}(\C\P^{\infty})$, and $u$ is the equivariant parameter in degree two. At chain level, each $1$-orbit gives a copy of the $\ku$-module $\mathbb{F}$. When $c_1(Y)=0$, we have\footnote{For the same grading reasons that prove $SH^*(Y,\Fi)=0$ in \cref{Intro vanishing of SH}.} $ESH^*(Y,\Fi)=0.$

\begin{thm}\label{Theorem curlyE filtration for ESH}
There is an $\R_{\infty}$-ordered filtration by graded $\ku$-submodules of $H^*(Y)\otimes_{\k}\mathbb{F}$,
\begin{equation*}%
E\FF^{\varphi}_p :=\bigcap_{\mathrm{generic}\,\lambda>p} \ker \left(Ec_\lambda^*:H^*(Y)\otimes_{\k}\mathbb{F}\to EHF^*(H_{\lambda})\right), \qquad E\Fil_{\infty}^{\Fi}:=H^*(Y) \otimes_{\k} \mathbb{F},
\end{equation*} 
where $Ec_\lambda^*$ is the equivariant continuation map, a grading-preserving $\ku$-linear map.

In general, $\Fil^{\Fi}_\lambda \subset E\Fil^{\Fi}_\lambda$. If $H^*(Y)$ lies in even degrees (e.g.\;CSRs), then $\Fil^{\Fi}_\lambda = H^*(Y)\cap E\Fil^{\Fi}_\lambda.$
\end{thm}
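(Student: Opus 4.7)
The plan is to run the construction of $\FF_\lambda^{\Fi}$ from \cref{Cor intro filtration} $\ku$-equivariantly, using $S^1$-equivariant Floer cohomology $EHF^*(H_\lambda)$ and equivariant continuation maps $Ec_\lambda^*$, and then compare the two filtrations through a natural commutative diagram. The equivariant machinery follows the conventions of \cite{McLR18}: at chain level each $1$-orbit contributes a copy of the $\ku$-module $\mathbb{F}$, and the various structure maps (continuation, PSS) are $\ku$-linear and preserve the $\Z$-grading where $|u|=2$. Since $EHF^*(0^+ H)\cong H^*(Y)\otimes_\k \mathbb{F}$ via an equivariant PSS isomorphism, one obtains $Ec_\lambda^*$ as the composition with the equivariant continuation $E\psi_{\lambda,0^+}$.

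Once $Ec_\lambda^*$ is in hand, the fact that $E\FF_p^{\Fi}$ is a graded $\ku$-submodule is immediate: kernels of $\ku$-linear grading-preserving maps are graded $\ku$-submodules, and intersections of such are again graded $\ku$-submodules. The ordering $E\FF_p^{\Fi}\subset E\FF_{p'}^{\Fi}$ for $p\leq p'$ follows from the factorisation $Ec_{\lambda'}^* = E\psi_{\lambda',\lambda}\circ Ec_\lambda^*$, which itself is proved by the standard homotopy-of-homotopies argument in the equivariant setting (this is essentially the same proof as in the non-equivariant case, now taking place at the level of $\ku$-chain complexes).

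For $\FF_\lambda^{\Fi}\subset E\FF_\lambda^{\Fi}$, I would construct a canonical map $\iota:QH^*(Y)\to H^*(Y)\otimes_\k \mathbb{F}$ coming from the $\k$-linear unit $\k\hookrightarrow \mathbb{F}$, $1\mapsto 1$, and check that it fits in a commutative square with its equivariant analogue:
\[
\xymatrix{
QH^*(Y) \ar[r]^-{c_\lambda^*}\ar[d]_-\iota & HF^*(H_\lambda) \ar[d]^-{I_\lambda} \\
H^*(Y)\otimes_\k\mathbb{F} \ar[r]^-{Ec_\lambda^*} & EHF^*(H_\lambda)
}
\]
where $I_\lambda$ is the chain-level map sending a Floer generator $x$ to $x\otimes 1$. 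Commutativity reduces to the compatibility of continuation/PSS with forgetting the $S^1$-action, which is a standard moduli-theoretic argument (a parametrised count against the universal family $ES^1$ restricted to a point). Taking kernels then yields $\iota(\FF_\lambda^{\Fi})\subset E\FF_\lambda^{\Fi}$.

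The main obstacle is the converse inclusion when $H^*(Y)$ lies in even degrees. Under this hypothesis, by \cref{Theorem what happens in even degrees} every Morse--Bott family of $1$-orbits of $H_\lambda$ (constant orbits on $\F_\a$ and outer Morse--Bott slices $B_{p,\beta}$) contributes cohomology concentrated in one parity, and the Floer indices $\mu_\lambda$ are even. Hence the Borel-type spectral sequence $H^*_{S^1}(\textrm{gens}) \otimes \mathbb{F} \Rightarrow EHF^*(H_\lambda)$ degenerates for parity reasons, giving a (non-canonical) splitting $EHF^*(H_\lambda)\cong HF^*(H_\lambda)\otimes_\k \mathbb{F}$ under which $Ec_\lambda^*$ becomes $c_\lambda^*\otimes \mathrm{id}_{\mathbb{F}}$ modulo terms in $u\mathbb{F}$. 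Restricting to the subspace $H^*(Y) = H^*(Y)\otimes_\k \k \hookrightarrow H^*(Y)\otimes_\k \mathbb{F}$ then shows that an element $x\in H^*(Y)$ lies in $\ker Ec_\lambda^*$ iff $x\in \ker c_\lambda^*$, so $H^*(Y)\cap E\FF_\lambda^{\Fi}=\FF_\lambda^{\Fi}$. The delicate point here is justifying the splitting on the \emph{chain complex} rather than on cohomology, which I would handle by either working with the equivariant cascade model as in \cite{BO} adapted to the Morse--Bott families of our action, or (more simply) arguing degree-wise that the equivariant differential preserves parity and therefore splits off the $\ku$-action.
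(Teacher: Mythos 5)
Your overall strategy matches the paper's: the commutative square with the $u^0$-inclusions $\iota$ and $I_\lambda$ (the paper writes both as $in=\mathrm{id}\otimes u^0$) gives $\Fil^{\Fi}_\lambda\subset E\Fil^{\Fi}_\lambda$ directly, and the even-degree hypothesis is then used to force the reverse containment. The difference is in how the hard direction is closed. Tracing through the square, the only thing you actually need is injectivity of $in:HF^*(H_\lambda)\to EHF^*(H_\lambda)$: if $x\in H^*(Y)$ has $Ec_\lambda^*(x\otimes 1)=0$ then by commutativity $in(c_\lambda^*(x))=0$, and injectivity of $in$ gives $x\in\ker c_\lambda^*$. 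The paper extracts this cleanly from the Gysin sequence $\cdots\to HF^*\stackrel{in}{\to} EHF^*\stackrel{u}{\to}EHF^{*+2}\stackrel{b}{\to}HF^{*+1}\to\cdots$ of \cite{McLR18}: in even degrees $EHF^*(H_\lambda)\cong\oplus H^*(\F_\a)\otimes_\k\mathbb{F}[-\mu_\lambda(\F_\a)]$ is free over $\mathbb{F}$, hence $u\cdot$ is surjective, hence $b=0$, hence $in$ is injective. Your degeneration-of-the-Borel-spectral-sequence argument does establish the same fact (the identification $E_\infty^0=E_1^0=HF^*$ with $F_0 EHF^*=\mathrm{im}(in)$ forces $in$ injective over a field), so the reasoning is sound, but it is more elaborate than necessary and the phrasing ``$Ec_\lambda^*$ becomes $c_\lambda^*\otimes\mathrm{id}_{\mathbb{F}}$ modulo terms in $u\mathbb{F}$'' is not well-posed as written (note $u\mathbb{F}=\mathbb{F}$, so ``modulo $u\mathbb{F}$'' is vacuous). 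The ``delicate point'' you flag about justifying a chain-level splitting is a red herring: no chain-level splitting is used or needed, only the injectivity of $in$ on cohomology, which the Gysin sequence gives immediately once one observes $u$ acts surjectively on a free $\mathbb{F}$-module. I would recommend replacing the splitting discussion with the short Gysin-sequence computation; what it buys is that the statement then literally reads off as $\Fil^{\Fi}_\lambda=QH^*(Y)\cap E\Fil^{\Fi}_\lambda=\ker(c_\lambda^*)$, with no auxiliary non-canonical choices entering the argument.
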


\begin{rmk}
The reason $\Fil^{\Fi}_\lambda \subset E\Fil^{\Fi}_\lambda$ may be larger is that the image $c_{\lambda}^*(x)$ may lie in the $\k$-span of the images of the higher-order maps $\delta_j$, $j\geq 1$, involved in the equivariant Floer differential $d=\delta_0 + \sum u^{j}\delta_j $ (for example $\delta_1=\Delta$ induces the BV-operator on $SH^*(Y,\Fi)$, see \cite{Sei08}). In that case, $Ec_{\lambda}^*(x)=0$.
\end{rmk}

We show in \cite{RZ2} that the $S^1$-equivariant spectral sequence for $ESH^*_+(Y,\Fi)$ often collapses on the $E_1$-page, making this approach computationally easier, at the cost of analysing the presence of repeated copies of generators due to the $u^{-j}$ parameters.
We illustrate an example in \cref{Example Slodowy S32 using mulambda Falpha}.

\subsection{Filtrations in the context of Fukaya categories}
We conclude with a brief comment about a categorical version of the filtration.
Let us make the non-trivial\footnote{A starting  point is to mimic constructions from \cite{ritter2017monotone}, by replacing the radial coordinate $R$ by the moment map $H$, and using \cref{Remark max principle for Fukaya cat} to ensure that Floer solutions do not escape to infinity.} assumption that a compact Fukaya category $\mathcal{F}(Y,\Fi)$ and a wrapped Fukaya category $\mathcal{W}(Y,\Fi)$ are defined for $Y$. 
By \cite{ritter2017monotone}, one expects the Hochschild (co)homologies of $\mathcal{F}(Y,\Fi)$ and $\mathcal{W}(Y,\Fi)$ to be modules respectively over $QH^*(Y)$ and $SH^*(Y,\Fi)$. This would imply that $\mathcal{W}(Y,\Fi)$ (but not necessarily $\mathcal{F}(Y,\Fi)$) is homologically trivial\footnote{For Liouville manifolds $M$, $SH^*(M)=0$ implies that there are no closed \textit{exact} Lagrangian submanifolds in $M$. However, for our spaces $Y$, $\omega$ is typically \textit{not} exact, so there is no notion of \emph{exact} Lagrangian, and no known obstructions to the existence of arbitrary closed Lagrangians $j:L\hookrightarrow M$ (beyond the homological self-intersection, $j_*[L]\cdot j_*[L]=-\chi(L)$).} when $SH^*(Y,\Fi)=0$, so we are interested in slope-filtration invariants analogous to the $\ker c^*_{\lambda}\subset QH^*(Y)$.

For the compact Fukaya category, a filtration by $QH^*(Y)$-submodules on Hochschild homology $\mathrm{HH}_*(\mathcal{F}(Y))$ arises from the open-closed string map, so considering kernels of the composite $c_{\lambda}^*\circ \mathcal{OC}:\mathrm{HH}_*(\mathcal{F}(Y))\to QH^*(Y) \to HF^*(H_{\lambda}).$
In the wrapped setup, one needs to decide which non-compact Lagrangian submanifolds $L\subset (Y,\omega)$ to allow.\footnote{for example including the requirement that $L$ at infinity is $\nabla H$-invariant (by comparison: in the Liouville manifold setup, one requires $L$ to be ``conical'' at infinity \cite{ritter2017monotone}).} 
For such $L,L'$, consider the acceleration map $$f^*_{\lambda}:HF^*(L,L') \to HW^*(L,L';H_{\lambda}),$$
where the latter module at chain level, $CW^*(L,L';\lambda H)$, is generated by time $1$-orbits for $H_{\lambda}$ from $L$ to $L'$ rather than intersections points $L\cap L'$.
By \cite{ritter2017monotone}, one expects this to be a $QH^*(Y)$-module map, so $\ker f^*_{\lambda}$ filters $HF^*(L,L')$ in a way analogous to how $\ker c^*_{\lambda}$ filtered $QH^*(Y)$. 
As $\varinjlim HW^*(L,L';H_{\lambda})$ is expected to be a module over $SH^*(Y,\Fi)$, if $SH^*(Y,\Fi)=0$ then $f_{\lambda}^*$ vanishes for large enough $\lambda$.
\begin{ex}
The construction of the $SH^*(Y)$-action on $HH_*(\mathcal{W}(Y,\Fi))$, 
at its most basic level, shows that $q \in QH^*(Y)$ acting by $c_{\lambda}^*(q)\in HF^*(H_{\lambda})$ on $HF^*(L,L')$, with output in $HW^*(L,L';H_{\lambda})$, agrees with composing $f_{\lambda}^*$ with the action $q\cdot$ on $HF^*(L,L')$. Thus, $\FF^{\Fi}_{\lambda}\cdot HF^*(L,L') \subset \ker f_{\lambda}^*$.

A simple illustration of this, for $Y=\C P^1\times \C$ as in \cref{Example CP1 times C}, letting $L=S^1 \times \R$ and denoting $L_{\lambda}$ its image under the time $1$-flow of $H_{\lambda}$: $HF^*(L,L):=HF^*(L,L_{0^+})\cong H^*(S^1)\subset \ker f_{1^+}^*$, as $c_{1^+}^*=0$.
\end{ex}

To upgrade the above to categorical level, for compact Lagrangians, we would proceed as follows.
Given a Hochschild cycle $\gamma=[y]\in \mathrm{HH}_*(\mathcal{F}(Y,\Fi))$, consider the acceleration functor 
\vspace{-1mm}
$$f:\mathrm{HH}_*(\mathcal{F}(Y,\Fi)) \to \mathrm{HH}_*(\mathcal{W}(Y,\Fi)).\vspace{-1mm}$$
Suppose $f(y)$ is exact (e.g.\,always if $\mathrm{HH}_*(\mathcal{W}(Y,\Fi))=0$), so at chain level the bar differential $b(z)=y$ for a Hochschild chain $z$. A Hochschild chain is a finite sum of words of type $\underline{x_{n+1}}\otimes x_n \otimes \cdots \otimes x_1$, for various lengths $n$, where each $x_i\in CW^*(L_{i-1},L_{i};\lambda_i H)$ lies in a chain complex as described above (and $L_{n+1}=L_0$). We may therefore associate to $z$ a slope-value $\lambda(z):=\max \sum \lambda_i$, maximising over the finitely many words in $z$ the total slope value $\sum \lambda_i$ of each word. We then define a filtration of Hochschild homology by $p\in (0,\infty]$ by measuring what slope is needed to witness the triviality of $f(y)$:
$$
\mathcal{F}^{p}:=\bigcap_{\lambda>p}\{ [y]\in \mathrm{HH}_*(\mathcal{F}(Y,\Fi)):
\exists z\in \mathrm{CC}_*(\mathcal{W}(Y,\Fi))\textrm{ with } f(y)=b(z) \textrm{ and } \lambda(z)\leq \lambda\}.
$$
This filters  $\mathrm{HH}_*(\mathcal{F}(Y,\Fi))$ by $QH^*(Y)$-submodules,\footnote{By inspecting the construction of the chain map $\psi_q$ that defines the $QH^*(Y)$-action on the Hochschild chain complex by $q\in QH^*(Y)$ \cite[Sec.8]{ritter2017monotone}, one sees that the total slope $\sum \lambda_i$ of words will not increase. Thus $f(\psi_q (y))=\psi_q(f(y))=\psi_q(b(z))=b(\psi_q(z))$ and $\lambda(\psi_q(z))\leq \lambda(z)\leq \lambda$.} with $p\in \R\cup \{\infty\}$, letting $\mathcal{F}^{\infty}:=\mathrm{HH}_*(\mathcal{F}(Y,\Fi))$.\\[2mm]
\noindent \textbf{Acknowledgements.} 
We thank Paul Seidel for very valuable conversations, particularly in the initial stages of this project. We thank Mohammed Abouzaid, %
Dylan Cant, Ivan Smith, Jack Smith, and Nicholas Wilkins 
for helpful conversations. 
We thank Leonid Polterovich for encouraging us to rephrase our results in terms of persistence modules (\cref{Subsection persistence}).
The first author is grateful to the Mathematics Department at Stanford for their hospitality during the author's sabbatical year.
Early stages of this work are in the second author’s DPhil thesis \cite{FZ20}, and he acknowledges support from Oxford University, St Catherine's College, University of Edinburgh and ERC Starting Grant 850713 – HMS.
\section{Example: the Slodowy variety $\mathcal{S}_{32}$}\label{Example Slodowy S32 using mulambda Falpha}  

\subsection{Definition of $\mathcal{S}_{32}$}
We mention for illustration one of many examples from {\PartII }.
The partition $(3,2)$ of $n=5$ defines a standard nilpotent $5\times 5$ Jordan canonical form $e$, with Jordan block sizes $3$ and $2$. This determines a standard $\mathfrak{sl}_2$-triple $(e,f,h)$ in $\mathfrak{sl}_5$, where $h$ is a diagonal matrix with eigenvalues $h_i$, in our case $2,0,-2,1,-1$ \cite[Sec.5.2.1]{FZ20}.\footnote{There is a typo in that thesis: the diagonal entries of $h_k$ are $h_0^k,\ldots,h_{k-1}^k$ starting the numbering at $i=0$.}
This determines a Slodowy slice
$S_{e}=e + \text{ker} (\ad f) \subset \sl_5.$
The nilpotent cone $\mathcal{N}=\{5\times 5 \textrm{ nilpotent matrices} \}\subset \mathfrak{sl}_5$ determines the Slodowy variety $\mathcal{S}_e:=S_e\cap \mathcal{N}$.
Now consider the Springer resolution $\nu: \widetilde{\mathcal{N}}\to \mathcal{N}$, whose fibre $\B^x$ %
over $x\in \mathcal{N}$ consists of all complete flags $\f=\{0\subset F_1 \subset \cdots \subset F_4 \subset \C^5\}$ satisfying $x  F_i\subset F_{i-1}$.
In fact, $\widetilde{\mathcal{N}}\cong T^*\mathcal{B}$ where $\mathcal{B}$ is the complete flag variety for $\C^5.$
The Springer resolution restricts to a resolution $Y:=\widetilde{S}_e=\nu^{-1}(\mathcal{S}_e)\stackrel{\nu}{\to} \mathcal{S}_e,$
with $\dim_{\C} Y=\dim_{\C}\mathcal{S}_e= 4$.
It admits the Kazhdan $\C^*$-action: $t\cdot (x,\f)=(t^2\mathrm{Ad}(t^{-h})x,t^{-h}\f)$, where $t^{-h}$ is the diagonal matrix with entries $t^{-h_i}$, thus $t^2\mathrm{Ad}(t^{-h})x$ is explicitly $t^{2+h_j-h_i}x_{ij}$ on the entries $x_{ij}$ of $x$. 
It turns out that $Y$ is a weight-2 CSR, so the  Maslov index $\mu=\dim_{\C}Y=4$.

\subsection{The fixed loci $\F_\a$, their weights, and their indices $\mu_{\lambda}(\F_\a)$}
In {\PartII } we show that the fixed components $\F_\a$ are all points in this case (thus the moment map $H$ of the $S^1\subset \C^*$ action is a Morse function) %
and we compute their weights:
$\f_{big}$: $(3,\!3,\!-1,\!-1)$, 
$\f_p,\f_w$: $(5,\!3,\!-3,\!-1)$, 
$\f_j',\f_y'$: $(3,\!3,\!-1,\!-1),$ 
$\f_j^3,\f_y^3,\f_j^1,\f_y^1$: $(3,\!1,\!1,\!-1)$, 
$\f_{min}$: $(1,\!1,\!1,\!1).$
We calculate the indices $\mu_{\lambda}(\F_\a)$ at generic slopes $\lambda= T^+$ slightly above $T=0,$ $1/5$, $1/3$, $2/5$, $3/5$, $2/3$, $4/5$ and $1$. These are the only periods when the filtration can possibly change, due to  \cref{Prop filtration is stable}. The following table shows how these indices vary.

{\small
\begin{center}
 \begin{tabular}[t]{rcccccccccc}
 \strut & 
 $\f_{big}$
 &
 $\f_p$
 &
 $\f_w$
 &
 $\f_j'$
 &
 $\f_y'$
 &
$\f_j^3$
 &
 $\f_y^3$
 &
 $\f_j^1$
 &
 $\f_y^1$
 &
$\f_{min}$
 \\
    $H^*(Y):$ 
    &$4$
    &$4$
    &$4$
    &$4$
    &$4$
    &
    $2$
    &$2$
    &$2$
    &$2$
    &$0$    
    \\ 
    $HF^*(\tfrac{1}{5}^+H):$ 
    &$  4$
    &$  2$
    &$  2$
    &$  4$
    &$  4$
    &
    $  2$
    &$  2$
    &$  2$
    &$  2$
    &$  0$    
     \\ 
    $HF^*(\tfrac{1}{3}^+H):$ 
    &$  0$
    &$  2$
    &$  2$
    &$  0$
    &$  0$
    &
    $  0$
    &$  0$
    &$  0$
    &$  0$
    &$  0$ 
     \\ 
    $HF^*(\tfrac{2}{5}^+H):$ 
    &$  0$
    &$  0$
    &$  0$
    &$  0$
    &$  0$
    &
    $  0$
    &$  0$
    &$  0$
    &$  0$
    &$  0$ 
     \\ 
    $HF^*(\tfrac{3}{5}^+H):$ 
    &$  0$
    &$  {-2}$
    &$  {-2}$
    &$  0$
    &$  0$
    &
    $  0$
    &$  0$
    &$  0$
    &$  0$
    &$  0$ 
     \\ 
    $HF^*(\tfrac{2}{3}^+H):$ 
    &$  -4$
    &$  {-2}$
    &$  {-2}$
    &$  {-4}$
    &$  {-4}$
    &
    $  {-2}$
    &$  {-2}$
    &$  {-2}$
    &$  {-2}$
    &$  0$ 
     \\ 
    $HF^*(\tfrac{4}{5}^+H):$ 
    &$  -4$
    &$  {-4}$
    &$  {-4}$
    &$  {-4}$
    &$  {-4}$
    &
    $  {-2}$
    &$  {-2}$
    &$  {-2}$
    &$  {-2}$
    &$  0$ 
     \\ 
    $HF^*(1^+H)\cong H^*(Y)[8]:$ 
    &$  -4$
    &$  {-4}$
    &$  {-4}$
    &$  {-4}$
    &$  {-4}$
    &
    $  {-6}$
    &$  {-6}$
    &$  {-6}$
    &$  {-6}$
    &$  {-8}$ 
    \end{tabular}     
 \end{center}  
 }

In the table, each number indicates a copy of $\k$ placed in the indicated degree. At the start we get the {\MB } indices $\mu_\a$ at the 10 fixed points, confirming \eqref{EqnFrankel}: $H^*(Y)= \k_4^5 \oplus \k_2^4 \oplus \k_0 $.

\subsection{The ranks of the $\FF^{\Fi}_{\lambda}$ filtration}
Rank considerations, using \cref{Cor estimating filtration using degrees}, imply the following:
 \begin{equation*}
|\Fil^{\Fi}_{1/5}\cap H^4(Y)|\geq 2,
\;
\Fil^{\Fi}_{1/3}\supset H^4(Y),
\;
|\Fil^{\Fi}_{1/3}\cap H^2(Y)|\geq 2,
\;
\Fil^{\Fi}_{2/5}\supset 
H^2(Y)\oplus H^4(Y),
\;
\Fil^{\Fi}_{1}=
H^*(Y).
\end{equation*}			
\cref{Cor CSR when unit dies} refines this: 
$\k \f_{min}$
survives until time $1^-$.
All fixed loci except for $F_p,F_w$ are $\tfrac{1}{5}$-stable, so $|\ker c^*_{(1/5)^+}|\leq 2$ by \cref{Prop filtration precise information 2}.
Altogether:\footnote{Although we conjecture that $\Fil^{\Fi}_{1/5}\cap H^4(Y)$ is spanned by $F_p,F_w$, this does not follow from \cref{Prop filtration precise information 2} because the isomorphism $c_{(1/5)^-}^*$ may not preserve the degree $2$ classes.}
$$
|\Fil^{\Fi}_{1/5}|=|\Fil^{\Fi}_{1/5}\cap H^4(Y)| = 2,
\quad 
\Fil^{\Fi}_{1/3}=\k_2^r\oplus  H^4(Y),
\quad 
\Fil_{2/5}^{\Fi}=\Fil^{\Fi}_{1^-} =
H^2(Y)\oplus H^4(Y),
\quad
\Fil^{\Fi}_{1}=
H^*(Y).
$$

The summand $\k_2^r$ of rank $r\in\{2,3,4\}$ is unknown because higher order $T$ contributions to $c^*_{1/5^+}$ and $c^*_{1/3^+}$ may allow $\f_j^3,\f_y^3,\f_j^1,\f_y^1$ to have non-trivial images in the summand $(H^*(\f_p)\oplus H^*(\f_w))[-2]$. 

\subsection{An approximate prediction of the spectral sequence}
\label{Example Intro Slodowy S32 two tables}
We now illustrate the method from
\cref{Subsection existence of orbits and upper bounds for ranks} for $Y=\mathcal{S}_{32}$. On the left, we will tabulate the rank of $HF^*(H_{\lambda})\cong \oplus H^*(\F_\a)[-\mu_{\lambda}(F_\a)]$ by degree ($\mathrm{rk}\,H^*(\F_\a)=1$ as $\F_\a\cong \mathrm{point}$). On the right, we show lower bounds for the ranks of the $E_1$-page spectral sequence for $SH^*(Y,\Fi)$ that can be deduced combinatorially from the left table (without any knowledge of the Morse-Bott manifolds $B_p=\cup_{\beta} B_{p,\beta}$). The column for slope $\lambda=0^+$ is $H^*(Y)$.
\[
{\small
\renewcommand{\arraystretch}{1.1}
\begin{array}{|c||c|c|c|c|c|c|}
\hline
\textrm{degree} 
&
\lambda=0^+
& \tfrac{1}{5}^+
& \tfrac{1}{3}^+
& \tfrac{2}{5}^+
& \cdots
\\[0.2mm]
\hline
\hline
4 & 5 & 3 & 0 & 0 %
& \cdots 
\\
\hline
3 & 0 & 0 & 0 & 0 %
&  \cdots
\\
\hline
2 & 4 & 6 & 2 & 0 %
& \cdots 
\\
\hline
1 & 0 & 0 & 0 & 0 %
&  \cdots
\\
\hline
0 & 1 & 1 & 8 & 10 %
& \cdots
\\
\hline
\cdots & \cdots & \cdots & \cdots & \cdots & \cdots %
\\
\hline
\end{array}
\qquad 
\begin{array}{|c||c|c|c|c|c|c|}
\hline
\textrm{degree} 
&
\lambda=0^+
& \tfrac{1}{5}^+
& \tfrac{1}{3}^+
& \tfrac{2}{5}^+
& \cdots
\\[0.2mm]
\hline
\hline
4 & 5 & 0 & 0 & 0 %
& \cdots 
\\
\hline
3 & 0 & 2 & 3 & 0 %
& \cdots 
\\
\hline
2 & 4 & 2 & {\bf 0} & 0 %
& \cdots 
\\
\hline
1 & 0 & 0 & {\bf 4} & 2 %
& \cdots 
\\
\hline
0 & 1 & 0 & 7 & 2 %
& \cdots 
\\
\hline
\cdots & \cdots & \cdots & \cdots & \cdots & \cdots %
\\
\hline
\end{array}
\qquad 
\renewcommand{\arraystretch}{1.0}
}
\]
We illustrate how to go from the left table to the right. Consider the increase in slope from $0^+$ to $\tfrac{1}{5}^+$: in degree four, 2 fixed points drop from degree 4 to 2. Thus, the $\lambda=\tfrac{1}{5}^+$ column must have 2 generators in degree 3 (to kill the old classes) and 2 generators in degree 2 (to create the new classes). Similarly, one inductively builds each column in the second table. For $\lambda=\tfrac{1}{3}^+$, the boldface ranks 0 and 4 (in degrees 2 and 1) are not known even by spectral sequence methods: the Morse cohomology approximation gives a certain sum $\oplus H^*(B_{p,\beta})$ with ranks 2 and 6 respectively, but we do not know whether the local Floer cohomology is further corrected by Floer solutions that are not approximated by Morse trajectories (this discrepancy causes the second inequality in \eqref{Equation Intro upper bound on F ranks via B}). We abusively refer to this phenomenon as ``vertical differentials'', even though such differentials belong to the $E_0$-page, not the $E_1$-page. 
Those boldface ranks (0,4) could therefore also be (1,5) or (2,6), on the actual $E_1$-page.

This $E_1$-page approximation is so accurate because all odd classes of a column must kill classes strictly to the left on some $E_{\geq 1}$-page, since columns with slope values $<\lambda$ give a spectral sequence converging to $HF^*(H_{\lambda})$ which lies in even degrees by \eqref{Equation intro HF lambda H splits}. 
Newly created even degree classes cannot kill classes to the left, as there are no odd classes there.
The only ambiguity in the approximate $E_1$-page arises if a column simultaneously kills $r$ degree $2d$ classes and creates $r$ degree $2d$ classes: this contributes $r$ generators to the column in degrees $2d-1$ and $2d$ respectively. E.g.\,$(0,4)$ above is $(0,4)+r(1,1)$ for some $r\in \{0,1,2\}$ on the actual $E_1$-page.
We can bound how bad this ambiguity may be: $r$ is bounded above by the total number of classes to the left in degree $2d$, by rank considerations.

The lower bounds for $|\FF^{\Fi}_{\lambda}|$ in this approach coincide with \cref{Cor estimating filtration using degrees}. We emphasize that effective upper bounds only arise thanks to our second paper \cite{RZ2} which yields the right-most bound in \eqref{Equation Intro upper bound on F ranks via B}. E.g.\,even the simplest top entries (0,2) of the $\tfrac{1}{5}^+$ column in the right-table is $(0,2)+r(1,1)$ for $r\in \{0,1,2,3\}$ on the actual $E_1$-page; the natural way to see $r=0$ is to verify that $\sqcup B_{1/5,\beta}=S^1\sqcup S^1$.
\subsection{The {\MBF} spectral sequences}
For the sake of comparison, we now show the two $E_1$-pages of the {\MBF} spectral sequences that converge to $SH^*(Y,\Fi)=0$ and to $S^1$-equivariant symplectic cohomology $ESH^*(Y,\Fi)=0$;
{\PartII} will discuss these in more detail. 

\begin{center}
\includegraphics[scale=0.7]{S32.pdf}
\\[2mm]
\includegraphics[scale=0.66]{S32_eq.pdf}
\end{center}

In the main columns, each dot contributes $1$  to the rank. %
In the first picture, the arrows indicate how edge differentials on the $E_1$ and higher pages must kill $H^*(Y)$. In the second picture again all classes must cancel, so arrows go from odd classes to even classes in degree one higher.
The $0$-th column $H^*(Y;\k)\otimes_{\k} \mathbb{F}$ consists of copies of the $\k[u]$-module $\mathbb{F}$ mentioned in \cref{Subsection intro S1 equiv SH 2}, so the smaller dots indicate $u^{-j}\cdot$\,(generator) for $j\geq 1$.
The other columns
are substantially different in the equivariant case because the $S^1$-reparametrisation action on $1$-orbits means that instead of the ordinary cohomology $H^*(B_{k/m})[-\mu(B_{p,\c})]$ of the slices \eqref{Equation Bkm slices intro} ({\MB } manifolds of period-$k/m$ $S^1$-orbits), we have
\begin{equation}\label{EH is in odd degree intro}
EH^*(B_{p,\c})[-\mu(B_{p,\c})] \cong
H^*\left(B_{p,\c}/S^1\right)[-1-\mu(B_{p,\c})].
\end{equation}
We explain in \cite{RZ2} that in many examples, including the current example, 
\eqref{EH is in odd degree intro} lies in odd degrees
and $EH^*(Y)$ lies is even degrees;
so the spectral sequence
for $ESH^*_+(Y,\Fi)$ collapsed
and we can read off $H^*(Y)\otimes \mathbb{F}$ from it, in fact one can also recover $SH^*_+(Y)\cong \ker (u:ESH^*_+(Y)\to ESH^*_+(Y))$. 

The spectral sequences 
show clearly that
$|\Fil^{\Fi}_{1/5}| = 2$: precisely 2 classes in the 0-th column get killed by the 1-st column.
However, %
in both cases, for different reasons, one cannot determine the rank $r\in\{2,3,4\}$ above. In the first spectral sequence, this is because of the possible presence of residual $E_0$-page ``vertical differentials'' in the $B_{1/3}$-column arising from an energy-spectral sequence. In the second spectral sequence, this is because the spectral sequence forgets the original $u$-action: we conjecturally expect the last four dots in the $B_{1/3}$-column to kill the first four dots in the $H^2(Y)$ slot, but we cannot exclude that the $u$-action on the $B_{1/3}$-column might hit the $B_{1/5}$-column.
\section{Symplectic $\C^*$-manifolds: definition and properties}%
 \label{Sec mfds with holo C* acn}
\subsection{Moment map, fixed locus, convergence points, and contracting actions}
\begin{de}\label{Def:CstarManifold}
A \textbf{symplectic $\C^*$-manifold} $(Y,\omega,\J,\Fi)$ consists of a connected\footnote{This condition is inessential, but we assume it for convenience.} symplectic manifold $(Y,\omega)$, a choice of $\omega$-compatible almost complex structure $\J$ on $Y$ (so $g(\cdot, \cdot):=\omega(\cdot,\J\cdot)$ is a Riemannian metric), and a {\bf \ph }\footnote{{\ph}ity of $\Fi$ means that the differential of $\varphi:\C^*\times Y \to Y$ is $((i,\J),\J)$-holomorphic.}
$\C^*$-action $\Fi$ on $(Y,\J)$, so that its $S^1$-part is Hamiltonian.
A \textbf{symplectic $\C^*$-submanifold} $W\subset Y$ is a connected $\C^*$-invariant submanifold which is $I$-\ph, 
so $\omega$-symplectic.\footnote{$\om(v,\J v)=g(v,v)\neq 0$ for $v\neq 0$.} Thus $(W,\omega|_{W},\J|_{W},\Fi|_{W})$ is a symplectic $\C^*$-manifold. 

\end{de} 

\begin{rmk}\label{Rmk about S1 actions}
If we only had an $\J$-{\ph }\footnote{meaning $(\psi_t)_*\circ \J = \J \circ (\psi_t)_*$ for $t\in \R$.} $S^1$-action, 
$\psi_t:=\varphi_{e^{2\pi it}}:Y \to Y$, then this locally extends to a $\C^*$-action. The Lie derivative of its vector field $X_{S^1}$ satisfies $\mathcal{L}_{X_{S^1}}(\J)=0$, so $X_{S^1}$ and $X_{\R_+}:=-\J X_{S^1}$ commute.\footnote{$[X_{S^1},\J X_{S^1}]=\mathcal{L}_{X_{S^1}}(\J X_{S^1})=\J \mathcal{L}_{X_{S^1}}(X_{S^1})=0$.} So we get a partially defined {\ph } map $\varphi:\C^*\times Y \to Y$, $\varphi_{e^{2\pi(s+it)}} = \mathrm{Flow}_{X_{\R_+}}^s\circ \psi_t$. If  $X_{\R_+}$ is integrable then this $\varphi$ becomes a globally defined $\C^*$-action.
\end{rmk}

Instead of ``{\ph},'' we will abusively just use the term ``holomorphic'' from now on.
If $H^1(Y)=0$, asking for the $S^1$-action to be Hamiltonian is equivalent to it being symplectic.\footnote{This is a standard fact that follows from the Cartan formula, see e.g. \cite[footnote 2 in Sec.2.6]{McLR18}.}

Let $\Fi^y:\C^* \to Y$, $\tau \mapsto \tau \cdot y$, denote the $\C^*$-action acting on $y\in Y$. 
By definition, $\varphi^y$ is $(i,\J)$-holomorphic.
We call $\R_+$ the subgroup $\R\hookrightarrow \C^*$, $s\mapsto e^{2\pi s}$, and $S^1$ the subgroup $\R/\Z \hookrightarrow \C^*$, $s\mapsto e^{2\pi i s}\subset \C^*$. The flows of the {\bf $\R_+$-part} and {\bf $S^1$-part} of $\Fi$ are given by the vector fields %
\begin{equation}\label{eqn XR+ and XS1}
X_{\R_+}(y) = \partial_s|_{s=0} \,\Fi^{y}(e^{2\pi s}) 
  \quad \textrm{ and }\quad
X_{S^1}(y)= \partial_s|_{s=0}
\,\Fi^{y}(e^{2\pi is}).
\end{equation}
As we parameterise $S^1$ by $s\in \R/\Z,$ the $S^1$-flow is $1$-periodic. The {\bf moment map} $H$ is 
the smooth function \eqref{Equation intro moment map} whose Hamiltonian vector field $X_H=X_{S^1}$ generates the $S^1$-action (where $\omega(\cdot,X_H)=dH$).

\begin{lm}\label{Lemma:XR+ is nabla h}
$X_{\R_+}=\nabla H$ and $X_{S^1}=X_H=\J\nabla H = \J X_{\R_+}$.
\end{lm}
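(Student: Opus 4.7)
The plan is to use the holomorphicity of $\Fi^y$ to relate the two vector fields, and then use the $\omega$-compatibility of $\J$ to convert the moment map identity for $X_{S^1}$ into a gradient identity for $X_{\R_+}$.

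First I would extract the relation $X_{S^1} = \J X_{\R_+}$ from holomorphicity. Using the local holomorphic coordinate $z = s + it$ on $\C^*$ near $\tau = 1$ defined by $\tau = e^{2\pi z}$, the standard complex structure $i$ on $T_1\C^*$ sends $\partial_s \mapsto \partial_t$. Because $\Fi^y:\C^* \to Y$ is $(i,\J)$-holomorphic, its differential at $1 \in \C^*$ intertwines the two complex structures:
\begin{equation*}
d\Fi^y_1 \circ i = \J \circ d\Fi^y_1.
\end{equation*}
Evaluating both sides on $\partial_s$ and using the definitions \eqref{eqn XR+ and XS1},
\begin{equation*}
X_{S^1}(y) = d\Fi^y_1(\partial_t) = d\Fi^y_1(i\,\partial_s) = \J\, d\Fi^y_1(\partial_s) = \J\, X_{\R_+}(y).
\end{equation*}

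Second, I would use the moment map equation $\om(\cdot, X_{S^1}) = dH$ together with the identity $g(\cdot,\cdot) = \om(\cdot, \J\cdot)$ to identify $X_{\R_+}$ with $\nabla H$. For any tangent vector $v$,
\begin{equation*}
dH(v) = \om(v, X_{S^1}) = \om(v, \J X_{\R_+}) = g(v, X_{\R_+}).
\end{equation*}
Since $\nabla H$ is the unique vector field satisfying $g(v,\nabla H) = dH(v)$ for all $v$, we conclude $X_{\R_+} = \nabla H$, and therefore $X_{S^1} = \J \nabla H$.

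There is no real obstacle here: the only nontrivial input is that holomorphicity of the orbit map is $\C$-linearity of its differential, and the rest is bookkeeping between $\om$, $\J$, and $g$. One tiny care-point is the sign/normalisation in the convention $\om(\cdot, X_H) = dH$ and in the parametrisation $\tau = e^{2\pi z}$; these must be matched consistently with \eqref{eqn XR+ and XS1}, which they are, so no factors of $2\pi$ appear in the final identities.
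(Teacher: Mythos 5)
Your proof is correct and takes essentially the same approach as the paper: both arguments consist of extracting $X_{S^1}=\J X_{\R_+}$ from the holomorphicity of $\Fi^y$ and then using the $\omega$-compatibility identity $g(\cdot,\cdot)=\omega(\cdot,\J\cdot)$ to match $\nabla H$ with $X_{\R_+}$ (the paper just derives $X_H=\J\nabla H$ first and then compares with $X_{S^1}=\J X_{\R_+}$, which is only a reordering of the same two steps).
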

\begin{proof}
By definition, $\omega(\cdot,\J\nabla H)=g(\cdot, \nabla H)=dH=\omega(\cdot,X_H)$ so $X_H=\J\nabla H$.
As $\varphi^{y}:\C^*\to Y$ is holomorphic, 
$X_{S^1}(y)=d\varphi^{y}\circ 2\pi i = \J (d\varphi^{y} \circ 2\pi) = \J X_{\R_+}.$
\end{proof}

\begin{lm}\label{Lemma Fixed loci agree}
The fixed locus of the $\C^*$-action equals the fixed locus of the $S^1$-action, 
$$
\F:=\mathrm{Fix}(\Fi) = \mathrm{Fix}(\Fi|_{S^1})= \mathrm{Crit}(H) = \mathrm{Zeros}(X_{\R_+}).  %
$$
This is a closed subset of $Y$, and if the $-X_{\R_+}$ flow from $y\in Y$ converges then its limit %
lies in $\F$. 
\end{lm}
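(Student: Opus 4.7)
The plan is to prove the chain of equalities one link at a time, exploiting \cref{Lemma:XR+ is nabla h} which identifies the two infinitesimal generators as $X_{\R_+}=\nabla H$ and $X_{S^1}=X_H=\J\nabla H$. The easy inclusions are $\mathrm{Fix}(\Fi)\subseteq \mathrm{Fix}(\Fi|_{S^1})$ (since $S^1\subseteq \C^*$), together with the standard identity $\mathrm{Crit}(H)=\mathrm{Zeros}(\nabla H)=\mathrm{Zeros}(X_{\R_+})$ coming from non-degeneracy of the Riemannian metric $g$, and $\mathrm{Fix}(\Fi|_{S^1})=\mathrm{Zeros}(X_{S^1})=\mathrm{Zeros}(X_H)=\mathrm{Crit}(H)$ coming from non-degeneracy of $\omega$ in the defining identity $\omega(\cdot,X_H)=dH$.

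The remaining inclusion $\mathrm{Fix}(\Fi|_{S^1})\subseteq \mathrm{Fix}(\Fi)$ is where the holomorphicity of $\Fi$ is essential. The plan is: if $y\in Y$ satisfies $X_{S^1}(y)=0$, then by the relation $X_{S^1}=\J X_{\R_+}$ from \cref{Lemma:XR+ is nabla h} and the invertibility of $\J$ on $T_yY$, we get $X_{\R_+}(y)=0$ as well. Hence $y$ is fixed by both the $S^1$-flow and the $\R_+$-flow. Since every element of $\C^*$ can be written as $e^{2\pi s}\cdot e^{2\pi it}$ with $s,t\in \R$, and $\Fi$ is a group action with these two one-parameter subgroups as generators, $y$ is fixed by all of $\C^*$.

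For the topological statement, $\F=\mathrm{Zeros}(X_{\R_+})$ is the zero locus of a continuous vector field, hence a closed subset of $Y$. For the convergence claim, suppose the $-X_{\R_+}$ flowline $\gamma(s):=\mathrm{Flow}^{-s}_{X_{\R_+}}(y)$ converges to some point $y_\infty\in Y$ as $s\to \infty$. This is a standard dynamical-systems argument: if $X_{\R_+}(y_\infty)\neq 0$, then by continuity $|X_{\R_+}|\geq c>0$ on a small neighbourhood $U$ of $y_\infty$, so the flowline would traverse $U$ in bounded time and exit, contradicting convergence to $y_\infty$. Hence $X_{\R_+}(y_\infty)=0$, so $y_\infty \in \F$.

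No step here presents a genuine obstacle — each is a short verification — but the conceptually important input is the holomorphicity of $\Fi$, which collapses the two potentially different fixed loci into one via the identity $X_{S^1}=\J X_{\R_+}$; without it, one could easily have $S^1$-fixed points that are not $\R_+$-fixed.
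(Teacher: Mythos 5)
Your argument is correct and follows the same route as the paper: the crucial observation in both is that $X_{\R_+}$ and $X_{S^1}$ are related by $\J$ (so have the same zero set), which forces the $S^1$-fixed locus, the $\C^*$-fixed locus, and $\mathrm{Crit}(H)$ all to coincide. The only difference is that you also spell out the closedness and convergence claims, which the paper treats as immediate.
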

\begin{proof}
Since $X_{\R_+}=\J X_{S^1}$, a fixed point of the $S^1$-action is automatically a fixed point of the $\C^*$-action, so the two fixed loci agree and coincide with $\mathrm{Crit}(H)=\mathrm{Zeros}(X_{S^1})=\mathrm{Zeros}(X_{\R_+})$ since $X_{S^1}=X_H$.
\end{proof}

\begin{lm}\label{LemmaSec2FixedLocus}
$\F$ is %
a symplectic $\J$-{\ph } submanifold, whose connected components $\F_\a$ %
are the %
critical submanifolds of the {\MB } function $H$. The {\MB} indices and {\MB} coindices of the $\F_\a$ are all even.
\end{lm}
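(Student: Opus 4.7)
The plan is to work locally at a fixed point $p \in \F$, linearise both the $S^1$-action and the vector field $X_{S^1}$ there, and read everything off the resulting weight-space decomposition of $T_pY$.

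First I would observe that the $S^1$-subgroup of $\Fi$ is a compact Lie group acting smoothly on $Y$, so by the standard slice theorem its fixed locus $\F$ is a closed smooth submanifold with
\begin{equation*}
T_p\F \;=\; (T_pY)^{S^1} \qquad \textrm{for every } p\in\F.
\end{equation*}
Because $\Fi$ is \ph, the derivative representation $d\psi_t|_p\in GL(T_pY)$ commutes with $\J_p$; diagonalising this unitary $S^1$-representation yields the $\C$-linear weight decomposition
\begin{equation*}
T_pY \;=\; \bigoplus_{k\in\Z} H_k,\qquad d\psi_t|_p\bigr|_{H_k} = e^{2\pi i k t}\cdot \mathrm{id}_{H_k},
\end{equation*}
where each $H_k$ is a $\J_p$-complex subspace. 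Taking $S^1$-fixed vectors gives $T_p\F = H_0$, which is $\J_p$-invariant. Hence each connected component $\F_\a$ is $\J$-\ph, and $\omega$-compatibility of $\J$ then forces $\omega|_{\F_\a}$ to be non-degenerate, so $\F_\a$ is symplectic.

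Next I would compute $\mathrm{Hess}(H)_p$ to verify the Morse-Bott property. Since $X_{S^1}(p)=0$, the covariant linearisation $A:=\nabla X_{S^1}|_p\in\mathrm{End}(T_pY)$ is well-defined (independent of the connection) and generates the linearised flow $d\psi_t|_p=\exp(tA)$, so from the weight decomposition $A|_{H_k}=2\pi k\,\J_p$. Differentiating the identity $dH=\omega(\cdot,X_{S^1})$ at the critical point $p$ (where the $X_{S^1}(p)=0$ kills the connection-dependent and $\nabla\omega$ terms, so that no integrability of $\J$ is needed) yields, for $v,w\in T_pY$,
\begin{equation*}
\mathrm{Hess}(H)_p(v,w) \;=\; \omega_p(w,Av).
\end{equation*}
For $v\in H_k$ this equals $2\pi k\,\omega_p(w,\J_p v)=2\pi k\, g_p(v,w)$. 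Because $g_p$ is $S^1$-invariant, the $H_k$'s are mutually $g_p$-orthogonal, giving
\begin{equation*}
\mathrm{Hess}(H)_p \;=\; \bigoplus_{k\in\Z} 2\pi k\cdot g_p|_{H_k},
\end{equation*}
which is non-degenerate on $\bigoplus_{k\neq 0}H_k$ and has null space exactly $H_0 = T_p\F$. This is the Morse-Bott condition.

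Finally, the Morse-Bott index at $\F_\a$ is $\dim_\R\bigl(\bigoplus_{k<0}H_k\bigr)=2\sum_{k<0}\dim_\C H_k$, and analogously the coindex is $2\sum_{k>0}\dim_\C H_k$; both are even because the $H_k$ are complex subspaces. The mildly delicate step is the Hessian calculation in the almost-complex setting, but since it is performed only at a zero of $X_{S^1}$ all potentially obstructive terms vanish, so no integrability of $\J$ is ever used.
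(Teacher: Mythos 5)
Your proof is correct, and it follows the same overall structure as the paper's (weight-space decomposition of $T_pY$ under the linearised $S^1$-action, identification $T_p\F=H_0$ which is $\J$-invariant hence symplectic, and parity of the index/coindex from complexity of the weight spaces). Where you genuinely diverge is in the Morse--Bott step: the paper cites Kobayashi for smoothness of $\F$ and then adapts Frankel's Lemma 1 (which is stated for K\"ahler manifolds), noting that the needed identity $\J S=S\J$ still follows from holomorphicity of $\varphi$ in the almost-complex case; you instead give a self-contained Hessian computation. Your argument is cleaner on this point: differentiating $dH=\omega(\cdot,X_{S^1})$ at the zero $p$ of $X_{S^1}$ kills the $\nabla\omega$ and connection-dependent terms, so $\mathrm{Hess}(H)_p(v,w)=\omega_p(w,Av)$ with $A=\nabla X_{S^1}|_p$ acting as $2\pi k\,\J_p$ on $H_k$, giving $\mathrm{Hess}(H)_p=\bigoplus_k 2\pi k\,g_p|_{H_k}$ with null space exactly $H_0$. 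This makes transparent exactly why integrability of $\J$ is never used, a point the paper has to flag with a remark about adapting Frankel; the paper's version, on the other hand, leans on the literature and so is shorter on the page. Both are valid; yours is the more verifiable in the non-integrable setting that this paper actually works in.
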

\begin{proof}
By Kobayashi \cite{Ko58}, $\mathrm{Fix}(\Fi|_{S^1})$ is a smooth oriented even-dimensional submanifold: by averaging, we can pick an $S^1$-invariant Riemannian metric, so the $S^1$-action yields a Killing vector field as required in \cite{Ko58}. We recall the short proof. The $S^1$-invariance of the metric ensures that at a fixed point $\p$, a neighbourhood of $\p$ can be $S^1$-equivariantly identified with a neighbourhood of $0\in T_\p Y$, using the linearised $S^1$-action on $T_\p Y$ \cite[Ch.VI, Thm.2.2]{Bre72}.
The fixed locus near $\p$ is parametrised via $\exp_p$ by the vector subspace $H_0$ of all $v\in T_\p Y$ with $d_\p\varphi\cdot v = 0$. The holomorphicity assumption on $\varphi$ ensures that $\J H_0=H_0$. Viewing $T_\p Y$ as an $S^1$-representation via the action of $d_\p \varphi$, the orthogonal complement of $H_0\subset T_\p Y$ decomposes orthogonally into a direct sum of two-dimensional planes $T_i$ which are rotated with speed $w_i\in \Z.$
Thus, as $Y$ is even-dimensional and oriented, so is $H_0$.
Although we do not assume that $Y$ is \Kh, Frankel's argument that the Hamiltonian is {\MB } in \cite[Lem.1]{Frankel59} nevertheless applies in our case, by letting  $\phi$ (in his notation) be our Hamiltonian $H$, $X$ be our $S^1$-vector field, and using our holomorphicity assumption for $\varphi$ to get the equation $\J S=S\J $ needed in his proof. 
By definition, the {\MB } index $\mu_\a$ of a component $\F_\a\subset \F$ equals the sum of the real dimensions of the planes $T_i$ 
which are rotated with negative speeds $w_i<0$. The coindex $\dim_{\R}Y - \dim_{\R}\F_a - \mu_\a$ is also even as $Y,\Fa$ are symplectic, hence have even real dimension.
\end{proof}

\begin{rmk}\label{Rmk hyperbolic fixed points}
   The critical points of the {\MB } function $H$ are hyperbolic (the flow on the $T_j$ complex line above is $e^{2\pi i w_j}$ which contributes two eigenvalues of the differential of the flow of modulus equal to $2\pi |w|\in 2\pi \Z$, so not $1$), 
so the unstable/stable manifolds are non-properly embedded Euclidean spaces (a classical theorem by Hadamard--Perron and Hartman--Grobman). 
The non-properness occurs if the flowlines converge to a critical point that does not belong to that unstable/stable manifold, or if a flowline goes to infinity in finite time (which can only occur if $H$ is not proper).
\end{rmk}

\begin{de}
If $\varphi_t(y)$ has a limit as $\C^* \ni t\to 0$, we say $y\in Y$ has \textbf{convergence point} $y_0$, where  $$\displaystyle \yinfty:=\lim_{\C^* \ni t\fun 0} t\cdot \x \;\in \F.$$
\end{de}

\begin{lm}
A $y\in Y$ has a convergence point if and only if the $-\nabla H$ flowline from $y$ converges. 
The subset of points $y\in Y$ with a given convergence point $y_\iinfty\in \F$ is the stable manifold $W^s_{-\nabla H}(y_\iinfty)$. 
\end{lm}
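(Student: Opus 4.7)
The plan is to reduce the statement to a simple compactness-and-continuity argument by using the $\R_+ \times S^1$ decomposition of $\C^*$, together with the fact that $X_{\R_+} = \nabla H$ from \cref{Lemma:XR+ is nabla h}. Write a generic element of $\C^*$ near $0$ as $t = e^{2\pi(s+i\theta)}$ with $s\to -\infty$ and $\theta \in \R/\Z$. Since the $\R_+$-part and the $S^1$-part of $\Fi$ come from commuting vector fields $X_{\R_+}$ and $X_{S^1}$ (see \cref{Rmk about S1 actions}), their flows commute, so
\begin{equation*}
\Fi_t(y) \;=\; \mathrm{Flow}^{s}_{X_{\R_+}}\!\bigl(\psi_\theta(y)\bigr)
         \;=\; \psi_\theta\!\bigl(\mathrm{Flow}^{s}_{X_{\R_+}}(y)\bigr),
\end{equation*}
where $\psi_\theta$ is the $S^1$-flow. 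Moreover, $\mathrm{Flow}^{s}_{X_{\R_+}}(y) = \mathrm{Flow}^{s}_{\nabla H}(y)$, so as $s\to -\infty$ this is precisely the forward $-\nabla H$ flow running to time $+\infty$.

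For the forward direction, suppose $\yinfty = \lim_{t\to 0}\Fi_t(y)$ exists. Restricting to real $t = e^{2\pi s}\in \R_+$ (i.e.\ $\theta=0$) gives $\mathrm{Flow}^{s}_{\nabla H}(y)\to \yinfty$ as $s\to -\infty$, so the $-\nabla H$ flowline from $y$ converges to $\yinfty$; by \cref{Lemma Fixed loci agree}, $\yinfty\in \F$.

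For the converse, assume the $-\nabla H$ flowline from $y$ converges to some $\yinfty \in \F$. Take an arbitrary sequence $t_n=e^{2\pi(s_n+i\theta_n)}\to 0$, so $s_n\to -\infty$ and $\theta_n\in \R/\Z$. By the displayed identity and our assumption, $\mathrm{Flow}^{s_n}_{X_{\R_+}}(y)\to \yinfty$. Since $\yinfty\in \F$ is fixed by $\Fi|_{S^1}$, continuity of the joint map $(\theta,z)\mapsto \psi_\theta(z)$ gives, for any convergent subsequence $\theta_{n_k}\to \theta_\infty$,
\begin{equation*}
\Fi_{t_{n_k}}(y) \;=\; \psi_{\theta_{n_k}}\!\bigl(\mathrm{Flow}^{s_{n_k}}_{X_{\R_+}}(y)\bigr)
\;\longrightarrow\; \psi_{\theta_\infty}(\yinfty) \;=\; \yinfty.
\end{equation*}
As $S^1$ is compact, every subsequence of $\{\theta_n\}$ has a convergent sub-subsequence, so the whole sequence $\Fi_{t_n}(y)$ converges to $\yinfty$; hence $\lim_{t\to 0}\Fi_t(y) = \yinfty$. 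This proves the first claim and simultaneously identifies the convergence points on both sides: $y$ has convergence point $\yinfty$ if and only if the $-\nabla H$ flowline from $y$ converges to $\yinfty$, i.e.\ $y\in W^s_{-\nabla H}(\yinfty)$.

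The only subtle point is the compactness-and-continuity step in the converse, which requires using the commutativity of $X_{\R_+}$ and $X_{S^1}$ (rather than just the $\R_+$ subgroup), so that the ``angular'' part $\psi_{\theta_n}$ can be absorbed via the $S^1$-invariance of $\yinfty$. Everything else is essentially a bookkeeping of the $\R_+\times S^1$ decomposition of $\C^*$ and the identification $X_{\R_+}=\nabla H$.
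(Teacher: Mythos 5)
Your proof is correct and follows essentially the same route as the paper's: reduce the $\C^*$-limit to the $\R_+$-limit via $X_{\R_+}=\nabla H$, then use continuity of the $S^1$-action together with $S^1$-invariance of the limit point $y_0\in\F$ (justified by \cref{Lemma Fixed loci agree}) to upgrade the ray limit to a genuine $\C^*$-limit. The only stylistic difference is that you make the compactness-of-$S^1$ step explicit via the subsequence/sub-subsequence argument, whereas the paper asserts the uniform convergence of the circle $S^1\cdot y$ more tersely; the underlying idea is identical.
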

\begin{proof}
This essentially follows from \cref{Lemma Fixed loci agree}.
The $-\nabla H=-X_{\R_+}$ flowline $\gamma(s)$ with $\gamma(0)=y$ corresponds to the action $\gamma(s)=e^{-2\pi s}\cdot y$ by $-s\in \R_+$. So the convergence assumption on the flowline is equivalent to assuming $\lim (t\cdot y)$ exists for $\R_+\ni t \to 0$. If that limit exits then,
by continuity of the $S^1$-action, $e^{2\pi i\theta}\lim (t\cdot y) = \lim ((e^{2\pi i\theta} t)\cdot y)$ as $\R_+\ni t \to 0$. It follows that the whole circle $S^1\cdot y$ will uniformly converge to the same limit point under the action of $\C^*\ni t \to 0.$
\end{proof}

\begin{de}\label{Def:Contracting CstarManifold}
For a symplectic $\C^*$-manifold $(Y,\omega,\J,\Fi)$,
the $\C^*$-action $\Fi$ is {\bf contracting} if there is a compact subdomain $Y^{\mathrm{in}}\subset Y$ such that the $-X_{\R_+}$ flow starting from any point $y\in Y$ will eventually enter and stay in $Y^{\mathrm{in}}$.
In particular, $\F\subset Y^{\mathrm{in}}$.
\end{de}

\begin{lm}\label{Lemma H bdd below}\strut The following are equivalent:
\begin{enumerate}
\item $\varphi$ is contracting.
    \item  %
    $H$ is bounded below and $\F$ is compact.
    \item %
    \label{Semiprojective}
    $\F$ is compact and all points have convergence points.
\end{enumerate}

\end{lm}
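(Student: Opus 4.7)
The plan is to prove the cyclic implications $(1) \Rightarrow (2) \Rightarrow (3) \Rightarrow (1)$. For $(1) \Rightarrow (2)$: $\F$ is closed by \cref{Lemma Fixed loci agree} and lies in the compact $Y^{\mathrm{in}}$, so it is compact; and along any $-\nabla H$ flowline $\gamma_y$, which is complete for all $s \in \R$ since $\Fi$ is a globally defined $\C^*$-action, one has $\frac{d}{ds}H(\gamma_y(s)) = -|\nabla H(\gamma_y(s))|^2 \leq 0$, and eventually $\gamma_y(s) \in Y^{\mathrm{in}}$, giving the lower bound $H(y) \geq \min_{Y^{\mathrm{in}}} H > -\infty$.

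For $(2) \Rightarrow (3)$: fix $y \in Y$. Since $s \mapsto H(\gamma_y(s))$ is non-increasing and bounded below, it converges to some $L$, and $\int_0^\infty |\nabla H(\gamma_y(s))|^2 \, ds = H(y) - L < \infty$. The crucial step, and the main obstacle, is showing that $\gamma_y$ has precompact image. My proposed approach is to prove that the set $U \subset Y$ of points whose forward $-\nabla H$ flow has precompact image is both open and closed. Openness follows from hyperbolicity of critical points (\cref{Rmk hyperbolic fixed points}): in a neighborhood of each $\F_\a$, the local Hadamard--Perron picture gives stable directions along which nearby flows are funneled to $\F_\a$. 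Closedness uses compactness of $\F$: given $y_n \to y$ with $y_n \in U$ and flow-limits $p_n \in \F$, a subsequence $p_n \to p \in \F$, and the completeness of the global $\C^*$-flow together provide a continuous-dependence argument that places $\gamma_y(s)$ inside a fixed compact neighborhood of $p$ for large $s$. Connectedness of $Y$ then forces $U = Y$. Once precompact, $\gamma_y(s)$ converges to a single point of $\F$ via the local normal form of $H$ around each {\MB} critical manifold (\cref{LemmaSec2FixedLocus}), which in the hyperbolic case yields exponential convergence of the gradient flow in the normal directions.

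For $(3) \Rightarrow (1)$: using the local Hadamard--Perron structure near each $\F_\a$ from \cref{Rmk hyperbolic fixed points}, we choose a compact neighborhood $V_\a$ of $\F_\a$ whose interior is a local stable-unstable product that is positively invariant under the flow once an orbit enters it. By compactness, finitely many $V_\a$ cover $\F$, and we set $Y^{\mathrm{in}} := \bigcup_\a \overline{V_\a}$, which is compact. Convergence from (3) ensures each orbit $\gamma_y$ eventually enters some $V_\a$, after which local invariance keeps it in $Y^{\mathrm{in}}$, establishing the contracting property. The principal difficulty remains in $(2) \Rightarrow (3)$: without assuming $H$ is proper, one must combine the {\MB} structure, compactness of $\F$, and the global $\C^*$-action to preclude flowlines escaping to infinity while their $H$-values remain bounded; the completeness of the $\C^*$-flow (rather than merely the $\nabla H$-flow) is essential to this exclusion.
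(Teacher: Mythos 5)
Your cyclic decomposition $(1)\Rightarrow(2)\Rightarrow(3)\Rightarrow(1)$ differs from the paper's proof, which argues $(2)\Rightarrow(1)$ and $(1)\Rightarrow(2)$ directly, then closes with $(2)\Leftrightarrow(3)$. The paper's central device for the hard direction — which you did not find — is to reparametrize the negative gradient flow by $-\nabla H/\|\nabla H\|^2$ on $Y\setminus\F$: along this vector field, $H$ decreases at exactly rate $1$, so if $H$ is bounded below the reparametrized flow exists only for a bounded time interval and must therefore accumulate on $\F\subset Y^{\mathrm{in}}$. This one-line rescaling handles the passage from a lower bound on $H$ to the contracting property, and it is the idea your proposal labours to replace.

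The open/closed argument you propose for $(2)\Rightarrow(3)$ has a genuine gap. Openness of the set $U$ of points with precompact forward $-\nabla H$-orbit does \emph{not} follow from hyperbolicity: if $\gamma_y$ converges to $p\in\F_\a$ and $\F_\a$ has non-trivial unstable directions (which is the case for every $\F_\a$ with $\mu_\a>0$), nearby points shadow $\gamma_y$ into a neighbourhood of $\F_\a$ and then exit along the unstable manifold, and nothing you have said prevents those exiting orbits from escaping to infinity. The Hadamard--Perron picture funnels orbits towards $\F_\a$ only in the stable directions; it gives no positive-invariance near a saddle and hence no openness. The closedness half rests on continuous dependence, which is a finite-time statement and cannot on its own trap $\gamma_y(s)$ as $s\to\infty$. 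Your $(3)\Rightarrow(1)$ step fails for the same reason: the local stable--unstable products $V_\a$ are not positively invariant under $-\nabla H$ (an orbit off the local stable manifold leaves $V_\a$ through the unstable boundary), so $\bigcup_\a\overline{V_\a}$ is not a trapping region and cannot serve as $Y^{\mathrm{in}}$. You correctly flagged precompactness of flowlines as the essential obstacle, but the topological route you chose does not close it; the reparametrization of the gradient flow is the missing tool.
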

\begin{proof}
If $\F$ is compact, we can pick a compact neighbourhood $Y^{\mathrm{in}}$ of $\F$.
The flow of $-\nabla H/\|\nabla H\|^2$ starting from $y\notin \F=\mathrm{Crit}(H)$ will decrease $H$ at the rate of $1$ in time $1$ unless the flowline reaches $\F$. So if $H$ is bounded below then this flowline must reach $Y^{\mathrm{in}}$ in finite time. Thus $\varphi$ is contracting. 
 Conversely, suppose $\varphi$ is contracting. As $\F\subset Y^{\mathrm{in}}$ is closed and $Y^{\mathrm{in}}$ is compact, also $\F$ is compact. There are no zeros of $X_{\R_+}$ outside of $Y^{\mathrm{in}}$ by the contracting assumption, so
$H$ achieves a global minimum inside $Y^{\mathrm{in}}$ as $-\nabla H$-flowlines are eventually in $Y^{\mathrm{in}}$. So $H$ is bounded below. Finally, the assumption on convergence points in (\ref{Semiprojective}) is equivalent to saying $-\nabla H$-flowlines converge to points in $\F$, which is equivalent to saying the absolute minima of $H$ lie in $\F$ (assuming $\F$ is compact).
\end{proof}

{\bf Assume henceforth that $(Y,\omega,\J,\Fi)$ is a symplectic $\C^*$-manifold with contracting $\Fi.$}
By condition (\ref{Semiprojective}), these are symplectic generalisations of smooth semiprojective varieties \cite{Hausel-Villegas}.

\subsection{The topology of the core}\label{Subsection topology of the core}

For any symplectic $\C^*$-manifold $Y$ with contracting $\Fi$, define %
$$\text{Core}(Y) :=\{y\in Y \mid \lim_{\C^* \ni t\fun \infty} t\cdot y \text{ exists}\}=\{y\in Y \mid \textrm{the }+\nabla H \text{-flowline from }y\text{ converges}\}.$$ %
Thus the forward $X_{\R_+}$-flow of a core point $y$ converges to some $y_\infty\in\F,$ %
and the unstable manifold of the $-\nabla H$ flow %
of a point $y_\infty\in \F$ consists of those $y\in \text{Core}(Y)$ that limit to it. %

If $Y$ is compact, then $Y=\mathrm{Core}(Y)$. For non-compact $Y$, we now prove $\mathrm{Core}(Y)$ is compact. 
We first need a technical lemma about $X_{\R_+}$-flowlines, which arise as
$\gamma(t)=e^{2\pi t} \cdot \gamma(0)$ 
with flow-time $t\in \R$.

\begin{lm}\label{Lemma Cannot keep leaving all compacts}
Given any compact subset $K\subset Y$, there cannot be $-X_{\R_+}$-flowlines $\gamma_n:[a_n,b_n]\to Y$ with endpoints $\gamma_n(a_n),\gamma(b_n)\in K$, with $c_n\in [a_n,b_n]$ such that\footnote{i.e.\,given any compact subset $K'\subset Y$, for all large enough $n$ the $\gamma_n$ leave $K'$ at some time $c_n$.} $\gamma_n(c_n)\to \infty$.

If $\gamma$ is an $X_{\R_+}$-flowline with $\gamma(0)=p$ and $\gamma(c_n)\to \infty$ for some $c_n\to \infty$, then there is a neighbourhood $p\in U_p$ such that for every compact $K\subset Y$ there is a time $\tau_K>0$ such that $\varphi_t(U_p)\cap K=\emptyset$ for all $\tau_K<t\in \R_+$ (i.e.\,``flowlines uniformly diverge to infinity near a divergent flowline'').
\end{lm}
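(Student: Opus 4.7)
The plan is to reduce both parts to the following key compactness fact: for every compact $K\subset Y$, the forward $-X_{\R_+}$-orbit $\mathcal{O}_K$ of $K$ is contained in some compact $\widetilde{K}\subset Y$. To obtain $\widetilde K$, I use the contracting hypothesis and continuity of the flow to cover $K$ by finitely many open sets $A_T=\{y\in Y: \text{the time-}T\ -X_{\R_+}\text{-flow of }y \text{ lies in }\mathrm{int}(Y^{\mathrm{in}})\}$, extracting a uniform time $T_K$ after which the $-X_{\R_+}$-flow of $K$ stays in $Y^{\mathrm{in}}$ by the ``enter-and-stay'' clause; then $\widetilde K$ is the union of the continuous image of the compact set $[0,T_K]\times K$ under the $-X_{\R_+}$-flow with $Y^{\mathrm{in}}$, which is compact. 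Part (1) follows immediately: since $\gamma_n$ is a $-X_{\R_+}$-flowline with $\gamma_n(a_n)\in K$, one has $\gamma_n(c_n)\in\mathcal{O}_K\subset\widetilde{K}$, contradicting $\gamma_n(c_n)\to\infty$.

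For part (2), I first establish that $\mathrm{Core}(Y)$ is closed in $Y$; this uses part (1) together with the fact that bounded gradient flowlines of the Morse--Bott function $H$ converge (guaranteed by the hyperbolic local structure at fixed points, \cref{LemmaSec2FixedLocus} and \cref{Rmk hyperbolic fixed points}). Given $q_n\in\mathrm{Core}(Y)$ with $q_n\to q$, compactness of $\F$ yields a subsequential limit $y^\infty\in \F$ of the convergence points $y_n^\infty:=\lim_{t\to\infty}\varphi_t(q_n)$, and I fix a compact neighborhood $N$ of $y^\infty$. If $\gamma_q(t):=\varphi_t(q)$ were unbounded, with $\gamma_q(t_m)\to\infty$ along some $t_m\to\infty$, then for each $m$ I pick $n_m$ by continuity of $\varphi_{t_m}$ so that $\varphi_{t_m}(q_{n_m})$ tracks $\gamma_q(t_m)$ (hence escapes to infinity), and $T_m\geq t_m$ (using $\varphi_t(q_{n_m})\to y_{n_m}^\infty\in N$) so that $\varphi_{T_m}(q_{n_m})\in N$. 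Reverse-parametrizing $\varphi_{\cdot}(q_{n_m})|_{[0,T_m]}$ then yields $-X_{\R_+}$-flowlines with endpoints in the compact set $N\cup\{q_n:n\in\N\}\cup\{q\}$ that pass through points escaping to infinity, contradicting part (1). Hence $\gamma_q$ is bounded and thus converges, so $q\in\mathrm{Core}(Y)$.

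Since $\gamma_p$ is unbounded, $p\notin\mathrm{Core}(Y)$; as $\mathrm{Core}(Y)$ is closed and $Y$ is locally compact, I pick a relatively compact open neighborhood $U_p\ni p$ with $\overline{U_p}\cap\mathrm{Core}(Y)=\emptyset$. For any compact $K$, suppose for contradiction that $q_n\in U_p$ and $t_n\to\infty$ satisfy $\varphi_{t_n}(q_n)\in K$; then for each $s\in[0,t_n]$, the point $\varphi_s(q_n)$ is the image of $\varphi_{t_n}(q_n)\in K$ under the $-X_{\R_+}$-flow for time $t_n-s$, and so lies in $\mathcal{O}_K\subset\widetilde K$. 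Extracting $q_n\to q\in\overline{U_p}$ and passing to the limit at each fixed $s\geq 0$ by continuity, I deduce $\gamma_q(s)\in\widetilde K$ for all $s\geq 0$; thus $\gamma_q$ is bounded, and therefore $q\in\mathrm{Core}(Y)$ (by convergence of bounded orbits), contradicting $\overline{U_p}\cap\mathrm{Core}(Y)=\emptyset$. The main conceptual obstacle is that a single $U_p$ must work for every compact $K$ simultaneously; closedness of $\mathrm{Core}(Y)$ is precisely what makes this uniform choice possible, since the union $\bigcup_K\{q\in Y:\gamma_q(\R_+)\subset\widetilde K\}$ coincides with $\mathrm{Core}(Y)$.
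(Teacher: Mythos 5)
Your framing --- reduce both parts to boundedness of the forward $-X_{\R_+}$-orbit $\mathcal{O}_K$ of a compact $K$ --- is clean, and granted that fact your route to Part~2 via the closedness of $\mathrm{Core}(Y)$ is tidy and genuinely different from the paper's. But the proof that $\mathcal{O}_K$ is bounded has a gap. The sets $A_T$ are indeed open and cover $K$, giving $T_K=\max T_i$ for a finite subcover. This only asserts, for each $y\in K$, that the time-$T_i$ flow of $y$ lies in $\mathrm{int}(Y^{\mathrm{in}})$ for \emph{one} time $T_i\leq T_K$; it does \emph{not} give that the flow stays in $Y^{\mathrm{in}}$ for all $t\geq T_K$. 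You invoke the ``enter-and-stay'' clause, but \cref{Def:Contracting CstarManifold} says only that each flowline eventually enters $Y^{\mathrm{in}}$ at some unspecified, point-dependent time and thereafter never leaves; it does \emph{not} say that $Y^{\mathrm{in}}$ is forward-invariant for $-X_{\R_+}$. A flowline can enter $\mathrm{int}(Y^{\mathrm{in}})$ at time $T_i$, exit, wander arbitrarily far, and only settle in for good much later --- and without a uniform bound on that later time, the set $\widetilde K$ you build (the time-$[0,T_K]$ sweep of $K$ together with $Y^{\mathrm{in}}$) need not contain $\mathcal{O}_K$. The paper does construct a forward-invariant $Y^{\mathrm{in}}$ later, in \cref{Lm nice subdomain}, which would close this gap; but that lemma's proof depends on \cref{Lemma core is compact}, which in turn uses the present lemma, so it is not available here.

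In fact, boundedness of $\mathcal{O}_K$ is essentially a restatement of Part~1 itself (if it fails, flow each escaping point onward until it re-enters $Y^{\mathrm{in}}$ and you get a sequence as in Part~1; the converse is immediate), so any proof of it must already contain the substantive work. The paper's proof is a limiting argument of Arzel\`a--Ascoli type: reparametrise so that $\gamma_n(0)$ is the first-entry point into a fixed compact $K_1$ and $\gamma_n(\tau_n)$ is the escaping point; extract a subsequence with $\gamma_n(0)\to p$; then apply the contracting hypothesis to the limit flowline $\gamma$ through $p$ to get a contradiction --- either $\tau_n\to\infty$ and $\gamma$ never falls into $Y^{\mathrm{in}}$, or $\tau_n\to\tau<\infty$ and $\gamma_n(\tau_n)\to\gamma(\tau)$ stays finite instead of diverging. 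A plain open-cover argument cannot manufacture the needed uniformity out of the merely pointwise contracting hypothesis; some compactness-of-flowlines argument of this flavour is required. Once Part~1 is established correctly, your derivation of closedness of $\mathrm{Core}(Y)$ and the rest of your Part~2 argument do go through (and are a nice alternative to the paper's sub-claim-plus-limiting proof of Part~2).
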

\begin{proof}
Consider an exhaustion\footnote{By convention, our manifolds are assumed to be second-countable, so $Y$ can be covered by a sequence compact subsets $B_m$ homeomorphic to a compact ball in Euclidean space, so we could take $K_n=\cup\{B_m: m\leq n\}$.} 
of $Y$ by a sequence of compact subsets $\emptyset=K_{-1}\subset K_0\subset K_1\subset \cdots$. 
By redefining the $K_n$, we may assume that $Y^{\mathrm{in}}\subset K_0=K$ and that $K_n\subset \mathrm{Int}(K_{n+1})$ (note that any compact $K'\subset Y$ lies inside some $K_n$, otherwise $(\mathrm{Int}(K_{n+2})\setminus K_n)\cap K'$ is an open cover of $K'$ with no finite subcover).
Suppose the $\gamma_n$ are as in the claim. By $\R$-reparametrisation (so $\gamma_n(\cdot + c_n)$ and translating $[a_n,b_n]$ by $-c_n$) we may assume that $p_n':=\gamma_n(0)\in K_n\setminus K_{n-1}$. Let $\tau_n\geq 0$ be the smallest time for which $p_n:=\gamma_n(-\tau_n)\in K_1$. After passing to a subsequence, we may assume that $p_n\to p$ in $K_1$. Let $\gamma:\R \to Y$ be the (unique) $-\nabla H$ flowline with $\gamma(0)=p$. By $\R$-reparametrisation, we may now assume $p_n=\gamma_n(0)$ and $p_n'=\gamma(\tau_n)$. By construction, $\gamma_n|_{[0,\tau_n]}\subset Y\setminus K_0 \subset Y\setminus Y^{\mathrm{in}}$. By smooth dependence of ODE solutions on initial conditions, given a compact interval $[0, T] \subset \R$, for large enough $n$ the restrictions of $\gamma,\gamma_n$ to $[0, T]$ are arbitrarily close.
If the $\tau_n$ are unbounded, then it follows that $\gamma(t) \subset Y\setminus Y^{\mathrm{in}}$ for $t\geq 0$, contradicting the contracting assumption on $\Fi$ as $\gamma(0)$ does not flow into $Y^{\mathrm{in}}$ via $-X_{\R_+}=-\nabla H$.
By passing to a subsequence, we may assume $\tau_n \to \tau>0$. Then the image $C$ of $\gamma:[0,\tau]\to Y$ is compact, but also enters every compact subset $K_n$, meaning that $(\mathrm{Int}(K_{n+2})\setminus K_n)\cap C$ is an open cover of $C$ with no finite subcover, contradiction.

Before proving the second claim, we show $\gamma(t)\notin K$ for all $t\geq T_K$, for some $T_K\geq 0$ depending on $K$. In the notation above, we may assume $\gamma(c_n)\in K_n\setminus K_{n-1}$, and by contradiction $\gamma(b_n)\in K$ for some $b_n>c_n$. Then  $\gamma_n:=\gamma|_{[0,b_n]}$ contradicts the first claim (for the compact subset $K\cup\{p\}$). 
Now suppose the second claim fails, so there is a compact subset $K$ of $Y$ and there are $p_n\to p$, such that the flowlines $\gamma_n$ of $X_{\R_+}$ starting at $\gamma_n(0)=p_n$ satisfy $\gamma_n(b_n) \in K$ for some $b_n\to \infty$. By smooth dependence of ODE solutions on initial conditions, given $T>0$, the $\gamma_n,\gamma$ restricted to $[0, T]$ are arbitrarily close for sufficiently large $n$. Thus, after passing to a subsequence of the flowlines $\gamma_n$ and renumbering, we may assume $\gamma,\gamma_n$ are very close on $[0,c_n]$, thus: $\gamma_n(c_n),\gamma(c_n)$ are both in $K_n\setminus K_{n-1}$ and both $\gamma_n(t),\gamma(t)\notin K$ for $t\in [T_K,c_n]$.
As $b_n\to \infty$ and $\gamma_n(b_n)\in K$, this implies that $b_n>c_n$ for sufficiently large $n$.
Thus $\gamma_n|_{[0,b_n]}$ contradicts the first claim (for the compact subset $K\cup \{p\} \cup \{p_n: n\geq 1\}$).
\end{proof}

\begin{lm}\label{Lemma core is compact}
The core is a $\C^*$-invariant compact subset of $Y$. %
It is the union of all unstable manifolds,
$$
\mathrm{Core}(Y) = \bigcup_{\ff\in \F} W^u_{-\nabla H}({\ff})=\bigcup_{\a} W^u_{-\nabla H}(\Fa).
$$
The complement $Y':=Y\setminus \mathrm{Core}(Y)$ is a trivial real line bundle over the smooth compact manifold $\Sigma:=Y'/\R_+$, whose $\R$-fibres parameterise the $X_{\R_+}$-flowlines in $Y'$.
We can identify $\Sigma$ with a smooth real hypersurface in $Y'$ along which $X_{\R_+}$ is everywhere strictly pointing outward (to infinity). 
\end{lm}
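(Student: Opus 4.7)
The plan is to handle each assertion in turn, reducing the compactness and bundle structure to \cref{Lemma Cannot keep leaving all compacts} together with one Morse--Bott input.

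First, $\C^*$-invariance of $\mathrm{Core}(Y)$ is immediate from the definition: if $\lim_{t\to\infty} t\cdot y = y_0$ exists, then for any fixed $s\in\C^*$ we have $\lim_{t\to\infty} t\cdot(sy) = \lim_{t\to\infty}(ts)\cdot y = y_0$. The identification $\mathrm{Core}(Y)=\bigcup_\alpha W^u_{-\nabla H}(\F_\alpha)$ is tautological once we recall from \cref{Lemma:XR+ is nabla h} that $X_{\R_+}=\nabla H$, so the forward $X_{\R_+}$-flow coincides with the $+\nabla H$-flow and convergence of the latter to $y_0\in \F_\alpha$ is exactly $y\in W^u_{-\nabla H}(\F_\alpha)$.

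To show $Y'$ is open (equivalently $\mathrm{Core}(Y)$ is closed), I would fix $y\in Y'$ and first argue that the forward flowline $\gamma_y(t)=e^{2\pi t}\cdot y$ eventually leaves every compact. Indeed, by Morse--Bott theory (using the hyperbolicity of \cref{Rmk hyperbolic fixed points} together with the stable-manifold normal form near each $\F_\alpha$, or equivalently the Lojasiewicz--Simon gradient inequality), a forward-bounded $+\nabla H$-flowline must converge to some $\F_\alpha$, contradicting $y\in Y'$; and $\gamma_y$ cannot oscillate between any compact $K$ and infinity, by the first part of \cref{Lemma Cannot keep leaving all compacts}. The second part of that lemma, applied with $K=Y^{\mathrm{in}}$, then produces a neighborhood $U\ni y$ and a time $\tau>0$ with $\varphi_t(U)\cap Y^{\mathrm{in}}=\emptyset$ for $t>\tau$; since $\F\subset Y^{\mathrm{in}}$, no flowline from $U$ can converge to $\F$, hence $U\subset Y'$. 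Compactness of $\mathrm{Core}(Y)$ then follows once boundedness is established: for a sequence $y_n\in\mathrm{Core}(Y)$ escaping every compact, pass to a subsequence whose forward limits satisfy $f_n\to f\in\F$ (by compactness of $\F$), and choose $a_n<0<b_n$ with $\gamma_{y_n}(a_n)\in Y^{\mathrm{in}}$ (by contracting) and $\gamma_{y_n}(b_n)$ in a fixed small neighborhood of $f$; the resulting flowlines have endpoints in a single compact set but pass through points $y_n$ that escape every compact, contradicting \cref{Lemma Cannot keep leaving all compacts}.

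For the bundle structure of $Y'$, I first note that fixed points of $X_{\R_+}$ lie in $\F\subset\mathrm{Core}(Y)$ and any periodic $X_{\R_+}$-orbit would be a bounded flowline, hence in $\mathrm{Core}(Y)$, so $X_{\R_+}$ acts freely on $Y'$ with orbits diffeomorphic to $\R$. Next I would replace $Y^{\mathrm{in}}$, if necessary, by a compact codimension-zero submanifold with smooth boundary, still containing $\mathrm{Core}(Y)$ in its interior, along which $X_{\R_+}$ is transverse and outward-pointing; since $\mathrm{Core}(Y)$ is compact and forward flowlines through $\partial Y^{\mathrm{in}}$ eventually escape, such a choice is a standard smoothing. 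Then $\Sigma:=\partial Y^{\mathrm{in}}\subset Y'$: if instead $y\in\Sigma\cap\mathrm{Core}(Y)$, then $\gamma_y$ would exit $Y^{\mathrm{in}}$ at $t=0^+$ and be unable to re-enter by outward-transversality, contradicting convergence of $\gamma_y$ to $\F\subset\mathrm{int}(Y^{\mathrm{in}})$. Finally, the map $\Phi\colon\Sigma\times\R\to Y'$, $(\sigma,t)\mapsto e^{2\pi t}\cdot\sigma$, is a diffeomorphism: injective because each $X_{\R_+}$-orbit meets the outward-transverse $\Sigma$ at most once, and surjective because the orbit through any $y\in Y'$ enters $\mathrm{int}(Y^{\mathrm{in}})$ as $t\to-\infty$ (contracting) and leaves every compact as $t\to+\infty$ (previous paragraph), so it crosses $\Sigma$ exactly once. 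This identifies $\Sigma$ with a smooth compact hypersurface of $Y'$ along which $X_{\R_+}$ points outward.

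The main obstacle is the Morse--Bott convergence input, ruling out bounded non-converging forward flowlines; once that is granted, everything reduces cleanly to iterated applications of \cref{Lemma Cannot keep leaving all compacts} and the transverse-slice construction near $\mathrm{Core}(Y)$.
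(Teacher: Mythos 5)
Your compactness argument for $\mathrm{Core}(Y)$ is sound and rests on the same key input, \cref{Lemma Cannot keep leaving all compacts}, as the paper's; you present it via openness of $Y'$ plus boundedness, while the paper traps the complete flowlines directly, but both are fine and both use the Morse--Bott convergence of bounded gradient flowlines (valid given \cref{LemmaSec2FixedLocus} and \cref{Rmk hyperbolic fixed points}). The genuine gap is in the bundle-structure paragraph, at the step where you ``replace $Y^{\mathrm{in}}$ by a compact codimension-zero submanifold with smooth boundary \dots along which $X_{\R_+}$ is transverse and outward-pointing'' and dismiss this as a ``standard smoothing.'' That construction is precisely the nontrivial content of this part of the lemma, and it is not a standard perturbation: the definition of a contracting action gives no control on $\partial Y^{\mathrm{in}}$ relative to the flow, the moment map $H$ need not be proper here (so one cannot just take a large level set of $H$), and the paper's own construction of such a hypersurface is carried out only in \cref{Lm nice subdomain}, which explicitly \emph{uses} \cref{Lemma core is compact}. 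Invoking such a hypersurface inside the proof of the present lemma is therefore circular without an independent construction.

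The paper avoids this entirely: it proves that the free $\R_+$-action on $Y'$ is \emph{proper}, using the second claim of \cref{Lemma Cannot keep leaving all compacts} to rule out $r_n\to\infty$ (or $0$) while $y_n\to p$ and $r_n\cdot y_n\to q$ stay in $Y'$. General theory of free proper actions then gives that $Y'/\R_+$ is a smooth manifold and $Y'\to Y'/\R_+$ a principal $\R_+$-bundle, which is trivial since $\R_+$ is contractible; the smooth section thus obtained \emph{is} the outward-transverse hypersurface, a conclusion rather than a starting point. You already apply the second claim of \cref{Lemma Cannot keep leaving all compacts} in your openness argument in essentially the form needed for properness, so the cleanest repair is to prove properness and then cite the quotient-manifold/trivial-bundle argument, as the paper does, rather than assert the hypersurface exists outright.
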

\begin{proof}
Recall the convergence point $y_{\iinfty}=\lim_{t\to 0} t \cdot y \in \F$, and clearly $\F\subset \mathrm{Core}(Y)$. So the core is the union of all $-\nabla H$ flowlines that converge at both ends,\footnote{Convergence at the negative end is automatic by the contracting assumption.} including asymptotics.
Such flowlines must be trapped in some large enough compact subset of $Y$, otherwise we could build a sequence of them that contradict \cref{Lemma Cannot keep leaving all compacts}.
As they are all trapped in a compact subset, standard arguments about breakings of families of negative gradient flowlines (or arguing directly, by smooth dependence on initial conditions of ODE solutions of $\gamma'(t)=-\nabla H$) imply that $\CoreY$ is compact.

As %
$\F\subset \mathrm{Core}(Y)$ is disjoint from $Y',$ the $\R_+$-action on $Y'$ is free. 
To show that $Y'/\R_+$ is a smooth manifold and $Y'\to Y'/\R_+$ is a smooth principal $\R_+$-bundle, it is therefore enough to show that this action is also proper.\footnote{This is a standard theorem, see e.g. \cite[Thm.1.11.4.]{DK00}.} %
One characterisation of properness\footnote{Given e.g. in \cite[Prop.21.5(b)]{lee2013smooth}.}
is that whenever we have convergent sequences $y_n\fun p,$ $r_n \cdot y_n \fun q$ in $Y',$ %
then $r_n\in\R_+$ 
needs to have a convergent subsequence. %
Assuming the contrary, there is a subsequence $r_n\to \infty$ (switching the roles of $y_n$ and $r_n \cdot y_n$ changes $r_n$ to $1/r_n$, so this also deals with the case $r_n \to 0$). By the second claim in \cref{Lemma Cannot keep leaving all compacts}, 
$r_n \cdot y_n \fun \infty$ since %
$r_n \cdot p \to \infty$ as $r_n\to \infty$. This contradicts that $r_n \cdot y_n\to q$.

Hence we have a smooth principal $\R_+$-bundle $Y'\to Y'/\R_+$, which is thus trivial and has a section.\footnote{Principal $\R_+$-fibre bundles are classified up to isomorphism by homotopy classes of maps $Y'/\R_+ \to B\R_+$ into the classifying space. 
But we can take $E\R_+=\R_+$ with the standard $\R_+$-action, so $B\R_+=(E\R_+)/\R_+=\mathrm{point}$.} %
Identifying $\R\cong \R_+$ by the exponential map identifies %
it with a real line bundle whose zero section is the image of the above section. The claim now follows immediately, up to the compactness of $\Sigma:=Y'/\R_+.$
Since $\mathrm{Core}(Y)$ is compact, by enlarging $Y^{\mathrm{in}}$ we may assume that its interior contains $\mathrm{Core}(Y).$ %
Then $\Sigma$ is compact %
being the image of a surjective quotient map $\partial Y^{\mathrm{in}} \to Y'/\R_+$ 
from the compact %
$\partial Y^{\mathrm{in}}$.
\end{proof}

\begin{lm}\label{Lm nice subdomain}
We may always assume 
$Y^{\mathrm{in}}$ contains $\mathrm{Core}(Y)$ in its interior, $\Sigma:=\partial Y^{\mathrm{in}}$ is a smooth $S^1$-invariant hypersurface along which $X_{\R_+}$ points strictly outwards, and there is an isomorphism $$\Psi:Y^{\mathrm{out}}=Y\setminus \mathrm{int}(Y^{\mathrm{in}})\cong \Sigma \times [1,\infty),\quad
{\rho} \cdot y \mapsfrom (y,\rho).
$$
In particular, $\rho$ is an $S^1$-invariant function.
\end{lm}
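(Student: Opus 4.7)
The plan is to construct a globally defined smooth, positive, $S^1$-invariant and $\R_+$-equivariant ``radial'' function $\rho$ on $Y' := Y \setminus \mathrm{Core}(Y)$, extend it continuously by zero to $\mathrm{Core}(Y)$, and take $Y^{\mathrm{in}} := \{\rho \le 1\}$. The diffeomorphism $\Psi$ will then be the obvious one induced by the $\R_+$-flow. The main technical step is to show that $\rho$ extends continuously and properly across $\mathrm{Core}(Y)$; this is where \cref{Lemma Cannot keep leaving all compacts} is essential.

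First, by \cref{Lemma core is compact} the map $Y' \to Y'/\R_+ = \Sigma_0$ is a trivial principal $\R_+$-bundle over a compact smooth manifold. Picking a smooth section $\sigma : \Sigma_0 \to Y'$ produces a smooth $\R_+$-equivariant map $\rho_0 : Y' \to \R_+$ with $\rho_0(t \cdot y) = t\,\rho_0(y)$. Since the $\C^*$-action is holomorphic, the $S^1$- and $\R_+$-actions commute, so the $S^1$-action preserves $Y'$ and carries $\R_+$-orbits to $\R_+$-orbits. I would then define the $S^1$-average in logarithmic coordinates,
\[
\rho(y) := \exp\!\left(\int_{S^1} \log \rho_0(e^{2\pi i \theta}\cdot y)\, d\theta\right),
\]
which is smooth, positive and $S^1$-invariant on $Y'$, and still satisfies $\rho(t\cdot y) = t\,\rho(y)$ because the multiplicative constant $\log t$ pulls out of the integral. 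In particular $d\rho(X_{\R_+}) = \rho > 0$, so every positive level set of $\rho$ is a smooth $S^1$-invariant hypersurface along which $X_{\R_+}$ points strictly outward.

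The hard part will be to show that $\rho$ extends continuously to all of $Y$ with $\rho \equiv 0$ on $\mathrm{Core}(Y)$ and is proper on $Y$. Suppose $y_n \in Y'$ converges to some $y_\infty \in \mathrm{Core}(Y)$. Writing $y_n = \rho_0(y_n)\cdot \sigma(\bar y_n)$ with $\bar y_n \in \Sigma_0$, assume for contradiction that $\rho(y_n)$ does not tend to $0$. Since $\rho$ and $\rho_0$ differ multiplicatively by a continuous bounded factor on any $S^1$-invariant compact set, the same holds for $\rho_0(y_n)$. Using compactness of $\Sigma_0$ one extracts a subsequence with $\bar y_n \to \bar y_\infty \in \Sigma_0$; if $\rho_0(y_n)$ is also bounded above the limit sits in $Y'$, contradicting $y_\infty \in \mathrm{Core}(Y)$, while if $\rho_0(y_n) \to \infty$ the points $y_n$ lie arbitrarily far out along forward $X_{\R_+}$-flowlines starting near $\sigma(\bar y_\infty) \in Y'$, so by the second part of \cref{Lemma Cannot keep leaving all compacts} they must escape every compact set, again contradicting $y_n \to y_\infty$. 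An essentially identical argument, applied to sequences $y_n \to \infty$ in $Y$, shows $\rho(y_n) \to \infty$, so the continuous extension is proper.

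With $\rho$ in hand, set $Y^{\mathrm{in}} := \{\rho \le 1\}$, a compact subset of $Y$ whose interior $\{\rho < 1\}$ contains $\mathrm{Core}(Y)$ (where $\rho = 0$). The boundary $\Sigma := \{\rho = 1\}$ lies in $Y'$, is smooth, $S^1$-invariant, and has $X_{\R_+}$ strictly outward pointing by the computation $d\rho(X_{\R_+}) = 1$ there. Finally, define
\[
\Psi^{-1} : Y^{\mathrm{out}} \longrightarrow \Sigma \times [1,\infty), \qquad z \longmapsto \bigl(\rho(z)^{-1}\cdot z,\; \rho(z)\bigr),
\]
with inverse $(y,\rho) \mapsto \rho \cdot y$; this is a diffeomorphism because the $\R_+$-flow is free and smooth on $Y^{\mathrm{out}} \subset Y'$, and $S^1$-invariance of $\rho$ is immediate from the construction. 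I expect no further difficulties: once the continuous/proper extension in the previous paragraph is established, the remaining assertions reduce to standard properties of principal $\R_+$-bundles.
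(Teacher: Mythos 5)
Your proof is correct and follows essentially the same plan as the paper's: extract the trivial $\R_+$-bundle structure on $Y'$ from \cref{Lemma core is compact}, $S^1$-average a radial coordinate, and take a level set as $\Sigma$. The one pleasant variation is that you take a \emph{geometric} (logarithmic) $S^1$-average of a section-induced radial function, which preserves $\R_+$-equivariance exactly, so your $\rho$ directly serves as the second coordinate of $\Psi$; the paper instead arithmetically averages $\widetilde{H}=\int_{S^1}\rho(e^{2\pi i s}\cdot y)\,ds$, which is only monotone along $X_{\R_+}$, and then chooses a large regular value. Two trivial slips: with the paper's parameterisation of $\R_+$ by $s\mapsto e^{2\pi s}$ one gets $d\rho(X_{\R_+})=2\pi\rho$, not $\rho$; and you denote the map $Y^{\mathrm{out}}\to\Sigma\times[1,\infty)$ by $\Psi^{-1}$, whereas in the lemma's notation that map is $\Psi$.
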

\begin{proof}
If we ignore the $S^1$-invariance claims, then the claim follows immediately from \cref{Lemma core is compact}.
As $S^1$ is compact, we can ensure that $S^1\cdot Y^{\mathrm{in}}$ does not intersect $\Sigma \times [\rho_1,\infty)$
for some finite $\rho_1$, and we redefine $Y^{\mathrm{out}}$ to be the latter set.
The  $S^1$-invariant function $\widetilde{H}(\x)=\int_{S^1} \rho(e^{2\pi is} \cdot \x)\ ds$ is an exhausting (i.e.\,proper and bounded below)
smooth function on $Y$, where we extend $\rho \equiv 1$ on $Y^{\mathrm{in}}.$ %
By computing $d\widetilde{H}(\nabla H)$, using that $\nabla H=X_{\R_+}$, $d\varphi_{e^{2\pi is}}\cdot X_{\R_+}=X_{\R_+}$, and $d\rho(X_{\R_+})>0$ by construction, we deduce that $d\widetilde{H}(\nabla H)>0$. 
Thus the preimage of a large regular value $M$ of $\widetilde{H}$ is a smooth  $S^1$-invariant compact hypersurface that avoids the compact set $S^1\cdot \partial Y^{\mathrm{out}}$, and $X_{\R_+}$ is strictly outward pointing along it. This hypersurface can be taken as the new definition of $\Sigma:=\widetilde{H}^{-1}(M)=\partial Y^{\mathrm{in}}$ (so the new $Y^{\mathrm{out}}$ is the $X_{\R^+}$ forward flow of this $\Sigma$).
The final claim follows as the $S^1$ and $\R_+$ actions commute.
\end{proof}

We now show that the core captures the topology of $Y$ in all reasonable cases.

\begin{prop}\label{Prop def retr to core analytic case}
If $(Y,\mathrm{Core}(Y))$ is a CW-pair,\footnote{More generally, it suffices that $(Y,\mathrm{Core}(Y))$ is a neighbourhood deformation retract pair, in other words the inclusion $\mathrm{Core}(Y)\to Y$ is a cofibration \cite[Sec.6.4]{May}.
} then $Y$ deformation retracts onto $\mathrm{Core}(Y)$.
This assumption 
holds when $I$ is integrable and $\mathrm{Core}(Y)\subset Y$ is cut out by analytic equations (e.g.\,all CSRs).
\end{prop}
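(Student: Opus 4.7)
My plan splits along the two assertions. For the first, the key step is to show that the inclusion $\iota: \mathrm{Core}(Y) \hookrightarrow Y$ is a homotopy equivalence; granted the CW-pair hypothesis, a standard theorem (a homotopy equivalence between the two terms of a CW pair is automatically a strong deformation retraction, a consequence of the homotopy extension property) will then yield the conclusion. To establish the homotopy equivalence, I will construct a single continuous family $r_s: Y \to Y$, $s \in [0,1]$, built from the $-\nabla H$ flow, with $r_0 = \mathrm{id}_Y$, $r_1(Y) \subset \F$, and $r_s|_\F = \mathrm{id}_\F$. By \cref{Lemma H bdd below}(3) every $y \in Y$ has a convergence point $y_0 \in \F$ under this flow, so one sets $r_1(y) := y_0$ pointwise; the nontrivial content is continuity.

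Once $r_s$ is constructed, the rest is formal. By \cref{Lemma core is compact}, $\mathrm{Core}(Y) = \bigcup_\a W^u_{-\nabla H}(\F_\a)$ is $-\nabla H$-invariant, so $r_s$ restricts to a deformation retraction of $\mathrm{Core}(Y)$ onto $\F$. Both $r_1: Y \to \F$ and its restriction $\pi := r_1|_{\mathrm{Core}(Y)}: \mathrm{Core}(Y) \to \F$ are then homotopy equivalences (with homotopy inverses given by the inclusions of $\F$), and the factorisation $\pi = r_1 \circ \iota$ yields that $\iota$ is a homotopy equivalence by two-out-of-three.

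The main obstacle will be continuity of $r_s$ as $s \to 1$: critical points of $H$ are hyperbolic (\cref{Rmk hyperbolic fixed points}), so the $-\nabla H$ flow converges only in the limit $t\to\infty$, with rates that degenerate near $\F$. I would handle this by working in a tubular neighborhood $N$ of $\F$ in which the $S^1$-equivariant linearised flow in normal coordinates is an explicit contraction, defining $r_s$ on $N$ directly in these normal coordinates (so that continuity, including at $\F$, is manifest), and gluing via a cutoff with the finite-time $-\nabla H$ flow on $Y \setminus N'$ for a smaller neighborhood $N' \subset N$. Compactness of $\mathrm{Core}(Y)$ together with the trivial-bundle structure $Y^{\mathrm{out}} \cong \Sigma \times [1,\infty)$ from \cref{Lm nice subdomain} ensures that the time needed to flow any $y$ into $N$ depends continuously on $y$ and is locally uniform on compact sets, enabling the gluing. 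This is a standard construction in Morse--Bott theory, just carried out in a non-compact setting controlled by the contracting assumption on $\Fi$.

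For the second assertion, if $I$ is integrable then $(Y,I)$ is a complex manifold, in particular a real-analytic manifold, and the hypothesis that $\mathrm{Core}(Y)$ is cut out by analytic equations makes it a closed real-analytic subvariety of $Y$. Lojasiewicz's triangulation theorem for real-analytic sets in real-analytic manifolds then provides a simultaneous triangulation of the pair $(Y, \mathrm{Core}(Y))$ in which $\mathrm{Core}(Y)$ is a subcomplex, which gives a CW-pair structure. For CSRs, the core $\mathrm{Core}(Y) = \pi^{-1}(\mathrm{pt})$ is a fibre of the algebraic contraction, hence a closed algebraic subvariety of the complex manifold $(Y,I)$, and in particular analytic, so the hypothesis applies.
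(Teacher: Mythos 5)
There is a genuine gap in the first half. You set $r_1(y) := y_\infty = \lim_{t\to\infty}\Phi^{t}_{-\nabla H}(y)$ and propose to prove $r_1$ is continuous, but this map is \emph{discontinuous} whenever $\F$ has a component of positive Morse--Bott index, which is the generic situation. Concretely, take $Y=T^*\CP^1$ with the fibre-contracting $\C^*$-action: $\mathrm{Core}(Y)$ is the zero section $\CP^1$ and $\F=\{p_1,p_2\}$ is two points. If $Y$ deformation retracted onto $\F$ then $H_0(Y)\cong H_0(\F)\cong\k^2$, contradicting connectedness of $Y$; similarly $\mathrm{Core}(Y)=\CP^1$ cannot retract onto the two-point set $\F$, which breaks your ``restrict $r_s$ to the core'' step as well. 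The mechanism of the discontinuity is standard Morse--Bott breaking: a flowline starting on the stable manifold of a saddle $\F_\alpha$ converges to $\F_\alpha$, while arbitrarily nearby flowlines shoot past $\F_\alpha$ and converge to some lower $\F_\beta$; the paper's own proof flags exactly this in a footnote (``the map defined by taking the limits of the $-X_{\R_+}$ flow is usually a discontinuous map''). Your gluing strategy inherits the problem: the first-hitting time for a tubular neighbourhood $N$ of $\F$ is not what is needed, since a flowline can enter $N$ near $\F_\alpha$, leave along an unstable direction, and converge elsewhere; and arranging the schedule so that trajectories are ``captured'' by their true limiting component makes the parametrising time blow up on the breaking locus. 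The local linearised model near a non-minimal $\F_\alpha$ also is not a contraction (it has expanding weight spaces), so the ``explicit contraction'' step fails already at the level of the normal form.

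The fix, which is the paper's route, is to retract onto $\mathrm{Core}(Y)$ \emph{without ever trying to retract onto} $\F$. The cutoff trick is applied differently: rescale $-X_{\R_+}$ by a bump function vanishing near $\mathrm{Core}(Y)$ (rather than near $\F$), so the resulting complete flow deformation retracts $Y$ onto any prescribed open neighbourhood $U$ of $\mathrm{Core}(Y)$ in finite time, with no convergence issues and no discontinuity, because one stops in $U$ rather than at a limit set. Then the CW-pair (or NDR) hypothesis supplies a neighbourhood $N_A$ of $A:=\mathrm{Core}(Y)$ that deformation retracts onto $A$; choosing $U\subset N_A$ and concatenating the two retractions gives the deformation retraction of $Y$ onto $A$. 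No homotopy equivalence or two-out-of-three argument is needed. Your second half (integrable $I$, analytic core $\Rightarrow$ triangulable pair, hence CW pair, applicable to CSRs since the core is the fibre $\pi^{-1}(0)$) matches the paper and is fine.
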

\begin{proof}
First we observe that $Y$ can be deformation retracted onto  an arbitrarily small neighbourhood $U$ of $A:=\mathrm{Core}(Y)$. Namely, by rescaling $-X_{\R_+}$ by a cut-off function vanishing near the core,\footnote{We remark that the map defined by taking the limits of the $-X_{\R_+}$ flow is usually a discontinuous map.} the flow of the resulting vector field can be used to define that deformation. 
Secondly, recall that for any CW pair $(Y,A)$,  $A$ is a deformation retract of some neighbourhood $N_A$ of $A\subset Y$ \cite[Prop.A.5]{Hatcher}.

Now combine the two steps: deform $Y$ onto an open neighbourhood $U$ with $A\subset U\subset N_A$, then restrict to $U$ the deformation of $N_A$ onto $A$ to get a deformation retraction of $U$ onto $A$.

The CW assumption on $(Y,A)$ holds for any compact analytic subset $A$ of any analytic manifold $Y$: by Giesecke and 
Lojasiewicz \cite{Giesecke, Lojasiewicz} the pair $(Y,A)$ can even be triangulated.\footnote{by analytic simplices, so the homeomorphisms from the simplices are analytic on the interiors of the simplices.}
This applies to CSRs $Y=\M$: 
the (compact) core is the preimage of an analytic map $\pi: \M \to \M_0$ of the unique $\C^*$-fixed point in $\M_0$, so it is a compact analytic subset of the analytic manifold $\M$.
\end{proof}

\begin{rmk}[{\bf {\AB} filtration}]\label{Rmk Atiyah-Bott}
Unlike the fixed locus $\F\subset Y,$ %
the core is in general singular.
As in Atiyah--Bott's discussion \cite[Prop.1.19]{AtB83}, $Y$ has a $\C^*$-equivariant finite Morse stratification: $Y$ is the disjoint union of the locally closed submanifolds given by the stable manifolds $U_\a:=W^s_{-\nabla H}(\F_\a)$ of points whose $-\nabla H$ flowlines converge to the component $\F_\a\subset \F=\mathrm{Crit}(H)$.
There is a partial order on the indices: $\a<\beta$ if a sequence of $-\nabla H$-trajectories $\gamma_n:Y \to \R$, with $\gamma_n(0)\in U_\a$, converges to a broken trajectory, and one of the breakings occurs at a point in $\F_\beta$. 
This gives the closure condition:
\begin{equation}\label{Equation partial order AB}\overline{U_\a}\subset \sqcup_{\a\leq \beta} U_{\beta}.\end{equation}
 The partial order is constructed inductively by starting with a local minimum $\F_{\min}$, which will be minimal w.r.t.\,$<$, and investigating how trajectories to $\F_{\min}$ can break (one must then take the transitive closure of the resulting partial pre-order, as we may not be in a Morse--Smale setup, which would imply transitivity by the classical gluing theorem for Morse trajectories).
In particular, $\a<\beta$ implies $H(\F_\a)<H(\F_\beta)$, but the partial order may be ``coarser'' than the 
partial order by $H$-values. To get a total order $\leq$ by $H$-values, we replace $\F_\a$ by the disjoint union of all $\F_\a$ with the same $H(\F_\a)$ value so that antisymmetry holds. Taking unions of subcollections of the $\F_\a$, as prescribed by the total order, one obtains a filtration of $Y$ by subspaces $\emptyset = W_0\subset W_1\subset \cdots \subset Y$.
There are also other,\footnote{Example: view the partial order as a directed graph, with vertices labelled $\F_\a$. Associate integer values to the vertices, with repetitions, given by the maximal length of paths in the graph from $\F_\a$ to a minimum. Take the disjoint union of $\F_\a$ with equal integer values. Then totally order these unions via their integer values.} albeit more arbitrary, ways to produce such a filtration by subspaces, again by refining the partial order to a total order.
 A filtration $\emptyset = W_0\subset W_1\subset \cdots \subset Y$ induces a filtration of $H^*(Y)$ by cup-product ideals by considering $K_i:=\ker(H^*(Y)\to H^*(W_i))$. These ideals, $H^*(Y)=K_0\supset K_1 \supset \cdots \supset \{0\}$, arise as sums of sub-collections of summands from \eqref{EqnFrankel}, e.g.\,those above a given $H$-value if we order by $H$-values.

We also have a stratification of $\mathrm{Core}(Y)$ by unstable manifolds
$D_\a:=W^u_{-\nabla H}(\F_\a)$; it stratifies $Y$ if we also artificially allow as a ``stable manifold'' the open subset $Y\setminus \mathrm{Core}(Y)$ of points whose $+\nabla H$ flow goes to infinity. So $\mathrm{Core}(Y)$ is a compact stratified subspace of $Y$: the union of the $W^{u}_{-\nabla H}(\F_\a)=W^{s}_{+\nabla H}(\F_\a)$ by \cref{Lemma core is compact}.
To reduce to the previous discussion, we would switch the sign to $+\nabla H$, so if we keep the original partial order $<$ then \eqref{Equation partial order AB} changes inequality-direction:
\begin{equation}\label{Equation partial order AB 2}\overline{D}_{\a}\subset \sqcup_{\a\geq \beta} D_{\beta}.\end{equation}

If the stratification by unstable manifolds $D_\a$ was a Whitney stratification (thus a Thom--Mather stratification), then $(Y,\mathrm{Core}(Y))$ would be a CW pair by Goresky \cite{Goresky}.
If the Morse--Smale condition\footnote{$W^s_{-\nabla H}(p)$ is transverse to $W^u_{-\nabla H}(q)$ for any critical points $p,q\in \mathrm{Crit}(H)$.} holds then unstable manifolds determine a Whitney stratification by Nicolaescu \cite[Ch.4]{Nicolaescu},\footnote{And proved originally by Laudenbach \cite{Laudenbach}, under an additional hypothesis.} 
so \cref{Prop def retr to core analytic case} applies.
We are unsure whether $(Y,\mathrm{Core}(Y))$ is a CW pair in complete generality.

{\AB} call a subset $J$ of the indices {\bf open} if $\lambda\in J$ implies that any index $\a<\lambda$ is also in $J$ (and an index-subset is {\bf closed} if the complement is open). Then $U_J=\cup_{\a\in J} U_{\a}$ is open if and only if $J$ is open (and analogously for the closed case). For $J$ open, $U_J\subset Y$ is an open symplectic $\C^*$-submanifold   with $\mathrm{Core}(U_J)=D_J=\cup_{\a\in J}D_\a$, but if $Y$ lives over a convex base (\cref{Def:KahlerMfdWithProjection}), this does not immediately imply that $U_J$ lies over a convex base because the restriction $\Psi|_{U_J}$ is typically not proper.
In the subsequent discussion we will use $$J^+:=J\cup \{\lambda\},$$ where $\lambda$ is a minimal index in the complement $J'$ of $J$, so $U_{\lambda}\subset U_J$ is closed, and $D_{\lambda}\subset D_J$ is open.

One can also filter $\mathrm{Core}(Y)$ by ``stable subspaces'', 
$$U_{\a}^C:=U_{\a}\cap \mathrm{Core}(Y),$$
so that we obtain a closure condition like in
\eqref{Equation partial order AB}, $\overline{U}_{\a}^{C}\subset \sqcup_{\a\leq \beta} U_{\beta}^C.$
In particular, $U_{\lambda}^C\subset U_J^C:=\cap_{\a\in J}U_{\a}^C$ is closed (for open $J$). However, the Thom isomorphism argument of
\cite[Eq.(1.16)]{AtB83}
need not apply to the $U_J^C$ because in general there is no clear way to deformation retract a neighbourhood $U_{\lambda}^{tub,C}$ of $U_{\lambda}\subset U_{J^+}$ onto $\F_{\lambda}^{tub}:=U_{\lambda}^{tub,C}\cap D_{\lambda}$ \emph{while staying within} $\mathrm{Core}(Y)$, unlike the easy case of a neighbourhood $U_{\lambda}^{tub}$ of $U_{\lambda}\subset U_{J^+}$ deformation retracting to $\F_{\lambda}^{tub}$ within $Y$ by combining a rescaling of the $-\nabla H$ flow and then exploiting $\exp_p$ for an $S^1$-invariant metric, for $p\in \F_{\lambda}$ \cite[Sec.5.1 p.54]{Nak99}.
\end{rmk}

\begin{de}\label{Definition HLC}
    A space $C$ is {\bf HLC} (homologically locally connected) if for each neighbourhood $U\subset C$ of any $c\in C$, $c$ has a neighbourhood $V\subset U$ making $\widetilde{H}_*(V)\to \widetilde{H}_*(U)$ vanish in reduced singular homology. 
    A space is {\bf HLC+} if it is HLC, paracompact\footnote{A useful fact is that a locally compact, Hausdorff, second countable space is paracompact.} and Hausdorff. 
    CW complexes are HLC+ spaces \cite[Prop.A.4]{Hatcher},
    so this includes all topological manifolds (possibly with boundary) and all real or complex algebraic (or semi-algebraic) varieties \cite{Lojasiewicz, Hironaka75}.
    Examples of HLC spaces are locally contractible spaces and any space homotopy equivalent to a CW complex.
    For paracompact Hausdorff spaces $X$, Alexander--Spanier cohomology $H^*_{AS}(X)$, and \v{C}ech cohomology $\check{H}^*(X)$ are isomorphic \cite[Cor.6.8.8]{Spanier} (isomorphic to sheaf cohomology of $X$ for the sheaf of locally constant coefficients).
    For $X$ HLC+ these are isomorphic to singular cohomology $H^*(X)$ \cite[Cor.6.9.5]{Spanier}.
\end{de}

\begin{prop}\label{Lemma Coh of core is coh of ambient via Alexander-Spanier coh}
For any symplectic $\C^*$-manifold,
$$
H^*(Y)\cong H_{AS}^*(\mathrm{Core}(Y))\cong \check{H}^*(\mathrm{Core}(Y)).
$$
If $\mathrm{Core}(Y)$ is HLC, then these are isomorphic to $H^*(\mathrm{Core}(Y))$.\\
$\mathrm{Core}(Y)$ is HLC if and only if each $\F_\a$ admits an HLC neighbourhood in $\mathrm{Core}(Y)$.
\end{prop}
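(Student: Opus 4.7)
The plan is to identify $H^*(Y)$ with $\check{H}^*(\Core(Y))$ via tautness of Alexander--Spanier cohomology, using a cofinal system of open neighbourhoods of $\Core(Y)$ built from the $\nabla H$-flow, each of which will turn out to be homotopy-equivalent to $Y$. Let $\phi_s:=\varphi_{e^{2\pi s}}$ denote the flow of $\nabla H=X_{\R_+}$. By \cref{Lm nice subdomain}, $\Core(Y)\subset \mathrm{int}(Y^{\mathrm{in}})$ and $-\nabla H$ points strictly inward on $\Sigma=\partial Y^{\mathrm{in}}$, so $\phi_{-\epsilon}(Y^{\mathrm{in}})\subset \mathrm{int}(Y^{\mathrm{in}})$ for every $\epsilon>0$. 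I would set $K_s:=\phi_{-s}(Y^{\mathrm{in}})$ and $V_s:=\phi_{-s}(\mathrm{int}(Y^{\mathrm{in}}))=\mathrm{int}(K_s)$ for $s\geq 0$; both families decrease with $s$, each $V_s$ is open in $Y$, and $\Core(Y)\subset V_s$ because $\Core(Y)$ is $\varphi$-invariant (\cref{Lemma core is compact}).

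The key step is to verify $\bigcap_{s\geq 0}K_s=\Core(Y)$: the inclusion $\supset$ follows from $\varphi$-invariance of $\Core(Y)\subset\mathrm{int}(Y^{\mathrm{in}})$, while for $\subset$ one must show that a forward $\nabla H$-orbit trapped in the compact set $Y^{\mathrm{in}}$ must converge into $\F$. I would do this using the local Morse--Bott normal form around each $\F_\a$ (\cref{LemmaSec2FixedLocus}) together with the Lyapunov property of $H$, appealing to Łojasiewicz-type convergence for Morse--Bott flows. A standard compactness argument (the decreasing closed sets $K_s\setminus U\subset K_0$ have empty intersection, so one of them is empty) then gives that $\{V_s\}$ is cofinal in the directed system of open neighbourhoods of $\Core(Y)$ in $Y$.

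Next, I would show that the inclusion $V_s\hookrightarrow Y$ is a homotopy equivalence by homotoping it via $v\mapsto \phi_{ts}(v)$, $t\in[0,1]$, to the composite $V_s\xrightarrow{\phi_s}\mathrm{int}(Y^{\mathrm{in}})\hookrightarrow Y^{\mathrm{in}}\hookrightarrow Y$: $\phi_s$ is a diffeomorphism onto its image, the second inclusion is a collar deformation retract, and the last is obtained by sliding back the collar $Y^{\mathrm{out}}\cong\Sigma\times[1,\infty)$ of \cref{Lm nice subdomain}. By 2-out-of-3 each $V_{s'}\hookrightarrow V_s$ for $s'\geq s$ is also a homotopy equivalence, whence $\varinjlim_s H^*(V_s)\cong H^*(V_0)\cong H^*(Y)$. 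Since open subsets of $Y$ are HLC+, the identifications in \cref{Definition HLC} give $\check{H}^*(U)=H_{AS}^*(U)=H^*(U)$ for every open $U\subset Y$; combining tautness of $H_{AS}^*$ for the closed subspace $\Core(Y)$ of the paracompact Hausdorff space $Y$ with cofinality of $\{V_s\}$,
\[
H_{AS}^*(\Core(Y))\cong \check{H}^*(\Core(Y))\cong \varinjlim_{U\supset \Core(Y)} H^*(U)\cong \varinjlim_s H^*(V_s)\cong H^*(Y).
\]
If additionally $\Core(Y)$ is HLC, it is automatically HLC+ (being compact Hausdorff), so \cref{Definition HLC} identifies both with singular $H^*(\Core(Y))$.

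For the HLC characterisation, the ``only if'' direction is trivial since HLC passes to open subsets of an HLC space. For ``if'', each $\phi_s$ restricts to a self-homeomorphism of $\Core(Y)$ by $\varphi$-invariance, and the contracting assumption provides, for any $y\in\Core(Y)$, some $\a$ and $s\geq 0$ with $\phi_{-s}(y)$ lying in the given HLC open neighbourhood $N_\a\subset\Core(Y)$ of $\F_\a$. Then $\phi_s(N_\a)\subset\Core(Y)$ is an HLC open neighbourhood of $y$, since HLC is a topological invariant. A standard factorisation argument (for any neighbourhood $U\ni y$ in $\Core(Y)$, apply HLC of $\phi_s(N_\a)$ inside $\phi_s(N_\a)\cap U$ and factor $\widetilde{H}_*(V)\to\widetilde{H}_*(U)$ through $\widetilde{H}_*(\phi_s(N_\a)\cap U)$) then yields HLC of $\Core(Y)$. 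The main obstacle I anticipate is the convergence step $\bigcap K_s=\Core(Y)$, which demands genuine convergence of trapped orbits rather than mere accumulation in $\F$; once this Morse--Bott convergence is granted, the cofinality argument and the HLC transport are formal.
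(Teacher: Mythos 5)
Your proof follows the same route as the paper's: identify $H_{AS}^*(\mathrm{Core}(Y))$ with $\varinjlim H^*(U)$ over open neighbourhoods via Spanier's tautness, then pass to a cofinal family of neighbourhoods built from the $\nabla H$-flow on which all restriction maps are isomorphisms, with the HLC transport argument being essentially identical to the paper's. You spell out more detail than the paper, which merely asserts ``we can pick a cofinal sequence of such $U$ using the $\C^*$-action''; the Morse--Bott/Łojasiewicz convergence of trapped forward trajectories that you correctly flag as the key step (to establish $\bigcap K_s = \mathrm{Core}(Y)$) is indeed the content implicitly needed, and does hold since Morse--Bott functions satisfy the Łojasiewicz inequality with exponent $\tfrac{1}{2}$ near their critical locus.
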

\begin{proof}
Abbreviate $A:=\mathrm{Core}(Y)$.
 By \cite[Cor.6.9.9]{Spanier}, $H_{AS}^*(A)\cong \varinjlim H^*(U)$ over restrictions for all neighbourhoods $U$ of $A$. We can pick a cofinal sequence of such $U$ using the $\C^*$-action, then it is a direct limit over isomorphisms, so $H_{AS}^*(A)\cong H^*(U)\cong H^*(Y)$. Note $A$ is paracompact (since $A$ is closed and $Y$ is paracompact) and Hausdorff (since $Y$ is).

 For the last claim, only one direction is non-trivial. Given a neighbourhood $c\in U\subset \mathrm{Core}(Y)$, $\varphi_t(c)$ converges towards some $\F_\a$ as $t\to \infty$. So $c':=\varphi_H^t(c)$ lies in an HLC neighbourhood of $\F_\a$ for large $t\in \R_+$. Pick neighbourhoods $c'\in V' \subset W'$ with vanishing $\widetilde{H}_*(V')\to \widetilde{H}_*(W')$ and $W:=\varphi_H^{-t}(W')\subset U$. Then $c\in V:=\varphi_H^{-t}(V')\subset U$ and $\widetilde{H}_*(V)\to \widetilde{H}_*(W) \to \widetilde{H}_*(U)$ vanishes as the first map vanishes.
\end{proof}

\begin{cor}\label{Cor Core is connected}
    $\mathrm{Core}(Y)$ is connected, indeed path-connected.
\end{cor}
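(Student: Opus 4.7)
The plan is as follows. Connectedness is immediate from \cref{Lemma Coh of core is coh of ambient via Alexander-Spanier coh}: as $Y$ is connected (\cref{Def:CstarManifold}), $\check H^0(\mathrm{Core}(Y))\cong H^0(Y)\cong\k$, and since $\mathrm{Core}(Y)$ is compact Hausdorff by \cref{Lemma core is compact}, its \v{C}ech $H^0$ equals the free $\k$-module on the connected components (quasi-components agree with components on such spaces), forcing exactly one component.

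For path-connectedness I will combine three ingredients. First, each unstable stratum $D_\alpha:=W^u_{-\nabla H}(\F_\alpha)$ is path-connected: the reparametrised positive flow $s\mapsto\varphi_{e^{2\pi s/(1-s)}}(y)$ on $[0,1)$, extended by $s=1\mapsto\lim_{t\to\infty}\varphi_t(y)\in\F_\alpha$, is continuous by the very definition of $D_\alpha$ and deformation-retracts $D_\alpha$ onto the connected submanifold $\F_\alpha$. Second, for any $y\in\mathrm{Core}(Y)$ the map $s\mapsto e^{-2\pi s}\cdot y$ on $[-\infty,\infty]$ (with $-\infty\mapsto y_\infty$ and $+\infty\mapsto y_0$ in $\F$) is a continuous path lying in the $\C^*$-invariant set $\mathrm{Core}(Y)$; in particular, any $y\in D_\alpha\cap U_\beta$ yields, via its $\C^*$-orbit closure, a path in $\mathrm{Core}(Y)$ joining $\F_\alpha$ and $\F_\beta$. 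Third, $\F$ has only finitely many components $\F_\alpha$ by compactness (\cref{Lemma H bdd below}).

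Declare $\F_\alpha\equiv\F_\beta$ if they are path-connected in $\mathrm{Core}(Y)$, with equivalence classes $C_1,\dots,C_r$, and set $V_i:=\bigsqcup_{\alpha\in C_i}D_\alpha$. Each $V_i$ is path-connected by the first two ingredients, and the $V_i$ partition $\mathrm{Core}(Y)$. The key step is to show each $V_i$ is closed in $\mathrm{Core}(Y)$: the finite partition combined with the already-established connectedness of $\mathrm{Core}(Y)$ then forces $r=1$. Via the Bia\l{}ynicki-Birula closure relation \eqref{Equation partial order AB 2} this closedness reduces to the assertion that whenever $\overline{D_\alpha}\cap D_\beta\neq\emptyset$ one has $\alpha\equiv\beta$.

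The hard part will be this last reduction. My plan is to produce a point of $D_\alpha\cap U_\beta$ and then invoke the second ingredient. By flowing any $y\in\overline{D_\alpha}\cap D_\beta$ forward under $+\nabla H$ and using that $\overline{D_\alpha}$ is $\C^*$-invariant, one reduces to $\F_\beta\cap\overline{D_\alpha}\neq\emptyset$; take $p$ in this intersection and a sequence $y_n\in D_\alpha$ with $y_n\to p$. Pass to the $S^1$-equivariant Darboux chart around $p$ built in the proof of \cref{LemmaSec2FixedLocus}, split the normal bundle as $N_\beta=N_\beta^+\oplus N_\beta^-$ into positive and negative Hessian eigenspaces (so $U_\beta$ is locally tangent to $N_\beta^+$ and $-\nabla H$ acts hyperbolically). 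Since $D_\alpha$ is a smooth submanifold accumulating at $p$, a careful local perturbation within $D_\alpha$ should produce a nearby $y\in D_\alpha$ whose $N_\beta^+$-component is nonzero and whose $N_\beta^-$-component is small enough that its backward $-\nabla H$-trajectory stays trapped in a stable tube around $\F_\beta$ and converges into $\F_\beta$, exhibiting the required point of $D_\alpha\cap U_\beta$.
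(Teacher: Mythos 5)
Your connectedness argument matches the paper's. Your path-connectedness argument does not, and it has a gap.

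The paper's route is shorter: by \cref{CorollaryGradientTrajBecomeSpheres} every $y\in\mathrm{Core}(Y)$ lies on a $\C^*$-invariant holomorphic sphere joining two fixed components, and by \cref{Lemma Frankel properness} every $\F_\a\neq\F_{\min}$ has a negative weight, so some such sphere descends strictly in $H$ from $\F_\a$. Since there are finitely many components and $\F_{\min}$ is the \emph{unique} local minimum, iterating the descent reaches $\F_{\min}$, which is path-connected because it is a connected manifold. There is no need to analyse the stratification $\{D_\a\}$, its closure relation, or intersections $D_\a\cap U_\b$ at all.

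The gap in your argument is the final ``local perturbation'' step. You want $y\in D_\a\cap U_\b$, i.e.\ a $-\nabla H$-flowline from $\F_\a$ to $\F_\b$. But a point near $p\in\F_\b$ with \emph{small nonzero} $N_\b^-$-component does \emph{not} converge to $\F_\b$ under the forward $-\nabla H$-flow: the fixed points are hyperbolic (\cref{Rmk hyperbolic fixed points}), so the unstable component grows exponentially and the trajectory exits any tube around $\F_\b$. Convergence requires the $N_\b^-$-component to vanish exactly, i.e.\ $y$ must lie \emph{on} the local stable manifold, and there is no reason a priori for $D_\a$ to meet it — without a Morse--Smale assumption $D_\a\cap U_\b$ can genuinely be empty even when $\F_\b\subset\overline{D_\a}$, with all connecting flowlines broken through intermediate $\F_\gamma$'s. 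One can salvage your scheme by replacing the perturbation with a trajectory-breaking/compactness argument (the $-\nabla H$-trajectories through $y_n\to p$ converge to a broken trajectory from $\F_\a$ through $\F_\b$, and one concludes $\a\equiv\b$ by transitivity through the intermediate break points), but that is a different and substantially heavier argument than what you wrote, and it is more work than the paper's direct descent via spheres.
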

\begin{proof}
 $H^0(Y)\cong H_{AS}^0(\mathrm{Core}(Y))$ has rank one as $Y$ is connected, and Alexander--Spanier cohomology in degree $0$ detects connectedness \cite[Cor.6.4.7]{Spanier}. Path-connectedness will follow from \cref{Lemma Frankel properness} and \cref{CorollaryGradientTrajBecomeSpheres}: any $\F_\a$ can be path-connected to $\F_{\min}:=\min H$ (which is a connected manifold and thus path-connected) by a succession of $\C^*$-invariant holomorphic spheres in $\mathrm{Core}(Y)$ (see \cref{CorollaryGradientTrajBecomeSpheres}), 
 and any point of $\mathrm{Core}(Y)$ lies on such a sphere.
\end{proof}

\begin{rmk}
For smooth semiprojective varieties, {\HV} \cite[Thm.1.3.1 and Cor.1.3.6]{Hausel-Villegas} 
studied the \BB decomposition of the variety, corresponding to our stable/unstable manifolds above. 
They first prove that $H^*(Y)\cong H^*(\mathrm{Core}(Y))$; secondly, they prove that $\pi_*(\mathrm{Core}(Y))\cong \pi_*(Y)$; and %
in the end, they use that $\mathrm{Core}(Y)$ is a CW complex in their setting to conclude that the inclusion $\mathrm{Core}(Y)\to Y$ is a homotopy equivalence by Whitehead's theorem.
Their proof, as written, only
relies on classical algebraic topology involving the $U_\a, D_\a$ from \cref{Rmk Atiyah-Bott} 
and using the Thom isomorphism from \cite[Eq.(1.16)]{AtB83}. 
We believe there is an imprecision in their proof.
The second Thom isomorphism in \cite[p.119]{Hausel-Villegas} is claimed for 
$\mathrm{Core}(Y)$ using the filtration by $D_\a$, but this involves the different closure condition 
\eqref{Equation partial order AB 2}.
Perhaps the misapprehension was assuming that  $D_{\lambda}\subset D_{J^+}$ is closed: this would explain why in \cite[Eq.(1.3.9)]{Hausel-Villegas} 
they claim to use an embedding $F_{\lambda}^{tub}\cap (U_J\cap D_{J^+})\to D_J$ which does not in fact exist%
\footnote{If $D_J$ was a typo for $U_J\cap D_{J^+}$, one still needs to prove that $\pi_1(U_J\cap D_{J^+})\cong \pi_1(D_J)$, which is not obvious as the question of whether $U_J\cap D_{J^+}$ deformation retracts onto $D_J$ is not clear without appealing to CW complex structures.
One can salvage the proof if there existed a map $\pi_1(U_J\cap D_J) \to \pi_1(D_J)$ that can be inserted in the second diagram of \cite[Eq.(1.3.9)]{Hausel-Villegas}, so $\pi_1(\F_{\lambda}^{tub}\cap(U_J\cap D_{J^+}))\to \pi_1(U_J\cap D_{J^+}) \to \pi_1(D_J)$, making the diagram commute.
} (e.g.\,in \cref{Example running example of intro}
using $D_{\lambda}=S^2\setminus p$, for the minimum point $D_J=\{p\}=\min H$).
One can salvage both proofs of \cite[Thm.1.3.1 and Cor.1.3.6]{Hausel-Villegas} (respectively the statement that the inclusion induces an isomorphism $H^*(\mathrm{Core}(Y))\cong H^*(Y)$ and isomorphisms $\pi_*(\mathrm{Core}(Y))\cong \pi_*(Y)$) for any symplectic $\C^*$-manifold for which a neighbourhood $V:=D_{J^+}\cap \{H< H(\F_{\lambda})-\textrm{(small constant)}\}$ of $D_J$ in $D_{J^+}$ is homotopy equivalent to $D_J$ (e.g.\,this holds if a neighbourhood of $D_J$ in $\mathrm{Core}(Y)$ deformation retracts to $D_J$). Indeed, one can then excise $D_J$ from $V$ so: $H^*(D_{J^+},D_J)\cong
H^*(D_{J^+},V)\cong H^*(D_{J^+}-D_J,V-D_J)\cong H^{*-\mu_\a}(\F_{\lambda})$, using the Thom isomorphism by viewing $D_{\lambda}$ as a vector bundle over $\F_{\lambda}$, and the rest of the proof of \cite[Thm.1.3.1]{Hausel-Villegas} would hold. In the proof of \cite[Cor.1.3.6]{Hausel-Villegas}, one makes the following replacements in \cite[Eq.(1.3.9)]{Hausel-Villegas}: $\pi_1(D_{\lambda}^{tub})$ by $\pi_1(\F_{\lambda}^{tub})$; $\pi_1(D_J)$ by $\pi_1(U_J\cap D_{J^+})$; then one uses a suitable rescaling of $-\nabla H$ to obtain a flow inducing $\pi_1(U_J\cap D_{J^+})\cong \pi_1(V)$, and the assumption on $V$ gives $\pi_1(V)\cong \pi_1(D_J)$.
In the context of smooth semiprojective varieties, we are dealing with analytic subvarieties $D_J\subset D_{J^+}$, so one could argue the existence of $V$ as in the proof of \cref{Prop def retr to core analytic case}; and for the cohomological claim it is enough to use the proof of \cref{Lemma Coh of core is coh of ambient via Alexander-Spanier coh} to argue $H^*(D_{J^+},D_J)\cong \varinjlim H^*(D_{J^+},V_n)$ over a sequence of $V_n$ which shrink to $D_J$, obtained from $V$ by flowing with $-\nabla H$. However, if one already uses those proofs, one might as well bypass the {\AB} filtration.
\end{rmk}

\subsection{Topology via the moment map, and 
dealing 
with its possible non-properness} %

\begin{lm}\label{Lemma making H proper MB}
There is an exhausting 
(i.e.\,proper and bounded from below)
{\MB } function $f: Y \to \R$ which equals $H$ on $Y^{\mathrm{in}}$, with $df(X_{\R_+})>0$ on $Y^{\mathrm{out}}$.
One can choose $\Sigma$ in \cref{Lm nice subdomain} to be a (sufficiently large) level set of $f$, and
 $\nabla f$ is strictly outward pointing on each hypersurface $\Sigma \times \{\rho\}$.
The $\mathrm{Core}(Y)$ is unaffected, and the {\MB } cohomologies of $H$ and $f$ coincide at chain level.
\end{lm}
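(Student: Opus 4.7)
The plan is to preserve $H$ on $Y^{\mathrm{in}}$ and add an exhausting radial correction outside. Using the trivialisation $Y\setminus \mathrm{int}(Y^{\mathrm{in}}) \cong \Sigma_0\times [1,\infty)$ from \cref{Lm nice subdomain}, with $S^1$-invariant radial coordinate $\rho$ satisfying $X_{\R_+}=2\pi\rho\,\partial_\rho$, I would choose a smooth non-decreasing $\lambda:[1,\infty)\to [0,\infty)$ that vanishes identically on $[1,1+\varepsilon]$ (hence to infinite order at $\rho=1$), with $\lambda'(\rho)>0$ for $\rho>1+\varepsilon$ and $\lambda(\rho)\to\infty$ as $\rho\to\infty$, and set
\[
f := H+\lambda(\rho),
\]
extending $\lambda\equiv 0$ across $Y^{\mathrm{in}}$; then $f\in C^{\infty}(Y)$ with $f\equiv H$ on $Y^{\mathrm{in}}$. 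Since $\mathrm{Core}(Y)$ depends only on the $\C^*$-action, which is untouched, it is trivially unaffected.

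I would then verify the remaining properties in turn. Boundedness below is immediate since $\lambda\geq 0$ and $H$ is bounded below by \cref{Lemma H bdd below}. For properness, on $Y^{\mathrm{out}}$ the inequality $f\leq M$ combined with $H\geq \inf H$ forces $\lambda(\rho)\leq M-\inf H$, hence $\rho\leq \rho_M$ for some finite $\rho_M$, giving $\{f\leq M\}\cap Y^{\mathrm{out}}\subset \Sigma_0\times [1,\rho_M]$, which is compact. Since $\F\subset \mathrm{Core}(Y)\subset \mathrm{int}(Y^{\mathrm{in}})$, $\nabla H\neq 0$ throughout $Y^{\mathrm{out}}$, and using $d\rho(X_{\R_+})=2\pi\rho$ we compute
\[
df(X_{\R_+})\;=\;\|\nabla H\|^2 + 2\pi\rho\,\lambda'(\rho)\;>\;0.
\]
Hence $df\neq 0$ on $Y^{\mathrm{out}}$, so $\mathrm{Crit}(f)=\mathrm{Crit}(H)=\F$, and the Morse--Bott property descends from $H$ as $f\equiv H$ near $\F$. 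For the new $\Sigma$, pick any regular value $c$ of $f$ with $c>\max_{Y^{\mathrm{in}}} H$ and set $\Sigma:=f^{-1}(c)$: this is a smooth compact $S^1$-invariant hypersurface (both $H$ and $\rho$ are $S^1$-invariant) strictly enclosing the original $Y^{\mathrm{in}}$, and re-running \cref{Lm nice subdomain} with it yields the claimed trivialisation $\{f\geq c\}\cong \Sigma\times [1,\infty)$ via the $X_{\R_+}$-flow. On each slice $\Sigma\times\{\rho\}$, $X_{\R_+}$ is transverse outward, and $g(\nabla f,X_{\R_+})=df(X_{\R_+})>0$ is precisely the strict outward-pointing claim for $\nabla f$.

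Finally, for the chain-level Morse--Bott claim, the key observation is that along the forward $+\nabla f$-flow from any $y\in Y^{\mathrm{out}}$ one has $\dot\rho=d\rho(\nabla f)=2\pi\rho+\lambda'(\rho)|\nabla\rho|^2>0$, so the flow stays in $Y^{\mathrm{out}}$ with $\rho$ strictly increasing. If $\rho$ remained bounded, the flow would lie in a compact subset of $Y^{\mathrm{out}}$ where $|\nabla f|$ has a positive lower bound, forcing $f\to\infty$ along the flow and contradicting the bound on $\rho$; so $\rho\to\infty$, $f\to\infty$, and the flow never converges to a critical point. Consequently $W^u_{-\nabla f}(\F_\alpha)\subset Y^{\mathrm{in}}$, so every $-\nabla f$-flowline between critical submanifolds is contained in $Y^{\mathrm{in}}$ where $\nabla f\equiv \nabla H$, and thus coincides with a $-\nabla H$-flowline. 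Combined with the agreement of critical submanifolds, Hessians, and normal bundles (all since $f\equiv H$ near $\F$), this gives the literal equality of the Morse--Bott chain complexes of $H$ and $f$. The only non-formal step, and the one on which the argument pivots, is this containment of unstable manifolds in $Y^{\mathrm{in}}$; it is what forces us to build $f$ with strictly positive $df(X_{\R_+})$ at infinity.
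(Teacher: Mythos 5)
Your proof is correct and reaches all the stated conclusions, but by a genuinely different construction from the paper's. The paper interpolates $H$ with the $S^1$-averaged radial coordinate $\widetilde H$ from \cref{Lm nice subdomain}, setting $f:=(1-\psi(\rho))H+\psi(\rho)\widetilde H$ for a cutoff $\psi$, and needs the auxiliary normalisation $\widetilde H\geq H$ on $\rho\in[1,2]$ to make the $\psi'(\rho)(\widetilde H-H)d\rho$ term in $df$ nonnegative. You instead take the additive modification $f:=H+\lambda(\rho)$, with $\lambda$ vanishing to infinite order at $\rho=1$, which sidesteps that comparison and makes the verification $df(X_{\R_+})=\|\nabla H\|^2+2\pi\rho\lambda'(\rho)>0$ immediate. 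Your version is computationally cleaner; the paper's version is presumably chosen to reuse the function $\widetilde H$ already introduced in \cref{Lm nice subdomain}. One genuine upgrade in your write-up: the paper's proof says nothing about why the Morse--Bott chain complexes of $H$ and $f$ literally coincide, whereas you supply the argument — since $\dot\rho=2\pi\rho+\lambda'(\rho)\|\nabla\rho\|^2>0$ along $+\nabla f$, the forward gradient flow of $f$ from any point of $Y^{\mathrm{out}}$ escapes to infinity, so the unstable manifolds $W^u_{-\nabla f}(\F_\alpha)$ are contained in $Y^{\mathrm{in}}$ where $f\equiv H$, forcing the flowlines (and hence the differentials) of the two functions to agree. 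That fills a gap the paper leaves implicit.
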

\begin{proof}
$H$ is bounded below, but we may need to modify it on $Y^{\mathrm{out}}$ to make it proper.
In the proof of \cref{Lm nice subdomain}, 
we built $\widetilde{H}$ on $Y^{\mathrm{out}}$.
We interpolate $H$ with $\widetilde{H}$ on $\Sigma \times [1,\infty)$, $f:=(1-\psi(\rho))H + \psi(\rho)\widetilde{H}$, where $\psi:[1,\infty)\to [0,1]$ is a cut-off function such that $\psi$ increases from $\psi=0$ near $\rho=1$ to $\psi = 1$ for $\rho\geq 2$.
We may assume that $\widetilde{H}\geq H$ in the region $\rho\in [1,2]$ (by rescaling $\widetilde{H}$ by a large constant). A simple calculation shows $df(X_{\R_+})>0$ on $Y^{\mathrm{out}}$ (using that $dH,d\widetilde{H},d\rho$ are strictly positive on $X_{\R_+}$ on $Y^{\mathrm{out}}$ and $\psi'\geq 0$). 
Regarding $\Sigma$: in the proof of 
 \cref{Lm nice subdomain}, we picked $\Sigma$ to be a level set of $\widetilde{H}$, and we may choose this level set to lie in the region $\rho\geq 2$ where $f=\widetilde{H}$.
\end{proof}

\begin{cor} \label{FibersMomentMapConnected} 
Recall we always assume $Y$ is connected.
If $H$ is proper, then the fibres of $H: Y\to \R$ are connected.
If $H$ is not proper,\footnote{e.g.\,this often occurs naturally for the open subsets $U_J\subset Y$ from \cref{Rmk Atiyah-Bott}, due to the removal of $U_{\alpha}$'s from $Y$.}
then the minimum of $H$ is connected and so are all level sets $H=c$ with\footnote{
For any sequence of points $p_n\to \infty\in Y$ (meaning $p_n$ leaves any $K_i$ of a given exhaustion of $Y$ by compact subsets $K_i$), let $H_p:=\liminf H(p_n)$. Then $\liminf H:=\inf \{H_p:$ all sequences $p_n\to \infty\}$. So $H\leq c$ is compact for $c<\liminf H$.
}
$c<\liminf H$. 
Moreover, the $\Sigma$ in \cref{Lm nice subdomain} is connected.
\end{cor}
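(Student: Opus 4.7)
The plan is to reduce all three claims to the classical Atiyah--Frankel connectedness theorem: on a connected manifold, a proper Morse--Bott function whose critical submanifolds all have even index and even coindex has connected (possibly empty) level sets. Lemma \cref{LemmaSec2FixedLocus} supplies the parity hypothesis, and Lemma \cref{Lemma making H proper MB} supplies a proper Morse--Bott extension of $H$ whenever $H$ itself is not proper, so the non-proper case can be reduced to the proper one.

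For the case when $H$ is proper, I would execute the standard two-stage Atiyah--Frankel argument. \emph{Step 1: $F_{\min}$ is connected.} A small sublevel set of $F_{\min}$ deformation retracts onto $F_{\min}$, and as $c$ increases only handles of even index $\mu_\alpha \in \{0,2,4,\ldots\}$ are attached; the absence of $1$-handles means disjoint components of sublevel sets can never be merged, so either $F_{\min}$ is connected or $Y$ is not --- and $Y$ is connected by assumption. The same argument also forbids additional local minima above $\min H$ (they would add components that could never be re-merged) and, on non-compact $Y$ with proper $H$, local maxima above $\min H$: such a local max would produce a ``cap'' component (a closed $(n{-}1)$-dimensional sphere-bundle) in the level set just below it, which would be annihilated upon crossing the critical value, leaving levels above empty, contradicting that $H$ is unbounded above on non-compact proper $Y$. \emph{Step 2: induct on critical values.} For $c$ just above $\min H$, $H^{-1}(c)$ is the unit sphere-bundle of the positive normal bundle of $F_{\min}$, with fibre $S^{\nu-1}$; parity gives $\nu \geq 2$ (the degenerate $\nu = 0$ forces $Y = F_{\min}$, the $S^1$-action is trivial and the claim is vacuous), so the fibre is connected, hence so is $H^{-1}(c)$. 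Each subsequent critical value $v$ with components $F_\alpha$ triggers a Morse--Bott surgery of the level set along a connected $S^{\mu_\alpha-1}$-bundle over $F_\alpha$ (as $\mu_\alpha \geq 2$), recapped by a connected $S^{\nu_\alpha-1}$-bundle (as $\nu_\alpha \geq 2$), so connectedness survives.

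For the non-proper case, the decisive idea is to pass to the proper Morse--Bott extension $f$ from Lemma \cref{Lemma making H proper MB}. Since $df(X_{\R_+}) > 0$ on $Y^{\mathrm{out}}$, $f$ has $\mathrm{Crit}(f) = \mathrm{Crit}(H) = F \subset Y^{\mathrm{in}}$ with identical local Morse--Bott data (because $f = H$ on $Y^{\mathrm{in}}$). Atiyah connectedness applied to $f$ gives at once that $F_{\min}(H) = F_{\min}(f)$ is connected. For $c < \liminf H$ the set $\{H \leq c\}$ is compact, and by enlarging $Y^{\mathrm{in}}$ in Lemma \cref{Lemma making H proper MB} to contain it we may arrange $f = H$ on $\{H \leq c\}$, so $H^{-1}(c) = f^{-1}(c)$ is connected. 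Finally, the hypersurface $\Sigma$ of Lemma \cref{Lm nice subdomain} may be chosen as the regular level set $f^{-1}(M)$ for $M$ above all critical values of $f$, and one more application of the same argument to $f$ gives that $\Sigma$ is connected.

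The main obstacle is the Morse--Bott surgery step, in particular the correct handling of critical submanifolds with $\nu_\alpha = 0$ (local maxima) or $\mu_\alpha = 0$ (additional local minima): the parity hypothesis together with connectedness of $Y$ and compactness of $\{H \leq c\}$ (in the non-proper case, for $c < \liminf H$) rules out these phenomena below $\liminf H$, reducing everything to a clean induction on critical values.
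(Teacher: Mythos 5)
Your proposal is correct and follows essentially the same route as the paper: invoke the even--index/even--coindex Morse--Bott parity from \cref{LemmaSec2FixedLocus} to run an Atiyah--Frankel connectedness argument, and reduce the non-proper case to the proper one by passing to the exhausting Morse--Bott function $f$ from \cref{Lemma making H proper MB}, on which the two maps agree over a compact subset arranged to contain $\{H\le c\}$. The paper simply cites Atiyah~\cite[Lem.~2.1--2.2]{At82} for the proper case and Nicolaescu~\cite[Lem.~3.62]{Nicolaescu} for the Morse--Bott version, rather than spelling out the handle-attachment induction; its one genuinely new ingredient is the observation that $\{f\ge c\}$ is connected for large $c$ (to replace Nicolaescu's $\{f=f_{\max}\}$ in the closed-manifold statement), which then yields $\Sigma$ connected via the gradient-flow diffeomorphism $\{f\ge c\}\cong\{f=c\}\times[c,\infty)$. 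Your alternative is to take $\Sigma$ to be a regular level of $f$ and apply the level-set connectedness result directly to $f$; both versions are fine.

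One local imprecision worth tightening: your Step~1 exclusion of local maxima says the cap sphere-bundle ``would produce a component in the level set just below''. That is only clear once one has already observed the cap is open-and-closed in the level set (it equals the level set intersected with the open Morse--Bott tubular neighbourhood, and is compact), and the contradiction with properness then relies on the inductive connectedness of the level set just below the critical value (so that the cap must be the \emph{entire} level set, which then vanishes, contradicting that $H$ is unbounded above). As written, that argument logically belongs inside the Step~2 induction rather than as a prior Step~1 fact; a cleaner alternative is to exclude extra local maxima by running the same ``no odd handles'' argument on superlevel sets using evenness of the coindices. You flag this subtlety yourself in the closing paragraph, so I take it you are aware of it --- just make the induction structure explicit in a final write-up.
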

\begin{proof}
The moment map of a Hamiltonian $S^1$-action on a connected compact symplectic manifold has connected fibres \cite[Lemmas 2.1 and 2.2]{At82}. The same proof holds for connected
open symplectic manifolds if the moment map is exhausting. Suppose now $H$ is not proper. We instead use the {\MB } function $f$ in \cref{Lemma making H proper MB}. The connectivity lemma in Nicolaescu \cite[Lem.3.62]{Nicolaescu} proves the claim for {\MB } functions on closed connected manifolds whose indices and co-indices are even. %
We can adjust his proof to work for exhausting {\MB } functions as follows. Suppose $f\geq c$ is disconnected, for large $c>0$. 
By the same handle-attachment argument as in Nicolaescu's proof, $Y$ is obtained from $f\geq c$ by handle-attachments which do not change the number of connected components, contradicting connectedness of $Y$. 
So $f\geq c$ is connected. The flow of $\tfrac{\nabla f}{\|\nabla f\|^2}$ is defined for $f\geq c$ (there are no critical points), yielding diffeomorphisms $\{f\geq c\} \iso \{f=c\}\times [c,\infty),$ the second coordinate being the value of $f$. So all level sets of $f$ above $c$ are connected (so in particular $\Sigma$ is connected).
The rest of Nicolaescu's proof applies, after replacing any occurrences of ``$f=f_{\max}$'' by $f=c$.
The second claim now follows by ensuring that $H=f$ on a sufficiently large compact set that includes $H\leq c$ in the interior (and we know that outside of this set, $df(X_{\R_+})>0$ by \cref{Lemma making H proper MB} so $f>c$ there).
\end{proof}

\begin{lm}\label{Lemma Frankel properness}
Over any field of coefficients,
we have a module isomorphism
\begin{equation}\label{EqnFrankel}
H^*(Y)\iso \bigoplus_{\a} H^*(\F_\a)[-\mu_\a].
\end{equation}
Over $\Z$ or a field, $H^*(Y)$ lies in even degrees if and only if all $H^*(\F_\a)$ do, and $H^*(Y)$ is torsion-free if and only if all $H^*(\F_\a)$ are.
As $Y$ is connected, there is a unique  {\bf minimal component} $\F_{\textrm{min}}:=\F_{\a}$ with $\mu_{\a}=0$ generating $1\in H^0(Y)$ in \eqref{EqnFrankel}; also $\F_{\textrm{min}}=\min H$ (there are no other local minima).
\end{lm}
\begin{proof}
By Frankel \cite[Sec.4]{Frankel59},\footnote{\cite{Frankel59} deals with compact \KH manifolds, but we imposed on $Y$ the conditions needed in the proof.
Alternatively, Kirwan \cite{Ki84} and Nakajima \cite[Sec.5.1]{Nak99} prove it for Hamiltonian torus actions on symplectic manifolds.
}
a Hamiltonian $S^1$-action whose moment map $H$ is  proper and bounded below is a perfect {\MB } function, and its fixed loci $\F_\a$ are symplectic submanifolds satisfying \eqref{EqnFrankel} (a spectral sequence argument, whose Floer analogue is Corollary \ref{pureHam}, yields \eqref{EqnFrankel}).
If $H$ is not proper, we use $f$ as in \cref{Lemma making H proper MB} instead, which does not affect the argument. Then \eqref{EqnFrankel} follows by using the filtration by values of $H$ to run the argument \cite[Sec.4]{Frankel59} (it only relies on $H$ being the moment map of an $S^1$-action in the vicinity of the critical locus $\F$). 
The claim about odd cohomology follows from the fact that {\MB } indices are even.
The statement about odd cohomology and torsion follows by universal coefficients \cite[Cor.1]{Frankel59}.
\end{proof}

\section{Torsion, periods, holomorphic spheres, and the attraction graph}%
 \label{Section Torsion sbmfds attraction graph}

\subsection{Weight spaces}\label{Subsection weight spaces}

In the proof of \cref{LemmaSec2FixedLocus} we obtained a unitary\footnote{Using the Hermitian form $\la \cdot,\cdot \ra = g(\cdot,\cdot) + i \om(\cdot,\cdot)$ on $T_p Y$, where $g(\cdot,\cdot)=\om(\cdot,\J \cdot)$.} decomposition
\begin{equation}\label{Eqn weight spaces Ti}
    T_\p Y = \oplus_i T_i \;\textrm{ at } \p\in \F
\end{equation}
 into two-planes $T_i$ of {\bf weight}\footnote{Explicitly, $v\neq 0 \in T_\p Y$ has {\bf weight} $w\in \Z$ means $d_\p \varphi_{t} \cdot v = t^{w} v$ for $t=e^{2\pi i s}$ and $s\in \R/ \Z$.}  $w_i\in \Z.$ %
Collecting summands of equal weights, 
\begin{equation}\label{Eqn weight spaces Hk}
    T_\p Y = \oplus_{k\in\Z} H_{k} \;\textrm{ at } \p\in \F, 
\end{equation}
where the complex vector subspace $H_k= \oplus \{ T_i: w_i = k \} \subset T_\p Y$ is the {\bf weight space} for weight $k\in \Z$. %

\begin{lm}\label{Lemma omega gives perfect pairing}
    The $H_k$ are symplectically 
    orthogonal symplectic subspaces for $\omega$.
\end{lm}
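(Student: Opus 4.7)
The plan is to exploit the $S^1$-invariance of $\omega$ together with the fact that each weight space $H_k \subset T_p Y$ is a complex subspace (since the linearised $\C^*$-action is holomorphic, so $I$-linear, and hence $I H_k = H_k$).

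First I would pick $v \in H_k$, $w \in H_\ell$ with $k \neq \ell$ and compute $\omega(v,w)$ using invariance under $t = e^{2\pi i s} \in S^1$. Since $d_p\varphi_t$ acts on $H_k$ as complex multiplication by $t^k$, one has $d_p\varphi_t \cdot v = \cos(2\pi k s)\, v + \sin(2\pi k s)\, I v$, and similarly for $w$. Expanding $\omega(d_p\varphi_t v, d_p\varphi_t w) = \omega(v,w)$ and using the $\omega$-compatibility identities $\omega(Iv, Iw) = \omega(v,w)$ and $\omega(Iv, w) = -\omega(v, Iw)$, the calculation collapses to
\[
\omega(v,w) = \cos\bigl(2\pi (k-\ell)s\bigr)\, \omega(v,w) - \sin\bigl(2\pi (k-\ell)s\bigr)\, \omega(v, Iw)
\]
for every $s \in \R/\Z$. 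Since $k \neq \ell$, comparing coefficients forces $\omega(v,w) = 0$ (and, as a bonus, $g(v,w) = \omega(v, Iw) = 0$, so the $H_k$ are also pairwise $g$-orthogonal). This gives the symplectic orthogonality of distinct $H_k$.

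Next, to see that each $H_k$ is itself a symplectic subspace, I would argue by non-degeneracy: suppose $v \in H_k$ satisfies $\omega(v, \cdot)|_{H_k} \equiv 0$. By the orthogonality just established, also $\omega(v, w) = 0$ for every $w \in H_\ell$ with $\ell \neq k$, hence $\omega(v, \cdot) \equiv 0$ on $T_p Y = \bigoplus_\ell H_\ell$. Since $\omega$ is non-degenerate on $T_p Y$, this forces $v = 0$, so $\omega|_{H_k}$ is non-degenerate.

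There is no serious obstacle here; the only thing to be careful about is bookkeeping the real/complex structure and the sign conventions in the $\omega$-compatibility identities. Everything else is formal once one uses that the $\C^*$-action on $T_p Y$ is $I$-linear and that the $S^1$-subaction preserves $\omega$.
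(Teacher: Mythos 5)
Your proof is correct and is essentially the same argument the paper gives: both use $S^1$-invariance of $\omega$, write $d_p\varphi_t$ on $H_k$ as complex multiplication by $t^k$ via $I$, expand using the $\omega$-compatibility identities to get a relation of the form $\omega(v,w) = \cos(\cdot)\,\omega(v,w) \mp \sin(\cdot)\,\omega(v,Iw)$ holding for all angles, conclude symplectic (and indeed $g$-)orthogonality for $k\neq\ell$, and then invoke non-degeneracy of $\omega$ on the whole tangent space to conclude each $H_k$ is itself symplectic. The calculation and bookkeeping you sketch match the paper's proof up to notation.
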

\begin{proof}
    Let $t\in S^1$, then $\varphi_t^*\omega =\omega$ since $\varphi_t$ is symplectic. So if $a\in H_k$, $b\in H_{k'}$, then $$\omega(a,b)=
    \omega(d\varphi_t\cdot a,d\varphi_t\cdot b)= \omega(t^k\, a,t^{k'}\,b)
    =
    \cos ((k'-k)\theta)
    \omega(a,b)+
    \sin ((k'-k)\theta)
    \omega(a,Ib),
    $$
    where $t=e^{i\theta}$. As this holds for all $\theta\in \R$, this implies $\omega(a,b)=0$ and $\omega(a,Ib)=0$ unless $k=k'$.
    Non-degeneracy of $\omega$ implies the rest of the claim.
\end{proof}

\begin{lm}\label{Lemma constant weights on F}
The set of all weights of $T_\p Y$ is constant on a connected component $\F_\a$ of $\F,$ and \eqref{Eqn weight spaces Hk} induces a bundle decomposition $T_{\F_\a} Y = \oplus_k H_k$ over $\F_\a.$ This yields well-defined numbers
\begin{equation}\label{notation h_k^alpha for dimension of wt spaces}
    h_k^{\alpha}:=\dim_{\C} H_k.
\end{equation}
\end{lm}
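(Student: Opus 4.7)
The plan is to reduce the statement to the well-known fact that a compact Lie group action on a vector bundle with trivial base action decomposes canonically into isotypic components, and then combine this with the equivariant local linearisation already used in the proof of \cref{LemmaSec2FixedLocus}.

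First, I would fix any $p\in\F_\a$ and invoke the $S^1$-equivariant linearisation recalled in the proof of \cref{LemmaSec2FixedLocus}: after averaging the metric $g$ over $S^1$, a neighbourhood of $p$ in $Y$ is $S^1$-equivariantly identified (via $\exp_p$) with a neighbourhood of $0\in T_p Y$ equipped with the linearised action $d_p\Fi$. Under this identification, the local component of $\F_\a$ through $p$ corresponds to the weight-zero subspace $H_0\subset T_p Y$, and for every nearby fixed point $p'$ the action of $d_{p'}\Fi$ on $T_{p'}Y$ agrees, through the identification, with the action of $d_p\Fi$ on $T_p Y$. Hence the set of weights, with multiplicities $h_k$, is the same at $p'$ as at $p$. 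This shows that $p\mapsto\{(k,h_k(p))\}$ is locally constant on $\F_\a$, and since $\F_\a$ is connected by definition, it is globally constant.

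To upgrade this to a bundle decomposition, I would work with the $S^1$-equivariant vector bundle $T_{\F_\a}Y\to \F_\a$, on which $S^1$ acts trivially on the base and fibrewise by $d\Fi_t$. Using an $S^1$-invariant metric, the smooth endomorphism fields
\[
\pi_k:=\int_{S^1} t^{-k}\,d\Fi_t\,dt \;:\; T_{\F_\a}Y \longrightarrow T_{\F_\a}Y, \qquad k\in \Z,
\]
are fibrewise the orthogonal projections onto the weight-$k$ eigenspaces; they satisfy $\pi_k\pi_{k'}=\delta_{kk'}\pi_k$ and $\sum_k \pi_k=\mathrm{Id}$ (only finitely many $\pi_k$ are nonzero, by the finiteness of the weight set). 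Because their fibrewise rank $h_k^{\a}$ is constant on $\F_\a$ by the previous paragraph, each image $H_k:=\mathrm{Im}\,\pi_k$ is a smooth complex subbundle of $T_{\F_\a}Y$, and the $\pi_k$ give the claimed orthogonal direct sum decomposition $T_{\F_\a}Y=\bigoplus_k H_k$. The numbers $h_k^{\a}=\dim_{\C} H_k$ are therefore well-defined.

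There is no genuine obstacle; the only place where some care is needed is the transition from ``weights at a single point $p$'' to ``weights in a neighbourhood of $p$ in $\F_\a$'', which is precisely what the equivariant linearisation delivers. Everything else is formal representation theory of $S^1$ acting on an equivariant vector bundle with trivial base action.
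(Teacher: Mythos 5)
Your proof is correct but takes a genuinely different route from the paper's, in both halves. For constancy of the weights the paper argues softly: it notes that the eigenvalues $\tau^{w_i}$ of $d_p\Fi_\tau$ vary continuously in $p\in\F_\a$ yet are constrained to the discrete set $\{\tau^{w}:w\in\Z\}$, so they (and their multiplicities) are locally constant, hence constant on the connected component. You instead re-use the $S^1$-equivariant linearisation from the proof of \cref{LemmaSec2FixedLocus}: the equivariant exponential conjugates $d_{p'}\Fi$ to $d_p\Fi$ for $p'$ near $p$ in $\F_\a$, so the weight decompositions are literally isomorphic, without any appeal to continuity of spectra. For the bundle decomposition the paper simply observes that $H_k$ is the $\tau^k$-eigenspace of the smooth endomorphism $d\Fi_\tau$ of $T_{\F_\a}Y$; you make this explicit via the Fourier projectors $\pi_k=\int_{S^1}t^{-k}\,d\Fi_t\,dt$, which shows directly that the $H_k$ are smooth orthogonal subbundles. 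Your version is longer, but it buys a fully explicit construction of the subbundles (the paper's ``the bundle decomposition follows'' asks the reader to supply the smoothness), and it sidesteps the continuity-of-eigenvalues step. The paper's version is shorter and doesn't require invoking the equivariant slice theorem for this particular lemma.
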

\begin{proof}
Using a weight decomposition of $T_\p Y$, $d_\p \varphi_{\tau}$ can be represented by a diagonal matrix with diagonal entries $\tau^{w_i}$ where $w_i$ are the (possibly repeated) weights $w \in \Z$. Along any path in $\F$, the eigenvalues of $d_\p \varphi_{\tau}$ vary continuously in $\p$.
Our eigenvalues $\tau^{w_i}$ depend on a discrete parameter $w_i\in \Z$, so the $w_i$ are constant on (path-)connected components of $\F$. 
The bundle decomposition follows, since $H_k$ is determined by the property of being the $\tau^{k}$-eigenspace of the endomorphism $d \varphi_{\tau}$ of $T_{\F_\a} Y$.
\end{proof}
\begin{rmk}\label{Remark H+}
The zero weight subspace at $y\in \F_{\a}$  corresponds to the fixed component     
$H_0=T_\ff\F_{\a}$. The negative weight subspace $H_-=\oplus_{k<0}H_k$ corresponds (via the exponential map) to $-\nabla H$ flowlines coming out of $\F_{\a}$ so they lie in $\mathrm{Core}(Y)$. For the positive weight subspace $H_+=\oplus_{k>0} H_k$ it is more complicated. Consider a small sphere subbundle $S_+$ of $H_+$, and let $p=\exp_{y}(v)$ for $v\in S_+$. For a subset $P_{in}$ of $v\in S_+$, the $+\nabla H$-flow of $p$ stays in $\mathrm{Core}(Y)$ (i.e. flows to a finite point), whereas for $P_{out}=S_+\setminus P_{in}$ it flows to infinity. By \cref{Lemma Cannot keep leaving all compacts}, $P_{out}$ is an open subset of $S_+$ (a point $p'$ sufficiently close to $p$ will also flow out of $Y^{in}$ and thus not belong to the core). So $P_{in}=S_+\setminus P_{out}$ is closed. In general, the subsets $P_{in}, P_{out}$ can be horrible, as we do not assume the Morse-Smale property for $H$. 
We call \textbf{outer} weight of the action $\Fi$
the weights occurring in the subset $P_{out}\subset H_+.$ 
\end{rmk}
\subsection{Torsion points and torsion submanifolds $\Ymc$}

\begin{de}\label{TorsionPtsTorsionSubmflds}
Identify $\Z/m$ with the subgroup 
$\Z/m\hookrightarrow S^1\subset \C^*$, $k\mapsto e^{2\pi i k/m}$.
For $m\geq 2$, a {\bf $\Z/\ell$-torsion point} is a $\Z/m$-fixed point. 
Such points define a $\C^*$-invariant smooth submanifold\footnote{a fixed locus of a compact Lie group action on a smooth manifold is a smooth submanifold \cite[p.108]{DK00}.} $Y_{\ell}\subset Y$.
We can decompose $Y_m=\sqcup_\c \Ymc$ into connected components $Y_{\ell,\beta}$ called {\bf $\Z/\ell$-torsion submanifolds}.
We show below that each $Y_{m,\b}$ converges via the $\C^*$-flow to (possibly several) fixed components $\F_\a$, and that each $\Ymc$ contains a subcollection of the $\F_\a$. 
\end{de}

\begin{lm}\strut
\begin{enumerate}
    \item $Y_m$ contains all $Y_{mb}$ for integers $b\geq 1$;
    \item $Y_m\subset Y$ is a closed subset, with a relatively open dense stratum $Y_m\setminus \cup_{b\geq 2} Y_{mb}$;
    \item each $\Ymc$ is a symplectic $\C^*$-submanifold of $Y$, and its $\C^*$-action admits an $m$-th root;
    \item if $Y$ is a symplectic $\C^*$-submanifold over a convex base (\cref{Def:KahlerMfdWithProjection}), then so is each $\Ymc$ with either action from (3).
\end{enumerate}
\end{lm}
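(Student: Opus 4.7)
\subsection*{Plan of proof}

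My plan is to handle the four parts in order, treating (1) as immediate, doing the bulk of the local analysis in (2)--(3) via the weight decomposition $T_y Y=\oplus_k H_k$ of \eqref{Eqn weight spaces Hk}, and reducing (4) to a restriction argument.

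For (1), the inclusion $\Z/m \hookrightarrow \Z/(mb)$, $[k]\mapsto [bk]$, shows that any $\Z/(mb)$-fixed point is automatically $\Z/m$-fixed, so $Y_{mb}\subset Y_m$. For (2), $Y_m$ is the zero set of the continuous map $y\mapsto \Fi_{e^{2\pi i/m}}(y)-y$ (taken in any local chart, or more invariantly the preimage of the diagonal), hence closed; and each $Y_{mb}$ is closed in $Y$ by the same argument, so $Y_m\setminus \bigcup_{b\ge 2} Y_{mb}$ is relatively open in $Y_m$. For density, near any $y\in Y_m$ I average $g$ over the compact group $\Z/m$ (or over $S^1$ if $y$ lies in $\F$) and use the equivariant exponential map to produce a $\Z/m$-equivariant chart $T_yY\supset U\to Y$. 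Under this chart $Y_m$ corresponds to the linear subspace $(T_yY)^{\Z/m}=\bigoplus_{m\mid k} H_k$ of the linearised action, and $Y_{mb}$ corresponds to $\bigoplus_{mb\mid k} H_k$. Unless every weight divisible by $m$ is also divisible by $mb$ (in which case the whole local component of $Y_m$ is already contained in $Y_{mb}$ and should be re-labelled), the second subspace has strictly smaller dimension, so the $\Z/m$-exact-stabiliser stratum is dense in that component of $Y_m$.

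For (3), the $\C^*$-invariance of $Y_m$, hence of each $\Ymc$, comes from the fact that $\C^*$ is abelian: $\varphi_t$ commutes with $\varphi_{e^{2\pi i/m}}$, so it preserves the fixed locus. Symplecticity at any $y\in \Ymc$ follows from the same averaging-and-exponential trick as in \cref{LemmaSec2FixedLocus}: the linearised $\Z/m$-action on $T_yY$ is symplectic and unitary, so its fixed subspace $T_y\Ymc$ splits into weight lines each preserved by $\omega$ and symplectically orthogonal (this is \cref{Lemma omega gives perfect pairing} applied to the $\Z/m$-action rather than the full $S^1$), hence symplectic; and $T_y\Ymc$ is $I$-invariant because $I$ commutes with the $\C^*$-action and therefore with $\Z/m$. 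For the $m$-th root, $\mu_m\subset \C^*$ acts trivially on $\Ymc$ by definition, so the restricted action factors through $\C^*/\mu_m \xrightarrow{\ \cong\ } \C^*$, $[z]\mapsto z^m$; pulling back along this isomorphism defines an honest $\C^*$-action $\widetilde\Fi$ on $\Ymc$ whose $m$-th power is the original. A direct computation from \eqref{eqn XR+ and XS1} gives $\widetilde X_{S^1}=\tfrac{1}{m}X_{S^1}|_{\Ymc}$, so $\widetilde\Fi$ is Hamiltonian with moment map $\tfrac{1}{m}H|_{\Ymc}$.

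For (4), I simply restrict $\Psi$ to $\Ymc^{\mathrm{out}}:=\Ymc\cap Y^{\mathrm{out}}$. Properness is inherited because $\Ymc$ is closed in $Y$, so $\Psi^{-1}(K)\cap \Ymc$ is a closed subset of the compact set $\Psi^{-1}(K)$; $I$-holomorphicity is inherited from the $I$-holomorphic inclusion $\Ymc\hookrightarrow Y$. With the original $\C^*$-action, $\Psi_*X_{S^1}=\mathcal{R}_B$ restricts verbatim to $\Ymc^{\mathrm{out}}$. With the $m$-th root action, $\Psi_*\widetilde X_{S^1}=\tfrac{1}{m}\mathcal{R}_B=X_{R/m}$, which fulfils the generalised Reeb condition \eqref{Equation psi Xs1 is Reeb 2} with the positive constant Reeb-invariant function $f\equiv 1/m$. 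Contractiveness of $\Fi|_{\Ymc}$ (and hence of $\widetilde\Fi$, which has the same orbits up to reparametrisation) follows because the original $-\nabla H$ flow from any $y\in \Ymc$ eventually enters $Y^{\mathrm{in}}\cap \Ymc$, which is compact.

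The only subtle point is the density statement in (2): one has to recognise that a component $\Ymc$ might happen to be contained in a larger fixed locus $Y_{mb}$ (for instance if all weights dividing $m$ on $\Ymc$ are by accident divisible by $mb$), in which case the stratum is trivially empty on that component and density has to be interpreted after discarding such components (or equivalently by working with the smallest $m'$ with $\Ymc\subset Y_{m'}$). Everything else is standard finite-group-action / equivariant-transversality bookkeeping on top of the weight-decomposition technology already set up in \S\ref{Subsection weight spaces}.
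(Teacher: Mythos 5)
Your proof is correct and follows the same route as the paper: (1)--(2) by continuity of $\Fi_{e^{2\pi i/m}}$, (3) by the $I$-holomorphic $\Z/m$-weight decomposition of $TY|_{Y_m}$ together with the observation that $\mu_m$ acts trivially on $Y_m$, and (4) by restricting $\Psi$. The only cosmetic divergence is in (4): you keep the constant $f\equiv 1/m$ in the generalised Reeb condition \eqref{Equation psi Xs1 is Reeb 2}, whereas the paper rescales $(\alpha,R,R_0)\mapsto(m\alpha,R/m,R_0/m)$ to return to \eqref{Equation psi Xs1 is Reeb}; \cref{Rmk weight w of Reeb} shows these are interchangeable. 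One genuine addition: your caveat about density in (2) -- that a connected component of $Y_m$ could coincide locally with some $Y_{mb}$, $b\geq 2$, whenever every weight at its fixed loci divisible by $m$ is also divisible by $mb$, leaving the stratum $Y_m\setminus\cup_{b\geq 2}Y_{mb}$ empty there -- is a real subtlety that the paper's one-line proof of (2) glosses over, and your fix (interpret density per component, discarding or re-labelling those absorbed into a $Y_{mb}$) is the right one, though you presumably meant the \emph{largest} such $m'$ with $\Ymc\subset Y_{m'}$, not the smallest.
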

\begin{proof}
    By continuity, a sequence of $\Z/m$-fixed points converges to a $\Z/m$-fixed point (which may also be a $\Z/mb$-fixed point for $b\geq 2$). So (1) and (2) are immediate.
    The linearised $\Z/m$ action on $TY|_{Y_m}$ decomposes it $I$-holomorphically into into $\Z/m$-weight spaces, and the zero weight space is $TY_m$, so $Y_m$ is $I$-{\ph}, and thus $\om$-symplectic. So (3) follows (the $m$-th root of the $\C^*$-action is well-defined as $m$-th roots of unity act as the identity on $Y_m$).
    Claim (4) follows by restricting the map 
    from \cref{Equation intro Psi} to $\Psi|_{\Ymc}:\Ymc \to B=\Sigma \times [R_0,\infty)$. If we use the $m$-th root of the $\C^*$-action, we just need to rescale the data $\alpha,R,R_0$ on $B$ from \cref{Def:KahlerMfdWithProjection} to $m\alpha$, $R/m$, $R_0/m$ (this rescales the Reeb field $\mathcal{R}_B$ by $1/m$: note the definition does not require $B$ to admit an $S^1$-action on all of $B$).
\end{proof}

\begin{lm}\label{Lemma torsion submanifolds}
At $p\in \F_\a$ the tangent space $T_p Y_{\ell}$ is the $\Z/\ell$-fixed locus of the linearised action, so
\begin{equation}\label{TangentSpaceOfTorsionMfd}
T_p Y_{\ell}=\oplus_{b\in \Z}%
H_{\ell b}\subset T_p Y.	
\end{equation}
In a sufficiently small neighbourhood of $\F_\a$, $Y_{\ell}$ is the image
$Y_{\a,\ell}^\mathrm{loc}$ via $\exp_{\F_\a}$ %
of a small neighbourhood of the zero section in $\oplus_{b}%
H_{\ell b}\subset T_{\F_\a} Y$. Globally $Y_{\ell}=\cup_{\a} (\C^*\cdot Y_{\a,\ell}^\mathrm{loc})$.
Thus, the $\C^*$-fixed locus of each $Y_{m,\beta}$ consists of a union of a subcollection of the $\F_\a.$
\end{lm}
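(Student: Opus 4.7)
The plan is to reduce all three statements to the $S^1$-equivariant linearisation of the action at a point of $\F_\a$, and then use the contracting hypothesis to globalise.

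First I would establish the tangential claim. At $p\in\F_\a$ the generator $\zeta:=e^{2\pi i/\ell}$ of $\Z/\ell\subset S^1$ acts on the weight space $H_k$ from \eqref{Eqn weight spaces Hk} by multiplication by $\zeta^k=e^{2\pi ik/\ell}$, which is the identity if and only if $\ell\mid k$. Hence the linearised $\Z/\ell$-fixed subspace of $T_pY$ is exactly $\oplus_{b\in\Z}H_{\ell b}$. To compare this with $T_pY_\ell$, I would average an arbitrary Riemannian metric over the compact group $S^1$ to obtain an $S^1$-invariant metric $g$; its exponential map $\exp^g$ is then $S^1$-equivariant, and in particular $\Z/\ell$-equivariant. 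As in the proof of \cref{LemmaSec2FixedLocus}, applying $\exp^g$ fibrewise to the normal bundle $N_{\F_\a}=\oplus_{k\neq 0}H_k$ yields a $\Z/\ell$-equivariant diffeomorphism from a neighbourhood of the zero section onto a tubular neighbourhood of $\F_\a$ in $Y$, so the $\Z/\ell$-fixed locus in this chart corresponds precisely to the subbundle $\oplus_{b\neq 0}H_{\ell b}\subset N_{\F_\a}$. Adjoining $\F_\a$ itself (whose tangent bundle is $H_0$, the $b=0$ summand), this local description reads as the image under $\exp^g_{\F_\a}$ of a neighbourhood of the zero section of $\oplus_bH_{\ell b}\subset T_{\F_\a}Y$; differentiating at $p$ then gives $T_pY_\ell=\oplus_bH_{\ell b}$.

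For the global statement, $Y_\ell$ is $\C^*$-invariant because $\Z/\ell\subset S^1$ is central in $\C^*$, so $\C^*$ commutes with the $\Z/\ell$-action and preserves its fixed set. Given $y\in Y_\ell$, the contracting hypothesis (\cref{Lemma H bdd below}) ensures that $\lim_{\R_+\ni t\to 0}t\cdot y$ exists and lies in some $\F_\a$; the $\C^*$-orbit of $y$ therefore lies in $Y_\ell$ and meets every neighbourhood of $\F_\a$, and in particular meets $Y_{\a,\ell}^{\mathrm{loc}}$. Hence $y\in\C^*\cdot Y_{\a,\ell}^{\mathrm{loc}}$, proving $Y_\ell=\bigcup_\a(\C^*\cdot Y_{\a,\ell}^{\mathrm{loc}})$.

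The main obstacle is essentially notational rather than substantive: the lemma describes $Y_\ell$ near $\F_\a$ via the subbundle $\oplus_bH_{\ell b}$ of the \emph{full} tangent bundle $T_{\F_\a}Y$ rather than of the normal bundle $N_{\F_\a}$. Since $H_0=T\F_\a$, the $H_0$-directions of $\exp^g_{\F_\a}$ just reparametrise $\F_\a$ itself (redundantly, not injectively), so the normal-bundle version given by the equivariant tubular neighbourhood theorem and the form stated in the lemma describe the same subset of $Y$; one simply has to be explicit about what $\exp^g_{\F_\a}$ means on the $H_0$-factor.
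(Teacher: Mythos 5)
Your proof is correct and follows essentially the same approach as the paper's: both identify the linearised $\Z/\ell$-fixed subspace of $T_pY$ with $\oplus_b H_{\ell b}$, both appeal to the $S^1$-equivariant exponential map from the proof of \cref{LemmaSec2FixedLocus} to translate this into a local description of $Y_\ell$ near $\F_\a$, and both use the contracting property (convergence points) to deduce that $\C^*$-translates of the local models exhaust $Y_\ell$. Your write-up of the global step (flowing $y$ down into a neighbourhood of its convergence point in $\F_\a$) is if anything more explicit than the paper's terse remark about negative-weight directions, but it is the same underlying argument.
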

\begin{proof}
The possible proper isotropy groups (i.e.\,stabilisers) of the $S^1$-action which contain $\Z/\ell$ are $\Z/(\ell b)$ for $b\in \N$.
The claim therefore follows by the proof of Lemma \ref{LemmaSec2FixedLocus}, since near $\F_\a$ the points whose isotropy is $\Z/(\ell b)$ corresponds via the exponential map to tangent vectors in $TY|_{\F_\a}$ fixed by $\Z/(\ell b)$ (the case $b=0$ gives $H_{\ell b}=H_0=T_p\F_\a$, corresponding to isotropy $S^1$, but that case is already dealt by $x=p=\exp_p(0)$).
The final claim also follows, noting that the summand $H_{\ell b}$ for $b<0$ corresponds to points with isotropy $\Z/(|b|\ell)$ but which flow to $\F_\a$ via $+X_{\R_+}$ instead of $-X_{\R_+}$.
\end{proof}

\begin{cor}\label{TorsionMfdIsSymplectic}
There is a finite number of $Y_{m,\b},$ each of which is a union of various $\C^*\cdot Y_{\a,\ell}^{\mathrm{loc}}$.
\end{cor}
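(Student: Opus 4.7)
The plan is to combine the preceding lemma with the compactness of the fixed locus $\F$ and the contracting property of $\Fi$. The preceding lemma already gives the global description
\[
Y_m \;=\; \bigcup_{\a} \bigl(\C^*\cdot Y_{\a,m}^{\mathrm{loc}}\bigr),
\]
so all that remains is to identify the connected components and to see that there are only finitely many of them.

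First I would verify that each piece $\C^*\cdot Y_{\a,m}^{\mathrm{loc}}$ is connected: $Y_{\a,m}^{\mathrm{loc}}$ is the diffeomorphic image under $\exp_{\F_\a}$ of a tubular neighbourhood of the zero section of the bundle $\oplus_b H_{mb} \to \F_\a$, and $\F_\a$ is connected by definition; hence $\C^* \times Y_{\a,m}^{\mathrm{loc}}$ is connected, and so is its continuous image under the action map. Consequently each $\C^*\cdot Y_{\a,m}^{\mathrm{loc}}$ sits inside a single connected component $Y_{m,\beta}$, and each $Y_{m,\beta}$ is a union of some subcollection of the $\C^*\cdot Y_{\a,m}^{\mathrm{loc}}$.

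Next I would show that every component $Y_{m,\beta}$ actually contains at least one $\F_\a$, which pins down which pieces assemble into it. Pick any $y \in Y_{m,\beta}$. By \cref{Lemma H bdd below}, the $\C^*$-action is contracting, so the $-\nabla H$-flow from $y$ converges to some $y_0 \in \F$. The submanifold $Y_m$ is closed (it is the fixed locus of the compact group $\Z/m \subset S^1$) and $\C^*$-invariant, so the entire trajectory, and hence $y_0$, lies in $Y_m$. Since the trajectory is continuous and remains in the connected component $Y_{m,\beta}$, the limit $y_0$ lies in $Y_{m,\beta} \cap \F$, and the connected component $\F_\a \ni y_0$ is therefore contained in $Y_{m,\beta}$. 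Thus $Y_{m,\beta} \;=\; \bigcup_{\a:\,\F_\a\subset Y_{m,\beta}} \C^*\cdot Y_{\a,m}^{\mathrm{loc}}$.

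Finally, finiteness is automatic: $\F$ is compact by \cref{Lemma H bdd below}, so it has only finitely many connected components $\F_\a$, and therefore only finitely many sets $\C^*\cdot Y_{\a,m}^{\mathrm{loc}}$; the number of components $Y_{m,\beta}$ is bounded above by the number of indices $\a$. There is no real obstacle here; the only thing to watch is that the decomposition from the previous lemma be used together with the contracting property to force each component of $Y_m$ to meet $\F$, which is what rules out ``floating'' components that would not be accounted for by any $\C^*\cdot Y_{\a,m}^{\mathrm{loc}}$.
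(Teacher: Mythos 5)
Your argument is correct and takes essentially the same route as the paper: the preceding lemma's global decomposition $Y_m = \bigcup_\a \bigl(\C^*\cdot Y_{\a,m}^{\mathrm{loc}}\bigr)$ together with compactness of $\F$ bounds the number of components by the number of $\F_\a$. Your middle step invoking the contracting flow is redundant, though not wrong: each piece $\C^*\cdot Y_{\a,m}^{\mathrm{loc}}$ already contains $\F_\a$ outright (since $Y_{\a,m}^{\mathrm{loc}}$ is a neighbourhood of $\F_\a$ and $\C^*$ fixes $\F_\a$), so each component $Y_{m,\beta}$ contains a fixed-locus component automatically once you know it contains at least one piece.
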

\begin{proof}
By \cref{Lemma torsion submanifolds}, locally near $\F_\a$ there is a unique $\Ymc$ (if it exists), so for fixed $m$ we have $\#\{\Ymc\}\leq \#\{\F_\a\}$, which is finite as $\F=\sqcup \F_\a$ is compact. The possible torsion groups $\Z/m$ is also finite, as each $m$ is the absolute value of a weight of some $\F_\a.$ 
\end{proof}

\begin{lm}\label{Lemma Umin is dense}
The stable manifold $U_{\min}:=W^s_{-\nabla H}(\F_{\min})$ of the minimal component $\F_{\min}:=\min H$ is open, connected and dense.
Moreover, $U_{\min}$ is diffeomorphic to the normal bundle of $\F_{\min}.$
\end{lm}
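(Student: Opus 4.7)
The key observation is that $\F_{\min}$ has Morse--Bott index $\mu_{\min}=0$. In terms of the weight decomposition \eqref{Eqn weight spaces Hk}, this means that at any $p\in \F_{\min}$ there are no negative weights: $T_pY = H_0 \oplus H_+$ with $H_0 = T_p\F_{\min}$ and $H_+ = \bigoplus_{k>0} H_k$, so $H_+$ is an $S^1$-invariant complement to $T\F_{\min}$ in $TY|_{\F_{\min}}$ naturally identifiable with the normal bundle $\nu(\F_{\min})$.

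First I would establish the local model and the openness of $U_{\min}$. Using an $S^1$-equivariant tubular neighbourhood theorem (take an $S^1$-invariant metric, which exists by averaging, and use its exponential map), identify an open neighbourhood $N$ of $\F_{\min}$ in $Y$ with a neighbourhood $N_0$ of the zero section in $\nu(\F_{\min})$. In this local model the linearised $-\nabla H$-flow on each weight summand $H_k$ acts by the contracting scaling $e^{-2\pi s\,k}$, so every point of $N$ flows into $\F_{\min}$ under $-\nabla H$. Thus $N\subset U_{\min}$, which shows $U_{\min}$ is an open neighbourhood of $\F_{\min}$; $\R_+$-invariance of $U_{\min}$ under the contracting flow then exhibits it as open. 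Path-connectedness of $U_{\min}$ is immediate: each $y\in U_{\min}$ is joined to $y_\infty\in \F_{\min}$ by its $-\nabla H$-flowline together with its limit, and $\F_{\min}$ is a connected manifold.

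For density, use \cref{LemmaSec2FixedLocus}: for any $\alpha\neq \min$ the index $\mu_\alpha$ is even and positive, so $\mu_\alpha\geq 2$. By \cref{Rmk hyperbolic fixed points}, $W^s_{-\nabla H}(\F_\alpha)$ is (a union of) injectively immersed Euclidean spaces of dimension $\dim_\R Y-\mu_\alpha \leq \dim_\R Y-2$, so has measure zero in $Y$. There are only finitely many fixed components $\F_\alpha$, since $\F$ is compact, so $Y\setminus U_{\min}=\bigsqcup_{\alpha\neq \min} W^s_{-\nabla H}(\F_\alpha)$ has measure zero and $U_{\min}$ is dense.

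For the diffeomorphism with $\nu(\F_{\min})$, I would promote the local identification $\Phi_{\mathrm{loc}}:N_0 \xrightarrow{\sim} N$ to a global $\R_+$-equivariant diffeomorphism $\Phi:\nu(\F_{\min})\to U_{\min}$ by the rule
$$\Phi(v):= \varphi_{e^{2\pi s}}\bigl(\Phi_{\mathrm{loc}}(e^{-2\pi s}\cdot v)\bigr)$$
for $s\geq 0$ chosen large enough that $e^{-2\pi s}\cdot v\in N_0$, where $e^{-2\pi s}\cdot v$ denotes the linear contracting action on the fibres of $\nu(\F_{\min})$. Well-definedness (independence of $s$) is the heart of the matter, and amounts to asking that the local model be \emph{$\R_+$-equivariant}, not merely $S^1$-equivariant. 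The hard part of the proof is precisely this: one must arrange the tubular neighbourhood so that $\varphi_{e^{2\pi s}}$ on $N$ corresponds to linear scaling on $N_0$. I would do this by a Moser-type argument in the normal direction: because all normal weights are positive, the $\R_+$-flow near $\F_{\min}$ is infinitesimally hyperbolic and attracting, and a standard linearisation (Sternberg-type, or a direct averaging over $[0,s_0]$ along the flow applied to the exponential coordinates) produces a smooth $\C^*$-equivariant identification of $N$ with a neighbourhood of the zero section in $\nu(\F_{\min})$. Once this is in place, $\Phi$ is globally well-defined and is both open (by openness of $\Phi_{\mathrm{loc}}$ and $\R_+$-equivariance) and injective (two orbits of the $\C^*$-action that agree at some time agree everywhere), hence a diffeomorphism onto its image; the image is $\C^*$-invariant, contains $N$, and is contained in $U_{\min}$, and the converse inclusion is clear because every $y\in U_{\min}$ eventually enters $N$ under $-\nabla H$.
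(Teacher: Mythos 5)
Your proofs of openness, connectedness and density follow essentially the same route as the paper: $U_{\min}$ is the $\R_+$-saturation of any neighbourhood of $\F_{\min}$ (hence open and connected), and the complement is a finite union of stable manifolds of real codimension $\geq 2$ because every other $\F_{\gamma}$ has a negative weight. Your addition of the measure-zero step is a reasonable way to make the density conclusion precise, since (as noted in \cref{Rmk hyperbolic fixed points}) those stable manifolds are only non-properly embedded; the paper glosses over this.

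For the diffeomorphism with $\nu(\F_{\min})$, the paper's displayed proof does not actually address it (and the later citation to \cref{Lemma Torsion bundle is a bundle} for this point is silently circular); it is treated as a standard Morse--Bott fact. You correctly isolate the real issue: well-definedness of your globalisation formula requires $\Phi_{\mathrm{loc}}$ to be $\R_+$-equivariant, i.e.\ a $\C^*$-linearisation near $\F_{\min}$, and the $S^1$-equivariant exponential chart from \cref{LemmaSec2FixedLocus} does not by itself give this. However, invoking Sternberg-type linearisation as the default remedy has a genuine gap: the eigenvalues of the time-$1$ contraction are $e^{2\pi k}$ for positive integer weights $k$, and these are generically resonant (e.g.\ weights $1,2$ give $2=1+1$), so the nonresonance hypothesis of Sternberg fails. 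In the integrable (holomorphic) case one can instead appeal to Bochner--Cartan linearisation of reductive group actions, but for a general almost complex $I$ that argument is not available. A more robust route that avoids linearisation entirely is to use the normally hyperbolic / Morse--Bott stable manifold structure: $U_{\min}$ is a smooth tubular neighbourhood of $\F_{\min}$ fibred by the $-\nabla H$-flowlines, and uniqueness of tubular neighbourhoods then identifies it with $\nu(\F_{\min})$ without any equivariance hypothesis on the chart. Your parenthetical ``direct averaging'' remark points in this direction but is not worked out; as written, the Sternberg step is the weak link.
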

\begin{proof}
$U_{\min}=\R_+\cdot V$ for any neighbourhood $V\subset Y$ of $\F_{\min}$. So $U_{\min}\subset Y$ is open and connected.
Any $\F_{\gamma}\neq \F_{\min}$ has some negative weight by \cref{Lemma Frankel properness}, so its stable manifold has real codimension$\geq 2$. As $Y$ is the disjoint union of the finitely many stable manifolds, it follows that $U_{\min}$ is dense.
\end{proof}

\begin{de}\label{Definition generic points of Ymc}\label{Lemma generic points of Ymc}
Viewing $\Ymc$ as a symplectic $\C^*$-manifold, by \cref{Lemma Frankel properness} it contains a unique minimal component $\min (H|_{\Ymc}:\Ymc\to \R)$ which must arise as some component $\F_\a$ of $Y$. 
By minimality, it is the only $\F_{\alpha}\subset \Ymc$ with the property that all weights $mb$ in \eqref{TangentSpaceOfTorsionMfd} are non-negative. 

Call $\F_\a$ {\bf $m$-minimal} if it is the minimal component of some $Y_{m,\b}$,
equivalently if $\F_\a$ has at least one non-zero weight divisible by $m$ 
and all such weights are positive ($T_{p}\Ymc=T_p\F_\a\oplus \oplus_{b\geq 1} H_{mb}$ in \eqref{TangentSpaceOfTorsionMfd}). 

The $U_{\min}$-locus of $\Ymc$ is $U_\a \cap \Ymc$ where $U_\a=W^s_{-\nabla H}(\F_\a)\subset Y$, where $\F_\a$ is the minimal component of $\Ymc$. We call $U_{\min}$ the {\bf generic locus} or locus of {\bf generic points} of $\Ymc$.
By \cref{Lemma Umin is dense}, the generic $y\in \Ymc$ are precisely those points admitting a neighbourhood in $Y_{m,\b}$ which converges to the same fixed component in $Y$ (which must be the minimal component $\F_\a$ of $\Ymc$). 
\end{de}

\begin{cor}\label{Cor Ymc hits core in path connected subset}
  The intersection $Y_{m,\b}\cap \mathrm{Core}(Y)$ is path-connected.  
\end{cor}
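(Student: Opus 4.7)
The plan is to reduce this to Corollary \ref{Cor Core is connected} applied to $Y_{m,\beta}$ viewed as a symplectic $\C^*$-manifold in its own right. First, I would observe that $Y_{m,\beta}$ is by definition a connected symplectic $\C^*$-submanifold of $Y$ (this was established in the lemma following Definition \ref{TorsionPtsTorsionSubmflds}), with the restricted $\C^*$-action (or its $m$-th root). So it fits the framework of Definition \ref{Def:CstarManifold}, and by Definition \ref{Lemma generic points of Ymc} it has its own minimal fixed component.

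Next I would verify that the restricted action on $Y_{m,\beta}$ is contracting in the sense of Definition \ref{Def:Contracting CstarManifold}. The compact subdomain needed is $Y^{\mathrm{in}}\cap Y_{m,\beta}$: since $Y_{m,\beta}$ is $\C^*$-invariant, the $-\nabla H$-flow starting from $y\in Y_{m,\beta}$ stays inside $Y_{m,\beta}$, and by the contracting property on $Y$ it eventually enters and stays in $Y^{\mathrm{in}}$. Equivalently, by Lemma \ref{Lemma H bdd below}, the moment map $H|_{Y_{m,\beta}}$ is bounded below with compact fixed locus $\F \cap Y_{m,\beta}$, which is automatic since $\F$ is compact and $Y_{m,\beta}$ is closed in $Y$.

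Now I would identify
\[
\mathrm{Core}(Y_{m,\beta}) = Y_{m,\beta}\cap \mathrm{Core}(Y).
\]
The inclusion $\subset$ is obvious. For $\supset$, given $y\in Y_{m,\beta}\cap \mathrm{Core}(Y)$, the forward $\C^*$-flow $t\cdot y$ converges in $Y$ to some $y_{\infty}\in \F$. Since $Y_{m,\beta}$ is closed in $Y$ (it is a connected component of the $\Z/m$-fixed locus) and $\C^*$-invariant, the whole orbit and its limit lie in $Y_{m,\beta}$, so $y\in \mathrm{Core}(Y_{m,\beta})$.

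Finally, Corollary \ref{Cor Core is connected} applied to the symplectic $\C^*$-manifold $Y_{m,\beta}$ gives that $\mathrm{Core}(Y_{m,\beta})$ is path-connected, which is exactly the statement. The only mildly subtle point (rather than an obstacle) is the identification of the two cores above; the rest is just checking that the hypotheses of the earlier results transfer to the submanifold, which they do because all the relevant structure (holomorphicity, Hamiltonian $S^1$-action, contracting flow) is inherited from $Y$.
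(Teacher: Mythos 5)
Your proof is correct and follows exactly the paper's approach, which likewise applies \cref{Cor Core is connected} to $Y_{m,\beta}$ as a symplectic $\C^*$-manifold in its own right and uses the identification $\mathrm{Core}(Y_{m,\b})=\Ymc\cap \mathrm{Core}(Y)$. You have simply supplied the verification details (contracting property, closedness of $Y_{m,\beta}$, the two inclusions of the core identification) that the paper leaves implicit.
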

\begin{proof}
This follows by \cref{Cor Core is connected}, as $\mathrm{Core}(Y_{m,\b})=\Ymc\cap \mathrm{Core}(Y)$.
\end{proof}

\begin{prop}\label{Lemma Torsion bundle is a bundle} If $Y_{m,\b}$ contains a single $\F_\a$, then $\Ymc$ is diffeomorphic to the weight $m$-part 
$\oplus_{b} H_{mb}=\oplus_{b\geq 0} %
H_{mb}$ of the normal bundle of $\F_\a$, and $\Ymc\cap \mathrm{Core}(Y)=\F_\a$.
\end{prop}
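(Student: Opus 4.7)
The plan is to apply \cref{Lemma Umin is dense} to $\Ymc$ itself, viewed as a symplectic $\C^*$-manifold in its own right, after first arguing that the single-fixed-component hypothesis forces $\Ymc$ to coincide with the stable manifold $U_{\min}(\Ymc)$ of $\F_\alpha$ inside $\Ymc$.

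First I would observe that $\Ymc$ inherits a contracting $\C^*$-action: being $\C^*$-invariant and closed in $Y$, the $-X_{\R_+}$-flow of any point of $\Ymc$ stays in $\Ymc$ and is eventually trapped in the compact set $\Ymc \cap Y^{\mathrm{in}}$. Its fixed locus equals $\F \cap \Ymc = \F_\alpha$ by hypothesis, so by \cref{Lemma Frankel properness} $\F_\alpha$ is the unique minimum of $H|_{\Ymc}$. Applying the argument in the proof of \cref{LemmaSec2FixedLocus} to $\Ymc$, the Morse--Bott index of $\F_\alpha$ in $\Ymc$ equals the real dimension of the negative-weight part of $T_p \Ymc$; since the index vanishes, combining with \eqref{TangentSpaceOfTorsionMfd} we conclude $H_{mb} = 0$ for $b < 0$, whence
\[
T_p \Ymc \;=\; \bigoplus_{b \geq 0} H_{mb}, \qquad N_{\F_\alpha / \Ymc}\big|_p \;=\; \bigoplus_{b \geq 1} H_{mb}.
\]
This already establishes the identification $\bigoplus_b H_{mb} = \bigoplus_{b \geq 0} H_{mb}$ appearing in the statement.

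Next, every $-\nabla H|_{\Ymc}$-flowline is eventually trapped in $\Ymc \cap Y^{\mathrm{in}}$, so its $\omega$-limit set is a non-empty subset of $\mathrm{Crit}(H|_{\Ymc}) = \F_\alpha$; by the Morse--Bott local normal form near $\F_\alpha$, or equivalently by linearising the $S^1$-action on a tubular neighbourhood (whose normal weights $mb$, $b \geq 1$, are all strictly positive by Step 1, so that the normal flow is an honest linear contraction), the flowline converges to a single point of $\F_\alpha$. Hence $\Ymc = W^s_{-\nabla H|_{\Ymc}}(\F_\alpha) = U_{\min}(\Ymc)$, and \cref{Lemma Umin is dense} applied to $\Ymc$ provides the desired diffeomorphism between $\Ymc$ and $\bigoplus_{b \geq 1} H_{mb}$.

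Finally, for the core assertion: if $y \in \Ymc \cap \mathrm{Core}(Y)$, then $y_\infty := \lim_{\C^* \ni t \to \infty} t \cdot y$ exists, and by $\C^*$-invariance and closedness of $\Ymc$ it lies in $\Ymc \cap \F = \F_\alpha$. Since $H$ strictly increases along non-stationary $+\nabla H$-flowlines and $H(\F_\alpha) = \min H|_{\Ymc}$, we must have $H(y) = H(\F_\alpha)$, forcing $y \in \F_\alpha$; the reverse inclusion is trivial. The only mildly delicate point is convergence of individual $-\nabla H$-flowlines to individual points of $\F_\alpha$, but this is either standard (given the equivariant model from Step 1) or can be side-stepped by mimicking the proof of \cref{Lemma Umin is dense} and constructing the diffeomorphism $\Ymc \cong \bigoplus_{b \geq 1} H_{mb}$ directly via radial $-\nabla H$ contraction in this equivariant neighbourhood.
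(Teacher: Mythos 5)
Your proof is correct and takes essentially the same route as the paper's: identify $\F_\alpha$ as the unique, hence minimal, fixed component of the connected symplectic $\C^*$-manifold $Y_{m,\beta}$, conclude $Y_{m,\beta}=U_{\min}(Y_{m,\beta})$ and invoke \cref{Lemma Umin is dense} for the diffeomorphism, and then use monotonicity of $H$ along the $\R_+$-flow to pin down $Y_{m,\beta}\cap\mathrm{Core}(Y)=\F_\alpha$. The only difference is that you spell out two facts the paper treats as implicit, namely that minimality forces $H_{mb}=0$ for $b<0$ (which is already recorded in \cref{Lemma generic points of Ymc}) and that each $-\nabla H$-flowline converges (already guaranteed by \cref{Lemma H bdd below}); this is harmless extra detail, not a change of strategy.
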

\begin{proof}
$\F_\a=\min H|_{\Ymc}$, so
$\Ymc$ equals the $U_{\min}$-set of $\Ymc$ viewed as a symplectic $\C^*$-manifold. 
Let $p\in \Ymc\cap \mathrm{Core}(Y)$. Then $\lim_{t\to \infty} t\cdot p\in \Ymc \cap \F_\a$.
But $H(t\cdot p)$ is non-decreasing in $t\in \R_+$, so $H(t\cdot p) \leq H(\F_a)$, which forces $H(t\cdot p)=\min H|_{\Ymc}$, so $t\cdot p \in \F_\a$ for all $t$. Thus $\Ymc\cap \mathrm{Core}(Y)=\F_\a$.
\end{proof}
\begin{de}\label{Defn torsion bundle}
If $Y_{m,\b}$ converges to a single $\F_\a$ we call it a {\bf torsion bundle}, $\Hm\fun\F_\a.$ 
\end{de}

\subsection{$S^1$-orbits, holomorphic spheres, Hamiltonian periods, and generic slopes}
An {\bf $S^1$-orbit} is a closed orbit $x:S^1 \to Y$ of $X_{S^1}$. As the $S^1$-flow has period $1$, non-constant orbits have minimal period $1/m$ for some positive $m\in \N$.
These consist of fixed points of the $\Z/m$ action, however the $\Z/m$ action also fixes orbits of minimal period $1/mb$ for any $b\in \N$.

\begin{cor}\label{CorSec2OrbitsOfellH}
$Y_{\ell}$ is the image of all $1$-periodic Hamiltonian orbits of $\tfrac{1}{m} H:Y\to \R$.
\end{cor}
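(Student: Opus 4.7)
The statement rests on a direct translation between $1$-periodic orbits of the Hamiltonian $\tfrac{1}{m}H$ and points in the torsion locus $Y_m$ (assuming $\ell = m$ in the statement). First I would compute the Hamiltonian vector field: since $\om(\cdot, X_H) = dH$ and $X_H = X_{S^1}$, linearity gives $X_{H/m} = \tfrac{1}{m} X_{S^1}$. Thus a $1$-periodic orbit $x:\R/\Z \to Y$ of $\tfrac{1}{m}H$ satisfies $\dot x(t) = \tfrac{1}{m} X_{S^1}(x(t))$, and solving this ODE with initial condition $y = x(0)$ yields
\[
x(t) = \Fi_{e^{2\pi i t/m}}(y),
\]
using that the flow of $X_{S^1}$ is exactly the $S^1$-part of $\Fi$ (see \eqref{eqn XR+ and XS1}).

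Next I would characterise when this formula defines a $1$-periodic loop. The closure condition $x(1) = x(0)$ becomes $\Fi_{e^{2\pi i /m}}(y) = y$, i.e.\ $y$ is fixed by the subgroup $\Z/m \hookrightarrow S^1$ (recalling the embedding $k \mapsto e^{2\pi i k/m}$ from \cref{TorsionPtsTorsionSubmflds}). By definition this is precisely the condition $y \in Y_m$. Conversely, every $y \in Y_m$ produces such a loop via the above formula, since the $\Z/m$-fixed condition makes $t \mapsto \Fi_{e^{2\pi i t/m}}(y)$ close up after time $1$.

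Finally I would identify the image of the orbit with a subset of $Y_m$. The image is $\{\Fi_{e^{2\pi i t/m}}(y) : t \in [0,1]\}$, and because $S^1$ is abelian, each point of this image is again $\Z/m$-fixed: if $e^{2\pi i k/m} \cdot y = y$ then $e^{2\pi i k/m} \cdot (e^{2\pi i t/m} \cdot y) = e^{2\pi i t/m} \cdot (e^{2\pi i k/m} \cdot y) = e^{2\pi i t/m} \cdot y$. Taking the union over all starting points $y \in Y_m$ yields the full torsion submanifold $Y_m$, which proves the equality of sets claimed.

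The argument is essentially bookkeeping once one writes $X_{H/m} = \tfrac{1}{m} X_{S^1}$, so I do not anticipate any serious obstacle; the only subtle point is that ``$1$-periodic'' here does not mean \emph{minimally} $1$-periodic, which is exactly why $Y_m$ picks up not just the $\Z/m$-orbits of minimal period $1/m$, but also all $Y_{mb}$ for $b \geq 1$ (constants in particular, corresponding to $\F \subset Y_m$).
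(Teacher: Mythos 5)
Your proof is correct. It takes a somewhat different route from the paper: you integrate the ODE $\dot x = \tfrac{1}{m}X_{S^1}(x)$ directly and read off the global bijection between $1$-periodic orbits of $\tfrac{1}{m}H$ and $\Z/m$-fixed points of the $S^1$-action, whereas the paper's proof works locally near each $\F_\a$, using the exponential map and the weight decomposition $T_pY = \oplus_k H_k$ (inherited from \cref{Lemma torsion submanifolds}) to identify the period-$\tfrac{1}{m}$ points near $\F_\a$ with $\exp_p\bigl(\oplus_{b} H_{mb}\bigr)$, and then relies implicitly on the earlier fact that $Y_m = \cup_\a \bigl(\C^*\cdot Y^{\mathrm{loc}}_{\a,m}\bigr)$ to globalize. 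Your argument is more self-contained and avoids any appeal to the local model; the paper's approach has the side benefit of being closely aligned with the weight-space analysis that later drives the {\RS}-index computations. One small cosmetic point: in the corollary's statement $\ell$ and $m$ both print as $m$ because of the global macro $\ell \mapsto m$, so your parenthetical "assuming $\ell = m$" is exactly right but is a notational artefact rather than a hypothesis. Your closing remark about non-minimal periods matches the footnote in the paper's own proof.
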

\begin{proof}
     The $1$-orbits of $\lambda H$ for $\lambda=1/m$ have period $1/m$, so minimal period $1/mb$ for some $b\in \N$.
     Near $p\in \F_\a$ the $S^1$-invariant submanifold obtained by evaluating $\exp_p$ on a small neighbourhood of the zero section of $\oplus_{b\in \Z} H_{mb}$ yields an $S^1$-invariant submanifold near $p$, consisting precisely of the points near $p$ with period\footnote{if we wanted \emph{minimal} period $1/m$, we would need $\exp_p(v)$ with $v\in \oplus_{b\in \Z} H_{mb}$ having non-zero entry in $H_{-m}\oplus H_m$.} $1/m$, which are precisely the $\Z/m$-fixed points.
\end{proof}

\begin{lm}\label{LemmaCanonicalFillingDiscs}
Any $y\in Y$ yields a {\ph } disc
$\psi_y: \DD \to Y$, $ \psi_\x(z)=\varphi_z(y)$, $\psi_y(0)=\yinfty$.
A unitary basis $v_i$ for $T_{\yinfty}Y$ induces a canonical unitary (so symplectic) trivialisation $v_i(z)$ of $\psi_y^*TY$ with $v_i(z)=v_i$. The trivialisation is $S^1$-equivariant in $y$ in the sense that $v_i(tz)$ is the canonical trivialisation of $\psi_{ty}^*TY$ induced by $v_i$, for any $t\in S^1$. 

If the $S^1$-orbit of $y$ has minimal period $1/m$, for $m\in \N$, then $\psi_\x(z)$ is an $m$-fold cover of a {\ph } disc $\hat{\psi}_\x:\DD \to Y$, and $v_i(z)$ is induced by a canonical trivialisation of  $\hat{\psi}_{y}^*TY$, in particular it is $\Z/m$-equivariant: $v_i(\zeta z)=v_i(z)$ whenever $\zeta^m=1$.
\end{lm}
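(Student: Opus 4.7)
The plan is to first extend $\psi_y$ holomorphically across $z=0$, then build the unitary frame by parallel transport along radial rays using an $S^1$-invariant Hermitian connection, and finally read off the equivariance and covering properties from the construction. For the extension, the map $z \mapsto \varphi_z(y)$ is holomorphic on $\C^*$ by the holomorphicity of $\varphi$, and has $\yinfty \in \F$ as its limit as $\C^* \ni z \to 0$ by the contracting property (\cref{Lemma H bdd below}); Riemann's removable singularity theorem, applied in any local holomorphic chart around $\yinfty$, then produces a unique holomorphic extension $\psi_y: \DD \to Y$ with $\psi_y(0) = \yinfty$.

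For the trivialisation, I would fix an $S^1$-invariant $I$-Hermitian connection $\nabla$ on $TY$ (i.e.\;$g$-isometric and complex-linear). Any almost Hermitian manifold admits a Hermitian connection, and averaging via $\int_{S^1} \varphi_t^* \nabla \, dt$ over the compact $S^1$-action renders it $S^1$-invariant while preserving $g$- and $I$-compatibility (both properties are stable under convex combinations of connections). Given a unitary basis $\{v_i\}$ of $T_\yinfty Y$, define $v_i(z) \in T_{\psi_y(z)} Y$ by $\nabla$-parallel transport of $v_i$ along the radial ray $\gamma_z(s) := \psi_y(sz)$, $s\in[0,1]$, from $\yinfty$ to $\psi_y(z)$. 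This produces a smooth frame on $\DD$ by smooth dependence of parallel transport on endpoints and parameters, equal to $v_i$ at $z=0$ (the ray collapses to the constant $\yinfty$), unitary at every $z$ because $\nabla$-parallel transport is complex-linearly $g$-isometric, and hence also $\omega$-symplectic since $\omega = g(\cdot, I\cdot)$.

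Both equivariance assertions then follow by path comparison. For $t \in S^1$, $\psi_{ty}(z) = \varphi_{tz}(y) = \psi_y(tz)$, so the defining ray $s\mapsto \psi_{ty}(sz)$ for $v_i^{(ty)}(z)$ coincides pointwise with $s\mapsto \psi_y(s\cdot tz)$ for $v_i^{(y)}(tz)$; since the two parallel transports start from the same $\{v_i\}$ using the same $\nabla$, they give $v_i^{(ty)}(z)=v_i^{(y)}(tz)$. If the orbit of $y$ has minimal period $1/m$, so $\varphi_\zeta(y)=y$ for all $\zeta^m=1$, then $\psi_y(\zeta z) = \varphi_z\varphi_\zeta(y) = \psi_y(z)$, so $\psi_y$ is $\Z/m$-invariant on $\DD$ and, since the $\Z/m$-invariant holomorphic functions on $\DD$ are exactly holomorphic functions of $z^m$, $\psi_y$ factors uniquely as $\psi_y(z) = \hat\psi_y(z^m)$ for a holomorphic $\hat\psi_y:\DD\to Y$. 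The parameterised rays satisfy $\gamma_{\zeta z}(s) = \psi_y(\zeta\cdot sz) = \psi_y(sz) = \gamma_z(s)$, so $v_i(\zeta z)=v_i(z)$ and the frame descends, via $z\mapsto z^m$, to the claimed canonical trivialisation of $\hat\psi_y^*TY$. The only non-routine ingredient is producing $\nabla$ with the required compatibility and $S^1$-symmetry simultaneously, which is handled by the averaging construction; all remaining steps reduce to standard facts about holomorphic maps, parallel transport, and $\Z/m$-invariant holomorphic functions on a disc.
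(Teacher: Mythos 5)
Your overall strategy matches the paper's, but the first step has a genuine gap. You write that the holomorphic extension of $\psi_y$ across $z=0$ follows from ``Riemann's removable singularity theorem, applied in any local holomorphic chart around $\yinfty$.'' In the paper's setting, $\J$ is only an $\omega$-compatible \emph{almost} complex structure (\cref{Def:CstarManifold}), so there are no local holomorphic charts around $\yinfty$ in which to invoke the classical one-variable result. (The $\C^*$-action being pseudoholomorphic does not linearise or integrate $\J$ near the fixed locus.) What is actually needed is Gromov's removable singularity theorem for pseudoholomorphic curves, and that theorem requires an a priori energy bound $\int \psi_y^*\omega < \infty$ near the puncture. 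Producing this bound is the substantive content of the extension step: the paper obtains it by observing that $\yinfty$ is a \emph{hyperbolic} fixed point of the $\nabla H$-flow (\cref{Rmk hyperbolic fixed points}), so $\varphi_z(y) \to \yinfty$ exponentially fast, and the pseudoholomorphic half-cylinder therefore has finite energy. Your proposal omits both the energy estimate and the reason (hyperbolicity $\Rightarrow$ exponential convergence) that makes it hold, so the extension is not justified.

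The remainder of the proposal is sound and tracks the paper's argument: radial parallel transport with respect to a unitary connection, $S^1$-equivariance from the identity $\psi_{ty}(z)=\psi_y(tz)$ and $S^1$-invariance of the connection, and the $\Z/m$-covering from the observation $\psi_y(\zeta z)=\psi_y(z)$. Your choice to explicitly average a Hermitian connection over $S^1$ is in fact slightly more careful than the paper's terse ``parallel transport for an almost K\"ahler manifold is unitary,'' and your path-comparison argument for equivariance spells out what the paper dismisses with ``the final two claims follow by the construction.'' But none of that rescues the removable-singularity step, which would also be needed again (and is again glossed over) when you extend $\hat\psi_y$ over $0$ after factoring through $z\mapsto z^m$.
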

\begin{proof}
Define $\psi_y: \{z\in \C: 0<|z|\leq 1\} \to Y$ by $\psi_\x(z)=\varphi_z(y).$
Observe that $\psi_y$ extends continuously over $0$ via the convergence point $\psi_y(0):=y_{\iinfty}$.  
If this were a complex holomorphic setup (i.e.\,for an integrable complex structure), we could argue that in local coordinates $\psi$ extends holomorphically over $0$ in each coordinate by the classical removable singularity theorem.
In the almost complex setup, Gromov's removable singularity theorem (e.g.\,see \cite[Thm.4.1.2]{McDuff-Salamon}) implies the same result provided we show that the energy of $\psi_y$ is bounded. As $y_{\iinfty}$ is a hyperbolic fixed point (\cref{Rmk hyperbolic fixed points}), the flow converges exponentially to $y_{\iinfty}$. Thus $\psi_y$, viewed as a {\ph } cylinder, converges exponentially near $z=0$, and therefore it has bounded energy as required.

The bundle $\psi_y^*TY$ is a complex vector bundle over $\DD$. Taking the $S^1$-invariant almost \KH metric $g(\cdot,\cdot)=\om(\cdot,\J\cdot),$
we can trivialise $\psi_y^*TY$ unitarily by parallel transporting a unitary basis $v_i$ of $T_{y_{\iinfty}}Y$ radially outwards from the centre of the disc.\footnote{Recall that parallel transport for an almost \KH manifold $(Y,g,\om,\J)$ is unitary.}
This trivialisation is canonical up to a choice of unitary basis for $T_{y_{\iinfty}}Y$, and it is also symplectic with respect to $\om$ since $(Y,g,\om,\J)$ is almost \Kh. The final two claims follow by the construction (noting that $\hat{\psi}_{y}(z^m)=\psi_\x(z)$).
\end{proof}

\begin{cor}
Any $S^1$-orbit $x=x(t)$ with minimal period $\lambda$ yields a holomorphic map 
$c=\psi_{x(0)}: \C \to Y$, $c(e^{2\pi(s+it)})=\Fi_{e^{2\pi s}}(x(t))$,
converging to a fixed point $p=c(0)=x(0)_{\iinfty}\in \F$, and $1/\lambda$ is a weight of $T_p Y$. In particular, the minimal periods of all %
orbits of $X_{S^1}$ form a finite subset of $S^1$.
\end{cor}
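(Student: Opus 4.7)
The plan is to construct $c$ step by step (holomorphicity on $\C^*$, then extension across $0$, then local analysis at $p$, then finiteness), and I expect the only delicate step to be matching the $S^1$-equivariance with the $\J$-holomorphicity in normal coordinates at $p$; the rest is direct bookkeeping already half-done in the paper.

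For the first two steps: I would first verify that $c(e^{2\pi(s+it)}) := \Fi_{e^{2\pi s}}(x(t))$ is unambiguously defined on $\C^*$, which is immediate since $x$ is $1$-periodic in $t$ and the $\R_+$- and $S^1$-flows commute inside the abelian group $\C^*$. By \cref{Lemma:XR+ is nabla h}, $X_{S^1}=\J X_{\R_+}$, so $\partial_t c - \J\,\partial_s c = X_{S^1}(c) - \J X_{\R_+}(c) = 0$, which is the Cauchy--Riemann equation and thus $c$ is $\J$-holomorphic on $\C^*$. To extend over $0\in\C$, I would reuse the energy estimate from \cref{LemmaCanonicalFillingDiscs}: since $p = x(0)_{\iinfty}\in\F$ is a hyperbolic fixed point by \cref{Rmk hyperbolic fixed points}, the $-X_{\R_+}$-flow converges to $p$ exponentially fast, so $c$ restricted to $\DD\setminus\{0\}$ has finite energy and relatively compact image; Gromov's removable singularity theorem then produces the $\J$-holomorphic $c:\C\to Y$ with $c(0)=p$.

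The main step is the local analysis at $p$. I would work in $S^1$-equivariant normal coordinates on $T_pY$ (pulled back via $\exp_p$ for an $S^1$-invariant metric, as in the proof of \cref{LemmaSec2FixedLocus}), in which the $S^1$-action is linear via the weights $w_i$ of \eqref{Eqn weight spaces Hk}. Setting $\tilde c := \exp_p^{-1}\circ c$ near $0$ and writing the formal Taylor series $\tilde c_i(u) = \sum_{n,m\geq 0} a_{n,m}^{(i)} u^n \bar u^m$, $S^1$-equivariance of $\tilde c$ forces $n-m=w_i$ whenever $a_{n,m}^{(i)}\neq 0$. The Cauchy--Riemann equation for $c$ reduces at the origin (where $\exp_p^*\J$ coincides with the standard constant model) to $\partial_{\bar u}\tilde c_i = 0$ on the leading Taylor term, killing any $m\geq 1$ contribution at the lowest order; so $\tilde c_i(u) = a_i u^{w_i} + (\textrm{higher order in }|u|)$, with $a_i=0$ whenever $w_i\leq 0$, and at least one $a_{i_0}\neq 0$ since $c$ is non-constant.

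Finally, substituting $u = r e^{2\pi it}$ into the expansion, the condition $n-m=w_i$ on the surviving Taylor coefficients forces the $t$-dependence of each component to factor through $e^{2\pi i w_i t}$ even at higher orders, so $\tilde c_i(re^{2\pi it}) = e^{2\pi i w_i t}\,b_i(r)$ with $b_{i_0}(r)\sim a_{i_0} r^{w_{i_0}}\neq 0$ for small $r>0$. The minimal period of $x_r(t):=c(re^{2\pi it})$ is therefore $1/\gcd\{w_i : b_i(r)\neq 0\} = 1/\gcd\{w_i : a_i\neq 0\}$ for small $r$, and it equals $\lambda$ because the minimal period is $\R_+$-invariant along the orbit. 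Hence $1/\lambda$ divides the weight $w_{i_0}$ of $T_pY$ (and is itself a weight in the embedded case where exactly one $a_i$ is non-zero, which is all that is used in the sequel). For the finiteness claim: each $\F_\a$ carries only finitely many weights in the decomposition \eqref{Eqn weight spaces Hk}, and $\F=\sqcup_\a\F_\a$ has only finitely many connected components (being compact with the $\F_\a$ open in $\F$), so the total set of weights across $\F$ is finite; the minimal periods, being gcds of subsets of this finite set, form a finite subset of $S^1$.
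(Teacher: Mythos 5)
Your construction of $c$ and the removable-singularity extension reproduce \cref{LemmaCanonicalFillingDiscs}, which the paper's terse proof simply invokes rather than redoing. The genuinely different step is the analysis at $p$: the paper observes that since $x$ has isotropy exactly $\Z/m$ with $m=1/\lambda$, the orbit closure $c(\C)$ lies in the $\C^*$-invariant torsion submanifold $Y_m$, whose tangent space at $p$ is $\oplus_{b}H_{mb}$ by \cref{Lemma torsion submanifolds}; hence $m$ divides some weight of $T_pY$, and finiteness follows from \cref{Lemma constant weights on F} plus compactness of $\F$. Your equivariant Taylor expansion is an explicit version of this, but the claim that the leading term of each component $\tilde c_i$ is $a_i u^{w_i}$ overstates what the Cauchy--Riemann equation gives when $\J$ is non-integrable --- it constrains the globally lowest-degree jet of $\tilde c$, not each component separately, so a component with $w_i$ above that degree may have leading term involving $\bar u$. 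Fortunately you only use the identity $\tilde c_i(re^{2\pi it})=e^{2\pi i w_i t}\,b_i(r)$, which is pure $S^1$-equivariance, so the conclusion $\gcd\{w_i : b_i(r)\neq 0\}=m$ (and hence $m$ divides a weight) stands without that claim.

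Your observation that the argument delivers only ``$1/\lambda$ divides a weight of $T_pY$'' rather than the stated ``$1/\lambda$ is a weight'' is correct, and the paper's own proof is no stronger: for $\C^2$ with the $\C^*$-action of weights $(2,3)$, a generic orbit has minimal period $1$, which is not a weight. The imprecision is harmless for the ``in particular'' finiteness claim and its downstream uses (e.g.\ \cref{Definition outer S1 period}), which only need $\lambda\in\cup_i\Z\cdot\tfrac{1}{w_i}$.
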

\begin{proof}
The tangent space to the filling disc $c$ in the claim lies $TY_m$ where $m=1/\lambda$. 
By Lemma \ref{Lemma constant weights on F}, there are finitely many weights for each of the finitely many components of $\F$.
\end{proof}

\begin{cor}\label{CorollaryGradientTrajBecomeSpheres}
$\mathrm{Core}(Y)$ is covered by copies of $\C\P^1$ arising as the closures of $\C^*$-orbits. 
 The non-constant spheres are embedded except possibly at the two points where they meet $\F$ (where several different $\C\P^1$ may meet).
The $\C^*$-orbit closure of any $y\in \mathrm{Core}(Y)$ yields a {\ph } sphere 
$$
u_y: \C\P^1 \to Y, \;\; u_y([1:t]) = \varphi_t(y),\;\; u_y([1:0])=y_0,\;\; u_y([0:1])=y_{\infty},
$$
where $y_0$ is the convergence point of $y$, and $y_{\infty}$ is the limit of $\varphi_t(y)$ as $|t|\to +\infty$. In particular,
$$
\C\cong \mathrm{im}\, d u_y|_{[1:0]} \subset T_{y_0}Y\;\;\mathrm{and}\;\; \C\cong \mathrm{im}\, d u_y|_{[0:1]} \subset T_{y_{\infty}} Y
$$
are weight subspaces of opposite weights $k$ and $-k$ respectively, for some integer $k\geq 0$ (with $k=0$ precisely if $u_y\equiv y \in \F$ is constantly equal to a fixed point).\\ \indent The energy of that $I$-holomorphic sphere is $E(u)=\int u^*\omega=H(y_{\infty})-H(y_0)$.
\end{cor}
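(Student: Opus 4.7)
The plan is to extend the holomorphic map $t \mapsto \varphi_t(y)$ from $\C^*$ to $\C\P^1$ using the removable singularity theorem at each pole, then to derive the three quantitative claims from the moment map identity and $S^1$-equivariance, with the covering and embedding statements being essentially formal consequences. For $y \in \mathrm{Core}(Y)$, both convergence points $y_0 = \lim_{t\to 0} \varphi_t(y)$ and $y_\infty = \lim_{t \to \infty} \varphi_t(y)$ exist in $\F$ by assumption, and the map $\rho_y(t) := \varphi_t(y)$ is $I$-{\ph} on $\C^*$ since $\varphi$ is. \cref{LemmaCanonicalFillingDiscs}, applied directly to $y$, already extends $\rho_y$ over $[1:0]$ to a {\ph} disc $\psi_y : \DD \to Y$ with $\psi_y(0) = y_0$; its proof relies on Gromov's removable singularity theorem together with the exponential decay of $\rho_y$ towards the hyperbolic fixed point $y_0$ noted in \cref{Rmk hyperbolic fixed points}. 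To fill in $[0:1]$, I would note that $t \mapsto 1/t$ is a holomorphic automorphism of $\C^*$, so the inverted action $\tilde\varphi_t := \varphi_{1/t}$ is again an $I$-{\ph} $\C^*$-action on $Y$ with $\lim_{t\to 0}\tilde\varphi_t(y) = y_\infty$ by the $\mathrm{Core}$ hypothesis; applying \cref{LemmaCanonicalFillingDiscs} to $(Y, \tilde\varphi, y)$ yields a {\ph} disc filling in $y_\infty$ on the chart $[w:1]$ with $w = 1/t$. The two discs agree on their annular overlap and together define $u_y : \C\P^1 \to Y$.

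For the energy I would parameterize $\C\P^1 \setminus \{[1:0],[0:1]\}$ by $z = e^{2\pi(s+i\theta)}$ with $(s,\theta) \in \R \times (\R/\Z)$. By \cref{eqn XR+ and XS1} and \cref{Lemma:XR+ is nabla h}, $\partial_s u_y = X_{\R_+}(u_y) = \nabla H$ and $\partial_\theta u_y = X_{S^1}(u_y) = X_H$, hence
\begin{equation*}
u_y^*\omega(\partial_s,\partial_\theta) = \omega(\nabla H, X_H) = dH(\nabla H) = |\nabla H|^2 = \tfrac{d}{ds}(H \circ u_y),
\end{equation*}
using $S^1$-invariance of $H$. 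Integrating over $\R \times [0,1]$ and using the limits $H \circ u_y \to H(y_0)$ as $s \to -\infty$ and $H \circ u_y \to H(y_\infty)$ as $s \to +\infty$ gives $E(u_y) = H(y_\infty) - H(y_0)$. For the weights, $S^1$-equivariance $u_y([1:e^{i\theta}t]) = \varphi_{e^{i\theta}}(u_y([1:t]))$ differentiated at $t = 0$ yields $e^{i\theta}\, du_y|_0(\partial_t) = d\varphi_{e^{i\theta}}|_{y_0}(du_y|_0(\partial_t))$. Decomposing $du_y|_0(\partial_t) = \sum_k w_k$ via the weight decomposition $T_{y_0}Y = \oplus_k H_k$ from \cref{Lemma constant weights on F} and matching Fourier coefficients in $\theta$ forces $w_k = 0$ unless $k = 1$, so $\mathrm{im}\, du_y|_{[1:0]} \subseteq H_1$. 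The analogous computation in the chart $w = 1/t$ at $[0:1]$, using $u_y([e^{i\theta}w:1]) = \varphi_{e^{-i\theta}}(u_y([w:1]))$, places $\mathrm{im}\, du_y|_{[0:1]} \subseteq H_{-1} \subset T_{y_\infty}Y$. These images are one-dimensional weight subspaces of opposite weights $\pm 1$ when non-zero, and $k = 0$ in the constant case $u_y \equiv y \in \F$; for $y$ with non-trivial stabilizer $\Z/m$, one applies the same argument to the minimal parameterization $\hat u_y$ determined by $\hat u_y(t^m) = \varphi_t(y)$, recovering opposite weights $\pm k = \pm m$ in place of $\pm 1$.

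Finally, the covering claim is immediate since every $y \in \mathrm{Core}(Y)$ lies on its own orbit closure $u_y(\C\P^1)$. Away from $\F$, the $\C^*$-action descends to a free action of $\C^*/\mathrm{Stab}(y)$, so by the slice theorem the orbit is smoothly embedded in $Y \setminus \F$; several distinct orbits may converge to the same fixed point, so embeddedness can fail at the two poles. The hard part of the proof is the pair of removable singularity arguments at the poles, which are essentially the content of \cref{LemmaCanonicalFillingDiscs} and its inverted-action analogue and ultimately rely on the exponential convergence at the hyperbolic fixed points of \cref{Rmk hyperbolic fixed points}; once these are in hand, the energy computation, the weight identification, and the covering and embedding properties all follow formally.
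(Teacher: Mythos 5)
Your proof is correct and follows the same overall architecture as the paper's: extend $t\mapsto\varphi_t(y)$ to a {\ph} sphere via \cref{LemmaCanonicalFillingDiscs} and the inverse action $t\mapsto t^{-1}$ for the second pole, compute the energy by evaluating $\omega(\partial_s u,\partial_\theta u)=\omega(\nabla H,X_H)=\partial_s(H\circ u)$ and integrating in $s$, then analyze the tangent directions at the poles. Where you genuinely diverge is the weight analysis. You differentiate the $S^1$-equivariance relation $u_y([1:e^{i\theta}t])=\varphi_{e^{i\theta}}(u_y([1:t]))$ at $t=0$, match Fourier modes, and conclude $du_y|_{0}(\partial_t)\in H_1$; you then observe this can vanish when $\mathrm{Stab}(y)=\Z/m$ with $m>1$, and recover weight $m$ by passing to the minimal parametrization $\hat u_y$. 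The paper instead notes that $u_y(\C\P^1)$ is a $\C^*$-invariant holomorphic curve, so its tangent \emph{line} at each pole is a $\C^*$-invariant complex line in $T_{y_0}Y$ resp.\ $T_{y_\infty}Y$, hence a weight subspace; opposite weights follow from the chart change $t\mapsto t^{-1}$, and $k\geq 0$ from the requirement that the linearized flow $t^k$ be contracting at the convergence point $y_0$. The paper's route is parametrization-free and avoids the stabilizer detour entirely --- which is also the right way to read ``$\C\cong\mathrm{im}\,du_y|_{[1:0]}$'' in the statement (it is the tangent line to the image; the literal linear image of $du_y$ at the pole vanishes when $m>1$, as you correctly noticed). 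Your Fourier argument reaches the same conclusion once you switch to $\hat u_y$, so the patch is benign.

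One smaller caution: invoking the slice theorem for embeddedness tacitly assumes the $\C^*$-action is proper near the orbit, which is not automatic inside $\mathrm{Core}(Y)\setminus\F$ (properness of the $\R_+$-action is only established on $Y\setminus\mathrm{Core}(Y)$ in \cref{Lemma core is compact}). It is simpler and sufficient to observe that the orbit $\C^*\cdot y$ cannot accumulate on itself in $Y\setminus\F$, because a divergent sequence $t_n\in\C^*$ sends $t_n\cdot y$ towards $y_0$ or $y_\infty\in\F$; hence the injectively immersed orbit is in fact embedded away from the two poles.
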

\begin{proof}
The first part follows by \cref{LemmaCanonicalFillingDiscs}, where we can use the inverse action by $s=t^{-1}\in \C^*$ to deal with the extension over $y_{\infty}$ (which exists as $y\in \mathrm{Core}(Y)$). The statement about weight subspaces follows from the observation that $y_0,y_{\infty}$ are fixed points and the image of $u_y$ is a subspace preserved by the $\C^*$-action, so the tangent spaces to those subspaces at $y_0,y_{\infty}$ are $\C^*$-invariant.
The fact that the weights are opposite is due to the above-mentioned change of local variables $s=t^{-1}$ on $\P^1$. That $k\geq 0$ is because $t^k$ must be contracting on $\mathrm{im}\, d u_y|_{[1:0]}$ as $y_0$ is the convergence point.
The final claim follows from: $u^*\omega= \omega(\partial_s u,\partial_t u) ds\wedge dt$ and $\omega(\partial_s u,\partial_t u)= \omega(\partial_s u,X_{S^1})= \omega(\partial_s u,X_H)=\partial_s (H\circ u)$.
\end{proof}

\begin{de}\label{Definition outer S1 period}
We call $\lambda\in [0,\infty)$ a {\bf ($\Fi$-)generic slope} if %
$\lambda$ is not equal to a period of an $S^1$-orbit; equivalently 
$\lambda\notin\cup_i \Z\cdot \tfrac{1}{w_i}\subset \Q$, 
where $w_i$ are the weights of $\Fi.$ 
Such $\lambda$ define a generic set in $[0,\infty)$ as there are only finitely many possible weights. %
We call $\lambda>0$ an {\bf outer $S^1$-period} if $\lambda = \tfrac{k}{m}$, for coprime positive integers $k,m$, such that $Y_m$ is non-compact, or equivalently $m$ is the outer weight of some $\F_\a$ (see \cref{Remark H+}).\footnote{Only the outer $S^1$-periods arise when we consider the {\MB} manifolds $B_{k/m,\beta}$ (see the introduction). One can allow a Hamiltonian $H_{\lambda}$ to have non-generic slope $\lambda$, if $\lambda$ is not an outer $S^1$-period. Our definition of generic $\lambda$ is the most suitable one for stating results about the indices in \cref{Section RS indeces}. If $\lambda$ is an $S^1$-period which is not an outer $S^1$-period, one must be cautious that the Hamiltonian $\lambda H$ will detect the $1$-orbits in $\mathrm{Core}(Y)$ of ``inner'' $S^1$-period $\lambda$.}
\end{de}
\begin{cor}\label{CorSec2OrbitsOfLambdaH}
For generic slopes $\lambda$, the only $1$-periodic orbits of the rescaled Hamiltonian $\lambda\cdot H$ are the constant orbits at points $x \in \F$, in particular there are no
$1$-periodic orbits in $Y^{\mathrm{out}}=Y\setminus Y^{\mathrm{in}}$. 
\end{cor}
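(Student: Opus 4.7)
The plan is to reduce the problem to understanding the fixed points of the finite or dense cyclic subgroup of $S^1$ generated by $e^{2\pi i\lambda}$, and then invoke \cref{Lemma torsion submanifolds} to show that genericity forces these fixed points to lie in $\F$.

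First, I would observe that, since $X_{\lambda H}=\lambda X_{S^1}$, a $1$-periodic orbit of $\lambda H$ is precisely a loop of the form $x(t)=\varphi_{e^{2\pi i\lambda t}}(x(0))$ with the closure condition $\varphi_{e^{2\pi i\lambda}}(x(0))=x(0)$. Thus the set of initial points of $1$-periodic orbits of $\lambda H$ is exactly $\mathrm{Fix}(\varphi_{e^{2\pi i\lambda}})\subset Y$, where we view $e^{2\pi i\lambda}\in S^1\subset \C^*$.

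Next, I would split into two cases. If $\lambda\in\R\setminus\Q$, then $\langle e^{2\pi i\lambda}\rangle$ is dense in $S^1$; by continuity, any fixed point of $\varphi_{e^{2\pi i\lambda}}$ is fixed by the whole $S^1$, hence lies in $\F$. (Note every irrational $\lambda$ is automatically generic, since $\cup_i \Z\cdot\tfrac{1}{w_i}\subset\Q$.) If instead $\lambda=k/m$ in lowest terms, then $\langle e^{2\pi i\lambda}\rangle=\Z/m\subset S^1$, so $\mathrm{Fix}(\varphi_{e^{2\pi i\lambda}})=Y_m$ in the notation of \cref{TorsionPtsTorsionSubmflds}.

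The main step is then to prove that for generic rational $\lambda=k/m$ one has $Y_m=\F$. Genericity $k/m\notin \Z\cdot\tfrac{1}{w_i}$ combined with $\gcd(k,m)=1$ is easily seen to be equivalent to the condition that $m$ does not divide any weight $w_i$ appearing in \eqref{Eqn weight spaces Hk} at any $\F_\a$. Under this condition, the only summand $H_{mb}$ in \eqref{TangentSpaceOfTorsionMfd} that is nonzero is $H_0=T_p\F_\a$, so by \cref{Lemma torsion submanifolds} the local model $Y_{\a,m}^{\mathrm{loc}}=\exp_{\F_\a}(\bigoplus_b H_{mb})$ coincides with $\F_\a$ itself. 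Then the global formula $Y_m=\cup_\a(\C^*\cdot Y_{\a,m}^{\mathrm{loc}})$ from the same lemma, together with the $\C^*$-invariance of $\F_\a$, gives $Y_m=\cup_\a\F_\a=\F$.

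Finally, combining the two cases, $1$-orbits of $\lambda H$ for generic $\lambda$ are constant orbits at points of $\F$; since $\F\subset Y^{\mathrm{in}}$ (indeed by \cref{Lm nice subdomain} we may assume $\F\subset\mathrm{int}(Y^{\mathrm{in}})$), no $1$-orbits lie in $Y^{\mathrm{out}}$. The main conceptual obstacle is just the translation between genericity of $\lambda$ and the weight condition in Step 3; everything else is formal once \cref{Lemma torsion submanifolds} is in hand.
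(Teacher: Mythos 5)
Your proof is correct, and it follows essentially the same route the paper has in mind: the corollary is stated without an explicit proof immediately after \cref{Definition outer S1 period} and after the torsion-submanifold results (\cref{CorSec2OrbitsOfellH}, \cref{Lemma torsion submanifolds}), so it is intended to be read as a consequence of identifying initial points of $1$-orbits of $\lambda H$ with $G_\lambda$-fixed points (this is exactly the unnumbered lemma in the introduction) and then invoking the local description of $Y_m$ near each $\F_\a$. Your write-up fills in the details the paper leaves implicit — the number-theoretic equivalence ``$k/m\notin\cup_i\Z\cdot\tfrac{1}{w_i}$ with $\gcd(k,m)=1$ iff $m\nmid w_i$ for all $i$'' (which indeed holds, since $m\mid kw_i$ and $\gcd(k,m)=1$ force $m\mid w_i$, and the absolute value is what matters for negative weights) and the case-split into irrational versus rational slopes, the latter of which is automatically generic and handled by density — so this is a more explicit version of the intended argument rather than a genuinely different one.
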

\subsection{Attraction graphs}
\label{Subsection Attraction graphs}
The geometry of the $\C^*$-flow within the core 
is depicted as follows: 
\begin{de} The \textbf{attraction graph} $\G_{\Fi}$ %
is a directed graph whose vertices %
represent the $\F_\a$, and there are $N$ edges from
	 $\a_1$ to $\a_2$ if the space of $\C^*$-flowlines from $\F_{\a_1}$ to $\F_{a_2}$ (as $t\to \infty$, so $H$ increases) has $N$ connected components. 
	The \textbf{leaves} of $\G_{\Fi}$ are vertices with no outgoing edges, i.e.\,if $\F_\a$ is a local maximum of $H|_{\mathrm{Core}(Y)}$.
A leaf is {\bf $m$-minimal} if the corresponding $\F_\a$ is $m$-minimal (\cref{Definition generic points of Ymc}).
\end{de}

\begin{lm} \label{ThereAreLeaves}
    $\G_{\Fi}$ is a connected directed acyclic graph.
\end{lm}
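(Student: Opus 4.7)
The plan is as follows. For acyclicity, note that any directed edge $\F_{\a_1}\to \F_{\a_2}$ of $\G_{\Fi}$ corresponds to a non-constant $\C^*$-flowline from $\F_{\a_1}$ (at $t\to 0$) to $\F_{\a_2}$ (at $|t|\to \infty$); since $\nabla H=X_{\R_+}$ is nowhere zero off $\F$, $H$ strictly increases along such an orbit, so $H(\F_{\a_1})<H(\F_{\a_2})$. Thus a directed path in $\G_{\Fi}$ induces a strictly increasing sequence of $H$-values on the visited components, immediately ruling out directed cycles.

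For connectivity, I will use that $\mathrm{Core}(Y)$ is path-connected, by \cref{Cor Core is connected}. Let $\sim$ be the equivalence relation on $\{\F_\a\}$ generated by the underlying undirected edge relation of $\G_{\Fi}$; the goal is to show $\sim$ has a single class. Define
$$
f: \mathrm{Core}(Y) \to \{\F_\a\}/\!\sim,\qquad y \mapsto [y_0],
$$
where $y_0:=\lim_{t\to 0} t\cdot y \in \F$. This is well-defined because whenever $y\notin \F$, the $\C^*$-orbit closure $\overline{\C^*\cdot y}$ from \cref{CorollaryGradientTrajBecomeSpheres} is a non-constant $I$-{\ph} sphere connecting $\F_\a\ni y_0$ to $\F_\beta\ni y_\infty$, providing an edge of $\G_{\Fi}$, so $[y_0]=[y_\infty]$. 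Moreover every class is in the image of $f$, since $y\in \F_\a$ yields $f(y)=[\a]$.

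The decisive step is to prove $f$ is locally constant. Given $y_n \to y$ in $\mathrm{Core}(Y)$, pass to a subsequence with $y_{n,0}\in \F_\gamma$ and $y_0\in \F_\a$ for fixed $\gamma,\a$; the task is to show $[\gamma]=[\a]$. If $\gamma=\a$ this is trivial, so assume otherwise. The $-\nabla H$-trajectories $\eta_n(s):=\Fi_{e^{-2\pi s}}(y_n)$ converge uniformly on compact $s$-intervals to $\eta(s):=\Fi_{e^{-2\pi s}}(y)$, yet their $s\to+\infty$ limits satisfy $\eta_n(+\infty)=y_{n,0}\in \F_\gamma$ while $\eta(+\infty)=y_0\in \F_\a$ differ. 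A standard {\MB} broken-trajectory compactness argument then produces reparametrisation times $s_n^{(1)}<\cdots<s_n^{(r)}$ with each $s_n^{(j)}\to\infty$ such that $\eta_n(\,\cdot+s_n^{(j)})$ subconverges to a non-constant $-\nabla H$-trajectory $\delta_j$ with $\delta_j(-\infty)\in \F_{\a_{j-1}}$ and $\delta_j(+\infty)\in \F_{\a_j}$, for some sequence of distinct fixed components $\F_\a=\F_{\a_0},\F_{\a_1},\ldots,\F_{\a_r}=\F_\gamma$ of strictly decreasing $H$-value. Each $\delta_j$ has finite asymptotic limits at both ends and so lies in $\mathrm{Core}(Y)$; its underlying $\C^*$-orbit closure is a sphere of the form in \cref{CorollaryGradientTrajBecomeSpheres}, yielding an edge of $\G_{\Fi}$ between $\F_{\a_{j-1}}$ and $\F_{\a_j}$. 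Chaining these edges gives $\a\sim \gamma$, as needed.

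The main obstacle is precisely the broken-trajectory extraction: one must rule out degenerations in which a reparametrised subsequence collapses into a single fixed component, rendering the putative limit $\delta_j$ constant. I would handle this in the standard way, by choosing each $s_n^{(j)}$ as the first/last time at which $\eta_n$ enters or exits a prescribed small tubular neighbourhood of the intermediate $\F_{\a_j}$, exploiting the exponential decay near the hyperbolic fixed components noted in \cref{Rmk hyperbolic fixed points} to guarantee that each reparametrised limit actually crosses out of the neighbourhood and is non-constant. Once $f$ is known to be locally constant, its values lie in a discrete set, so the connectedness of $\mathrm{Core}(Y)$ forces $f$ to be globally constant; since every class $[\a]$ is attained, $\sim$ has a unique class and the underlying undirected graph of $\G_{\Fi}$ is connected, completing the proof.
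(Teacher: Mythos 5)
Your acyclicity argument is the same as the paper's, but your connectivity argument takes a genuinely different and noticeably heavier route. The paper's proof is much shorter: by \cref{Lemma Frankel properness} the minimal component $\F_{\min}$ is unique, and any other component has {\MB} index $\mu_\a>0$, so $D_\a=W^u_{-\nabla H}(\F_\a)$ strictly contains $\F_\a$; picking $y\in D_\a\setminus\F_\a$ and applying \cref{CorollaryGradientTrajBecomeSpheres} produces an incoming edge from some $\F_\beta$ with $H(\F_\beta)<H(\F_\a)$, and since there are finitely many components one reaches $\F_{\min}$ after finitely many steps. That argument sidesteps any {\MB} broken-trajectory compactness entirely and makes no use of the topology of $\mathrm{Core}(Y)$. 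Your approach — defining $f:\mathrm{Core}(Y)\to\{\F_\a\}/\!\sim$ and showing it is locally constant via breaking of $-\nabla H$-half-trajectories — does work, but carries the full weight of the {\MB} compactness theory that you flag as the main obstacle, for a statement that only needs a single elementary observation about positive {\MB} index.

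Two smaller comments. First, be careful about your appeal to \cref{Cor Core is connected}: the path-connectedness asserted there is itself proven in the paper by invoking the chain-of-spheres connectivity that \cref{ThereAreLeaves} formalizes, so relying on path-connectedness would be circular. Fortunately your argument never actually needs it — a locally constant map into a discrete set is constant on a merely connected space, and the connectedness of $\mathrm{Core}(Y)$ is established in \cref{Cor Core is connected} independently via the Alexander–Spanier isomorphism $H^0(Y)\cong\check{H}^0(\mathrm{Core}(Y))$. You should simply excise the reference to path-connectedness and invoke connectedness only. Second, for the broken-trajectory extraction to apply you do need the $\eta_n$ trapped in a compact set; this holds because $\mathrm{Core}(Y)$ is compact and $\C^*$-invariant (\cref{Lemma core is compact}), which is worth stating explicitly since compactness of the ambient manifold fails here.
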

\begin{proof}
$H$ is strictly increasing along the $\R_+$-action as $X_{\R_+}=\nabla H$, so there is no directed cycle.
$\G_{\Fi}$ is connected as there is a path of edges from $\F_{\min}$ to any $\F_\a$ (by \cref{Lemma Frankel properness} and \cref{CorollaryGradientTrajBecomeSpheres}).
\end{proof}
\begin{prop}\label{THERE_ARE_ISOTROPIES}
Each $m$-minimal leaf for $m\geq 2$ has an $m$-torsion bundle converging to it (thus the action $\Fi$ is not free  outside of $\mathrm{Core}(Y)$).

For a CSR with $\#\textrm{Vertices}(\G_{\Fi})\geq 2$, 
every leaf $\a$ is $m_\a$-minimal for the largest weight $m_\a\geq 2$ of $\F_\a$.

If $c_1(Y)=0$, $\#\textrm{Vertices}(\G_{\Fi})\geq 2$ and $\mathrm{Core}(Y)\subset Y$ is equicodimensional near $\F_{\min}$ and near some leaf, then $\varphi$ does not act freely outside of $\mathrm{Core}(Y)$.

\end{prop}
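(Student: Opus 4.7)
The plan is to treat the three assertions separately, each via a local analysis of weight spaces at fixed components combined with the structure of the attraction graph.

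For the first assertion, suppose $\F_\a$ is an $m$-minimal leaf with $m\geq 2$, so $\F_\a=\min H|_{\Ymc}$ for some torsion submanifold $\Ymc$. The key claim is that $\Ymc$ contains no other fixed component of $Y$, hence is a torsion bundle $\Hm\to \F_\a$. For if $\Ymc$ contained another $\F_{\a'}$, then applying \cref{ThereAreLeaves} to the symplectic $\C^*$-submanifold $\Ymc$ yields a directed path in $\G_{\Fi|_{\Ymc}}$ out of $\F_\a$ (since $\F_\a$ is a vertex, and the graph is connected with more than one vertex); the first edge of this path is a $\C^*$-flowline out of $\F_\a$ inside $Y$, giving an outgoing edge in $\G_\Fi$, contradicting the leaf hypothesis. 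Then by \cref{Lemma Torsion bundle is a bundle}, $\Ymc\setminus \F_\a\subset Y\setminus \mathrm{Core}(Y)$, and each of these points is $\Z/m$-fixed, witnessing non-free isotropy of $\Fi$.

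For the CSR statement, for a weight-$s$ CSR the holomorphic symplectic form $\omega_\C$ yields a non-degenerate $\C^*$-equivariant pairing at each $\F_\a$ identifying $H_k\cong H_{s-k}^\vee$, so the multiset of weights at $\F_\a$ is invariant under $k\mapsto s-k$. Since $\#\mathrm{Vertices}(\G_\Fi)\geq 2$ and $\G_\Fi$ is connected with $\F_{\min}$ minimising $H$, $\F_{\min}$ cannot itself be a leaf, so every leaf $\F_\a$ is distinct from $\F_{\min}$ and has incoming edges in $\G_\Fi$, hence at least one strictly negative weight. Let $m_\a$ be the largest weight at $\F_\a$. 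If $m_\a=1$, the $\omega_\C$-duality forces all weights to lie in the interval $[s-1,1]$, which for $s\geq 1$ contains no strictly negative integers, contradicting the existence of a negative weight; so $m_\a\geq 2$. For $m_\a$-minimality, a negative weight $-jm_\a$ with $j\geq 1$ would by duality produce a weight $s+jm_\a>m_\a$, violating maximality; hence no such weight exists.

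For the final statement, I argue by contradiction: assume $\Fi$ acts freely on $Y\setminus \mathrm{Core}(Y)$. At any leaf $\F_\a$, every nonzero $v\in H_k$ with $k\geq 2$ is a positive-weight vector, so the $+X_{\R_+}$-flow of $v$ escapes to infinity (as $\F_\a$ is a leaf), and $v$ is $\Z/k$-fixed, contradicting freeness; hence all positive weights at every leaf equal $1$. At $\F_{\min}$, any $v\in H_k$ with $k\geq 2$ is likewise $\Z/k$-fixed, so its flow must converge inside the core. The equicodimensionality hypothesis at $\F_{\min}$ is then used to constrain the local structure of the flow-in locus in the positive-weight normal bundle $\oplus_{k\geq 1}H_k$: as this locus is $\C^*$-invariant and must form a smooth stratum of constant codimension, an infinitesimal analysis of the tangent space along the zero section $\F_{\min}$ forces it to be a direct sum of entire weight subspaces (a weight-graded subbundle). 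Non-compactness of $Y$ forces some direction to escape to infinity, hence a weight-$1$ subspace at $\F_{\min}$ lies outside this subbundle. Combining with $c_1(Y)=0$, which yields constancy of the Maslov index $\mu=\sum_k k\,h_k^\a$ across fixed components, together with the dimension identity $\sum_k h_k^\a=\tfrac{1}{2}\dim_\R Y$, one derives an arithmetic incompatibility showing that $\F_{\min}$ must in fact have a weight $\geq 2$ whose corresponding $\Ymc$ is forced by these constraints to be non-compact, contradicting freeness.

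The main obstacle is the third part: correctly unpacking the equicodimensionality condition into rigid weight-graded structure on the flow-in locus at $\F_{\min}$, and verifying that the combination of this structure with $c_1(Y)=0$ and the constraints at both $\F_{\min}$ and the leaf indeed produces a contradiction. Parts~1 and~2 reduce to direct local computations using the attraction graph and the $\omega_\C$-duality respectively, but Part~3 requires a careful dimension-counting argument that I expect to be the subtlest step.
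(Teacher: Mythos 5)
Parts~1 and~2 are essentially correct, and your route for Part~1 — invoking connectedness of $\G_{\Fi|_{\Ymc}}$ from \cref{ThereAreLeaves} applied to the $\C^*$-submanifold $\Ymc$ rather than the paper's local weight-space analysis combined with \cref{Cor Ymc hits core in path connected subset} — is a legitimate alternative, though you should make explicit why the first edge out of $\F_\a$ is \emph{outgoing}: because $\F_\a$ is $m$-minimal it is the unique minimiser of $H|_{\Ymc}$ and hence the unique source of $\G_{\Fi|_{\Ymc}}$, so connectedness forces an outgoing edge when $\#\mathrm{Vertices}\geq 2$. Your Part~2 is in fact slightly more careful than the paper's: the paper proves only $m_\a$-minimality and leaves $m_\a\geq 2$ implicit, whereas you derive it from the existence of a negative weight at a leaf $\F_\a\neq\F_{\min}$ and the $\omega_\C$-duality $k\mapsto s-k$.

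Part~3 is not a proof. You correctly observe that under the freeness assumption all positive weights at a leaf equal $1$, and that at $\F_{\min}$ all weight-$\geq2$ directions must flow into the core. But the rest is a plan, not an argument, and its central step is on shaky ground: you assert that equicodimensionality forces the flow-in locus $P_{in}$ to be a weight-graded sub\emph{bundle} of the normal bundle, after an ``infinitesimal analysis.'' The set $P_{in}$ is closed and $\C^*$-invariant, but (as \cref{Remark H+} warns) it can be quite pathological; equicodimensionality of $\mathrm{Core}(Y)$ near $\F_{\min}$ only constrains the top-dimensional strata, and I see no reason it forces smoothness or linearity of $P_{in}$ along $\F_{\min}$. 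You also never actually produce the ``arithmetic incompatibility.'' The clean path to the contradiction is different. Under the freeness assumption, at a leaf $\F_\a$ all positive weights are $1$, so the Maslov index satisfies $\mu=\dim_\C H_+(\a)+\sum_{k<0}k\,h_k^\a\leq c_\a:=\dim_\C H_+(\a)$, with \emph{strict} inequality because $\F_\a\neq\F_{\min}$ (using $\#\mathrm{Vertices}\geq2$) has at least one negative weight. On the other hand at $\F_{\min}$ all nonzero weights are $\geq 1$, giving $\mu\geq\mathrm{codim}_\C\F_{\min}$. So $\mathrm{codim}_\C\F_{\min}\leq\mu<c_\a$. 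Since $c_\a$ is exactly the local complex codimension of $\mathrm{Core}(Y)$ near the leaf (positive weight directions point out of the core at a leaf) and $\mathrm{codim}_\C\F_{\min}$ is the local codimension of the stratum $\F_{\min}\subset\mathrm{Core}(Y)$, the equicodimensionality hypothesis forces $\mathrm{codim}_\C\F_{\min}=c_\a$, which is the desired contradiction. Note that $c_1(Y)=0$ enters only via the \emph{constancy} of $\mu$ across fixed components — which you noticed — but the two inequalities that make that constancy useful are what your sketch is missing.
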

\begin{proof}
Let $\a$ be any leaf, so $TY|_{\F_\a}=T\F_\a \oplus H_-\oplus H_+$, and $\exp|_{\F_\a}$ maps a neighbourhood of the zero section of $T\F_\a \oplus H_-$
into $\mathrm{Core}(Y)$, whereas for $H_+\setminus \{0\}$ it maps into $Y\setminus \mathrm{Core}(Y)$.

Suppose a leaf $\a$ is $m$-minimal, and let $\Ymc$ be the torsion submanifold containing $\F_\a$. Near $\F_\a$, $\Ymc$ only intersects $\mathrm{Core}(Y)$ at $\F_\a$ (since there are no negative weights $mb$ for $b<0$ and all positive weight directions point out of the core at a leaf). By \cref{Cor Ymc hits core in path connected subset}, $\Ymc$ contains a single fixed component, $\F_\a$, so it is a torsion bundle over $\F_\a$.

For the second claim we use \eqref{NonDegenPairingByOmC}: the
$\omega_{\C}$-duality isomorphism $H_{s-(-m)}=H_{s+m}\cong H_{-m}$ for all $m$, where $s\geq 1$ is the weight of the CSR.
For $m_{\a}$ as in the claim, there cannot be a weight $-m_{\a}b$ for $b\geq 1$ as $\omega_{\C}$-duality would imply there is a weight $s-(-m_{\a}b)=s+m_{\a}b>m_{\a}$. 

For the third claim, we prove more generally that if $\Fi$ is free outside of $\mathrm{Core}(Y)$, and $c_1(Y)=0$, then the Maslov index $\mu$ (\cref{Subsection discussion of ma and muFalapha}) satisfies the following at leaves $\a$, for $H_+$ as in \cref{Remark H+},
$$
\codim_{\C} \F_{\min} \leq \mu \leq c_\a:=\dim_{\C}H_+ = (\textrm{local complex codimension of }\mathrm{Core}(Y)\textrm{ near }\F_\a),
$$
 and the second inequality is strict if $\#\textrm{Vertices}(\G_{\Fi})\geq 2$. 
By the freeness assumption,  all positive weights at $\F_\a$ must be $+1$ (as any $m$-torsion submanifold (for $m\geq 2$) would contradict freeness). So $\mu \leq c_\a$ by definition, with strict inequality if $\F_\a$ has a negative weight, equivalently when $\a$ is not an isolated leaf (equivalently, by  connectedness in \cref{ThereAreLeaves},  $\#\textrm{Vertices}(\G_{\Fi})\geq 2$).
At $\F_{\min}$ all non-zero weights are positive, so the Maslov index satisfies $\mu\geq \codim_{\C}\F_{\min}$. The third claim follows.
\end{proof}
\begin{de}\label{defnExtendedAttractionGraph}
	The \textbf{extended attraction graph}
 $\widetilde{\G}_{\Fi}$
 decorates ${\G_{\Fi}}$ %
 with an outward-pointing arrow at a vertex $\a$ for each torsion bundle $\Hm\fun \F_\a$. %
\end{de}
\begin{ex}\label{ExtendedAtractionGraphA4}
Let $Y$ be 
the minimal resolution of the $A_4$-singularity $\pi: X_{\Z/5}\fun \C^2/(\Z/5).$
The standard weight-2 $\C^*$-action on this CSR is induced from the weight-1 diagonal action on $\C^2.$ The core  
$\pi^{-1}(0)$ consists of a Dynkin $A_4$ tree of spheres that intersect transversely. There are five \CC-fixed points, three of which are intersections of spheres. The other two are the leaves of $\widetilde{\G}_{\Fi}$ in Figure \ref{attractionGraphA4}. Each leaf has one torsion bundle attached to it, so $\widetilde{\G}_{\Fi}$ has two outward  arrows labelled with the weight of the torsion bundle (in blue).
Each directed edge of $\G_{\Fi}$ is labelled by the positive outgoing weight and negative incoming weight, which are opposite by 
\cref{CorollaryGradientTrajBecomeSpheres}. That pairs of edges at each vertex have weights summing to $2$ is due to this CSR having weight $s=2$, and using \eqref{NonDegenPairingByOmC}.
\begin{figure}[H]
		\begin{center}
			\includegraphics[scale=0.25]{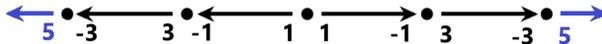}
			\caption{Extended attraction graph for the minimal resolution of the $A_4$-singularity} %
			\label{attractionGraphA4}
		\end{center}
\end{figure}
\end{ex}
\section{Computation of the Robbin--Salamon and Maslov indices}\label{Section RS indeces}

\subsection{Maslov index}\label{Subsection discussion of ma and muFalapha}
Let $(Y,\omega,\J,\Fi)$  be a
symplectic $\C^*$-manifold with non-trivial $\C^*$-action.
We now assume $c_1(Y)=0$, so Hamiltonian Floer cohomology (and thus $SH^*(Y,\Fi)$) can be $\Z$-graded by making a choice of trivialisation of the canonical bundle %
$\Lambda^{\mathrm{top}}_{\C}T^*Y$ (see \cref{AppendixFloer} and \cite[Sec.3.6]{R13}).
The Hamiltonian $S^1$-action $\varphi$ admits a \textbf{Maslov index}\footnote{In general, this involves a certain choice of lift of the action to the cover formed by capping discs, but we will always choose the lift which preserves the constant disc at a (hence any, as $c_1(Y)[\pi_2(Y)]=0$) fixed point of the $S^1$-action.} $\mu=\mu(\Fi)$ \cite{Sei97}.
At a fixed point of the $S^1$-action, its linearisation is a loop of unitary matrices $U_t$ and the Maslov index $\mu$ equals\footnote{compare \cite[Lemmas 48 and 71]{R14}.} the degree in $\Z$ of the loop of determinants $\det\, U_t \subset S^1$. Equivalently,\footnote{compare \cite[Thm.48]{R10}.} $\varphi_t^*$ acts on $\Lambda^{\mathrm{top}}_{\C}T^*Y$ by rotation with speed $\mu$. %
For the definition of {\RS} indices we refer to  \cref{AppendixFloer}.

\begin{lm}\label{LemmaSec2MaslovIndex}
$\mu=\sum w_i>0$ for the weights $w_i$ of the $S^1$-action at any fixed point $x\in \F\subset Y$.
\end{lm}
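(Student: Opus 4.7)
The plan is to split the statement into two parts: first identify $\mu$ with $\sum w_i$ at an arbitrary fixed point, and second establish positivity by evaluating at the minimal component $\F_{\min}$, where all weights are non-negative.

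For the equality $\mu=\sum w_i$, I would pick any $x\in\F$ and use the weight decomposition $T_xY=\oplus_i T_i$ from \eqref{Eqn weight spaces Ti}. On each complex line $T_i$, the linearisation $d_x\Fi_t$ of the $S^1$-flow acts by $e^{2\pi i w_i t}$ (this is simply the definition of the weight), so in a unitary basis of $T_xY$ adapted to the decomposition, $U_t:=d_x\Fi_t$ is diagonal with entries $e^{2\pi i w_i t}$. Therefore
\[
\det U_t=\exp\!\left(2\pi i\Big(\sum_i w_i\Big)t\right),
\]
which is a loop $S^1\to S^1$ of degree $\sum_i w_i$. By the characterisation of $\mu$ recalled just before the Lemma (the degree of $\det U_t$, equivalently the rotation speed of the induced action on $\Lambda^{\mathrm{top}}_{\C}T^*Y$), we obtain $\mu=\sum_i w_i$. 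Since $c_1(Y)=0$, the canonical bundle is (stably) trivial, so the rotation speed is a well-defined integer independent of the fixed point; in particular the computation above gives the same value at every $x\in\F$.

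For positivity, I would evaluate $\sum w_i$ at $x\in \F_{\min}$. By \cref{Lemma Frankel properness}, $\F_{\min}$ is the locus of absolute minima of $H$ and has Morse--Bott index $\mu_{\min}=0$, which translates into the statement that all weights at $\F_{\min}$ are non-negative (negative weights contribute to the Morse--Bott index). Thus $\sum w_i\geq 0$ at $\F_{\min}$, so it suffices to exhibit at least one strictly positive weight there. This uses the non-triviality of $\Fi$: if every weight at $\F_{\min}$ were zero, then $TY|_{\F_{\min}}=T\F_{\min}$, and $\F_{\min}$ would be open; but it is also closed (as the fixed locus of a compact group action), forcing $\F_{\min}=Y$ by connectedness of $Y$, which contradicts that $\Fi$ is non-trivial. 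Alternatively, \cref{Lemma Umin is dense} identifies the stable manifold $U_{\min}$ with the normal bundle of $\F_{\min}$, whose fibres are spanned by the non-zero weight directions, and density of $U_{\min}$ forces this normal bundle to have positive rank as soon as $\F_{\min}\neq Y$. Hence at least one $w_i>0$ at $\F_{\min}$, giving $\mu=\sum w_i>0$.

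No step here is really a serious obstacle; the only mild subtlety is the argument that $\mu$ is well-defined independently of the chosen fixed point, which relies on $c_1(Y)=0$ and continuity of the $S^1$-action on the (trivialised) canonical bundle.
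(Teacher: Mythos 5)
Your proof is correct and follows essentially the same route as the paper: identify $\mu=\sum w_i$ from the unitary weight decomposition and the $\det U_t$ characterisation, then deduce positivity by evaluating at $\F_{\min}$, where the Morse--Bott index being zero forces all weights to be non-negative, and non-triviality of the action forces some weight to be strictly positive. The paper's proof is just terser, additionally remarking $\mu\geq \codim_{\C}\F_{\min}\geq 1$, which your argument also gives implicitly once you know the positive weight span the normal bundle of $\F_{\min}$.
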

\begin{proof}
This follows from \eqref{Eqn weight spaces Ti}.
At $p\in \F_{\min}$ non-zero weights are positive, and there must be a positive weight (otherwise $Y=\F_{\min}$, $H\equiv 0$, and the $\C^*$-action is trivial), indeed $\mu \geq \codim_{\C}\F_\a\geq 1$.
\end{proof}

\begin{prop}\label{PropSec2RSIndices}
For a generic slope $\lambda$, the only $1-$periodic orbits of the Hamiltonian $\lambda H$ are the constant orbits, i.e.\,the fixed points $x\in \F$, and their {\RS} indices satisfy (uniformly in $x$)
\begin{equation}\label{indecesgotoinfinity}
\displaystyle \lim_{\lambda\fun +\infty} RS(x,\lambda H)=+\infty.
\end{equation}
\end{prop}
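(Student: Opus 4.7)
The first assertion -- that $\lambda H$ has no non-constant $1$-periodic orbits for generic $\lambda$ -- is an immediate consequence of \cref{CorSec2OrbitsOfLambdaH}, so the substance lies in the asymptotic growth of $RS(x, \lambda H)$ at the constant orbits $x \in \F$. My plan is to reduce the computation to one-dimensional rotations via the weight decomposition at $x$, and then identify the leading $\lambda$-term with the Maslov index $\mu$ from \cref{LemmaSec2MaslovIndex}.

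Concretely, for $x \in \F_\a$ I would invoke \cref{Lemma omega gives perfect pairing} to split $T_x Y = \bigoplus_i T_i$ symplectically orthogonally into the weight-$w_i$ planes. Since $X_{\lambda H} = \lambda X_{S^1}$, the linearised time-$t$ flow acts on $T_i \cong \C$ by the rotation $\Phi_t^{(i)}(v) = e^{2\pi i \lambda w_i t} v$, and additivity of the Robbin--Salamon index over symplectic direct sums reduces the global $RS$ to a sum of one-dimensional contributions. For $w_i = 0$ (the directions tangent to $\F_\a$) the path is the identity, the crossing form vanishes identically, and the contribution is zero; for $w_i \neq 0$ and generic $\lambda$ (so $\lambda w_i \notin \Z$), I would compute the crossing form at each crossing time $t = k/(\lambda|w_i|) \in [0,1]$ (for $k = 0,1,\ldots,\lfloor \lambda|w_i|\rfloor$) and obtain
\begin{equation*}
RS(\Phi^{(i)}) \;=\; \mathrm{sign}(w_i)\bigl(\lfloor \lambda |w_i| \rfloor + \tfrac{1}{2}\bigr),
\end{equation*}
yielding
\begin{equation*}
RS(x, \lambda H) \;=\; \sum_{w_i \neq 0} \mathrm{sign}(w_i)\lfloor \lambda |w_i| \rfloor \;+\; \tfrac{1}{2}(p_\a - n_\a),
\end{equation*}
where $p_\a,n_\a$ count positive and negative weights at any point of $\F_\a$ (well-defined by \cref{Lemma constant weights on F}).

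Using $\lfloor \lambda |w_i| \rfloor = \lambda |w_i| + O(1)$, this rewrites as $RS(x,\lambda H) = \lambda \sum_i w_i + O(1) = \lambda \mu + O(1)$, where the error depends only on the component $\F_\a$. Since $\mu > 0$ by \cref{LemmaSec2MaslovIndex} and there are only finitely many $\F_\a$ (as $\F$ is compact), the $O(1)$ terms take only finitely many values, so $RS(x,\lambda H) \to +\infty$ uniformly in $x \in \F$ as required.

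The main obstacle will not be conceptual but bookkeeping: correctly computing the RS crossing-form contribution on each $T_i$, handling the Morse--Bott degeneracy along the zero-weight directions consistently, and carefully accounting for endpoint contributions at $t = 0$ and $t = 1$ in line with the conventions of \cref{AppendixFloer}. Once the single-weight formula is established, the positivity of $\mu$ makes the uniform asymptotic conclusion automatic.
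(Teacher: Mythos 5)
Your strategy is essentially the paper's: reduce to the weight decomposition $T_x Y = \oplus_i T_i$, compute the {\RS} index weight-by-weight using additivity over symplectic direct sums (\cref{RSProperties}(3)), and identify the leading-in-$\lambda$ term with the Maslov index $\mu > 0$. The paper simply reads the one-dimensional rotation index directly off \cref{RSProperties}(5), whereas you attempt to re-derive it from the crossing-form definition --- and there your per-weight formula is off by a factor of two. For $w_i > 0$ and $\lambda w_i \notin \Z$, the crossing form at each crossing time $t = k/(\lambda w_i)$ is a quadratic form on the full two-dimensional crossing subspace $\ker(\Psi(t)-\mathrm{id}) = T_i \cong \R^2$; for a positive rotation it is positive definite there, so its \emph{signature} is $+2$, not $+1$. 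Each interior crossing therefore contributes $+2$ and the endpoint at $t=0$ contributes $\tfrac{1}{2}\cdot 2 = +1$, giving
\[
RS\bigl(\Phi^{(i)}\bigr) = 2\lfloor \lambda w_i \rfloor + 1 = \W(\lambda w_i),
\]
exactly as recorded in \cref{RSProperties}(5). Your claimed value $\lfloor \lambda w_i \rfloor + \tfrac12$ is precisely half of this, and the error propagates to the asymptotic: the correct leading term is $2\lambda\mu$, as in the paper, rather than $\lambda\mu$. Since $\mu>0$ the final conclusion that $RS(x,\lambda H)\to+\infty$ uniformly in $x\in\F$ is unaffected, but the intermediate formula is wrong. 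You would be better served citing \cref{RSProperties}(5) directly, as the paper does, rather than re-deriving the one-dimensional rotation index.
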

\begin{proof}
By \cref{CorSec2OrbitsOfLambdaH} we just need to consider $x\in \F$.
By \eqref{Eqn weight spaces Ti}, on $T_x Y = \oplus T_i$ the linearisation of the flow of $\lambda H$ is rotation in $T_i$ with speed $\lambda w_i$. The latter rotation contributes $\W(\lambda w_i)$ to the {\RS} index, in the notation of \cref{RSProperties}. Using Equation \eqref{Equation Appendix W properties}:
$$\textstyle
RS(x,\lambda H)=\sum_i  \W(\lambda w_i)
\geq \sum_i (2 \lambda w_i -1 )
=2\lambda \sum_i w_i -\dim_{\C} Y =  2\lambda \mu - \dim_{\C} Y,
$$
(we did not assume genericity of $\lambda$ here).
Since $\mu>0$, this diverges to $\infty$ uniformly in $x$ as $\lambda \to \infty$.
\end{proof}

\begin{rmk}\label{rmk CZ to infty always}
For a non-constant $1$-orbit $x$ of $\lambda H$ for non-generic $\lambda$, by homotopy invariance of {\RS} indices we can first push $x$ towards the core by using the $\R_+$-action. Thus the index of $x$ equals the index of the convergence point $\xinfty$, whose index we computed above. If $x$ arises as a $1$-orbit of a Hamiltonian $F$ in a region where $F=c(H)$, for a smooth function $c$, the {\RS} index of $x$ agrees with that computed for $\lambda H$, where $\lambda = c'(H(x))$, up to a shift in the index by $\pm \tfrac{1}{2}$ depending on the sign of $c''(H(x))$, due to a shear \cite[Sec.3.3]{OanceaEnsaios}.
Thus the {\RS} indices also diverge to $+\infty$ for $1$-orbits $x$ of $c(H)$ arising with higher and higher slopes $c'(H(x))$.
\end{rmk}

\subsection{The indices $\mu_{\lambda}(\F_\a)$}
\label{GradingConventions}
	Our {\bf grading convention} for Floer cohomology follows \cite{McLR18}, namely a $1$-periodic orbit $x$ of a Hamiltonian $F$  has grading $$|x|:=\dim_\C Y - RS(x,F).$$
For %
$C^2$-small Morse Hamiltonians, this %
agrees with the Morse index grading. %

For the Hamiltonian $\lambda H,$ the \textbf{grading} of a connected component $\F_\a$ of $\F$ is defined as
$$\boxed{\mu_\lambda(\F_\a):=\dim_\C\, Y - \dim_\C\, \F_\a- RS(x,\lambda H) }$$ which is independent of the choice of $x\in\F_\a$. This $\mu_\lambda(\F_\a)$ is the Floer grading of $\F_\a$ seen as a {\MB } manifold of Hamiltonian $1$-orbits of $\lambda H$ (see \cite[Appendix A]{RZ2}). 

 \begin{lm}\label{LemmaSec2GradingOfFa}
The $\mu_\lambda(\F_\a)$
are even integers for positive $\lambda \notin \cup_i  \Z\cdot \frac{1}{w_i}\subset \Q,$ 
where $w_i$ are weights of $\F_\a.$ 
\end{lm}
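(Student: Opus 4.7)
The plan is to unpack the definition of $\mu_\lambda(\F_\a)$ using the weight decomposition at a fixed point $x\in \F_\a$ and then read off integrality and parity from the elementary function $\W$ defined in \cref{RSProperties}.

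First, I would write $T_xY = \bigoplus_i T_i$ as in \eqref{Eqn weight spaces Ti}, where each $T_i$ is a complex line of weight $w_i\in\Z$, and recall from the proof of \cref{PropSec2RSIndices} that the linearisation of the time-$1$ flow of $\lambda H$ on $T_i$ is rotation by $2\pi\lambda w_i$, so that
\[
RS(x,\lambda H)=\sum_i \W(\lambda w_i).
\]
Next I would observe that the zero-weight summands span precisely $T_x\F_\a$, so $\dim_\C \F_\a = \#\{i:w_i=0\}$ and $\dim_\C Y - \dim_\C \F_\a = \#\{i: w_i\neq 0\}$. Since $\W(0)=0$, this rearranges into
\[
\mu_\lambda(\F_\a)=\sum_{i:\,w_i\neq 0}\bigl(1-\W(\lambda w_i)\bigr).
\]

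The hypothesis $\lambda\notin \bigcup_i \Z\cdot \tfrac{1}{w_i}$ is precisely the statement that $\lambda w_i\notin \Z$ for every $i$ with $w_i\neq 0$. The remaining content is then the elementary parity lemma for $\W$: for $t\in\R\setminus\Z$, $\W(t)$ is an odd integer. This follows directly from the definition in \cref{RSProperties} via the closed formula $\W(t)=2\lfloor t\rfloor +1$ for $t>0$ non-integer (with the analogous formula for $t<0$), which is already implicit in the inequality $\W(t)\geq 2t-1$ used in the proof of \cref{PropSec2RSIndices}. Consequently each summand $1-\W(\lambda w_i)$ is an even integer, so $\mu_\lambda(\F_\a)\in 2\Z$.

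The only thing to watch is the sign convention for negative weights, but since the map $t\mapsto \W(t)$ satisfies the same parity property on both sides of zero (it is odd-integer-valued off $\Z$), negative weights cause no issue. I do not expect any genuine obstacle; the content of the lemma is really a bookkeeping statement, and the genericity hypothesis on $\lambda$ is designed exactly to avoid the half-integer jumps of $\W$ at integer points.
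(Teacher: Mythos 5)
Your proof is correct and takes essentially the same route as the paper: both reduce the claim to the observation that $\W$ is odd-integer-valued off $\Z$ and vanishes at $0$, so each nonzero-weight summand contributes an even number to $\mu_\lambda(\F_\a)$. The only cosmetic difference is that you sum over individual complex lines $T_i$ rather than over the grouped weight spaces $H_k$, and the identity $\mu_\lambda(\F_\a)=\sum_{w_i\neq 0}(1-\W(\lambda w_i))$ that you derive is exactly one of the forms recorded in \eqref{mu_lambda_calculation}.
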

\begin{proof}
As in \cref{PropSec2RSIndices}, the 
RS-index contribution of $H_k$ is $\W(\lambda k) \cdot \dim_{\C} H_k$. For $k\neq 0$, the assumption on $\lambda$ implies $\lambda k\not\in \Z$ for the weights $k$ of $\F_\a$, thus $\W(\lambda k)$ is odd, and for $k=0$ we have $\W(\lambda k)=\W(0)=0$ (see Equation \eqref{Equation Appendix W properties}). Thus,
$$\mu_\lambda(\F_\a) = \dim_\C Y - \dim_\C \F_\a - \sum_{k\neq 0} \dim_{\C}(H_k) \W(\lambda k) \stackrel{\mathrm{mod}\,2}{\equiv} \dim_\C Y - \dim_{\C} H_0- \sum_{k\neq 0} \dim_{\C}(H_k)=0. \eqno\qed$$
\renewcommand{\qedsymbol}{}
\vspace{-\baselineskip}
\end{proof}

By definition and the proof of Lemma \ref{LemmaSec2FixedLocus}, the {\MB } index $\mu_\a$ of $\F_\a$ is twice the number of two-planes $T_i$ in 
\eqref{Eqn weight spaces Ti}
on which the linearised $S^1$-action rotates clockwise, so it is automatically even. In the notation $h_k^{\alpha}=|H_k|$ of \eqref{notation h_k^alpha for dimension of wt spaces}, where $|V|:=\dim_{\C}\,V$, we can rewrite $\mu_\a$ and the Maslov index $\mu$:
\begin{equation}\label{mu a and maslov index equations}
  \mu_\a=2 \sum_{k<0} h_k^{\alpha} \quad \textrm{ and }  \quad 
\mu = \sum_{k} k\cdot h_k^{\alpha} = \sum_{k>0} k(h_k^{\alpha}-h_{-k}^\a).   
\end{equation}

\begin{cor}\label{mu_lambda go to -infty}
$\mu_\lambda(\F_\a)\leq 2|Y|- |\F_\a| - 
2\lambda \mu$ for any $\lambda>0$, so $\mu_\lambda(\F_\a)\to-\infty$ as $\lambda\to \infty$, and
\begin{equation}\label{mu_lambda_calculation}
\mu_\lambda(\F_\a) = |Y| - |\F_\a| - \sum_k \mathbb{W}(\lambda k)\,h_k^{\alpha} 
 = \sum_{k\neq 0} (1-\mathbb{W}(\lambda k))\,h_k^{\alpha}
 = \!\!\!\!\!\sum_{w_i\neq 0\textrm{ in }\eqref{Eqn weight spaces Ti}}\!\!\!\!\!\! (1-\mathbb{W}(\lambda w_i)).
\end{equation}
\end{cor}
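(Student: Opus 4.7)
\smallskip

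\textbf{Proof plan for Corollary \ref{mu_lambda go to -infty}.}
The strategy is essentially a direct computation, combining the definition of $\mu_\lambda(\F_\a)$ with the weight-space decomposition of the linearised flow already used in \cref{PropSec2RSIndices} and \cref{LemmaSec2GradingOfFa}. First I would unpack the definition $\mu_\lambda(\F_\a) = |Y| - |\F_\a| - RS(x,\lambda H)$ for any $x\in \F_\a$, and compute the $RS$-index block by block along the unitary weight decomposition $T_xY = \oplus_i T_i$ from \eqref{Eqn weight spaces Ti}. On each two-plane $T_i$ the linearised flow of $\lambda H$ is rotation of speed $\lambda w_i$, which contributes $\mathbb{W}(\lambda w_i)$ to the $RS$-index by the properties recalled in Appendix A (the same step that appears at the start of the proof of \cref{PropSec2RSIndices}). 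Collecting planes of equal weight into $H_k$ gives
\begin{equation*}
RS(x,\lambda H) \;=\; \sum_i \mathbb{W}(\lambda w_i) \;=\; \sum_k h_k^{\alpha}\, \mathbb{W}(\lambda k).
\end{equation*}

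Next I would observe that $|\F_\a| = h_0^{\alpha}$ (since the zero weight space is $T_x\F_\a$ by \cref{Remark H+}) and $|Y| = \sum_k h_k^{\alpha}$, so $|Y| - |\F_\a| = \sum_{k\neq 0} h_k^{\alpha}$, and the $k=0$ contribution to $RS$ vanishes because $\mathbb{W}(0)=0$. Substituting into the definition of $\mu_\lambda(\F_\a)$ yields directly
\begin{equation*}
\mu_\lambda(\F_\a) \;=\; |Y|-|\F_\a| - \sum_k \mathbb{W}(\lambda k)\,h_k^{\alpha} \;=\; \sum_{k\neq 0}\bigl(1-\mathbb{W}(\lambda k)\bigr) h_k^{\alpha} \;=\!\!\!\! \sum_{w_i\neq 0}\!\!\! \bigl(1-\mathbb{W}(\lambda w_i)\bigr),
\end{equation*}
which is the displayed three-way equality.

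For the upper bound, I would apply the pointwise estimate $\mathbb{W}(r) \geq 2r - 1$ from \eqref{Equation Appendix W properties} to each $w_i$ (positive \emph{and} negative, since the estimate holds for all real $r$). Summing, $RS(x,\lambda H) \geq \sum_i (2\lambda w_i - 1) = 2\lambda\mu - |Y|$ by \cref{LemmaSec2MaslovIndex}, hence
\begin{equation*}
\mu_\lambda(\F_\a) \;=\; |Y| - |\F_\a| - RS(x,\lambda H) \;\leq\; 2|Y| - |\F_\a| - 2\lambda\mu.
\end{equation*}
Because $\mu>0$ (by \cref{LemmaSec2MaslovIndex}, as $\Fi$ is non-trivial), this bound forces $\mu_\lambda(\F_\a)\to -\infty$ as $\lambda\to\infty$, uniformly in $\a$ (there are only finitely many components).

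There is no real obstacle here: the only mild point is to make sure the formulas are stated for \emph{all} $\lambda>0$ (not just generic ones) — the expressions on the right are well-defined even when some $\lambda w_i\in \Z$, and the identity follows from the general $RS$ computation, while the integrality/parity statements of \cref{LemmaSec2GradingOfFa} are the part that required generic $\lambda$. So the corollary is essentially a bookkeeping consequence of \cref{PropSec2RSIndices}.
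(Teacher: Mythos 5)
Your proof is correct and follows essentially the same route as the paper: compute $RS(x,\lambda H)=\sum_i \W(\lambda w_i)=\sum_k h_k^\alpha\W(\lambda k)$ from the weight decomposition, substitute into the definition of $\mu_\lambda(\F_\a)$ using $|\F_\a|=h_0^\alpha$, $|Y|=\sum_k h_k^\alpha$, $\W(0)=0$, and for the upper bound invoke $\W(r)\geq 2r-1$ exactly as in the proof of \cref{PropSec2RSIndices}. The paper's proof is a two-line pointer to the same ingredients; you have spelled them out, but the content is identical.
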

\begin{proof}
    We use $|\F_\a| = h_0^{\alpha}$, $|Y|=\sum h_k^{\alpha}$,  $\mathbb{W}(0)=0$. The first claim uses the proof of \cref{PropSec2RSIndices}.
\end{proof}

\begin{lm}\label{Lemma difference between muFalpha and mua}
    $\mu_\a - \mu_{\lambda}(\F_\a) = \sum_{k>0} (\mathbb{W}(\lambda k)-1)(h_{k}^{\alpha} - h_{-k}^{\alpha})$.
\end{lm}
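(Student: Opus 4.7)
\medskip

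The plan is to prove the identity by a direct algebraic manipulation, combining the formula for the Morse--Bott index $\mu_\a = 2\sum_{k<0} h_k^\alpha$ from \eqref{mu a and maslov index equations} with the second expression for $\mu_\lambda(\F_\a)$ from \eqref{mu_lambda_calculation}:
\[
\mu_\lambda(\F_\a) = \sum_{k\neq 0}(1-\mathbb{W}(\lambda k))\,h_k^{\alpha}.
\]
Subtracting these, I would split the sum on the right into its $k>0$ and $k<0$ parts, and in the $k<0$ part substitute $k=-j$ to rewrite everything in terms of positive indices.

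The one small input I will need is the oddness of $\mathbb{W}$, i.e.\ $\mathbb{W}(-x)=-\mathbb{W}(x)$, which is one of the properties recorded in Equation \eqref{Equation Appendix W properties} of the appendix. Using it, the $k<0$ part becomes
\[
\sum_{j>0}(1+\mathbb{W}(\lambda j))\,h_{-j}^{\alpha},
\]
so that
\[
\mu_\lambda(\F_\a) = \sum_{k>0}(1-\mathbb{W}(\lambda k))\,h_k^{\alpha} + \sum_{k>0}(1+\mathbb{W}(\lambda k))\,h_{-k}^{\alpha}.
\]

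Then the computation is straightforward: combining this with $\mu_\a = 2\sum_{k>0} h_{-k}^\alpha$, the terms $h_{-k}^\alpha$ collect with coefficient $2-(1+\mathbb{W}(\lambda k)) = 1-\mathbb{W}(\lambda k)$, while the $h_k^\alpha$ collect with coefficient $-(1-\mathbb{W}(\lambda k)) = \mathbb{W}(\lambda k)-1$. Factoring gives exactly
\[
\mu_\a - \mu_\lambda(\F_\a) = \sum_{k>0}(\mathbb{W}(\lambda k)-1)(h_k^{\alpha}-h_{-k}^{\alpha}),
\]
as desired.

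There is essentially no obstacle here; the lemma is a direct bookkeeping identity, and the only point to handle carefully is the sign change $\mathbb{W}(-\lambda k) = -\mathbb{W}(\lambda k)$ used to symmetrise the negative-weight contribution. The real content of the identity is conceptual: it shows that the deviation of the slope-$\lambda$ Floer grading from the Morse--Bott grading depends only on the \emph{asymmetry} $h_k^\alpha - h_{-k}^\alpha$ of the weight decomposition at $\F_\a$, which will be useful in subsequent sections for comparing indices across components and detecting when the two indices coincide (namely when all $\mathbb{W}(\lambda k)=1$, equivalently when $\lambda$ is small enough, or when the weight decomposition is symmetric).
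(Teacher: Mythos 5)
Your proof is correct and follows essentially the same route as the paper's: substitute the expressions $\mu_\a=2\sum_{k<0}h_k^\a$ and $\mu_\lambda(\F_\a)=\sum_{k\neq 0}(1-\mathbb{W}(\lambda k))h_k^\a$, reindex the negative-weight sum via $k\mapsto -k$, and invoke the oddness $\mathbb{W}(-x)=-\mathbb{W}(x)$ to collect terms. The only cosmetic difference is that you symmetrise $\mu_\lambda(\F_\a)$ before subtracting rather than after; the algebra is identical.
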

\begin{proof}
The claim follows by using the property $\mathbb{W}(-a)=-\mathbb{W}(a)$ on the last expression below:
\begin{align}
\begin{split}
\label{CZCZ}
\mu_\a - \mu_{\lambda}(\F_\a) & = 
 \sum_{k<0} h_k^{\alpha}  -
\sum_{k>0} h_k^{\alpha} + \sum_k \mathbb{W}(\lambda k)\,h_k^{\alpha}
=
 \sum_{k<0} (1+\mathbb{W}(\lambda k))\,h_k^{\alpha} -
\sum_{k>0} (1-\mathbb{W}(\lambda k))\,h_k^{\alpha} 
\\
& 
=  \sum_{k>0} [(1+\mathbb{W}(-\lambda k))\,h_{-k}^{\alpha} - (1-\mathbb{W}(\lambda k))\,h_k^{\alpha}].
\qedhere
\end{split}
\end{align}
\end{proof}

\begin{cor}\label{For Lambda small Floer Shift Equal To Morse}
Let $\lambda_{\alpha}:=\min\{\tfrac{1}{|k|}: h_k^{\alpha}\neq 0 \textrm{ for }k\in \Z\setminus \{0\}\}=1/(\textrm{maximal absolute weight of }\F_\a)$.
\hspace{-0.5cm}
\begin{itemize}
 \item  $\mu_\lambda(\F_\a)=\mu_\a$ for all $0<\lambda < \lambda_{\alpha}$.
\item $\mu_\lambda(\F_\a)=\mu_\a-2N\mu$ for all $ N <\lambda < N+\lambda_{\alpha}$, where $N\in \N$.
\end{itemize}
\end{cor}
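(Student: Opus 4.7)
The plan is to derive both statements as direct consequences of \cref{Lemma difference between muFalpha and mua} combined with the elementary computation of $\mathbb{W}(\lambda k)$ on the relevant range of arguments. By definition, $h_k^{\alpha}\neq 0$ implies $|k|\leq 1/\lambda_\a$, so for any $\lambda$ in the two ranges of interest the only values of $\mathbb{W}$ that matter are $\mathbb{W}(\lambda k)$ with $0<|\lambda k|$ bounded in terms of $\lambda_\a$.

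First I would handle the case $0<\lambda<\lambda_\a$. For any $k>0$ appearing with non-zero $h_k^\a-h_{-k}^\a$, we have $0<\lambda k<1$, so $\mathbb{W}(\lambda k)=1$ by the properties of $\mathbb{W}$ recalled in \eqref{Equation Appendix W properties}. Substituting into the formula of \cref{Lemma difference between muFalpha and mua} gives
\[
\mu_\a - \mu_{\lambda}(\F_\a) = \sum_{k>0} (\mathbb{W}(\lambda k)-1)(h_{k}^{\alpha} - h_{-k}^{\alpha}) = 0,
\]
which is the first bullet.

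Next I would handle the case $N<\lambda<N+\lambda_\a$ for $N\in\mathbb{N}$. For each $k>0$ with $h_k^\a-h_{-k}^\a$ contributing, we have $Nk<\lambda k<Nk+k\lambda_\a \leq Nk+1$, so $\lambda k$ lies in the open interval $(Nk,Nk+1)$ between consecutive integers; by \eqref{Equation Appendix W properties} this gives $\mathbb{W}(\lambda k)=2Nk+1$. Plugging into \cref{Lemma difference between muFalpha and mua}:
\[
\mu_\a - \mu_{\lambda}(\F_\a) = \sum_{k>0} 2Nk\,(h_{k}^{\alpha} - h_{-k}^{\alpha}) = 2N\cdot \sum_{k>0} k(h_{k}^{\alpha} - h_{-k}^{\alpha}) = 2N\mu,
\]
using the Maslov index identity $\mu=\sum_{k>0} k(h_k^\a - h_{-k}^\a)$ from \eqref{mu a and maslov index equations}. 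This is the second bullet.

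There is no substantive obstacle: the result is purely combinatorial once one has \cref{Lemma difference between muFalpha and mua} and the piecewise description of $\mathbb{W}$ on $(n,n+1)$. The only point worth double-checking is the boundary case $k\lambda_\a=1$, but this cannot contribute strictly within the open interval $(N,N+\lambda_\a)$ because $\lambda k<Nk+1$ is strict.
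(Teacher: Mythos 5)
Your proof is correct and takes essentially the same route as the paper: both parts are deduced from \cref{Lemma difference between muFalpha and mua} by evaluating $\mathbb{W}(\lambda k)$ on the relevant range and invoking the Maslov index identity from \eqref{mu a and maslov index equations}. The only cosmetic difference is in the second bullet: the paper uses the translation property $\mathbb{W}(Nk+x)=2Nk+\mathbb{W}(x)$ to reduce to the first bullet, whereas you compute $\mathbb{W}(\lambda k)=2\lfloor \lambda k\rfloor+1=2Nk+1$ directly from $\lambda k\in(Nk,Nk+1)$; these are the same observation phrased two ways.
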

\begin{proof}
The first claim follows
by \cref{Lemma difference between muFalpha and mua}, as $\mathbb{W}(\lambda k)=1$ for $k>0$ and $\lambda<\lambda_{\alpha}$.
The second follows because 
$\mathbb{W}((N + \lambda)k)=\mathbb{W}(Nk + \lambda k)=2Nk+\mathbb{W}(\lambda k)$, and $\sum 2k(h_k^\a-h_{-k}^\a)=2\mu$.
\end{proof}

\subsection{Behaviour of $\mu_{\lambda}(\F_\a)$ as $\lambda$ increases}
\label{Subsection weight spaces Hk}

\begin{de}\label{Remark local rank}\label{Definition critical times}
Let $\tau_p:=\tfrac{k_0}{m}$, for $k_0,m$ coprime positive integers, so $\tau_p k \in  \Z$ precisely when $k=mb$ for some $b\in \Z$.  
Call $\tau_p$ a {\bf critical time} if some $h_{mb}^{\alpha}\neq 0$ for some $b\in \Z$. 
Call $\tau_p$ an {\bf $\alpha$-critical time} if 
some $mb$ is a weight of $\F_\a$ (so $\lambda_\a$ is the first $\alpha$-critical time).
Note that $\mu_{\lambda}(\F_\a)$
can only change when $\lambda$ is an $\alpha$-critical time (that is when $\mathbb{W}(\lambda k)$ can jump/drop).
We abusively call $0<\lambda<\min_{\alpha}\lambda_\a$ all {\bf 0-th critical times}, and $0<\lambda<\lambda_\a$ all 0-th critical $\alpha$-times (for those, $\mu_\a=\mu_{\lambda}(\F_\a)$).

Note: if both $\F_\a,\F_\gamma\subset Y_{m,\c}$ then $\mu_{\lambda}(\F_\a)=\mu_{\lambda}(\F_\gamma)$ for the critical time $\lambda=\tfrac{k_0}{m}$, as those indices equal the (homotopy-invariant) Floer grading of the $1$-orbits of $\lambda H$ whose initial points lie in $Y_{m,\c}$.

For a torsion submanifold $\Ymc$, with minimal component $\F_\a$ (\cref{Definition generic points of Ymc}),
call
$$\textstyle  \mathrm{rk}(Y_{m,\c})=|Y_{m,\c}|-|\F_\a|=\sum_{b\geq 1} h_{mb}^{\alpha}$$
 the {\bf rank} of $\Ymc$. When $Y_{m,\c}$ is a torsion bundle, this is the complex rank of the bundle. %
\end{de}
\begin{ex}\label{Example how the indices mulambda Fa jump}
We illustrate how the indices vary for an example of a $\C^*$-action on the minimal resolution of the $A_3$-singularity 
$\{XY-Z^4=0\}\subset \C^3$, obtained by lifting the $\C^*$-action $$t\cdot(X,Y,Z)=(t X,t^3 Y, t Z).$$
The fixed locus consists of $\C P^1:=\F_\beta$ and two points 
$\F_\a$, $\F_\gamma$, 
with weight decompositions $H_3\oplus H_{-2}$, $H_2\oplus H_{-1}$ where all $|H_i|=1$.
Then $\mu_{\lambda}(F_\a)= 2 - \mathbb{W}(3\lambda) + \mathbb{W}(2\lambda)$ and $\mu_{\lambda}(F_\gamma)=2-\mathbb{W}(2\lambda)+\mathbb{W}(\lambda)$ (using $\mathbb{W}(-2\lambda)=-\mathbb{W}(2\lambda)$).
The 0-th critical times are the $\lambda<  \tfrac{1}{3}$, the other critical times are $  \tfrac{1}{3}$, $  \tfrac{1}{2}$, $  \tfrac{2}{3}$, $1$, etc. At critical times $\mu_{\lambda}(F_\a)$ is $(\mu_\a=2)$, $1$, $1$, $1$, $0$, etc., and $\mu_{\lambda}(F_\gamma)$ is $(\mu_\gamma=2)$, $2$, $1$, $0$, $0$, etc.
If we allow values of $\lambda$ in between critical times, then the first sequence fails to be non-increasing as $\F_\a$ is not $2$-minimal (\cref{Lemma generic points of Ymc}): $\mu_{\lambda}(F_\a)$ is 
$(\mu_\a=2)$, $1$, $\bf 0$, $1$, $\bf 2$, $1$, $\bf 0$, $0$, etc. (in bold the values in between critical times). 
Whereas $\mu_{\lambda}(F_\gamma)$ is $(\mu_\gamma=2)$, $2$, $\bf 2$, $1$, $\bf 0$, $0$, $\bf 0$, $0$, etc.
There is a torsion submanifold $Y_{3}$ containing $\F_\a$, $Y_{2}'$ containing $\F_\a$, $Y_2$ containing $\F_{\gamma}$.
Then $\mathrm{rk}(Y_m)=1$ for $m=2,3$.
There is a flowline from $\F_\a$ to $\F_\gamma$ following the $H_{-2}$ direction while staying within $Y_{2}'$ (\cref{Lemma generic points of Ymc}), so in fact $Y_2=Y_{2}'$ are the $\Z/2$-torsion points of $Y$. 
Note $\mu_{\lambda}(F_\a)=\mu_{\lambda}(F_\gamma)$ for $\lambda=\tfrac{1}{2}$ as $Y_2=Y_{2}'$ connects $\F_\a$, $\F_\gamma$.
This fails for general $\lambda$ as we cannot connect $\F_\a$ to $\F_\b$ by $1$-orbits of $\lambda H$.
    \end{ex}

We seek a reasonable condition (which holds for all CSRs) that ensures that $\mu_{\lambda}(\F_\a)$ is bounded above by $\mu_\a$.
At $\F_\a$, order the non-zero-weight spaces by decreasing absolute weight: $(H_{k_1}\oplus H_{-k_1})\oplus (H_{k_2}\oplus H_{-k_2})\oplus \cdots$ where $k_1>k_2>\cdots > 0$ (we allow the possibility that one of $H_{k_j},H_{-k_j}$ is zero). For a general $Y$, we cannot exclude that $h_{-k_1}^\a>h_{k_1}^{\a}$, which would cause $\mu_{\lambda}(\F_\a)$ to jump above $\mu_a$ at the critical time $\tfrac{1}{k_1}$. So the condition involves the behaviour of the differences $h_{k_j}^\a - h_{-k_j}^\a$.

\begin{de}\label{Definition compatibly weighted}
We call $Y$ {\bf compatibly-weighted} if $\delta_r^\a\geq 0$ for all $r$ and all $\a$, where
    $$
    \delta_r^\a:=\sum_{j=1}^r (h_{k_j}^\a - h_{-k_j}^\a).
    $$
\end{de}

\begin{ex}\label{Exercise CSRs are compatibly weighted} 
All CSRs are compatibly-weighted. We just need the fact \eqref{NonDegenPairingByOmC} about CSRs: the
$\omega_{\C}$-duality isomorphism $H_{s-(-m)}=H_{s+m}\cong H_{-m}$ for all $m$, where $s\geq 1$ is the weight of the CSR. Thus
    $
    h_{s+m}^{\a} - h_{-m}^\a = 0.
    $
    It easily follows that $\delta_r^\a\geq 0$ since if $h_{-m}^\a$ appears in $\delta_r^a$ for $m\geq 0$, then so must $h_{s+m}^\a$ since $s+m\geq |-m|$.
    In \cref{Example how the indices mulambda Fa jump} we had a CSR $Y$ with $s=1$, and the weight $s+m=3$ ensured that $-m=-2$ (causing the problematic $\mathbb{W}(-2\lambda)$) did not make $\mu_{\lambda}(\F_\a)$ jump beyond $\mu_\a$.
\end{ex}

\begin{prop}\label{Lemma muFa is smaller than MB index mua}
If $Y$ is compatibly-weighted (\cref{Definition compatibly weighted}) then
    $\mu_{\lambda}(\F_\a)\leq \mu_\a$ for all $\lambda>0.$
\end{prop}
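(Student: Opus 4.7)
The plan is to start from the explicit expression for $\mu_\a - \mu_\lambda(\F_\a)$ given in \cref{Lemma difference between muFalpha and mua}, namely
$$\mu_\a - \mu_\lambda(\F_\a) \;=\; \sum_{k>0}\bigl(\W(\lambda k)-1\bigr)\bigl(h_k^\a - h_{-k}^\a\bigr),$$
and show directly that it is non-negative under the compatibly-weighted hypothesis. The natural tool is summation by parts (Abel summation), organised so that the \emph{partial sums} are precisely the $\delta_r^\a$ appearing in \cref{Definition compatibly weighted}.

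More concretely, list the positive absolute weights of $\F_\a$ in decreasing order $k_1 > k_2 > \cdots > k_N > 0$, and set $d_j := h_{k_j}^\a - h_{-k_j}^\a$ and $e_j := \W(\lambda k_j) - 1$. Then the right-hand side above equals $\sum_{j=1}^N e_j d_j$, and writing $d_j = \delta_j^\a - \delta_{j-1}^\a$ (with $\delta_0^\a = 0$) gives
$$\mu_\a - \mu_\lambda(\F_\a) \;=\; e_N\, \delta_N^\a \;+\; \sum_{j=1}^{N-1} (e_j - e_{j+1})\,\delta_j^\a.$$
The hypothesis "$Y$ compatibly-weighted" says exactly that $\delta_j^\a \geq 0$ for all $j$, so it now suffices to verify $e_N \geq 0$ and $e_j - e_{j+1} \geq 0$ for $1 \leq j \leq N-1$.

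Both monotonicity statements follow from elementary properties of $\W$ recalled in \cref{AppendixFloer}: $\W$ is non-decreasing on $[0,\infty)$, with $\W(0) = 0$ and $\W(a) \geq 1$ for every $a > 0$ (on $(0,1)$, $\W \equiv 1$, and $\W$ jumps upward at each positive integer). Since $\lambda > 0$ and all $k_j > 0$, we have $\lambda k_N > 0$, so $e_N = \W(\lambda k_N) - 1 \geq 0$; and $\lambda k_j > \lambda k_{j+1} > 0$ gives $e_j - e_{j+1} = \W(\lambda k_j) - \W(\lambda k_{j+1}) \geq 0$. Thus every term in the Abel-summed expression is a product of non-negative quantities, and the inequality $\mu_\a - \mu_\lambda(\F_\a) \geq 0$ follows.

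The only subtle point—and really the conceptual content of the argument—is the choice to order the weights by \emph{decreasing} absolute value. This is precisely the ordering built into \cref{Definition compatibly weighted}, which ensures the partial sums of the differences $h_{k_j}^\a - h_{-k_j}^\a$ are non-negative; with the opposite ordering the partial sums would have no definite sign. Once the right bookkeeping is in place, the estimate is mechanical. No genericity on $\lambda$ is needed, since the inequality is a pointwise statement about $\W$-values.
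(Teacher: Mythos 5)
Your proof is correct and is essentially the paper's proof: the paper establishes the same conclusion by an inductive lower bound $\sum_{j=1}^r e_j d_j \geq e_r\delta_r^\a$, which is just the inductive form of your Abel summation identity, and both arguments rest on exactly the same ingredients (the formula from \cref{Lemma difference between muFalpha and mua}, the decreasing ordering of weights so that the partial sums are $\delta_r^\a$, the hypothesis $\delta_r^\a\geq 0$, and the monotonicity and non-negativity of $\W(\lambda k)-1$ for $k>0$).
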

\begin{proof}
Recall $\mu_\a-\mu_{\lambda}(\F_\a)=\sum_{k>0} (\mathbb{W}(\lambda k)-1)(h_k^{\a}-h_{-k}^\a)$.
We will use the property that $\mathbb{W}(a)\geq \mathbb{W}(b)$ for $a\geq b$ repeatedly, and that $\mathbb{W}(\lambda k_j)-1\geq 0$ since $k_j\geq 1$.
We claim by induction on $r\geq 1$ that
$$\textstyle
\sum_{j=1}^r (\mathbb{W}(\lambda k_j)-1)(h_{k_j}^{\a}-h_{-k_j}^\a)
\geq 
(\mathbb{W}(\lambda k_r)-1)\delta_r^\a.
$$
This implies the claim: taking $r$ to be the final value, we get $\mu_\a-\mu_{\lambda}(\F_\a)\geq (\mathbb{W}(\lambda k_r)-1)\delta_r^\a\geq 0$.
For $r=1$ the inductive claim is an equality.
Now the inductive step, using $\delta_r^\a\geq 0$:
\begin{align*}
\begin{split}
\textstyle
\sum_{j=1}^{r+1} (\mathbb{W}(\lambda k_j)-1)(h_{k_j}^{\a}-h_{-k_j}^\a)
& \geq 
(\mathbb{W}(\lambda k_r)-1)\delta_r^\a + (\mathbb{W}(\lambda k_{r+1})-1)(h_{k_{r+1}}^{\a}-h_{-k_{r+1}}^\a)\\
& \geq 
(\mathbb{W}(\lambda k_{r+1})-1)\delta_r^\a + (\mathbb{W}(\lambda k_{r+1})-1)(h_{k_{r+1}}^{\a}-h_{-k_{r+1}}^\a)\\
& = (\mathbb{W}(\lambda k_{r+1})-1)\delta_{r+1}^\a. \qedhere
\end{split}
\end{align*}
\end{proof}
\begin{nota}
We write $\lambda^+$ to mean a non-critical time just above a given critical time $\lambda=\tfrac{k_0}{m}$ ($k_0,m$ coprime), with no critical times between $\lambda$ and $\lambda^+$. Similarly for $\lambda^-$ below $\lambda$. 
Abbreviate
\begin{align*}
f_m^\a &:=  (h_{m}^\a-h_{-m}^\a),\quad \textrm{ and }
\quad  
F_m^\a:=  \sum_{b\geq 1} f_{mb}^\a \qquad (\textrm{thus }\delta_r^{\a}=\sum_{m\geq k_r} f_m^{\a}),\\
N_m\mathcal{I}&:= \textrm{double-count\footnotemark{} of the number of integers in the interval }m\cdot \mathcal{I},\\
C_m\mathcal{I}&:= \textrm{double-count of the number of integers in the interval }m\cdot \mathcal{I}\textrm{ coprime to }m.
\end{align*}\end{nota}\footnotetext{if it arises in the interior of the interval we count it twice, but it only contributes one if it arises at a boundary point. Example: $N_6([\tfrac{1}{2},1])=1+2+2+1=6$ due to $3,4,5,6 \in [3,6]=6\cdot [\tfrac{1}{2},1]$, whereas $N_6((\tfrac{1}{2},1])=2+2+1=5$.}
\begin{lm}\label{Lemma Fma is jump in grading}
\strut
\strut\begin{enumerate}
    \item \label{mu lambda before and after crit value difference}
     At a critical time $\lambda=\tfrac{k_0}{m}$, with $\gcd(k_0,m)=1$,
     we have
     $$F_m^\a = 
\mu_{\lambda^-}(\F_\a) - \mu_{\lambda}(\F_\a)= \mu_{\lambda}(\F_\a) - \mu_{\lambda^+}(\F_\a)=\tfrac{1}{2} (\mu_{\lambda^-}(\F_\a)-\mu_{\lambda^+}(\F_\a)).$$
\item \label{m-minimal Fma equal to rk Ymb} If $\F_\a$ is $m$-minimal (\cref{Definition generic points of Ymc}), then $F_m^\a=\mathrm{rk}(Y_{m,\c})$.
\item \label{Shifts just before the integer time} In particular, for $H_{\pm}$  as in \cref{Remark H+}, and $N\in \N$, $$\mu_{N^-}(\F_\a)=\mu_\a-2N\mu+2(|H_+|-|H_-|).$$ 
E.g.\ for weight-1 CSRs, $\mu_{1^-}(\F_\a)=0$. %
\item \label{mu lambda via Nm and Cm} For all $\lambda>0$,
\begin{equation}\label{Equation mulambda Fa formula general}
\mu_{\lambda}(\F_\a)  \quad=\quad \mu_{\a}- \sum
N_m(0,\lambda] \cdot f_m^{\a} \quad = \quad\mu_{\a}- \sum 
C_m(0,\lambda] \cdot F_m^{\a}.
\end{equation}
\end{enumerate}
\end{lm}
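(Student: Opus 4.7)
Our plan is to extract all four parts from the explicit formula $\mu_{\lambda}(\F_\a) = \sum_{k\neq 0}(1-\mathbb{W}(\lambda k))h_k^{\alpha}$ of \eqref{mu_lambda_calculation}, together with the elementary fact that $\mathbb{W}$ is locally constant away from non-zero integers and has the jump pattern $\mathbb{W}(n^\pm) = 2n \pm 1$ and $\mathbb{W}(n) = 2n$ for $n \in \Z \setminus \{0\}$. For part (1), at a critical time $\lambda = k_0/m$ with $\gcd(k_0,m) = 1$, the set of $k \neq 0$ with $\lambda k \in \Z$ is exactly $m\Z \setminus \{0\}$. For $k = mb$ with $b \geq 1$, the half-jump $\mathbb{W}(\lambda k) - \mathbb{W}(\lambda^+ k) = -1$ contributes $+h_{mb}^\a$ to $\mu_\lambda - \mu_{\lambda^+}$; for $k = -mb$, oddness of $\mathbb{W}$ gives $\mathbb{W}(\lambda k) - \mathbb{W}(\lambda^+ k) = +1$, contributing $-h_{-mb}^\a$. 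Summing and running the symmetric argument on the other side yields $\mu_\lambda - \mu_{\lambda^+} = \mu_{\lambda^-} - \mu_\lambda = F_m^\a$. Part (2) is then immediate: $m$-minimality (\cref{Definition generic points of Ymc}) forces $h_{-mb}^\a = 0$ for all $b \geq 1$, so $F_m^\a = \sum_{b\geq 1} h_{mb}^\a = |Y_{m,\c}| - |\F_\a| = \mathrm{rk}(Y_{m,\c})$ by \cref{Lemma torsion submanifolds}.

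For part (3), plug $\lambda = N^-$ directly into \eqref{mu_lambda_calculation} using $\mathbb{W}(N^- k) = 2Nk - 1$ for $k > 0$ and $\mathbb{W}(N^- k) = 2Nk + 1$ for $k < 0$:
\[
\mu_{N^-}(\F_\a) = \sum_{k>0}h_k^\a(2-2Nk) + \sum_{k<0}h_k^\a(-2Nk) = 2|H_+| - 2N\mu,
\]
which rewrites, using $\mu_\a = 2|H_-|$ from \eqref{mu a and maslov index equations}, as $\mu_\a + 2(|H_+|-|H_-|) - 2N\mu$. For a weight-$1$ CSR the $\omega_\C$-duality $h_k^\a = h_{1-k}^\a$ (see \cref{Exercise CSRs are compatibly weighted}) gives by reindexing $|H_+| = |H_-| + |\F_\a|$ and $2\mu = \dim_\C Y = 2|H_+|$, hence $\mu_{1^-}(\F_\a) = 2|H_+| - 2\mu = 0$.

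For part (4), oddness of $\mathbb{W}$ collapses \eqref{mu_lambda_calculation} to $\mu_{\lambda}(\F_\a) = \mu_\a - \sum_{m' \geq 1}(\mathbb{W}(\lambda m') - 1)f_{m'}^\a$, the shift by $-1$ absorbing $\mathbb{W}(0^+) = 1$ so the right-hand side equals $\mu_\a$ at $\lambda = 0^+$. A direct case analysis on $\lambda m'$ shows $\mathbb{W}(\lambda m') - 1 = N_{m'}(0, \lambda]$: the interior-$2$/boundary-$1$ weighting exactly reproduces the midpoint $\mathbb{W}(n) = 2n$ halfway between $\mathbb{W}(n^\pm) = 2n \pm 1$. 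This establishes the first equality. The second then follows from the combinatorial identity $N_{m'}(0,\lambda] = \sum_{d \mid m'} C_d(0, \lambda]$, which we prove via the bijection sending an integer $n \in m'(0,\lambda]$ to the pair $(d, n/d)$ with $d = \gcd(n, m')$: then $n/d \in (m'/d)(0,\lambda]$ is coprime to $m'/d$, and the bijection preserves the interior/boundary distinction. Substituting $F_m^\a = \sum_{b \geq 1}f_{mb}^\a$ and regrouping the first formula by $m' = mb$ converts it into the second. The main bookkeeping subtlety throughout is the careful handling of the boundary-weighting convention for $N_m$, $C_m$ and its precise match with the midpoint value of $\mathbb{W}$ at non-zero integers; the rest is essentially computation from \eqref{mu_lambda_calculation}.
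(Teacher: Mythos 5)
Your proof is correct and follows essentially the same route as the paper: all four parts are read off from the closed formula $\mu_{\lambda}(\F_\a)=\sum_{k\neq 0}(1-\mathbb{W}(\lambda k))h_k^{\alpha}$ and the jump behaviour of $\mathbb{W}$ at nonzero integers, and your divisor bijection proving $N_{m'}(0,\lambda]=\sum_{d\mid m'}C_d(0,\lambda]$ is precisely the paper's terse ``rescaling by $b$'' observation made explicit. The only cosmetic difference is that you obtain (3) and the first line of (4) by direct substitution into the closed formula rather than accumulating the half-jumps of (1), which is equivalent.
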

\begin{proof} Part \eqref{mu lambda before and after crit value difference} follows from $\mathbb{W}(\lambda mb)-\mathbb{W}(\lambda^{-} mb) = 1 =\mathbb{W}(\lambda^+ mb)- \mathbb{W}(\lambda mb)$, using \eqref{WfunctionForRSAppendix}.
In \eqref{m-minimal Fma equal to rk Ymb}, $h_{-mb}^\a=0$ so $F_m^\a=\mathrm{rk}(Y_{m,\c})$ by \cref{Definition generic points of Ymc}.
The first part of
\eqref{Shifts just before the integer time} follows from \cref{For Lambda small Floer Shift Equal To Morse}, applying \eqref{mu lambda before and after crit value difference} in the case $\tfrac{k_0}{m}=\tfrac{N}{1}$, using $F_1^{\a}=|H_+|-|H_-|.$
The claim for weight-1 CSRs follows from \cref{GradientIsRPlus} (so $2\mu=\dim_{\C}Y$) and \cref{NonDegenPairingByOmC} (which implies that $|H_+|-|H_-|=h_1^{\a}=h_0^{\a}=\dim_{\C}\F_\a$), so $\mu_{N^-}(\F_\a)=2\dim_{\C}\F_\a+\mu_\a 
-N\dim_{\C}Y$, then use \cref{LemmaMomentMapMorseBott}\eqref{fa vs top cohomology weight-1 CSR}.
Part \eqref{mu lambda via Nm and Cm} follows from \eqref{mu lambda before and after crit value difference}, and the observation that fractions $\tfrac{\textrm{integer}}{b}$ in an interval $\mathcal{I}$ correspond (by rescaling by $b$) to integers in $b\cdot \mathcal{I}$.
\end{proof}
\begin{ex}
We illustrate \eqref{Equation mulambda Fa formula general} for $k_1=11$, $k_2=10$, $k_3=7$, $k_4=6$ with $h_{11}^\a=3$, $h_{-10}^\a=3$, $h_{7}^\a=1$, $h_{-6}^\a=1$ (all other $h_{\pm k}^\a=0$). 
Thus $F_{6}^{\a}=F_{3}^{\a}=-1$, $F_7^{\a}=1$, $F_{10}^{\a}=F_5^{\a}=-3$, $F_{11}^{\a}=3$. 
Suppose we already knew $\mu_{2/7}(\F_\a)=1$, and we want to compute $\mu_{3/7}(\F_\a)=\mu_{2/7}(\F_\a)-\sum C_m[2/7,3/7]\,F_m^{\a}$. The critical times in $\mathcal{I}:=[\tfrac{2}{7},\tfrac{3}{7}]$ are ${\bf \tfrac{2}{7}}  < \tfrac{3}{10} <  \tfrac{2}{6} <\tfrac{4}{11} < \tfrac{4}{10} < {\bf \tfrac{3}{7}}$. By definition 
$C_3\mathcal{I}=1$ due to $\tfrac{2}{6}=\tfrac{1}{3}$,
$C_5\mathcal{I}=1$ due to $\tfrac{4}{10}$,
$C_6\mathcal{I}=0$ (the $\tfrac{2}{6}$ is already accounted for in $C_3\mathcal{I}$),
$C_7\mathcal{I}=1$ due to $\tfrac{2}{7}$, $\tfrac{3}{7}$ being boundary points,
$C_{10}\mathcal{I}=1$ due to $\tfrac{3}{10}$ (not $\tfrac{4}{10}$),
$C_{11}\mathcal{I}=1$. Thus, as expected, $\sum C_m[2/7,3/7]F_m^{\a}=2 F_6^{\a}+2F_{7}^{\a}+4 F_{10}^{\a}+2F_{11}^{\a}
=-6.$
So
$\mu_{3/7}(\F_\a)=\mu_{2/7}(\F_\a)+6=7$ jumps up, because unexpectedly 
there were more fractions in $[\tfrac{2}{7},\tfrac{3}{7}]$ with the lower denominator 10 than with 11. In the following Proposition,  
it is more frequent to observe ``drops'' $f(p)\geq f(p+1)$, nevertheless even for positively-weighted $Y$ a jump $f(p)<f(p+1)$ is possible, and several consecutive jumps are possible. 
\end{ex}

\begin{prop}
Fix $\a$. Let $\lambda_p= \tfrac{p}{m}$ be a sequence of $\a$-critical times for fixed $m\geq 1$, for $p=1,2,3,4,\ldots$ not necessarily coprime to $m$. Let $f(p)$ denote any of the following three functions,
$$p\mapsto \mu_{\lambda_p^-}(\F_\a), \;\;\; p\mapsto \mu_{\lambda_p}(\F_\a),  \;\;\; p\mapsto \mu_{\lambda_p^+}(\F_\a),
$$
letting $f(0):=\mu_\a$. 
Then $K_j:=N_{k_j}[\lambda_p,\lambda_{p+1}]\in \N$ is the double-count of divisors of $m$ in the list of consecutive integers $pk_j,pk_j+1,\ldots,(p+1)k_j$, and 
\begin{equation}\label{Equation drop in f(p)}
f(p)-f(p+1)= 
\sum_{j=1}^r K_j \cdot f_{k_j}^\a
=
K_r \delta_r^\a + (K_{r-1}-K_r) \delta_{r-1}^\a + \cdots + (K_1-K_2)\delta_1^\a,
\end{equation}
where $K_m=2$; $K_j\in \{0,1,2\}$ for $k_j<m;$ 
 and the brackets $(K_{j-1}-K_j)\in \{-1,0,1,2,\ldots\}$, where $-1$ cannot occur if $mb+m-2\geq k_{j-1}>k_j\geq mb$ fails for all $b\in \N$ (e.g.\,it fails if $k_{j-1}-k_j\geq m-1$).

If $Y$ is compatibly-weighted, and $\delta_{j-1}^{\a}= 0$ when $K_{j-1}-K_j= -1$, then $f$ is non-increasing.
\end{prop}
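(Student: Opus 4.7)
My plan is to deduce the first equality from the closed-form expression $\mu_\lambda(\F_\a)=\mu_\a-\sum_m N_m(0,\lambda]\,f_m^\a$ given in \eqref{Equation mulambda Fa formula general}. For $f(p)=\mu_{\lambda_p}(\F_\a)$, taking the telescoping difference reduces everything to checking, for each weight $k=k_j$, the identity $N_{k_j}(0,\lambda_{p+1}]-N_{k_j}(0,\lambda_p]=K_j$. This I would verify by splitting into the four cases governed by whether $pk_j/m$ and $(p+1)k_j/m$ lie in $\Z$, and in each case comparing the interior-versus-boundary double-counts of the half-open scaled intervals $(0,pk_j/m]$ and $(0,(p+1)k_j/m]$ to the closed-interval double-count $K_j=N_{k_j}[\lambda_p,\lambda_{p+1}]$. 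The cases $f=\mu_{\lambda_p^\pm}$ follow from the $\mu$-case together with the jump identity $\mu_{\lambda^\pm}(\F_\a)=\mu_\lambda(\F_\a)\mp F_{m'(\lambda)}^\a$ proved in \cref{Lemma Fma is jump in grading}, by absorbing the corresponding boundary shifts into the open/closed conventions for $N(0,\lambda^\pm]$. The second equality is Abel summation applied to $f_{k_j}^\a=\delta_j^\a-\delta_{j-1}^\a$ with $\delta_0^\a:=0$.

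For the bounds on $K_j$, I would exploit that the scaled interval $k_j[\lambda_p,\lambda_{p+1}]$ has length exactly $k_j/m$. When $k_j=m$ the length is $1$ and the interval is $[p,p+1]$, whose two boundary integers each contribute one, giving $K_m=2$. When $k_j<m$ the length is strictly less than $1$, and since any two distinct multiples of $m$ are at least $m$ apart, the interval contains at most one integer: zero (giving $K_j=0$), one boundary integer (giving $K_j=1$), or one interior integer (giving $K_j=2$). For $(K_{j-1}-K_j)=-1$ to occur one needs $(K_{j-1},K_j)\in\{(0,1),(1,2)\}$; in either configuration a short arithmetic calculation forces the two scaled intervals for $k_{j-1}$ and $k_j$ to straddle a common multiple of $m$, which is exactly the condition $mb\leq k_j<k_{j-1}\leq mb+m-2$. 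A gap $k_{j-1}-k_j\geq m-1$ would place the two weights into distinct mod-$m$ bins, ruling this configuration out.

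For the non-increasing conclusion, I would apply the telescoped form $f(p)-f(p+1)=K_r\delta_r^\a+\sum_{j=1}^{r-1}(K_j-K_{j+1})\delta_j^\a$: under the compatibly-weighted hypothesis each $\delta_j^\a\geq 0$ and $K_r\geq 0$, so each term with a non-negative bracket is non-negative, and each term with bracket $-1$ vanishes by the extra assumption $\delta_j^\a=0$. I expect the main obstacle to be the middle combinatorial step: pinning down the precise pairs $(K_{j-1},K_j)$ that can arise as $k_j$ varies through the weight list requires a meticulous case analysis of where the (at most one) multiple of $m$ sits in each short interval $k_j[\lambda_p,\lambda_{p+1}]$, and it is where the statement is most sensitive to the double-count convention and to the arithmetic relations between consecutive $k_j$'s and $m$.
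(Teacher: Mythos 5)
Your plan for the first equality (telescoping $\mu_\lambda(\F_\a)=\mu_\a-\sum_k N_k(0,\lambda]\,f_k^\a$ from \eqref{Equation mulambda Fa formula general} and verifying the half-open/closed cancellation $N_k(0,\lambda_{p+1}]-N_k(0,\lambda_p]=N_k[\lambda_p,\lambda_{p+1}]$), for the $\lambda_p^\pm$ cases via \cref{Lemma Fma is jump in grading}, for the Abel summation, and for the bounds $K_m=2$ and $K_j\in\{0,1,2\}$ when $k_j<m$, all mirror the paper's argument.

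The gap is in the assertion $(K_{j-1}-K_j)\in\{-1,0,1,\dots\}$, which your proposal never actually verifies. Your sentence ``for $(K_{j-1}-K_j)=-1$ to occur one needs $(K_{j-1},K_j)\in\{(0,1),(1,2)\}$'' only identifies which pairs give exactly $-1$; it does not rule out $(K_{j-1},K_j)=(0,2)$, which gives $-2$, and this pair does occur. Take $m=5$, $p=2$, $k_{j-1}=3$, $k_j=2$: then $K_{j-1}=N_3[\tfrac{2}{5},\tfrac{3}{5}]=0$ (no integer lies in $[6/5,9/5]$) while $K_j=N_2[\tfrac{2}{5},\tfrac{3}{5}]=2$ (the integer $1$ is interior to $[4/5,6/5]$), so $K_{j-1}-K_j=-2$. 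This matters for the final non-increasing claim: with weights $5,3,3,-2,-2$ at $\F_\a$ (so $h_5^\a=1$, $h_3^\a=2$, $h_{-2}^\a=2$; compatibly-weighted since $\delta_1^\a=1$, $\delta_2^\a=3$, $\delta_3^\a=1\ge 0$), \eqref{mu_lambda_calculation} gives $\mu_{2/5}(\F_\a)=-3$ and $\mu_{3/5}(\F_\a)=-1$, so $f(2)<f(3)$, even though no bracket equals $-1$ in this step ($(K_1-K_2,K_2-K_3)=(2,-2)$) so the extra hypothesis is vacuously satisfied.

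The paper's own proof contains the same gap: its floor/ceiling argument correctly shows that the plain count of multiples of $m$ in a run of $c+1$ consecutive integers can exceed the count in a run of $a+1$ consecutive integers ($a>c$) by at most one, but the $K_j$ in the statement are double-counts (interior multiples contribute $2$, boundary multiples $1$), and the double-count difference can be $-2$ as above. The double-count really is forced, since otherwise the first equality in \eqref{Equation drop in f(p)} would not telescope against \eqref{Equation mulambda Fa formula general}: in the example, $\sum_j K_jf_{k_j}^\a=2\cdot1+0\cdot2+2\cdot(-2)=-2=f(2)-f(3)$, whereas the simple counts would give $0$. So your proposal inherits a defect already present in the paper's proof; repairing it requires admitting $(K_{j-1}-K_j)=-2$ in the statement and strengthening the hypothesis of the non-increasing conclusion accordingly.
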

\begin{proof}
The fractions $\tfrac{\textrm{integer}}{b}$ in $[\tfrac{p}{m},\tfrac{p+1}{m}]$ correspond (by rescaling by $mb$) to the integers in $[pb,(p+1)b]$  divisible by $m$.
By \eqref{Equation mulambda Fa formula general},
$
\mu_{\lambda_{p}}(\F_\a)-\mu_{\lambda_{p+1}}(\F_\a)
=\sum_{j=1}^r N_{k_j}[\lambda_p,\lambda_{p+1}]\cdot f_{k_j}^\a
$
so \eqref{Equation drop in f(p)} holds for $f(p)=\mu_{\lambda_p}(\F_\a)$. The other two cases follow by \cref{Lemma Fma is jump in grading}.(1)-(2).
 The number of divisors of $m$ in $N$ consecutive integers is either $\lfloor \tfrac{N}{m} \rfloor$ or $\lceil  \tfrac{N}{m} \rceil$ (these equal when $m$ divides $N$).
If $k_{j-1}=a>k_{j}=c$ but $K_{j-1}<K_j$,
then
$\lfloor  \tfrac{1+c}{m} \rfloor\leq \lfloor  \tfrac{1+a}{m} \rfloor<\lceil  \tfrac{1+c}{m} \rceil$, so 
$K_{j-1}-K_j=-1$. 
That condition means $m(b+1)>1+a>1+c>mb$ for some $b\in \N$. Rearranging gives the claim about $-1$ occurrences. The other claims are easy.
\end{proof}

\begin{lm}\label{Estimates of 1/s+ time indeces for weight-s CSRs} For any weight-$s$ CSR, $\mu_{{(i/s)}^+}(\F_\a)$ is non-increasing in $i\in \N$, given by \eqref{Equation mu i over s calculation}, with
$$\mu_{{(1/s)}^+}(\F_\a)= -\dim_\R \F_\a \quad\textrm{ and }\quad
\mu_{{(2/s)}^+}(\F_\a)=-\dim_{\R} \F_{\a}-\dim_{\C} Y - h_{s/2}^{\a}<\dim_{\R}\F_{\a}.$$
For $s\geq 3,$ $\mu_{(1/(s-1))^+}(\F_\a) = 
  -\dim_{\R} \F_\a - 2 h_1^{\a} - 2\sum_{j\geq 2} h_{(s-1)j}^{\a}$.
\end{lm}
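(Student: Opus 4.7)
The starting point is the closed form from \cref{mu_lambda go to -infty},
$$
\mu_\lambda(\F_\a)=\sum_{k\neq 0}(1-\W(\lambda k))\,h_k^\a,
$$
combined with the $\omega_\C$-duality $h_k^\a=h_{s-k}^\a$ for weight-$s$ CSRs (see \eqref{NonDegenPairingByOmC} and \cref{Exercise CSRs are compatibly weighted}). All three formulas are obtained by a pairing-and-cancellation analysis at $\lambda=(i/s)^+$, using the one-sided values $\W(n^\pm)=2n\pm 1$ at integers and $\W(x)=2\lceil x\rceil-1$ (respectively $2\lfloor x\rfloor+1$) for positive (respectively negative) non-integer $x$. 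The general expression \eqref{Equation mu i over s calculation} is the resulting explicit form for each $i$.

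For $\mu_{(1/s)^+}(\F_\a)$, split the sum into four pieces by sign of $k$ and by whether $s\mid k$. Substituting $h_{-j}^\a=h_{s+j}^\a$ and re-indexing $\ell=s+j$ in the two negative-weight pieces aligns each summand with a positive-weight summand sharing the same factor $h_\ell^\a$. The coefficients cancel pairwise for $\ell>s$, vanish on the range $0<k<s$ (where $\lceil k/s\rceil=1$), and leave only $\ell=s$ with coefficient $-2$, producing $-2h_s^\a=-2h_0^\a=-\dim_\R \F_\a$. The same recipe at $\lambda=(2/s)^+$ uses the sharper divisibility $\sigma\mid k$ with $\sigma:=s/\gcd(s,2)$, which introduces a genuine middle contribution $-2h_{s/2}^\a$ when $s$ is even. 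The surviving residual $-4h_0^\a-2h_{s/2}^\a-2\sum_{s/2<k<s}h_k^\a-2\sum_{k>s}h_k^\a$ then collapses, via a second application of duality in the form $\sum_{k>s/2}h_k^\a=\tfrac{1}{2}(\dim_\C Y-h_{s/2}^\a)$, to $-\dim_\R \F_\a-\dim_\C Y-h_{s/2}^\a$. The strict inequality $<\dim_\R \F_\a$ is then immediate, since $\dim_\C Y\geq 1$ because the $\C^*$-action is non-trivial.

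The third formula, $\mu_{(1/(s-1))^+}(\F_\a)$ for $s\geq 3$, follows by the same recipe with divisibility by $s-1$ in place of $s$. Because $\gcd(s,s-1)=1$, the $\omega_\C$-involution $k\leftrightarrow s-k$ no longer preserves $(s-1)$-divisibility, so the cancellations are sparser: the background $-\dim_\R \F_\a$ still arises from $h_s^\a=h_0^\a$, the involution pair $(1,s-1)$ contributes $-2h_1^\a$, and each higher multiple $(s-1)j$ with $j\geq 2$ survives as an unpaired $-2h_{(s-1)j}^\a$.

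The main obstacle is the non-increasing statement $\mu_{((i+1)/s)^+}(\F_\a)\leq\mu_{(i/s)^+}(\F_\a)$. By \cref{Lemma Fma is jump in grading}\eqref{mu lambda via Nm and Cm},
$$
\mu_{(i/s)^+}(\F_\a)-\mu_{((i+1)/s)^+}(\F_\a)=\sum_m N_m\bigl(\tfrac{i}{s},\tfrac{i+1}{s}\bigr]\,f_m^\a,
$$
and the plan is to apply Abel summation ordered by the decreasing enumeration $k_1>k_2>\cdots$ of positive weights of $\F_\a$, rewriting the right-hand side as a non-negative combination of the partial sums $\delta_r^\a$, which are non-negative because CSRs are compatibly-weighted (\cref{Exercise CSRs are compatibly weighted}). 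A useful consistency check is the Seidel-shift identity $\mu_{((i+s)/s)^+}(\F_\a)=\mu_{(i/s)^+}(\F_\a)-s\dim_\C Y$ (from $2\mu=s\dim_\C Y$ and \cref{For Lambda small Floer Shift Equal To Morse}), which forces the $s$ drops within any full period to sum to exactly $s\dim_\C Y$; this matches the drops of size $\dim_\C Y$ already exhibited by the $i=0\to 1$ and $i=1\to 2$ computations above, and is what pins down the monotonicity in each individual $i\to i+1$ step.
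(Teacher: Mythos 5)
Your derivations of the three closed-form values are correct and coincide with the paper's argument: both rest on the pairing $k\leftrightarrow s-k$ (coming from $\omega_{\C}$-duality $h_k^{\a}=h_{s-k}^{\a}$) applied to the sum $\mu_{\lambda}(\F_\a)=\sum_{k\neq 0}(1-\W(\lambda k))h_k^{\a}$, with the unpaired residue $k\in\{1,\dots,s\}$ handled by hand. Your algebra for $i=1$ and $i=2$ checks out, and your sketch for the $(s-1)$-case matches the paper's mechanism. (One minor remark: the printed inequality ``$<\dim_{\R}\F_\a$'' is trivially implied; the useful and intended bound — used in a footnote to \cref{Example CSR filtration Intro 1} — is $\mu_{(2/s)^+}(\F_\a)<-\dim_{\R}\F_\a$, which is what $\dim_{\C}Y\geq 1$ actually gives you. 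You may want to record the sharper statement.)

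The gap is in your argument for monotonicity. You propose to apply Abel summation to
$\mu_{(i/s)^+}(\F_\a)-\mu_{((i+1)/s)^+}(\F_\a)=\sum_m N_m(\tfrac{i}{s},\tfrac{i+1}{s}]\,f_m^{\a}$
and to conclude non-negativity from $\delta_r^{\a}\geq 0$. But this rewriting produces a combination
$K_r\delta_r^{\a}+\sum_{j<r}(K_j-K_{j+1})\delta_j^{\a}$
in which the successive-difference coefficients $K_j-K_{j+1}$ can be negative; the paper explicitly flags this (the brackets in \eqref{Equation drop in f(p)} can be $-1$) and only obtains non-increase under the extra hypothesis that $\delta_{j-1}^{\a}=0$ whenever the coefficient is $-1$. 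Nothing in your write-up rules this out for the intervals $(\tfrac{i}{s},\tfrac{i+1}{s}]$ of a general weight-$s$ CSR, and the Seidel-shift identity you invoke as a check only pins down the \emph{total} drop over a full period $i\to i+s$, not the sign of each $i\to i+1$ step, so it cannot supply the missing positivity.

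The irony is that you already have the clean argument in hand. Once the pairing yields \eqref{Equation mu i over s calculation}, namely
$\mu_{(i/s)^+}(\F_\a)=f_s(i)-i\cdot\dim_{\R}\F_\a-(2i-2)\sum_{k>s}h_k^{\a}$,
monotonicity in $i$ is immediate term-by-term: $f_s(i)=\sum_{k=1}^{s-1}(1-\W((\tfrac{i}{s})^+k))\,h_k^{\a}$ is non-increasing because $\W$ is non-decreasing and $h_k^{\a}\geq 0$; $-i\cdot\dim_{\R}\F_\a$ is non-increasing since $\dim_{\R}\F_\a\geq 0$; and $-(2i-2)\sum_{k>s}h_k^{\a}$ is non-increasing since $\sum_{k>s}h_k^{\a}\geq 0$. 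This is the paper's actual proof of monotonicity — it never passes through the $N_m$/Abel-summation machinery — and it bypasses the difficulty entirely. You should replace your Abel-summation plan with this direct observation.
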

\begin{proof}
By \cref{mu_lambda go to -infty},  $\mu_{(i/s)^+}(\F_\a) 
 = \sum_{k\neq 0} f(i,k)\,h_k^{\a}$ where $f(i,k):=1-\mathbb{W}((\tfrac{i}{s})^+ k)$.
 For $k>s$,
 $$
 (\tfrac{i}{s})^+ (s-k)=i - (\tfrac{i}{s})^+k,
 $$
 where we may write $i$ instead of $i^+$ because $[(\tfrac{i}{s})^+-\tfrac{i}{s}]k>[(\tfrac{i}{s})^+-\tfrac{i}{s}]s$ when $k>s$.
Thus 
$$
f(i,s-k)=1-\mathbb{W}((\tfrac{i}{s})^+ (s-k))=
1-2i+\mathbb{W}((\tfrac{i}{s})^+k)
=-(2i-2)-f(i,k).
$$
Now we pair the $k>s$ case with the $s-k<0$ case, using that $h^{\a}_k=h^{\a}_{s-k}$ by \eqref{NonDegenPairingByOmC}:
\begin{equation}\label{Equation pairing indices trick}
f(i,k)\,h_k^{\a}
+
f(i,s-k)\,h_{s-k}^{\a}
=
[f(i,k)-(2i-2)-f(i,k)]\,h_k^{\a}
=
-(2i-2)h_k^{\a}.
\end{equation}
The $k\neq 0$ values not paired are $k=1,2,\ldots,s$. For $k>0$, $f(i,k)\leq 0$ and it is non-increasing in $i$, so 
$$f_s(i):=f(i,1)+f(i,2)+\cdots+f(i,s-1)\leq 0,$$
and it is non-increasing in $i$.
As $(\tfrac{i}{s})^+s=i^+$, $f(i,s)=-2i$.
As $2h_s^{\a}=2h_0^{\a}= \dim_{\R}\F_\a$ (using  \eqref{NonDegenPairingByOmC}),%
\begin{equation}\label{Equation mu i over s calculation}
\mu_{(i/s)^+}(\F_\a) =
f_s(i)
-i\cdot \dim_{\R} \F_a
-
(2i-2)\sum_{k>s} h_k^{\a}.
\end{equation}
For $i=1$, $f_s(i)=0$ as $(\tfrac{1}{s})^+ k\in (0,1)$ for $1\leq k <s$, so $\mu_{(1/s)^+}(\F_\a)=- \dim_{\R} \F_a$. For $i=2$, $f(2,k)=0$ for $k<\tfrac{s}{2}$, and $f(2,k)=-2$ for $\tfrac{s}{2}\leq k<s$. So $\mu_{(2/s)^+}(\F_\a)=-\dim_{\R} \F_a -2 \sum_{k\geq s/2} h_k^{\a}$, as required since
$\dim_{\C}Y = \sum_k h_k^{\a} = -h_{s/2}^{\a} + 2 \sum_{k\geq s/2} h_k^{\a}$ (using $h_k^{\a}=h_{s-k}^{\a}$),
where $h_{s/2}^{\a}:=0$ for odd $s$. 

Finally, we have $(\tfrac{1}{s-1})^+ (s-k)=(\tfrac{1}{s-1})^+ (s-1+1-k)=1-(\tfrac{1}{s-1})^+(k-1)$ for $k>s$, so 
$$
1-\W((\tfrac{1}{s-1})^+ (s-k))=-1+\W((\tfrac{1}{s-1})^+(k-1))\leq -1+\W((\tfrac{1}{s-1})^+k).
$$
The analogue of \cref{Equation pairing indices trick} becomes
\begin{equation}\label{Equation pairing indices trick 2}
g(s,k):=[-\W((\tfrac{1}{s-1})^+k)+\W((\tfrac{1}{s-1})^+(k-1))]\ h_k^{\a}
\leq 0,
\end{equation}
moreover $g(s,k)$ is either $-2$ or $0$: the $-2$ case occurs precisely when $k= (s-1)j$ ($>k-1$) for some $j\geq 2\in \N$. 
For $k=1,\ldots,s-2$ we have $1-\W((\tfrac{1}{s-1})^+k)=0$. For $k=s-1,s$ (and $s\geq 3$) we have $1-\W((\tfrac{1}{s-1})^+k)=-2$. 
Using $h_{s-1}^{\a}=h_1^{\a}$ and $2h_s^{\a}=2h_0^{\a}=\dim_{\R}\F_\a$ (by \eqref{NonDegenPairingByOmC}), the analogue of \eqref{Equation mu i over s calculation} is
\begin{equation}\label{Equation mu i over s-1 calculation}
\mu_{(1/(s-1))^+}(\F_\a) = -2h_{s-1}^{\a} - 2h_s^{\a} + \sum_{k>s} g(s,k)
= -\dim_{\R} \F_\a - 2 h_1^{\a} - 2\sum_{j\geq 2} h_{(s-1)j}^{\a}. \qedhere
\end{equation}
\end{proof}
\subsection{Relating weight spaces of $\C^*$-related fixed loci}\label{Subsection relating weight spaces via Cstar action}
Each $\Y_{m,\c}$ is a symplectic $\C^*$-submanifold, so we can associate a Maslov index $\mu_{m,\c}$ to the $S^1$-action as in Seidel \cite[Lem.2.6]{Sei97} (see also \cite[Sec.7.8]{R14}), despite often having $c_1(\Y_{m,\c})\neq 0$ even when $c_1(Y)=0$.
In practice, at $p\in Y_{m,\c}$, $\mu_{m,\c}(o_p,c_p)$ depends not just on the $S^1$-orbit $o_p$ of period $\tfrac{1}{m}$, with $o_p(0)=p$, but also on a choice of capping $c_p:\mathbb{D}\to Y_{m,\c}$, $c_p|_{S^1}=o_p$. The $\mu_{m,\c}(o_p,c_p)$ is constant if one varies $(o_p,c_p)$ continuously.

Suppose $Y_{m,\c}$ contains $\F_\a$.
For the constant $p\equiv o_p:[0,1/m] \to Y$ with constant $c_p\equiv p$,
$$\textstyle
\mu_{m,\c}(p,p)=\sum_{b} mbf_{mb}^{\a}=\sum_{b} mb(h_{mb}^{\a}-h_{-{mb}}^{\a}).
$$
Suppose that
there is a $+\nabla H$ flowline in $Y_{m,\c}$ from $\F_\a$ to $\F_{\gamma}$ (so also $\F_{\gamma}\subset Y_{m,\c}$).
By \cref{CorollaryGradientTrajBecomeSpheres}, there is a $\C^*$-invariant pseudo-holomorphic sphere
$S_q$ in $Y_{m,\c}$ from $p\in \F_\a$ to $q\in \F_{\gamma}$. %
To simplify the discussion, 
suppose $\F_\a$ and $\F_{\gamma}$ do not have weights $\pm bm$ for $b>1$.
Then, using \cite[Lem.2.6]{Sei97},
$$
mf_m^{\a}
=
\mu_{m,\c}(p,p)
=
\mu_{m,\c}(q,S_q)
=
mf_m^{\gamma} +c_1(Y_{m,\c})[S_q].
$$
Abbreviate $|X|:=\dim_{\C}X$. Using $|\F_\a|+h_m^{\a}+h_{-m}^{\a}=|Y_{m,\c}|=|\F_{\gamma}|+h_m^{\gamma}+h_{-m}^{\gamma}$, we deduce
\begin{align*}
h_m^{\a} &= h_m^{\gamma}+\tfrac{1}{2}(|\F_{\gamma}|-|\F_{\a}|) +\tfrac{1}{2m}c_1(Y_{m,\c})[S_q],
\\
h_{-m}^{\a} &=  h_{-m}^{\gamma}+\tfrac{1}{2}(|\F_{\gamma}|-|\F_{\a}|)  - \tfrac{1}{2m}c_1(Y_{m,\c})[S_q].
\end{align*}
Now inductively proceed with $Y_{k,\beta}\supset Y_{m,\beta}$ for a divisor $k|m$, assuming by induction that we already related $h_{\pm k'}^{\a}$ with $h_{\pm k'}^{\beta}$ for $k'>k$.
This method determines all $h_{\pm k}^{\a}$ in terms of $h_{\pm k}^{\gamma}$ and $c_1(Y_{k,\beta})[S_q]$ for all divisors $k|m$.
The inductive equations, summing over divisors $n|m$ with $n> k$, are
\begin{align*}
 h_k^{\a} &=  h_{k}^{\gamma}+\tfrac{1}{2}(|\F_{\gamma}|-|\F_{\a}|) +\tfrac{1}{2k}\sum ((k+n)(h_n^{\gamma}-h_n^{\a})+(k-n)(h_{-n}^{\gamma}-h_{-n}^{\a})+c_1(Y_{n,\c})[S_q]),
\\
 h_{-k}^{\a} &=   h_{-k}^{\gamma}+\tfrac{1}{2}(|\F_{\gamma}|-|\F_{\a}|)  +\tfrac{1}{2k}\sum ((k-n)(h_n^{\gamma}-h_n^{\a})+(k+n)(h_{-n}^{\gamma}-h_{-n}^{\a})- c_1(Y_{n,\c})[S_q]).
\end{align*}
    
\section{Symplectic cohomology associated to a Hamiltonian $S^1$-action} \label{SectionSHassociatedToHamS1Action}

\subsection{Symplectic $\C^*$-manifolds over a convex base}\label{Subsection Kahler mfds admitting C*action}\label{Subsec overview}
We will assume the reader is familiar with Hamiltonian Floer theory and symplectic cohomology (e.g.\,\cite{Sa97, Sei08, R10}), in particular we use the notation, terminology and conventions from \cite{R10} unless stated otherwise.

\begin{de}\label{Def:KahlerMfdWithProjection}
 $(Y,\omega,\J ,\Fi)$ is a \textbf{symplectic $\C^*$-manifold over a convex base} 
if there is an $(I,I_B)$-{\ph } proper\footnote{meaning that preimages via $\Psi$ of compact subsets in $B$ are compact in $Y$.} map 
$$
\Psi: Y^{\mathrm{out}} = Y \setminus \mathrm{int}(Y^{\mathrm{in}}) \to B^{\mathrm{out}}=\Sigma \times [R_0,\infty),
$$
where $R_0 \in \R$ is any constant; $(\Sigma,\alpha)$ is a closed contact manifold; $I_B$ is a $d(R\alpha)$-compatible almost complex structure on $B^{\mathrm{out}}$ of contact type such that 
\begin{equation}\label{Equation Psi of XS1 is XfR main}
\Psi_* X_{S^1} = X_{fR}
\end{equation}
where $f: \Sigma \to (0,\infty)$ is a Reeb-invariant function (i.e.\,$df(\mathcal{R}_B)=0$).
Here $I$ and $I_B$ are \textit{almost} complex structures (but we often abusively say $\Psi$ is holomorphic).

We also assume that $B^{\mathrm{out}}$ is geometrically bounded at infinity (see \cref{Rmk weight w of Reeb} for explanations).

We often abusively write $\Psi: Y \to B$ even though the map $\Psi$ is only defined at infinity, and we sometimes write $B^{\mathrm{out}}$ instead of $B$ even though 
$B^{\mathrm{out}}$ is not required to have a filling $B$.

A stronger condition is to be a symplectic $\C^*$-manifold %
\textbf{globally defined over a convex base}:
we mean there is a {\ph } proper $S^1$-equivariant map
$\Psi:(Y,I) \to (B,I_B)$
defined on all of $Y$, whose target is a symplectic manifold
$(B,\omega_B,\J_B)$ convex at infinity, with a Hamiltonian $S^1$-action, whose Reeb flow at infinity agrees with the $S^1$-action. It is understood that $\J_B$ is an $\omega_B$-compatible almost complex structure, of contact type at infinity.
The definition in fact implies that $\Psi$ is also $\C^*$-equivariant (see \cref{Rmk weight w of Reeb}). 
\end{de}

\begin{rmk}\label{Rmk weight w of Reeb}
$B$ being \textbf{convex at infinity} means there is a compact subdomain $B^{\mathrm{in}}\subset B$ outside of which we have a \textbf{conical end} $B^{\mathrm{out}}:=B\setminus\mathrm{int}(B^{\mathrm{in}}) \cong \Sigma \times [R_0,\infty)$ such that the symplectic form becomes $\omega_B=d(R\alpha)$. 
The radial coordinate $R\in [R_0,\infty)$ yields the Reeb vector field $\mathcal{R}_B$
for the contact hypersurface $(\Sigma,\alpha)$, $\Sigma:=\{R=R_0\}$ (defined by $d\alpha(\mathcal{R}_B,\cdot)=0$ and $\alpha(\mathcal{R}_B)=1$). So $\mathcal{R}_B=X_R$ is the Hamiltonian vector field for the function $R$. 
After increasing $R_0$ if necessary, we can always assume that $I_B$ is $\omega_B$-compatible and of {\bf contact type} on $B^{\mathrm{out}}$, meaning%
\footnote{Equivalently $dR= R\alpha\circ I_B$, so $I_B$ preserves the contact distribution $\xi=\ker \alpha \subset T\Sigma$. The $\omega_B$-compatibility condition ensures that $d\alpha$ is an $I_B$-compatible symplectic form on $\xi$.
By \cite[Lemma C.9]{R16}, it suffices to assume $a(R)dR= R\alpha\circ I_B$ for a positive smooth function $a$, equivalently
$I_B Z_B = a(R)\mathcal{R}_B$. 
} 
$I_B Z_B = \mathcal{R}_B$, where $Z_B = R \partial_R$ is the {\bf Liouville vector field} defined by $\omega_B(Z_B,\cdot)=R\alpha$ on $B^{\mathrm{out}}$. If $I_B$ does not depend on $R$,
then clearly $B^{\mathrm{out}}$ is geometrically bounded at infinity due to the radial symmetry. 
We allow non-radially invariant $I_B$ as it imposes fewer constraints on the {\ph}ity assumption on $\Psi$.
If $I_B$ depends on $R$ (on the $\xi=\ker d\alpha$ orthogonal summand of $TB^{\mathrm{out}}=\xi\oplus \R Z_B\oplus \R \mathcal{R}_B$),
then it is desirable to require
that $B^{\mathrm{out}}$ is geometrically bounded at infinity. This assumption is needed to prove that  Floer solutions ``consume $F$-filtration'' if they go far out at infinity on a long region on which $c'(H)$ is linear (the $F$-filtration is constructed in \cref{filtrationFloer}, but this property is proved in \PartII). This property is needed in \cref{Prop filtration is stable}, in the construction of the $Q_{\Fi}$ class  (see the footnote to \cref{Theorem neg lb paper summary}), and it is used in {\PartII} to ensure a certain consistency between {\MBF } spectral sequences so that we can take the direct limit over slopes $\lambda$.

If $f\equiv w>0$ is a constant in \eqref{Equation psi Xs1 is Reeb 2}, one may as well assume \cref{Equation psi Xs1 is Reeb}  by rescaling
$R,\alpha,R_0$ to $wR,\alpha/w,wR_0$ (leaving $\omega_B=d(R\alpha)$ and the Liouville form $\theta=R\alpha$ unchanged on $B^{\mathrm{out}}$). 

If \cref{Equation psi Xs1 is Reeb} holds, then it follows that the Reeb flow on the image of $\Psi$ is an $S^1$-action and the map $\Psi: Y^{\mathrm{out}} \to \mathrm{Im}(\Psi)$ is $S^1$-equivariant. In the more general case of \eqref{Equation psi Xs1 is Reeb 2}, $$\Psi_* X_{S^1} = X_{fR} = f\mathcal{R}_B + R X_f$$ generates an $S^1$-action on $\mathrm{Im}(\Psi)$.
As we do not assume that $\Psi$ is surjective, in both cases it is not necessary for those flows to arise from an $S^1$-action defined on all of $B$.
On the image $\mathrm{Im}(\Psi)\subset B^{\mathrm{out}}$ the $S^1$-action in fact extends to a (partially defined) $\C^*$-action. Indeed, in $Y$ the vector fields $X_{\R_+}$, $X_{S^1}$ commute, therefore the same holds for their $\Psi_*$-pushforwards 
$\Psi_*X_{\R_+}$, $\Psi_*X_{S^1}$ on $\mathrm{Im}(\Psi)$, where 
$$\Psi_*X_{\R_+} = \Psi_*(-IX_{S^1})=-I_B\Psi_*X_{S^1} = -I_BX_{fR} = \nabla (fR).$$
Also, the integrability of $X_{\R^+},X_{S^1}$ combined with the $\Psi$-projection of their flow implies the integrability of those  $\Psi_*$-pushforwards 
on $\mathrm{Im}(\Psi)$.
If \eqref{Equation psi Xs1 is Reeb} holds, then $\Psi_*X_{\R_+}=\nabla R = Z_B$ is the Liouville field.
\end{rmk} 

\begin{rmk}[Examples]
When a symplectic $\C^*$-manifold $Y$ is a Liouville manifold whose Reeb flow at infinity is the $S^1$-action, it lies over a convex base with $\Psi$ being the restriction to $Y^{\mathrm{out}}$ of the identity map $\mathrm{id}:Y \to B=Y$ (or of a suitable Liouville flow map if $\Psi_*X_{S^1}=w\mathcal{R}_B$ and one wants to get rid of the constant $w>0$ by the rescaling trick).
In this case, symplectic cohomology $SH^*(Y,\varphi)$ (defined later) agrees with the usual symplectic cohomology $SH^*(Y)$ \cite{Vi96,Sei08}.
The analogous statement holds when a symplectic $\C^*$-manifold $Y$ is convex at infinity. Examples of non-Liouville but convex examples are negative complex line bundles (e.g.\,see \cite{R14}).
Examples of non-convex symplectic $\C^*$-manifolds $Y$ globally defined over non-Liouville convex bases are negative complex vector bundles $E \to M$ over closed symplectic manifolds $M$, using the natural $\C^*$-action on the fibres \cite[Sec.11.2]{R14}.
\end{rmk}

\begin{lm}
The action $\Fi$ on a symplectic $\C^*$-manifold over a convex base is contracting.
\end{lm}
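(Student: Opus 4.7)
The plan is to verify the conditions of \cref{Lemma H bdd below}: that $\F$ is compact and that $H$ is bounded below. The decisive new input beyond the general setup is the map $\Psi$ together with $\Psi_*X_{S^1}=X_{fR}$, which I will exploit by treating $g:=fR\circ\Psi$ as a Lyapunov function on $Y^{\mathrm{out}}$.

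For compactness of $\F$, the key observation is that $d(fR)(\partial_R)=f>0$ on $B^{\mathrm{out}}$, so $d(fR)$ never vanishes there, and hence $X_{fR}$ has no zeros on $B^{\mathrm{out}}$. A fixed point $p\in \F\cap Y^{\mathrm{out}}$ would produce $X_{fR}(\Psi(p))=\Psi_*X_{S^1}(p)=0$, a contradiction; hence $\F\subset Y\setminus Y^{\mathrm{out}}=\mathrm{int}(Y^{\mathrm{in}})$, and as $\F$ is closed in $Y$ and $Y^{\mathrm{in}}$ is compact, $\F$ is compact.

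For the lower bound on $H$, I would show that the $-\nabla H$-flow from any $y\in Y^{\mathrm{out}}$ enters $Y^{\mathrm{in}}$ in finite time; since $H$ is non-increasing along the flow and $\partial Y^{\mathrm{in}}$ is compact, this gives $H(y)\geq \min_{\partial Y^{\mathrm{in}}}H$, bounding $H$ below. Using $(I,I_B)$-holomorphicity of $\Psi$ together with \eqref{Equation Psi of XS1 is XfR main}, one obtains $\Psi_*\nabla H=\Psi_*(-IX_{S^1})=-I_B X_{fR}=\nabla_B(fR)$, so along any flowline $\gamma_y$ inside $Y^{\mathrm{out}}$,
\[
(g\circ\gamma_y)'(t) = d(fR)\bigl(-\nabla_B(fR)\bigr)\bigl(\Psi(\gamma_y(t))\bigr) = -|\nabla_B(fR)|^2\bigl(\Psi(\gamma_y(t))\bigr) < 0,
\]
the strictness coming from the non-vanishing of $d(fR)$. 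Thus $g$ is strictly decreasing, forcing $R\circ\Psi\circ\gamma_y\leq g(y)/\min_\Sigma f=:C$, and properness of $\Psi$ traps $\gamma_y$ in the compact set $K:=\Psi^{-1}(\Sigma\times[R_0,C])$. On the compact subset $\Psi(K)\subset B^{\mathrm{out}}$, $|\nabla_B(fR)|^2$ admits a uniform lower bound $c>0$; if $\gamma_y$ never entered $Y^{\mathrm{in}}$, then $g(\gamma_y(t))\leq g(y)-ct$ would eventually contradict $g\geq R_0\min_\Sigma f>0$ on $Y^{\mathrm{out}}$, so $\gamma_y$ must enter $Y^{\mathrm{in}}$ in finite time. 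The main technical point is the identification $\Psi_*\nabla H=\nabla_B(fR)$, combining holomorphicity of $\Psi$ with \eqref{Equation Psi of XS1 is XfR main}; once this is in hand, the rest is a Lyapunov argument driven by the fact that $fR$ has nowhere-zero gradient on the conical end.
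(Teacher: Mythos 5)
Your proof is correct and follows the same route the paper intends: the paper's one-line proof ("properness of $\Psi$ and $\Psi_*X_{\R_+}=\nabla(fR)$ with $f>0$") is precisely the Lyapunov argument you carry out, and your extra step showing $\F$ compact is also implicit in the paper's sketch since $f>0$ forces $d(fR)\neq 0$ on $B^{\mathrm{out}}$. Your write-up simply fills in the details that the paper leaves to the reader, including the correct derivation of $\Psi_*\nabla H=\nabla_B(fR)$ via holomorphicity of $\Psi$ (which the paper records in \cref{Rmk weight w of Reeb}).
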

\begin{proof} This follows by the properness of $\Psi$ and that on $B^{\mathrm{out}}$ we have $\Psi_*X_{\R_+}=\nabla(fR)$, with $f>0$.
\end{proof}
Note that $H_B:=fR$ is a Hamiltonian for the $S^1$-flow on $\mathrm{Im}(\Psi)$.
 By Definition \ref{Def:KahlerMfdWithProjection}, using $X_{S^1}=X_H$ on $Y$, and $\mathcal{R}_B=X_R$ on $B^{\mathrm{out}}$, we deduce that on $B^{\mathrm{out}}$ %
\begin{equation}\label{Eq:ProjectionOfHamVF}
\Psi_*  X_H = X_{fR} = f\mathcal{R}_B + RX_f = X_{H_B}.
\end{equation}
However, there may be no relationship between $\omega$ and $\Psi^*\omega_B$, so $H$ need not be related to $H_B\circ \Psi.$

We now show that one can ``twist'' the symplectic structure on $Y$ without affecting the class $[\om]\in H^2(Y;\R)$ in order to get a \emph{proper} moment map. This will be useful in later sections.

\begin{lm}\label{Lemma making H proper}
Let $\Psi: Y^{\mathrm{out}} \fun B^{\mathrm{out}}=\Sigma \times [R_0,\infty)$ be a symplectic $\C^*$-manifold over a convex base, and $\phi:[R_0,\infty)\to [0,\infty)$ a non-decreasing unbounded smooth function vanishing near $R=R_0$. Then
$$\om_{\phi}:=\om+d(\Psi^*(\phi(R)\alpha))$$ 
is symplectic, cohomologous to $\om$, and the $S^1$-action is Hamiltonian for $\om_{\phi}$, with proper moment map.

If $\Psi: Y \fun B$ is a symplectic $\C^*$-manifold globally defined over a convex base, $\om+\Psi^*\om_B$ is a symplectic form for which the $S^1$-action is Hamiltonian and has a proper moment map.

\end{lm}
\begin{proof}
We first prove the second statement: by holomorphicity, $(\Psi^*\omega_B)(v,\J v)=\omega_B(\Psi_* v, \J_B \Psi_* v)\geq 0$ as $\J_B$ is $\omega_B$-compatible. Similarly, $(\Psi^*\omega_B)(v_1,\J v_2)=(\Psi^*\omega_B)(v_2,\J v_1)$. So $\omega+\Psi^*\omega_B$ is $\J$-compatible,
thus symplectic.
As $\Psi$ is $S^1$-equivariant and $X_{S^1,B}=X_{H_B}$ is Hamiltonian (with $H_B=R$ at infinity),
$$(\Psi^*\omega_B)(\cdot,X_{S^1})=\omega_B(\Psi_*\cdot,X_{H_B}) = \Psi^*dH_B = d(H_B\circ \Psi).$$
Hence $H+H_B\circ \Psi$ is the Hamiltonian for $X_{S^1}$ on $(Y,\omega+\Psi^*\omega_B)$ outside of a compact subset of $Y$.
As $H$ is bounded below, on the sublevel set $Y_{\leq C}=\{H+H_B\circ \Psi \leq C\}$ (for $C\in \R$) the function $H_B\circ \Psi$ is bounded, so $Y_{\leq C}$ lies in the $\Psi$-preimage of a sublevel set of $H_B$ in $B$. As $H_B$ is proper, since $H_B=R$ at infinity, and $\Psi$ is proper, it follows that $Y_{\leq C}$ is compact. Properness of $H+H_B\circ \Psi$ follows.

The form $\om_{\phi}=\om+\Psi^*(d(\phi(R)\alpha))$ is well-defined on $Y$ even though $\Psi$ is only defined on $Y^{\mathrm{out}}$, because $\phi$ vanishes near $R=R_0$ so $\Psi^*(d(\phi(R)\alpha))$ extends by zero over $Y^{\mathrm{in}}$.
Also $\om_\phi$ 
is $\J$-compatible because $d(\phi(R)\alpha)=\phi'(R)\,dR\wedge \alpha + \phi(R) d\alpha$ satisfies $(d(\phi(R)\alpha))(v,\J_B v)\geq 0$
(using that $\J_B$ is of contact type and $\phi'\geq 0$). 
The $S^1$-flow on $Y$ is symplectic for $\om+\Psi^*(d(\phi(R)\alpha))$ because $(d(\phi(R)\alpha))(\cdot,X_R)=\phi'(R)\,dR=d(\phi(R))$ (using that $X_R$ is the Reeb vector field).
The Hamiltonian is now $H+\phi(R\circ \Psi)$, and the proof of properness is analogous (as $\phi$ is proper), using that
 $Y^{\mathrm{in}}$ is compact so $H|_{Y^{\mathrm{in}}}$ is proper.
\end{proof}

\begin{rmk}\label{Rmk weak convexity} [Weak convexity condition at infinity]
At the cost of some intuition,  \cref{Def:KahlerMfdWithProjection} need not make any reference to $B$ or $(\Sigma,\alpha)$ as follows. We pull-back all data via $\Psi$ to $Y$,
$$
\Theta := \Psi^*(R\alpha), \qquad \Omega := \Psi^*\omega_B=d\Theta, \qquad \rho:=R\circ \Psi, \qquad F:=f\circ \Psi.
$$

By \cite[Lemma C.6]{R16}, the Hamiltonians $H_B=fR$, with $f$ as in \cref{Equation psi Xs1 is Reeb 2}, are characterised by the conditions $H_B\geq 0$, $dR(X_{H_B})=0$, $R\alpha(X_{H_B})=H_B$.
The proof in \cite[Theorem C2, Lemma C7]{R16} of the extended maximum principle for Floer solutions in $B=\Sigma \times [R_0,\infty)$ for such Hamiltonians only requires the contact type condition 
$R\alpha = -dR\circ I_B$.
We can rephrase the above conditions on $Y$ as: $\Theta=-d^c \rho$ 
where $d^c \rho := d\rho\circ \J$, so $\Omega=-dd^c \rho$; the condition $d\rho(X_{S^1})=0$, equivalently $S^1$-invariance of $\rho$; the relation $d\Theta(\cdot,X_{S^1})=d(F\rho)$ corresponds in $B$ to $\omega_B(\cdot,X_{H_B})=d(fR)$; and finally 
$\Theta(X_{S^1})=F\rho$.

The condition $d\Theta(\cdot,X_{S^1})=d(F\rho)$ can be rewritten as a Lie derivative condition $\mathcal{L}_{X_{S^1}} \Theta =0$: using $\Theta(X_S^{1})=F\rho$ in Cartan's formula,
$
d\Theta(\cdot,X_{S^1})=-i_{X_{S^1}} d\Theta =  d(i_{X_{S^1}} \Theta) -\mathcal{L}_{X_{S^1}} \Theta=
d(F\rho) - \mathcal{L}_{X_{S^1}} \Theta.
$

Thus, we propose the definition: a symplectic manifold $Y$ with a Hamiltonian $S^1$-action is {\bf weakly convex at infinity} if outside of a compact subset there is an exhausting $S^1$-invariant function
$$
\rho: Y^{\mathrm{out}} \to \R,
$$
giving rise to a semi-positive $(1,1)$-form $-dd^c \rho$, such that
$$
\Theta (X_{S^1})=F\rho \qquad \mathrm{and} \quad \mathcal{L}_{X_{S^1}} \Theta=0, 
$$
where $\Theta:=-d^c\rho$, and $F: Y^{\mathrm{out}}\to [0,\infty)$ is some smooth function.

Note $-dd^c\rho=d\Theta$ need not be symplectic on $Y^{\mathrm{out}}$. It is closed and semi-positive: $-dd^c\rho(v,\J v)\geq 0$ for all $v\in TY^{\mathrm{out}}$.
The class of Hamiltonians to use for Floer theory on $Y$ should\footnote{In the presence of the map $\Psi: Y^{\mathrm{out}}\to B$, a local Floer solution $u$ in $Y$ for such a Hamiltonian (using $I$)
maps to a local Floer solution $v=\Psi\circ u$ in $B$ for $(\lambda H_B,I_B)$, for which the extended maximum principle applies at infinity.} equal %
$\lambda H+\textrm{constant}$ at infinity, for a constant $\lambda>0$.
\end{rmk}

\subsection{Maximum principle for admissible Hamiltonians}\label{Subsec max principle}

\begin{rmk}[Technical symplectic assumptions on $Y$]\label{Rmk technical symplectic assumptions on Y}
We always tacitly assume that our symplectic manifold $Y$ is {\bf weakly-monotone}
so that transversality arguments in Floer theory can be dealt with by the methods of Hofer--Salamon \cite{HS95}. This means one of the following holds: \begin{enumerate} 
\item $c_1(Y)(A)=0$ when we evaluate on any spherical class $A\in \pi_2(Y)$, or 
\item $\omega(Y)(A)=0$ when we evaluate on any spherical class $A\in \pi_2(Y)$, or 
\item for some $k>0$ we have $c_1(Y)(A)=k \cdot \omega(A)$ for all $A\in \pi_2(Y)$, or 
\item the smallest positive value $c_1(Y)(A)\geq n-2$ where $\dim_{\R}Y=2n$.\end{enumerate} 

Case (2) holds if $\omega$ is exact; (3) is the {\bf monotone} case.
In \cref{Subsection computation of the continuation maps} we use {\bf weak+ monotonicity} \cite[Sec.2.2]{R14} which means the same as above except $n-2$ in (4) becomes $n-1$.

We do not use the Floer cohomology of the base $B$, so we do not require that $B$ is weakly-monotone when $Y$ is globally defined over a convex base (of course $B^{\mathrm{out}}$ is exact).
\end{rmk}

\begin{de}\label{Def:ClassOfHamiltonians}\label{Rmk monotone homotopies admissible}
For $\Psi:Y\fun B$ a symplectic $\C^*$-manifold over a convex base, $\mathcal{H}(Y,\Fi)$ is the class of \textbf{$\Fi$-admissible Hamiltonians}: any smooth function $F:Y \fun \R$ which at infinity is a linear function $\lambda\cdot H$ of the moment map for a generic slope $\lambda>0$.
A {\bf $\Fi$-admissible homotopy} $F_s$ means
\begin{enumerate}
    \item $F_s=F_-$ for $s \ll 0$ and $F_s=F_+$ for $s\gg 0$, where $F_{\pm} \in  \mathcal{H}(Y,\Fi)$;
    \item $F_s=\lambda_s\cdot H$ outside of a compact subset of $Y$ independent of $s$, with $\lambda_s>0$ possibly not generic;
    \item $\partial_s \lambda_s \leq 0$.
\end{enumerate}
\end{de}

The class of $\Fi$-admissible Hamiltonians is natural in the sense that an isomorphism of symplectic $\C^*$-manifolds (see \cref{Subsection intro Floer theory induces filtration}) yields an isomorphism of the corresponding admissible Hamiltonians (via composition with the isomorphism). The class depends on the choice of $\Fi$, which determines $H$.

\begin{thm}\label{Thm:SHForFi}
For any symplectic $\C^*$-manifold $Y$ over a convex base, Hamiltonian Floer cohomology $HF^*(F)$ for $F\in \mathcal{H}(Y,\Fi)$ is well-defined. They form a directed system via Floer continuation maps $HF^*(F_+) \to HF^*(F_-)$ induced by $\Fi$-admissible homotopies $F_s$ (which exist %
 whenever $F_+ \leq F_-$ at infinity, equivalently for slopes $\lambda_+\leq \lambda_-$).
The continuation map is independent of the choice of $F_s$; for slopes $\lambda_-=\lambda_+$ there are continuation maps in both directions and they are inverse to each other; so the groups $HF^*(F)$ up to continuation isomorphism only depend on the slope $\lambda$ at infinity of $F$, and we write $
\mathbf{HF^*(\HL)}
$
for that isomorphism class without specifying $F_{\lambda}$ except for its generic slope $\lambda$.
\end{thm}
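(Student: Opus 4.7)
The plan is to follow the standard construction of Hamiltonian Floer cohomology on non-compact symplectic manifolds (compare \cite{R10} and \cite{Sei08}), where compactness of moduli spaces is the only non-routine ingredient; it is supplied by \cref{intro max princ} together with properness of $\Psi$.

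First I would establish $C^0$-bounds on Floer cylinders for a fixed $F \in \mathcal{H}(Y,\Fi)$ with generic slope $\lambda$ at infinity. By \cref{CorSec2OrbitsOfLambdaH}, all $1$-periodic orbits of $F$ lie in $Y^{\mathrm{in}}$, so the asymptotes of any Floer cylinder $u$ lie in a fixed compact subset. For the interior: after choosing an $S^1$-family of $\omega$-compatible almost complex structures $J_t$ that equals $I$ outside a compact set (to respect the maximum principle), the projected curve $v = \Psi \circ u$ on $B^{\mathrm{out}}$ solves the Floer equation for the Hamiltonian $\lambda fR$ and $I_B$, by $(I,I_B)$-holomorphicity of $\Psi$ and \eqref{Equation psi Xs1 is Reeb 2}. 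The extended maximum principle of \cite[App.C.2--C.4]{R16} then forbids $R\circ v$ from attaining an interior maximum; since its boundary values at $s=\pm\infty$ lie in a fixed slab of $B^{\mathrm{out}}$, $R \circ \Psi \circ u$ is uniformly bounded, and properness of $\Psi$ upgrades this to a $C^0$-bound on $u$ in $Y$. With this, the usual machinery -- transversality via weak monotonicity and Hofer--Salamon \cite{HS95}, Gromov compactness, gluing -- defines the chain complex $CF^*(F)$ and its differential, and hence $HF^*(F)$.

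For the continuation structure, given $F_\pm \in \mathcal{H}(Y,\Fi)$ with slopes $\lambda_+ \leq \lambda_-$, I would pick a $\Fi$-admissible homotopy $F_s$ as in \cref{Rmk monotone homotopies admissible}. The monotonicity $\partial_s \lambda_s \leq 0$ is exactly what the extended maximum principle requires for the $s$-dependent PDE solved by $v = \Psi \circ u$, so the same projection argument yields $C^0$-bounds on continuation cylinders and produces a chain map $HF^*(F_+) \to HF^*(F_-)$. Standard parametric arguments, using a homotopy of $\Fi$-admissible homotopies (the monotonicity condition being convex, one can interpolate between two such $F_s$ while remaining admissible), yield chain homotopies showing that the continuation map depends only on the endpoints. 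When $\lambda_+ = \lambda_-$, composing continuations in both directions and interpolating through a two-parameter family to the constant homotopy shows that the compositions are chain-homotopic to the identity, so both continuations are isomorphisms inverse to each other. This makes $HF^*(H_\lambda)$ well-defined up to canonical isomorphism, depending only on the generic slope $\lambda$.

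The main technical obstacle is the compactness step. Unlike the Liouville or convex-at-infinity setup, level sets of $H$ on $Y$ do not correspond to slices $\Sigma \times \{R\}$ under $\Psi$, the kernel of $\Psi_*$ can be large and of varying rank, and the Hamiltonian $\lambda H$ on $Y$ is a priori unrelated to the pullback of $\lambda fR$ (since $\Psi^*\omega_B$ need not agree with $\omega$). Bounding $R\circ\Psi\circ u$ is therefore genuinely weaker than bounding $H\circ u$; what saves the argument is the combination of properness of $\Psi$ (lifting the horizontal bound in $B^{\mathrm{out}}$ to a $C^0$-bound on $u$ in $Y$) with the geometric boundedness of $B^{\mathrm{out}}$ needed for standard Gromov compactness on the projected side. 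All subsequent steps -- invariance, directed system, monotone homotopy arguments -- are direct adaptations of \cite{R10,Sei08}.
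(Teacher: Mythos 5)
Your proof is correct and follows essentially the same approach as the paper: project Floer (and continuation) cylinders through $\Psi$, apply the extended maximum principle of \cite{R16} for the projected solutions in $B^{\mathrm{out}}$ (with the monotonicity $\partial_s\lambda_s\leq 0$ for continuations), lift the $R$-bound to a $C^0$-bound in $Y$ via properness of $\Psi$, and then run standard weakly-monotone Floer theory. One small slip in your closing paragraph: geometric boundedness of $B^{\mathrm{out}}$ is not what supplies Gromov compactness here---once $u$ is confined to a compact subset of $Y$ by properness of $\Psi$, compactness is done entirely inside $Y$ with no reference to the Riemannian geometry of $B$ at infinity; the geometric-boundedness hypothesis on $B^{\mathrm{out}}$ is imposed for other reasons (the $F$-filtration ``consumes filtration'' arguments and the construction of $Q_\Fi$, see \cref{Rmk weight w of Reeb}).
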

\begin{proof}
Let $x_{\pm}$ be any given $1$-periodic Hamiltonian orbits for a given $F\in \mathcal{H}(Y,\Fi)$. We will prove that there is a compact subset $C\subset Y,$ depending only on $x_{\pm}$ and $F$, such that all Floer trajectories for $F$ converging to $x_{\pm}$ are contained in $C$. This ensures that compactness and transversality arguments required to construct $HF^*(F)$ can be dealt with just as in the compact setting of Hofer--Salamon \cite{HS95}, using that $Y$ is weakly monotone (\cref{Rmk technical symplectic assumptions on Y}).

Let $u$ be such a Floer trajectory, i.e.\;a smooth map $u:\R\times S^1 \fun Y$ with $u(-\infty,t)=x_-(t)$, $u(+\infty,t)=x_+(t),$ satisfying the Floer equation 
\begin{equation}\label{Eqn Floer eq for F and J}
\partial_s u + \J (\partial_t u -X_F )=0.
\end{equation}
By admissibility, $X_F = \lambda X_H$ outside of a compact subset. Using \eqref{Equation Psi of XS1 is XfR main}, letting $H_B:=fR$,
\begin{equation}\label{Eq:pi* of Ham is Ham}
\Psi_* X_F = \lambda\, X_{H_B}.
\end{equation}
By holomorphicity of $\Psi$, outside a compact subset of $B$, $v=\Psi\circ u$ satisfies the Floer equation for $\lambda H_{B}$,
 \begin{equation}\label{Eq: projected v continuation}
\partial_s v + \J_B(\partial_t v -\lambda X_{H_B} )=0.
\end{equation}
We apply the extended maximum principle for $v$ in $B$ \cite[Theorem C2, Lemma C7]{R16}, which prohibits $v$ from leaving a compact subset of $B$ which is determined by $\Psi\circ x_{\pm}$.
When we assume instead \eqref{Equation psi Xs1 is Reeb}, the simpler maximum principle from \cite[App.D]{R13} can be applied. 
Finally, as $\Psi$ is proper, this implies that $u$ also lies in a compact subset of $Y$ determined by $x_{\pm}$, as required.

To achieve transversality for moduli spaces of Floer solutions, one may need to perturb $I$ in regions that those solutions cross, which would ruin \eqref{Eq: projected v continuation}. However, the above argument showed that Floer solutions do not enter a neighbourhood at infinity, so we do not need to perturb $I$ at infinity.

For the Floer continuation maps, one replaces $X_F$ in equation \eqref{Eqn Floer eq for F and J} by $X_{F_s}$, 
in particular $x_{\pm}$ are now $1$-periodic orbits of $X_{F_{\pm}}$ respectively. 
The projection $v=\Psi\circ u$ outside of a compact subset of $B$ satisfies equation \eqref{Eq: projected v continuation} with $\lambda$ replaced by $\lambda_s$. The decreasing condition $\partial_s \lambda_s \leq 0$ is precisely what ensures that the maximum principle still holds by \cite[Thm.C.11]{R16} (or the simpler \cite[App.D]{R13} when \eqref{Equation psi Xs1 is Reeb} holds).
The other properties in the claim follow by standard Floer theory arguments.
\end{proof}

\begin{rmk}\label{Rmk about coeffs Novikov}
We will not review in detail the chain-level construction of $HF^*(F)$ (see e.g.\,\cite{R10}), but we remind the reader that the chain complex $CF^*(F)$ is a module over a certain Novikov field $\mathbb{K}.$ 
When $c_1(Y)=0$, we will in fact work over the {\bf Novikov field}
\begin{equation}\label{EqnSec2NovikovField}
\textstyle \mathbb{K} = \{\sum n_j T^{a_j}: a_j\in \R, a_j\to \infty, n_j\in \mathbb{B} \},
\end{equation}
where $T$ is a formal variable in grading zero, and $\mathbb{B}$ is any choice of base field. In the monotone case, the same Novikov field can be used but $T$ will have a non-zero grading \cite[Sec.2A]{R16}. In other situations, e.g.\,the weakly-monotone setup, the Novikov field is more complicated \cite[Sec.5B]{R16}.
In general, the chain complex $CF^*(F)$ is a free $\mathbb{K}$-module
generated by the $1$-periodic orbits of a generic $C^2$-small time-dependent perturbation of $F$ supported away from the region at infinity where $F$ is linear.
The perturbation of $F$ ensures, among other things, that the chain complex is finitely generated (our condition on generic slopes at infinity implies that no generators exist at infinity, so no perturbation is required there). As the perturbation does not change the slopes at infinity, up to a continuation isomorphism the group $HF^*(F)$ does not depend on the choice of perturbation of $F$. The {\MBF} approach which avoids perturbing $F$ is explained in \cite[Appendix A]{RZ2}.

The key trick was that \eqref{Eq:pi* of Ham is Ham} yields $\Psi_* X_F = \lambda X_{H_B}=X_{\lambda H_B}$, a Hamiltonian vector field on $B$. If $F=c(H)$ depended on $H$, then $\lambda=c'(H(u(s,t)))$ would depend on the domain coordinates $(s,t)\in \R\times S^1$. The maximum principle for such domain-dependent Hamiltonians would require $\partial_s \lambda\leq 0$ (see \cite[Thm.C.11]{R16} and \cite[App.D]{R13}).
Even assuming \eqref{Equation psi Xs1 is Reeb}, instances where that maximum principle must fail can be constructed \cite[Rmk.6.2.5]{FZ20} for the minimal resolution $\M=X_{\Z/5}$ of the Du Val singularity $\M_0=\C^2/(\Z/5) \iso V(XY-Z^5)\subset \C^3$ (Example \ref{ExtendedAtractionGraphA4}).
\end{rmk}

\subsection{Symplectic cohomology $SH^*(Y,\Fi)$ for a $\C^*$-action $\Fi$}

\begin{de}\label{Def:SHForFi}
By Theorem \ref{Thm:SHForFi}, to any symplectic $\C^*$-manifold $(Y,\omega,\J,\Fi)$ over a convex base we may associate the \textbf{$\Fi$-symplectic cohomology}, given by the direct limit
\begin{equation}\label{SHforGeneral}
SH^*(Y,\om,\J,\Fi):=\lim_{F \in \mathcal{H}(Y,\varphi)} HF^*(F).
\end{equation}
This is a vector space over the Novikov field $\mathbb{K}$ (the Floer continuation maps are $\mathbb{K}$-linear maps). When $c_1(Y)=0,$ there is a well-defined $\Z$-grading by the {\RS} index on $HF^*(F)$ (by Lemma \ref{ZGradingsOnComplSympl}, using that Floer continuation maps are grading-preserving). 
The above construction holds more generally if we just assume that $Y$ is weakly convex at infinity with $S^1$-action $\Fi$ as in Remark \ref{Rmk weak convexity}.
\end{de}

\begin{prop} 
The $\Fi$-symplectic cohomology $SH^*(Y,\om,\J,\Fi)$ is a unital $\mathbb{K}$-algebra admitting a unital $\mathbb{K}$-algebra homomorphism from the quantum cohomology of $Y$,%
$$
c^*:QH^*(Y,\J) \to SH^*(Y,\om,\J,\Fi).
$$
The product is the {\bf pair-of-pants product}, which arises as the direct limit of the pair-of-pants products $$HF^*(F_{\lambda_1})\otimes HF^*(F_{\lambda_2}) \to HF^*(F_{\lambda_1+\lambda_2}).$$
If $c_1(Y)=0$, those cohomology groups are $\Z$-graded and the morphisms are compatible with the grading. 
\end{prop}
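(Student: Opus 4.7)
The plan is to define the pair-of-pants product at chain level on $CF^*(F_{\lambda_1})\otimes CF^*(F_{\lambda_2})\to CF^*(F_{\lambda_1+\lambda_2})$ by counting rigid {\ph} maps from a thrice-punctured sphere (the ``pair of pants'' $\Sigma_P$), equipped with cylindrical ends and a fixed sub-closed $1$-form $\beta\in \Omega^1(\Sigma_P)$ satisfying $d\beta\leq 0$, such that on the two negative ends $\beta = \lambda_i\,dt$ and on the positive end $\beta = (\lambda_1+\lambda_2)\,dt$. Solutions satisfy the inhomogeneous Floer equation $(du - X_H\otimes \beta)^{0,1}=0$ for a domain-dependent Hamiltonian that equals $H$ (times the asymptotic slope) at infinity on each end. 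Standard arguments in weakly monotone Floer theory (\cref{Rmk technical symplectic assumptions on Y}) using Hofer--Salamon then yield a well-defined cohomology-level map.

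The main technical obstacle is the maximum principle: one must ensure that pair-of-pants moduli spaces whose asymptotics are fixed $1$-orbits $x_{\pm}$ of $F_{\lambda_i}$ remain in a compact subset of $Y$. I will adapt the trick from the proof of \cref{Thm:SHForFi}. Since $F=\lambda H$ at infinity and $\Psi_* X_H = X_{H_B}$ (\cref{Eq:ProjectionOfHamVF}), the projected map $v=\Psi\circ u:\Sigma_P\to B$ satisfies the analogous inhomogeneous Floer equation with Hamiltonian $H_B\otimes \beta$. Because $d\beta\le 0$, $\beta$ has non-negative slopes at each end, and $H_B=fR$ satisfies the hypotheses of \cite[Thm.\,C.11, Lem.\,C.7]{R16}, the extended maximum principle forbids $R\circ v$ from attaining an interior maximum, so $v$ stays inside a sublevel set of $R$ determined by $\Psi\circ x_{\pm}$. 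Properness of $\Psi$ then confines $u$ to a compact subset of $Y$, and small perturbations of $I$ away from infinity can be used to achieve transversality without breaking this argument.

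Next I would verify the expected compatibility properties by a standard continuation/cobordism argument: (i) independence of the choice of $\beta$ and of domain-dependent perturbations, via a one-parameter family of such choices; (ii) compatibility with the continuation maps $HF^*(F_{\lambda_i})\to HF^*(F_{\lambda_i'})$ as the slopes increase, obtained by interpolating $\beta$ monotonically; (iii) associativity and graded-commutativity from a four-punctured sphere cobordism and a rotation in the domain moduli respectively. Taking the direct limit over $\mathcal{H}(Y,\Fi)$ then endows $SH^*(Y,\omega,I,\Fi)$ with the pair-of-pants product. The unit is produced as the image of the fundamental class under the ``PSS-like'' map arising from a single negative cylindrical end with $\beta$ capped off by a disc; equivalently, it is $c^*(1_{QH^*(Y)})$ where $c^*=\psi_{\lambda,0^+}\circ (\text{PSS}:QH^*(Y)\xrightarrow{\cong} HF^*(F_{0^+}))$ for a $C^2$-small $F_{0^+}$ whose Floer complex computes $QH^*(Y)$. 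The fact that $c^*$ is a unital algebra homomorphism reduces, by the same cobordism arguments, to the standard statement that the PSS map intertwines the quantum product with the pair-of-pants product, since the obstruction in our setting---escape of moduli spaces to infinity---is ruled out by the same $\Psi$-projection argument applied to all relevant Riemann surfaces (discs, pairs of pants, and their composites).

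Finally, when $c_1(Y)=0$, each $CF^*(F)$ carries a well-defined $\Z$-grading by the Robbin--Salamon index (\cref{LemmaSec2RSIndices}, \cref{GradingConventions}), continuation maps preserve it, and a dimension-of-moduli-space count shows the pair-of-pants product and $c^*$ are grading-preserving with our conventions. The hard step is really the maximum principle for pair-of-pants solutions; once it is in place, everything else is an adaptation of the established closed-manifold theory via the projection $\Psi$, exactly as in \cref{Thm:SHForFi}.
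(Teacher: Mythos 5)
Your proposal takes essentially the same route as the paper: you isolate the maximum principle as the only genuinely new ingredient, establish it by projecting pair-of-pants solutions through $\Psi$ to the convex base $B$ where the extended maximum principle of \cite{R16} applies, and defer the rest of the algebraic package (associativity, unitality, PSS compatibility, $\Z$-grading from $c_1(Y)=0$) to the standard Floer-theoretic literature via cobordism/continuation arguments, exactly as the paper does by citing \cite{R13,R14}. The only differences are cosmetic---your choice of which ends are ``positive/negative'' forces $d\beta\equiv 0$ rather than merely $d\beta\le 0$, and you cite \cite[Thm.\,C.11]{R16} where the paper points to \cite[Lem.\,C.7, Rmk.\,C.10]{R16} for the pair-of-pants case specifically---neither of which affects correctness.
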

\begin{proof}
For a detailed discussion of the pair-of-pants product on symplectic cohomology and of gradings we refer to \cite{R13,R14}. The new ingredient here is to explain why pair-of-pants solutions $u:S \to Y$ do not escape to infinity. This follows by the same projection trick we used in the proof of Theorem \ref{Thm:SHForFi}, by using the maximum principle for pair-of-pants solutions $v=\Psi\circ u$ in symplectic manifolds that are convex at infinity such as $B$
\cite[Lem.C.7 and the comments under Rmk.C.10]{R16}.
Explicitly (as explained in \cite[App.D]{R13}) in a complex coordinate $z=s+\sqrt{-1}t$ on the pair-of-pants surface $S$ the Floer equations are: $\partial_t u = X_F\beta_t + \J\partial_s u - \J X_F \beta_s$ and $\partial_s u = X_F \beta_s - \J \partial_t u + \J X_F \beta_t$, where $\beta$ is a certain auxiliary one-form on $S$.
So the projection trick still applies: at infinity, the projected solution $v=\Psi\circ u$ in $B$ satisfies the same equations with $X_F$ and $\J$ replaced respectively by $\lambda X_{H_B}$ and $\J_B$. 
When we assume instead \eqref{Equation psi Xs1 is Reeb}, the simpler maximum principle from \cite[App.D]{R13} applies.
\end{proof}

\subsection{Vanishing of symplectic cohomology when $c_1(Y)=0$}\label{Sec2VanishingOfSH} Let $(Y,\omega,\J,\Fi)$ be a symplectic $\C^*$-manifold over a convex base.

\begin{prop}\label{PropSec2VanishingTheorem}
If $c_1(Y)=0$,  then
$
SH^*(Y,\omega,\J,\Fi)=0.
$
\end{prop}
\begin{proof}
The following mimics \cite[Thm.48]{R10} and \cite[Sec.2.6]{McLR18}.
We compute $SH^*=\varinjlim HF^*(\lambda H)$ as a direct limit using the cofinal sequence of admissible Hamiltonians%
\footnote{Technical remark: note that the $1$-periodic orbits of $\lambda H$ are typically degenerate since they are not isolated, so by convention $HF^*(\lambda H)$ actually means that $\lambda H$ is first perturbed (usually in a time-dependent way), or that a {\MB } model is used. In our case, the orbits are constant, so an autonomous perturbation suffices.
By a standard continuation argument, the choice of (compactly supported) perturbation does not matter up to an isomorphism on Floer cohomology.}
$\lambda H$ for generic slopes $\lambda\to \infty$. As $c_1(Y)=0$, the maps in the direct limit are $\Z$-grading preserving maps. The claim follows by \cref{PropSec2RSIndices}, as $HF^*(\lambda H)$ is supported in arbitrarily negative degrees\footnote{A technical remark: in a {\MB } model for Floer cohomology, the generators are graded by an index that can differ from 
the index we calculated for our {\MB } manifolds $\F_\a$ of orbits by up to $\dim_{\R} \F_\a\leq 2\dim_{\C} Y$, so the indices still diverge as $\lambda \to \infty$. There is also a generic perturbation of the Hamiltonian whose $1$-periodic orbits are graded as in the {\MB } model, so the indices also diverge in a perturbation model for Floer cohomology (\cref{PropRSIndicesGoToInfty2}\eqref{MorseItem4}).} for large $\lambda$. 
\end{proof}
\section{Filtration on quantum cohomology}
\label{SectionFiltration}

\subsection{PSS-morphism into symplectic cohomology}
\label{Subsection PSS morph into SH}
For $(Y,\omega,\J,\Fi)$
 a symplectic $\C^*$-manifold over a convex base, and $\HL$ any admissible Hamiltonian of slope $\lambda$ in $H$ at infinity,
 \cite{PSS,R13} yields:
\begin{prop}\label{SmallHam} When $\delta>0$ is smaller than any period of $X_H,$ %
we have an isomorphism of $\k$-algebras $$HF^*(\H_\delta)\iso QH^*(Y).$$ 
Note $HF^*(\H_\delta)\iso H^*(Y;\k)$ as
$\k$-vector spaces. When $c_1(Y)=0$, all those isomorphisms are $\Z$-graded.
\end{prop}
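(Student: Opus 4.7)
The plan is to run the standard PSS construction, with the crucial novelty being that compactness of moduli spaces is controlled by the $\Psi$-projection maximum principle developed in \cref{SectionSHassociatedToHamS1Action}.

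First I would set up the generators. By \cref{CorSec2OrbitsOfLambdaH}, since $\delta$ is smaller than any period of $X_H$, the only $1$-periodic orbits of $X_{H_\delta}$ are the constant orbits at $\F=\sqcup \F_\a$. These form {\MB} families, so I would first replace $H_\delta$ by a Hamiltonian $\widetilde{H}_\delta$ agreeing with $H_\delta$ outside of a compact neighbourhood of $Y^{\mathrm{in}}$ and equal to $\delta H+\varepsilon f$ on $Y^{\mathrm{in}}$, where $f$ is a $C^2$-small Morse function on $Y$ chosen so that $\delta H+\varepsilon f$ is itself Morse and its critical points correspond bijectively with those of $f$, concentrated near $\F$. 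By the usual continuation argument, the slope at infinity is unchanged, so the Floer cohomology is unchanged up to canonical isomorphism, and the generators are now the non-degenerate constant orbits at $\mathrm{Crit}(f)$. Alternatively, one could work directly in a {\MBF} model as in \cite[App.A]{RZ2}.

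Next I would define the PSS map at the chain level. Following \cite{PSS,R13}, for a critical point $p\in \mathrm{Crit}(f)$ and a $1$-orbit $x$ of $\widetilde{H}_\delta$, one counts pairs $(\gamma,u)$ where $\gamma$ is a $-\nabla f$ flowline from $p$ and $u:\C\to Y$ satisfies a Floer equation of the form $\partial_s u+\J(\partial_t u-\beta(s)X_{\widetilde{H}_\delta})=0$ with a cut-off function $\beta$ vanishing near $0$ and equal to $1$ at the cylindrical end, with $u(0)=\gamma(\infty)$ and $u(+\infty,\cdot)=x(\cdot)$. Energies are weighted by $T^{\omega(u)}$ to land in $QH^*(Y)$. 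The construction of the inverse (Floer-to-Morse) PSS map, and of the chain homotopies that show these are inverse, proceeds identically.

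The main obstacle will be ensuring the moduli spaces above are compact. This is where the paper's technology is indispensable: for any such $u$, the projection $v=\Psi\circ u$ satisfies, in the region where $\widetilde{H}_\delta=\delta H$ and where $\beta\equiv 1$, the Floer equation for the Hamiltonian $\delta H_B$ on the convex base $B$ by \eqref{Eq:ProjectionOfHamVF}. The extended maximum principle \cite[Thm.C.2, Lem.C.7]{R16} prohibits $R\circ v$ from achieving a local maximum on the region where the $s$-dependent slope $\beta(s)\delta$ is non-decreasing, which in the PSS construction is arranged by choosing $\beta$ monotone in $s$; exactly the same argument as in the proof of \cref{Thm:SHForFi} then confines $u$ to a compact subset of $Y$ depending only on $x$. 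Transversality and weakly-monotone bubbling analysis of \cite{HS95} then apply inside this compact subset, exactly as in \cref{Thm:SHForFi}.

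Finally I would promote this to a $\k$-algebra isomorphism. The standard argument (going back to \cite{PSS}) identifies the pair-of-pants product on $HF^*(\widetilde{H}_\delta)$ with the quantum product on $QH^*(Y)$ by gluing a pair-of-pants to three PSS half-planes and degenerating the Hamiltonian term to zero, producing a moduli space of three-pointed {\ph} spheres with three Morse flowline incidences; this defines the quantum intersection product by construction. Compactness of these moduli spaces follows again from the $\Psi$-projection maximum principle. Unitality is clear since the PSS image of $1\in H^0(Y)$ is a Floer-cycle at the minimum of $\widetilde{H}_\delta|_{\F_{\min}}$. When $c_1(Y)=0$, both sides carry canonical $\Z$-gradings (the Floer grading is $\dim_\C Y-RS$, see \cref{GradingConventions}; the Morse grading of $p$ equals this {\RS} index for $C^2$-small Hamiltonians by \cref{For Lambda small Floer Shift Equal To Morse} combined with a standard shear computation), and the PSS construction preserves grading by the classical index identities.
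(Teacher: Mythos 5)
Your overall strategy — replace $H_\delta$ by a Morse perturbation, run the standard PSS construction, and use the $\Psi$-projection to import the extended maximum principle — is exactly what the paper intends when it cites \cite{PSS,R13} right after establishing \cref{Thm:SHForFi}, so the approach is right. However, there is a sign error that propagates through your compactness argument. In the paper's conventions (\cref{Thm:SHForFi}, \cref{Rmk monotone homotopies admissible}) continuation maps go $HF^*(F_+)\to HF^*(F_-)$ for slopes $\lambda_+\leq\lambda_-$, with the higher-slope asymptotic at $s=-\infty$, and the extended maximum principle of \cite[Thm.\,C.11]{R16} requires $\partial_s\lambda_s\leq 0$. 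For the map $c^*:QH^*(Y)\to HF^*(H_\delta)$ the orbit $x$ must therefore sit at the $s\to -\infty$ end of the PSS plane, with the cut-off $\beta$ decaying as $s$ increases, so that $\partial_s(\beta(s)\delta)\leq 0$. You have placed $x$ at $s=+\infty$ with $\beta$ increasing, which gives $\partial_s(\beta(s)\delta)\geq 0$; and your sentence asserting that the maximum principle ``prohibits $R\circ v$ from achieving a local maximum on the region where the $s$-dependent slope $\beta(s)\delta$ is non-decreasing'' has the inequality backwards.

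This is not a harmless relabelling: once you flip the convention to make $c^*$ compatible with the maximum principle, the ``inverse (Floer-to-Morse) PSS map'' you invoke has the \emph{opposite} sign of $\partial_s\lambda_s$ and the claim that it ``proceeds identically'' does not hold. A clean way to close this gap without constructing the inverse map at all is to exploit the hypothesis on $\delta$: since $\delta$ is smaller than any $S^1$-period, $CF^*(\widetilde{H}_\delta)$ has exactly the same $\k$-basis as the Morse complex of $\widetilde{H}_\delta$, and the constant PSS solutions show that the chain-level PSS map is $\mathrm{id}+(T^{>0}\textrm{-terms})$, hence automatically a $\k$-module isomorphism over the Novikov field. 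Surjectivity and injectivity then come for free, and the ring structure is handled by the degeneration/gluing argument you outline (whose compactness again rests on the \emph{non-increasing} slope convention, and on the Lemma~\ref{Lemma F filtration preserved by product}-style pair-of-pants argument). You might also note that when $H^*(Y)$ lies in even degrees, the $\k$-module isomorphism follows independently from the collapse of the spectral sequence in \cref{PropRSIndicesGoToInfty2}\eqref{MorseItem7} combined with \cref{For Lambda small Floer Shift Equal To Morse}, as used in the proof of \cref{pureHam}.
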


Composing the above isomorphism with the continuation map from $\H_{\delta}$ to $\H_{\lambda},$ for generic $\lambda>\delta$: %
\begin{equation}\label{EquationCanonicalClambda}
c_{\lambda}^*: QH^*(Y) \to HF^*(\HL).
\end{equation}

\subsection{Filtration of $QH^*(Y)$ by ideals}\label{FiltrationOnSingCohomology}

Now suppose $SH^*(Y,\Fi)=0$ (e.g.\,when $c_1(Y)=0$, by 
\cref{PropSec2VanishingTheorem}).
As $SH^*(Y,\Fi)=0$ is a direct limit of continuation maps, every class in $QH^*(Y)\cong HF^*(\H_\delta)$ must map to zero in $HF^*(\HL)$ via \eqref{EquationCanonicalClambda} for large enough $\lambda$. This determines a filtration:
\begin{de}\label{Definition fi-filtration on QH}
The $\varphi$-\textbf{filtration} is $\Fil^{\varphi}_{\lambda}:=\cap\{\ker c_{\mu}^*:\mu>\lambda\textrm{ is generic}\}$, and $\Fil^{\varphi}_{\infty}:=QH^*(Y)$.
\end{de}

\begin{rmk}
    The above does not strictly require $SH^*(Y,\Fi)=0$. The same construction yields a filtration on $\ker(c^*=\varinjlim c^*_{\lambda}:QH^*(Y)\to SH^*(Y))$, with the same properties as those we explain  below. One can then extend this to a filtration of $QH^*(Y)$ via $\ker c^*\subset QH^*(Y)=:\Fil^{\Fi}_{\infty}.$  
\end{rmk}

Observe the following basic properties:
\begin{enumerate}
    \item $\Fil^{\varphi}_\lambda=0$ for $\lambda\leq 0.$ 
    \item $\Fil^{\varphi}_\lambda  \subset \Fil^{\varphi}_{\lambda'}$ for $\lambda\leq \lambda'$.
    \item $\Fil_{\lambda}^{\varphi}\subset QH^*(Y)$ is a graded\footnote{When $c_1(Y)=0$, we refer to a choice of $\Z$-grading. In the monotone setting, gradings can be taken in a certain finite group. In general, however, there is only a $\Z/2$-grading.} $\k$-vector subspace (since $c_{\mu}^*$ is grading-preserving).
\end{enumerate}

\begin{prop}\label{Prop filtration is stable}
$\Fil^{\varphi}_\lambda  = \Fil^{\varphi}_{\lambda'} $ if there is no outer\footnote{recall \cref{Definition outer S1 period}} $S^1$-period in $(\lambda,\lambda'].$ 
\end{prop}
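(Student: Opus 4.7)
The inclusion $\Fil^{\varphi}_\lambda \subseteq \Fil^{\varphi}_{\lambda'}$ is automatic for $\lambda \leq \lambda'$, since the defining intersection
$\Fil^{\varphi}_\lambda = \bigcap\{\ker c^*_{\mu} : \mu > \lambda \text{ generic}\}$
is taken over a strictly larger index set than the one for $\lambda'$. So the content of the proposition is the reverse inclusion $\Fil^{\varphi}_{\lambda'} \subseteq \Fil^{\varphi}_\lambda$.

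Fix $x\in \Fil^{\varphi}_{\lambda'}$ and a generic $\mu\in (\lambda,\lambda']$; the goal is to show $c^*_{\mu}(x)=0$. Since the outer $S^1$-periods form a closed discrete subset of $[0,\infty)$ (\cref{CorollaryGradientTrajBecomeSpheres}, and \cref{Rmk Atiyah-Bott}), we can pick a generic $\mu'>\lambda'$ so close to $\lambda'$ that $(\lambda',\mu']$ contains no outer $S^1$-period. Combined with the hypothesis that $(\lambda,\lambda']$ contains no outer $S^1$-period, the interval $(\mu,\mu']$ contains no outer $S^1$-period at all. By definition of $\Fil^{\varphi}_{\lambda'}$, $c^*_{\mu'}(x)=0$, and by compatibility of continuation maps (\cref{Thm:SHForFi}) we have $c^*_{\mu'} = \psi_{\mu',\mu} \circ c^*_\mu$, where $\psi_{\mu',\mu}:HF^*(\H_{\mu})\to HF^*(\H_{\mu'})$ is the continuation map. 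It therefore suffices to prove the following key claim:
\[
(\ast)\qquad \psi_{\mu',\mu}\ \text{is an isomorphism whenever}\ (\mu,\mu']\ \text{contains no outer}\ S^1\text{-period.}
\]

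To prove $(\ast)$, one chooses admissible representatives $H_\mu,H_{\mu'}\in \mathcal{H}(Y,\Fi)$ that coincide on a large compact $K\subset Y$ containing all their $1$-periodic orbits, differing only outside $K$ where they are respectively the linear functions $\mu H$ and $\mu' H$ at infinity. This is possible because, by the discussion after \cref{Equation Bkm slices intro}, the non-constant $1$-orbits of $c(H)$ are parametrized by the outer $S^1$-periods $p\in (0,\lambda]$, which are by assumption the same for slope $\mu$ and slope $\mu'$; after perturbation in $K$ (not touching infinity) the two Hamiltonians can be taken literally identical on $K$. The monotone homotopy $F_s = \rho(s) H_{\mu'} + (1-\rho(s)) H_\mu$ (with $\rho$ a suitable decreasing cutoff, in order to satisfy $\partial_s\lambda_s\leq 0$) is $\Fi$-admissible, and its Floer continuation solutions are confined to $K$ by the $\Psi$-projection maximum principle of \cref{intro max princ}. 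On the chain level this identifies $CF^*(\H_\mu)$ and $CF^*(\H_{\mu'})$ as the same $\k$-module with the same Floer differential, and realizes $\psi_{\mu',\mu}$ as the identity plus corrections coming from Floer cylinders with non-zero energy that are confined to $K$. The cleanest way to conclude that $\psi_{\mu',\mu}$ is a quasi-isomorphism is via the $S^1$-period filtration on $CF^*(\H_\lambda)$ constructed in \PartII{} (see the discussion in \cref{Subsection existence of orbits and upper bounds for ranks}): the continuation respects this filtration, and on the associated graded it reduces to the identity on the Morse--Bott chain complexes $\oplus_\a H^*(\F_\a)[-\mu_\mu(\F_\a)]\oplus \bigoplus_{0<p\leq \mu,\,\beta}H^*(B_{p,\beta})[\cdots]$ of each orbit family (the same for $\mu$ and $\mu'$, as $\mu_\mu(\F_\a)=\mu_{\mu'}(\F_\a)$ when the indices do not jump across $(\mu,\mu']$). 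Hence $\psi_{\mu',\mu}$ induces an isomorphism on the $E_1$-page of the resulting spectral sequence and therefore on cohomology.

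The main obstacle is the rigorous proof of $(\ast)$: it depends on the $S^1$-period filtration and on the Morse--Bott compactness/transversality package for Floer solutions between orbit families of the moment map, which are established only in \PartII. That is why the statement of stability is credited to \PartII\ here. Granting $(\ast)$, the injectivity of $\psi_{\mu',\mu}$ yields $c^*_\mu(x)=0$, completing the proof that $\Fil^{\varphi}_{\lambda'}\subseteq \Fil^{\varphi}_\lambda$.
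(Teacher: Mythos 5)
The reduction to $(\ast)$ via picking $\mu'$ just beyond $\lambda'$ and factorising $c^*_{\mu'} = \psi_{\mu',\mu}\circ c^*_\mu$ is clean and correct, and is a nice way to make the paper's conclusion precise against the intersection-over-generic-slopes definition of $\Fil^{\varphi}_p$. The gap is in the proof of $(\ast)$.

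Your assertion that the Floer continuation solutions ``are confined to $K$ by the $\Psi$-projection maximum principle'' does not hold. The maximum principle controls $R\circ\Psi\circ u$ in $B$, so it confines $u$ to a set of the form $\Psi^{-1}(\{R\leq R_{\max}\})$, where $R_{\max}$ is determined by the asymptotics. But $K$ is chosen as a sublevel set of $H$ (so that the 1-orbits lie inside it), and the paper explicitly cautions, in the remark after \eqref{Equation psi Xs1 is Reeb 2}, that level sets of $H$ and of $R\circ\Psi$ are in general not nested. A Floer cylinder can therefore exit $K$ into the region where $H_\mu\neq H_{\mu'}$ while $v=\Psi\circ u$ stays below the max-principle bound; moreover, in the transition region where $F_s=c_s(H)$ has non-constant slope, the projected Floer equation does not even have the constant-slope form for which the extended maximum principle of \cite[Thm.C.11]{R16} is stated. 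So the chain-level identification $CF^*(H_\mu)=CF^*(H_{\mu'})$ is not established by this route.

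What actually traps the solutions is the $F$-filtration argument. In the construction of \cref{SectionConstructionOfHLambda part 1}, $H_\lambda$ is built with a ``linearity region'' of slope $\lambda_0$; a Floer continuation cylinder exiting the locus where $H_\mu=H_{\mu'}$ must cross this region, and the computation of \cref{H_lambdaIsOneDirected} (together with \cref{Lemma Floer continuation maps are ok}) forces the drop in $F$-filtration along such a crossing to exceed the a priori bound $F(x_-)-F(x_+)$. Once this quantitative confinement is in place, the $s$-translation symmetry shows the continuation is an inclusion at chain level, which is the paper's route to $(\ast)$. Your subsequent appeal to the period filtration from {\PartII} is on the right track (and it is indeed where the rigorous work lives), but it is presented as a ``cleanest way'' add-on rather than as the essential mechanism, and the statement that the associated-graded continuation is the identity because $\mu_\mu(\F_\a)=\mu_{\mu'}(\F_\a)$ is a red herring: in the $c(H)$ model the gradings near $\F_\a$ are automatically unchanged since the Hamiltonians literally coincide there; the $\lambda$-dependence of $\mu_\lambda(\F_\a)$ only enters for the pure-linear $\lambda H$ model, which is not the one in play here.
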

\begin{proof}
This is not immediate: it is a consequence of the $F$-filtration defined in \cref{filtrationFloer}, whose detailed construction is carried out in \PartII.
The Hamiltonians $H_{\lambda},H_{\lambda'}$ can be constructed to have the same $1$-orbits, by modifying $H_{\lambda}$ in the region at infinity where it has slope $\lambda$, and increasing the slope to $\lambda'$. In \cite{RZ2} we prove they admit a continuation map $\psi_{\lambda',\lambda}:HF^*(H_{\lambda}) \to HF^*(H_{\lambda}')$ which is an inclusion at the level of complexes. Indeed Floer solutions $u$ which enter the region where $H_{\lambda}\neq H_{\lambda'}$ must cross a large region where the homotopy $H_s$ has slope $\lambda$, forcing the drop in $F$-filtration $-\int dF(\partial_s u)\,ds$ to be larger than the priori bound $F(x_-)-F(x_+)$ at asymptotics (compare \cref{H_lambdaIsOneDirected} and \cref{Lemma Floer continuation maps are ok}). Once Floer continuations solutions $u$ are trapped in the region where $H_s=H_{\lambda}=H_{\lambda'}$, they cannot be rigid due to $s$-translation symmetry, so the $u$ are $s$-independent and the continuation map is an inclusion. A similar argument shows that the Floer trajectories counted by the Floer differential for $H_{\lambda'}$ are also trapped in the region where $H_{\lambda}=H_{\lambda'}$, so the complexes $CF^*(H_{\lambda})$,$CF^*(H_{\lambda'})$ are identified. Finally, continuation maps compose compatibly on cohomology, so $c^*_{\lambda'}=\psi_{\lambda',\lambda}\circ c^*_{\lambda}$, thus $$\Fil^{\Fi}_{\lambda'}=\ker c^*_{\lambda'} = \ker \psi_{\lambda',\lambda}\circ c^*_{\lambda} = \ker c^*_{\lambda} = \Fil^{\Fi}_{\lambda},$$
since we showed that $\psi_{\lambda',\lambda}$ is an isomorphism.
\end{proof}

\begin{prop}\label{filtrationByIdeals} The $\Fi$-filtration is a filtration by ideals on the %
$\k$-algebra $QH^*(Y).$ %
In particular, if the unit $1\in \Fil_{\lambda}^{\Fi}$ then $\Fil_{\lambda}^{\Fi}=QH^*(Y)$ (``unity is the last to die'').
\end{prop}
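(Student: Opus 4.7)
The plan is to establish that each kernel $\ker c_\mu^* \subset QH^*(Y)$ is an ideal, and then observe that an intersection of ideals is an ideal. The ``unity'' statement will be an immediate algebraic consequence.

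\textbf{Step 1: $HF^*(H_\mu)$ carries a natural $QH^*(Y)$-module structure, and $c_\mu^*$ is a $QH^*(Y)$-module homomorphism.} Pick a $\delta>0$ smaller than every period of $X_H$, so that $HF^*(H_\delta)\cong QH^*(Y)$ via \cref{SmallHam}. For any generic slope $\mu>0$, the pair-of-pants product
\[
HF^*(H_\delta)\otimes HF^*(H_\mu)\longrightarrow HF^*(H_{\delta+\mu})
\]
composed with the continuation isomorphism $HF^*(H_{\delta+\mu})\cong HF^*(H_\mu)$ (which is an isomorphism, as the slopes $\mu,\mu+\delta$ lie in the same connected component of $\R$ minus the outer $S^1$-periods, by picking $\delta$ small and applying \cref{Prop filtration is stable}) defines the module action. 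Unitality and compatibility are standard TQFT properties of the Floer pair-of-pants: continuation maps intertwine pair-of-pants products up to higher continuations (by gluing of Floer solutions parameterised by disc configurations), which on cohomology forces $c_\mu^*$ to be a module map satisfying $c_\mu^*(q\star x)=q\cdot c_\mu^*(x)$ for $q\in QH^*(Y)$ and $x\in QH^*(Y)$.

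\textbf{Step 2: $\ker c_\mu^*$ is an ideal of $QH^*(Y)$.} Given $x\in \ker c_\mu^*$ and $q\in QH^*(Y)$, the module property yields
\[
c_\mu^*(q\star x)=q\cdot c_\mu^*(x)=q\cdot 0=0,
\]
so $q\star x\in \ker c_\mu^*$. Hence $\ker c_\mu^*$ is a (two-sided, since $QH^*(Y)$ is graded-commutative) ideal.

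\textbf{Step 3: Assemble the filtration.} Since an arbitrary intersection of ideals is an ideal, $\Fil_\lambda^{\Fi}=\bigcap_{\mathrm{generic}\,\mu>\lambda}\ker c_\mu^*$ is a graded ideal of $QH^*(Y)$ (grading being preserved by $c_\mu^*$, as noted in \cref{Subsection PSS morph into SH}). The convention $\Fil_\infty^{\Fi}=QH^*(Y)$ is trivially an ideal. The containment $\Fil_\lambda^{\Fi}\subset \Fil_{\lambda'}^{\Fi}$ for $\lambda\leq \lambda'$ follows from the factorisation $c_\mu^*=\psi_{\mu,\mu'}\circ c_{\mu'}^*$ for $\mu\geq \mu'$ (both generic).

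\textbf{Step 4: ``Unity is the last to die''.} If $1\in \Fil_\lambda^{\Fi}$, then for every $q\in QH^*(Y)$ the ideal property gives $q=q\star 1\in \Fil_\lambda^{\Fi}$, so $\Fil_\lambda^{\Fi}=QH^*(Y)$.

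The main technical point is Step 1: that the continuation $c_\mu^*$ is actually $QH^*(Y)$-linear (not merely a $\k$-linear map). This is a TQFT-type statement whose geometric content is the existence of a Floer-theoretic homotopy between two ways of gluing a pair-of-pants with a continuation cylinder; in our non-compact setting, the essential extra ingredient is that the relevant moduli spaces remain compact, which is ensured by the $\Psi$-projection maximum principle of \cref{intro max princ} applied to the pair-of-pants and its homotopies, exactly as in \cref{Thm:SHForFi}.
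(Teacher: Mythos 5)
Your proof is correct, but it takes a different and heavier route than the paper, and it is worth flagging why. The paper's proof never touches each individual $\ker c_\mu^*$: it feeds $q\otimes x$ into the pair-of-pants compatibility square with slopes $\delta$ and $k=\lambda+\delta$, obtains $q\star x\in\ker c^*_{\lambda+2\delta}$, and observes that since $\delta>0$ is arbitrary and $\Fil^\Fi_\lambda=\bigcap_{\mu>\lambda}\ker c^*_\mu$ is defined as an intersection, the slope creep from $\lambda+\delta$ to $\lambda+2\delta$ is harmless. That argument needs nothing beyond the product--continuation compatibility of [R13]. You instead prove the stronger intermediate statement that every single $\ker c_\mu^*$ is an ideal, which forces you to reverse the slope increase $\mu\to\mu+\delta$ and hence to invoke \cref{Prop filtration is stable} to supply the continuation isomorphism $HF^*(H_{\mu+\delta})\cong HF^*(H_\mu)$. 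That proposition is a substantially deeper fact whose proof relies on the period-filtration machinery of \cref{filtrationFloer} and is partly deferred to [RZ2], so you are paying with a heavy tool for something the paper gets essentially for free by exploiting the intersection in the definition of $\Fil^\Fi_\lambda$. Your Step~4 (``unity is the last to die'') is identical. The byproduct of your approach — that each $\ker c^*_\mu$ is already an ideal — is genuinely stronger and interesting, but it is not needed here, and making the foundational \cref{filtrationByIdeals} depend on \cref{Prop filtration is stable} would be an unwelcome logical dependency; the paper's proof avoids it.
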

\begin{proof} 
By compatibility of continuation maps with pair-of-pants products \cite{R13}, and \cref{SmallHam},
\begin{equation} %
\xymatrix@C=45pt{
QH^*(Y)\otimes QH^*(Y) \ar@{->}[r]^-{c_{\delta}\otimes c_{k}} \ar@{->}[d]_-{\textrm{Quantum cup-product}} &
HF^*(\H_{\delta}) \otimes HF^*(\H_{k})
\ar@{->}^-{\textrm{POP-product}}[d] \\
QH^*(Y)
\ar@{->}[r]^-{c_{\delta+k}} & HF^*(\H_{\delta+k})}
\end{equation}
commutes.
Let  $q\in QH^*(Y)$ and $x \in \Fil^{\Fi}_\lambda$. Then $x\in \ker c^*_{\lambda+\delta}$ for any $\delta>0$, so taking 
$k=\lambda+\delta$ in the diagram we deduce $q \star x \in \ker c^*_{\lambda+2\delta}$. As $\delta>0$ was arbitrarily small, 
we deduce $q \star x \in  \Fil^{\varphi}_\lambda$.
\end{proof}

\begin{rmk}[{\bf Choice of coefficients}]\label{Remark choice of coefficients}
The Novikov field $\k$ from \eqref{EqnSec2NovikovField} is a $\mathbb{B}$-vector space, after making a choice of the base field $\mathbb{B}$. It is flat over $\mathbb{B}$, so $H^*(Y;\k)\cong H^*(Y;\mathbb{B})\otimes_{\mathbb{B}} \k$. This induces an ($\omega$- and $\Fi$-dependent) filtration on $H^*(Y;\mathbb{B})\subset H^*(Y;\mathbb{B})\otimes_{\mathbb{B}} \k \cong QH^*(Y)$ by $\mathbb{B}$-vector subspaces for any field $\mathbb{B}$ (a better behaved filtration can be defined by specialisation, as in \cref{Subsection specialisation argument}). Floer/quantum cohomology is also defined over a Novikov ring $\k$ using any underlying ring $\mathbb{B}$ \cite{HS95} (by \cref{Rmk technical symplectic assumptions on Y}), and \cref{filtrationByIdeals} still holds.
If one forgoes the multiplicative structure by ideals, one can more generally replace $\mathbb{B}$ by any abelian group, yielding a filtration on $H^*(Y;\mathbb{B})$ by abelian subgroups.
\end{rmk}

\subsection{Specialisation to a filtration of $H^*(Y)$ by cup-ideals}\label{Subsection specialisation argument}

In \cref{Subsection specialisation intro} we defined the {\bf initial part} %
of a formal Laurent ``series'' in $T$ to be the coefficient of the lowest power of $T$.
For any subspace $S\subset H^*(Y;\k) \cong H^*(Y;\mathbb{B})\otimes_{\mathbb{B}} \k$, this gives rise to $\val(S)\subset H^*(Y;\mathbb{B})$. In general it is not sufficient to apply $\val$ to a basis over $\k$ of $S$, but it is possible to apply a row-reduction like argument over $\k$ to such a basis so that this holds as follows. 

Let $\k_{>0}\subset \k$ be the ideal consisting of elements involving only strictly positive $T$-powers.

\begin{lm}\label{Lemma ini valuation}
Let $V$ be a $\mathbb{B}$-vector space.
For any finite dimensional $\k$-vector subspace $S\subset \k \otimes_{\mathbb{B}} V$, $\val(S)$ is a $\mathbb{B}$-vector space with $\mathrm{rank}_{\mathbb{B}}(\val(S))=\mathrm{rank}_{\k}(S)$. There is a $\k$-basis $s_i$ 
such that $\val(s_i)$ is a $\mathbb{B}$-basis for $\val(S)$, yielding a $\k$-linear isomorphism $$\val(S)\otimes_{\mathbb{B}}\k \to S \subset \k\cdot(\val(S)+\k_{>0} \otimes_{\mathbb{B}}V), \; \val(s_i)\mapsto s_i.$$
\end{lm}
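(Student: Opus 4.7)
The plan is threefold: first, to show $\val(S)$ is a $\mathbb{B}$-subspace of $V$; second, to construct an adapted $\k$-basis $\{s_i\}$ of $S$ whose initial terms form a $\mathbb{B}$-basis of $\val(S)$; and third, to deduce the rank identity, isomorphism, and containment statement formally. For the first step, I would note that closure under $\mathbb{B}$-scaling is immediate from $\val(bs)=b\val(s)$ (since $b\in\mathbb{B}\hookrightarrow\k$ has $\mathrm{val}_T(b)=0$), and for closure under addition, given nonzero $s_1,s_2\in S$ with $\mathrm{val}_T(s_j)=a_j$ and $a_1\leq a_2$, the element $T^{a_2-a_1}s_1+s_2\in S$ has $T^{a_2}$-coefficient equal to $\val(s_1)+\val(s_2)$, which is either the initial term of the sum or vanishes in $V$. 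Next, I would establish the upper bound $\dim_\mathbb{B}\val(S)\leq n:=\mathrm{rank}_\k S$ by contradiction: given $n+1$ supposedly $\mathbb{B}$-independent initial terms lifted to $t_i\in S$ with $\mathrm{val}_T(t_i)=0$, a $\k$-linear relation $\sum c_it_i=0$ normalized so $\min\mathrm{val}_T(c_i)=0$ would yield the $T^0$-identity $\sum_{\mathrm{val}_T(c_i)=0}\val(c_i)\val(t_i)=0$, contradicting independence.

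For the second step, I would induct on $n$. The plan is to pick any nonzero $s_1\in S$ normalized so $\mathrm{val}_T(s_1)=0$, set $v_1:=\val(s_1)\neq 0$, and fix a $\mathbb{B}$-complement $W_0$ of $\mathbb{B}v_1$ in $\val(S)$ (finite-dimensional by the bound above). Define $S_0:=\{s\in S:\val(s)\in W_0\}\cup\{0\}$; the addition argument from the first step would show $S_0$ is a $\k$-subspace with $\val(S_0)=W_0$. The crucial decomposition $S=\k s_1\oplus S_0$ would follow: transversality $\k s_1\cap S_0=0$ is clear from $\val(cs_1)=\val(c)v_1\notin W_0$ for $c\neq 0$, and spanning is proved iteratively --- given $s\in S$, writing $\val(s)=w+bv_1$, one subtracts $bT^{\mathrm{val}_T(s)}s_1$ from $s$, landing directly in $S_0$ if $w\neq 0$ or producing a residual of strictly larger $T$-valuation if $w=0$, then iterates. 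Applying the induction hypothesis to $S_0$ of $\k$-rank $n-1$ would produce $s_2,\dots,s_n$ with initial terms forming a $\mathbb{B}$-basis of $W_0$; adjoining $s_1$ completes the adapted basis.

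The main obstacle will be termination of this iterative subtraction: it can fail in finitely many steps, producing only a residual sequence whose $T$-valuations diverge to $+\infty$. I would resolve this by invoking completeness of the Novikov field $\k$ (interpreting $\k\otimes_\mathbb{B}V$ as the natural $T$-adic completion of the algebraic tensor product): the partial coefficients $bT^{\mathrm{val}_T(\cdot)}$ form a $T$-adically Cauchy series converging to some $c\in\k$, and the residuals converge to $0$ in $\k\otimes_\mathbb{B}V$, forcing $s-cs_1=0\in S_0$; finite-dimensionality of $S$ over the complete field $\k$ guarantees the limit stays in $S$, and $S_0$ is closed since it is cut out by the $\mathbb{B}$-linear condition $\val(\cdot)\in W_0$ with $W_0$ finite-dimensional. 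With the adapted basis in hand, the rank identity $\dim_\mathbb{B}\val(S)=n$ is immediate from counting, the $\k$-linear isomorphism $\val(S)\otimes_\mathbb{B}\k\to S$ is the obvious map $\val(s_i)\mapsto s_i$ (sending $\mathbb{B}$-basis to $\k$-basis), and the containment $S\subset\k\cdot(\val(S)+\k_{>0}\otimes_\mathbb{B}V)$ reduces to the tautology $T^{-\mathrm{val}_T(s)}s=\val(s)+(T^{>0}\text{-terms})$ for any $s\in S\setminus\{0\}$.
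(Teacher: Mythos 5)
Your proposal contains a genuine gap in the inductive step. You define $S_0:=\{s\in S:\val(s)\in W_0\}\cup\{0\}$ and claim it is a $\k$-subspace, invoking the addition argument from your first step. That argument only establishes that $\val(S)$ is closed under addition: when the initial terms of two elements cancel, the initial term of the sum is some \emph{other} element of $\val(S)$, but nothing forces it to lie in the chosen complement $W_0$. So $S_0$ fails to be closed under addition. Concretely, take $V=\mathbb{B}^2$ with basis $e_1,e_2$, and $S=\mathrm{span}_\k(e_1,\ e_2+Te_1)$, so $\val(S)=V$; put $s_1:=e_1$, $v_1=e_1$, $W_0:=\mathbb{B}e_2$. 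Then $e_2+Te_1\in S_0$ and $-e_2=-\bigl((e_2+Te_1)-Te_1\bigr)\in S_0$, but their sum $Te_1$ satisfies $\val(Te_1)=e_1\notin W_0$, so $Te_1\notin S_0$. Hence $S_0$ is not a $\k$-subspace, the claimed decomposition $S=\k s_1\oplus S_0$ is not a direct sum of $\k$-vector spaces, and your induction hypothesis --- which is stated for finite-dimensional $\k$-vector subspaces --- cannot be applied to $S_0$. (The related projection $s\mapsto s_0$ defined by your spanning iteration is not $\k$-linear either, for the same reason.)

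The paper avoids this by never forming a transversal complement: it starts from an arbitrary $\k$-basis $s_1,\ldots,s_d$, normalizes so each $s_i$ has $T$-valuation $0$, and sequentially row-reduces the initial terms, subtracting from $s_{k+1}$ the same $\mathbb{B}$-combination of $s_1,\ldots,s_k$ that expresses $b_{k+1}$ in terms of $b_1,\ldots,b_k$, then rescaling by a power of $T$ and repeating. Your termination argument --- that non-termination would exhibit $s_{k+1}$ as a convergent Laurent series in $s_1,\ldots,s_k$, contradicting $\k$-independence --- is the same as the paper's and is the right idea. What breaks is organizing the induction around a preimage subspace $S_0$, because the filtration of $S$ by $T$-valuation does not respect naive preimages of $\mathbb{B}$-complements in the associated graded $\val(S)$.
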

\begin{proof}
Given any $\k$-basis $s_1,\ldots,s_d$ of $S$, rescale by suitable powers of $T$ so that $s_i = b_{i}T^0 + (T^{>0}\textrm{-terms})$ for $b_{i}\neq 0 \in \mathbb{B}$. We prove by induction that we may assume that the $b_i$ are $\mathbb{B}$-linearly independent. Inductively assume this holds for $b_1,\ldots,b_k$. If $b_{k+1}$ is a $\mathbb{B}$-linear combination of $b_1,\ldots,b_k$, then we can redefine $s_{k+1}$ by subtracting the same $\mathbb{B}$-linear combination but replacing $b_1,\ldots,b_k$ by $s_1,\ldots,s_k$. The new $b_{k+1}$ becomes zero. Rescaling $s_{k+1}$ by a suitable negative power of $T$, redefines $b_{k+1}\neq 0$. Repeat the argument. This process must stop, otherwise we would deduce that $s_{k+1}$ can be written as a (convergent) Laurent ``series'' in $s_1,\ldots,s_k$. So after finitely many steps we may assume $b_1,\ldots,b_{k+1}$ are $\mathbb{B}$-indepenent, as required. We obtain an isomorphism $\k^d\cong \k \otimes_{\mathbb{B}} \mathbb{B}^d \to S$, $1\otimes e_ig \mapsto s_i$ for the standard basis $e_i$ of $\mathbb{B}^d$. It follows\footnote{$b_i=\val(s_i)\in \val(S)$, and the lowest order $T$-term of a general $\k$-linear combination of $s_i$ is a non-trivial $\mathbb{B}$-linear combination of the $b_i$ which cannot vanish by $\mathbb{B}$-independence.} that 
$\val(S)=\mathrm{span}_{\mathbb{B}}(b_1,\ldots,b_d)\cong \mathbb{B}^d$. 
\end{proof}

\begin{cor}
    $\FFFF^{\Fi}:=\val(\FF_{\lambda}^{\Fi}) \subset H^*(Y;\mathbb{B})$ is a filtration by cup-product ideals, and $
    \mathrm{rank}_{\mathbb{B}} \FFFF^{\Fi}
    = 
    \mathrm{rank}_{\k} \FF_{\lambda}^{\Fi}.
    $
    There is a $\k$-linear isomorphism
    $\FFFF^{\Fi}\otimes_{\mathbb{B}}\k\to \FF_{\lambda}^{\Fi} \subset \k \cdot ( \FFFF^{\Fi} + H^*(Y;\k_{> 0}))$.
\end{cor}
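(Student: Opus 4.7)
The plan is to deduce everything from the preceding \cref{Lemma ini valuation}, which already does the hard linear-algebraic work, together with the ideal property of $\FF_\lambda^{\Fi}$ in quantum cohomology (\cref{filtrationByIdeals}) and one very short observation about the relationship between quantum and classical cup product modulo $\k_{>0}$.

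First I would dispose of the formal properties. The filtration property $\FFFF_\lambda^{\Fi}\subset \FFFF_{\lambda'}^{\Fi}$ for $\lambda\leq \lambda'$ is immediate: $\FF_\lambda^{\Fi}\subset \FF_{\lambda'}^{\Fi}$ by \cref{Definition fi-filtration on QH}, and $\val$ is monotone on sets. For the rank equality and the $\k$-linear isomorphism $\FFFF_\lambda^{\Fi}\otimes_{\mathbb{B}}\k \to \FF_\lambda^{\Fi}$, I apply \cref{Lemma ini valuation} directly with $V:=H^*(Y;\mathbb{B})$ and $S:=\FF_\lambda^{\Fi}\subset H^*(Y;\k) = \k\otimes_{\mathbb{B}}V$. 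This $S$ is a finite-dimensional $\k$-vector subspace (since $H^*(Y;\mathbb{B})$ is finite-dimensional in our setting), so the lemma gives a $\k$-basis $s_i$ of $\FF_\lambda^{\Fi}$ whose initial parts $\val(s_i)$ form a $\mathbb{B}$-basis of $\FFFF_\lambda^{\Fi}$, producing both the rank equality and the isomorphism $\FFFF_\lambda^{\Fi}\otimes_{\mathbb{B}}\k \cong \FF_\lambda^{\Fi}$ landing in $\k\cdot(\FFFF_\lambda^{\Fi}+H^*(Y;\k_{>0}))$.

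The only non-formal point is the ideal property of $\FFFF_\lambda^{\Fi}$ under cup product in $H^*(Y;\mathbb{B})$. Here the key observation is that for any $b\in H^*(Y;\mathbb{B})$ and any $y\in H^*(Y;\k)$ one has
\[
b\star y \;=\; b\cup y \;+\; (T^{>\val_T(y)}\text{-terms})
\]
in $QH^*(Y)$: indeed, non-constant contributions to the quantum product come weighted by $T^{\omega(A)}$ with $\omega(A)>0$ (by positivity of symplectic area of non-constant $J$-holomorphic spheres, or in the monotone/weakly-monotone setting by the grading conventions of \cref{Rmk about coeffs Novikov}), so their $T$-valuation is strictly larger than that of the classical term. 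Now take any $\bar x\in \FFFF_\lambda^{\Fi}$, realised as $\bar x = \val(x)$ for some $x\in \FF_\lambda^{\Fi}$ with $x=\bar x T^{v}+(T^{>v}\text{-terms})$, $v=\val_T(x)$. Then by \cref{filtrationByIdeals}, $b\star x\in \FF_\lambda^{\Fi}$, and by the observation above $b\star x=(b\cup\bar x)\,T^{v}+(T^{>v}\text{-terms})$. If $b\cup\bar x\neq 0$ in $H^*(Y;\mathbb{B})$ then $\val(b\star x)=b\cup\bar x\in \FFFF_\lambda^{\Fi}$; if $b\cup\bar x=0$ then trivially $b\cup\bar x=0\in\FFFF_\lambda^{\Fi}$. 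Either way $b\cup\bar x\in\FFFF_\lambda^{\Fi}$, which is the required cup-ideal property.

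There is no real obstacle: the only point requiring care is the $T$-valuation comparison between quantum and classical cup product, which is standard but must be invoked. Once that is on the table, everything else is a direct consequence of \cref{Lemma ini valuation} and the fact that $\FF_\lambda^{\Fi}\subset QH^*(Y)$ is a quantum ideal.
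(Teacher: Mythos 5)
Your proof is correct and follows essentially the same route as the paper's: invoke \cref{Lemma ini valuation} with $S=\FF_\lambda^{\Fi}$, $V=H^*(Y;\mathbb{B})$ for the rank equality and isomorphism, and for the cup-ideal property combine the ideal property of $\FF_\lambda^{\Fi}$ under quantum product (\cref{filtrationByIdeals}) with the observation that the $T^{a_0}$-coefficient of $r\star x$ is $r\cup x_0$ when $x=x_0T^{a_0}+(T^{>a_0}\text{-terms})$. One small point in your favour: the paper's proof asserts $r\cup x_0 = \val(r\star x)$ directly, which is literally false when $r\cup x_0=0$ (since $\val(r\star x)$ would then be the initial part of the higher-order corrections), whereas you correctly split into the cases $b\cup\bar x\neq 0$ and $b\cup\bar x=0$, the latter landing in $\FFFF^{\Fi}$ because $0=\val(0)\in\FFFF^{\Fi}$.
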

\begin{proof}
Suppose $x=x_0 T^{a_0} + (T^{>a_0}\textrm{-terms})\in \FF_{\lambda}^{\Fi}$.
Let $r\in H^*(Y;\mathbb{B})$.
By definition, quantum product
$$
r\star x = r\cup x + (\textrm{higher }T\textrm{ terms})
= T^{a_0}(r\cup x_0)  + (T^{>a_0}\textrm{-terms})
$$
is cup product with higher corrections.
As $\FF_{\lambda}^{\Fi}$ is an ideal with respect to quantum product, $r\star x\in \FF_{\lambda}^{\Fi}$.
So $r\cup x_0 = \val(r\star x) \in \FFFF^{\Fi}$, as required.
The claim about the ranks and isomorphism follows from \cref{Lemma ini valuation} (using $S:=\FF_{\lambda}^{\Fi}$ and $V:= H^*(Y;\mathbb{B})$).
\end{proof}

\subsection{Description of the map $c_{\lambda}^*:QH^*(Y) \to HF^*(F_{\lambda})$}
\label{Sec2PerturbationModel}

\begin{prop}\label{PropRSIndicesGoToInfty2}
Let $Y$ be a symplectic $\C^*$-manifold over a convex base.
Let $f_\a: \F_\a \to \R$ be any Morse function on $\F_\a$.
One can construct an admissible Hamiltonian $\widetilde{F}:Y \to \R$ of slope $\lambda$ by making an autonomous perturbation  of $F=\lambda H$ supported in disjoint neighbourhoods of the $\F_\a$, such that:
\begin{enumerate}[(1)]
\item\label{MorseItem1} $\widetilde{F}$ is Morse and its critical points are precisely the critical points $p$ of the $f_\a$.
\item\label{MorseItem2} The $1$-periodic orbits of $\widetilde{F}$ are the (isolated) constant orbits $x_{\a,p}$ at the $p\in \mathrm{Crit}(f_\a)$.
\item\label{MorseItem3} $|x_{\a,p}|=
\mu_{f_\a}(p)+\mu_{\lambda} (\F_\a)
$ is their grading\footnote{A  $\Z$-grading if $c_1(Y)=0$; a grading in a certain finite group if $Y$ is monotone; a $\Z/2$-grading otherwise. The $\mu_{\lambda} (\F_\a)$ are the indices from \cref{GradingConventions}.} in $HF^*(\tilde{F})$, 
where $\mu_{f_\a}(p)$ is the Morse index of $p$.
\item\label{MorseItem4} $|x_{\a,p}|\to -\infty$ as $\lambda\fun +\infty$.
\item\label{MorseItem5} For generic choices of $f_\a$, the function $\widetilde{F}$ is Morse-Smale, in particular the Morse trajectories of $-\varepsilon_\a\nabla f_\a$ in $\F_\a$ are regular Morse trajectories of $-\nabla\widetilde{F}$ in $Y$.
\item\label{MorseItem6} There is a continuation isomorphism
$$
BHF^*(F;f_\a)\cong
HF^*(\widetilde{F}),
$$
where $BHF^*(F;f_\a)$ is the {\MBF } cohomology of $F:Y\to \R$ using the auxiliary Morse functions $f_\a$ on $\mathrm{Crit}(F)=\F=\sqcup_\a \F_\a$ $($see \cite[Appendix A.1]{RZ2}$)$.
\item\label{MorseItem7} There is a convergent spectral sequence
\begin{equation}\label{Equation spectral sequence for Falpha}
\bigoplus_{\a} H^*(\F_\a;\k)[-\mu_\lambda(\F_\a)]
\Rightarrow BHF^*(F;f_\a)\cong HF^*(\widetilde{F}),
\end{equation}
where $H^*(\F_\a;\k)[-\mu_\lambda(\F_\a)]$ arises from the direct product of the Morse complexes for $f_a:\F_\a\to \R$ (shifted in grading) arising as the low-energy local {\MBF } cohomology $($involving only simple cascades in the {\MB } submanifolds $\F_\a$, see \cite[Appendix A.1]{RZ2}$)$. Equivalently, it arises as the low-energy local Floer cohomology of $\widetilde{F}$ near $\F_\a$.
\end{enumerate}
\end{prop}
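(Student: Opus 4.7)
The plan is to build $\tilde F$ as a small, compactly supported, $S^1$-invariant perturbation of $F = \lambda H$ near each $\F_\a$, and to verify the seven claims by a sequence of standard Morse-Bott-Floer arguments. First I would fix an $\om$-compatible $S^1$-invariant metric (by averaging), so that $\exp_{\F_\a}$ identifies a small disk bundle of the normal bundle $\nu \F_\a$ with an $S^1$-invariant tubular neighbourhood $U_\a$ of $\F_\a$, on which the $S^1$-action is linear in the fibres. Choose disjoint $U_\a$, extend each $f_\a$ to an $S^1$-invariant smooth function $\tilde f_\a$ on $U_\a$ by pull-back via the bundle projection, pick a cutoff $\rho_\a$ supported in $U_\a$ and equal to $1$ near $\F_\a$, and set $\tilde F := \lambda H + \sum_\a \epsilon_\a \rho_\a \tilde f_\a$ for small $\epsilon_\a>0$. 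The perturbation is compactly supported and disjoint from $Y^{\mathrm{out}}$, so $\tilde F \in \mathcal{H}(Y,\Fi)$.

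For (1)--(2), the genericity of $\lambda$ means that each $\F_\a$ is a non-degenerate Morse-Bott manifold of constant $1$-orbits of $\lambda H$: the linearised return map has $1$-eigenspace exactly $T\F_\a$. The usual implicit function theorem and a compactness argument then show that, for sufficiently small $\epsilon_\a$, every $1$-periodic orbit of $X_{\tilde F}$ inside $U_\a$ is $C^1$-close to $\F_\a$ and projects to a critical point of $f_\a$; outside $\sqcup_\a U_\a$ no new orbits appear, since there $\tilde F = \lambda H$ and $\lambda$ is generic (\cref{CorSec2OrbitsOfLambdaH}). For (3), at $x_{\a,p}$ the linearised time-$1$ flow splits along $T_pY = T_p\F_\a \oplus \bigoplus_{k\neq 0} H_k$: the normal summands contribute the unperturbed rotation $\sum_{k\neq 0}\W(\lambda k)\,h_k^\a = RS(x,\lambda H)$, while the tangential summand is a small Hessian shear whose RS contribution is $\tfrac12\mathrm{sig}(\mathrm{Hess}_p f_\a) = \dim_\C\F_\a - \mu_{f_\a}(p)$ (standard Robbin-Salamon crossing-form calculation). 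Combining with the grading convention $|x_{\a,p}| = \dim_\C Y - RS(x_{\a,p},\tilde F)$ and the definition of $\mu_\lambda(\F_\a)$ gives $|x_{\a,p}| = \mu_\lambda(\F_\a) + \mu_{f_\a}(p)$, and item (4) is then immediate from \cref{PropSec2RSIndices}, since the normal contribution already diverges to $+\infty$ uniformly in $p$ as $\lambda\to\infty$.

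For (5), genericity of $f_\a$ yields the Morse-Smale property for $-\nabla^{g|_{\F_\a}} f_\a$. The claim that the low-energy Floer cylinders of $-\nabla\tilde F$ connecting $x_{\a,p}$ and $x_{\a,q}$ with $\a$ fixed are in bijection (with regular linearisations) with Morse trajectories of $-\epsilon_\a \nabla f_\a$ in $\F_\a$ is the standard Pozniak-type reduction: for small $\epsilon_\a$, energy and action-window bounds confine such cylinders $C^0$-close to $\F_\a$, and because $\tilde f_\a$ is $S^1$-invariant, the normal Hamiltonian (an $S^1$-rotation with no zero weights) decouples from the tangential Morse equation, permitting a rescaling to identify the two moduli spaces together with their Fredholm linearisations. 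For (6), I would use the $\Fi$-admissible homotopy $s \mapsto \lambda H + s\sum_\a \epsilon_\a \rho_\a \tilde f_\a$, interpreted on one side as a Morse-Bott cascade model for $\lambda H$ with auxiliary functions $f_\a$, and on the other side as honest Hamiltonian Floer cohomology for $\tilde F$; the Morse-Bott continuation machinery from \cite[App.~A.1]{RZ2} provides the required chain-level identification. For (7), I would filter $BCF^*(F;f_\a) = \bigoplus_\a CM^*(f_\a)[-\mu_\lambda(\F_\a)]$ by the action values $\mathcal{A}_{\lambda H}(\F_\a) = -\lambda H(\F_\a)$, refining ties using the partial order from \cref{Rmk Atiyah-Bott}. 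The $E_0$-differential then counts only intra-$\F_\a$ cascades, i.e.\ by (5) Morse trajectories of $f_\a$, producing $E_1 \cong \bigoplus_\a H^*(\F_\a;\k)[-\mu_\lambda(\F_\a)]$, with higher differentials arising from cascades connecting distinct $\F_\a$.

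The main obstacle I anticipate is (5): although the Pozniak-type reduction is well-established in the Morse-Bott-Floer literature, it requires careful bookkeeping in our non-exact setting to ensure uniform $C^0$-confinement of low-energy cylinders and surjectivity of the linearised operator. The potential complication introduced by the ambient holomorphic $\C^*$-action is largely defused by choosing $\tilde f_\a$ to be $S^1$-invariant, so that the normal $S^1$-rotation and the tangential Morse direction remain orthogonal and can be analysed independently; a secondary care is to take $\epsilon_\a$ small enough relative to the $C^0$-norm of $d\rho_\a$ so that the cutoff introduces no spurious orbits inside $U_\a \setminus \{\rho_\a = 1\}$.
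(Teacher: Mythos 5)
Your proposal is correct and follows essentially the same route as the paper: you build $\widetilde{F}=\lambda H + \sum_\alpha \varepsilon_\alpha\rho_\alpha\widetilde{f}_\alpha$ with the same cutoff construction, you verify (1)--(2) by the same standard Morse--Bott perturbation argument, your Robbin--Salamon crossing-form computation for (3) reproduces the formula $RS(x_{\alpha,p},\widetilde{F})=RS(p,F)+\tfrac12\dim_\R\F_\alpha-\mu_{f_\alpha}(p)$ that the paper cites from Oancea, and your treatment of (5)--(7) via Pozniak-type reduction, Morse--Bott--Floer continuation, and an action/energy filtration matches the paper's references to \cite{RZ2}. The only minor divergence is cosmetic: you fill in more explicit detail for (3) and (5) and phrase the (7) filtration in terms of action values $-\lambda H(\F_\alpha)$ with ties refined by the Atiyah--Bott partial order, where the paper speaks of an energy-spectral-sequence argument with constant cappings; in the low-energy, Morse--Bott regime these are the same filtration.
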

\begin{proof}

Following \cite{BH13}, we pick bump functions $\rho_\a$ supported near the $\F_\a$ and we define
\begin{equation}\label{EqnFtildeMB}
    \widetilde{F}=F+ \sum \varepsilon_\a \rho_\a \widetilde{f}_\a,
\end{equation}
where $\widetilde{f}_\a$ is an extension of $f_\a$, constant in normal directions to $\F_\a$ (after parametrising a tubular neighbourhood of $\F_\a$ by its normal bundle in Y %
via the exponential map).
Claims (1)-(2) are now a standard perturbation argument. Since $F$ has only constant $1$-periodic orbits (as $\lambda$ is generic), the {\MB } property of $F$  ensures that the Floer action functional of any sufficiently small autonomous perturbation $\widetilde{F}$ of $F$ is still {\MB } and its $1$-periodic orbits are still constant orbits at the critical points of $\widetilde{F}$.
So \eqref{MorseItem1} follows for sufficiently small constants $\varepsilon_\a>0$, and
\eqref{MorseItem2} follows from \eqref{MorseItem1}. 
We have $RS(x_{\a,p},\widetilde{F})=RS(p,F)+ { \frac{1}{2} } \dim_{\R} \F_\a - \mu_{f_\a}(p)$ by \cite[Sec.3.3]{OanceaEnsaios}, so we get \eqref{MorseItem3} by our grading conventions (\cref{GradingConventions}). Claim \eqref{MorseItem4} follows by \cref{mu_lambda go to -infty}.
Claim \eqref{MorseItem5} is a standard transversality result, using that $F$ is {\MB } for $\F_\a$.
Claim \eqref{MorseItem6} follows by a {\MBF} continuation argument (see \cite[Appendix A.2]{RZ2}).
Claim \eqref{MorseItem7} follows by an energy-spectral sequence argument (see \cite[Appendix B.2]{RZ2}, where in our current setting 
we do not need $c_1(Y)=0$ as we can use constant cappings at constant orbits, and
the triviality of the local system follows from the complex-linearity of the linearised $S^1$-flow).
One can alternatively prove this by using \cite[Theorem C.9]{RZ2} (without the need to use $p_t$, so take $p_t:=\mathrm{id}$), to show that for small $\varepsilon_\a>0$ the low-energy local Floer cohomology of $\widetilde{F}$ near $\F$ is the direct product of the
Morse cohomologies of the $f_\a: \F_\a\to \R$ with degree shifts by $\mu_{\lambda}(\F_\a)$; and this in turn has an energy-spectral sequence converging to $HF^*(\widetilde{F})$.
\end{proof}

\begin{rmk}
By the proof of Theorem \ref{Thm:SHForFi} all Floer trajectories for $F=\lambda H$ are trapped in a compact region of $Y$, and those for $\widetilde{F}$ in \eqref{EqnFtildeMB} are arbitrarily close to that compact region if we make the support of the $\rho_{\alpha}$  sufficiently close to the $\F_\a$.
In the Floer differential, a Floer trajectory $u$ is counted with a Novikov weight $T^{E(u)}$, where $E(u)$ is the energy. When $u$ converges at the ends to critical points $p_{\pm}$ of the relevant Hamiltonian $G:Y\to \R$, then this energy is in fact determined by the ends and the spherical class $A\in \pi_2(Y)$ of $u:\R \times S^1\to Y$ extended continuously at $\pm \infty$ using its asymptotics $p_{\pm}$,
	$$
 \textstyle
	E(u) := \int_{\R\times S^1} \|\partial_s u\|^2 \, ds\,dt =
	\om[A] + G(p_-)-G(p_+).
	$$
 \end{rmk}

From now on, ordinary cohomology is understood to be taken with coefficients in $\k$.

\begin{cor}\label{pureHam} 
Suppose $Y$ (equivalently, each $\F_\a$) has no odd cohomology. Then for generic $\lambda>0$ the vector space $HF^*(\lambda H)$ is supported in even degrees, and 
the $\k$-linear map from \eqref{EquationCanonicalClambda} becomes:
\begin{equation}\label{contmap}
c_{\lambda}^*:QH^*(Y)\cong \bigoplus_{\a} H^*(\F_\a)[-\mu_\a]\fun \bigoplus_{\a} H^*(\F_\a)[-\mu_\lambda(\F_\a)] \cong HF^*(\lambda H).
\end{equation}
For small $\lambda=\delta>0$, $\mu_\a=\mu_\delta(\F_\a)$ and the isomorphism becomes \eqref{EqnFrankel} so reformulates \cref{SmallHam}. 
\end{cor}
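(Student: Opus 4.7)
The plan is to apply the energy-spectral sequence from \cref{PropRSIndicesGoToInfty2}\eqref{MorseItem7} and observe that, under the hypothesis on odd cohomology, it degenerates at $E_1$ on purely parity grounds.

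First I would invoke \cref{PropRSIndicesGoToInfty2}: choosing Morse functions $f_\a$ on each $\F_\a$ and a sufficiently small perturbation $\widetilde{F}$ of $F=\lambda H$ as in \eqref{EqnFtildeMB}, there is a continuation isomorphism $HF^*(\lambda H)\cong HF^*(\widetilde{F})\cong BHF^*(F;f_\a)$ and a convergent spectral sequence
\[
E_1 \;=\; \bigoplus_{\a} H^*(\F_\a)[-\mu_\lambda(\F_\a)] \;\Rightarrow\; HF^*(\lambda H).
\]
By \cref{LemmaSec2GradingOfFa}, each shift $\mu_\lambda(\F_\a)$ is an even integer for generic $\lambda>0$. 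Combined with the assumption that each $H^*(\F_\a)$ is supported in even degrees (equivalent to the hypothesis that $Y$ has no odd cohomology, by the vector-space splitting \eqref{EqnFrankel}), the $E_1$-page is concentrated in even total degrees.

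Next I would argue that the spectral sequence collapses at $E_1$: all higher differentials $d_r$ raise total degree by $1$, so they map even-degree classes into odd-degree classes, and the latter vanish on every page inductively. Hence $E_\infty=E_1$. Working over a field, the resulting filtration on $HF^*(\lambda H)$ splits non-canonically, yielding a $\k$-linear isomorphism
\[
HF^*(\lambda H)\;\cong\;\bigoplus_{\a} H^*(\F_\a)[-\mu_\lambda(\F_\a)],
\]
supported in even degrees. Combined with the {\AB} presentation \eqref{EqnFrankel} $QH^*(Y)\cong\oplus_\a H^*(\F_\a)[-\mu_\a]$, and the fact that $c_{\lambda}^*$ from \eqref{EquationCanonicalClambda} is a grading-preserving $\k$-linear map, this produces the claimed presentation of $c_{\lambda}^*$.

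For the final sentence of the statement, I would take $\lambda=\delta>0$ smaller than every $\lambda_\a$ of \cref{For Lambda small Floer Shift Equal To Morse}; then $\mu_\delta(\F_\a)=\mu_\a$ for every $\a$, so the source and target of \eqref{contmap} coincide with the decomposition in \eqref{EqnFrankel}, and \cref{SmallHam} identifies $c_{\delta}^*$ with the canonical isomorphism $QH^*(Y)\cong HF^*(H_\delta)$. There is no real obstacle here: the only subtlety is verifying that the parity obstruction to the higher differentials is genuinely automatic (i.e.\,that one does not need to know the differentials explicitly, but only their degree shift), and that the split of the $E_\infty$-associated graded back to $HF^*(\lambda H)$ only requires working over a field, which is our standing assumption on $\k$.
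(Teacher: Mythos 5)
Your proposal is correct and follows essentially the same route as the paper: invoke the spectral sequence of \cref{PropRSIndicesGoToInfty2}\eqref{MorseItem7}, use \cref{LemmaSec2GradingOfFa} to place the $E_1$-page in even total degree, observe that the total-degree-raising differentials must therefore vanish, and finish the small-$\lambda$ case via \cref{For Lambda small Floer Shift Equal To Morse}. Your explicit remark that the field assumption is what makes the $E_\infty$-filtration split is a worthwhile detail that the paper leaves implicit.
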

\begin{proof}
In \cref{PropRSIndicesGoToInfty2}.\eqref{MorseItem7} the $E_1$-page is concentrated in even total degrees because $H^*(\F_\a)$ lives in even degrees by assumption, and by %
\cref{LemmaSec2GradingOfFa} 
$\mu_\lambda(\F_\a)$ is even for generic $\lambda$.
The differentials in the spectral sequence increase the total grading by one, so all differentials vanish after the $E_1$-page, so the spectral sequence has already converged.
The claim about $\mu_\a=\mu_{\delta}(\F_\a)$ is \cref{For Lambda small Floer Shift Equal To Morse}.
\end{proof}

\begin{rmk}\label{Rmk upper bound on grading for lambda H} 
Suppose $\mu_{\lambda}(\F_\a)\leq \mu_\a$ for all $\lambda>0$, $\a$ (e.g.\,see \cref{Lemma muFa is smaller than MB index mua}).
Then even if $Y$ had odd cohomology, the above argument would imply that $HF^*(\lambda H)$ is supported in degrees 
$$*\in \left[\min_{\a}\mu_{\lambda}(\F_\a),\max_{\a} \left(\mu_{\lambda}(\F_\a)+2| \F_\a|\right)\right] \subset 
\left[\min_{\a}\mu_{\lambda}(\F_\a),\max_{\a} \left(\mu_\a+2|\F_\a|\right)\right],
$$
where $|V|:=\dim_{\C}V$.
So $HF^*(\lambda H)$ has grading $\leq $ the top supported grading of $H^*(Y)$, by \cref{Lemma Frankel properness}.
\end{rmk}

\begin{rmk}[Rank drops of $\mu_{\lambda}(\F_\a)$ yield lower bounds on the filtration]\label{Remark how rank drops give lower bounds on filtration}
Observe that the total rank of the vector spaces $QH^*(Y)\cong \oplus_\a H^*(\F_\a)[-\mu_\a]$ and $HF^*(\lambda H)\iso \oplus_\a H^*(\F_\a)[-\mu_{\lambda}(\F_\a)]$ are the same, but the map $c_{\lambda}^*$ in \eqref{contmap} is grading-preserving. Thus the more $\mu_\a-\mu_{\lambda}(\F_\a)$ drops in $\lambda$ (without a compensating jump from a different value of $\a$), the better the lower bound on the rank of $\ker c_{\lambda}^*$ (this also works without the odd-cohomology vanishing assumption of \cref{pureHam}, by considering
\cref{PropRSIndicesGoToInfty2}.\eqref{MorseItem7}). This yields \cref{Equation lower bound rank of filtration} (the maximum is due to $\FF^{\Fi}_{\lambda}\subset \FF^{\Fi}_{\gamma}$ for $\lambda<\gamma$).
\end{rmk}

Considering the last remark (or equally, using \cref{Equation lower bound rank of filtration}), together with \cref{Estimates of 1/s+ time indeces for weight-s CSRs} we get:
\begin{cor} Given a weight-$s$ CSR $Y,$ 
$\FF_{{1}/{s}}^{\Fi} \supset H^{\geq 2}(Y),  \ \FF_{{2}/{s}}^{\Fi} = H^{*}(Y).$\\ 
In particular, $\FF_1^{\Fi}=H^*(Y)$ for $s=2$; and 
$\FF_1^{\Fi} \supset H^{\geq 2}(Y), \ \FF_2^{\Fi}=H^*(Y)$ for $s=1.$
\end{cor}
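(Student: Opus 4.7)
The plan is to deduce this from \cref{Estimates of 1/s+ time indeces for weight-s CSRs} together with \cref{pureHam}, which describes the continuation map explicitly when cohomology lives in even degrees, and \cref{filtrationByIdeals}, which says that if the unit lies in $\FF_\lambda^{\Fi}$ then $\FF_\lambda^{\Fi} = QH^*(Y)$.

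First, I would recall the relevant structural facts: for a weight-$s$ CSR $Y$ we have $QH^*(Y) \cong H^*(Y;\k)$ and $H^*(Y)$ is concentrated in even degrees with $H^k(Y) = 0$ for $k > \dim_\C Y$, as explained in \cref{Example CSR filtration Intro 1}. In particular, \cref{pureHam} applies, giving the grading-preserving continuation map
\[
c_\lambda^*:QH^*(Y)\cong \bigoplus_{\a} H^*(\F_\a)[-\mu_\a]\longrightarrow \bigoplus_{\a} H^*(\F_\a)[-\mu_\lambda(\F_\a)] \cong HF^*(\lambda H),
\]
for generic $\lambda>0$.

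Next, I would take $\lambda = (1/s)^+$ and apply \cref{Estimates of 1/s+ time indeces for weight-s CSRs}, which states $\mu_{(1/s)^+}(\F_\a) = -\dim_\R \F_\a$. Since $H^*(\F_\a)$ is concentrated in degrees $0,\ldots,\dim_\R\F_\a$, the summand $H^*(\F_\a)[-\mu_{(1/s)^+}(\F_\a)]$ sits in degrees $[-\dim_\R \F_\a,\,0]$. Consequently $HF^k(\tfrac{1}{s}^+ H) = 0$ for all $k>0$. As $H^*(Y)$ is in even degrees, this forces $c^*_{(1/s)^+}$ to vanish on $H^{\geq 2}(Y)$, giving $H^{\geq 2}(Y) \subset \FF^{\Fi}_{1/s}$.

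Then, for $\lambda = (2/s)^+$, \cref{Estimates of 1/s+ time indeces for weight-s CSRs} gives the strict inequality $\mu_{(2/s)^+}(\F_\a) < -\dim_\R \F_\a$, so each summand $H^*(\F_\a)[-\mu_{(2/s)^+}(\F_\a)]$ now sits in degrees strictly below $0$, and hence $HF^0(\tfrac{2}{s}^+ H) = 0$. In particular $c^*_{(2/s)^+}(1) = 0$, so $1\in \FF^{\Fi}_{2/s}$; by \cref{filtrationByIdeals} this forces $\FF^{\Fi}_{2/s} = QH^*(Y) = H^*(Y)$. The two specialisations to $s=1$ and $s=2$ follow immediately. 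There is no real obstacle here — the result is essentially a degree-counting corollary, and the only thing to watch is that the grading shifts in \eqref{contmap} are applied correctly.
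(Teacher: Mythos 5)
Your proof is correct and follows essentially the same route as the paper: both combine the index estimates of \cref{Estimates of 1/s+ time indeces for weight-s CSRs} with the even-degree splitting of the continuation map from \cref{pureHam}, and then read off the kernel by comparing gradings. The only cosmetic difference is that for $\FF^{\Fi}_{2/s}$ you pass through ``unity is the last to die'' (\cref{filtrationByIdeals}) rather than noting directly that $HF^k((2/s)^+H)=0$ for all $k\geq 0$ since the top supported degree $-\dim_\C Y-h_{s/2}^\a$ is strictly negative; both are fine.
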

\subsection{Computation of the continuation maps}\label{Subsection first order approximation of continuation maps}
\label{Subsection computation of the continuation maps}
\label{Subsection about what powers of T are used in continuation solutions}

Following \cite[Sec.3.2-3.3]{R10}, in situations where the Floer action functional $A_H$ is well-defined, for continuations solutions $v$ there is a difference between the (geometric) energy $E(v):=\int |\partial_s v|^2\, ds \wedge dt = A_{H_-}(x_-)-A_{H_+}(x_+)+\int_{\R \times S^1} \partial_s H_s(v)\, ds \wedge dt$ and the topological energy $E_0(v)=A_{H_-}(x_-)-A_{H_+}(x_+)$, where $x_{\pm}(t)=v(\pm\infty,t)$. Continuation maps count solutions with Novikov weight $T^{E_0(v)}$. One can write this without reference to $A_H$,
$$\textstyle
E_0(v)=E(v)-\int_{\R \times S^1} \partial_s H_s(v)\, ds \wedge dt = \int v^*(\omega - dK \wedge dt),
$$
abbreviating $K: \R \times Y \to \R$, $K(s,y) := H_s(y)$.
If $\partial_s H_s\leq 0$, then $E_0(v)\geq E(v)\geq 0$.
If a $1$-orbit $x$ is in a region where $\partial_s H_s = 0$, then $x$ is a $1$-orbit of $H_{\pm}=H_{\pm \infty}$, and
$E_0(v_x)=0$ for the ``constant'' continuation solution $v_x(s,t)=x(t)$.

Consider homotopies of type $H_s = \lambda_s H$ with $\lambda_s$ non-increasing. Add a constant to $H$ if necessary, so that $H\geq 0$, then $\partial_s H_s\leq 0$. There can be ``constant'' continuation solutions with $E_0(v_x)>0$: for homotopies $H_s = \lambda_s H$ with $\lambda_s$ non-increasing, $H\geq 0$, and $x\in \F_\a\subset \mathrm{Crit}(H)$, let $\lambda_{\pm}:=\lambda_{\pm \infty}$, then 
$$E_0(v_x)=H_{\a}\cdot (\lambda_- - \lambda_+),$$
where $H_{\a}:=H(\F_{\a})$.
Despite $\partial_s H_s\leq 0$, one cannot exclude the possibility of non-constant continuation solutions $v$ of the same (or possibly lower) $E_0(v)$ value than $E_0(v_x)$. Indeed, all continuation solutions in a small tubular neighbourhood of $x\in \F_\a$ will have the same $E_0$ value.\footnote{$\omega=d\theta$ is exact in such a neighbourhood, so $A_H(x):=-\int x^*\theta + \int H(x)\,dt$ and $A_{H_-}(x_-)-A_{H_+}(x_+)=H_{\a}\cdot (\lambda_- - \lambda_+)$.}

It seems that the contribution of a regular constant continuation solution might be unexpectedly cancelled in the continuation map count. We can exclude this for $\F_{\min}$: as\footnote{Alternatively, replacing $\lambda_s H$ by $H_s:=\lambda_s(H-H_{\min})$ does not affect $X_{H_s}$ (the Floer continuation solutions are the same, but we count them with shifted weights). The minimisers of $E_0(v)$ are then constant continuation solutions $v_x$ at $x\in \F_{\min}$, which have $E_0(v_x)=0$. Finally, adding a constant $C$ to $H$ corresponds to a continuation map $T^{C}\cdot \mathrm{id}$. We cannot do this for general $\F_{\a}\neq \F_{\min}$ as $\partial_s(\lambda_s(H-H_{\a}))$ is not $\leq 0$ everywhere.} $H\geq H_{\min},$ 
\begin{equation}
\label{Equation estimate energy continuation} 
-\int \partial_s H_s (v)\,ds\wedge dt \geq - H_{\min} \cdot \int \partial_s \lambda_s\, ds = (\lambda_- - \lambda_+) H_{\min}=:\delta_{\min}.
\end{equation}
So constant continuation solutions are counted with weight $T^{\delta_{\min}}$; any other with $T^{>\delta_{\min}}.$ So it is convenient to choose $H_{\min}=0$, thus $\delta_{\min}=0$, by adding a constant to $H$.
As in \cref{For Lambda small Floer Shift Equal To Morse}, let $$\lambda_{\alpha}:=\min\{\tfrac{1}{|k|}: h_k^{\alpha}\neq 0 \textrm{ for }k\in \Z\setminus \{0\}\}=1/(\textrm{maximal absolute weight of }\F_\a).$$

Similarily to \cref{pureHam}, when $H^*(Y)$ lies in even degrees, Floer continuation maps become:
\begin{align}
\label{Equation 2 cont}
\psi_{\gamma,\lambda}&:HF^*(\lambda H)\cong \oplus_\b H^*(\F_\b)[-\mu_{\lambda}(\F_\b)]
\to \oplus_\b H^*(\F_\b)[-\mu_{\gamma}(\F_\b)] \cong H^*(\gamma H).
\end{align}

\begin{prop}\label{Prop filtration precise information}
Let $\F_{\a}:=\F_{\min}$ here. Suppose $H^*(Y)$ lies in even degrees. 
If for each weight $\m$ of $\F_\a$ in \eqref{Eqn weight spaces Hk} there are no integers in the interval $(|\m|\lambda, |\m|\gamma)$, then $\mu_{\lambda}(\F_\a)=\mu_{\gamma}(\F_\a)$ and the matrix for 
the part of 
\eqref{Equation 2 cont} given by $H^*(\F_\a)[-\mu_{\lambda}(\F_\a)]
\to \oplus_{\beta} H^*(\F_{\beta})[-\mu_{\gamma}(\F_{\beta})]$, 
has the form
\begin{equation}\label{Equation continuation estimate map id plus higher}
T^{\delta_{\a}}\cdot \left(\mathrm{id}_{\alpha}+T^{>0}\textrm{--terms}\right),\, \textrm{where }\delta_{\a}:=(\gamma-\lambda)\cdot H_{\a},
\end{equation}
and where $\mathrm{id}_{\alpha}$ denotes the identity map on $H^*(\F_\a)$-classes. In particular, in the notation of \cref{Subsection specialisation argument}, $$\mathrm{ini}(\ker \psi_{\gamma,\lambda})\subset \oplus_{\beta\neq \a}H^*(\F_{\beta};\mathbb{B})[-\mu_{\lambda}(\F_{\beta})].$$
\end{prop}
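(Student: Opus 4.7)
The plan has three steps. First, I will establish the index equality $\mu_\lambda(\F_\a)=\mu_\gamma(\F_\a)$. By \cref{Definition critical times}, $\mu_\lambda(\F_\a)$ can only jump at an $\a$-critical time $k/|m|$ where $m$ is a non-zero weight of $\F_\a$ and $k\in \Z_{>0}$; the hypothesis that no integer lies in $(|m|\lambda,|m|\gamma)$ for each weight $m$ of $\F_\a$ precisely rules out such jumps on the interval $(\lambda,\gamma)$.

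Second, to describe the leading $T$-order of $\psi_{\gamma,\lambda}$ restricted to the $H^*(\F_\a)$ summand, I will use Morse-perturbed models $\widetilde{F}_\lambda,\widetilde{F}_\gamma$ from \cref{PropRSIndicesGoToInfty2} with common auxiliary Morse functions $f_\b$ on each $\F_\b$, interpolated by a $\Fi$-admissible homotopy $H_s=\lambda_s H$ (with the same Morse perturbation throughout). The continuation map counts Floer solutions $v$ weighted by $T^{E_0(v)}$. By the key estimate \eqref{Equation estimate energy continuation} of \cref{Subsection about what powers of T are used in continuation solutions}, using $H\geq H_{\min}=H_\a$:
\begin{equation*}
E_0(v)\geq (\gamma-\lambda)H_\a=\delta_\a,
\end{equation*}
with equality only when the image of $v$ lies in $\{H=H_{\min}\}=\F_\a$. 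Consequently the leading contribution to the $H^*(\F_\a)\to H^*(\F_\a)$ block comes entirely from continuation solutions with image in $\F_\a$. By the standard correspondence between low-energy Floer solutions and Morse trajectories in the {\MB } setting (see \cite[Appendix A]{RZ2} and the local analysis underlying \cref{PropRSIndicesGoToInfty2}), these contribute the Morse-theoretic continuation map for $f_\a$ with itself, which at cohomology level is the identity on $H^*(\F_\a)$. All other continuation solutions contribute strictly higher powers of $T$, yielding the block form $T^{\delta_\a}(\mathrm{id}_\a+T^{>0}\textrm{-terms})$.

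Third, for the off-diagonal blocks coming from $H^*(\F_\a)$ and landing in $H^*(\F_\b)$ with $\b\neq\a$: any continuation solution asymptotic to $x_-\in \F_\a$ at one end and $x_+\in \F_\b$ at the other must traverse a region where $H>H_\a$, so $E_0(v)\geq \delta_\a$ with strict inequality (since $v$ cannot remain in $\F_\a$). More generally, the $\b$-column of the matrix has $T$-order $\geq \delta_\b=(\gamma-\lambda)H_\b$, and $\delta_\b>\delta_\a$ since $\F_\a=\F_{\min}$. For the kernel statement, if $x=T^{a_0}\bigl(x_\a^0+\sum_{\b\neq\a}x_\b^0\bigr)+T^{>a_0}$-terms lies in $\ker\psi_{\gamma,\lambda}$, then the coefficient of $T^{a_0+\delta_\a}$ in the $\F_\a$-component of $\psi_{\gamma,\lambda}(x)$ equals $x_\a^0$: no other summand can contribute at such a low $T$-power, so $x_\a^0=0$, giving $\mathrm{ini}(x)\in\oplus_{\b\neq\a}H^*(\F_\b;\mathbb{B})[-\mu_\lambda(\F_\b)]$.

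The main obstacle will be rigorously identifying the low-energy contribution with the Morse-continuation identity map. Specifically, one needs to show that after transversality perturbation, the moduli of continuation solutions with image in $\F_\a$ corresponds bijectively (with signs) to Morse continuation trajectories for $f_\a$; this uses the {\MBF } continuation machinery of \cite[Appendix A]{RZ2} combined with the complex-linearity of the linearised $S^1$-flow on the normal bundle, which ensures the normal Floer data decouples trivially in the low-energy limit. The critical-time hypothesis is exactly what guarantees that no new non-constant low-energy Floer solutions at $\F_\a$ can appear between $\lambda$ and $\gamma$ to spoil this identification.
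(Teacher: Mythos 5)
Your proof takes essentially the same route as the paper's: the energy estimate \eqref{Equation estimate energy continuation} isolates the constant continuation solutions at $\F_\a=\F_{\min}$ as the only $T^{\delta_\a}$ contribution, and the decoupling via the weight decomposition together with the no-integers hypothesis shows the local continuation map on the normal data is an isomorphism, giving $\mathrm{id}_\a$ at leading order. The one imprecision is the claim that ``the $\b$-column of the matrix has $T$-order $\geq \delta_\b$'': this is an overclaim, since a non-constant solution with one asymptote at $\F_\b$ can dip down toward $\F_{\min}$ so that the estimate \eqref{Equation estimate energy continuation} only yields $E_0\geq\delta_\a$; what one actually gets (and what suffices) is strict inequality $E_0>\delta_\a$ for such solutions, because $E(v)>0$ forces the bound to be strict. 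Your acknowledged ``main obstacle'' is exactly the step the paper resolves by reducing the local analysis near $\F_\a$ to the continuation map on $HF^*(\C;\pi w_i\lambda|z_i|^2)\to HF^*(\C;\pi w_i\gamma|z_i|^2)$, which is an isomorphism precisely when $(w_i\lambda,w_i\gamma)\cap\Z=\emptyset$.
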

\begin{proof}
We will use the {\MB} model for Floer cohomology, which counts ``cascades'': this is explained in detail in the Appendix in \PartII. In particular, a basis of cycles for $H^*(\F_{\beta};\mathbb{B})$ arises from Morse cohomology over $\mathbb{B}$ for auxiliary Morse functions $f_{\beta}:\F_{\beta}\to \R$. 

Continuation maps in the {\MB} model count cascades in which a Floer continuation cylinder must be present.
The homotopy of Hamiltonians used is $\lambda_s H$ where $\lambda_s=\gamma$ for $s\ll 0$; $\lambda_s = \lambda$ for $s\gg 0$; and $\partial_s \lambda_s \leq 0$.
Thus $\partial_s H_s\leq 0$, so
continuation cascades will be counted with a factor given by a non-negative power of $T$.
By \eqref{Equation estimate energy continuation}, only the constant cascades with asymptotes in $\F_\a$ are counted with power $T^{\delta_{\a}}$, any other cascade is counted with a power $T^{>\delta_{\a}}$. 
Thus, to prove the claim, it suffices to explain why a constant cascade is regular. We consider the constant continuation cylinder at each constant $1$-orbit at $x\in \mathrm{Crit}(f_\a)\subset \F_\a$, where $f_\a: \F_\a \to \R$ is the auxiliary Morse function used in the {\MBF} model. 
The local model near $x$ is described by the weight decomposition:
$$
\C^d \oplus \bigoplus_i \C_{w_i}
$$
where $d=\dim_{\C} \F_\a$, and $\C_{w_i}$ denotes a copy of $\C$ with the weight $w_i\neq 0$ action. Thus
$$\textstyle
H_s = \pi \lambda_s \sum  w_i |z_i|^2.
$$
The Floer equation therefore decouples, and we reduce to considering the continuation map in $\C$,
$$
HF^*(\C;\pi w_i \lambda |z_i|^2) \to HF^*(\C, \pi w_i \gamma |z_i|^2)
$$
for $\pi w_i \lambda_s |z_i|^2$. One approach is to verify the regularity of the constant Floer continuation solution at $0$ directly. Indirectly, we just need to show that map, viewed as local low-energy Floer cohomologies, is an isomorphism.
But this is known:\footnote{\label{Footnote about continuation map being iso} The continuation map only depends on the two slopes at infinity. Suppose $w_i>0$. We use instead a Hamiltonian $H_{w_i\gamma}$ on $\C$ obtained from $H_{w_i\lambda}:=\pi w_i\lambda|z_i|^2$ by increasing the slope \emph{away from a neighbourhood of $0$}. A monotone homotopy $H_s$ from $H_{w_i\lambda}$ to $H_{w_i\gamma}$ satisfies the maximum principle, so continuation solutions lie in the region where $H_s=H_{w_i\gamma}=H_{w_i\gamma}$ is $s$-independent, as $H_{w_i\gamma}$ does not have any new $1$-orbits compared to $H_{w_i\lambda}$. Those solutions cannot be rigid due to $s$-reparametrisation unless they are constant at $0$. Finally the constant $0$ is regular for $H_{w_i\lambda}$ (which equals $H_s$ near $0$). For $w_i<0$, $H_{w_i\gamma}$ is obtained by decreasing the negative slope of $H_{w_i \lambda}$ at infinity, so the homotopy $H_s$ is not monotone, but we are only interested in the local low-energy Floer continuation solutions (which is meaningful by using the monotonicity lemma \cite[Lem.33]{R14}), and the same argument shows that the constant solution at $0$ is regular.
Indeed, the local low-energy continuation map is inverse to the continuation map $HF^*(H_{w_i\gamma}) \to HF^*(H_{w_i\lambda})$ in the reverse direction which involves a monotone homotopy and (by the previous argument) just counts the solution at $0$.} 
the map is an isomorphism precisely if the $1$-periodic $S^1$-action on $\C$ does not have periodic orbits with period inside $(w_i\lambda,w_i\gamma)$, equivalently $(w_i\lambda,w_i\gamma)\cap \Z=\emptyset$. The latter holds by the assumption.
The final claim now follows:
if the initial part (cf.\,\cref{Subsection specialisation argument}) of a class in $\oplus_{\beta} H^*(\F_{\beta})[-\mu_{\lambda}(\F_{\beta})]$ is in $H^*(\F_\a)[-\mu_{\lambda}(\F_\a)]$, then its $c_{\lambda}^*$-image  cannot be cancelled by other contributions, as those arise with higher order $T$-terms.
\end{proof}

\begin{cor}\label{Corrolary on until when F_min = Id under continuation}

If $H^*(Y)$ lies in even degrees, and $\lambda<\lambda_{\min}$, then $c_{\lambda}^*:H^*(\F_{\min})\to HF^*(H_{\lambda})$ is injective, so $\FF_{\lambda}^{\Fi}\cap H^*(\F_{\min})=\{0\}$.
Therefore, in the notation of \cref{Subsection specialisation argument},
$$
\FFFF^{\Fi} \subset \oplus_{\beta\neq \min} H^*(\F_{\beta};\mathbb{B})[-\mu_\beta] \qquad \textrm{and }\qquad
\FFFF^{\Fi}\otimes_{\mathbb{B}}\k\cong \FF_{\lambda}^{\Fi} \subset \k \cdot ( \FFFF^{\Fi} + \oplus_{\beta} H^*(\F_{\beta};\k_{> 0})). 
$$
Without assumptions on $H^*(Y)$, if $c_1(Y)=0$, $\lambda< \lambda_{\min}$ and $\mu_{\lambda}(\F_\beta)\geq 0$ for all $\beta$, then $1\notin \Fil^{\Fi}_{\lambda}$.
\end{cor}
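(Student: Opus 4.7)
The plan is to apply \cref{Prop filtration precise information} with $\alpha=\min$ and organise everything via $T$-valuations with respect to the Frankel-type decomposition $QH^*(Y)\cong \bigoplus_\beta H^*(\F_\beta)[-\mu_\beta]$ from \cref{SmallHam} and \cref{pureHam}. First I would normalise by adding a constant to $H$ so that $H_{\min}=0$; then $H_\beta>0$ for every $\beta\neq\min$ and the shift $\delta_{\min}=(\lambda-\delta)H_{\min}$ in \eqref{Equation continuation estimate map id plus higher} vanishes.

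For the injectivity claim I apply \cref{Prop filtration precise information} with source slope $\delta>0$ small (so $c_\delta^*$ is the isomorphism of \cref{SmallHam}) and target slope $\lambda$. The hypothesis ``$(|m|\delta,|m|\lambda)\cap\Z=\emptyset$ for every weight $m$ of $\F_{\min}$'' reduces to $|m|\lambda<1$ for every nonzero weight $m$ of $\F_{\min}$, which is exactly $\lambda<\lambda_{\min}$. The Proposition then delivers $\mu_\lambda(\F_{\min})=\mu_{\min}=0$ and
\begin{equation*}
c_\lambda^*\big|_{H^*(\F_{\min})} = \mathrm{id}_{\min}+T^{>0}\text{-terms},
\end{equation*}
where the leading identity lands in the $H^*(\F_{\min})[-\mu_\lambda(\F_{\min})]=H^*(\F_{\min})$-summand of $HF^*(H_\lambda)$. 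Any nonzero $y\in H^*(\F_{\min})$ thus has $\mathrm{val}_T(c_\lambda^*(y))=0$ with $\mathrm{ini}(c_\lambda^*(y))=y\neq 0$, giving injectivity and $\FF_\lambda^\Fi\cap H^*(\F_{\min})=\{0\}$.

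For the specialisation statement, take $x\in\FF_\lambda^\Fi$, rescale so that $\mathrm{val}_T(x)=0$, and decompose $\mathrm{ini}(x)=\sum_\beta y_\beta$ with $y_\beta\in H^*(\F_\beta;\mathbb{B})[-\mu_\beta]$; I claim $y_{\min}=0$. By the previous step, $c_\lambda^*(y_{\min})=y_{\min}+T^{>0}\text{-terms}$ with $y_{\min}$ in the $H^*(\F_{\min})$-summand of the target. For $\beta\neq\min$, any continuation cascade $v$ from a chain generator over $\F_\beta$ (at $s=+\infty$) to one over $\F_{\min}$ (at $s=-\infty$) has topological energy $E_0(v)\geq 0$, and $E_0(v)=0$ forces $v$ to be $s$-independent and hence constant (from $\partial_t v = \lambda_s X_H(v)$ being $s$-independent we get $v(t)\in\F$, so $\partial_t v = 0$), which is impossible for $\beta\neq\min$; so the $H^*(\F_{\min})$-component of $c_\lambda^*(y_\beta)$ has $T$-valuation $>0$. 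Similarly, the $T^{>0}$-part of $x$ maps to $T^{>0}$-terms under $c_\lambda^*$. Collecting these facts, the $H^*(\F_{\min})$-component of $\mathrm{ini}(c_\lambda^*(x))$ equals $y_{\min}$, which must therefore vanish. The displayed isomorphism $\FFFF^\Fi\otimes_{\mathbb{B}}\k\cong\FF_\lambda^\Fi$ and its inclusion into $\k\cdot(\FFFF^\Fi+\oplus_\beta H^*(\F_\beta;\k_{>0}))$ are then immediate consequences of \cref{Lemma ini valuation} applied with $V=H^*(Y;\mathbb{B})$.

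For the final assertion I drop the even-degree hypothesis and work in the perturbation model of \cref{PropRSIndicesGoToInfty2}. The unit $1\in QH^0(Y)$ is supported entirely in the $H^*(\F_{\min})$-summand, since $\mu_\beta\geq 0$ with equality only for $\beta=\min$ by \cref{Lemma Frankel properness}. The local model computation in the proof of \cref{Prop filtration precise information}---a Floer calculation on the $\C_{w_i}$-factors that does not use even degrees---shows that $c_\lambda^*(1)$ is a cocycle in $CF^0(H_\lambda)$ whose $T^0$-leading term equals $1$. To upgrade non-vanishing at chain level to non-vanishing in cohomology, I invoke the grading formula $|x_{\beta,p}|=\mu_f(p)+\mu_\lambda(\F_\beta)$ of \cref{PropRSIndicesGoToInfty2}.\eqref{MorseItem3}: together with $\mu_f(p)\geq 0$ and the standing hypothesis $\mu_\lambda(\F_\beta)\geq 0$, this forces $CF^{-1}(H_\lambda)=0$, so there are no coboundaries in degree $0$. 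Hence $c_\lambda^*(1)\neq 0$ in $HF^0(H_\lambda)$, and $1\notin \FF_\lambda^\Fi$. The main obstacle is the $T$-valuation bookkeeping in the third paragraph: ruling out $T^0$-contributions of cascades into the $\F_{\min}$-summand hinges on combining the normalisation $H_{\min}=0$ with the rigidity statement that $E_0=0$ forces a constant cascade.
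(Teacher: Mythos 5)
Your proof is correct and takes essentially the same route as the paper's. Both rely on \cref{Prop filtration precise information} specialised to $\F_{\min}$ with the small-slope source, both use the energy estimate \eqref{Equation estimate energy continuation} (with the normalisation $H_{\min}=0$ so that $\delta_{\min}=0$) to see that cascades touching $\F_{\min}$ from elsewhere carry $T^{>0}$, and both prove the final assertion by observing that $\mu_\lambda(\F_\beta)\geq 0$ forces $CF^{-1}(H_\lambda)=0$ so the chain-level cocycle representing $c^*_\lambda(1)$ cannot be a coboundary. The paper's proof is terser (it cites \cref{Prop filtration precise information} for both the injectivity and specialisation claims without re-deriving the cascade-valuation argument), whereas you unfold that argument explicitly; your explicit remark that the local $\C_{w_i}$-model computation of \cref{Prop filtration precise information} does not itself use the even-degree hypothesis is a genuine clarification of what the paper leaves implicit when citing that Proposition in the final, hypothesis-free part of the corollary.
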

\begin{proof}
The first part is immediate from \cref{Prop filtration precise information}:
if the initial part (in the sense of \cref{Subsection specialisation argument}) of a class in  $\oplus_{\beta} H^*(\F_{\beta})[-\mu_{\beta}]$ involves a non-zero entry in $H^*(\F_{\min};\mathbb{B})$, then the image under $c_{\lambda}^*$ of that entry cannot be cancelled by other contributions, as those arise with higher order $T$-terms. 

For the second part, let $1$ be the unit cycle in $QH^*(Y)$. Its image $c^*_{\lambda}(1)=T^{\delta_{\min}}\cdot (1+T^{>0}\textrm{-terms})\in CF^*(\lambda H)$ (by \cref{Prop filtration precise information}) is a cycle but we need to ensure it is not a boundary. The conditions $\mu_{\lambda}(\F_\beta)\geq 0$ ensure no chains in $CF^*(\lambda H)$ have negative grading, in particular grading $-1$ (which could give rise to a non-trivial differential killing the unit due to high-energy Floer trajectories). 
\end{proof}

\begin{rmk}
    We expected \cref{Prop filtration precise information} to hold for all $\F_\a$, and the proof as written would work if we knew that non-constant Floer continuation solutions/cascades for $H_s$ with one end on $\F_{\a}$ had $E_0$-value strictly greater than $\delta_{\a}$ (currently we obtain this only for $\F_{\min}$ via \cref{Equation estimate energy continuation}).
\end{rmk}

\begin{de}
    $\F_\a$ is {\bf $p$-stable} if $\tfrac{1}{p}\Z_{\neq 0}\cap ($weights($\F_\a))=\emptyset$. (So:\,$p\neq \tfrac{m}{k}$, for $k\in $\,weights($\F_\a$), $m\in \Z$).
\end{de}

\begin{prop}\label{Prop filtration precise information 2}
Let $H^*(Y)$ lie only in even degrees. 
Suppose that 
for each weight $m$ of $\F_\a$ %
there are no integers in the interval $(|m|\lambda, |m|\gamma)$, 
so $\mu_{\lambda}(\F_\a)=\mu_{\gamma}(\F_\a)$. 
If $\gamma-\lambda$ is sufficiently small, then the part of the map $\psi_{\gamma,\lambda}$ in \eqref{Equation 2 cont} given by $H^*(\F_\a)[-\mu_{\lambda}(\F_\a)]
\to \oplus_{\beta} H^*(\F_{\beta})[-\mu_{\gamma}(\F_{\beta})]$ equals \eqref{Equation continuation estimate map id plus higher}.

For $\lambda=p^-$, $\gamma=p^+$, and $p$-stable $\F_\a$ the two indices $\mu_{p^{\pm}}(\F_\a)$ are equal, and $\psi_{p^+,p^-}$ restricted to $\oplus_{p\textrm{-stable }\F_\a} H^*(\F_\a)[-\mu_{p^-}(\F_\a)]$ is injective.
For the smallest period $p>0$ for which $S^1$-orbits exist, the total rank $\|\FF^{\Fi}_{p}\|\leq \|H^*(Y)\|-\sum_{(p\textrm{-stable }\F_\a )} \|H^*(\F_\a)\|.$ 
\end{prop}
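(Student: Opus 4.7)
The plan is to adapt the proof of \cref{Prop filtration precise information} essentially verbatim. The only obstruction there was the global energy estimate \eqref{Equation estimate energy continuation}, which was specific to $\F_{\min}$ because it relied on $H \geq H_{\min}$ pointwise. For general $\F_\a$ I will replace it by a uniform lower bound on the energy of non-constant cascades, made effective by the smallness of $\gamma - \lambda$.

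First I would work in the \MBF{} model. The constant continuation cascade at $x \in \mathrm{Crit}(f_\a)$ contributes weight $T^{\delta_\a}$ to the $(x,x)$-entry of $\psi_{\gamma, \lambda}$; its regularity reduces via the weight decomposition $T_x Y = \C^d \oplus \bigoplus_i \C_{w_i}$ to the regularity of the constant Floer continuation on each $\C_{w_i}$ for the local Hamiltonian $\pi w_i \lambda_s |z_i|^2$, and by the footnote argument in the proof of \cref{Prop filtration precise information} this is an isomorphism precisely when $(|w_i|\lambda, |w_i|\gamma) \cap \Z = \emptyset$ for every nonzero weight $w_i$ of $\F_\a$. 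This is our hypothesis, so these cascades produce the $T^{\delta_\a}\cdot \mathrm{id}_\a$ leading term (on $H^*(\F_\a)$, since the zero-Floer-energy part of the chain-level map is a Morse continuation chain map on the $f_\a$-complex, hence chain-homotopic to the identity). Any other contribution to the $(x,y)$-entry, with $y \in \mathrm{Crit}(f_\beta)$, is a Novikov sum $\sum_A n_{x,y,A}\,T^{E_0(x,y,A)}$ with topological energy $E_0(x,y,A) = \gamma H_\a - \lambda H_\beta - \omega(A)$. For $A = 0$ the inequality $E_0 \geq E(u) \geq 0$ forces $E(u) = 0$, hence the cascade is constant on the Floer piece, forcing $\beta = \a$ and $x = y$ -- already accounted for. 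For $A \neq 0$ the projection trick \eqref{Eq: projected v continuation} confines all cascades to a compact subset $K \subset Y$ depending only on the asymptotes, and Gromov compactness on $K$ (via weak monotonicity, \cref{Rmk technical symplectic assumptions on Y}) provides an energy quantum $\hbar > 0$, uniform over a small slope interval around $\lambda$, with $E_0(x,y,A) \geq \hbar$ whenever $A \neq 0$. Shrinking $\gamma - \lambda$ so that $\delta_\a = (\gamma-\lambda) H_\a < \hbar/2$ for every $\a$ then forces $E_0 > \delta_\a$ on every such contribution. The main obstacle is securing the uniformity of $\hbar$ as $\gamma - \lambda \to 0$; this reduces to a routine continuity-of-Floer-data argument on $K$, combined with the standard energy quantization from weak monotonicity.

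For the second part, $p$-stability of $\F_\a$ is precisely $p|k| \notin \Z$ for every nonzero weight $k$ of $\F_\a$, hence by continuity there exists $\eta_\a > 0$ with $(|k|\lambda, |k|\gamma) \cap \Z = \emptyset$ whenever $p - \eta_\a < \lambda < p < \gamma < p + \eta_\a$. Taking $\eta := \min_{p\text{-stable }\a}\eta_\a$ (further shrunk to satisfy the smallness required above) and $p^\pm := p \pm \tfrac{1}{2}\eta$, Part 1 applies simultaneously to every $p$-stable $\F_\a$, and the initial-term argument of \cref{Subsection specialisation argument} yields injectivity of $\psi_{p^+, p^-}$ restricted to $\bigoplus_{p\text{-stable }\F_\a} H^*(\F_\a)[-\mu_{p^-}(\F_\a)]$. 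When $p$ is the smallest $S^1$-period, the interval $(0, p^-]$ contains no $S^1$-period, so by the argument of \cref{Prop filtration is stable} the continuation $c^*_{p^-}: QH^*(Y) \to HF^*(p^- H)$ is an isomorphism and $\FF^{\Fi}_p = \FF^{\Fi}_{p^+}$. Since $c^*_{p^+} = \psi_{p^+, p^-} \circ c^*_{p^-}$, $\ker c^*_{p^+}$ is identified with $\ker \psi_{p^+, p^-}$, and the injectivity just obtained gives
\[
\|\FF^{\Fi}_p\| \;=\; \|\ker \psi_{p^+, p^-}\| \;\leq\; \|H^*(Y)\| \;-\; \sum_{p\text{-stable }\F_\a} \|H^*(\F_\a)\|.
\]
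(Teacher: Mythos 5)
Your overall strategy mirrors the paper's proof: replace the global estimate \eqref{Equation estimate energy continuation} by a bound local to $\F_\a$ that becomes effective when $\gamma-\lambda$ is small, then identify the constant-cascade contribution via the weight-space regularity analysis. Parts 2 and 3 of your argument are correct and match the paper (modulo the $T^{-\delta_\a}$ normalisation needed before applying the initial-term argument, since the $\delta_\a$ differ across components). The gap is in the energy dichotomy in Part 1. The claim that ``$A=0$ forces $E(u)=0$, hence a constant cascade'' is false: the topological energy $E_0$ is determined by the asymptotics and the class $A$ alone -- for a cascade with ends on $\F_\a$ and $\F_\beta$ it is $\gamma H_\beta - \lambda H_\a - \omega(A)$ (you have $\alpha,\beta$ transposed, a minor matter) -- so $E_0=\delta_\a$ whenever $\beta=\a$ and $A=0$, irrespective of whether $u$ is constant. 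Non-constant cascades with $A=0$ and $E(u)>0$ are unavoidable and a priori contribute at the same Novikov weight $T^{\delta_\a}$. The $A\neq 0$ claim is also off: since $\omega(A)$ enters $E_0$ with a \emph{minus} sign, a sphere-energy quantum bounds $E_0$ from above relative to $\delta_\a$, not from below, so ``Gromov compactness gives $E_0\geq\hbar$ for $A\neq 0$'' is not what energy quantization delivers.

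The correct replacement, and the one the paper uses, is a geometric dichotomy on whether the cascade leaves a fixed tubular neighbourhood $N_\a$ of $\F_\a$. A monotonicity-lemma argument (\cite[Lem.33]{R14}), valid because $\partial_s H_s\leq 0$ and uniform in the Floer data as $\gamma\to\lambda$, gives $\epsilon>0$ such that any continuation solution with an end on $\F_\a$ escaping $N_\a$ has \emph{geometric} energy $E(v)\geq\epsilon$, hence $E_0(v)\geq E(v)\geq\epsilon>\delta_\a$ once $\gamma-\lambda$ is small enough. This disposes at once of all escaping contributions regardless of $A$ and of $H_\beta$. Solutions confined to $N_\a$ are precisely those computing low-energy local Floer cohomology of $\F_\a$, and your regularity analysis of the constant solution on each weight summand $\C_{w_i}$ (which is correct) then identifies this total local contribution with $T^{\delta_\a}\cdot\mathrm{id}_\a$. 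In short: replace your $A=0$ vs.\ $A\neq 0$ split by the escapes-$N_\a$ vs.\ stays-in-$N_\a$ split; your $A$-based dichotomy simply does not control the non-constant small-energy cascades.
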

\begin{proof}
As $\partial H_s\leq 0$, a monotonicity lemma argument (e.g. \cite[Lem.33]{R14}) implies that given a tubular neighbourhood $N_{\a}$ of $\F_\a$, there is an $\epsilon>0$, so that continuation solutions with at least one end on $\F_\a$, and not entirely contained in $N_{\a}$, have energy $E(v)\geq \epsilon >0.$ If $\gamma-\lambda$ is small enough so that $\delta_{\a}$ in \eqref{Equation continuation estimate map id plus higher} satisfies $\delta_{\a}<\epsilon$, then $E_0(v)>\delta_{\a}$ for all cascades with ends on $\F_\a$ except possibly for those in $N_{\a}$.
Cascades in $N_{\a}$, for small $N_{\a}$ and small energy, involve the low-energy local Floer cohomology of $\F_\a$ (see the Appendix in \PartII). But the assumption implies that those local groups are isomorphic under the continuation map (cf.\,the previous \cref{Footnote about continuation map being iso}). So the total contribution of the continuation cascades in $N_{\a}$ (all contributing with weight $T^{\delta_{a}}$) is the identity continuation map times $T^{\delta_{a}}$.

The second claim follows: $\psi_{p^+,p^-}(T^{-\delta_{\a}}x_\a) = x_\a+(T^{>0}$-terms) for $x_\a\in H^*(\F_\a)[-\mu_{p^-}(\F_\a)]$ where $\F_\a$ is  $p$-stable. For the restricted map in the claim,
$\psi_{p^+,p^-}(\sum n_\a T^{-\delta_\a}x_\a)=\sum n_\a (x_\a + T^{>0}$-terms), which is non-zero unless all $n_\a\in \k$ are zero.\footnote{if the minimal order $T$ term arising in the $\psi$-output is $T^{\min}\sum b_\a x_\a$, where $b_\a\in \mathbb{B}$ are in the base field, then this must arise for the subset of the $n_\a\in \k$ of the form $T^{\min}(b_\a + T^{>0}$-terms). Then use that the $x_\a$ are independent over $\mathbb{B}$.}
The final claim follows by rank-nullity: by stability in \eqref{Intro Stability property}, $c_{p^-}^*$ is an isomorphism, and by the second claim: $\|\mathrm{rk}\,\psi_{p^+,p^-}\|\geq \sum_{(p\textrm{-stable }\F_\a )} \|H^*(\F_\a)\|$.
\end{proof}

\begin{rmk}\label{Remark explaining conj about Euler class in intro}
When $\lambda$ passes a ``critical time'' $\tfrac{k}{m}$, where $m$ is a weight of $\F_\a$, the index $\mu_{\lambda}(\F_\a)$ often drops. If $\F_\a$ does not have negative weights $-mb$ for $b\in \N$, then $\F_\a=\min(H|_{\Ymc})\subset \Ymc$ is $m$-minimal (\cref{Definition generic points of Ymc}), and
$\Ymc$ near $\F_\a$ is a complex vector bundle over $\F_\a$ of complex rank $\rk(\Ymc)$. We show that $\mu_{\lambda}(\F_\a)$ drops precisely by  $2 \mathrm{rk}(\Ymc)$ when $\lambda$ passes the critical time. Working within that bundle, there is an $m$-th root $\varphi^{1/m}$ of the action on $\Ymc$; that corresponds to a full rotation in $\Ymc$ so it has an associated class $Q_{\varphi^{1/m}}\in QH^*(\Ymc)$. One expects by \cite{R14} that $Q_{\varphi^{1/m}}$ is the Euler class $e(\Ymc)$, at least up to higher order $T$ terms. This led us to formulate \cref{Introduction conjecture about cup with Euler class}. 
\end{rmk}

\begin{lm}\label{Lemma when min weights are plus one}
    $\lambda_{min}=1 \Leftrightarrow \mu=\mathrm{codim}_{\C}\F_{\min} \Leftrightarrow (\textrm{all nonzero weights of }\F_{\min}\textrm{ are }+1)$.
\end{lm}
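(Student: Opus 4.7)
The plan is to reduce all three conditions to a statement about the multiset of nonzero weights at $\F_{\min}$, using the key observation that at the minimum of the moment map every nonzero weight must be strictly positive. Indeed, by \cref{Lemma Frankel properness} the {\MB } index $\mu_\a$ of $\F_{\min}$ is zero, so by the formula $\mu_\a = 2\sum_{k<0} h_k^\a$ in \eqref{mu a and maslov index equations} we have $h_k^{\min} = 0$ for all $k<0$.

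With that in hand, I would rewrite the three invariants in terms of the positive weight multiplicities $h_k^{\min}$ for $k\geq 1$:
\begin{itemize}
\item $\lambda_{\min} = 1/\max\{k\geq 1 : h_k^{\min}\neq 0\}$, directly from \cref{For Lambda small Floer Shift Equal To Morse};
\item $\mu = \sum_{k\geq 1} k\, h_k^{\min}$, from \eqref{mu a and maslov index equations} and the vanishing of negative weights;
\item $\codim_{\C}\F_{\min} = \sum_{k\geq 1} h_k^{\min}$, since $T_p Y / T_p \F_{\min} = \oplus_{k\geq 1} H_k$ by \eqref{Eqn weight spaces Hk}.
\end{itemize}

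The equivalence ``all nonzero weights of $\F_{\min}$ are $+1$'' $\Longleftrightarrow$ $\lambda_{\min}=1$ is then immediate from the first bullet (the maximum positive weight equals $1$). For the equivalence with $\mu = \codim_{\C}\F_{\min}$, I would subtract the two sums above to get
\[
\mu - \codim_{\C}\F_{\min} = \sum_{k\geq 1}(k-1)\, h_k^{\min},
\]
which is a nonnegative sum that vanishes if and only if $h_k^{\min}=0$ for every $k\geq 2$, i.e.\ if and only if all nonzero weights are $+1$. The forward direction gives ``all weights $+1$'' $\Rightarrow$ $\mu = \codim_{\C}\F_{\min}$ trivially.

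There is no real obstacle here; the only thing to be careful about is to invoke $\mu_{\min} = 0$ (hence the absence of negative weights) before manipulating the formulas, since the statements $\mu = \sum k\, h_k$ and $\mu = \sum_{k>0} k(h_k-h_{-k})$ from \eqref{mu a and maslov index equations} are of course valid at \emph{any} $\F_\a$ but only collapse to $\sum_{k\geq 1} k\, h_k^{\min}$ once the negative weights are known to vanish, which is specific to the minimal component.
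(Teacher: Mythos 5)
Your proof is correct and takes essentially the same approach as the paper: both reduce everything to the observation that the nonzero weights at $\F_{\min}$ are positive integers, that their number is $\codim_{\C}\F_{\min}$ and their sum is $\mu$, so equality holds iff all weights are $+1$. The paper simply asserts positivity of the weights "as $\F_{\min}=\min H$", while you derive it from $\mu_{\min}=0$ via \eqref{mu a and maslov index equations}; this is a minor cosmetic difference, not a different route.
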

\begin{proof}
There are $\dim_{\C} Y - \dim_{\C}\F_{\aa}$ non-zero weights for $\F_{\aa}$, they are positive integers (as $\F_{\aa}=\min H$) and their sum is $\mu$ (\cref{LemmaSec2MaslovIndex}).
So $\mu=\dim_{\C} Y - \dim_{\C}\F_{\aa}\Leftrightarrow $ positive weights $=+1$.
\end{proof}
 
\begin{cor}\label{Cor CSR when unit dies}
Let $Y$ be a weight-2 CSR with $\Fmin=\{\mathrm{point}\},$
or a weight-1 CSR. %

Then $\Fil^{\Fi}_{\lambda}\cap H^*(\F_{\min})=\{0\}$ for $\lambda<1,$ in particular $1\in H^0(Y)$ survives until time-1. 
\end{cor}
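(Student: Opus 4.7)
The plan is to reduce directly to \cref{Corrolary on until when F_min = Id under continuation}: since $H^*(Y)$ lies in even degrees for any CSR (cf.\ \cref{Example CSR filtration Intro 1}), it suffices to verify that in both cases one has $\lambda_{\min}\geq 1$, i.e.\ every nonzero weight at $\F_{\min}$ has absolute value at most one. By \cref{Lemma when min weights are plus one} this is equivalent to showing all nonzero weights at $\F_{\min}$ equal $+1$.

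The key input is the $\omega_\C$-duality of weight spaces at $\F_{\min}$ for a weight-$s$ CSR (see \eqref{NonDegenPairingByOmC} and \cref{Exercise CSRs are compatibly weighted}):
$$h_k^{\min} = h_{s-k}^{\min} \qquad \text{for all } k\in\Z.$$
Since $\F_{\min}$ is a local minimum of the moment map $H$, there are no strictly negative weights there, i.e.\ $h_k^{\min}=0$ for $k<0$. In the weight-$1$ case, duality becomes $h_k^{\min}=h_{1-k}^{\min}$; for $k\geq 2$ one has $1-k\leq -1$ and so $h_k^{\min}=0$. Thus the only possible nonzero weights at $\F_{\min}$ are $0$ and $+1$. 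In the weight-$2$ case with $\F_{\min}$ a point, one additionally has $h_0^{\min}=\dim_\C \F_{\min}=0$; duality $h_k^{\min}=h_{2-k}^{\min}$ then forces $h_2^{\min}=h_0^{\min}=0$ and $h_k^{\min}=0$ for $k\geq 3$, so again the only nonzero weight is $+1$. In both cases $\lambda_{\min}=1$.

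Now \cref{Corrolary on until when F_min = Id under continuation} yields $\Fil^{\Fi}_\lambda \cap H^*(\F_{\min})=\{0\}$ for all $\lambda<1$. For the ``in particular'' clause, note that under the Frankel decomposition \eqref{EqnFrankel} the unit $1\in H^0(Y)$ corresponds to the generator of $H^0(\F_{\min})$ (the unique summand with $\mu_\alpha=0$, by \cref{Lemma Frankel properness}), hence $1$ lies in the $H^*(\F_{\min})$ summand and thus $1\notin \Fil^{\Fi}_\lambda$ for $\lambda<1$. The argument is essentially algebraic once the two foundational inputs — $\omega_\C$-duality of weights and the injectivity statement of \cref{Corrolary on until when F_min = Id under continuation} — are in hand, so no serious obstacle arises; the only care needed is to check that the hypotheses of that corollary (evenness of $H^*(Y)$, and that the unit genuinely sits in the minimum summand) are satisfied, both of which are standard for CSRs.
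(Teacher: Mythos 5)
Your proof is correct and follows essentially the same route as the paper's: both reduce to \cref{Corrolary on until when F_min = Id under continuation} via \cref{Lemma when min weights are plus one}, using $\omega_\C$-duality \eqref{NonDegenPairingByOmC} to pin down the weights at $\F_{\min}$. The one small difference is in the weight-1 case: the paper appeals to $\F_{\min}$ being the $\omega_J$-Lagrangian from \cite{vzivanovic2022exact} to see that half the weights are $0$ and half are $+1$, whereas you derive the vanishing of $h_k^{\min}$ for $k\geq 2$ directly from duality $h_k^{\min}=h_{1-k}^{\min}$ together with the absence of negative weights at a local minimum. Your route is slightly more self-contained (and only establishes the weaker fact that weights are $0$ or $+1$, which suffices), but the logical skeleton is the same.
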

\begin{proof}
Given a weight-2 CSR with $\Fmin=\{\mathrm{point}\},$ due to \eqref{NonDegenPairingByOmC} it has only 
+1 weights, thus \cref{Lemma when min weights are plus one} applies and
the claim follows from \cref{Corrolary on until when F_min = Id under continuation} ($\lambda<1$ excludes $\alpha=\min$ as  $\lambda_{\min}=1$). 
For a weight-1 CSR, %
$\F_{\aa}$ is an $\omega_J$-Lagrangian (the ``minimal Lagrangian'' of \cite{vzivanovic2022exact}), thus 
half of its weights are equal to 0, and the other half equal to +1 due to \eqref{NonDegenPairingByOmC},
thus the same argument applies.
\end{proof}
\begin{thm}[{\cite[Sec.1.12]{R14}}]\!\!\label{Theorem neg lb paper summary}\footnote{it is not difficult to verify that the construction and properties of this continue to hold in our setup, by using the maximum principle from \cref{Subsec max principle}, and to carry out \cite[Sec.5.4]{R14} we use the same monotonicity-lemma arguments for the projection to the convex base as in 
\cite{RZ2}, when we show that Floer solutions consume $F$-filtration when they cross large linearity regions.
For this part of the argument, we need the assumption that $B^{\mathrm{out}}$ is geometrically bounded.
}
The $S^1$-action $\Fi_t:Y \to Y$ induces a commutative diagram:
$$
\xymatrix@C=45pt@R=13pt
{ SH^*(Y)  \ar@{<-}_{\varinjlim}[d]
\ar@{->}_-{\sim}^-{\mathcal{R}_{\widetilde{\Fi}}}[rr]  & & SH^{*+2I(\widetilde{\Fi})}(Y)
 \ar@{<-}^{\varinjlim}[d] \\
HF^*(\lambda H)  \ar@{<-}_{c_{\lambda}^*}[d] 
\ar@{->}_-{\sim}^-{\mathcal{S}_{\widetilde{\Fi}}}[r]  & HF^{*+2I(\widetilde{\Fi})}((\lambda-1) H)
\ar@{->}[r]^-{\textrm{continuation}}
 &
 HF^{*+2I(\widetilde{\Fi})}(\lambda H)
 \ar@{<-}^{c_{\lambda}^*}[d]
 \\
 QH^*(Y)
\ar@{->}[rr]^-{r_{\widetilde{\Fi}}} & & QH^{*+2I(\widetilde{\Fi})}(Y)
}
$$
Here $\mathcal{R}_{\widetilde{\Fi}}$ and  $\mathcal{S}_{\widetilde{\Fi}}$ are $\k$-module isomorphisms, and $r_{\widetilde{\Fi}}$ is a $\k$-module homomorphism, given as %
quantum %
product by a (typically non-invertible) Gromov-Witten invariant $Q_{\varphi}:=r_{\widetilde{\Fi}}(1)\in QH^{2I(\widetilde{\Fi})}(Y)$,
and  $\mathcal{R}_{\widetilde{\Fi}}$ is pair-of-pants product by
the invertible element $c^*(Q_{\varphi})\in SH^{2I(\widetilde{\Fi})}(Y)$.

The construction depends on a certain choice of lift $\widetilde{\varphi}$ of the $S^1$-action, and this choice can be made \cite[Sec.3.1 and Sec.7.8]{R14} so that $I(\widetilde{\Fi})=\mu$ is the Maslov index of $\Fi$ from \cref{Subsection discussion of ma and muFalapha}.

It follows that $c^*:QH^*(Y)\to SH^*(Y)$ is a quotient map, inducing $\k$-algebra isomorphisms
$$
SH^*(Y)\cong QH^*(Y)/E_0(Q_{\varphi}) \cong QH^*(Y)_{Q_{\varphi}},
$$
where $E_0(Q_{\varphi})$ is the generalised $0$-eigenspace of quantum product by $Q_{\varphi}$, and $QH^*(Y)_{Q_{\varphi}}$ denotes localisation at $Q_{\varphi}$ of the $\k$-algebra $QH^*(Y).$ For the latter isomorphism, see \cite[Lem.4.3]{R16}.

The association of $\widetilde{\varphi}$ to the above $\k$-homomorphisms respects group multiplication, in particular $Q_{\varphi^N}=Q_{\varphi}^{\star N}$. (We caution that with the above choices of lifts, $\widetilde{\Fi_1}\circ \widetilde{\Fi_2}=T^a \widetilde{\Fi_1\circ\Fi_2}$ typically gives rise to correction factors $T^a\in \k$, some $a\in \R$, due to certain deck transformations \cite[Sec.2B,4D]{R16}).
\end{thm}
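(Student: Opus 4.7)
The plan is to follow the strategy of \cite{R14} (as summarised in \cref{Theorem neg lb paper summary}), but implemented in the generality of symplectic $\C^*$-manifolds over a convex base. First I would construct the Seidel-type isomorphism $\mathcal{S}_{\widetilde\Fi}$ at the Floer level as follows. Given an admissible Hamiltonian $F_\lambda$ of generic slope $\lambda$, one defines a twisted Hamiltonian
\[
F_\lambda^{\widetilde\Fi}(t,y):=F_\lambda(t,\Fi_t^{-1}(y))-H(\Fi_t^{-1}(y)),
\]
whose $1$-orbits $y(t)$ are in bijection with the $1$-orbits $x(t)=\Fi_t^{-1}(y(t))$ of $F_{\lambda-1}$, because pulling back by the $S^1$-action shifts the asymptotic slope from $\lambda$ to $\lambda-1$. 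The lift $\widetilde\Fi$ chosen as in \cite[Sec.3.1, 7.8]{R14} pins down the Maslov shift by $2\mu$. I would verify that both $F_\lambda$ and $F_\lambda^{\widetilde\Fi}$ are $\Fi$-admissible, and that the pullback yields a chain-level isomorphism $CF^*(F_\lambda)\cong CF^{*+2\mu}(F_{\lambda-1}^{\widetilde\Fi})$. This produces $\mathcal S_{\widetilde\Fi}:HF^*(\lambda H)\to HF^{*+2\mu}((\lambda-1)H)$.

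Next I would construct the commutative square on the right. Composing $\mathcal S_{\widetilde\Fi}$ with the continuation $HF^{*+2\mu}((\lambda-1)H)\to HF^{*+2\mu}(\lambda H)$ (which exists for $\lambda\ge 1$ as slopes are non-increasing) gives the map whose compatibility with $c_\lambda^*$ needs to be checked. The key input here is the maximum principle of \cref{Subsec max principle}: the twisted Hamiltonian still has $\lambda H$-slope at infinity after a compactly supported perturbation, so Floer continuation solutions project via $\Psi$ to solutions in $B$ satisfying the hypotheses of \cite[Thm.C.11]{R16}, and stay in a compact subset of $Y$. Passing to the direct limit over $\lambda\to\infty$ yields the top row on $SH^*(Y,\Fi)$, with $\mathcal R_{\widetilde\Fi}$ an isomorphism because each $\mathcal S_{\widetilde\Fi}$ is, and the continuation maps are compatible by standard cobordism-of-moduli-spaces arguments.

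To identify $r_{\widetilde\Fi}$ with quantum product by $Q_{\widetilde\Fi}:=r_{\widetilde\Fi}(1)\in QH^{2\mu}(Y)$ I would set up the three-pointed moduli space used in \cite[Sec.8--10]{R14}: a pair-of-pants with one boundary decorated by a $\widetilde\Fi$-twisted puncture and the other two by trivial punctures. A standard TQFT gluing argument (gluing a cap at the twisted puncture, then applying a PSS isomorphism as in \cref{SmallHam}) shows that the induced map $QH^*(Y)\to QH^{*+2\mu}(Y)$ factors as $q\mapsto q\star Q_{\widetilde\Fi}$. Multiplicativity $Q_{\Fi^N}=Q_\Fi^{\star N}$ follows from the gluing that concatenates two twisted punctures, modulo the $T^a$ factors coming from the group-lifting ambiguity as in \cite[Sec.2B, 4D]{R16}. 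The main obstacle in the whole program is, as in \cref{Subsec max principle}, guaranteeing compactness of the PDE moduli spaces; the twisted Hamiltonian $F_\lambda^{\widetilde\Fi}$ and the pair-of-pants solutions with twisted insertion do \emph{not} a priori descend cleanly to Floer data on $B$ because of the twisting by $\Fi_t$. I would handle this by choosing the lift so that on $Y^{\mathrm{out}}$ the twisting is by the time-$t$ map of $X_{S^1}$, whose $\Psi$-pushforward is $X_{fR}$ by \eqref{Equation psi Xs1 is Reeb 2}, so the twisted Hamiltonian projects to $\lambda f R - fR$ plus a compactly supported correction, which is again radially increasing; \cite[Thm.C.11]{R16} then applies.

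Finally, because $\mathcal R_{\widetilde\Fi}$ is an isomorphism of $SH^*(Y)$ of the form ``product by $c^*(Q_\Fi)$,'' the element $c^*(Q_\Fi)\in SH^*(Y)$ is invertible. Applying \cite[Lem.4.3]{R16} (localisation at an element whose quantum product is a generalised eigenvalue calculation) yields the two algebra isomorphisms $SH^*(Y)\cong QH^*(Y)/E_0(Q_\Fi)\cong QH^*(Y)_{Q_\Fi}$, and surjectivity of $c^*$ is immediate from the factorisation $c^*=\mathcal R_{\widetilde\Fi}^{-1}\circ c^*\circ r_{\widetilde\Fi}$ iterated.
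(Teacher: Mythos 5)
The paper presents this theorem as a citation of \cite[Sec.1.12]{R14} with no in-text proof; the only indication of how it transfers to the non-convex setting is the footnote, which names (i) the projected maximum principle of \cref{Subsec max principle}, (ii) monotonicity-lemma arguments for carrying out \cite[Sec.5.4]{R14}, and (iii) geometric boundedness of $B^{\mathrm{out}}$. Your proposal correctly reconstructs the strategy of \cite{R14} --- the twisted Hamiltonian defining $\mathcal S_{\widetilde\Fi}$, the commutative square, the pair-of-pants/gluing identification of $r_{\widetilde\Fi}$, and localisation via \cite[Lem.4.3]{R16} --- and correctly identifies compactness of moduli spaces as the key new technical point. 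On the whole this follows the route the paper indicates.

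Three points of friction, the last being a genuine (if modest) gap. First, your concern that the twisted Floer data ``do not a priori descend cleanly to $B$'' is largely moot: $H$ is $S^1$-invariant, so $F_\lambda^{\widetilde\Fi}=(\lambda-1)H$ at infinity \emph{exactly} (not merely up to a compactly supported correction), and since $\Psi_*X_{S^1}=X_{fR}$ the $S^1$-flow on $Y^{\mathrm{out}}$ projects to a flow on $B$, so the conjugated almost complex structure and Hamiltonian project without extra work. Second, surjectivity of $c^*$ is not ``immediate from the factorisation $c^*=\mathcal R_{\widetilde\Fi}^{-1}\circ c^*\circ r_{\widetilde\Fi}$ iterated'' --- that identity is neutral with respect to surjectivity. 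The surjectivity comes from identifying $SH^*(Y)=\varinjlim HF^*(\lambda H)$ with the direct limit of iterated multiplications by $Q_\Fi$ on $QH^*(Y)$, i.e.\ the localisation $QH^*(Y)_{Q_\Fi}$, and then invoking \cite[Lem.4.3]{R16}; you do cite the lemma, so the conclusion is right, but the reasoning as phrased is circular. Third, and most substantively: your treatment of compactness only addresses Floer cylinders for admissible Hamiltonians. The construction of $Q_\Fi$ itself requires compactness of moduli spaces of pseudoholomorphic sections of the Hamiltonian fibration over $S^2$ (this is \cite[Sec.5.4]{R14}). In the non-convex setting those sections can enter and exit the region at infinity several times, and the maximum principle of \cref{Subsec max principle} does not directly apply to them; the paper's footnote flags exactly this, and indicates that it requires a monotonicity-lemma argument for the projection to $B$ together with geometric boundedness of $B^{\mathrm{out}}$ (as developed in \cite{RZ2}). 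Your proposal does not engage with this point, which is where the genuinely new analytic input lies.
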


When $QH^*(Y)$ is ordinary cohomology or when\footnote{The theorem holds in the weak+ monotone setup \cite[Sec.2.2]{R14}, being cautious that the grading is no longer a $\Z$-grading outside of the $c_1(Y)=0$ setup.} $c_1(Y)=0$, it follows that $SH^*(Y)=0$, because $\mathcal{R}_{\widetilde{\varphi}}$ is a $\k$-module homomorphism of non-zero degree $\mu>0$ on a finite rank $\k$-module $SH^*(Y)$ 
(as it is a quotient of $QH^*(Y)$, which has finite rank). This gives an alternative proof of \cref{PropSec2VanishingTheorem}.

The Theorem implies that the full-rotation continuation maps %
$$
c_{N+\delta}^*: QH^*(Y)\cong HF^*(F_{\delta}) \to HF^*(F_{N+\delta}),
$$
can be identified with quantum product $N$ times by $Q_{\varphi}\in QH^{2\mu}(Y)$:
$$
r_{\widetilde{\Fi}}^N=Q_{\varphi}^{\star N} \star \cdot :QH^*(Y)
\to QH^{*+2N\mu}(Y).
$$
\begin{cor}\label{Filtration via QFi}
For any integer $N>0$, 
$\Fil^{\Fi}_{N} = \ker (Q_{\varphi}^{\star N}\star\cdot),$
thus
$\Fil^{\Fi}_{N}=E_0(Q_{\Fi})$ for large $N$.
In particular, $\Fil^{\Fi}_{N}=QH^*(Y)$ if $Q_{\varphi}^{\star N}=0$. (Compare with \cref{estimates on filtration by drops}) 
\qed
\end{cor}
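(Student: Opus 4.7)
The plan is to combine the stability property (Proposition \ref{Prop filtration is stable}) with Theorem \ref{Theorem neg lb paper summary}, reducing the infinite intersection in the definition of $\Fil^{\Fi}_N$ to a single kernel that is then identified with $\ker(Q_{\Fi}^{\star N}\star\cdot)$.

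First I would collapse the intersection. By \eqref{DefinitionOfFiltration}, $\Fil^{\Fi}_N=\bigcap_{\mu>N,\,\mathrm{generic}} \ker c_\mu^*$. Since outer $S^1$-periods are rationals $k/m$ whose denominators come from the finite set of outer weights of the $\F_\a$, they form a discrete subset of $[0,\infty)$, so there is $\delta>0$ with no outer $S^1$-period in $(N,N+\delta]$ and with $N+\delta$ generic. The stability property \eqref{Intro Stability property} then gives $\Fil^{\Fi}_\mu=\Fil^{\Fi}_{N+\delta}$ for all generic $\mu\in(N,N+\delta]$; for generic $\mu>N+\delta$, the factorisation $c_\mu^*=\psi_{\mu,N+\delta}\circ c_{N+\delta}^*$ of continuation maps yields $\ker c_{N+\delta}^*\subset\ker c_\mu^*$. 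Hence
\[
\Fil^{\Fi}_N=\ker c_{N+\delta}^*.
\]

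Second, I would apply Theorem \ref{Theorem neg lb paper summary}. Iterating the commutative square in the theorem $N$ times and taking $\lambda=\delta$, one obtains a commutative diagram in which the $N$-fold Seidel isomorphism $\mathcal{S}_{\widetilde{\Fi}}^N$ and the PSS isomorphism $c_\delta^*:QH^*(Y)\xrightarrow{\sim} HF^*(\delta H)$ identify $c_{N+\delta}^*:QH^*(Y)\to HF^*((N+\delta)H)$ with the $\k$-linear map $r_{\widetilde{\Fi}}^N=Q_{\Fi}^{\star N}\star\cdot:QH^*(Y)\to QH^{*+2N\mu}(Y)$. Since $\mathcal{S}_{\widetilde{\Fi}}^N$ and $c_\delta^*$ are isomorphisms, taking kernels gives
\[
\Fil^{\Fi}_N=\ker c_{N+\delta}^*=\ker\bigl(Q_{\Fi}^{\star N}\star\cdot\bigr),
\]
as claimed.

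Third, the remaining two assertions are now pure linear algebra over the Novikov field $\k$. The $\k$-module $QH^*(Y)$ has finite rank, so the increasing chain of subspaces $\ker(Q_{\Fi}^{\star N}\star\cdot)$ stabilises at some finite $N_0$, and by definition the stable value is the generalised $0$-eigenspace $E_0(Q_{\Fi})$ of the endomorphism $Q_{\Fi}\star\cdot$ of $QH^*(Y)$; this gives $\Fil^{\Fi}_N=E_0(Q_{\Fi})$ for all $N\geq N_0$. Finally, if $Q_{\Fi}^{\star N}=0$ then $Q_{\Fi}^{\star N}\star\cdot$ is the zero map, so its kernel is all of $QH^*(Y)$ and $\Fil^{\Fi}_N=QH^*(Y)$.

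The only real subtlety is the first step, namely checking that the stability property can be invoked at the integer slope $N$; this is the point where one must make sure that the set of outer $S^1$-periods does not accumulate at $N$ from above. As noted, this is automatic from the finiteness of the set of outer weights at the components $\F_\a$, so no serious obstacle arises and the rest of the argument is a straightforward packaging of the existing theorem.
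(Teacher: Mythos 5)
Your proposal is correct and follows essentially the same route as the paper: identify $\Fil^{\Fi}_N$ with $\ker c^*_{N+\delta}$ for small $\delta>0$ (via stability and factorisation of continuation maps), then invoke Theorem~\ref{Theorem neg lb paper summary} to read off $c^*_{N+\delta}$ as $Q_{\Fi}^{\star N}\star\cdot$ after conjugating by the PSS and iterated Seidel isomorphisms, and close with the standard linear-algebra facts about kernels of powers on a finite-rank $\k$-module. The paper leaves the first step implicit and states the corollary as immediate from the surrounding discussion; you have merely made that step explicit, correctly.
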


\begin{lm}\label{Lemma PDFmin nonzero}
Let $Y$ be a symplectic $\C^*$-manifold, and 
$\F_{\min}$
its minimal component (\cref{Lemma Frankel properness}).
\begin{equation} \label{non vanishing equivalence}
    \mathrm{PD}[\F_{\min}]\neq 0 \in H^*(Y) \iff e(U_{\min}) \neq 0 \in H^*(\F_{\min}),
\end{equation}
where $e(U_{\min})$ is the Euler class of the normal bundle of $\F_{\min}$ (see \cref{Lemma Umin is dense}). 

In particular, 
$\dim_{\C}\F_{\min}\geq \tfrac{1}{2}\dim_{\C}Y$ is a necessary condition for the non-vanishing \eqref{non vanishing equivalence}.
\end{lm}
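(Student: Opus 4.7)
The direction ``$e(U_{\min})\neq 0 \Rightarrow \mathrm{PD}[\F_{\min}]\neq 0$'' is standard. By \cref{Lemma Umin is dense}, $U_{\min}$ is the total space of the normal bundle $\nu$ of $\F_{\min}\subset Y$, and the tubular-neighborhood identity gives
$$
\iota^*\,\mathrm{PD}_Y[\F_{\min}] \,=\, e(\nu) \,=\, e(U_{\min}) \,\in\, H^c(\F_{\min}),
$$
where $\iota:\F_{\min}\hookrightarrow Y$ and $c=\codim_{\R}\F_{\min}$; so $e(U_{\min})\neq 0$ forces $\mathrm{PD}[\F_{\min}]\neq 0$.

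For the converse, I argue contrapositively: $e(U_{\min})=0$ forces $\mathrm{PD}_Y[\F_{\min}]=0$. By \cref{Lemma making H proper MB} I may assume $H$ is exhausting and proper with unchanged critical loci. The filtration of $Y$ by sublevel sets $Y^{\leq c}$ gives a spectral sequence converging to $H^*(Y)$ whose $E_1$-page is $\bigoplus_{\alpha}H^{*-\mu_{\alpha}}(\F_{\alpha})$ via the Morse--Bott Thom isomorphism, and which collapses at $E_1$ by the even parity of indices and coindices (\cref{LemmaSec2FixedLocus}), realising \cref{EqnFrankel}. Since $\F_{\min}$ is compact, I represent $\mathrm{PD}_Y[\F_{\min}]$ by a Thom form $\tau$ compactly supported in a small disk subbundle $D\subset U_{\min}\cong \nu$, chosen to lie inside $V:=Y^{\leq H(\F_{\min})+\epsilon}$ for small $\epsilon>0$; $V$ deformation-retracts to $\F_{\min}$ via $-\nabla H$, and the restriction map $H^c(Y)\to H^c(V)\cong H^c(\F_{\min})$ sends $[\tau]$ to $e(U_{\min})$. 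So in the Frankel splitting \cref{EqnFrankel}, the $\F_{\min}$-summand of $[\tau]$ equals $e(U_{\min})$. It remains to argue that the components of $[\tau]$ in the summands $H^{*-\mu_{\alpha}}(\F_{\alpha})$ for $\alpha\neq \min$ all vanish. This is where the \emph{support} of $\tau$ enters: $\tau$ is compactly supported inside $V$, disjoint from every $\F_{\alpha}$ with $\alpha\neq \min$. Under the Poincaré pairing $H^c(Y)\otimes H^{lf}_d(Y)\to \k$, the classes dual to the $\F_{\alpha}$-summands ($\alpha\neq \min$) can be represented by cycles supported in small neighbourhoods of the corresponding unstable subbundles, which also avoid $V$; hence they pair trivially with $[\tau]$. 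By non-degeneracy of the pairing, $[\tau]$ has zero component in all $\F_{\alpha}$-summands with $\alpha\neq \min$, so $\mathrm{PD}_Y[\F_{\min}]=e(U_{\min})$ under \cref{EqnFrankel}, and vanishes when $e(U_{\min})=0$.

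The dimension statement is immediate: $e(U_{\min})\in H^{2\codim_{\C}\F_{\min}}(\F_{\min})$ automatically vanishes unless $2\codim_{\C}\F_{\min}\leq \dim_{\R}\F_{\min}$, i.e.\ $\dim_{\C}\F_{\min}\geq \tfrac{1}{2}\dim_{\C}Y$. \textbf{Main obstacle.} The delicate point is the support-vs-filtration argument showing that the Thom-form representative $\tau$ has trivial components in the $\F_{\alpha}$-summands ($\alpha\neq \min$) of \cref{EqnFrankel}. The cleanest route is via Poincaré duality paired with the Frankel splitting, using that the dual basis of $H^{lf}_*(Y)$ can be realised by cycles (stable/unstable manifolds of the $\F_{\alpha}$) whose supports are disjoint from $V$; alternatively, one chooses the Frankel splitting compatibly with the sublevel-set filtration so that cochain support directly controls the filtration degree, which is possible by the $E_1$-collapse but needs careful bookkeeping.
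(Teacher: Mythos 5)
The direction ``$e(U_{\min})\neq 0 \Rightarrow \mathrm{PD}[\F_{\min}]\neq 0$'' and the dimension count are both fine: the tubular-neighbourhood identity $\iota^*\mathrm{PD}_Y[\F_{\min}]=e(U_{\min})$ is correct (and the paper's proof likewise rests, via PL duality and \cite[Thm.67]{R14}, on this identification of $e(U_{\min})$ as the restriction of $\mathrm{PD}[\F_{\min}]$ to $U_{\min}$).

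Your converse direction has a genuine gap. The support argument claiming the non-$\F_{\min}$ Frankel components of $[\tau]$ vanish fails for a concrete reason: the compact cycles generating the $\F_\alpha$-summands of $H_*(Y)$ for $\alpha\neq\min$ — which is what one must pair $[\tau]\in H^{n-d}_c(Y)\cong H_d(Y)$ against to detect those components — are closures of unstable manifolds $\overline{D}_\alpha\subset\mathrm{Core}(Y)$, and by the closure relation \eqref{Equation partial order AB 2} these closures contain $-\nabla H$ limit points in \emph{lower} critical components, in particular (generically) in $\F_{\min}$. So they do \emph{not} avoid $V$, no matter how small $\epsilon$ is, and the Thom form $\tau$ can genuinely pair non-trivially with them. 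Concretely, in \cref{Example running example of intro 2}(b), the cycle $[S^2_2]$ dual to the fixed point $p_2$ passes through $p\in\F_{\min}=S^2_1$ and one has $[\F_{\min}]\cdot[S^2_2]=1\neq 0$; there $e(U_{\min})\neq 0$ so the lemma is consistent, but the example shows the support argument cannot rule out such contributions. There is also a bookkeeping error: $H^c(Y)\otimes H^{lf}_d(Y)\to\k$ (with $c=n-d$) is not a perfect pairing in the non-compact setting; the perfect ones are $H^k(Y)\otimes H^{n-k}_c(Y)\to\k$ and $H_k(Y)\otimes H^{lf}_{n-k}(Y)\to\k$. Your ``main obstacle'' variant, pairing the compactly supported $[\tau]$ against lf-cycles given by closures of stable manifolds (which \emph{do} miss $V$), tests whether $[\tau]=0$ in $H^*_c(Y)$ — which is false whenever $\F_{\min}$ is non-empty — not whether its image $\mathrm{PD}[\F_{\min}]$ vanishes in $H^*(Y)$, so it proves nothing about the lemma. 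The paper takes a different route: it PL-dualizes, perturbs the lf-cycle $[\F_{\min}]\in H^{lf}_*(Y)$ to a generic smooth section of the normal bundle, and argues directly about intersection numbers with Morse pseudo-cycles in $H_*(Y)$, reducing the problem to an intersection computation inside $U_{\min}$ via \cite[Thm.67]{R14}. You would need a replacement for the support step — for instance an argument controlling how the unstable-manifold closures can intersect a generic normal-bundle section — before the converse implication is established.
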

\begin{proof}
The last claim follows for degree reasons: $e(U_{\min})\in H^{2\,\codim_{\C}\, \F_{\min}}(\F_{\min})$.
The identification of $U_{\aa}:=W^{s}_{-\nabla H}(\F_{\min})$ with  the normal bundle of $\F_{\aa}$ follows from 
\cref{Lemma Torsion bundle is a bundle}
(letting $m=1$). 

By {\PL} duality,
$\mathrm{PD}[\F_{\aa}]\neq 0 \in H^*(Y)$ if and only if the locally finite (lf) cycle $[\F_{\aa}]\in H_*
^{lf}(Y)$ has non-trivial intersection number with some cycle $C\in H_*(Y)$.
We can perturb the lf-cycle $\F_{\aa}$ in $U_{\aa}$ as the image of a generic smooth section of the normal bundle to $\F_{\aa}$.
We can perturb the metric in the complement of a small neighbourhood of $\F_{\aa}$ to make the flow of $H$ Morse-Smale, and we can perturb $H$ to make it Morse away from $\F_{\aa}$. Then by Morse theory the cycles in $H_*(Y)$ that do not come from the inclusion $H_*(\F_{\aa})\to H_*(Y)$ can be represented as pseudo-cycles by linear combinations of the unstable manifolds of the critical points of $H$ not in $\F_{\aa}$ (the algebro-geometrical analogue of this is described in \cite[Sec.2.2]{vzivanovic2022exact}). These unstable manifolds correspond to submanifolds\footnote{using the Morse--Smale property, and for the codimension claim we use that the unstable manifold of a Morse critical point $p$ is the Morse index, and that Morse indices will be $\leq 2\dim_{\C}Y-2$ by \cref{Lemma Frankel properness} and non-compactness of $Y$.
} in 
$N^+$ of real codimension at least $2$, where $N^+\subset TY|_{\F_{\min}}$ is the subbundle where $\mathrm{Hess}(H)$ is positive definite.
It follows that we can construct a smooth section $\F_{\aa} \to U_{\aa}$ of the normal bundle that avoids the closures of those pseudo-cycles.
Therefore, we have built an lf-homologous perturbation of $[\F_{\aa}]$ which can only intersect the cycles in $\F_{\aa}$. This implies that $[\F_{\aa}] \neq 0 \in H_*^{lf}(Y)$ if and only if  $[\F_{\aa}] \neq 0 \in H_*^{lf}(U_{\min})$.
Finally, by the proof of \cite[Thm.67]{R14}, PD of $[\F_{\aa}]  \in H_{2\dim_{\C}\F_{\aa}}^{lf}(U_{\aa})$ equals the 
pull-back in $H^{2\,\codim_{\C}\,\F_{\aa}}(U_{\aa})$ of $e(U_{\aa})$.
\end{proof}

\begin{prop}\label{Prop QFi computation first order}
Suppose $Y$ is K\"{a}hler with $c_1(Y)=0$ %
or
non-compact Fano\footnote{see \cite[Lem.1.6]{R16} or 
\cref{Subsection relating weight spaces via Cstar action}
for how to correctly interpret the Maslov index when $c_1(Y)\neq 0$.} 
and
$$
\mu=\mathrm{codim}_{\C}\F_{\min} \;\; \textrm{ and } \;\; \mathrm{PD}[\F_{\min}]\neq 0 \in H^{2\mu}(Y).
$$
Then
$Q_{\varphi}=\mathrm{PD}[\F_{\min}]+($terms with $T^{>0}) \neq 0 \in QH^{2\mu}(Y).$
\end{prop}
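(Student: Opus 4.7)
The strategy is to exhibit $Q_{\varphi}$ as a Novikov series whose lowest-order ($T^{0}$) coefficient is exactly $\mathrm{PD}[\F_{\min}]$. By \cref{Theorem neg lb paper summary}, $Q_{\varphi}=r_{\widetilde{\Fi}}(1)\in QH^{2\mu}(Y)$ can be realised in Seidel's way as a count of $I$-holomorphic sections of the Hamiltonian fibration $E_\varphi \to S^2$ associated to the lift $\widetilde{\Fi}$, weighted by $T^{\omega(A)}$ over section classes $A$, as in \cite{R14}. The maximum principle of \cref{Subsec max principle}, applied via the projection $\Psi:Y^{\mathrm{out}}\to B$, guarantees that all such sections remain in a compact subset of $E_\varphi$, so this Novikov sum is well defined. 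Since non-constant $I$-holomorphic maps into $(Y,\omega)$ have strictly positive symplectic area, only sections contained in a single fibre (i.e.\,constant sections at fixed points $p\in \F$) can contribute to the $T^{0}$-term, and all other terms carry strictly positive powers of $T$.

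The local model of $E_\varphi$ near a constant section at $p\in \F_{\alpha}$ is, using the weight decomposition $T_pY = \bigoplus_i T_i$ with weights $w_i$, a direct sum $\bigoplus_i \mathcal{O}(w_i)\to \CP^1$; the expected complex dimension of nearby sections is $\sum_i w_i = \mu$ by \cref{LemmaSec2MaslovIndex}, matching $\deg Q_{\varphi}=2\mu$. Crucially, my hypothesis $\mu = \codim_\C \F_{\min}$ together with \cref{Lemma when min weights are plus one} forces the non-zero weights at $\F_{\min}$ to be all equal to $+1$, so near $\F_{\min}$ the space $Y$ is locally biholomorphic to the total space of the normal bundle $U_{\min}\to\F_{\min}$ equipped with the fibrewise weight-one $\C^*$-action. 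In this local model, the component of the section moduli space near $\F_{\min}$ is unobstructed: all summands $\mathcal{O}(w_i)$ have $w_i\in\{0,1\}\geq 0$, so $H^1(\CP^1,\mathcal{O}(w_i))=0$, and the moduli space is the total space of a complex vector bundle over $\F_{\min}$ of the expected dimension, with evaluation map that (up to contraction onto $\F_{\min}$) represents the zero section of $U_{\min}$. For the other fixed components $\F_\alpha\neq \F_{\min}$, negative weights are present, producing obstruction bundles whose Euler class must appear in the contribution; this contribution to the $T^{0}$-coefficient is either zero, or lies in degrees incompatible with $2\mu$, so does not spoil the leading term.

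The local computation at $\F_{\min}$ is therefore identical to Ritter's calculation for negative vector bundles \cite[Sec.11 and Thm.72]{R14}, applied to $E=U_{\min}\to\F_{\min}$: the contribution of the $\F_{\min}$-locus to $Q_{\varphi}$ is $\pi^{*}c_{\mathrm{top}}(U_{\min})=e(U_{\min})$ viewed inside $H^{*}(Y)$, which under the identification $H^{*}_{c}(U_{\min})\cong H^{*-2\codim_{\C}\F_{\min}}(\F_{\min})$ is precisely $\mathrm{PD}[\F_{\min}]\in H^{2\mu}(Y)$ (see the proof of \cref{Lemma PDFmin nonzero}, which uses \cite[Thm.67]{R14}). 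Adding the strictly higher-order corrections from non-trivial spheres, we obtain
\[
Q_{\varphi}=\mathrm{PD}[\F_{\min}]+(T^{>0}\text{-terms})\in QH^{2\mu}(Y).
\]
Non-vanishing then follows from the assumption $\mathrm{PD}[\F_{\min}]\neq 0$ by applying the Novikov valuation $\mathrm{ini}$ of \cref{Subsection specialisation argument}: no series of strictly positive $T$-powers can cancel the $T^{0}$-coefficient $\mathrm{PD}[\F_{\min}]$. The hardest step is the second one: rigorously identifying the local $\F_{\min}$-contribution with $e(U_{\min})$, i.e.\,checking transversality of the moduli space of minimal sections and that the evaluation map pushes its fundamental class to $[\F_{\min}]\in H^{lf}_{*}(Y)$, with correct orientation and sign conventions; all other steps are essentially bookkeeping on Novikov exponents and the grading shift, and the Fano case is handled uniformly by interpreting $T$ with its non-trivial grading as in \cite[Lem.1.6]{R16}.
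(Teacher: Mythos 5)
There is a genuine gap in your handling of the constant sections at the other fixed components $\F_\alpha \neq \F_{\min}$. You claim that these contribute to the $T^0$-coefficient only ``either zero, or lie[s] in degrees incompatible with $2\mu$,'' i.e.\,a degree argument. That argument does not work, especially in the Calabi--Yau case: a constant section at $\F_\alpha$ lives in section class $s_\alpha = s_{\min}+\gamma_\alpha$ with $c_1(\gamma_\alpha)=0$ when $c_1(Y)=0$, so its contribution lands in the same degree $2\mu$ of $QH^{2\mu}(Y)$, and it need not vanish. More fundamentally, your phrasing lumps all constant sections at fixed points together as potential $T^0$-contributors and tries to dismiss the unwanted ones by dimension counting, but the Novikov exponent of a section is not the ``symplectic area of the map into $Y$'' --- it is $\omega(\gamma)$, measured relative to the reference class $s_{\min}$, which in turn depends on the choice of lift $\widetilde{\Fi}$.

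The correct reason the constant sections at $\F_\alpha\neq\F_{\min}$ are invisible at order $T^0$ --- and what the paper's proof spends most of its effort on --- is the normalisation step. One fixes the lift $\widetilde{\Fi}$ so that the constant capping disc at a point of $\F_{\min}$ is sent to itself, normalises the Hamiltonian to $K:=H-H(\F_{\min})$, and then shows that the coupling form $\widetilde{\Omega}$ on the fibration evaluates to $K(x_\alpha)>0$ on the constant section $s_\alpha$ at $x_\alpha\in\F_\alpha\neq\F_{\min}$, while $\widetilde{\Omega}(s_{\min})=0$. Since sections are counted with weight $T^{\int s^*\widetilde{\Omega}}$, only constants at $\F_{\min}$ enter at order $T^0$; everything else, constant or not, carries a strictly positive power of $T$. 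Your steps (1)--(4) and (6)--(7) are fine (including the $+1$-weights argument via \cref{Lemma when min weights are plus one} for regularity at $\F_{\min}$, and the identification of the leading term with $e(U_{\min})=\mathrm{PD}[\F_{\min}]$), but the capping/normalisation analysis replacing your step (5) is the essential content and cannot be bypassed by a degree argument.
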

\begin{proof}
This will rely on \cite[Lem.1.6]{R16}\footnote{In that lemma,
$D=\mathrm{Fix}(g)$ needs to be connected, otherwise the statement of \cite[Eq.(6)]{R16} needs to be adjusted to: $r(g^{\wedge})=\mathrm{PD}[\F_{\min}]+(\textrm{higher order }t\textrm{ terms}) \in QH^{2(m-d)}(M)$, where $\F_{\min}=\{K_g=0\}\subset D$ is the absolute minimum of the Hamiltonian $K_g$ of the action $g$, as will become clear from our discussion of $T$-weights in the current proof.} (that was stated in the non-compact Fano case, but the written proof for \cite[Eq.(6)]{R16}
works also in the non-compact CY case). The contribution $\F_{\min}$ arises from certain constant sections counted by the GW-interpretation of $Q_{\Fi}$. The additional contributions to $Q_{\Fi}$ come from: (1) other moduli spaces of constant sections that sweep out lf-cycles inside the other fixed components $\F_{\beta}$ (after cutting down the moduli space using obstruction bundle techniques \cite[Sec.8.4-8.6]{R14}); and (2) moduli spaces of non-constant sections.
To justify that (1) and (2) contribute with strictly positive powers of $T$, it is necessary to address the technical issue of the choice of lift $\widetilde{\Fi}$ mentioned in \cref{Corrolary on until when F_min = Id under continuation} \cite[Sec.3.1]{R14}. One actually works with a cover of the free loop space of $Y$, obtained by capping off loops by discs, and identifying two discs if they give rise to a sphere class in $\pi_2(Y)_0$, which are $\pi_2(Y)$ classes on which $\omega$ and $c_1(TY)$ vanish. A lift $\widetilde{\Fi}$ is determined by knowing how it acts on one such capping. We choose $\widetilde{\Fi}$ as in \cite[Sec.7.8]{R14} by declaring that it sends the constant cap at a fixed point in $\F_{\min}$ to itself.
When constructing $Q_{\Fi}$ we ``normalise'' the Hamiltonian of the action: we use $K:=H-H(\F_{\min})$ so that the Hamiltonian $K=0$ on $\F_{\min}$. The same argument as in \cite[first Lemma in Sec.7.8]{R14}\footnote{In the proof of that Lemma, $\phi$ is a function that increases from $0$ to $1$ (there is a sign change compared to the $\phi$ in the proof of \cite[Lm.40]{R14}, due to a sign in the definition of $\widetilde{\Omega}$ over $D^+$).} then applies in our setting: the symplectic form $\widetilde{\Omega}$ on the Hamiltonian fibration used to construct $Q_{\Fi}$ evaluates to zero on the class of the constant section $s_{\min}$ at a point of $\F_{\min}$. On the other hand, running the computation of \cite[first Lemma in Sec.7.8]{R14} for the class of a constant section $s_{\alpha}$ at a point $x_{\a}\in \F_\a\neq \F_{\min}$, we get%
\footnote{
This is consistent with the fact that the induced ``cap'' for the constant loop at a point of $\F_{\a}$ (having chosen the constant ``cap'' at points of $\F_{\min}$) is described by a succession of {\ph} spheres as in \cref{CorollaryGradientTrajBecomeSpheres} starting from $\F_{\min}$ and ending at $\F_\a$, which defines a class $A\in \pi_2(Y)/\pi_2(Y)_0$ with $\omega(A)=H(\F_\a)-H(\F_{\min})=K(\F_\a).$
}
$\widetilde{\Omega}(s_{\alpha})=K(x_{\a})>0=K(\F_{\min})$. Indeed more generally, 
by \cite[Lm.30]{R14} the $\widetilde{\Omega}$-value on any pseudoholomorphic section counted by $Q_{\Fi}$ will be strictly positive except on constant sections at $\F_{\min}.$
We use a simplified Novikov field compared to \cite{R14}: we count sections in class $s=s_{\min}+\gamma$ for $\gamma\in \pi_2(Y)/\pi_2(Y)_0$ (see \cite[Sec.5.1]{R14}) with weight 
\begin{equation}\label{Equation positive omega contributions to Seidel element}
T^{\omega(\gamma)}
=T^{\int\! s_{\min}^*\widetilde{\Omega}}\cdot T^{\omega(\gamma)}
=
T^{\int\! s^*\widetilde{\Omega}}.
\end{equation}
It follows that all sections, except for constants at $\F_{\min}$, are counted with a $T^{>0}$-factor.
\end{proof}

\begin{rmk}\!\!\label{Remark about Seidel element and constant sections}\footnote{This remark proves a result analogous to an observation due to Seidel for closed symplectic manifolds \cite[Lm.3.1]{mcduff2006topological}, which is at the heart of McDuff-Tolman's \cite[Prop.3.3]{mcduff2006topological}, which is analogous to \cref{Prop QFi computation first order} in the closed case. Their semi-freeness condition on $\F_{\max}$ is precisely our condition that the weights are $0,1$ at $\F_{\min}$ (see \cref{Lemma when min weights are plus one}).}
The argument in the previous proof shows the following general observation for the class $Q_{\Fi}$ in the setting of \cref{Theorem neg lb paper summary} (and also in the setting of the spaces considered in \cite{R14,R16}), with the convention that the lift $\widetilde{\Fi}$ is chosen to fix the constant ``cap'' at a point of $\F_{\min}$.

\vspace{0.2cm}
\noindent {\bf Observation.} \emph{The only constant sections that can contribute in class $s_{\min}$ to $Q_{\Fi}$ must lie at $\F_{\min}$ (constant sections at other $\F_{\a}\neq \F_{\min}$ would have to contribute in a class $s_{\min}+\gamma$ with $\omega(\gamma)>0$), and non-constant sections can only contribute in classes $s_{\min}+\gamma$ with $\omega(\gamma)>0$. Thus, when constant sections at $\F_{\min}$ are regular,\footnote{Otherwise, one must compute the Euler class of the relevant obstruction bundle over $\F_{\min}$.} they yield the $T^0$-part to $Q_{\Fi}$, which is $\mathrm{PD}[\F_{\min}]$.}
\vspace{0.2cm}

\noindent \emph{Proof}. Non-constant sections $u$ in class $s=s_{\min}+\gamma$ contribute with energy $u^*\widetilde{\Omega}>0$ (see \cite[Lem.30]{R14}) and $u^*\widetilde{\Omega}=s^*\widetilde{\Omega}=\omega(\gamma)$, so constant sections have $\omega(\gamma)=0$. Finally, for constant sections, the previous proof showed that the constant point lies in $\F_{\min}$ whenever $\omega(\gamma)=0.$ \qed
\vspace{0.2cm}

We mention one setting where higher order $T$ correction terms can be excluded.
The virtual complex dimension of the space of sections in class $s=s_{\min}+\gamma$ being counted by $Q_{\Fi}$ is\footnote{see \cite[Lem.35]{R14} using \cite[Sec.5.1 and Def.25]{R14}.} $$\dim_{\C} Y - \mu(\Fi) + c_1(TM)(\gamma).$$
If $Y$ is monotone\footnote{see \cref{Rmk technical symplectic assumptions on Y}, e.g. non-compact Fano $Y$.} then non-constant sections contribute with $c_1(TM)(\gamma)>0$, so they sweep a locally finite cycle of dimension $>\dim_{\C} Y - \mu(\Fi)$ (whose Poincar\'{e} dual contributes to $Q_{\Fi}$ with a factor $T^{\omega(\gamma)}$). In the special case $\mu(\Fi)=1$, this implies that those lf-cycles are trivial for dimension reasons, so $Q_{\Fi}=\mathrm{PD}[\F_{\min}].$ E.g.\,this applies when $\Fi$ is one of the natural rotations about toric divisors in non-compact Fano toric manifolds. For closed Fano toric manifolds, this was first observed by McDuff--Tolman \cite{mcduff2006topological} which led to their proof of the Batyrev presentation of $QH^*(Y)$.
\end{rmk}

\begin{cor}\label{Cor Qfi nonzero for weight 1 csr}
For any weight-$1$ CSR, $Q_{\varphi}=\mathrm{PD}[\F_{\min}]+ ($terms with $T^{>0}) \neq 0 \in QH^{\dim_{\C}Y}(Y)$, so
$\Fil^{\Fi}_1\neq H^*(Y).$
For any weight-$s$ CSR with $s\geq 2$, $Q_{\Fi}= 0$, so $\Fil^{\Fi}_1=H^*(Y).$
\end{cor}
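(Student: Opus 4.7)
The plan is to combine Proposition~\ref{Prop QFi computation first order} with Corollary~\ref{Filtration via QFi}, which identifies $\Fil^{\Fi}_1 = \ker(Q_{\Fi}\star\cdot)$. First I would dispose of the weight-$s\geq 2$ case by a pure dimension count. Example~\ref{Example CSR filtration Intro 1} gives $QH^*(Y)\cong H^*(Y)$ (using Namikawa's deformation of $(Y,I)$ to an affine variety) and $2\mu = s\cdot \dim_{\C}Y$, hence $Q_{\Fi}\in H^{s\dim_{\C}Y}(Y)$. Since $Y$ is a connected non-compact oriented real $2\dim_{\C}Y$-manifold, its top cohomology vanishes, so $H^{s\dim_{\C}Y}(Y)=0$ for all $s\geq 2$. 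Thus $Q_{\Fi}=0$ and $\Fil^{\Fi}_1 = \ker 0 = H^*(Y)$.

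For weight-$1$ CSRs, I would verify the three hypotheses of Proposition~\ref{Prop QFi computation first order}. Condition $c_1(Y)=0$ holds since $(Y,\omega_{\C})$ is holomorphic symplectic. The $\omega_{\C}$-duality pairs the weights at $\F_{\min}$ as $k\leftrightarrow 1-k$, and since $\F_{\min}$ is a minimum of $H$ only non-negative weights occur, so the only nonzero weight is $+1$; Lemma~\ref{Lemma when min weights are plus one} then yields $\mu=\codim_{\C}\F_{\min}$. For the third hypothesis, by Lemma~\ref{Lemma PDFmin nonzero} it suffices to show $e(U_{\min})\neq 0 \in H^*(\F_{\min})$. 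The fibrewise duality $H_1\cong H_0^*$ upgrades to an $I$-holomorphic isomorphism of complex vector bundles $U_{\min}\cong T^*\F_{\min}$ over $\F_{\min}$, whence
\[
e(U_{\min}) \;=\; c_n(T^*\F_{\min}) \;=\; (-1)^n \chi(\F_{\min}),\qquad n:=\dim_{\C}\F_{\min}.
\]

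The key ingredient is then that $\chi(\F_{\min})\neq 0$. Since $H^*(Y)$ lies in even degrees only for CSRs, Frankel's decomposition \eqref{EqnFrankel} forces each $H^*(\F_{\alpha})$ to lie in even degrees, so
\[
\chi(\F_{\min}) \;=\; \sum\nolimits_k \dim_{\k} H^{2k}(\F_{\min}) \;\geq\; \dim_{\k} H^0(\F_{\min}) \;=\; 1.
\]
Proposition~\ref{Prop QFi computation first order} then delivers $Q_{\Fi} = \mathrm{PD}[\F_{\min}] + (T^{>0}\text{-terms})\neq 0 \in QH^{\dim_{\C}Y}(Y)$. As $Q_{\Fi}\star 1 = Q_{\Fi}\neq 0$, the unit does not lie in $\Fil^{\Fi}_1 = \ker(Q_{\Fi}\star\cdot)$, so $\Fil^{\Fi}_1\neq H^*(Y)$. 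The only step requiring care is the global identification $U_{\min}\cong T^*\F_{\min}$: the pointwise duality $H_1\cong H_0^*$ must be promoted to a global isomorphism of holomorphic vector bundles, which I expect to follow from the $S^1$-equivariance and the holomorphicity of the $\omega_{\C}$-pairing between the two normal-type weight subbundles over $\F_{\min}$.
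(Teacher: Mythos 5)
Your proof is correct, and it follows the same skeleton as the paper's: dispose of $s\geq 2$ by a degree count, then for $s=1$ invoke \cref{Prop QFi computation first order} and \cref{Lemma PDFmin nonzero} to reduce to $e(U_{\min})\neq 0$, which follows from $\chi(\F_{\min})>0$ once $U_{\min}$ is identified with a (co)tangent bundle of $\F_{\min}$. The two places where your route genuinely diverges are worth noting. First, for $s\geq 2$ the paper invokes \cref{CohomologyOfACSRProperties} (cohomology of a CSR is concentrated in degrees $\leq \dim_\C Y$), whereas you only use the elementary fact that a connected non-compact $2\dim_\C Y$-manifold has vanishing top singular cohomology; both suffice, and yours requires less input. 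Second, and more substantively, to identify $U_{\min}$ with a cotangent bundle the paper cites the fact (from the companion work on minimal Lagrangians) that $\F_{\min}$ is $\omega_J$-Lagrangian in the hyperk\"{a}hler structure, giving a \emph{real} bundle isomorphism $U_{\min}\cong T^*\F_{\min}$; you instead use the $\omega_\C$-duality \eqref{NonDegenPairingByOmC} directly, getting a \emph{complex} bundle isomorphism $U_{\min}\cong H_1\cong T^*_\C\F_{\min}$, whence $e(U_{\min})=(-1)^{\dim_\C\F_{\min}}e(\F_{\min})$. Your version is more self-contained — it never leaves the K\"{a}hler picture $(Y,I,\omega_I)$ — while the paper's leans on the hyperk\"{a}hler structure. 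Both signs are immaterial for non-vanishing (in fact the sign $-e(\F_{\min})$ asserted in the paper appears not to match either convention exactly, but this is harmless).

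Your closing worry about globalizing the pointwise duality $H_1\cong H_0^*$ is legitimate but easily dispatched, exactly along the lines you anticipate: the pairing $\omega_\C\colon H_1\otimes_\C H_0 \to \underline{\C}$ over $\F_{\min}$ is $\C$-bilinear (since $\omega_\C$ is a holomorphic $2$-form), smooth in the base point, and fibrewise non-degenerate by \eqref{NonDegenPairingByOmC}. A smoothly varying non-degenerate bilinear pairing of complex vector bundles is precisely a bundle isomorphism $H_1\xrightarrow{\;\sim\;}\mathrm{Hom}_\C(H_0,\underline\C)=H_0^*$, and $H_0=T\F_{\min}$ by construction of the weight decomposition. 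No separate appeal to $S^1$-equivariance is needed, and you do not need the isomorphism to be holomorphic — a smooth complex-linear bundle isomorphism already equates Chern classes, which is all the Euler-class computation requires.
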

\begin{proof}
For $s\geq 2$, $Q_{\Fi}\in QH^{2\mu}(Y)=0$ as $2\mu=s\dim_{\C}Y>\dim_{\C}Y$ (by \cref{GradientIsRPlus} and \cref{CohomologyOfACSRProperties}). %
Let $Y$ be a weight-$1$ CSR. 
By \cref{Cor CSR when unit dies},
$\F_{\aa}$ is an $\omega_J$-Lagrangian and $\mu=\mathrm{codim}_{\C} \F_{\aa}$.
As $\F_{\aa}$ is $\om_J$-Lagrangian, its normal and cotangent bundle can be identified, so $e(U_{\min})=-e(\F_{\min})$, and $e(\F_{\min})\neq 0$ as the Euler characteristic of $\F_{\min}$ is non-zero (its cohomology lies in even degree, by \cref{Lemma Frankel properness} and \cref{CohomologyOfACSRProperties}.). 
Thus $Q_{\Fi}\neq 0$ by \eqref{non vanishing equivalence} and \cref{Prop QFi computation first order}.
\end{proof}

An alternative proof of \cref{Cor Qfi nonzero for weight 1 csr} follows by non-degeneracy of the intersection form 
which holds for CSRs (\cref{CSR_definite_intersection_form}).
When the intersection product is trivial, we find that $Q_{\Fi}$ vanishes:

\begin{prop}\label{Prop QFi vanishes if intersection form is trivial}
If $c_1(Y)=0$ and 
$\xymatrix@C=50pt{
    H_{2\mu}(Y)\otimes H_{2\dim_{\C}Y-2\mu}(Y) 
    \ar@{->}[r]^-{\textrm{intersection}} & \k}$ is trivial, $Q_{\Fi}=0.$
\end{prop}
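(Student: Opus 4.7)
The plan is to show that $Q_\Fi$, viewed as a class in $H^{2\mu}(Y;\k)$, is represented Novikov-term-by-term as the (compactly-supported) Poincar\'e dual of a \emph{compact} cycle in $Y$; the triviality of the intersection pairing then kills each term. First, I would unpack the Seidel/GW construction from \cref{Theorem neg lb paper summary}: $Q_\Fi = r_{\widetilde{\Fi}}(1) \in QH^{2\mu}(Y)$ counts pseudoholomorphic sections of the Hamiltonian $Y$-fibration $E \to S^2$ associated to $\Fi$, parametrised by classes $s = s_{\min}+\gamma$ with $\gamma\in \pi_2(Y)/\pi_2(Y)_0$ and weighted by $T^{\omega(\gamma)}$ as in \cref{Equation positive omega contributions to Seidel element}. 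Under the hypothesis $c_1(Y)=0$, the virtual complex dimension $\dim_\C Y - \mu + c_1(TY)(\gamma)$ of the moduli space $\mathcal{M}_\gamma$ of such sections collapses to $\dim_\C Y - \mu$, independently of $\gamma$.

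Next, I would invoke the maximum principle to establish compactness. The argument of \cref{intro max princ} and \cref{Thm:SHForFi}, applied to the section PDE on the closed surface $S^2$ via the projection $v = \Psi\circ u$ to the convex base $B$, forces the image of every section in every $\mathcal{M}_\gamma$ to lie in a fixed compact set $K\subset Y$ independent of $\gamma$; sphere bubbling is controlled by the weak-monotonicity hypothesis of \cref{Rmk technical symplectic assumptions on Y}. Evaluating at a marked point of $S^2$ therefore produces a \emph{compact} pseudocycle $C_\gamma \subset K$ of real dimension $2\dim_\C Y - 2\mu$, representing an ordinary (not merely locally-finite) homology class $[C_\gamma]\in H_{2\dim_\C Y-2\mu}(Y;\k)$, with
$$
Q_\Fi \;=\; \sum_\gamma T^{\omega(\gamma)}\,\mathrm{PD}[C_\gamma] \;\in\; H^{2\mu}(Y;\k),
$$
where $\mathrm{PD}[C_\gamma]$ denotes the image of the compactly-supported Poincar\'e dual under the natural map $H^{2\mu}_c(Y;\k)\to H^{2\mu}(Y;\k)$.

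Finally, I would conclude by pairing. Over the field $\k$, universal coefficients imply the evaluation pairing $H^{2\mu}(Y;\k)\otimes H_{2\mu}(Y;\k)\to \k$ is perfect, so it suffices to check $\langle Q_\Fi,\alpha\rangle = 0$ for every $\alpha\in H_{2\mu}(Y;\k)$. Representing $\alpha$ by a compact cycle transverse to each $C_\gamma$,
$$
\langle \mathrm{PD}[C_\gamma],\alpha\rangle \;=\; [C_\gamma]\cdot\alpha \;\in\; H_0(Y;\k)=\k
$$
is the signed intersection count of two \emph{compact} cycles of complementary dimension, i.e.\ an instance of the pairing $H_{2\mu}(Y)\otimes H_{2\dim_\C Y-2\mu}(Y)\to\k$ that vanishes by hypothesis. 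Hence $\langle Q_\Fi,\alpha\rangle = \sum_\gamma T^{\omega(\gamma)}\,[C_\gamma]\cdot\alpha = 0$ for all $\alpha$, giving $Q_\Fi=0$; the claim $\Fil_1^{\Fi}=QH^*(Y)$ then follows from \cref{Filtration via QFi}.

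The main obstacle is justifying that the Seidel moduli spaces genuinely yield \emph{compact} cycle classes in $Y$, not merely locally-finite ones (which would only pair with $H^*_c(Y)$ under PD and be orthogonal to the hypothesis). This needs the maximum principle extended from Floer cylinders to the closed-domain PDEs underlying the Seidel construction on $S^2$, and a check that the obstruction-bundle / multivalued perturbation scheme of \cite[Sec.8.4--8.6]{R14} preserves compactness of moduli once the underlying holomorphic moduli are compact --- both routine but essential. Given that, the remainder of the proof is purely formal Poincar\'e duality combined with universal coefficients.
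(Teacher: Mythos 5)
Your proof is correct and takes essentially the same approach as the paper: both hinge on the observation that the maximum principle traps the Seidel section moduli in a compact subset, so $Q_\Fi$ is represented by genuine compact cycles of dimension $2\dim_\C Y - 2\mu$ (with each Novikov term in the same degree because $c_1(Y)=0$), and both then conclude $Q_\Fi=0$ by non-degeneracy of the Poincar\'e--Lefschetz pairing, which you phrase equivalently via universal coefficients as perfectness of $H^{2\mu}(Y;\k)\otimes H_{2\mu}(Y;\k)\to\k$. The paper works directly with $Q_\Fi$ viewed in $H^{lf}_{2\dim_\C Y-2\mu}(Y)$ and pairs against $H_{2\mu}(Y)$, while you dualise to $H^{2\mu}(Y)$ via the map $H^{2\mu}_c\to H^{2\mu}$ and pair with homology---a cosmetic difference only.
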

\begin{proof}
    Viewing $Q_{\varphi}$ as a class in $H^{lf}_{2\dim_{\C}Y-2\mu}(Y)$ by {\PL } duality, it is a $\k$-linear combination of lf-cycles of dimension $2\dim_{\C}Y-2\mu$ (since $c_1(Y)=0$, the Novikov parameter is in degree zero). But those lf-cycles are all compact cycles as (by definition) they arise from evaluation maps on compact moduli spaces \cite{R14}. Indeed the lf-cycles are supported close to $\mathrm{Core}(Y)$ as the maximum principle prevents the sections counted by $Q_{\Fi}$ from entering the region at infinity where the maximum principle holds.
The assumption on the triviality of the intersection product implies that $Q_{\varphi}$ has zero intersection product with $H_{2\mu}(Y)$. On the other hand, the intersection product $H_{2\mu}(Y)\otimes H_{2\dim_{\C}Y-2\mu}^{lf}(Y)\to \k$
is non-degenerate by {\PL} duality. Therefore $Q_{\Fi}=0$.
\end{proof}
There is family of spaces to which \cref{Prop QFi vanishes if intersection form is trivial} can be applied. Consider %
the moduli space $\MM_G(d,g)$ of $G$-Higgs bundles of degree $d$ over a Riemann surface %
of genus $g$ (e.g.\,see \cite{Hi87}), 
for $G\in \{GL(n,\C),SL(n,\C)\},$
and $d\geq 0$ coprime to $n$. 
Recall these are {\HKL} manifolds satisfying the weight-1 condition $t\cdot \omega_{\C} = t\omega_{\C}$ for the canonical $\C^*$-action $\Fi$.

\begin{cor}\label{Criterion_for_Q_Fi=0}\label{Higgs_moduli_Q_Fi=0}
    $Q_\Fi=0$ for $\MM_G(d,g)$,
    so in particular $\Fil_1^{\Fi}=H^*(\MM_G(d,g))$.
\end{cor}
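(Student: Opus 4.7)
The plan is to apply Proposition \ref{Prop QFi vanishes if intersection form is trivial} directly. Its two hypotheses are $c_1(Y)=0$ and triviality of the intersection pairing $H_{2\mu}(Y)\otimes H_{2\dim_\C Y-2\mu}(Y)\to \k$, so the proof reduces to checking these for $Y=\MM_G(d,g)$.

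First, I would observe that $c_1(\MM_G(d,g))=0$ follows immediately from the fact that these moduli spaces are hyperkähler, hence holomorphic symplectic: the holomorphic symplectic form $\omega_\C$ trivialises the canonical bundle. This gives a $\Z$-grading on Floer and quantum cohomology, and places us in the setting of Proposition \ref{Prop QFi vanishes if intersection form is trivial}.

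Next, I would invoke the vanishing of the intersection pairing on $H_*(\MM_G(d,g))$ already alluded to in Example \ref{Example Intro Higgs moduli II}: for $G=SL_n$ this is due to Hausel \cite{hausel_1998}, and for $G=GL_n$ it is due to Heinloth \cite{heinloth2016intersection}. Their results assert that the intersection product of any two compact homology classes vanishes, so in particular it vanishes in the specific complementary degrees $(2\mu,\,2\dim_\C \MM_G(d,g)-2\mu)$ required by the Proposition. Plugging these two inputs into Proposition \ref{Prop QFi vanishes if intersection form is trivial} yields $Q_\Fi=0$.

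Finally, the filtration consequence is immediate from Corollary \ref{Filtration via QFi}: $\Fil_1^\Fi=\ker(Q_\Fi\star\cdot)$, and since $Q_\Fi=0$ makes quantum product by $Q_\Fi$ the zero map on $QH^*(\MM_G(d,g))$, the kernel is everything. This gives $\Fil_1^\Fi=QH^*(\MM_G(d,g))=H^*(\MM_G(d,g))$. The only non-routine ingredient here is the Hausel--Heinloth vanishing of the cup product, which is the genuinely hard input; once that is in hand, the argument is essentially a formal consequence of Theorem \ref{Theorem neg lb paper summary} and the preceding propositions.
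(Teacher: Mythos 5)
Your strategy is the same as the paper's: verify the two hypotheses of \cref{Prop QFi vanishes if intersection form is trivial} and conclude via \cref{Filtration via QFi}. The $c_1(Y)=0$ step is fine and matches the paper.

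The gap is in the intersection-form step. You write that Hausel and Heinloth ``assert that the intersection product of any two compact homology classes vanishes.'' That is not what those papers prove: both \cite{hausel_1998} and \cite{heinloth2016intersection} establish vanishing of the \emph{middle-dimensional} intersection form, i.e.\ $H_d(\MM)\otimes H_d(\MM)\to \k$ where $d:=\dim_{\C}\MM$. To apply \cref{Prop QFi vanishes if intersection form is trivial} you need vanishing of the pairing in bidegree $(2\mu,\,2d-2\mu)$, and you have not verified that this is the middle-dimensional pairing. The missing step is the computation $2\mu = d$: since the Higgs moduli $\MM$ carries a weight-$1$ holomorphic symplectic form ($t\cdot\omega_{\C}=t\,\omega_{\C}$), the argument of \cref{GradientIsRPlus} gives $\mu = \tfrac{1}{2}s\dim_{\C}\MM = \tfrac{1}{2}\dim_{\C}\MM$, hence $2\mu = d = 2d-2\mu$, which identifies the required pairing with exactly the one Hausel and Heinloth kill. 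Without this computation the citation does not literally apply. (One could alternatively argue that $H_k(\MM)=0$ for $k>d$, so the bidegree-$(2\mu,2d-2\mu)$ pairing vanishes for degree reasons unless $2\mu=d$ — but you have not made that argument either, and either way some additional reasoning is needed.)
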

\begin{proof} %
Let $d:=\dim_{\C}(\MM)$. We have $c_1(Y)=0$ and $2\mu=d$ by the same proofs as in \cref{LemmaCalabiYau} and \cref{GradientIsRPlus}.
So $Q_{\Fi}=0$ follows by \cref{Prop QFi vanishes if intersection form is trivial} together with the fact
(due to \cite{hausel_1998} for $SL(n,\C)$ and 
to \cite{heinloth2016intersection} for $GL(n,\C)$)
that $H_{d}(\MM)\otimes H_{d}(\MM) \to \k$ is trivial (using $2\mu=d=2d-2\mu$).
\end{proof}

\subsection{$S^1$-equivariant symplectic cohomology}\label{Subsection intro S1 equiv SH 2}
Applying the methods from \cite{McLR18}, $S^1$-equivariant symplectic cohomology $ESH^*(Y,\Fi)$ is a $\k[u]$-module, with a canonical $\k[u]$-module homomorphism $$Ec^*:EH^*(Y)\cong H^*(Y)\otimes_{\k}\mathbb{F} \to ESH^*(Y,\Fi),$$ and $u$ is a formal variable in degree $2$. At chain level each $1$-orbit contributes a copy of the $\k[u]$-module $$\mathbb{F}:=\kuu /u\k [\![u]\!]\cong H_{-*}(\C\P^{\infty}),$$ where we identify $u^{-j}=[\C\P^j]$, and $H^*(\C\P^{\infty})=\k[u]$ acts by the nilpotent cap product action. We recall the notation for locally finite $S^1$-equivariant homology,
$$EH^*(Y):=H_{2\dim_{\C}Y-*}^{\,l\!f,S^1}(Y),$$ 
which in this case becomes $H^*(Y)\otimes_{\k}\mathbb{F}$ as it arises from constant $1$-orbits, and $S^1$ is only acting by $S^1$-reparametrisation on $1$-orbits (not on the space $Y$). \cref{Cor intro filtration} becomes:

\begin{thm}\label{Filtration vs equivariant filtration}
There is an $\R_{\infty}$-ordered filtration by graded $\k[u]$-submodules of $H^*(Y)\otimes_{\k}\mathbb{F}$,
\begin{equation}\label{DefinitionOfFiltration2}
E\FF^{\varphi}_p :=\bigcap_{\mathrm{generic}\,\lambda>p} \left(\ker Ec_\lambda^*:H^*(Y)\otimes_{\k}\mathbb{F}\to EHF^*(H_{\lambda})\right), \qquad E\Fil_{\infty}^{\Fi}:=H^*(Y) \otimes_{\k} \mathbb{F},
\end{equation} 
where $Ec_\lambda^*$ is an equivariant continuation map, a grading-preserving $\k[u]$-linear map.

In general, $\Fil^{\Fi}_\lambda \subset E\Fil^{\Fi}_\lambda$. If $H^*(Y)$ lies in even degrees (e.g.\;CSRs), then $\Fil^{\Fi}_\lambda  = QH^*(Y)\cap E\Fil^{\Fi}_\lambda.$
\end{thm}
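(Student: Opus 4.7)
The plan is to construct the equivariant continuation maps $Ec_\lambda^*$ using $S^1$-equivariant Floer theory as in McLean--Ritter \cite{McLR18}, with the $S^1$-action by loop reparametrisation, so that each $1$-orbit contributes a copy of $\mathbb{F}$ to the chain complex $ECF^*(H) = CF^*(H)\otimes_\k \mathbb{F}$ with differential $d^{S^1} = \sum_{k\geq 0} u^k \delta_k$. The essential analytic input is compactness of the equivariant moduli spaces; since $S^1$-equivariant perturbations of Floer data are supported in a compact subset of $Y$, the extended maximum principle via $\Psi: Y^{\mathrm{out}}\to B$ from \cref{Thm:SHForFi} applies verbatim to equivariant continuation cylinders. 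Grading preservation and $\ku$-linearity of $Ec_\lambda^*$ are standard features of the construction, and $E\FF^{\varphi}_p$ is automatically a graded $\ku$-submodule of $EH^*(Y)$ as an intersection of kernels of $\ku$-linear maps.

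For the inclusion $\FF^{\Fi}_\lambda \subset E\FF^{\Fi}_\lambda$, I would argue at chain level. The equivariant chain continuation $\widetilde{Ec}_\lambda: ECF^*(H_\delta)\to ECF^*(H_\lambda)$ is $\ku$-linear, and on $x\otimes 1$ takes the form $\widetilde{Ec}_\lambda(x\otimes 1) = \sum_{k\geq 0} c_k(x)\otimes u^k$, where $c_0 = c_\lambda$ is the non-equivariant chain-level continuation. Since $u^k \cdot 1 = 0$ in $\mathbb{F}$ for $k\geq 1$, the higher terms are killed, giving $\widetilde{Ec}_\lambda(x\otimes 1) = c_\lambda(x)\otimes 1$; analogously $d^{S^1}(y\otimes 1) = \delta_0(y)\otimes 1$. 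Hence if $x\in\FF^{\Fi}_\lambda$ has a chain representative $x_{\mathrm{chain}}$ with $c_\lambda(x_{\mathrm{chain}}) = \delta_0(y)$, then $\widetilde{Ec}_\lambda(x_{\mathrm{chain}}\otimes 1) = d^{S^1}(y\otimes 1)$, so $Ec_\lambda^*(x\otimes 1) = 0$ in $EHF^*(H_\lambda)$, yielding the inclusion.

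For the equality $\FF^{\Fi}_\lambda = QH^*(Y)\cap E\FF^{\Fi}_\lambda$ under the assumption that $H^*(Y)$ lies in even degrees, I would use a spectral-sequence collapse. By \cref{Equation intro HF lambda H splits} the hypothesis implies $HF^*(H_\lambda)$ also lies in even degrees; since $|u|=2$, the $E_2$-page $HF^*(H_\lambda)\otimes_\k\mathbb{F}$ of the $u$-adic filtration spectral sequence for $ECF^*(H_\lambda)$ is concentrated in even total degree. All higher differentials $d_r$ raise total degree by $+1$ and therefore vanish for parity reasons, so the sequence collapses at $E_2$; since $\k$ is a field, extensions split, giving $EHF^*(H_\lambda) \cong HF^*(H_\lambda)\otimes_\k\mathbb{F}$. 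Running the same argument compatibly on the source and on the continuation map identifies $Ec_\lambda^*$ with $c_\lambda^*\otimes \mathrm{id}_{\mathbb{F}}$, so $\ker Ec_\lambda^* \cong \ker c_\lambda^*\otimes_\k\mathbb{F}$; intersecting with the $u^0$-summand $QH^*(Y)\otimes 1\subset EH^*(Y)$ recovers $\ker c_\lambda^* = \FF^{\Fi}_\lambda$. The hard part will be making this identification precise at the level of full $\ku$-modules rather than merely associated-graded pieces, and compatibly for continuation, ruling out possible multiplicative extensions; equivalently, one must verify that all higher operations $\delta_k^*$ for $k\geq 1$ act trivially on $HF^*(H_\lambda)$-cohomology, which reduces to the same parity argument: $\delta_k$ shifts cohomological degree by $1-2k$ (odd), hence maps the even-degree $HF^*(H_\lambda)$ into a zero group.
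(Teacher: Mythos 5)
Your overall strategy is sound and the first two parts (construction of $Ec_\lambda^*$ and the inclusion $\Fil^{\Fi}_\lambda\subset E\Fil^{\Fi}_\lambda$) agree with the paper: the paper packages your chain-level computation $\widetilde{Ec}_\lambda(x\otimes 1)=c_\lambda(x)\otimes 1$ as a commuting square with the map $in:=\mathrm{id}\otimes u^0$, which is the same observation.

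For the equality direction the two proofs diverge, and yours carries a genuine gap where you yourself flag ``the hard part.'' You aim to establish the strong statement $Ec_\lambda^*\cong c_\lambda^*\otimes\mathrm{id}_{\mathbb{F}}$ from the $u$-adic spectral-sequence collapse, but this requires strictness of the filtered map $\widetilde{Ec}_\lambda$ (so that $\mathrm{gr}(\ker Ec_\lambda^*)=\ker(\mathrm{gr}\,Ec_\lambda^*)$), and collapse of the source and target spectral sequences does not by itself give strictness. Your proposed fix --- ``$\delta_k^*$ act trivially on $HF^*(H_\lambda)$'' --- is not well-posed as stated: for $k\geq 2$ the $\delta_k$ are not chain maps for $\delta_0$ (they satisfy $\sum_{i+j=k}\delta_i\delta_j=0$), so ``$\delta_k^*$'' is not a cohomology operation. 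What is well-defined is that the page differentials $d_r$ vanish by parity, which you use correctly for the collapse, but this alone does not yield the $\ku$-module identification compatibly with continuation.

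The paper avoids the issue entirely by using only the Gysin sequence $HF^*\xrightarrow{in}EHF^*\xrightarrow{u\cdot}EHF^{*+2}\xrightarrow{b}HF^{*+1}$. Observe that by the commuting square, $x\in QH^*(Y)\cap E\Fil^{\Fi}_\lambda$ iff $c_\lambda^*(x)\in\ker(in)=\mathrm{im}(b)$; so the equality $\Fil^{\Fi}_\lambda=QH^*(Y)\cap E\Fil^{\Fi}_\lambda$ needs only $b=0$, equivalently $in$ injective, which is much weaker than your target isomorphism and is immediate once one knows $EHF^*(H_\lambda)$ is a free $\mathbb{F}$-module concentrated in even degrees (the paper derives this from the triviality of the $S^1$-reparametrisation action on constant orbits; your parity collapse gives the same conclusion). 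If you wish to keep your spectral-sequence framework, the fix is to lower your ambitions: deduce only that $EHF^*(H_\lambda)$ lies in even degrees and is free over $\mathbb{F}$ (so $u\cdot$ is surjective, hence $b=0$ by exactness), then invoke $in$ injective. This gives the theorem without needing any naturality of the splitting $EHF^*\cong HF^*\otimes\mathbb{F}$ or strictness of continuation maps.
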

\begin{proof}
The first part is analogous to the non-equivariant case, so we will just explain the second part. In general, we have the following commutative diagram, where $in:=\mathrm{id}\otimes_{\k} u^0$ is the inclusion of the $u^0$-part, yielding an injective left-vertical arrow below,
$$
\xymatrix@C=45pt@R=15pt
{H^*(Y)\otimes_{\k} \mathbb{F}
\ar@{->}^-{Ec^*_{\lambda}}[r]  
\ar@{<-}[d]_-{in}
& 
EHF^*(H_{\lambda})
 \ar@{<-}^{in}[d]
 \\
 QH^*(Y)
\ar@{->}[r]^-{c^*_{\lambda}} & 
HF^*(H_{\lambda})
}
$$
Thus $\Fil^{\Fi}_\lambda \subset E\Fil^{\Fi}_\lambda$.
Now consider the Gysin sequence from \cite[Sec.4.5]{McLR18}: 
$$
\cdots \longrightarrow HF^*(H_{\lambda},\Fi) \stackrel{in}{\longrightarrow} EHF^*(H_{\lambda},\Fi)
\stackrel{u\cdot}{\longrightarrow} EHF^{*+2}(H_{\lambda},\Fi)
\stackrel{b}{\longrightarrow} HF^{*+1}(H_{\lambda},\Fi)
\longrightarrow \cdots
$$
(the connecting map $b$ at chain level yields images of higher equivariant differentials $\delta_j$, $j\geq 1$).
So $$QH^*(Y)\cap E\Fil_{\lambda}^{\Fi}=\ker(QH^*(Y)\stackrel{c_{\lambda}^*}{\longrightarrow} HF^*(H_{\lambda})\longrightarrow HF^*(H_{\lambda})/b(EHF^{*+1}(H_{\lambda})).$$
The analogue of \cref{Equation intro HF lambda H splits}, when $H^*(Y)$ lies in even degrees, is
\begin{equation}\label{Equation intro EHF Hlambda}
    EHF^*(H_{\lambda})\cong \oplus H^*(\F_\a)\otimes_{\k}\mathbb{F}[-\mu_\lambda(\F_\a)],
\end{equation}
since the $S^1$-reparametrisation action on the constant orbits in $\F_\a$ is trivial. So \eqref{Equation intro EHF Hlambda} is a free $\mathbb{F}$-module in even degrees, on which $u\cdot$ acts surjectively, so $b=0$ and $in: HF^*(H_{\lambda}) \to EHF^*(H_{\lambda})$ is injective. Thus $\Fil^{\Fi}_\lambda = E\Fil^{\Fi}_\lambda$ follows. Also $HF^*(H_{\lambda})=\ker (u\cdot:EHF^*(H_{\lambda}) \to EHF^*(H_{\lambda}))$ recovers  \eqref{Equation intro HF lambda H splits}.
\end{proof}

\section{Filtration on cohomology for Conical Symplectic Resolutions} \label{NonExactSH}

\subsection{Topological properties of CSRs} \label{CSRs}

We refer to {\BPW} \cite{BPW16} where CSRs were introduced and studied in detail, although these spaces were also previously considered by various authors, most notably Kaledin \cite{Ka06, Ka08, Ka09} and Namikawa \cite{Nam08, Nam11}. In particular,  \cite[Sec.2]{BPW16} lists large families of examples of CSRs, including Nakajima quiver varieties, and hypertoric varieties.
We recall:
\begin{de}\label{DefCSRwithFi}
	A \textbf{Conical Symplectic Resolution (CSR)} is a projective resolution\footnote{Meaning: $\M$ is a smooth variety, and $\pi$ is an isomorphism over the smooth locus of $\M_0$.} $\pi:\M \fun \M_0$ of a normal %
	affine variety $\M_0$, where $(\M,\om_\C)$ is a holomorphic symplectic manifold and $\pi$ is equivariant with respect to $\C^*$-actions on $\M$ and $\M_0$ (both denoted by $\Fi$). These actions satisfy: %
	\begin{enumerate}
		\item[(1)] The complex symplectic form $\om_\C$ has a \textbf{weight} $s\in \N$, so 
  $\Fi_t^* \om_\C = t^s \om_\C$ for all $t\in \C^*$.
		\item[(2)]\label{contracts} The action $\Fi$ contracts $\M_0$ to a single fixed point $x_0$, so $\forall x \in \M_0, \displaystyle \lim_{t\fun 0} t \cdot x=x_0.$ %
		Algebraically, $\C[\M_0]=\bigoplus_{n\geq 0}\C[\M_0]^n$ {and} $\C[\M_0]^0=\C,$ where $\C[\M_0]^n$ denotes the $n$-weight space.\footnote{Explicitly
			$\C[\M_0]^n=\{f \in \C[\M_0] \mid (t \cdot f)(x)=f(t\cdot x)=t^n f(x)\}.$} 
	\end{enumerate}
    We call \textbf{weight-$s$ conical actions} such actions $\Fi$. 
    A CSR may have many conical actions, of possibly different weights $s,$ 
    so we emphasise the choice by writing $(\M,\Fi)$.
    We denote the {\bf core} of $(\M,\Fi)$ by
     $\L:=\{p\in \M \mid \lim_{\C^* \ni t\fun \infty} t\cdot p \text{ exists}\}$ (see \cref{Subsection topology of the core}). 
\end{de}
We remark that our normality assumption on $\M_0$ is %
equivalent\footnote{For the proof of this equivalence see \cite[Lem.3.16]{vzivanovic2022exact}.} to the condition that $\pi:\M\fun \M_0$ is the affinisation 
map\footnote{$\M\fun \Aff(\M):=Spec(H^0(\M,\O_{\M})), \  p \mapsto \{f \mid f(p)=0\}.$} that is given in the original definition in \cite[Sec.2]{BPW16}.

\begin{lm}\label{LemmaCalabiYau}
	Any CSR $\M$ satisfies $c_1(T\M,I)=0,$ where $I$ is its complex structure.
\end{lm}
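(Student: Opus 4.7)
The plan is to exploit the holomorphic symplectic form $\omega_\C$ on $\M$ directly to trivialise the canonical bundle $K_\M := \Lambda^{\mathrm{top}}_\C T^*\M$, from which $c_1(T\M, I) = 0$ will follow at once.

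Concretely, write $2n := \dim_\C \M$ (this is even since $\M$ carries a non-degenerate 2-form of type $(2,0)$). I would consider the $n$-fold wedge
\[
\omega_\C^{\wedge n} := \underbrace{\omega_\C \wedge \cdots \wedge \omega_\C}_{n\text{ times}},
\]
which is a global holomorphic section of $K_\M$. Non-degeneracy of $\omega_\C$ as a symplectic form is, pointwise on $\M$, exactly the statement that $\omega_\C^{\wedge n}$ is nowhere zero: at each $p \in \M$, choosing a symplectic basis of $T_p\M$ for $\omega_\C|_p$ gives the standard computation that $\omega_\C^{\wedge n}|_p$ is a non-zero multiple of the top form in that basis. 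Hence $\omega_\C^{\wedge n}$ is a nowhere-vanishing holomorphic section of the holomorphic line bundle $K_\M$, which therefore is holomorphically (in particular topologically) trivial.

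It follows that $c_1(K_\M) = 0 \in H^2(\M; \Z)$. Since $K_\M = \det_\C T^*\M$ and for any complex vector bundle $E$ one has $c_1(E^\vee) = -c_1(E)$, we deduce
\[
c_1(T\M, I) \;=\; c_1(\det{}_\C T\M) \;=\; -c_1(K_\M) \;=\; 0,
\]
as desired. There is essentially no obstacle: the one conceptual point is simply that in the CSR definition $\omega_\C$ is a genuine holomorphic symplectic form defined on all of $\M$ (not merely on the smooth locus of $\M_0$, nor only at infinity), which is part of the hypothesis in \cref{DefCSRwithFi}; given this, the argument is the standard ``holomorphic symplectic $\Rightarrow$ trivial canonical bundle'' observation, and requires no input from the $\C^*$-action, the conicality, or the resolution map $\pi$.
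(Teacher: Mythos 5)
Your proof is correct and uses exactly the paper's argument: the top wedge power of $\omega_\C$ trivialises the canonical bundle, and $c_1(T\M) = -c_1(K_\M) = 0$. The paper states this in two sentences; you have simply spelled out the standard details.
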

\begin{proof}
The top exterior power of the complex symplectic form $\omega_{\C}$ trivialises the canonical bundle $\Lambda_\C^{top}T^*\M.$ 
Now recall that $c_1(\Lambda_\C^{top}T^*\M)=c_1(T^*\M)=-c_1(T\M)$.
\end{proof}

In the following Theorem, we summarise some well-known facts about CSRs.%
\footnote{Part (\ref{centralfibrecore}) is immediate (for a proof see e.g.\,\cite[Lem.3.15.]{vzivanovic2022exact});
(\ref{CSRcorehomotopyEquiv}) is due to \cite[proof of Prop.2.5]{BPW16}, %
see also \cref{Prop def retr to core analytic case};
(\ref{FibresOddCohOnly}) is due to Kaledin \cite[Prop.2.12]{Ka06},
and (\ref{isotropicCore},\ref{LagrangianCore}) go back to Nakajima \cite[Thm.5.8]{Nak94a}, 
and one can find the detailed proof in \cite[Lem.3.3]{vzivanovic2022exact}.}
\begin{thm}\label{CSRproperties} 
	Let $\pi:\M\fun\M_0$ be a weight-s CSR.
	\begin{enumerate}
	    \item \label{centralfibrecore} Its core is the central fibre, $\L=\pi^{-1}(0).$
	    \item \label{isotropicCore} %
	    $\L$ is an $\om_\C$-isotropic subvariety (but usually singular). 
     \item \label{LagrangianCore}
     When $s=1,$ $\L$ is a $\frac{1}{2}\dim_\C \M$-equidimensional variety, so $\om_\C$-Lagrangian (usually singular).  
	    \item \label{CSRcorehomotopyEquiv} The inclusion $\L \subset \M$ is a homotopy equivalence.
	    \item \label{FibresOddCohOnly}
	    Any fibre $F$ of $\pi$ has $H^{odd}(F,\mathbb{B})=0,$ for any field $\mathbb{B}$ of characteristic zero. %
	\end{enumerate}
\end{thm}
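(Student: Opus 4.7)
The plan is to establish the five claims in order, leaning on the $\C^*$-equivariance of $\pi$, the weight condition $\Fi_t^*\om_\C=t^s\om_\C$, and the machinery developed earlier in the paper for general symplectic $\C^*$-manifolds over a convex base. I expect the most technical parts to be (2) and (5); for the first two and for (4) I can rely largely on the abstract results of \cref{Lemma core is compact,Prop def retr to core analytic case}.

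For (\ref{centralfibrecore}), I would exploit $\C^*$-equivariance of $\pi$ together with the contracting property. If $p\in\mathrm{Core}(\M)$ then $\pi$ sends the convergent forward orbit of $p$ to a forward orbit in $\M_0$ with a limit; but the only point of $\M_0$ whose forward orbit converges is the unique fixed point $x_0$, since every other point has $\lim_{t\to\infty} t\cdot\pi(p)=\infty$ by the contracting assumption. Hence $\mathrm{Core}(\M)\subset\pi^{-1}(0)$. Conversely, $\pi^{-1}(0)$ is $\C^*$-invariant and compact (as $\pi$ is projective), so every point has a forward orbit with compact closure, which therefore converges to a fixed point by the general discussion of contracting flows in \cref{Subsection topology of the core}.

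For (\ref{isotropicCore}), the key idea is a weight argument. On the smooth locus of an irreducible component of $\L$ at a $\C^*$-fixed point $q\in\F_\a\subset\L$, the tangent space $T_q\L$ is a sum of non-positive-weight subspaces (being tangent to a union of unstable manifolds of $-X_{\R_+}$). For weight vectors $v,w\in T_q\L$ of weights $k_1,k_2\leq 0$ the identity $\om_\C(v,w)=t^{s-k_1-k_2}\om_\C(v,w)$ and $s-k_1-k_2\geq s\geq 1$ force $\om_\C(v,w)=0$. For smooth points of $\L$ that are not fixed, I would flow backward in the $-X_{\R_+}$-direction and use continuity of $\om_\C$; singular points are handled by taking a resolution of $\L$ and pulling back (or equivalently by a density argument on the smooth locus of each irreducible component). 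This is essentially Kaledin's argument. For (\ref{LagrangianCore}), I would combine (\ref{isotropicCore}) with the reverse inequality $\dim_\C\L_i\geq \tfrac12\dim_\C\M$ for each irreducible component, which follows from the Poisson-geometric fact that for $s=1$ the fibres of a projective symplectic resolution have dimension at least half the ambient dimension; the standard proof (Nakajima) combines the isotropy of fibres with semicontinuity of fibre dimension applied to $\pi$.

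For (\ref{CSRcorehomotopyEquiv}), I would invoke \cref{Prop def retr to core analytic case}: since $\L=\pi^{-1}(0)$ is cut out by the pullback of the maximal ideal of $x_0\in\M_0$, it is a (compact) analytic subvariety of the analytic manifold $\M$, so $(\M,\L)$ is a CW pair by Giesecke--\L{}ojasiewicz, and \cref{Prop def retr to core analytic case} yields a deformation retract, which is in particular a homotopy equivalence. Finally, for (\ref{FibresOddCohOnly}), I would defer to Kaledin's purity theorem \cite[Prop.2.12]{Ka06}: the $\C^*$-action of positive weight on $\om_\C$ puts strong constraints on the mixed Hodge structure of the fibres of a projective symplectic resolution, forcing vanishing of odd cohomology over any characteristic-zero field. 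The hard part here is genuinely algebro-geometric; the role of the plan is just to recognise that in a symplectic-topology paper this is best imported as a black box.
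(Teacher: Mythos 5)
Your proposal follows the same structure as the paper: the theorem is explicitly presented as a summary of well-known facts with a footnote pointing to the literature, citing Nakajima for (2)--(3), \cite{BPW16} and \cref{Prop def retr to core analytic case} for (4), and Kaledin \cite{Ka06} for (5); you correctly locate all of these and import (5) as a black box, so the overall approach matches. Your sketch of (1) is the standard equivariance argument and matches the cited proof, and your sketch of (2) captures the right idea, though the passage ``flow backward and use continuity of $\om_\C$'' should really invoke the weight/equivariance identity $\om_\C(d\Fi_t v,d\Fi_t w)=t^s\om_\C(v,w)$ once more rather than bare continuity: it is the divergence of $t^s$ against a bounded left-hand side that kills the form at non-fixed points, not continuity alone.

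There is a genuine slip in your sketch of (3). You assert that the reverse inequality $\dim_\C\L_i\geq\tfrac12\dim_\C\M$ follows by ``combining the isotropy of fibres with semicontinuity of fibre dimension applied to $\pi$''. That is not the mechanism, and semicontinuity of fibre dimension gives $\geq$ in the wrong direction (the central fibre is the largest fibre, not controlled from below by nearby fibres). The actual argument, as in Nakajima and \cite[Lem.3.3]{vzivanovic2022exact}, is a weight computation specific to $s=1$: the $\om_\C$-duality \eqref{NonDegenPairingByOmC} gives $H_k\cong H_{1-k}^*$, which forces $\dim(H_0\oplus H_-)=\tfrac12\dim_\C\M$ at every $\F_\a$, so every unstable cell $D_\a$ has complex dimension exactly $\tfrac12\dim_\C\M$; since each irreducible component of $\L$ contains some $D_\a$ as a dense open subset, equidimensionality and hence Lagrangianity follow. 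In other words, both isotropy and the dimension count come from the same weight computation, and no independent Poisson-geometric input or semicontinuity is needed. Since the paper itself does not write out a proof of (3), this does not break the theorem, but your sketch as written would not close the gap if it were expanded.
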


\begin{cor} \label{CohomologyOfACSRProperties}
    The cohomology of a weight-s CSR $\M$ over characteristic zero %
    fields
	is supported in even degrees, and at most up to 
	the degree $\dim_\C \M.$ 
    Moreover, when $s=1$, %
    $H^{\dim_{\C}\M}(\M)\neq 0$. 
\end{cor}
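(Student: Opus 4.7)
The plan is to deduce everything from the properties of the core $\L = \pi^{-1}(0)$ collected in \cref{CSRproperties}, using the homotopy equivalence $\L \hookrightarrow \M$. By that equivalence it suffices to analyse $H^*(\L;\mathbb{B})$. The three assertions (even support, degree bound, and non-vanishing in top degree when $s=1$) will correspond respectively to the three key properties of $\L$: it is a fibre of $\pi$, it is $\om_\C$-isotropic, and it is $\om_\C$-Lagrangian when $s=1$.

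For the even-degree claim, since $H^*(\M;\mathbb{B}) \cong H^*(\L;\mathbb{B})$ by \cref{CSRproperties}\eqref{CSRcorehomotopyEquiv}, and $\L = \pi^{-1}(0)$ is a fibre of $\pi$ by \cref{CSRproperties}\eqref{centralfibrecore}, Kaledin's result \cref{CSRproperties}\eqref{FibresOddCohOnly} gives $H^{\mathrm{odd}}(\L;\mathbb{B}) = 0$ directly. (Alternatively, one can use Frankel's decomposition \eqref{EqnFrankel}, together with the fact that each $\F_\alpha \subset \L$ is itself a compact subvariety of a fibre of $\pi$ and the even shifts $\mu_\alpha$ from \cref{LemmaSec2FixedLocus}, but the fibre-cohomology route is the shortest.)

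For the degree bound, recall that $\L$ is an $\om_\C$-isotropic complex analytic subvariety of $\M$ by \cref{CSRproperties}\eqref{isotropicCore}. Since $\om_\C$ is holomorphic symplectic on the $2n$-complex-dimensional manifold $\M$ (where $2n := \dim_\C \M$), any irreducible isotropic subvariety has complex dimension at most $n$, so $\dim_\R \L \leq 2n = \dim_\C \M$. Because $\L$ is a compact complex analytic set it admits a semi-algebraic triangulation of real dimension $\dim_\R \L$, so its singular cohomology vanishes above that dimension. Therefore $H^d(\M;\mathbb{B}) \cong H^d(\L;\mathbb{B}) = 0$ for $d > \dim_\C \M$.

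Finally, assume $s=1$. Then \cref{CSRproperties}\eqref{LagrangianCore} says $\L$ is $\om_\C$-Lagrangian, hence equidimensional of complex dimension $n = \tfrac{1}{2}\dim_\C \M$. Since $\L$ is a non-empty compact complex analytic variety of pure complex dimension $n$, its top-dimensional singular homology $H_{2n}(\L;\mathbb{B})$ is generated by the (non-zero) fundamental classes of its irreducible components, and is therefore non-trivial. Dualising (e.g.\,by the universal coefficient theorem, using that the homology of a compact triangulated space is finitely generated) gives $H^{2n}(\L;\mathbb{B}) \neq 0$, hence $H^{\dim_\C \M}(\M;\mathbb{B}) \neq 0$. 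The only step where one has to be careful is the invocation of a fundamental class for the possibly very singular variety $\L$; this is standard (via triangulability of complex analytic sets, Borel--Moore/Poincaré--Lefschetz duality, or by appealing to the normalisation of each irreducible component), and is the only non-formal point of the argument.
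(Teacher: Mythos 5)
Your proof is correct and follows exactly the intended route: the paper states the corollary without a written proof because it is meant to follow immediately from \cref{CSRproperties} via the homotopy equivalence $\L \hookrightarrow \M$, the fibre-cohomology result of Kaledin for even support, isotropy for the degree bound, and the Lagrangian property for non-vanishing in top degree when $s=1$. You have simply spelled out that deduction, including the standard but worth-flagging points (triangulability of the compact analytic set $\L$ for the degree bound, and existence of fundamental classes of irreducible components for the non-vanishing), so there is nothing to correct.
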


We prove the non-degeneracy of the intersection form of any CSR.
\begin{prop}\label{CSR_definite_intersection_form}
Any CSR $\M$ has a definite intersection form
$H_{\dim_\C \M}(\M) \times H_{\dim_\C \M}(\M) \fun \Q.$
\end{prop}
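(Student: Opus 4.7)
The plan is to exploit two key features of a CSR---Kaledin's semismallness theorem and the contracting $\C^*$-action---to reduce the intersection pairing to a single isotypic summand of the BBD decomposition of $R\pi_*\Q_\M$, where definiteness with a uniform sign follows from a polarisation argument.

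First, by Kaledin \cite{Ka06, Ka09}, every projective symplectic resolution $\pi:\M\to\M_0$ is semismall, so the BBD decomposition theorem gives
\[
R\pi_*\Q_\M[d]\;\cong\;\bigoplus_\alpha IC(\overline{S_\alpha},\mathcal{L}_\alpha),\qquad d:=\dim_\C\M,
\]
as a direct sum of perverse sheaves on $\M_0$, indexed by the symplectic leaves $S_\alpha$ of $\M_0$, with no shifts by semismallness. Using properness of $\pi$ (so $R\pi_!\cong R\pi_*$) together with Poincaré--Lefschetz duality $H_d(\M,\Q)\cong H^d_c(\M,\Q)$ and taking $\mathbb{H}^0_c$, one obtains the orthogonal (for the intersection pairing) decomposition
\[
H_d(\M,\Q)\;\cong\;\bigoplus_\alpha IH^{s_\alpha}_c(\overline{S_\alpha},\mathcal{L}_\alpha),\qquad s_\alpha:=\dim_\C\overline{S_\alpha}.
\]

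Second, the $\C^*$-action on $\M_0$ preserves the symplectic stratification and contracts each closure $\overline{S_\alpha}$ to the unique fixed point $0\in\M_0$, so each $\overline{S_\alpha}$ is an affine cone; since the $\C^*$-equivariance of $\pi$ makes each $\mathcal{L}_\alpha$ $\C^*$-equivariant, Goresky--MacPherson's conical vanishing theorem applies and yields $IH^k(\overline{S_\alpha},\mathcal{L}_\alpha)=0$ for $k\geq s_\alpha$. Verdier dualising gives $IH^{s_\alpha}_c(\overline{S_\alpha},\mathcal{L}_\alpha)=0$ whenever $s_\alpha>0$. Hence only the point stratum $\{0\}$ contributes, and $H_d(\M,\Q)\cong\mathcal{L}_0$ is realised as a single isotypic summand of the decomposition. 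Having reduced to one summand, I would finish by invoking the signature theorem of de Cataldo--Migliorini for semismall maps: the intersection form on the isotypic component attached to a stratum $S_\alpha$ is $(-1)^{(d-s_\alpha)/2}$-definite. Applied to $s_\alpha=0$, this yields that the intersection form on $H_d(\M,\Q)$ is $(-1)^{d/2}$-definite, hence definite.

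The main technical obstacle is that the de Cataldo--Migliorini signature theorem is classically formulated for $X$ smooth \emph{projective}, while $\M$ is only smooth quasi-projective. To overcome this I would either (i) use the $\C^*$-equivariant embedding $\M\hookrightarrow \M_0\times\CP^m$ from Example \ref{Example intro CSR have global map} to pass to a $\C^*$-equivariant projective compactification, choose a semismall resolution extending $\pi$, invoke the signature theorem there, and restrict the polarised pairing back to $\M$; or (ii) invoke Saito's theory of polarisable mixed Hodge modules, which supplies the required polarisation of $IC(\{0\},\mathcal{L}_0)$ directly in the quasi-projective setting. A subsidiary point that must be checked is that the $\C^*$-equivariance of the relevant local systems is strong enough to apply the conical vanishing theorem in the local-system version; this follows from the $\C^*$-equivariance of $\pi$ and of the stratification.
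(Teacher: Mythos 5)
Your argument is correct in substance and rests on the same two pillars as the paper's proof, namely Kaledin's semismallness of symplectic resolutions \cite{kaledin2000} and the de Cataldo--Migliorini definiteness (signature) theorem for semismall maps \cite{deCataldoMigliorini2005}; the difference lies entirely in how you isolate the point-stratum contribution. The paper's route is elementary: since the inclusion of the central fibre $\L=\pi^{-1}(0)\hookrightarrow\M$ is a homotopy equivalence (\cref{CSRproperties}(\ref{CSRcorehomotopyEquiv})), the intersection form on $H_{\dim_\C\M}(\M)$ is literally the refined intersection form attached to the fibre over the point stratum, and \cite[Cor.2.1.14]{deCataldoMigliorini2005} gives its definiteness directly. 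Your route via the decomposition theorem, \PL{} duality and conical vanishing proves a bit more --- it identifies $H_{\dim_\C\M}(\M)$ with the multiplicity space $\mathcal{L}_0$, makes the sign $(-1)^{\dim_\C\M/2}$ explicit, and would still isolate the point-stratum summand in situations where the central fibre is not a deformation retract --- but for a CSR the homotopy equivalence renders all of that sheaf theory unnecessary. Two caveats on your write-up. First, your fix (i) for the quasi-projectivity issue is not justified as stated: there is no reason that a projective compactification of $\M$ admits a semismall extension of $\pi$, so this alternative would need a genuine argument; it is moot, however, because the reference the paper cites (unlike the earlier 2002 signature paper) establishes the refined-intersection-form results for projective morphisms with smooth \emph{quasi-projective} source, which is essentially your fix (ii). Second, the orthogonality of the decomposition with respect to the intersection pairing is itself a theorem of de Cataldo--Migliorini rather than a formal consequence of the decomposition theorem, though you never actually use it since all summands other than the one supported at $0$ vanish.
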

\begin{proof}
By \cite[Cor.2.1.14]{deCataldoMigliorini2005} %
the intersection product for fibres above the relevant strata 
of semismall resolutions is definite over $\Q$ coefficients.
Symplectic resolutions are semismall \cite[Prop.1.2]{kaledin2000}, so this applies to $\L=\pi^{-1}(0)$.
Finally, the inclusion $\L=\pi^{-1}(0) \subset \M$ is a homotopy equivalence for any CSR $\pi:\M\fun \M_0,$ (\cref{CSRproperties}(\ref{CSRcorehomotopyEquiv})).
\end{proof}

By Kaledin--Verbitsky \cite[Thm.1.1]{KaVe02} and Namikawa 
\cite
{Nam08}, it is also known that %
any CSR $\M$ has a (topologically trivial) deformation whose base is $H^2(\M,\C)$ and whose generic fibre is an affine algebraic variety. In the latter variety, there are no non-constant $I$-holomorphic spheres\footnote{the affine variety embeds into some affine space $\C^N$, and there are no non-constant holomorphic functions on $\C P^1$.} 
so quantum product equals cup product. 
As the quantum product is preserved under deformations of the complex structure  (working over the Novikov field $\k$ over any base field), we deduce:

\begin{prop} \label{quantumequaltocup} For any CSR, there is a ring isomorphism $QH^*(\M)\iso H^*(\M,\k)$. \qed
\end{prop}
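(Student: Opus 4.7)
The plan is to follow the outline already indicated in the paragraph preceding the proposition, filling in the three ingredients needed to make it rigorous in our non-compact setup.

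First I would invoke the Kaledin--Verbitsky / Namikawa deformation theorem to produce a smooth family $\mathcal{M}\to H^2(\M,\C)$ of holomorphic symplectic manifolds with $\M$ as its central fibre, such that the generic fibre $\M'$ is an \emph{affine} algebraic variety. Topological triviality of the family gives a diffeomorphism $\M\cong\M'$ preserving the underlying smooth manifold, and hence identifying integer cohomology rings. In particular the class $[\om]$ and the almost complex structure deform, and after a small perturbation we may assume the chosen fibre is affine. Next, I would observe that on an affine variety there are no non-constant $I$-holomorphic spheres: any such sphere, composed with an embedding $\M'\hookrightarrow\C^N$, would yield a non-constant holomorphic function on $\C P^1$, contradicting Liouville. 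Consequently every moduli space of $I'$-holomorphic spheres of positive homology class is empty, so all three-point genus-zero Gromov--Witten invariants with $A\neq 0$ vanish, and the quantum product on $QH^*(\M',I',\omega')$ reduces to the ordinary cup product. This yields a ring isomorphism $QH^*(\M',I',\omega')\cong H^*(\M';\k)$.

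The final step is deformation invariance of $QH^*$ along the family as a ring, transferring the isomorphism back to the original complex structure. For this I would use the standard parametrised moduli space argument: for a generic path of tame almost complex structures $I_t$ and of symplectic forms $\omega_t$ in the family, the count of pseudo-holomorphic spheres representing a given class $A$ is independent of $t$, provided parametrised moduli spaces remain compact. As noted in \cref{Subsection isomorphism invariance}, in our non-compact setting this requires the deformation to be accompanied by a deformation of the projection map $\Psi$ in \eqref{Equation intro Psi} satisfying \eqref{Equation psi Xs1 is Reeb 2}, so that the projection trick from \cref{Thm:SHForFi} delivers a maximum principle uniformly in $t$. The Kaledin--Verbitsky/Namikawa family is $\C^*$-equivariant and its total space admits a projective morphism to an affine variety, so \cref{Example intro CSR have global map} supplies a $t$-family of admissible $\Psi_t$, and the maximum principle gives the required uniform compactness of parametrised moduli spaces. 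Combining with the weak$+$ monotonicity assumption (automatic here since $c_1(Y)=0$ by \cref{LemmaCalabiYau}), $QH^*$ is invariant along the path and we obtain the ring isomorphism $QH^*(\M)\cong H^*(\M;\k)$.

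The main obstacle is the last step: deformation invariance of $QH^*$ in a non-compact symplectic manifold is not automatic, since moduli spaces could a priori escape to infinity as $t$ varies. The heart of the argument is therefore constructing the family of $\Psi_t$ compatibly with the Kaledin--Verbitsky/Namikawa deformation, and verifying that the extended maximum principle of \cite[App.C]{R16} applies uniformly in $t$; once this is done, the standard cobordism argument between moduli spaces for different $t$ goes through and yields the ring isomorphism.
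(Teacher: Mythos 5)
Your proof follows the same route as the paper's terse justification (the paragraph preceding the proposition): the Kaledin--Verbitsky/Namikawa topologically trivial deformation to an affine generic fibre, the vanishing of nonconstant $I$-holomorphic spheres there, and deformation invariance of $QH^*$. The one place you add genuine content is the last step, where you correctly identify the compactness issue for parametrised moduli spaces in the non-compact setting and resolve it by producing a $t$-family of projections $\Psi_t$ from the $\C^*$-equivariant projective morphisms $\M_t\to\M_{0,t}$ via \cref{Example intro CSR have global map}, exactly the point the paper's footnote in \cref{Subsection isomorphism invariance} flags as needing justification; this is consistent with (and fills in) what the paper leaves implicit.
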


The next lemma describes the basic information on the fixed  locus of the $\C^*$-action on a CSR.
\begin{lm}\label{LemmaMomentMapMorseBott}
	Consider a weight-$s$ CSR $(\M,\Fi).$ We have the following: 
	\begin{enumerate}
	    \item \label{FixedSmooth} Its fixed locus $\F:=\M^{\Fi}$ is a smooth subvariety contained in the core $\L.$ 
        \item \label{finitelyMany} $\F$ is a proper\footnote{Meaning: compact in the analytic topology.} variety which breaks into finitely many connected components 
	$\F=\sqcup_\a \F_\a.$ 
	    \item \label{WeightDecompCSR} Given a fixed point $p\in\F_\a,$ the induced $\C^*$-action on $T_p\M$ has a weight decomposition
	$$T_p \M = \oplus_{k\in\Z} H_k, \ H_k:=\{v\in T_p \M \mid t \cdot v =t^k v\}.$$ %
	    \item \label{NonDegClaimCSR}The weight $s\in\N$ condition $t\cdot \om_\C = t^s \om_\C$ implies the non-degeneracy of the following pairing,  \begin{equation}\label{NonDegenPairingByOmC}\qquad\qquad
     \om_\C: H_k \oplus H_{s-k} \fun \C, \quad \textrm{ for each }k\in\Z.   \qquad (\omega_{\C}\textbf{-duality})
	\end{equation}
  \item \label{fa vs top cohomology weight-1 CSR}  When $s=1$,  we have\;\; $2\dim_{\C}\F_\a + \mu_\a = \dim_{\C} \M$\;\; and \;\;$\# \{\F_\a\}= \mathrm{rk}(H^{\dim_{\C}\M}(\M))$.
	\end{enumerate}
\end{lm}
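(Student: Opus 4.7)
Parts (1)--(3) follow by combining the general theory of symplectic $\C^*$-manifolds with the fact that $\Fi$ is algebraic. Once $\M$ is equipped with its $S^1$-invariant K\"{a}hler form (from \cref{Subsection Explicit S1 invt Kahler form for CSR}, yielding a symplectic $\C^*$-manifold structure in the sense of \cref{Def:CstarManifold}), the smoothness of $\F$ in (1) is immediate from \cref{LemmaSec2FixedLocus}. For the inclusion $\F \subset \L$: since $\pi$ is $\C^*$-equivariant and the contracting assumption forces $x_0$ to be the unique fixed point of $\M_0$, any $p \in \F$ satisfies $\pi(p) = x_0$, so $\F \subset \pi^{-1}(x_0) = \L$ by \cref{CSRproperties}\eqref{centralfibrecore}. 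For (2), projectivity of $\pi$ makes $\L=\pi^{-1}(x_0)$ a projective variety, hence compact, and $\F \subset \L$ is closed; a compact smooth manifold has finitely many connected components. For (3), one invokes the standard Bia\l ynicki-Birula weight decomposition at a fixed point of an algebraic $\C^*$-action, which recovers \eqref{Eqn weight spaces Hk} with integer weights.

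For (4), I would differentiate the identity $\Fi_t^*\om_\C = t^s \om_\C$ at a fixed point $p\in \F_\a$. For $v\in H_k$ and $w\in H_\ell$, since $d\Fi_t|_p$ acts on $H_k$ by $t^k$ and on $H_\ell$ by $t^\ell$,
$$
t^s\,\om_\C(v,w) \;=\; (\Fi_t^*\om_\C)(v,w) \;=\; \om_\C(d\Fi_t\,v,\,d\Fi_t\,w) \;=\; t^{k+\ell}\om_\C(v,w),
$$
for all $t\in\C^*$, forcing $\om_\C|_{H_k\otimes H_\ell}=0$ unless $k+\ell=s$. Combined with the non-degeneracy of $\om_\C$ on $T_p\M=\bigoplus_k H_k$ and \cref{Lemma omega gives perfect pairing} (which guarantees the $H_k$ are $\om_\C$-orthogonal symplectic summands for the analogous argument on the K\"{a}hler form, and the same orthogonality applies here), the pairing $H_k\otimes H_{s-k}\to \C$ must be non-degenerate.

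For (5), specialise (4) to $s=1$: this yields $\dim_\C H_k = \dim_\C H_{1-k}$, so in particular $\dim H_0=\dim H_1$ and $\dim H_{-m}=\dim H_{m+1}$ for $m\geq 1$. Using $T_p\F_\a=H_0$ and the identity $\mu_\a=2\sum_{k<0}\dim H_k$ from \eqref{mu a and maslov index equations}, a direct reindexing gives
$$
\dim_\C \M \;=\; \sum_k \dim H_k \;=\; \dim H_0+\dim H_1+2\!\sum_{m\geq 1}\dim H_{-m} \;=\; 2\dim_\C \F_\a+\mu_\a.
$$
For the count of components, apply \cref{Lemma Frankel properness} in top degree: $H^{\dim_\C\M}(\M)\cong\bigoplus_\a H^{\dim_\C\M-\mu_\a}(\F_\a)=\bigoplus_\a H^{2\dim_\C\F_\a}(\F_\a)$. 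Each $\F_\a$ is a compact connected complex (in fact K\"{a}hler) submanifold by (1)--(2) and \cref{LemmaSec2FixedLocus}, so its top cohomology has rank one, giving $\mathrm{rk}\,H^{\dim_\C\M}(\M)=\#\{\F_\a\}$.

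I do not anticipate a serious obstacle: parts (1)--(3) and (5) are essentially bookkeeping applications of earlier machinery, and (4) reduces to a one-line equivariance computation. The mildest subtlety is ensuring, in (5), that the summands $H^{2\dim_\C\F_\a}(\F_\a)$ really are one-dimensional, which requires knowing each $\F_\a$ is compact, connected, and orientable --- but all three follow from what precedes.
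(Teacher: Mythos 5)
Your proposal is correct and follows essentially the same route as the paper: (1)--(2) via equivariance of $\pi$ and compactness of $\L$, (3) via the weight decomposition of the linearised action, (4) via the equivariance computation $\om_\C(t\cdot v, t\cdot w)=t^{k+\ell}\om_\C(v,w)$ paired against $\Fi_t^*\om_\C=t^s\om_\C$, and (5) via the $s=1$ duality $\dim H_k=\dim H_{1-k}$ together with \eqref{EqnFrankel} in top degree. One small remark: your appeal to \cref{Lemma omega gives perfect pairing} in part (4) is off-target and unnecessary --- that lemma concerns the real form $\omega$, whereas the non-degeneracy you actually need is that of $\om_\C$ on $T_p\M$, which holds pointwise by definition of a holomorphic symplectic manifold, and combined with the block-vanishing $\om_\C(H_k,H_\ell)=0$ for $k+\ell\neq s$ already gives the claim.
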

\begin{proof}
The fixed locus of a reductive group action on a smooth variety is smooth;\footnote{e.g.\,see \cite[Lem.5.11.1]{CGi97}.} in particular, it applies to $\F=\M^{\Fi}.$ 
It is contained in the core, as it is the preimage $\L=\pi^{-1}(0)$ of the only fixed point $0 \in \M_0,$ and $\pi$ is equivariant. Being closed in a proper variety $\L,$ 
the fixed locus is proper itself, thus indeed breaks into finitely many connected components $\F=\sqcup_\a \F_\a.$ 

The $\C^*$-action at a fixed point $p\in\F_\a$ yields a representation $\C^*\dejstvo T_p\M,$ thus the weight decomposition \eqref{WeightDecompCSR} is immediate. 
Considering two homogeneous vectors $v_1\in H_{k_1},$ $v_2\in H_{k_2},$ we have
\begin{equation*}\label{ActionOnSymplForm}
	\om_\C(v_1,v_2)=t^{-s} \om_\C(t\cdot v_1, t\cdot v_2)=t^{-s}\om_\C(t^{k_1}v_1,t^{k_2}v_2)=t^{k_1+k_2-s}\om_\C(v_1,v_2),
\end{equation*} 
thus, $\om_\C(v_1,v_2)=0$ unless $k_1 + k_2=s.$ So \eqref{NonDegClaimCSR} follows, as $\om_\C$ is non-degenerate on $T_p \M=\oplus_k H_k$. 

When $s=1,$ this duality forces 
$H_0\oplus H_-= \oplus_{k\leq 0} H_k \iso H_+,$
thus, abbreviating $|V|:=\dim_{\C}V$, 
$\dim_\C \M = |H_0| + |H_-| + |H_+| 
=2(|H_0|+|H_-|)= 2 \dim_\C \F_\a + \mu_\a.$
Thus, by \eqref{EqnFrankel}, the number of generators in $H^{\dim_{\C}\M}(\M)$ is the number of $\a$.
\end{proof}

\begin{lm}\label{LemaCSRM0nonSingularEqualToAffineSpace}
Given a CSR $\pi:\M\fun\M_0,$ if $\M_0$ is non-singular then $\M_0 \iso \C^{2n}$, for some $n\in\N.$
\end{lm}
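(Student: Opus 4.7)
First I would observe that by \cref{DefCSRwithFi}, $\pi:\M \to \M_0$ is an isomorphism over the smooth locus of $\M_0$; so if $\M_0$ is entirely non-singular, then $\pi$ is a biholomorphism $\M \iso \M_0$. In particular $\M_0$ inherits a holomorphic symplectic form, forcing $N:=\dim_{\C}\M_0=\dim_{\C}\M$ to be even, say $N=2n$. So it remains to identify $\M_0$ with $\C^{2n}$ as a variety (the $\C^*$-action then automatically becomes linear in suitable coordinates).

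Next, $\M_0$ is irreducible: it is normal, so each connected component is irreducible, and the contracting action to the unique fixed point $x_0\in \M_0$ ensures $\M_0$ is (path-)connected. Thus $\C[\M_0]$ is an integral domain of Krull dimension $2n$, and by the grading assumption in \cref{DefCSRwithFi}(2) it is an $\N$-graded $\C$-algebra with $\C[\M_0]^0=\C$ and unique graded maximal ideal $\mathfrak{m}_{x_0}=\bigoplus_{j\geq 1}\C[\M_0]^j$.

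The central step is a graded-Nakayama argument. Smoothness at $x_0$ gives $\dim_{\C}\mathfrak{m}_{x_0}/\mathfrak{m}_{x_0}^2=2n$; since the $\C^*$-action preserves this quotient, I can pick \emph{homogeneous} elements $f_1,\ldots,f_{2n}\in \mathfrak{m}_{x_0}$ (of positive weights $w_1,\ldots,w_{2n}$) whose images form a basis of $\mathfrak{m}_{x_0}/\mathfrak{m}_{x_0}^2$. Graded Nakayama then implies that $f_1,\ldots,f_{2n}$ generate $\mathfrak{m}_{x_0}$ as an ideal, and hence generate $\C[\M_0]$ as a $\C$-algebra (every homogeneous element of positive degree lies in $\mathfrak{m}_{x_0}$, which is generated by the $f_i$). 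This produces a surjective graded $\C$-algebra homomorphism
\[
\Phi:\C[x_1,\ldots,x_{2n}]\twoheadrightarrow \C[\M_0],\qquad x_i\mapsto f_i,
\]
where each $x_i$ is placed in degree $w_i$.

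To finish, I note that $\Phi$ is a surjection between integral domains of the same Krull dimension $2n$, so $\ker \Phi$ is a prime ideal of height $0$ in the polynomial ring, hence $\ker \Phi=0$. Thus $\Phi$ is a graded isomorphism $\C[x_1,\ldots,x_{2n}]\iso \C[\M_0]$, which dualises to the desired $\C^*$-equivariant isomorphism $\M_0\iso \C^{2n}$ (with the linear $\C^*$-action of weights $w_1,\ldots,w_{2n}$).

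The main obstacle is just the bookkeeping in the graded-Nakayama step --- in particular making sure that positivity of the grading ($\C[\M_0]^0=\C$) is what replaces the local hypothesis in the usual Nakayama lemma, so that a set of homogeneous representatives of a basis of $\mathfrak{m}_{x_0}/\mathfrak{m}_{x_0}^2$ genuinely algebra-generates $\C[\M_0]$ and not just $\mathfrak{m}_{x_0}$ after localisation.
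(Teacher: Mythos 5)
Your proof is correct, but it takes a genuinely different route from the paper's. The paper's argument is geometric: it observes that when $\M_0$ is smooth, the identity map $\M_0\to\M_0$ is itself a CSR with a single fixed point, then invokes the Bia\l{}ynicki-Birula decomposition theorem for semiprojective varieties (via \cite[Cor.2.7 and Lem.3.15]{vzivanovic2022exact}) to conclude that $\M_0$ is an affine bundle over a point, hence an affine space with a linear action. Your argument is instead purely commutative-algebraic: you note that $\pi$ is an isomorphism (so $\dim_{\C}\M_0$ is even by the symplectic form), that $\C[\M_0]$ is an $\N$-graded domain with $\C[\M_0]^0=\C$ and unique graded maximal ideal $\mathfrak{m}_{x_0}$, use smoothness at $x_0$ plus graded Nakayama to lift a basis of $\mathfrak{m}_{x_0}/\mathfrak{m}_{x_0}^2$ to homogeneous algebra generators, and then kill the kernel of the resulting surjection $\C[x_1,\ldots,x_{2n}]\twoheadrightarrow\C[\M_0]$ by a Krull-dimension count between domains. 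Both are valid; the paper's version is shorter because it leans on a high-powered external structure theorem, whereas yours is self-contained and also produces the explicit linear weights $w_1,\ldots,w_{2n}$ of the $\C^*$-action on $\C^{2n}$ as a byproduct (namely the degrees of the chosen homogeneous generators), which the BB argument does not make visible.
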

\begin{proof}
 If $\M_0$ is non-singular, $\pi:\M_0 \fun \M_0$ is a CSR, with a single fixed point $0.$ Thus by the \BB decomposition theorem for semiprojective varieties and the fact that CSRs are semiprojective
 \cite[Cor.2.7 and Lem.3.15.]{vzivanovic2022exact},
 we deduce that $\M_0$ is an affine bundle over a point, hence an affine space, thus isomorphic to $\C^{2n}$ (with some linear action on it).
\end{proof}

\subsection{Symplectic structures on a CSR}\label{Preliminaries}

We now show that CSRs $(\M,\Fi)$ fit into the framework of Section \ref{SectionSHassociatedToHamS1Action}.
In Corollary \ref{MomentMapExhausting} we build an explicit $I$-compatible $S^1$-invariant Calabi-Yau\footnote{due to \cref{LemmaCalabiYau}.} \KH structure on $(\M,\Fi)$, with an exhausting moment map. Without the exhausting condition, this
also arises from any holomorphic embedding $\iota:\M \hookrightarrow X$ into a \KH manifold $(X,\omega_X)$, by averaging:
\begin{equation}\label{EquationS1invtMetric}\textstyle
\om_I:=\int_{S^1} \Fi_t^*(\iota^* \om_X)\,dt.    
\end{equation}
Any CSR $\M$ admits such an embedding, being projective over the affine variety $\M_0$.\footnote{$\pi:\M \fun \M_0$ being projective means that there is an embedding $i:\M\hookrightarrow \M_0 \times \C P^n$ for some $n$ (\cite[p.103]{Ha77}). As $\M_0$ is affine, $\M_0 \hookrightarrow \C^m$ for some $m,$ hence we can obtain $\M \hookrightarrow \C^m \times \CP^n.$
 }
Abbreviate by $g(\cdot,\cdot):=\om_I(\cdot,I\cdot)$ the induced Riemannian metric ($\om_I$ is $I$-compatible).
Since $H^1(\M)=0$ (\cref{CohomologyOfACSRProperties}), it has a moment map $H.$ 
We show that $\M$ is a symplectic $\C^*$-manifold over a convex base in \cref{PropMapPsi}. Therefore, if $H$ %
is not exhausting, we can modify $\om_I$ so that the new moment map $H$ is proper using \cref{Lemma making H proper}, and we know $H$ is bounded below by \cref{Lemma H bdd below}.

\begin{lm}\label{GradientIsRPlus}
	Any weight-$s$ CSR $(\M,\Fi)$ is a symplectic $\C^*$-manifold with 
        an $S^1$-invariant $I$-compatible \KH structure $(g,I,\om_I).$ 
	The $S^1$-action is Hamiltonian with Maslov index
	$\mu=\frac{1}{2}s\cdot\dim_{\C} \M.$
\end{lm}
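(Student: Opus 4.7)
My plan has three steps, corresponding to the three assertions: the existence of the Kähler structure, the Hamiltonian property, and the Maslov index formula.

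First, to construct the $S^1$-invariant $I$-compatible Kähler form $\omega_I$, I would use projectivity of $\pi:\M \to \M_0$ combined with affineness of $\M_0$ (which embeds in some $\C^m$) to obtain a $\C^*$-equivariant holomorphic embedding $\iota:\M \hookrightarrow \C^m \times \C P^n$, where $\C^*$ acts on the factors with suitable weights coming from homogeneous generators of $\C[\M_0]$ and homogeneous coordinates on $\C P^n$. Pulling back the standard product Kähler form $\omega_X$ (Euclidean on $\C^m$ plus Fubini--Study on $\C P^n$, scaled so $S^1 \subset \C^*$ acts by isometries) gives an $I$-compatible Kähler form $\iota^*\omega_X$. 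Averaging as in \eqref{EquationS1invtMetric} preserves closedness, and because $\Fi_t$ is $I$-holomorphic each pulled-back form $\Fi_t^*(\iota^*\omega_X)$ is $I$-compatible; the convex combination of positive $(1,1)$-forms is again a positive $(1,1)$-form, so $\omega_I$ is Kähler.

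Second, the $S^1$-action is symplectic by construction, so Cartan's formula gives $d(\iota_{X_{S^1}}\omega_I) = \mathcal{L}_{X_{S^1}}\omega_I = 0$. Since $H^1(\M)=0$ by \cref{CohomologyOfACSRProperties}, the closed 1-form $\iota_{X_{S^1}}\omega_I$ is exact, giving a Hamiltonian $H$ for the $S^1$-action. Together with \cref{LemmaCalabiYau} (which gives $c_1(\M)=0$, hence a well-defined integer Maslov index), this shows $(\M,\omega_I,I,\varphi)$ is a symplectic $\C^*$-manifold.

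The main step is the Maslov index formula. By \cref{LemmaSec2MaslovIndex}, at any fixed point $p \in \F_\a$ the Maslov index equals the sum of weights $w_i$ with multiplicity in the decomposition \eqref{Eqn weight spaces Ti}, which collecting by weight gives
\[
\mu \;=\; \sum_{k \in \Z} k \cdot h_k^\a, \qquad h_k^\a := \dim_\C H_k.
\]
The key input is the $\omega_\C$-duality pairing \eqref{NonDegenPairingByOmC} in \cref{LemmaMomentMapMorseBott}: non-degeneracy of $\omega_\C: H_k \oplus H_{s-k} \to \C$ forces $h_k^\a = h_{s-k}^\a$ for all $k\in\Z$. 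Re-indexing by $l = s-k$ then yields
\[
\mu \;=\; \sum_{k} k \cdot h_k^\a \;=\; \sum_{k} k\cdot h_{s-k}^\a \;=\; \sum_{l}(s-l)\cdot h_l^\a \;=\; s\cdot\dim_\C \M - \mu,
\]
so $2\mu = s\cdot\dim_\C \M$, as claimed. There is no real obstacle here — the work is all concentrated in this duality trick, which requires nothing beyond the weight-$s$ condition; the rest of the lemma is essentially bookkeeping from results already established in the paper.
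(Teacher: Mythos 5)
Your proof is correct, and the first two parts (construction of the $S^1$-invariant Kähler form by averaging, and the $H^1(\M)=0$ argument for the Hamiltonian property) match the paper's approach and references. For the Maslov index formula, however, you take a genuinely different route. The paper argues globally: since $\omega_\C^d$ trivialises the canonical bundle $\Lambda^{\mathrm{top}}_\C T^*\M$ (with $d:=\tfrac{1}{2}\dim_\C\M$) and the weight-$s$ condition gives $\varphi_t^*(\omega_\C^d)=t^{sd}\omega_\C^d$, the $S^1$-action rotates the fibre of the canonical bundle with speed $sd$, which by the characterisation in \cref{Subsection discussion of ma and muFalapha} is exactly $\mu$. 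Your proof instead works locally at a fixed point: you combine the weight-sum formula $\mu=\sum_k k\,h_k^\a$ from \cref{LemmaSec2MaslovIndex} with the $\omega_\C$-duality $h_k^\a=h_{s-k}^\a$ from \cref{LemmaMomentMapMorseBott}\eqref{NonDegClaimCSR}, and the re-indexing $l=s-k$ yields $\mu=s\dim_\C\M-\mu$. Both are valid; the paper's argument is shorter and manifestly independent of the choice of fixed point, while yours makes visible a nice symmetry of the weight multiplicities at fixed points that the global trivialisation argument hides. One small remark: you need not cite \cref{LemmaCalabiYau} separately as an ingredient for well-definedness of the integer-valued Maslov index — the trivialisation of the canonical bundle that makes the index well-defined is exactly $\omega_\C^d$, so the two facts are really the same observation.
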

\begin{proof}
The $S^1$-action is symplectic as it preserves $\om_I$. By Corollary \ref{CohomologyOfACSRProperties}, $H^1(\M)=0,$ so the action is Hamiltonian. 
The canonical bundle $\Lambda^{\mathrm{top}}_\C T^*\M$ is trivialised by $\om_\C^d$ for $d:=\frac{1}{2}\dim_\C \M$. The weight $s$ condition ($\Fi_t^*\om_\C=t^s \om_\C$) implies $\varphi_t^*(\om_\C^d)=(\varphi_t^*\om_\C)^{d}=(t^s\om_\C)^d=t^{sd}\om_\C^d$, so $\mu=sd$ (see \cref{Subsection discussion of ma and muFalapha}).
\end{proof}

The methods of Section  \ref{SectionSHassociatedToHamS1Action}
are required, as $(\M,\om_I)$ is almost never convex at infinity:

\begin{prop}\label{NonIsoSing} 
Suppose that $0\in\M_0$ is a non-isolated singularity. Then \emph{any} choice of $I$-compatible (real) symplectic form $\omega$ on $\M$ is non-exact at infinity.
\end{prop}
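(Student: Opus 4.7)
If $\omega$ is $I$-compatible and $C\subset \M$ is a non-constant compact $I$-holomorphic curve, then $\int_C\omega>0$; if in addition $\omega=d\theta$ on a neighbourhood of $C$, then $\int_C\omega=0$ by Stokes. So it suffices, given any compact $K\subset \M$, to produce a non-constant compact $I$-holomorphic curve in $\M\setminus K$: any such family of curves escaping to infinity obstructs exactness of $\omega$ at infinity.

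\textbf{Producing holomorphic curves at infinity.} The non-isolated singularity hypothesis means $\mathrm{Sing}(\M_0)$ has positive dimension; it is $\C^*$-invariant, so I can pick $p\in \mathrm{Sing}(\M_0)\setminus\{0\}$. Since $\pi$ is a resolution of the normal variety $\M_0$ and $p$ is singular, the fibre $F_p:=\pi^{-1}(p)$ is positive-dimensional (otherwise Zariski's Main Theorem would force $\pi$ to be a local isomorphism near $p$, so $p$ would be smooth). It is a compact projective subvariety of $\M$, as $\pi$ is projective, hence proper. To slide $F_p$ off to infinity, use the grading $\C[\M_0]=\bigoplus_{n\geq 0}\C[\M_0]^n$ with $\C[\M_0]^0=\C$: since $p\neq 0$, there is a homogeneous $f\in \C[\M_0]^w$ with $w\geq 1$ and $f(p)\neq 0$, so $|f(t\cdot p)|=|t|^{w}|f(p)|\to\infty$ as $|t|\to\infty$, which means $t\cdot p$ exits every compact subset of $\M_0$. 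By properness and $\C^*$-equivariance of $\pi$, the compact subvariety $t\cdot F_p=\pi^{-1}(t\cdot p)$ exits every compact subset of $\M$ as $|t|\to\infty$. Intersecting with a generic hyperplane in a projective embedding of $F_p$, repeatedly until dimension one, produces a non-constant compact irreducible $I$-holomorphic curve $C_t\subset t\cdot F_p$.

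\textbf{Conclusion.} Suppose for contradiction that $\omega=d\theta$ on $\M\setminus K$ for some compact $K$. Pick $|t|$ large enough that $C_t\subset \M\setminus K$. Applying Stokes to a normalisation $\nu:\widetilde{C}_t\to C_t$ (whose exceptional set is finite, hence of measure zero) yields $\int_{C_t}\omega=\int_{\widetilde{C}_t}\nu^*(d\theta)=0$. But $I$-compatibility gives $\int_{C_t}\omega>0$ by positivity of $\omega(\cdot,I\cdot)$ on the tangent spaces of the smooth locus of $C_t$, a contradiction. The only delicate points are extracting the one-dimensional curve $C_t$ and running Stokes through its finite singular set; both are routine given that $\pi$ is projective. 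The crux of the argument is really the simple observation that the non-isolated singularity forces an entire $\C^*$-orbit of positive-dimensional fibres to accumulate at infinity, and every such fibre carries compact algebraic curves.
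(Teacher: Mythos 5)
Your proof is correct and takes a genuinely different route from the paper's. The paper invokes Kaledin's theorem on the finite symplectic stratification of $\M_0$ into smooth locally closed strata, observes that the non-isolated singularity forces a non-generic stratum $\M_0^1$ which is non-compact (its points have finite $\C^*$-isotropy), picks a sequence $(x_i)\to\infty$ in it, and integrates a suitable power of $\omega$ over an irreducible component of $\pi^{-1}(x_i)$ using the Griffiths--Harris theory of fundamental classes for singular subvarieties. You instead bypass the stratification theorem entirely: the singular locus is $\C^*$-invariant and positive-dimensional, you push a single singular point $p$ to infinity along its explicit $\C^*$-orbit using a homogeneous function of positive weight, and you obtain positive-dimensionality of $\pi^{-1}(p)$ directly from Zariski's Main Theorem applied to the birational projective resolution over a normal base. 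You then cut down to a curve by generic hyperplanes and run Stokes through a normalisation rather than through singular fundamental classes. Both are valid; your approach is more elementary and arguably makes the positive-dimensionality of the fibres, which the paper leaves implicit in the stratification, completely transparent, while the paper's route piggy-backs on structural input (Kaledin's theorem) that it already has available and wants to cite anyway. One tiny stylistic point: you should slice $t\cdot F_p$ directly, or equivalently slice $F_p$ once to get a curve $C\subset F_p$ and set $C_t := t\cdot C$, which works since $\varphi_t$ is $I$-holomorphic; as written, slicing $F_p$ but asserting $C_t\subset t\cdot F_p$ skips this small step.
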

\begin{proof} As $0\in \M_0$ is a symplectic singularity, by \cite[Thm.2.3]{Ka06} there is a finite stratification $\M_0=\sqcup_{a\in A} \M_0^{a}$ by locally closed smooth strata, where $\M_0^{0}=0.$ By assumption, there is at least another non-generic stratum $\M_0^{1}.$ As the $\C^*$-action on $\M_0$ is algebraic, it leaves the strata invariant. As the points in $\M_0^{1}$ have finite isotropy subgroups, an arbitrary $\C^*$-orbit makes the stratum $\M_0^{1}$  non-compact. Thus, there is a sequence of points $(x_i)_{i\in\N} \in \M_0^{1}$ that goes to infinity. %
	Their fibres $\pi^{-1}(x_i)$ are $I$-holomorphic, hence $\omega$-symplectic, projective subvarieties in $\M.$ Thus, integrating their (possibly singular) irreducible components with a suitable power of $\omega$ gives a positive value (such integration is well-defined, see  \cite[p.60]{GrHa78}). 
    If $\omega$ were exact outside of a compact set $K$, then a power of $\omega$ would also be exact in this region.
	But any (possibly singular) irreducible component of a fibre $\pi^{-1}(x_i)$ in this region would have a well-defined fundamental class \cite[p.61]{GrHa78},
    in particular by Stokes's theorem \cite[p.60]{GrHa78} integration of any exact form is zero.  
    Contradiction.
\end{proof}

By \cref{LemaCSRM0nonSingularEqualToAffineSpace}, when $0\in\M_0$ is not a singular point, $\M\iso \C^{2n}$ and so it is Liouville with vanishing symplectic cohomology (see e.g.\,\cite[Sec.3]{OanceaEnsaios}).
Symplectic resolutions $\M\fun \M_0$ (not just CSRs) for which 
$0\in\M_0$ is an isolated singularity are completely classified. In complex dimension 2, they are the minimal resolutions of Du Val singularities (by \cite[Prop.1.3]{Be00} and \cite[Thm.7.5.1]{Ish97}); in higher dimensions, they are the cotangent bundles $T^*\C P^n$ \cite[Thm.8.3]{CMS-B02}.
In the former case, they are convex at infinity and have vanishing symplectic cohomology for $\om_I$ \cite[Lem.42]{R10}. In the latter case, for $n\geq 2$ they are not convex at infinity \cite[Rmk. in Sec.11.1]{R14}. Thus:

\begin{cor}\label{Cor CSR is almost never convex}
A CSR is convex at infinity only if it is isomorphic to $\C^{2n}$ for some $n$, or it is a minimal resolution of a Du Val singularity.
\end{cor}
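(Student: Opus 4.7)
The plan is to combine Proposition~\ref{NonIsoSing} with the known classification of symplectic resolutions over an isolated singularity, together with the basic observation that convexity at infinity forces the symplectic form to be exact at infinity.

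First, I would observe that if $(\M,\om_I)$ is convex at infinity, then by definition outside a compact subset of $\M$ we have $\om_I = d(R\alpha)$ on a conical end $\Sigma \times [R_0,\infty)$, so in particular $\om_I$ is exact at infinity. On the other hand, $\om_I$ is an $I$-compatible symplectic form constructed as in \eqref{EquationS1invtMetric}, so Proposition~\ref{NonIsoSing} directly applies: if $0\in\M_0$ were a non-isolated singularity, $\om_I$ would not be exact at infinity, a contradiction. Thus $0\in\M_0$ must be either a smooth point of $\M_0$ or an isolated singularity of $\M_0$.

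Next I would split into two cases. If $0\in\M_0$ is a smooth point, then $\M_0$ is non-singular (by homogeneity coming from the contracting $\C^*$-action, any other point flows to $0$, so the smoothness of $\M_0$ at $0$ propagates, or more directly one notes that $\pi$ is a resolution which must be an isomorphism over the smooth locus, and if $\M_0$ were singular anywhere the contracting $\C^*$-action would drag that singularity to $0$). Then Lemma~\ref{LemaCSRM0nonSingularEqualToAffineSpace} gives $\M_0\cong \C^{2n}$, and since $\pi$ is an isomorphism over the smooth locus of $\M_0$, we conclude $\M\cong \C^{2n}$. If instead $0\in\M_0$ is an isolated singularity, I would invoke the classification results recalled in the paragraph preceding the Corollary: in complex dimension two these are precisely the minimal resolutions of Du Val singularities \cite{Be00, Ish97}, and in higher dimension any such symplectic resolution must be $\M\cong T^*\C P^n$ for some $n\geq 2$ by \cite{CMS-B02}. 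However, the latter case is excluded from being convex at infinity by the remark cited from \cite[Rmk.\ in Sec.~11.1]{R14}.

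This leaves exactly the two listed possibilities, proving the Corollary. The main (and essentially only) obstacle is Proposition~\ref{NonIsoSing}, whose proof was the substantive input; the Corollary itself is a clean synthesis of that Proposition with the existing classification theorems, so no further analytical work is required beyond invoking the cited literature.
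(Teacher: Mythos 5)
Your proof is correct and follows essentially the same route as the paper: Proposition~\ref{NonIsoSing} eliminates the non-isolated singularity case, Lemma~\ref{LemaCSRM0nonSingularEqualToAffineSpace} handles the smooth case, and the classification of isolated-singularity symplectic resolutions (Du Val or $T^*\C P^n$ with the latter ruled out by \cite{R14}) finishes. The only addition is your explicit observation that smoothness of $\M_0$ at $0$ forces global smoothness via the contracting action, which the paper leaves implicit.
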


\subsection{CSRs are symplectic $\C^*$-manifolds globally defined over a convex base}

\begin{prop}\label{PropMapPsi}
Any CSR $(\M,\Fi)$ is a symplectic $\C^*$-manifold globally defined over the convex base $\C^N$, in the sense of Definition \ref{Def:KahlerMfdWithProjection}. Indeed, there is a proper $\C^*$-equivariant holomorphic map $$\Psi=\Theta \circ j \circ \pi: \M \to \C^N,$$
with $\Psi^{-1}(0)=\L$,
where $\C^*$ acts diagonally on $\C^N$ by a certain weight $w>0$. The map $\Theta \circ j:\M_0\to \C^N$ is
a $\C^*$-equivariant holomorphic map, which is a local embedding except at $0\in \M_0$.
\end{prop}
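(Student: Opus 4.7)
The plan is to unpack the affine structure of $\M_0$ together with the projectivity of $\pi$. Since $\M_0$ is affine and the $\C^*$-action is contracting, its coordinate ring has a grading $\C[\M_0]=\bigoplus_{n\geq 0}\C[\M_0]^n$ with $\C[\M_0]^0=\C$ (Definition \ref{DefCSRwithFi}(2)). I would pick finitely many homogeneous generators $f_1,\ldots,f_N\in \C[\M_0]$ of weights $w_i\geq 1$. Dually these give a closed embedding $j:\M_0\hookrightarrow \C^N$, $x\mapsto (f_1(x),\ldots,f_N(x))$, which is $\C^*$-equivariant for the $\C^*$-action on $\C^N$ with weights $(w_1,\ldots,w_N)$. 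Since the ideal $\bigoplus_{n\geq 1}\C[\M_0]^n$ cuts out the unique fixed point, $j^{-1}(0)=\{0\}\subset \M_0$.

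Next, to reduce to a \emph{diagonal} $\C^*$-action on $\C^N$ (as required by Definition \ref{Def:KahlerMfdWithProjection}), set $w:=\mathrm{lcm}(w_1,\ldots,w_N)$ and define
\[
\Theta:\C^N\to\C^N,\qquad \Theta(z_1,\ldots,z_N)=(z_1^{w/w_1},\ldots,z_N^{w/w_N}).
\]
This is $\C^*$-equivariant from the weighted action on the source to the weight-$w$ diagonal action on the target, and it is a finite (hence proper) holomorphic map with $\Theta^{-1}(0)=\{0\}$. Setting $\Psi:=\Theta\circ j\circ \pi$, each factor is proper ($\pi$ is projective, $j$ is a closed immersion, $\Theta$ is finite), so $\Psi$ is a $\C^*$-equivariant proper holomorphic map, and $\Psi^{-1}(0)=\pi^{-1}(j^{-1}\Theta^{-1}(0))=\pi^{-1}(0)=\L$ by \cref{CSRproperties}\eqref{centralfibrecore}.

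To check the convex base conditions, I would endow $\C^N$ with its standard Liouville form $\theta_{\C^N}$, so that $\C^N$ is convex at infinity with radial coordinate $R$ proportional to $|z|^2$, and with Reeb flow on $S^{2N-1}$ generated by the diagonal $S^1$-action of weight $1$. The $\C^*$-equivariance of $\Psi$ forces $\Psi_*X_{S^1}$ to equal $w$ times the diagonal Reeb field at infinity, which is exactly the relation \eqref{Equation psi Xs1 is Reeb 2} with constant function $f\equiv w>0$ (and, as noted in \cref{Rmk weight w of Reeb}, the constant can be absorbed by a rescaling of $R$ and $\alpha$ on the base). For the auxiliary claim that $\Theta\circ j$ is a local embedding away from the fixed point, $j$ itself is an embedding and $\Theta$ fails to be a local embedding only along the coordinate hyperplanes, so one argues (at the generic point of each irreducible component of the image) by picking the generators $f_i$ suitably to ensure $\Theta\circ j$ is at worst a finite-to-one branched cover on $\M_0\setminus\{0\}$.

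The main technical obstacle is not any single step -- each is essentially a standard GIT/algebraic-geometry construction -- but rather ensuring that the \emph{homogenisation} $\Theta$ both preserves equivariance and does not destroy propriety or the nice behaviour of $\Psi^{-1}(0)$. The key trick is raising to $\mathrm{lcm}$-powers, which is clean algebraically but slightly muddies the local embedding property along coordinate hyperplanes; this is the one place one must be careful. Everything else (propriety, equivariance, Reeb compatibility at infinity) follows formally, and the non-triviality of the statement is really that $\M_0$ being affine with contracting action gives enough to write down $\Psi$ explicitly in the first place.
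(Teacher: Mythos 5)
Your construction is identical to the paper's: choose homogeneous generators of the graded ring $\C[\M_0]$ to obtain the equivariant closed embedding $j:\M_0\hookrightarrow\C^N$, homogenise the weights via the $\mathrm{lcm}$-power map $\Theta$, and compose with $\pi$. You also usefully make explicit the verification of the convex-base and Reeb conditions of \cref{Def:KahlerMfdWithProjection} and the identity $\Psi^{-1}(0)=\L$, both of which the paper's printed proof leaves implicit.

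Two points. You omit the paper's closing check that $\M$ is connected (via \cref{CSRproperties} and connectedness of the fibres of $\pi$), which matters since $Y$ is assumed connected in the general framework. More substantially, your treatment of the ``local embedding away from $0$'' assertion is not a proof: a finite-to-one branched cover is strictly weaker than a local embedding. You should be aware that the paper's own proof does not prove this assertion either, and it appears to be false. For $\M_0=V(XY-Z^3)\subset\C^3$ with the weight-$2$ action $(t^3X,t^3Y,t^2Z)$ of \cref{Example running example of intro}(a), one gets $\Theta\circ j(X,Y,Z)=(X^2,Y^2,Z^3)$, which near the non-origin point $(1,0,0)\in\M_0$ is $3$-to-$1$ (it forgets a cube root of $Z$) and has rank-$1$ differential on $T_{(1,0,0)}\M_0=\mathrm{span}(\partial_X,\partial_Z)$. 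No choice of homogeneous generators repairs this: any generating set must contain elements of weights $2$ and $3$, forcing $6\mid w$, and then no homogeneous $f\in\C[\M_0]$ contributes a nonzero $\partial_Z$-derivative at $(1,0,0)$ after applying $\Theta$. Since this auxiliary claim appears not to be used elsewhere in the paper, it is not consequential, but neither your argument nor the paper's establishes it.
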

\begin{proof}
By \cref{DefCSRwithFi}, $\M \fja{\pi} \M_0$ is $\C^*$-equivariant, and the coordinate ring of the affine variety $\M_0$ is the graded ring $\C[\M_0]=\bigoplus_{n\geq 0}\C[\M_0]^n,$ whose grading prescribes the weight of the $\C^*$-action. Fix a choice of homogeneous polynomials $f_1,\dots,f_N$ that generate $\C[\M_0]$. As $\C[\M_0]^0=\C$, we may assume that all $f_i$ are non-constant with positive weights $w_i$.
These determine an embedding $$j:\M_0 \to \C^N, \qquad p\mapsto (f_1(p),\dots,f_N(p)),$$ with $j(0)=0$. Let 
$w=\text{lcm}(w_1,\dots,w_N).$ 
The holomorphic map $j\circ \pi:\M \to \C^N$ is $\C^*$-equivariant for the $\C^*$-action $t \cdot (z_1,\dots, z_N)=(t^{w_1}z_1,..,t^{w_N}z_N)$ on $\C^N$. Let $\Theta$ be the holomorphic map
$$\C^N \fja{\Theta} \C^N, \ \ \Theta(z_1,\dots,z_N)=(z_1^{w/w_1},\dots, z_N^{w/w_N}).$$
Then $\Psi$ is proper as $\pi,j,\Theta$ are proper. 
Finally, $\M$ is connected, as it is homotopy equivalent to 
$\pi^{-1}(0)=\L$ (\cref{CSRproperties}(\ref{centralfibrecore},\ref{CSRcorehomotopyEquiv})) and the fibres of $\pi: \M \to \M_0$ are connected \cite[Prop.3.17]{vzivanovic2022exact}.
\end{proof}

\begin{rmk}\label{RmkFunctionPsi}
Explicitly, 
$\Psi = (\pi^*(f_1)^{w/w_1},\ldots,\pi^*(f_N)^{w/w_N}).$
Denoting $\hat{\pi}$ the constant $3.1415\ldots$, the $1$-periodic $S^1$-action $e^{2\hat{\pi} it}$ on $\C^N$ has Hamiltonian $\hat{\pi} (|z_1|^2+\cdots + |z_N|^2)$, and we denote its pull-back by
\begin{equation}\label{Equation Phi function}
    \Phi:=\hat{\pi}\,\Psi^*(|z_1|^2+\cdots + |z_N|^2)=\hat{\pi}\sum \pi^*(|f_i|^{2w/w_i}):\M\to \R,
\end{equation}
so $\L=\Phi^{-1}(0)$. The function $\Phi$ is typically not related to the moment map $H$ on $\M$ in \eqref{Equation intro moment map}. 
\end{rmk}

\subsection{An explicit $S^1$-invariant K\"{a}hler form with exhausting Hamiltonian}\label{Subsection Explicit S1 invt Kahler form for CSR}

\begin{lm}\label{Equivariant embeedding CSR}
Given a CSR $\pi:\M \fun \M_0,$ for some integer $M>0$ there is a $\C^*$-equivariant embedding 
$\M \hookrightarrow \M_0 \times \P^M$ using the diagonal action on the target, which is linear on the $\P^M$-factor. 
\end{lm}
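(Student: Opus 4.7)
The plan is to construct the embedding from sections of a $\C^*$-equivariant line bundle that is very ample relative to $\pi$. Since $\pi:\M\to\M_0$ is projective (\cref{DefCSRwithFi}), there is some $\pi$-very ample line bundle $L$ on $\M$. My first step is to replace $L$ by a sufficiently divisible tensor power so that it admits a $\C^*$-linearization compatible with the given $\Fi$: this is the standard fact that for a connected algebraic torus acting on a normal variety, every line bundle becomes equivariant after passing to a tensor power (the class of $L$ is automatically $\C^*$-fixed in the discrete group $\mathrm{Pic}(\M)$, and the cohomological obstruction to lifting this to an honest linearization is torsion, so some power is linearizable). After this replacement, $L$ remains $\pi$-very ample and is now $\C^*$-equivariant.

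Next, the finitely generated $\C[\M_0]$-module $V:=H^0(\M,L)$ inherits from the $\C^*$-linearization of $L$ a $\C^*$-grading compatible with the $\N$-grading on $\C[\M_0]$ given by \cref{DefCSRwithFi}. Since $\C[\M_0]^0=\C$, the ideal $\C[\M_0]^{>0}$ is maximal in this graded setup, and graded Nakayama provides finitely many homogeneous $\C^*$-weight vectors $s_0,\ldots,s_M\in V$ that generate $V$ as a $\C[\M_0]$-module; denote their weights by $w_0,\ldots,w_M\in\Z$. Pushing forward, the associated surjection $\O_{\M_0}^{M+1}\twoheadrightarrow \pi_*L$ of coherent $\O_{\M_0}$-modules produces a closed embedding $\P_{\M_0}(\pi_*L)\hookrightarrow \P_{\M_0}(\O_{\M_0}^{M+1})=\M_0\times\P^M$, while $\pi$-very ampleness of $L$ gives a closed embedding $\M\hookrightarrow \P_{\M_0}(\pi_*L)$. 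Composing yields a closed embedding
$$
\M\hookrightarrow \M_0\times\P^M,\qquad p\mapsto \bigl(\pi(p),\,[s_0(p):\cdots:s_M(p)]\bigr),
$$
which is $\C^*$-equivariant: $\pi$ is equivariant by construction, and since each $s_i$ is a $\C^*$-weight vector of weight $w_i$, the resulting action on the $\P^M$-factor is the linear diagonal action $t\cdot[z_0:\cdots:z_M]=[t^{w_0}z_0:\cdots:t^{w_M}z_M]$.

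The main obstacle is the first step: arranging the $\C^*$-linearization on the relatively ample line bundle $L$. A route that bypasses this machinery is to apply Sumihiro's equivariant embedding theorem to $\M$ as a normal quasi-projective $\C^*$-variety, obtaining a $\C^*$-equivariant embedding of $\M$ into some $\P^N$ with a linear $\C^*$-action, and then combining with the equivariant $\pi$ one lands inside $\M_0\times\P^N$; cutting down or re-choosing generators as above gives the desired embedding into $\M_0\times\P^M$. Either approach reduces the problem to standard facts about torus actions on normal varieties, with the Nakayama argument packaging the relative very ampleness data into explicit equivariant coordinates on $\P^M$.
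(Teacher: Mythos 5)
Your argument is correct and follows the same overall strategy as the paper's -- pass to a tensor power of a relatively (very) ample bundle that is $\C^*$-linearizable, then embed via sections -- but the two proofs implement the equivariance step differently. The paper linearizes a power of a $\pi$-relatively \emph{ample} bundle via Brion's theorem, takes a further power to reach relative very ampleness (via EGA II.4.5.6 and Stacks 01VU), obtains the immersion from Stacks 02NP, and then \emph{cites} \cite[Prop.1.7]{MuFo82} (Kambayashi, Sumihiro) for the equivariance of this immersion. You instead begin directly with a $\pi$-very ample $L$, pass to a linearizable power (still $\pi$-very ample, since tensoring $\pi$-very ample with $\pi$-ample stays $\pi$-very ample), and then -- and this is the genuinely different part -- make the equivariance explicit by viewing $H^0(\M,L)$ as an $\N$-graded module over the $\N$-graded ring $\C[\M_0]$ and invoking graded Nakayama to extract homogeneous generators $s_0,\ldots,s_M$, whose weights $w_i$ write down the diagonal action on $\P^M$ directly. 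This is a nice, self-contained replacement for the literature citation, and your Sumihiro alternative (combining an equivariant locally closed immersion $\M\hookrightarrow\P^N$ with the proper $\pi$ to get a closed immersion into $\M_0\times\P^N$) is also sound. Two small points worth spelling out: (i) the claim that $[L]$ is automatically $\C^*$-fixed in $\mathrm{Pic}(\M)$ is best justified by noting that $t\mapsto (t\cdot L)\otimes L^{-1}$ is a morphism from the connected rational group $\C^*$ to the abelian variety $\mathrm{Pic}^0(\M)$ and is therefore constant (for CSRs one also has $\mathrm{Pic}^0=0$ outright since $H^1(\M,\O_\M)=0$); (ii) graded Nakayama requires the grading on $H^0(\M,L)$ to be bounded below, which follows since it is finitely generated over the $\N$-graded $\C[\M_0]$ with $\C[\M_0]^0=\C$.
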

\begin{proof}
The morphism $\pi$ is projective, thus $\pi$ factors through a closed immersion and a projection
$$\M \xhookrightarrow{\iota} \M_0 \times \P^n \fja{\pi_{\M_0}} \M_0,$$ for some integer $n$. Composing that inclusion $\iota$ with projection to the second-factor yields
$$\pi_{\mathbb{P}^n} \circ \iota : \M \hookrightarrow \mathbb{P}^n \times \M_0 \rightarrow \mathbb{P}^n,$$
and thus a pull-back bundle $\mathcal{L}=\iota^*\pi_{\mathbb{P}^n}^*(\mathcal{O}(1)).$ By \cite[\href{https://stacks.math.columbia.edu/tag/01VT}{Lem.01VT}]{stacks-project} that bundle is $\pi$-relatively ample, using that $\M_0$ is affine and $\pi$ is of finite type (as $\pi$ is projective, by \cite[\href{https://stacks.math.columbia.edu/tag/01WC}{Lem.01WC}]{stacks-project} it is proper, and hence of finite type by definition).
Recall that a bundle on $\M$ is $\C^*$-linearisable if it admits a $\C^*$-action linear on the fibres which lifts the action on $\M$. 
As $\M$ is normal, \cite[Thm.2.14]{Brion13} ensures that $\mathcal{L}^{\otimes k}$ is $\C^*$-linearisable for some positive integer $k.$ The same holds for positive tensor powers of $\mathcal{L}^{\otimes k}$.
As $\mathcal{L}$ is $\pi$-relatively ample, so is $\mathcal{L}^{\otimes k},$ this follows from \cite[Ch.II, Prop.4.5.6(i)]{EGA}. %
By \cite[\href{https://stacks.math.columbia.edu/tag/01VU}{Lem.01VU}]{stacks-project}, the quasi-compactness of $\M_0$ (being an affine variety) and the finite type property of $\pi$ ensure that some positive power $L:=(\mathcal{L}^{\otimes k})^{\otimes d}$ 
is $\pi$-relatively very ample. %
By  \cite[\href{https://stacks.math.columbia.edu/tag/02NP}{Lem.02NP}]{stacks-project}, %
as $\M_0$ is affine and $\pi:\M\fun \M_0$ is of finite type, such an $L$ yields an immersion 
$$j: \M \hookrightarrow \mathbb{P}^M \times \M_0$$ for some integer $M>0$, with
$L \cong j^*\pi_{\mathbb{P}^M}^*\mathcal{O}(1).$ It remains to prove that one 
can construct this immersion to be $\C^*$-equivariant, where the action on $\mathbb{P}^M$ is linear. 
That this immersion is $\C^*$-linear follows by construction, by the same argument as in \cite[Prop.1.7]{MuFo82} (using work of Kambayashi and Sumihiro). We remark that the proof above is essentially the same argument as in \cite[Prop.1.7]{MuFo82} for the linear algebraic group $G=\C^*$, except we are working with schemes that are proper over $\M_0=\mathrm{Spec}(R)$ (where $R$ is the coordinate ring of the affine variety $\M_0$) rather than working over $\mathrm{Spec}(k)$.
\end{proof}

\begin{cor}\label{MomentMapExhausting}  
    Any CSR %
    admits an $S^1$-invariant \KH structure %
	with an exhausting moment map. %
\end{cor}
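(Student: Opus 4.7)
The plan is to construct the desired K\"{a}hler structure by composing the two equivariant embeddings already produced in the paper, then pulling back the standard $S^1$-invariant K\"{a}hler data from the ambient space.

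First, combine \cref{Equivariant embeedding CSR} with the embedding from the proof of \cref{PropMapPsi}: we have a $\C^*$-equivariant closed embedding $j : \M \hookrightarrow \M_0 \times \P^M$ with linear $\C^*$-action on $\P^M$, and a $\C^*$-equivariant closed embedding $j_0 : \M_0 \hookrightarrow \C^N$ via homogeneous generators $f_1,\dots,f_N$ of $\C[\M_0]$ of positive weights $w_1,\dots,w_N$, giving a diagonal action on $\C^N$. Composing, we obtain a $\C^*$-equivariant closed embedding
$$
\iota = (j_0 \times \mathrm{id}) \circ j \colon \M \hookrightarrow \C^N \times \P^M.
$$

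Next, equip $\C^N$ with its standard K\"{a}hler form $\om_{\C^N}$ (for which the moment map of the weighted $S^1$-action is $\hat{\pi}\sum w_j|z_j|^2$), and $\P^M$ with the Fubini--Study form $\om_{FS}$ (whose moment map is bounded as $\P^M$ is compact). Their sum $\Omega := \om_{\C^N} + \om_{FS}$ is an $S^1$-invariant K\"{a}hler form on $\C^N \times \P^M$ with a well-defined moment map $H_\Omega$. Define
$$
\om_I := \iota^*\Omega, \qquad H := \iota^* H_\Omega.
$$
Because $\iota$ is $I$-holomorphic and an immersion, $\om_I$ is a closed, $I$-compatible, non-degenerate $(1,1)$-form, hence K\"{a}hler; explicitly $\om_I(v,Iv) = \Omega(\iota_*v, I\iota_*v) > 0$ for $v \neq 0$, using injectivity of $\iota_*$. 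By $\C^*$-equivariance of $\iota$, both $\om_I$ and $H$ are $S^1$-invariant, and $H$ is indeed the moment map for the $S^1$-action on $(\M,\om_I)$.

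It remains to show $H$ is exhausting. It is bounded below since each summand of $H_\Omega$ is. For properness, note that the first projection $\mathrm{pr}_1 \circ \iota \colon \M \to \C^N$ is proper: it factors as the closed immersion $\M \hookrightarrow \M_0 \times \P^M$ followed by the proper projection $\M_0 \times \P^M \to \M_0$ (proper since $\P^M$ is compact) and the closed embedding $\M_0 \hookrightarrow \C^N$. Since $w_j > 0$ for all $j$, the function $\hat{\pi}\sum w_j|z_j|^2$ is proper on $\C^N$, and the $\P^M$-contribution to $H_\Omega$ is bounded. Thus $H$ is the sum of a proper function (pulled back via a proper map) and a bounded function, hence proper. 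The main subtlety, really the only non-formal point, is the verification that $\om_I$ is non-degenerate — but this is automatic because $\iota$ is a holomorphic immersion into a K\"{a}hler manifold, so no $S^1$-averaging of the form \eqref{EquationS1invtMetric} is required. \qed
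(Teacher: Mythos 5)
Your proof is correct and follows essentially the same strategy as the paper's: pull back the standard $S^1$-invariant K\"{a}hler form from $\C^N\times\P^M$ via a $\C^*$-equivariant closed embedding of $\M$, and observe that the resulting moment map is proper because $\mathrm{pr}_1\circ\iota$ is proper and the $\C^N$-moment map is proper while the $\P^M$-contribution is bounded.

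There is one notable difference that works in your favour. You compose with $j_0\colon \M_0\hookrightarrow\C^N$, $p\mapsto (f_1(p),\dots,f_N(p))$, directly, whereas the paper composes with $\Theta\circ j$ from \cref{PropMapPsi}, where $\Theta(z_1,\dots,z_N)=(z_1^{w/w_1},\dots,z_N^{w/w_N})$. The paper then asserts that $\Pi$ is a composite of two closed immersions, but $\Theta\circ j$ is \emph{not} a closed immersion in general: its pullback $\C[z_1,\dots,z_N]\to\C[\M_0]$ has image $\C[f_1^{w/w_1},\dots,f_N^{w/w_N}]\subsetneq\C[f_1,\dots,f_N]=\C[\M_0]$ whenever some $w/w_i\geq 2$ (e.g.\ $\M_0=\C^2$ with weights $1,2$ gives $\C[x^2,y]\subsetneq\C[x,y]$). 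Your choice of $j_0$ \emph{is} a genuine closed immersion because the $f_i$ generate $\C[\M_0]$, and the weighted diagonal $S^1$-action is just as suitable for $S^1$-invariance of the flat K\"{a}hler form and properness of the moment map. So your construction is cleaner and sidesteps a wrinkle in the paper. The only place you are slightly terse is in asserting that $\iota$ is a holomorphic \emph{immersion}: the paper makes this explicit by noting that a closed immersion of the smooth $\M$ induces a surjection $\mathfrak{m}/\mathfrak{m}^2\to\mathfrak{n}/\mathfrak{n}^2$ on cotangent spaces, hence an injective differential. This is standard but worth a sentence, since the non-degeneracy of $\iota^*\Omega$ is exactly what you flag as the one non-formal point.
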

\begin{proof}
 	Combining Lemma \ref{Equivariant embeedding CSR} with $\Theta\circ j$ from Proposition \ref{PropMapPsi} we obtain a $\C^*$-equivariant morphism
	$$
	\Pi: \M \to \M_0 \times \P^M \to \C^N \times \P^M.
	$$
	This morphism is proper, holomorphic, $\C^*$-equivariant (using the rescaled action on $\C^N$ as in \cref{PropMapPsi}), and locally it is a closed topological embedding. Moreover, $\Pi: \M \to \C^N \times \P^M$ is the composite of two closed immersions, so it is a closed immersion.
	To conclude that $\Pi$ is locally a holomorphic embedding, it remains to show that the differential $\Pi_*: T\M \to T\C^N \times T\P^M$ at any point $p$ is injective (so that the implicit function theorem applies). 
	This will follow from the surjectivity of the dual map, viewed as the algebro-geometrical map $\mathfrak{m}/\mathfrak{m}^2 \to \mathfrak{n}/\mathfrak{n}^2$ on cotangent spaces.\footnote{$\mathfrak{n}$ is the maximal ideal of functions vanishing at $p$ in the coordinate ring for an affine patch $\mathrm{Spec}(A)$ around $p\in \M$; $\mathfrak{m}$ are functions vanishing at $\Pi(p)$ in the coordinate ring for an affine patch $\mathrm{Spec}(B)$ around $\Pi(p)\in \C^N \times \P^M$.} As $\Pi$ is a closed immersion, we may assume $\Pi^{\#}:B \to A$ is surjective. By construction, $(\Pi^{\#})^{-1}(\mathfrak{n})= \mathfrak{m}$, therefore $\Pi^{\#}: \mathfrak{m} \to \mathfrak{n}$ is surjective, and thus the induced map $\mathfrak{m}/\mathfrak{m}^2 \to \mathfrak{n}/\mathfrak{n}^2$ is surjective, as required.
	
	The claim follows by pulling back the standard $S^1$-invariant \KH structure $\omega_Y$ from $Y=\C^N \times \P^M$ via $\Pi$. If $H_Y: Y \to \R$ is the Hamiltonian generating the $S^1$-vector field $X_{S^1,Y}$ on $Y$, then $H=H_Y\circ \Pi$ is the Hamiltonian on $\M$ generating the $S^1$-action.\footnote{  
	$
	(\Pi^*\omega_Y)(\cdot,X_{S^1,\M})
	= \omega_Y(\Pi_* \cdot,\Pi_* X_{S^1,\M})
	= \omega_Y(\Pi_* \cdot, X_{S^1,Y})
	= dH_{Y}(\Pi_* \cdot)
	= d(H_Y \circ \Pi)(\cdot),
	$
	as $\Pi$ is $S^1$-equivariant.}
 As $H_Y$ is the sum of the Hamiltonians on the two factors $\C^N$ and $\P^M$, and the Hamiltonian on the $\C^N$ factor grows like a power of the norm on $\C^N$, $H_Y$ is exhausting.
  The properness of $\Pi$ and $H_Y$ imply the properness of $H=H_Y \circ \Pi$, and $H$ is bounded below since $H_Y$ is. Thus $H$ is exhausting.
			\end{proof}

\subsection{Implications from Section \ref{SectionSHassociatedToHamS1Action} and \ref{SectionFiltration}}\label{SHforCSRCorrolaries}

By Section \ref{SectionSHassociatedToHamS1Action}, 
for generic slopes $\lambda>0$, the only $1$-periodic orbits of $\lambda H$ are the constant orbits inside the core $\L$ given by the fixed points $x\in \F:=\M^{\Fi}$ of the $S^1$-action (i.e.\,the critical locus of $H$).
As $c_1(\M)=0$, Proposition \ref{PropSec2VanishingTheorem} implies that
$$
SH^*(\M,\Fi,\om_I)=0.
$$
The fixed locus $\F$ decomposes into connected components  $\F_\a$ which are the {\MB} submanifolds for $H$. 
At $x\in \F$, the tangent space $T_x \M=\oplus_k H_k$ has a unitary decomposition given by the weight $k$ subspaces $H_k$ for the linearised $S^1$-action, where $H_0=T_x\F$.
We defined an even integer grading $$\mu_\lambda(\F_\a)=\dim_\C\, \M - \dim_\C\, \F_\a- \sum_k \dim_{\C}(H_k) \mathbb{W}(\lambda k)$$ 
where $\mathbb{W}(\lambda k)
=2 \lfloor \lambda k \rfloor + 1$ for $k\neq 0$, and $\mathbb{W}(0)=0$ (see \cref{LemmaSec2GradingOfFa}).

By Proposition \ref{quantumequaltocup}, $QH^*(\M)=H^*(\M;\k)$ as $\k$-algebras. Thus, \cref{SectionFiltration} and \cref{DefinitionOfFiltration} yields:
\begin{cor}\label{CorFiltrForCSR}
For any CSR $(\M,\Fi)$, the $\Fi$-filtration ordered by $p\in \R$,
$$
\Fil^{\varphi}_{p}:=\bigcap_{\textrm{generic }\lambda>p} \ker(c_{\lambda}^*:H^*(\M;\k) \fun HF^*(\lambda H)),
$$
is a filtration on the singular cohomology ring $H^*(\M;\k)$
by ideals with respect to the cup product.
\end{cor}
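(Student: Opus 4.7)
The proposed proof is a direct assembly of results already established in the paper, with the only CSR-specific input being the identification of the quantum product with the ordinary cup product. My plan is to verify the hypotheses of the general filtration construction and then transport the ideal structure across this ring isomorphism.

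First, I would confirm that a CSR $(\M,\Fi)$ satisfies the framework of \cref{SectionSHassociatedToHamS1Action}. By \cref{PropMapPsi}, $\M$ is a symplectic $\C^*$-manifold globally defined over the convex base $\C^N$ via the proper $\C^*$-equivariant holomorphic map $\Psi = \Theta\circ j\circ \pi$. By \cref{GradientIsRPlus}, the $S^1$-action is Hamiltonian for the $S^1$-invariant K\"ahler form $\om_I$, and \cref{MomentMapExhausting} (or alternatively an application of \cref{Lemma making H proper}) produces an exhausting moment map $H$. Thus the Floer cohomologies $HF^*(\lambda H)$ are defined for generic slopes $\lambda$, and $SH^*(\M,\Fi,\om_I)$ is defined as in \cref{Def:SHForFi}. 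The continuation maps $c_\lambda^* : QH^*(\M) \to HF^*(\lambda H)$ from \eqref{EquationCanonicalClambda} are well-defined $\k$-module homomorphisms.

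Second, since $c_1(\M) = 0$ by \cref{LemmaCalabiYau}, \cref{PropSec2VanishingTheorem} applies and gives $SH^*(\M,\Fi,\om_I) = 0$. This is the vanishing hypothesis required by \cref{Definition fi-filtration on QH} to ensure that every class in $QH^*(\M)$ eventually maps to zero under some $c_\lambda^*$, so that $\Fil^{\varphi}_p$ forms a genuinely exhausting filtration. Applying \cref{filtrationByIdeals} directly to $Y = \M$, we obtain that the subspaces
\[
\Fil^{\varphi}_p = \bigcap_{\mathrm{generic}\,\lambda > p} \ker c_\lambda^* \;\subset\; QH^*(\M)
\]
constitute a filtration by graded ideals with respect to the quantum product, ordered by $p \in \R \cup \{\infty\}$.

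The final step — and the only CSR-specific ingredient — is \cref{quantumequaltocup}, which states that there is a ring isomorphism $QH^*(\M) \cong H^*(\M;\k)$ identifying the quantum product with the ordinary cup product. Under this ring isomorphism, quantum-product ideals correspond exactly to cup-product ideals, and the (grading-preserving) maps $c_\lambda^*$ now read as $\k$-linear maps out of $H^*(\M;\k)$. Transporting the filtration $\Fil^{\varphi}_p$ along the isomorphism therefore produces the desired filtration of $H^*(\M;\k)$ by cup-product ideals.

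The main conceptual obstacle here is really hidden in \cref{quantumequaltocup}: it relies on the Kaledin--Verbitsky/Namikawa result that any CSR admits a deformation (with base $H^2(\M,\C)$) whose generic fibre is an affine variety containing no non-constant $I$-holomorphic spheres, combined with deformation invariance of the quantum product. Once that ring-level identification is taken for granted, the corollary is a formal consequence of the general ideal-filtration result \cref{filtrationByIdeals} and the vanishing \cref{PropSec2VanishingTheorem}, and no further Floer-theoretic work is required.
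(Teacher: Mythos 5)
Your proposal is correct and follows essentially the same route as the paper: verify the CSR framework (\cref{PropMapPsi}, \cref{GradientIsRPlus}, \cref{MomentMapExhausting}), invoke $c_1(\M)=0$ and \cref{PropSec2VanishingTheorem} to get $SH^*(\M,\Fi)=0$, apply \cref{filtrationByIdeals}, and then transport quantum-product ideals to cup-product ideals via the ring isomorphism of \cref{quantumequaltocup}. Your closing identification of \cref{quantumequaltocup} (via the Kaledin--Verbitsky/Namikawa deformation) as the only CSR-specific input is exactly right.
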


We suspect that these filtrations, up to $\lambda$-reparametrisation, %
do not depend on the choice of \KH form $\om_I,$ %
although we will not try to prove it.\footnote{For small deformations of the K\"{a}hler form, this should follow by the methods developed by Benedetti--Ritter \cite{BeRit20}.}
The filtrations however do depend on the choice of $\Fi$. 

We now abbreviate $H^*$ to mean cohomology with coefficients in $\k$.
The cohomology $H^*(\M)$ is concentrated only in even degrees by Corollary \ref{CohomologyOfACSRProperties}, therefore Corollary \ref{pureHam} yields
$$
\textstyle
HF^*(\H_\lambda)\iso \bigoplus_{\a} H^*(\F_\a)[-\mu_\lambda(\F_\a)],
$$
where $H^*(\F_\a)$ lives in even degrees and $\mu_{\lambda}(\F_\a)$ is even.
Thus the map $c_{\lambda}^*$ in \cref{CorFiltrForCSR} is a grading-preserving $\k$-linear homomorphism between $\k$-modules supported in even degrees:
$$
\textstyle
H^*(\M)\cong\bigoplus_{\a} H^*(\F_\a)[-\mu_\a]
\to 
\bigoplus_{\a} H^*(\F_\a)[-\mu_{\lambda}(\F_\a)],
$$
where $\mu_\a$ is the (even) {\MB } index of $\F_\a$ (and the first isomorphism is \eqref{EqnFrankel}).

\begin{rmk}\label{RemarkWhyFilrationsOnCohomologyInteresting}\textbf{Comparison with the literature.}
	There is an interest in filtrations on the cohomology of CSRs in the representation-theoretic literature. Using a certain contracting $\C^*$-action on the resolution of Slodowy varieties (certain CSRs coming from Lie algebras),
 Bellamy--Schedler %
 construct filtrations on their cohomologies,
whose corresponding grading can be read-off from the Poincaré polynomial,
given in \cite[Cor.2.5]{BeSch18}.\footnote{More precisely, it is a Poincaré polynomial on \textit{Poisson} homology of the Slodowy variety 
$S_{\phi}\cap \NN$, whose \textit{resolution} 
has the same cohomology, up to a grading twist. The variable $x$ presents (co)homological grading, whereas the
variable $y$ comes from their filtration.}
In the example of resolutions of type A singularities %
there is a choice of $\C^*$-action $\Fi$ so that $\Fil^{\Fi}$
yields %
the same grading %
as theirs. 
For the $A_n$ singularity $XY=Z^{n+1},$ this action is the lift of
$t\cdot(X,Y,Z)=(t^n X, t Y, t Z);$ for $n=2$, this is action $(c)$ in \cref{Example running example of intro 2}. 
\end{rmk}
\begin{rmk}\label{Rmk Refinement of the McKay correspondence}\textbf{Refinement of the McKay correspondence.}
	For resolutions of Du Val singularities (and possibly of other holomorphic-symplectic quotient singularities $\C^{2n}/\Gamma$), our filtration yields a refinement of the McKay correspondence \cite{Re92} which states that a graded basis for the cohomology of the resolution is in graded bijection with the conjugacy classes of the given finite group using the \emph{age grading} on conjugacy classes. An example for the resolution $\M$ of $\M_0:=\C^2/(\Z/5)$ is shown in \cite{RZ2} (the spectral sequence for $X_{\Z/5}$). The top cohomology has two filtration levels, which correspond to two pairs of orbits, which all have age grading equal to 1 (the orbits are loops in $\M \setminus \L$, and those are
	labelled naturally by their free homotopy classes, as explained e.g. in \cite[Eq.(1.4)]{McLR18} for the case of isolated quotient singularities, and in the subsequent footnote for the general case).
	Thus, our filtration makes a distinction between the orbits lying above $[e^{2\pi it/5},0], [0,e^{2\pi it/5}]$ and $[e^{4\pi it/5},0], [0,e^{4\pi it/5}],$ corresponding to the conjugacy classes $[\eps^1],[\eps^{-1}]$ and $[\eps^2], [\eps^{-2} ]$	in $\Z/5$ (here $\eps$ is a primitive $5^{th}$-root of unity).

The McKay correspondence involves  crepant resolutions of quotient singularities $\C^n/\Gamma$ for finite subgroups $\Gamma \subset SL(2,\C)$. 
For these to arise as CSRs, we need $\M_0=\C^{2n}/\Gamma$ for $\Gamma \subset \mathrm{Sp}(2n,\C)$ as $\M_0$ has a Poisson structure. Apart from four exceptional examples,
this has a conical symplectic resolution $\M$ 
precisely when $\Gamma$ is of type $G^n \rtimes S_n$, where $S_n$ is the symmetric group and $G\subset SL(2,\C)$ is a finite subgroup, 
\cite{bellamy2016non}.
These $\M$ are in fact all quiver varieties of affine ADE type, \cite{Kuz07}.
These were studied by Kaledin \cite{KaledinMcKay} and Bezrukavnikov--Kaledin \cite{BezKaledinMcKay}.
\end{rmk}
\section{Filtration on Floer complexes separating the periods of orbits}\label{filtrationFloer}
In this Section, $(Y,\omega,\J,\Fi)$ is a symplectic $\C^*$-manifold over a convex base $B$, satisfying \eqref{Equation psi Xs1 is Reeb}. Recall that we can always tweak $\omega$ by \cref{Lemma making H proper} to make $H$ proper in \eqref{Equation intro moment map}, which we assume from now on.
\subsection{Construction of a specific  Hamiltonian $H_\lambda$}\label{SectionConstructionOfHLambda part 1}
We have an $S^1$-equivariant proper holomorphic map $\Psi: Y^{\mathrm{out}} \to B=\Sigma \times [R_0,\infty)$. We abusively write
$
\Psi:Y \fun B
$
but it is understood that all constructions involving $\Psi$ are only defined on $Y^{\mathrm{out}}$. 

\begin{figure}[ht]
	\centering
	{
  \input{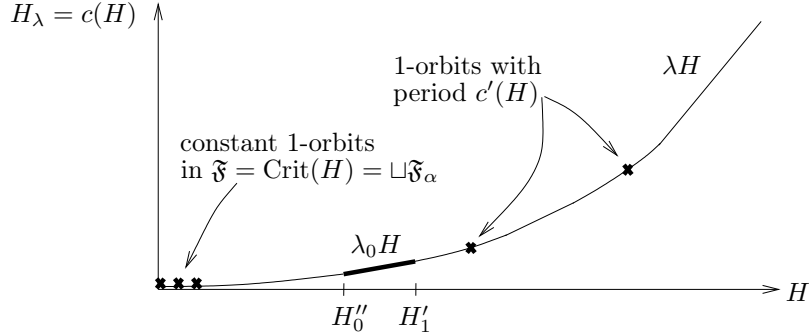}
		\caption{Graph of $H_\lambda$}
		\label{H_lambdagraph part 1}
	}
\end{figure}

The Hamiltonian $H_\lambda:=c \circ H$ will be constructed as in Figure \ref{H_lambdagraph part 1} in terms of a function $c$ of $H$.
This ensures that $X_{H_{\lambda}}=c'(H)\cdot X_H$, so $1$-periodic Hamiltonian orbits of $H_{\lambda}$ corresponds precisely to orbits of period $T=c'(H)$ of the flow of $X_{S^1}=X_H$. We construct $c:[\min H,+\infty)\fun \R$ so that
\begin{enumerate}
	\item[(1)]  $c'\geq 0$ \; and \; $c''\geq 0$. 
	\item[(2)]  $c''(H)>0$ whenever $c'(H)$ is a period of an $S^1$-orbit (these will be outer $S^1$-periods).
 \item[(3)]  $c(H)=\lambda_0 H$ on some interval, say for $H\in [H_0'',H_1'],$ for $0<\lambda_0<\min\{\textrm{positive }S^1\textrm{-periods}\}$.
	\item[(4)]  $c(H)=\lambda H$ for all sufficiently large $H.$
\end{enumerate}

We also assume that $c'$ is sufficiently small on $Y^{\mathrm{in}}$ so that there are no non-constant $1$-periodic orbits of $H_{\lambda}$ in $Y^{\mathrm{in}}$ since the potential period values $c'$ are smaller than the minimal positive $S^1$-period.
Thus in $Y^{\mathrm{in}}$ the only $1$-orbits are constants at points of the $\F_\a$ submanifolds. 
More precisely, we may assume $$Y^{\mathrm{in}}:=\{H\leq \ell\}$$ is a sublevel set, where $m$ is large enough so that 
$Y^{\mathrm{in}} \supset H^{-1}(H(\mathrm{Core}(Y)) \supset \mathrm{Core}(Y).$ %
We build $c'$ to be small on $[\mathrm{min }(H),\ell]$, with $c'\neq 0$ except possibly at  
$\mathrm{Crit}(H)$, so $\mathrm{Crit}(H)=\mathrm{Crit}(H_{\lambda})$, and so that $H_{\lambda}$ has the same (constant) $1$-orbits as $H$ in that region. So (1) above is only needed for $Y^{\mathrm{out}}=\{H\geq \ell\}$. 

When $Y=\M$ is a CSR, $\Psi$ is globally defined over $B=\C^N$, and $H_B=\pi w\|z\|^2$ is defined everywhere (the $S^1$-action has weight $w$, see \cref{PropMapPsi}), and one can pick $\M^{\mathrm{in}}=\{H\leq \ell\}$ to be any sublevel set containing the core $\L=\Psi^{-1}(0)$.
We recall that 
by the rescaling trick in Remark \ref{Rmk weight w of Reeb} we can get rid of the factor of $w$ that would appear in \cref{Equation psi Xs1 is Reeb} for CSRs.

\begin{lm}
The level sets $\Phi^{-1}(q)=\Psi^{-1}(\{H_B=q\})\subset Y$ 
for $q\geq R_0$ are closed submanifolds of $Y.$
\end{lm}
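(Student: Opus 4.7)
Under the standing assumption \eqref{Equation psi Xs1 is Reeb} of this section, $f\equiv 1$ so $H_B=R$, and hence $\Phi=R\circ\Psi$ is defined on $Y^{\mathrm{out}}$. The proof splits into the closedness statement and the submanifold statement, both of which reduce to standard facts once one unpacks the structural properties of $\Psi$ recorded in \cref{Rmk weight w of Reeb}.

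\textbf{Closedness.} First I would note that for each $q\geq R_0$ the slice $\Sigma\times\{q\}\subset B^{\mathrm{out}}$ is compact, since $\Sigma$ is closed (hence compact). By properness of $\Psi:Y^{\mathrm{out}}\to B^{\mathrm{out}}$ (part of \cref{Def:KahlerMfdWithProjection}), the preimage $\Phi^{-1}(q)=\Psi^{-1}(\Sigma\times\{q\})$ is therefore compact in $Y^{\mathrm{out}}$. Since $Y$ is Hausdorff, compact subsets are closed, proving closedness in $Y$.

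\textbf{Submanifold structure.} Next I would show that every $q\geq R_0$ is a regular value of $\Phi$ and then invoke the regular-value theorem. The key computation uses that, under \eqref{Equation psi Xs1 is Reeb}, the discussion in \cref{Rmk weight w of Reeb} gives $\Psi_*X_{\R_+}=Z_B=R\partial_R$ on $B^{\mathrm{out}}$. Consequently, for any $y\in\Phi^{-1}(q)$,
\begin{equation*}
 d\Phi_y(X_{\R_+}(y)) \;=\; dR_{\Psi(y)}\bigl(\Psi_*X_{\R_+}(y)\bigr) \;=\; dR_{\Psi(y)}\bigl(R(\Psi(y))\,\partial_R\bigr) \;=\; R(\Psi(y)) \;=\; q,
\end{equation*}
and $q\geq R_0>0$ (recall $R_0>0$ is forced by $\omega_B=d(R\alpha)$ being symplectic on $B^{\mathrm{out}}$). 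Hence $d\Phi_y\neq 0$ at every $y\in\Phi^{-1}(q)$, so the regular value theorem produces a smooth codimension-one submanifold of $Y^{\mathrm{out}}$, which is closed in $Y$ by the previous paragraph.

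\textbf{Anticipated obstacles.} There are essentially none of substance: the proof is a one-line regular-value computation, and the only thing to be careful about is that $X_{\R_+}$ is actually nonzero at points of $\Phi^{-1}(q)$ (which is automatic, since $X_{\R_+}$ vanishes only on $\F\subset\mathrm{Core}(Y)\subset Y^{\mathrm{in}}$, whereas $\Phi^{-1}(q)\subset Y^{\mathrm{out}}$), and that $R_0$ is strictly positive. Both are built into the set-up.
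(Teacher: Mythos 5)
Your proof is correct and follows essentially the same route as the paper's, which is a one-line regular-value argument: $d\Phi = dH_B\circ\Psi_*$ is nonzero because $\Psi_*X_{\R_+}=\nabla H_B$ and $X_{\R_+}=\nabla H$, hence $d\Phi(\nabla H)=\|\nabla H_B\|^2>0$ on $Y^{\mathrm{out}}$. You specialize to $f\equiv 1$ (so $H_B=R$ and $\nabla H_B = Z_B = R\partial_R$), yielding the explicit value $d\Phi_y(X_{\R_+})=q>0$, which is the same computation made slightly more concrete. Your separate closedness paragraph via properness of $\Psi$ and compactness of $\Sigma$ is sound (and even gives compactness of the level sets), though it is more than needed: $\Phi^{-1}(q)$ is automatically closed in $Y$ as the preimage of a point under a continuous map defined on the closed subset $Y^{\mathrm{out}}$.
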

\begin{proof}
 $d\Phi=dH_B \circ \Psi_*$ is non-zero on $\nabla H=X_{\R_+}$ as $\Psi_*X_{\R_+}=\nabla H_B$ (by \cref{Lemma:XR+ is nabla h}). 
\end{proof}

We choose $H_0'',H_1'$ so that we have a nesting
$$
\{H\leq H_0''\} \subset
\{\Phi\leq R_0''\} \subset
\{\Phi\leq R_{1}'\} \subset
\{H\leq H_{1}'\}
$$
for some values $0<R_0''<R_1'$.
The nesting condition ensures that $H_\lambda=c(H)=\lambda_0 H + \mathrm{constant}$ 
in a region of $Y$ that covers (via $\Psi$) the region in $B$ where $R\in L_0:=[H_0'',H_1']$, called {\bf linearity region}.
That this nesting can be achieved follows from $\Phi$ and $H$ being proper.

\subsection{Filtration functional on $B$}
\label{SubsectionFiltrationFunctional} 
Choose a smooth cut-off function $\phi:[0,+\infty)\fun \R$ satisfying
\begin{enumerate}
	\item[(1)] $\phi=0$ on $[0,R_0'']$.
	\item[(2)] $\phi'\geq 0$ everywhere.
    \item[(3)] $\phi_0:=\int_{L_0} \phi'(R)\,dR > 0$.
\end{enumerate}

The choice of $\phi$ on $R\geq R_1'$ is not so important. Let us choose $\phi=\phi_0$ to be constant there.
The cut-off function $\phi$ defines the exact 2-form $\eta$ on $B$,
\begin{equation}\label{Equation eta expanded}
\eta := d(\phi(R)\alpha) = \phi(R)\, d\alpha + \phi'(R)\, dR\wedge \alpha,
\end{equation}
and an associated $1$-form $\Omega_{\eta}$ on the free loop space $\mathcal{L}B=C^{\infty}(S^1,B)$, involving the Reeb field $\mathcal{R}_B$,
\begin{equation}\label{Equation filtration one form}
	\Omega_{\eta}: T_x\mathcal{L}B = C^{\infty}(S^1,x^*TB)\fun \R, \ \  \xi \mapsto -{\int} \eta(\xi,\partial_t x - \lambda_0 \mathcal{R}_B)\, dt.
\end{equation}
Define the {\bf filtration functional} $F: \mathcal{L}B \to \R$ on the free loop space by
$$
F(x):=-\int_{S^1} x^*(\phi \alpha) + \lambda_0 \int_{S^1} \phi(R(x(t)))\, dt.
$$

\begin{lm}{\normalfont\cite[Thm.6.2(1)]{McLR18}} \label{Fprimitive} $F$ is a primitive of $\Omega_{\eta}.$ That is, $dF(x)(\xi)=\Omega_{\eta}(x)(\xi).$ \qed
\end{lm}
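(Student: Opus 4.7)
The plan is to compute $dF(x)(\xi) = \frac{d}{ds}\big|_{s=0} F(x_s)$ for a variation $x_s$ of loops with $\partial_s x_s|_{s=0} = \xi$, split into the two summands of $F$, and match the result term-by-term with $\Omega_\eta(x)(\xi)$.

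First I would handle the term $-\int_{S^1} x_s^*(\phi\alpha)$. Setting $\theta := \phi(R)\alpha$, so $d\theta = \eta$, the standard Cartan-formula computation for the variation of $\int x_s^*\theta$ on the closed domain $S^1$ gives
\begin{equation*}
\tfrac{d}{ds}\big|_{s=0} \int_{S^1} x_s^*\theta
= \int_{S^1} d\theta(\xi,\partial_t x)\,dt + \int_{S^1} \tfrac{d}{dt}\bigl(\theta(\xi)\bigr)\,dt
= \int_{S^1} \eta(\xi,\partial_t x)\,dt,
\end{equation*}
since the boundary term vanishes on $S^1$. So this contributes $-\int \eta(\xi,\partial_t x)\,dt$ to $dF(x)(\xi)$.

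Next I would handle the term $\lambda_0 \int_{S^1} \phi(R\circ x_s)\,dt$, whose variation is straightforwardly $\lambda_0 \int \phi'(R)\,dR(\xi)\,dt$. The key identification is that this equals $\lambda_0 \int \eta(\xi,\mathcal{R}_B)\,dt$. This follows by expanding $\eta = \phi(R)\,d\alpha + \phi'(R)\,dR\wedge\alpha$ as in \eqref{Equation eta expanded} and using the defining properties of the Reeb field: $d\alpha(\cdot,\mathcal{R}_B)=0$, $\alpha(\mathcal{R}_B)=1$, and $dR(\mathcal{R}_B)=0$ (the last since $\mathcal{R}_B$ is tangent to level sets of $R$ on the symplectisation). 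These collapse $\eta(\xi,\mathcal{R}_B)$ to precisely $\phi'(R)\,dR(\xi)$.

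Assembling the two pieces yields
\begin{equation*}
dF(x)(\xi) = -\int \eta(\xi,\partial_t x)\,dt + \lambda_0 \int \eta(\xi,\mathcal{R}_B)\,dt
= -\int \eta(\xi,\partial_t x - \lambda_0\mathcal{R}_B)\,dt = \Omega_\eta(x)(\xi),
\end{equation*}
which is the claim. There is no real obstacle here: the computation is essentially a bookkeeping exercise, and the only point requiring any care is the Reeb-field identity that converts the ``$\lambda_0$-term'' into the required contraction with $\eta$. Everything is in fact valid wherever $\eta$ is defined on $B^{\mathrm{out}}$; when pulled back via $\Psi$ to $Y$ this will later give the filtration functional we use to build the period filtration on Floer complexes.
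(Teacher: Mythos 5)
Your computation is correct and is precisely the direct verification one expects: the variation of $-\int x_s^*(\phi\alpha)$ via Cartan's formula (with the boundary term dropping on $S^1$), combined with the Reeb-field identities $d\alpha(\cdot,\mathcal{R}_B)=0$, $\alpha(\mathcal{R}_B)=1$, $dR(\mathcal{R}_B)=0$ to rewrite the $\lambda_0$-term as $\lambda_0\int\eta(\xi,\mathcal{R}_B)\,dt$. The paper itself gives no proof here, deferring to \cite[Thm.6.2(1)]{McLR18}, and your argument is exactly the calculation that reference carries out.
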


When $\Psi$ is not globally defined, a loop $y\in \mathcal{L}Y$ may not have a well-defined projection $x=\Psi\circ y$. Nevertheless, it makes sense to talk about $F(x)$ and $\Omega_{\eta}|_x$ because the relevant integrands in the base $B$ will vanish near $\Sigma \times \{R_0\}$ as $\phi=\phi'=0$ near $R=R_0$ (also see {\PartII} for a detailed description of how to define ``pull-backs'' of the functionals to $\mathcal{L}Y$).

\subsection{The filtration inequality for $CF^*(H_\lambda)$}\label{FiltrationOnCFLambda}

\begin{thm}\label{H_lambdaIsOneDirected}
	The Floer chain complex $CF^*(H_\lambda)$ has a filtration given by the value of $F.$ That is, given two $1$-periodic orbits $x_-,x_+$ of $H_\lambda$ and a Floer cylinder for $(H_\lambda,I)$ from $x_-$ to $x_+,$
	\begin{equation}\label{FiltrationByF}
		F(x_-) \geq F(x_+).  %
	\end{equation}
\end{thm}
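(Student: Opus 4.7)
The strategy is to interpret $F(x_-) - F(x_+)$ as an integral of the closed Floer-theoretic one-form $\Omega_\eta$ along the Floer cylinder, projected to the base, and to show that the resulting integrand is pointwise nonnegative. Concretely, let $u:\R\times S^1\to Y$ be a Floer cylinder for $(H_\lambda,I)$ with $u(\pm\infty,t)=x_{\pm}(t)$, and set $v:=\Psi\circ u$ on the part of $Y^{\mathrm{out}}$ reached by $u$; on $Y^{\mathrm{in}}$, where $\Psi$ may be undefined, we work with $\Psi^*\eta$ extended by zero (which is smooth since $\phi=\phi'=0$ near $R=R_0$), and similarly for the pull-back of $F$. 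Since $X_{H_\lambda}=c'(H)\,X_H$ and $\Psi_*X_H=\mathcal{R}_B$ by \eqref{Equation psi Xs1 is Reeb}, the Floer equation $\partial_s u + I(\partial_t u - X_{H_\lambda})=0$ projects at infinity to $\partial_s v + I_B(\partial_t v - c'(H(u))\mathcal{R}_B)=0$. By Lemma \ref{Fprimitive},
\begin{equation*}
F(x_-) - F(x_+) \;=\; -\!\int_{-\infty}^{\infty}\!\Omega_\eta(v)(\partial_s v)\,ds \;=\; \int_{\R\times S^1}\!\eta\bigl(\partial_s v,\,\partial_t v - \lambda_0\mathcal{R}_B\bigr)\,ds\,dt .
\end{equation*}

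The main step is to decompose the integrand by inserting the projected Floer equation. Writing $w':=\partial_t v - c'(H(u))\mathcal{R}_B$ so that $\partial_s v = -I_B w'$, I would split
\begin{equation*}
\eta(\partial_s v,\partial_t v - \lambda_0\mathcal{R}_B) \;=\; \eta(w', I_B w') \;+\; \bigl(c'(H)-\lambda_0\bigr)\,\eta(\partial_s v,\mathcal{R}_B).
\end{equation*}
The first term is pointwise nonnegative: decomposing any tangent vector to $B^{\mathrm{out}}$ as $v_\xi + a\mathcal{R}_B + bZ_B$ with $v_\xi\in\xi=\ker\alpha$, and using $I_B Z_B=\mathcal{R}_B$ (contact type) together with $I_B$-compatibility of $d\alpha$ on $\xi$, one computes
\begin{equation*}
\eta(w,I_B w) \;=\; \phi(R)\,d\alpha(w_\xi,I_B w_\xi) \;+\; \phi'(R)\,R(a^2+b^2)\;\geq\;0,
\end{equation*}
since $\phi,\phi'\geq 0$ and $R>0$; this is exactly the $I_B$-semi-positivity of the non-exact form $\eta=d(\phi(R)\alpha)$.

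For the second term, a direct calculation gives $\eta(\partial_s v,\mathcal{R}_B)=\phi'(R)\,dR(\partial_s v)$, since $d\alpha(\cdot,\mathcal{R}_B)=0$. The key observation is that $\phi'(R)\neq 0$ only when $R\in[R_0'',R_1']$, and the nesting condition
$\{\Phi\leq R_0''\}\subset\{\Phi\leq R_1'\}\subset\{H\leq H_1'\}$
combined with $\{H\leq H_0''\}\subset\{\Phi\leq R_0''\}$ forces $H(u)\in[H_0'',H_1']$ at every point where $R(v)\in[R_0'',R_1']$; on this region $c'(H)=\lambda_0$, so the prefactor $c'(H)-\lambda_0$ vanishes identically on the support of $\phi'(R)$. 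Hence the second term is pointwise zero, the integrand is $\geq 0$, and \eqref{FiltrationByF} follows. The one technical subtlety to handle carefully is the case when $\Psi$ is only defined on $Y^{\mathrm{out}}$ and $u$ crosses into $Y^{\mathrm{in}}$: this is harmless because the pull-back forms extend smoothly by zero over $Y^{\mathrm{in}}$ thanks to $\phi,\phi'$ vanishing there, so the integrand itself is identically zero on that region of $\R\times S^1$.
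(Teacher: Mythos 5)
Your proof is correct and follows essentially the same route as the paper's: project the Floer cylinder via $\Psi$, express $F(x_-)-F(x_+)$ as the integral of $\Omega_\eta(\partial_s v)$ using Lemma~\ref{Fprimitive}, exploit the nesting/linearity construction to identify $\eta(\cdot,\lambda_0\mathcal{R}_B)$ with $\eta(\cdot,c'(H(u))\mathcal{R}_B)$, substitute the projected Floer equation, and conclude with pointwise semi-positivity of $\eta(\cdot,I_B\cdot)$ via the decomposition $TB^{\mathrm{out}}=\xi\oplus\R\mathcal{R}_B\oplus\R Z_B$. The only cosmetic difference is that you isolate the discrepancy as an error term $(c'(H)-\lambda_0)\,\eta(\partial_s v,\mathcal{R}_B)$ and show it vanishes on the support of $\phi'$, whereas the paper packages the same observation as the identity $\eta(\cdot,k\mathcal{R}_B)=\eta(\cdot,\lambda_0\mathcal{R}_B)$ holding everywhere before inserting the Floer equation; both hinge on exactly the same point and yield the same integrand $\eta(\partial_s v,I_B\partial_s v)\geq 0$.
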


\begin{proof}
	The Floer cylinder $u:\R \times S^1 \fun Y$ for $H_\lambda$ satisfies
	$\partial_s u + I(\partial_t u -X_{H_\lambda})=0.$
	By \cref{Equation psi Xs1 is Reeb},
	\begin{equation}\label{EqnPhiProjectionNice}
		\Psi_*(X_{H_\lambda})=\Psi_*(c'(H)X_{H})=c'(H)\Psi_*(X_{H})=c'(H) \mathcal{R}_B,
	\end{equation}
	noting that $c'(H)$ depends on the original coordinates in $Y$.
	Projecting $u$ via $\Psi$ defines a map 
	\begin{equation} \label{projectFloerinM}
		v:=\Psi \circ u:\R \times S^1 \fun B, \ \ \ \partial_s v + I_B(\partial_t v -k(s,t)\mathcal{R}_B)=0
	\end{equation}
	that converges to $y_-=\Psi(x_-)$, $y_+=\Psi(x_+)$ at $s=-\infty$, $+\infty,$ respectively, where $$k(s,t):=  c'(H(u(s,t)))$$
	is a domain-dependent function.
 The key observation is that we chose $\phi'(R)=0$ except on the region  $L_0$, and over $L_0$ we chose $k(s,t)=c'(H(u(s,t)))=\lambda_0$.
 Thus, using $d\alpha(\cdot, \mathcal{R}_B)=0$,
 \begin{equation}\label{Equation etas are consisten}
 \eta(\cdot,k(s,t)\mathcal{R}_B) =
   \eta(\cdot,\lambda_0 \mathcal{R}_B)
   \end{equation}
   holds everywhere, and it recovers the integrand used in \eqref{Equation filtration one form}.
 Combining with Lemma \ref{Fprimitive}, 
	\begin{equation}\label{F_DifferenceInTermsOfOmega}
		\begin{array}{rll}
			F(x_-) - F(x_+)  & \; = \;  \textstyle - \int_{\R} dF(v(s,t))(\partial_s v)\ ds  
            &
			\textstyle \; = \;  -\int_{\R} \Omega_\eta(v(s,t))(\partial_s v)\ ds
            \\[1mm]
    & \textstyle \; = \;  \int_{\R\times S^1} \eta(\partial_s v,\partial_t v -  \lambda_0 \mathcal{R}_B)\, dt\, ds 
     & \textstyle \; = \;  \int_{\R\times S^1} \eta(\partial_s v,\partial_t v -  k(s,t)\mathcal{R}_B)\, dt\, ds. 
		\end{array}
	\end{equation}
 By \eqref{projectFloerinM}, the last integrand equals $\eta(\partial_s v,I_B\partial_s v)$.
	Hence, we reduced the problem to the same computation as in the convex setting \cite[Lem.6.1]{McLR18}: abbreviating $\rho=R\circ v$,
		\begin{equation} \label{nonpositivityOfOmegaV}
			\begin{array}{rcl}
				\eta(\partial_s v,I_B\partial_s v)
								& = &
				\phi(\rho) \cdot d\alpha(\partial_s v,I_B\partial_s v) + \phi'(\rho) \cdot (dR \wedge \alpha)(\partial_s v,I_B\partial_s v)
				\\
				& = &
				\textrm{positive}\cdot \textrm{positive} + \textrm{positive}\cdot (dR \wedge \alpha)(\partial_s v,I_B\partial_s v),
			\end{array}
		\end{equation}
	where ``positive'' here means ``non-negative''.
	To estimate the last term, we may assume that $R\geq R_0''$ since $\phi'=0$ otherwise. Thus, 
	we decompose $\partial_s v$ according to an orthogonal decomposition of $T B$:
	\begin{equation}\label{Eqn C in xi part}
	\partial_s v = C\oplus y \mathcal{R}_B \oplus zZ \in \xi \oplus \R \mathcal{R}_B \oplus \R Z,
	\end{equation}
	where $Z=-I_B\mathcal{R}_B=R\partial_R$ is the Liouville vector field and $\xi=\ker \alpha|_{R=1}$. Notice that $\ker \a = \xi \oplus \R Z.$ Thus: $dR(\partial_s v) = Rz$ and $\alpha(I_B\partial_s v) = \alpha(I_BzZ) = \alpha(z \mathcal{R}_B)=z$. The claim then follows from
	\begin{equation}\label{Eqn non xi part}
	(dR \wedge \alpha)(\partial_s v,I_B\partial_s v) =
	dR(\partial_s v) \alpha(I_B\partial_s v) - \alpha(\partial_s v) dR(I_B \partial_s v)
	= Rz^2 + R y^2 \geq 0. \qedhere
    \end{equation}
	\end{proof}
 \begin{rmk}
     How one deals with the issue of transversality, without ruining the filtration, is a rather tricky matter that will be dealt with in {\PartII}.
 \end{rmk}

\subsection{The $F$-filtration values on $1$-orbits}

\begin{cor} \label{FiltrValue} The $F$-filtration values satisfy the following properties:
\begin{enumerate}
\item $F=0$ at the constant orbits, so at each point of  $\F=\sqcup_\a \F_\a$;
\item $F(y)=F(\Psi(x))<0$ for every non-constant $1$-periodic orbit $x$ of $H_{\lambda}$;
\item $F(y)$ only depends on the Reeb period $c'(H(x))$ of the projected orbit $y$, see \eqref{Equation filtration value of orbit};
\item\label{Item filtration is like filtering by minus H} on non-constant orbits, the $F$-filtration is equivalent to filtering by $-H$, or equivalently: filtering by negative $S^1$-period values $-c'(H)$. 
\end{enumerate}
\end{cor}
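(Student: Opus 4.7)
The strategy is to compute the functional $F$ explicitly on each $1$-orbit of $H_\lambda$ by exploiting the projection $y = \Psi\circ x$ into $B$, since along $1$-orbits everything is autonomous: $H_\lambda = c(H)$ is constant along its orbits, so $H(x(t)) \equiv H_x$ is constant and hence the ``period factor'' $T := c'(H_x)$ is constant.

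First I would set up the projected orbit. For a $1$-orbit $x$ of $H_\lambda$, $\partial_t x = c'(H(x))X_H(x) = T\,X_H(x)$, so by \eqref{EqnPhiProjectionNice} and \eqref{Equation psi Xs1 is Reeb} the projection $y = \Psi\circ x$ satisfies $\partial_t y = T \mathcal{R}_B$; in particular $y$ is a $T$-periodic Reeb orbit of constant radial height $R(y(t)) \equiv R^*$. Then I would compute the two integrals in the definition of $F(y) = -\int y^*(\phi\alpha) + \lambda_0\int\phi(R(y(t)))\,dt$. Since $\alpha(\mathcal{R}_B)=1$ and $R$ is constant along $y$, this collapses to
\begin{equation*}
F(y) = -\phi(R^*)\int_{S^1}\alpha(T\mathcal{R}_B)\,dt + \lambda_0\phi(R^*) = \phi(R^*)(\lambda_0 - T).
\end{equation*}
Claims (3) and (4) will then follow formally: the right-hand side depends only on $T = c'(H(x))$, so (3) holds, and on non-constant orbits $\phi(R^*)$ is a positive constant (see below) while $T = c'(H(x))$ is strictly increasing in $H(x)$ on the non-constant-orbit range, because $c$ is convex and $c''>0$ at every value where $c'$ equals an $S^1$-period; hence filtering by $F$ is the same as filtering by $-T$, or equivalently by $-H$, giving (4).

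The remaining work is to identify the constant $\phi(R^*)$ for non-constant orbits and to handle constant orbits. A non-constant $1$-orbit has $T > \lambda_0$ (since $\lambda_0$ was chosen smaller than every positive $S^1$-period and $T$ is an outer $S^1$-period), forcing $c'(H(x)) \neq \lambda_0$ and therefore $H(x) > H_1'$ by the construction of $c$ on $[H_0'',H_1']$. The nesting $\{H \leq H_1'\}\supset\{\Phi\leq R_1'\}$ gives $\Phi(x) > R_1'$, i.e.\ $R^* > R_1'$, where by design $\phi(R^*) = \phi_0 > 0$. Thus $F(y) = \phi_0(\lambda_0 - T) < 0$, proving (2). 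For (1), a constant orbit $x \equiv p \in \F = \mathrm{Crit}(H) \subset \{H \leq H_0''\}$ lies, by the other nesting $\{H\leq H_0''\}\subset\{\Phi\leq R_0''\}$, in the region where $\phi \equiv 0$; both integrands of $F$ therefore vanish (or, where $\Psi$ is not defined, the pull-backs $\Psi^*(\phi\alpha)$ and $\phi\circ R\circ\Psi$ extend by zero), so $F = 0$.

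The only mildly delicate point is the justification that $F$, defined a priori on $\mathcal{L}B$, makes sense as a functional on loops in $Y$ when $\Psi$ is only defined on $Y^{\mathrm{out}}$; this is where we use that $\phi$ and $\phi'$ vanish for $R\leq R_0''$, so that $\Psi^*(\phi\alpha)$ and $\Psi^*(\phi\circ R)$ extend smoothly by zero across $Y\setminus Y^{\mathrm{out}}$. The explicit value $F = \phi_0(\lambda_0 - T)$ that drops out of the computation in fact gives a cleaner version of the four claims than a purely formal argument would: one sees directly both the sign and the dependence on the period. No genuine obstacle is expected here, as the parameters $\lambda_0$, $H_0''$, $H_1'$, $R_0''$, $R_1'$ were chosen in Subsection \ref{SectionConstructionOfHLambda part 1} precisely so that this computation works.
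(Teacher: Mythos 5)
Your proof is correct and follows essentially the same line as the paper: compute $F$ on the projected orbit $y = \Psi\circ x$, obtaining $F(y)=\phi(R^*)(\lambda_0 - T)$ and identifying $\phi(R^*)=\phi_0$ (resp.\ $\phi(R^*)=0$) for non-constant (resp.\ constant) orbits. The paper's proof states $\phi(y)=\phi_0$ without spelling out the nesting/convexity argument, which you supply, and it asserts ``$F$ strictly decreases when $c'(H(x))$, hence $H(x)$, increases'' with the monotonicity of $T=c'(H)$ on the non-constant-orbit range left implicit; your observation that $c''>0$ at $S^1$-periods forces strict monotonicity is exactly the right justification.
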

\begin{proof} Let us calculate the value of the functional $F(y)$ explicitly for the projection $y:=\Psi(x(t))$ of a 1-periodic orbit $x$ of $H_\lambda.$ If $x$ is a fixed point, $y$ lies in the region where $\phi=0$ so $F(y)=0.$ Otherwise, $c'(H(x))=:T$ is the period of some $S^1$-orbit in $Y$ (in particular $T>\lambda_0$), and $\phi(y)=\phi_0$. Thus
\begin{align}
\label{Equation filtration value of orbit}
		F(y(t))=-{\textstyle \int_{S^1}} y^*(\phi \alpha) + \lambda_0 {\textstyle \int_{S^1}} \phi(R(y(t))) \ dt
		= -\phi(y)  T + \lambda_0\phi_0 
		= (\lambda_0-T)  \phi_0<0.
\end{align}
The drop in filtration value for $1-$orbits $y_1,y_2$ arising for successive slopes $T_1<T_2$ is:
\begin{equation}\label{Equation drop in filtration value}
F(y_1) - F(y_2) = 
\phi_0 (T_2-T_1) > 0.
\end{equation}
Thus $F(y(t))<0$ strictly decreases when $c'(H(x))$, hence $H(x),$ increases.
\end{proof}

\subsection{Period-bracketed symplectic cohomology}\label{Subsection Period-bracketed symplectic cohomology}

By convention, $x_-$ appears in the output of the chain differential $\partial(x_+)$ if a Floer trajectory $u$ flows from $x_-$ to $x_+$. 
So $\partial$ ``increases the $F$-filtration'', since $F(x_-)\geq F(x_+)$ by
\cref{H_lambdaIsOneDirected}.
 Also $\partial$ decreases $H$, and decreases the period $T=c'(H)$. Restricting to $1$-orbits with $F\geq A$ defines a subcomplex, $CF^*_{[A,\infty)}(H_{\lambda})$, and a quotient complex 
$$
CF^*_{(A,B]}(H_{\lambda}):=CF^*_{[B,\infty)}(H_{\lambda})/CF^*_{[A,\infty)}(H_{\lambda}).
$$
These fit into a short exact sequence, which induces a long exact sequence on cohomology,
\begin{equation}\label{LES for bracketed HF}
\cdots \to 
HF^*_{[A,\infty)}(H_{\lambda})
\to
HF^*_{[B,\infty)}(H_{\lambda})
\to 
HF^*_{(A,B]}(H_{\lambda})
\to 
HF^{*+1}_{[A,\infty)}(H_{\lambda})
\to
\cdots
\end{equation}
\begin{lm}\label{Lemma Floer continuation maps are ok}
A Floer continuation map $CF^*(H_{\lambda}) \to CF^*(H_{\lambda'})$ for $\lambda \leq \lambda'$, for a homotopy $H_s: = c_s\circ H$, respects the filtration if $H_{\lambda}$, $H_{\lambda'}$, $H_s$ are linear in $H$ over $L_0$, and
$\partial_s c_s'\leq 0$ over $L_0$.
\end{lm}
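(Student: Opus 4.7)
The plan is to adapt the argument of \cref{H_lambdaIsOneDirected} to a continuation cylinder $u:\R\times S^1\to Y$ for the homotopy $H_s=c_s\circ H$, with asymptotes $u(-\infty,t)=x_-$ an orbit of $H_{\lambda'}$ and $u(+\infty,t)=x_+$ an orbit of $H_\lambda$ (matching the convention of \cref{Thm:SHForFi}, so $\partial_s\lambda_s\leq 0$ at infinity). Exactly as in \eqref{EqnPhiProjectionNice}, the projection $v:=\Psi\circ u$ satisfies $\partial_t v-k(s,t)\mathcal{R}_B=I_B\partial_s v$, where now $k(s,t):=c_s'(H(u(s,t)))$ is domain-dependent.

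First I would introduce an $s$-dependent version of the filtration functional that tracks the slope of $c_s$ on $L_0$. Writing $a(s)$ for the slope of $c_s|_{L_0}$, which is well-defined by the linearity hypothesis and satisfies $a'(s)\leq 0$ by assumption, set
$$F_s(y):=-\int_{S^1} y^*(\phi\alpha)+a(s)\int_{S^1}\phi(R(y(t)))\,dt.$$
A verbatim repeat of \cref{Fprimitive} shows that $F_s$ is a primitive of the one-form $\Omega_{\eta,s}(y)(\xi):=-\int\eta(\xi,\partial_t y-a(s)\mathcal{R}_B)\,dt$. At the asymptotes, $F_{-\infty}(x_-)$ and $F_{+\infty}(x_+)$ recover the filtration values of $x_\pm$ for $H_{\lambda'}$ and $H_\lambda$ respectively (compare \cref{FiltrValue}).

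Second I would compute $\tfrac{d}{ds}F_s(v(s,\cdot))=a'(s)\int\phi(R(v))\,dt-\int \eta(\partial_s v,\partial_t y-a(s)\mathcal{R}_B)|_{y=v(s,\cdot)}\,dt$, splitting $\partial_t v-a(s)\mathcal{R}_B=I_B\partial_s v+(k(s,t)-a(s))\mathcal{R}_B$. The key observation, mirroring \eqref{Equation etas are consisten}, is that the nesting $\{\Phi\leq R_1'\}\subset\{H\leq H_1'\}$ forces $H(u(s,t))\in L_0$ at every point where $\phi'(R(v(s,t)))\neq 0$; since $c_s$ is linear on $L_0$ with slope $a(s)$, this gives $k(s,t)=a(s)$ wherever $\phi'(R\circ v)\neq 0$. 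Because $\eta(\cdot,\mathcal{R}_B)=\phi'(R)\,dR(\cdot)$, the cross-term $(k(s,t)-a(s))\eta(\partial_s v,\mathcal{R}_B)$ vanishes identically, yielding
$$\frac{d}{ds}F_s(v(s,\cdot))=a'(s)\int_{S^1}\phi(R(v))\,dt-\int_{S^1}\eta(\partial_s v,I_B\partial_s v)\,dt.$$

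Both terms are non-positive: the second by the semi-positivity estimate \eqref{nonpositivityOfOmegaV}--\eqref{Eqn non xi part}, which only uses $\omega_B$-compatibility of $I_B$ and the contact-type structure, and is therefore unaffected by the domain-dependence of $k$; the first because $\phi\geq 0$ and $a'(s)\leq 0$. Integrating over $s\in\R$ and reading off the asymptotic values yields $F_{+\infty}(x_+)\leq F_{-\infty}(x_-)$, which says precisely that the continuation chain map sends the subcomplex of $F$-values $\geq A$ in $CF^*(H_\lambda)$ into its counterpart in $CF^*(H_{\lambda'})$. The main obstacle I foresee is not the inequality itself, which is a clean adaptation of \cref{H_lambdaIsOneDirected}, but rather verifying that the extended maximum principle of \cite[App.\,C]{R16} still confines continuation cylinders to a compact subset once $c_s$ is allowed to vary non-trivially on $L_0$, and that the time-dependent perturbations required for transversality can be arranged without destroying the filtration — both deferred to \PartII.
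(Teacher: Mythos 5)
Your proof is correct and follows essentially the same route as the paper's: introduce the $s$-dependent filtration functional $F_s$ (your $a(s)$ is the paper's $\lambda_{0,s}$), observe that wherever $\phi'\neq 0$ the nesting condition forces $k(s,t)=a(s)$ so that the cross-term in $\eta$ vanishes (the paper records this as $\phi'(\rho)k(s,t)=\phi'(\rho)\lambda_{0,s}$), and then split $\partial_s(F_s\circ v)$ into the $dF_s\cdot\partial_s v$ term (non-positive by the same semi-positivity estimate as in the proof of \cref{H_lambdaIsOneDirected}) and the new $(\partial_s F_s)(v)=a'(s)\int\phi(R(v))\,dt$ term (non-positive by $\phi\geq 0$ and $\partial_s c_s'\leq 0$). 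Two small remarks: when invoking the nesting you should cite both inclusions $\{H\leq H_0''\}\subset\{\Phi\leq R_0''\}$ and $\{\Phi\leq R_1'\}\subset\{H\leq H_1'\}$ to get $H(u)\in L_0$ (your one-sided quote only gives the upper bound on $H$), and your worry about the maximum principle is misplaced — that concern only involves the behaviour of $c_s'$ at infinity (where it is constant $\lambda_s$), and is already handled by the admissible-homotopy hypothesis $\partial_s\lambda_s\leq 0$ of \cref{Thm:SHForFi}; the variation of $c_s$ on the finite region $L_0$ is irrelevant there.
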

\begin{proof}
Abbreviate $\rho=R(v(s,t))$,
and $\lambda_{0,s}:=c_s'$ on $L_0$.
We have $\phi'(\rho)k(s,t)=\phi'(\rho) \lambda_{0,s}$ everywhere.
We now use an $s$-dependent 
filtration and $s$-dependent filtration one-form,
$$
F_s(x):=-{\textstyle \int_{S^1}} x^*(\phi \alpha) + \lambda_{0,s} {\textstyle \int_{S^1}} \phi(R(x(t)))\, dt, \qquad \Omega_{\eta}(\xi):= -{\textstyle \int_{S^1}} \eta(\xi,\partial_t x - \lambda_{0,s} \mathcal{R}_B)\, dt.
$$
\cref{Equation etas are consisten} becomes  $\eta(\cdot,k(s,t)\mathcal{R}_B) =
   \eta(\cdot,\lambda_{0,s} \mathcal{R}_B)$.
However, in \eqref{F_DifferenceInTermsOfOmega} a new term appears because
$\partial_s (F_s\circ u) = d_u F_s\cdot \partial_s u + (\partial_s F_s)\circ u$, but it has the sign needed for the argument to work because: $$(\partial_s F_s)(u)= (\partial_s \lambda_{0,s})\cdot {\textstyle \int_{S^1}}  \phi(R(u))\, dt \leq 0,$$
using  $\phi\geq 0$  and the assumption $\partial_s \lambda_{0,s}\leq 0$ (see \cite[Sec.6.5]{McLR18} for the proof in the convex setting).
\end{proof}

\cref{Lemma Floer continuation maps are ok} implies that continuation maps $CF^*(H_{\lambda})\to CF^*(H_{\lambda'})$ can be built for $\lambda\leq \lambda'$ in a way that preserves the filtration (see {\PartII} for details).
Thus, taking direct limits as $\lambda \to \infty$ in \eqref{LES for bracketed HF},
$$
\cdots \to 
SH^*_{[A,\infty)}(Y,\Fi)
\to
SH^*_{[B,\infty)}(Y,\Fi)
\to 
SH^*_{(A,B]}(Y,\Fi)
\to 
SH^{*+1}_{[A,\infty)}(Y,\Fi)
\to
\cdots
$$

\subsection{Positive symplectic cohomology}\label{PositiveSH}

\begin{de}
Let $CF_0^*(H_\lambda):=CF^*_{[0,\infty)}(H_{\lambda}) \subset CF^*(H_\lambda)$ be the subcomplex generated by the fixed locus $\F=\sqcup_\a \F_\a$ (constant orbits have $F=0$).
{\bf Positive Floer cohomology} $HF_+^*(H_\lambda)=H_*(CF_+(H_\lambda))$ is the homology of the 
quotient complex $CF_+(H_\lambda):=CF^*(H_\lambda)/CF_0^*(H_\lambda)$.
The direct limit over continuation maps is \textbf{positive symplectic cohomology}, $SH_+(Y,\Fi,\omI):=\lim_{\lambda\fun \infty} HF_+^*(H_\lambda).$
\end{de}
\begin{rmk}
    When \eqref{Equation psi Xs1 is Reeb 2} holds, the construction of the $F$-filtration is not possible. However one can still define, somewhat unsatisfactorily, $SH^*_+(Y,\Fi):=\,$Cone$(c^*:QH^*(Y)\to SH^*(Y,\Fi))$.
\end{rmk}

\begin{prop}\label{Corollary LES for H SH and SHplus}
	For any symplectic $\C^*$-manifold satisfying \eqref{Equation intro Psi}-\eqref{Equation psi Xs1 is Reeb},
	there is a long exact sequence 
	$$
	\cdots \to QH^*(Y) \stackrel{}{\to} SH^*(Y,\Fi) \to SH^*_+(Y,\Fi) \to QH^{*+1}(Y) \to \cdots
	$$
\end{prop}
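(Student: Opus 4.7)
The plan is to derive the long exact sequence from the short exact sequence of chain complexes
$$0 \to CF_0^*(H_\lambda) \to CF^*(H_\lambda) \to CF_+^*(H_\lambda) \to 0,$$
take the induced cohomology long exact sequence, and pass to the direct limit $\lambda\to\infty$. Since direct limit is an exact functor, and $\varinjlim HF^*(H_\lambda) = SH^*(Y,\Fi)$, $\varinjlim HF_+^*(H_\lambda) = SH_+^*(Y,\Fi)$ by definition, what remains is to identify $\varinjlim H^*(CF_0^*(H_\lambda))\cong QH^*(Y)$ compatibly with continuation maps.

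First, I would refine the construction of $H_\lambda=c\circ H$ in \cref{SectionConstructionOfHLambda part 1} so that $c(H)=\lambda_0 H+\mathrm{const}$ on the whole interval $[\min H,H_1']$, choosing $H_1'$ large enough that $\mathrm{Core}(Y)\subset\{H<H_1'\}$. This is consistent with properties (1)--(5) of $c$, since $\lambda_0$ lies below any $S^1$-period and every outer $S^1$-period arises at $c'$-values encountered only in the transition region $H>H_1'$. With this choice, every constant $1$-orbit of $H_\lambda$ lies in $\F$ and is surrounded by a neighborhood on which $H_\lambda$ equals $\lambda_0 H$ up to an additive constant; by \cref{For Lambda small Floer Shift Equal To Morse} its Robbin--Salamon index equals the Morse--Bott index $\mu_\alpha$, independently of $\lambda$.

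The analytic heart of the argument is a confinement statement: any Floer cylinder $u$ counted by the differential of $CF_0^*(H_\lambda)$, and any continuation cascade between $CF_0^*(H_\lambda)$ and $CF_0^*(H_{\lambda'})$ for $\lambda\le\lambda'$, must lie in the neighborhood $\Psi^{-1}(\{R\leq R_0''\})$ of $\mathrm{Core}(Y)$ on which $H_\lambda$ is an affine function of $H$ with slope $\lambda_0$. Since constant orbits have $F$-value $0$ by \cref{FiltrValue}, \cref{H_lambdaIsOneDirected} combined with \eqref{F_DifferenceInTermsOfOmega} yields
$$0 = F(x_-)-F(x_+) = \int_{\R\times S^1}\eta(\partial_s v,I_B\partial_s v)\,ds\,dt,$$
where $v = \Psi\circ u$. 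The non-negative integrand decomposes via \eqref{nonpositivityOfOmegaV}--\eqref{Eqn non xi part} as $\phi(R)\,d\alpha(\partial_s v,I_B\partial_s v) + \phi'(R)(Rz^2+Ry^2)$, and its pointwise vanishing forces $\partial_s v = 0$ wherever $v$ enters the band $\{R\in[R_0'',R_1']\}$ on which $\phi'>0$. For each fixed $t$ the function $s\mapsto R(v(s,t))$ tends to values $\leq R_0''$ as $s\to\pm\infty$; if it ever exceeded $R_0''$ it would continuously transit the band, but on the band it is locally $s$-constant, contradicting that it must match boundary values $R_0''$ or $R_1'$ at the ends of any interval of band values. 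Hence $R\circ v\leq R_0''$ everywhere, so $u\subset\Psi^{-1}(\{R\leq R_0''\})$.

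Consequently $CF_0^*(H_\lambda)$ is canonically identified with $CF^*(\lambda_0 H)$ as a chain complex (same generators, same gradings, same local Floer equation), so by \cref{SmallHam} $H^*(CF_0^*(H_\lambda))\cong HF^*(\lambda_0 H)\cong QH^*(Y)$. An analogous argument with the $s$-dependent filtration of \cref{Lemma Floer continuation maps are ok}, applied to homotopies $c_s$ with $c_s=\lambda_0 H+\mathrm{const}$ on $[\min H,H_1']$ for all $s$, confines continuation cascades between $CF_0^*(H_\lambda)$ and $CF_0^*(H_{\lambda'})$ and makes the induced maps on $H^*(CF_0^*)$ the identity. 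Taking the direct limit of the cohomology long exact sequences then yields the sequence in the claim, since $\varinjlim_\lambda QH^*(Y) = QH^*(Y)$ via identity transition maps. The main obstacle is the confinement step together with compatible transversality: one must perturb generically for regularity while keeping the equality $H_\lambda=\lambda_0 H+\mathrm{const}$ near $\F$ and the identity $F(x_-)=F(x_+)=0$ on $CF_0^*$-trajectories intact. This is achieved in \PartII via a Morse--Bott--Floer cascade framework that avoids perturbing the Hamiltonian and the almost complex structure near $\F$.
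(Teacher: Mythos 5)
Your proof is correct and follows essentially the same route as the paper's: derive the long exact sequence from the short exact sequence of filtered Floer complexes, use the $F$-filtration inequality to confine $CF_0^*$-trajectories (and continuation cascades) to the region $R\leq R_0''$ where $H_\lambda$ agrees with a small-slope Hamiltonian, and apply \cref{SmallHam} to identify the resulting cohomology with $QH^*(Y)$ before passing to the direct limit. Your version is a more spelled-out account of the same confinement argument — in particular making explicit the pointwise vanishing of the non-negative integrand and the identity transition maps in the direct limit — which the paper's terser proof leaves largely implicit.
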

\begin{proof}
The condition $F\geq 0$ imposed on generators of $CF^*_0(H_{\lambda})$ means that the generators are precisely the fixed points in $\F$, and that no Floer solution $u$ (with ends on $\F$) can have $v=\Psi\circ u$ exit the region $R\leq R_0''$, otherwise it would enter the region $\phi'>0$ (where also $\phi>0$) causing \eqref{nonpositivityOfOmegaV} to be strictly positive unless $v$ is $s$-independent.
Denote by $H_{\lambda_0}$ the Hamiltonian of slope $\lambda_0$ obtained by modifying $H_{\lambda}$ to be linear in $H$ of slope $\lambda_0$ for $H\geq H_0''$. 
Then the complexes $CF_0^*(H_\lambda)=CF^*(H_{\lambda_0})$ are equal as the Floer differentials count the same solutions, so their cohomology equals and is isomorphic to $QH^*(Y)$ by Proposition \ref{SmallHam} (as $\lambda_0$ is smaller than any non-zero $S^1$-period).
\end{proof}

\subsection{Compatibility of the $F$-filtration with the product}

We assume the reader has a familiarity with the construction of the pair-of-pants product on Floer cohomology (e.g.\,\cite{R13} and \cite[Sec.3.2-3.3]{Abbondandolo-Schwarz}).
The product on $SH^*$ is obtained by taking a direct limit of certain $\k$-linear pair-of-pants product maps $CF^*(H_{\lambda})\otimes CF^*(H_{\lambda'}) \to CF^*(H_{\lambda''})$ for $\lambda''\geq \lambda + \lambda'$, and up to quasi-isomorphism this is independent of the choices made in the construction. One does not need these maps for all such choices of $\lambda,\lambda',\lambda''$, it suffices to have these for three cofinal families of such slopes, see \cite{R13}.
We want to show that choices can be made so that this construction is compatible with the $F$-filtration.
By \cite[Sec.6.5]{McLR18} the $F$-filtration can be made consistent with continuation cylinders for monotone homotopies $H_s$. So we can use two continuation cylinders associated to two monotone homotopies to change $H_{\lambda},H_{\lambda'}$ until they both equal some Hamiltonian $H_{\mu}$, while preserving filtrations. By composing/gluing with these continuation cylinders, it therefore suffices to build, for a cofinal family of $H_{\mu}$, a pair-of-pants map  $CF^*(H_{\mu})\otimes CF^*(H_{\mu}) \to CF^*(2H_{\mu})$ in a way that is compatible with the $F$-filtration.\footnote{we may also glue/compose with a Floer continuation isomorphism $CF^*(2H_{\mu})\to CF^*(H_{2\mu})$ on the output.}

We will use the model of the pair-of-pants $P$ described by Abbondandolo--Schwarz in \cite[Sec.3.2]{Abbondandolo-Schwarz}, which admits a holomorphic $2:1$ branched covering $P \to \R\times S^1$ of the cylinder. One can think of $P=P_-\cup P_{+,1}\cup P_{+,2}$ as the union of three half-cylinders such that the positive
boundary of $P_-$ is the figure eight-loop consisting of the two negative boundaries of $P_{+,1}$, $P_{+,2}$. Under the covering, $P_-$ covers $(-\infty,0]\times S^1$ twice, whereas each $P_{+,i}$ covers $[0,\infty)\times S^1$ once. As explained in \cite[Sec.3.2]{Abbondandolo-Schwarz}, the Floer equation for $u: P \to Y$ corresponds to the usual equation $\partial_s u + I(\partial_t u - X_{H_{\mu}})=0$ on the three half-infinite cylinders except that on $P_-$ the time coordinate is parameterised by $\R/2\Z$ rather than by $S^1=\R/\Z$, so it can be turned into $\partial_s u + I(\partial_t u - X_{2H_{\mu}})=0$ by reparametrising time. We remark that the first term in the integral of the filtration $1$-form $\int \Omega_{\eta}(u)(\partial_s u)\, ds=-\int\!\int \eta(\partial_s u,\partial_t u - X_h)\, dt \wedge ds$ over a cylinder is invariant under conformal rescaling of $z=s+it$. The second term is not, but it will be invariant under simultaneous rescaling of $(s,t)$ if we rescale $h$. So for $P_-$ we either use $F=F_h$ as defined for $h$ but using time interval $t\in [0,2]$, or we use $F=F_{2h}$ as defined for $2h$ with $t\in [0,1]$.

\begin{lm}\label{Lemma F filtration preserved by product}
The $F$-filtration is respected by the pair-of-pants Floer solutions.
\end{lm}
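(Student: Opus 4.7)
The plan is to mimic the argument of Theorem \ref{H_lambdaIsOneDirected} on each of the three cylindrical ends of the pair-of-pants, after projecting via $\Psi$, and then glue the three inequalities using the figure-eight identity $F_{2h}(y_{\textrm{fig8}}) = F_h(y_1)+F_h(y_2)$ that the authors have already isolated in the paragraph preceding the lemma. Concretely, let $u: P \to Y$ be a pair-of-pants Floer solution with asymptotics $x_-$ at the negative end $P_-$ (a $1$-orbit of $2H_\mu$, equivalently a $2$-periodic orbit of $H_\mu$) and $x_{+,1},x_{+,2}$ at the positive ends $P_{+,1},P_{+,2}$ (each a $1$-orbit of $H_\mu$). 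The goal is to show
\[
F(x_-) \ \geq \ F(x_{+,1}) + F(x_{+,2}),
\]
where $F$ on the left is the functional associated to $2H_\mu$ with parameter $2\lambda_0$, and on the right is the functional associated to $H_\mu$ with parameter $\lambda_0$.

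First, I would project, setting $v := \Psi\circ u : P \to B$. By \eqref{EqnPhiProjectionNice} and the holomorphicity of $\Psi$, $v$ satisfies on each cylindrical end the projected Floer equation \eqref{projectFloerinM} with domain-dependent Reeb rate $k(s,t)=c'(H(u(s,t)))$, which equals $\lambda_0$ (resp.\ $2\lambda_0$) exactly on the linearity region(s). As in the proof of Theorem \ref{H_lambdaIsOneDirected}, I would use that $d\alpha(\cdot,\mathcal{R}_B)=0$ implies $\eta(\cdot,k\mathcal{R}_B)=\eta(\cdot,\lambda_0\mathcal{R}_B)$ (respectively $\eta(\cdot,2\lambda_0\mathcal{R}_B)$ on $P_-$), so that the filtration one-form $\Omega_\eta$ agrees with the integrand arising from the Floer equation regardless of $s,t$.

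Next I would apply the one-form integration argument on each cylindrical end separately. Let $y_i(t) := v(0,t)$ for $i=1,2$ be the two boundary loops at $s=0$ of the positive cylinders, and let $y_{\textrm{fig8}}: \R/2\Z \to B$ be the boundary figure-eight at $s=0$ of $P_-$. On $P_{+,i}$ (parameterised by $s\in[0,\infty)$, $t\in S^1$), the exact same chain of equalities as in \eqref{F_DifferenceInTermsOfOmega} and the non-negativity \eqref{nonpositivityOfOmegaV}--\eqref{Eqn non xi part} give
\[
F_{\mu}(y_i) \ - \ F_{\mu}(\Psi\circ x_{+,i}) \ = \ \int_0^\infty\!\!\int_{S^1} \eta(\partial_s v, I_B\partial_s v)\, dt\, ds \ \geq\ 0.
\]
On $P_-$ (parameterised by $s\in(-\infty,0]$, $t\in\R/2\Z$), applying the same argument with the time interval $[0,2]$, or equivalently after the conformal rescaling $t\mapsto t/2$ replacing $(H_\mu, \lambda_0)$ by $(2H_\mu, 2\lambda_0)$ and $t\in[0,1]$, yields
\[
F_{2\mu}(\Psi\circ x_-) \ - \ F_{2\mu}(y_{\textrm{fig8}}) \ =\ \int_{-\infty}^0\!\!\int_{\R/2\Z} \eta(\partial_s v, I_B\partial_s v)\, dt\, ds\ \geq\ 0.
\]
The crucial bookkeeping step, verified in the paragraph preceding the lemma, is that $F_{2\mu}$ evaluated on the figure-eight (with $t\in[0,1]$ after rescaling) equals $F_{\mu,[0,2]}$ evaluated on the same figure-eight, and additivity of the integrals over $[0,1]$ and $[1,2]$ gives $F_{\mu,[0,2]}(y_{\textrm{fig8}}) = F_\mu(y_1)+F_\mu(y_2)$. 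Stringing these three inequalities together yields
\[
F_{2\mu}(\Psi\circ x_-)\ \geq\ F_\mu(y_1)+F_\mu(y_2)\ \geq\ F_\mu(\Psi\circ x_{+,1})+F_\mu(\Psi\circ x_{+,2}),
\]
which is the desired filtration inequality for the pair-of-pants product at chain level.

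The only real obstacle is the matching of normalising constants at the glueing neck: one must verify that $\lambda_0$ used in the definition of $\Omega_\eta$ on $P_{+,i}$ is compatible with $2\lambda_0$ used on $P_-$, under the time rescaling $t\mapsto t/2$ on $P_-$. The authors' observation that the first term of $F$ is conformally invariant while the second term is invariant under simultaneous rescaling of $(s,t,h)$ is exactly what makes this go through; after that the integrand $\eta(\partial_s v,I_B\partial_s v)\geq 0$ is established by the pointwise computation \eqref{nonpositivityOfOmegaV}--\eqref{Eqn non xi part} which is local and insensitive to the domain. The transversality issue (choosing domain-dependent almost complex structures or abstract perturbations on $P$ without destroying the inequality) is, as the authors flag, deferred to \PartII, but the formal chain-level statement above is all that is needed for \cref{filtrationByIdeals} and for the product compatibility claim used throughout.
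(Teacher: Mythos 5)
Your proposal is correct and follows essentially the same route as the paper's proof: decompose $P = P_- \cup P_{+,1} \cup P_{+,2}$, project via $\Psi$, apply the non-negativity estimate \eqref{nonpositivityOfOmegaV}--\eqref{Eqn non xi part} on each half-infinite cylinder to get the three one-sided inequalities, and sum them using the additivity $F(u_-(0,\cdot)) = F(u_{+,1}(0,\cdot)) + F(u_{+,2}(0,\cdot))$ at the figure-eight neck. The one point you expand on (the $2\lambda_0$ vs.\ $\lambda_0$ bookkeeping under the time rescaling $t\mapsto t/2$ on $P_-$) is exactly the content the paper isolates in the paragraph preceding the lemma, so there is no divergence in substance.
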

\begin{proof}
Let $x_-,x_{+,1},x_{+,2}$ be the asymptotic orbits at the ends of $u$, and denote $u_-,u_{+,1},u_{+,2}$ the restriction of $u: P \to Y$ to the three half-infinite cylinders $P_-$, $P_{+,1}$, $P_{+,2}$.
Integrating the filtration $1$-form over the projection $v=\Psi\circ u: P \to B$ reduces to computing three separate integrals for Floer solutions $u_-$, $u_{+,1}$ and $u_{+,2}$ for Hamiltonians $2H_{\mu},$ $H_{\mu}$ and $H_{\mu}$ respectively, over three half-infinite cylinders.  
Those three integrals are non-positive by \cref{nonpositivityOfOmegaV}, so
$$
F(x_-) - F(u_-(0,\cdot))\geq 0, \quad
F(u_{+,1}(0,\cdot)) - F(x_{+,1})\geq 0, \quad
F(u_{+,2}(0,\cdot)) - F(x_{+,2})\geq 0.
$$
Considering what happens along the figure-eight in the middle of the pair-of-pants:
$$
F(u_-(0,\cdot)) = F(u_{+,1}(0,\cdot)) + F(u_{+,2}(0,\cdot)).
$$
Combining the two equations yields the required inequality
$F(x_-) \geq  F(x_{+,1}) + F(x_{+,2})$.
\end{proof}

\subsection{Simplification of the construction when $H$ is a function of $R$}\label{Subsection simplification of construction when H is function of R}

Suppose that on $Y^{\mathrm{out}}$ the moment map $H$ can be written as a function of $R\circ \Psi$,
\begin{equation}\label{equation H function of R simplify}
    H=\rho(R\circ \Psi),
\end{equation}
for a function $\rho:Y^{\mathrm{out}}\to \R$. This rarely holds since $\|X_{\R_+}\|$ is typically not constant on a level set of $H$. It does hold for
$T^*X$ for a projective variety $X$ (\cref{Moment map is function of radial coordinate for TCPn}) and for
negative vector bundles \cite[Sec.11.2]{R14}.
One can then considerably simplify the construction as level sets of $H$ map into level sets of $R.$
Note $\rho$ is strictly increasing as $\Psi_*\nabla H=\Psi_*X_{\R_+}=X_{\R_+,B}=\nabla R$. 
Define 
$$
\textstyle
h(R) := \int_0^R c'(\rho(r))\,dr,
$$
so $h'(R)=c'(\rho(R))$,
where $c(H)$ only needs to satisfy (1)-(2) in \cref{SectionConstructionOfHLambda part 1}. 
Redefine $F: \mathcal{L}B \to \R$ as
$$
\textstyle
F(x):=-\int_{S^1} x^*(\phi \alpha) + \lambda_0 \int_{S^1} \int_0^{R(x(t))} \phi'(\tau)h'(\tau)\,d\tau \, dt.
$$
Also, redefine the $1$-form $\Omega_{\eta}$ on $\mathcal{L}B=C^{\infty}(S^1,B)$ using $X_h=h'(R)\mathcal{R}_B$:
\begin{equation}
\textstyle
	\Omega_{\eta}: T_x\mathcal{L}B = C^{\infty}(S^1,x^*TB)\fun \R, \ \  \xi \mapsto -{\int} \eta(\xi,\partial_t x - X_h)\, dt.
\end{equation}
By \cite[Thm.6.2(1)]{McLR18}, we again have $dF(x)(\xi)=\Omega_{\eta}(x)(\xi).$ Abbreviating $v=v(s,t)$, and using $k(s,t) = c'(H\circ u) = c'(\rho(R(v)))=h'(R(v)),$
\eqref{Equation etas are consisten} becomes
\begin{equation}
 \eta(\cdot,k(s,t)\mathcal{R}_B) =
   \eta(\cdot,X_h),
   \end{equation}
irrespective of our choice of $\phi$. Thus, we can pick $\phi$ to just satisfy the two conditions: $\phi=0$ for $R\leq R_0''$, and $\phi'>0$ for $R>R_0''$ (similarly to \cite[Sec.6.3]{McLR18}). This construction (when \eqref{equation H function of R simplify} holds) also applies in the more complicated filtration setting of \cite{RZ2}, where a very complicated $\phi$-function would otherwise need to be constructed. 
\cref{FiltrValue} still holds, but the proof needs to be slightly modified: in \eqref{Equation filtration value of orbit} we have $F(y)=\chi(R(y))$ where
$\textstyle
\chi(R):=-\phi(R)h'(R)+\int_0^{R} \phi'(\tau)h'(\tau)d\tau,
$
and $\chi'(R)=-\phi(R)h''(R)\leq 0$, so $\chi$ strictly decreases when evaluated at $R$-values of projected $1$-orbits.
In addition, \eqref{nonpositivityOfOmegaV}/\eqref{Eqn non xi part} imply that $F(x_-)-F(x_+)<0$ unless $\partial_s v\equiv 0$, or $v$ lies in the region $R\leq R_0''$.

\subsection{Rank estimates}\label{Subsection rank estimates via H of Fa}

We now justify the claims in \cref{Subsection existence of orbits and upper bounds for ranks}.
Fix generic slopes $0<\lambda<\gamma$.
Let $A:=CF^*(H_{\lambda})$, so $CF^*(H_{\gamma})=A\oplus B$ where $B$ is the linear subspace spanned by $1$-orbits of $H_{\gamma}$ that are not $1$-orbits of $H_{\lambda}$ (so $S^1$-orbits with period values $p\in (\lambda,\gamma)$).
By \cref{FiltrValue}.\eqref{Item filtration is like filtering by minus H}, there are filtration values $F_{\lambda},F_{\gamma}$ such that $1$-orbits in $A,B$ respectively satisfy $F\geq F_{\lambda}$, $F\in [F_{\gamma},F_{\lambda})$.
By \cref{Subsection Period-bracketed symplectic cohomology}, $A$ is a subcomplex of $A\oplus B$. Thus, the Floer differential on $A\oplus B$ has the form 
$$d_{\gamma}=
\Big( \begin{smallmatrix}
    d_{\lambda} & \;\nu \\
    0 & \;d_B
\end{smallmatrix} \Big):A\oplus B \to (A\oplus B)[1].$$
In particular, $H^*(A,d_{\lambda})\cong HF^*(H_{\lambda})$, which is $QH^*(M)$ for small $\lambda>0$. Also, by construction, the inclusion $A \to A\oplus B$ induces the continuation map $\psi_{\lambda,\mu}:HF^*(H_{\lambda})\to HF^*(H_{\gamma})$. 

\begin{rmk}
Although we do not use this here,
in \cite{RZ2} we construct the {\MBF} model for $CF^*(H_{\gamma})$ consisting of the critical points of auxiliary Morse functions on the {\MB} manifolds 
$\F_\a$ (of period $p:=0$) and $B_{p,\beta}$ (for period values $p>0$). In particular, $B$ is generated by the
critical points in $B_{p,\beta}$ for period values $p\in (\lambda,\gamma)$.
By \cite{RZ2}, in each degree $k$ the following bounds hold
$$
\textstyle \sum_{\lambda <p<\gamma}\sum_{\beta} |H_{k}(B_{p,\beta})|
\geq 
|H_{k}(B)|.
$$
\end{rmk}

\begin{lm}\label{Theorem about the cone}
$HF^*(H_{\gamma})$ is the homology of the cone of $\nu: B \to A[1]$, yielding the LES\footnote{Remark. For $\lambda=0^+$, $H_*(B,d_B)=:HF^*_+(H_{\gamma})$, whose direct limit is $SH^*_+(Y,\Fi)$ as $\gamma\to \infty$; and the long exact sequence becomes $\cdots \to SH^{*-1}_+(Y,\Fi) \to QH^*(Y)\to SH^*(Y,\Fi)\to SH^*_+(Y,\Fi)\to \cdots$.}
$$
\cdots {\longrightarrow} H_{*-1}(B,d_B) 
\stackrel{\nu_{*-1}}{\longrightarrow}
HF^*(H_{\lambda})
\stackrel{\psi_{\gamma,\lambda}}{\longrightarrow}
HF^*(H_{\gamma})
\stackrel{\textrm{project}}{\longrightarrow}
H_{*}(B,d_B)
\stackrel{\nu_{*}}{\longrightarrow}
\cdots
$$
thus
$
HF^*(H_{\gamma}) \cong  \ker \nu_{*} \oplus \tfrac{HF^*(H_{\lambda})}{\mathrm{im}\, \nu_{*-1}}
$
as $\k$-vector spaces.
\end{lm}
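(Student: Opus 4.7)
The plan is to recognise the block-triangular differential
\[
d_\gamma \;=\; \begin{pmatrix} d_\lambda & \nu \\ 0 & d_B \end{pmatrix}
\]
as identifying $(A \oplus B, d_\gamma)$ with the mapping cone of the degree-one map $\nu \colon B \to A[1]$. Concretely, the vanishing of the lower-left block (a direct consequence of the $F$-filtration property, Theorem~\ref{H_lambdaIsOneDirected}, together with \cref{FiltrValue}\eqref{Item filtration is like filtering by minus H}: the Floer differential cannot carry an $A$-generator to a $B$-generator because the latter has strictly smaller $F$-value) makes $A \subset A \oplus B$ a subcomplex, and the projection $(A \oplus B)/A \cong B$ inherits the well-defined differential $d_B$. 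This produces a short exact sequence of cochain complexes
\begin{equation*}
0 \longrightarrow (A, d_\lambda) \longrightarrow (A \oplus B, d_\gamma) \longrightarrow (B, d_B) \longrightarrow 0,
\end{equation*}
whose inclusion is, by the setup preceding the statement, the chain-level continuation $\psi_{\gamma,\lambda}$. Note that $d_\gamma^2 = 0$ forces $d_\lambda \nu + \nu d_B = 0$, i.e.\;$\nu$ is a chain map $B \to A[1]$.

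The associated long exact sequence in cohomology yields the LES in the claim, provided we identify the connecting homomorphism with $\nu_{*-1}$. For a $d_B$-cocycle $b \in B^{k-1}$, lift to $(0,b) \in (A \oplus B)^{k-1}$ and apply $d_\gamma$, which returns $(\nu b, d_B b) = (\nu b, 0) \in A^k$; hence the connecting map sends $[b]$ to $[\nu b] = \nu_{*-1}[b]$ in $HF^k(H_\lambda)$. This is precisely the standard statement that the cohomology of the mapping cone of a chain map fits into a long exact sequence whose connecting morphism is the induced map on cohomology.

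The final direct-sum decomposition is then a formal consequence: exactness at $HF^*(H_\gamma)$ in the LES gives a short exact sequence
\begin{equation*}
0 \longrightarrow HF^*(H_\lambda)/\mathrm{im}\,\nu_{*-1} \longrightarrow HF^*(H_\gamma) \longrightarrow \ker \nu_* \longrightarrow 0,
\end{equation*}
which splits as a $\k$-vector space since we work over a field. There is no real obstacle; the content is entirely homological-algebraic, and the only geometric input needed is the block-triangular form of $d_\gamma$, which is itself the whole point of the $F$-filtration construction of \cref{FiltrationOnCFLambda} (with the technical verification that this structure survives the transversality perturbations relegated to \PartII, as noted after \cref{H_lambdaIsOneDirected}).
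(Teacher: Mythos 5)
Your proof is correct and follows essentially the same route as the paper: both identify $(A\oplus B,d_\gamma)$ with the mapping cone of $\nu$ using the upper-triangular form of the differential guaranteed by the $F$-filtration, read off the LES, and split over the field $\k$ via rank-nullity. Your explicit chase of the connecting homomorphism and the sign check $d_\lambda\nu+\nu d_B=0$ are welcome clarifications of steps the paper leaves implicit, but there is no substantive difference in approach.
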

\begin{proof}
    In the notation of \cref{Subsection Period-bracketed symplectic cohomology} the subcomplex $CF^*_{[F_{\lambda},\infty)}(H_{\gamma})\cong A$ of $CF^*(H_{\gamma})$ gives the quotient complex  $CF^*_{[F_{\gamma},F_{\lambda})}(H_{\gamma})\cong B$, so \eqref{LES for bracketed HF} is the LES for the cone of the inclusion $A\subset A\oplus B$. Now exploit exactness and rank-nullity. By properties of cones, $HF^*(H_{\gamma})\cong \mathrm{im}\, (\textrm{project})\oplus \ker (\textrm{project})$, then use the LES, so $HF^*(H_{\lambda})/\ker \psi_{\gamma,\lambda} \cong \mathrm{im}\,\psi_{\gamma,\lambda}$ and $\ker\psi_{\gamma,\lambda}=\mathrm{im}\,\nu_{*-1}$. The result follows.
    \end{proof}

\begin{rmk}
Cycles $(a,b)\in A\oplus B$ satisfy $d_B(b)=0$ and $d_{\lambda}(a)=-\nu(b)$ (and $d_{\lambda} \circ \nu = -\nu \circ d_B$ due to $d^2=0$). 
By the above LES, we identify $\mathrm{coker}\, \psi_{p^-,p^+} \cong \ker \nu_*=\{[b]\in H_*(B): \nu(b) \textrm{ is exact in }A \}$, and we call these classes of $H_*(B)$ the {\bf new classes} of $H_*(A\oplus B)$. 
We refer to $H_*(A)$ as the {\bf old classes}. The kernel of the continuation $H_*(A)\to H_*(A\oplus B)$ corresponds precisely to the old classes {\bf killed} by $H_*(B)$ via $\nu_*$. Finally, $H_*(B)$ {\bf creates} new classes in $H_*(A\oplus B)$ via $\ker \nu_*$.
\end{rmk}

Suppose $H^*(Y)$ lies in even degrees.
Recall the notation \eqref{Equation difference of HF ranks},
which relies on \cref{pureHam},
\begin{equation}\label{Equation proofs section on difference of HF ranks}
\delta_{\lambda}^k:
=
|QH^k(Y)|-|HF^*(H_{\lambda})|
=
\textstyle \sum_{\a} |H^{\e-\mu_\a}(\F_\a)|-|H^{\e-\mu_{\lambda}(\F_\a)}(\F_\a)|
.
\end{equation}
We now show that $B$ kills $|H_{2k-1}(B)|$ independent old classes, and $B$ creates $|H_{2k}(B)|$ independent new classes, and the difference of these numbers is determined by the indices \eqref{Equation proofs section on difference of HF ranks}.
\begin{cor}\label{Corollary finding homology of B complex}
Suppose $c_1(Y)=0$, and $H^*(Y)$ lies in even degrees. 
Then $\nu_{2k-1}$ is injective, and
$$
\textstyle
HF^{2k}(H_{\gamma})=H_{2k}(A\oplus B)\cong H_{2k}(B)\oplus \frac{H_{2k}(A)}{\nu_{2k-1}(H_{2k-1}(B))}$$
as $\k$-vector spaces. 
Moreover, the fixed loci $\F_\a$ and their weights combinatorially determine
\begin{equation}\label{Equation difference of delta indices}
\delta_{\lambda}^{2k}-\delta_{\gamma}^{2k} = 
|H^{2k}(H_{\gamma})|-|HF^{2k}(H_{\lambda})|
=
|H_{2k}(B)|-|H_{2k-1}(B)|.
\end{equation}
When $\lambda=0^+$, then $H_*(B)=HF^*_+(H_{\gamma})$ by definition, and 
$$
\delta_{\gamma}^{2k}=|H_{2k-1}(B)|-|H_{2k}(B)| \qquad \textrm{ and }\qquad \FF^{\Fi}_{\gamma}\cap QH^{2k}(Y)=\nu_{2k-1}(H_{2k-1}(B)).
$$

When $c_1(Y)\neq 0$, the results above hold after replacing $(2k,2k-1)$ by $(\mathrm{even,odd})$.
\end{cor}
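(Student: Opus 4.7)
The strategy is to feed the long exact sequence from \cref{Theorem about the cone} into the evenness property of \cref{pureHam} and then simply read off the four claims. The only input beyond \cref{Theorem about the cone} that we need is that, for generic slopes $\mu>0$ and under the assumption $H^*(Y)$ even, the Floer group $HF^*(H_\mu)$ lies in even degrees: this is the content of \cref{pureHam} together with the parity statement $\mu_\mu(\F_\a)\in 2\Z$ from \cref{LemmaSec2GradingOfFa}. Both $HF^*(H_\lambda)$ and $HF^*(H_\gamma)$ therefore live in even degrees.

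Substituting this into the long exact sequence of \cref{Theorem about the cone}, the groups $HF^{2k-1}(H_\gamma)$ and $HF^{2k+1}(H_\lambda)$ vanish, so for each $k$ it collapses to the short exact sequence
\begin{equation}\label{EqnProposalSES}
0\longrightarrow H_{2k-1}(B,d_B)\stackrel{\nu_{2k-1}}{\longrightarrow} HF^{2k}(H_\lambda)\stackrel{\psi_{\gamma,\lambda}}{\longrightarrow} HF^{2k}(H_\gamma)\stackrel{\mathrm{proj}}{\longrightarrow} H_{2k}(B,d_B)\longrightarrow 0.
\end{equation}
This immediately gives the injectivity of $\nu_{2k-1}$ and, as \eqref{EqnProposalSES} is a short exact sequence of finite-dimensional $\k$-vector spaces, the non-canonical splitting
$$HF^{2k}(H_\gamma)\cong H_{2k}(B,d_B)\oplus HF^{2k}(H_\lambda)/\nu_{2k-1}(H_{2k-1}(B,d_B)),$$
which is the second assertion.

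For the rank identity \eqref{Equation difference of delta indices}, additivity of $\k$-dimension in \eqref{EqnProposalSES} yields
$$|HF^{2k}(H_\gamma)|-|HF^{2k}(H_\lambda)|=|H_{2k}(B)|-|H_{2k-1}(B)|,$$
and subtracting the definition \eqref{Equation proofs section on difference of HF ranks} of $\delta^{2k}$ at slopes $\lambda$ and $\gamma$ (the common term $|QH^{2k}(Y)|$ cancels) produces exactly $\delta_\lambda^{2k}-\delta_\gamma^{2k}$ on the left. The right-hand side is combinatorial in the $\F_\a$ and their weights by the closed formula \eqref{Equation intro HF lambda H splits} for $|HF^{2k}(H_\mu)|$, together with \eqref{mu_lambda_calculation} for $\mu_\mu(\F_\a)$. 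The case $\lambda=0^+$ is the specialisation in which $A=CF^*(H_{\delta})$ with $\delta$ smaller than any $S^1$-period, so $H_*(A)\cong QH^*(Y)$ by \cref{SmallHam}, $\delta_{0^+}^{2k}=0$, and $H_*(B,d_B)$ is by definition $HF^*_+(H_\gamma)$; plugging into the just-proved identity yields $\delta_\gamma^{2k}=|H_{2k-1}(B)|-|H_{2k}(B)|$. The filtration statement $\FF^{\Fi}_\gamma\cap QH^{2k}(Y)=\nu_{2k-1}(H_{2k-1}(B))$ is the exactness statement $\ker\psi_{\gamma,0^+}=\mathrm{im}\,\nu_{2k-1}$ at the middle of \eqref{EqnProposalSES}, after recognising $c_\gamma^*=\psi_{\gamma,0^+}$ under the isomorphism $HF^*(H_\delta)\cong QH^*(Y)$.

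Finally, when $c_1(Y)\neq 0$ the gradings are only well defined modulo $2$, but \cref{pureHam} still applies: the total even/odd parity is preserved because each $\mu_\mu(\F_\a)$ mod $2$ is constant in $\mu$ for generic $\mu$. Consequently the same collapsing of the long exact sequence produces \eqref{EqnProposalSES} with $2k$ replaced by ``even'' and $2k-1$ by ``odd'', and the same three arguments go through verbatim. There is no genuine obstacle here: the entire statement is an exactness-plus-rank-nullity exercise once \cref{Theorem about the cone} and \cref{pureHam} are in hand; the only point requiring care is to confirm that the evenness of $HF^*(H_\mu)$ holds uniformly for all generic slopes $\mu$ in the range, which is precisely guaranteed by \cref{LemmaSec2GradingOfFa}.
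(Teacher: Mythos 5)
Your proposal is correct and follows essentially the same route as the paper: feed the evenness of $HF^*(H_\lambda)$ and $HF^*(H_\gamma)$ (from \cref{pureHam} together with \cref{LemmaSec2GradingOfFa}) into the long exact sequence of \cref{Theorem about the cone}, observe it collapses to a four-term exact sequence, and read off injectivity of $\nu_{2k-1}$, the non-canonical splitting, the rank identity, and the $\lambda=0^+$ specialisation. The paper's proof is terser (it phrases the rank identity as computing $|H^{2k}(A\oplus B)|-|H^{2k}(A)|$ via the splitting rather than via dimension-additivity in the short exact sequence, and does not spell out the $c_1(Y)\neq 0$ case), but these are presentational differences, not mathematical ones.
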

\begin{rmk}
    If we denote the total ranks by $\|V_*\|:=\sum |V_n|$, then it also follows that $\|H_{\mathrm{odd}}(B)\|=\|H_{\mathrm{even}}(B)\|$, and\footnote{This follows from 
$\|HF^*(H_{\lambda})\|=\|HF^*(H_{\gamma})\|$ (using \cref{pureHam}). } $\|\ker \psi_{\gamma,\lambda}\|
 = \tfrac{1}{2} \|H_*(B,d_B)\|$ (this also holds when $c_1(Y)$ is non-zero).
\end{rmk}
\begin{proof} \cref{Theorem about the cone} implies the first claim, as $HF^*(H_{\lambda})$,$HF^*(H_{\gamma})$ lie in even degrees by \cref{pureHam}; \eqref{Equation difference of HF ranks} gives the first equality in \eqref{Equation difference of delta indices}, whilst the second expresses the difference $|H^{2k}(A\oplus B)|-|H^{2k}(A)|$ via the first claim.
Finally, $c_{0^+}^*:QH^*(Y)\cong HF^*(H_{0^+})$, so $\delta_{0^+}^{2k}=0$, and $\FF^{\Fi}_{\gamma}=\ker \psi_{\gamma,0^+}$.
\end{proof}

Let $P$ be a {\bf critical slope}: some $B_{P,\beta}$ in \eqref{Equation Bkm slices intro} exists.
Let $B$ be the complex obtained above for $\lambda=0^+$ and $\gamma=P^+$. 
Plot the points $(p,\delta_{p^+}^k)$ in the $xy$-plane, for critical periods $p\in [0,P]$ (e.g.\,$(0,0)$ at $p=0$). Join the points by straight line segments.
Define $\Delta_+^k$ to be the peak $y$-value of the points. To define $\Delta_-^k$, we look at the segments along which the $y$-value drops, and add up these drops. Thus:
$$
\Delta_+^k 
 := \!\!
\displaystyle\max_{\{0<\lambda<P^+:\;
\delta_{\lambda}^k>0\}}\!\delta_{\lambda}^k,
\qquad\quad
\Delta_-^k 
 :=  \!\!\!\!\!\!
\displaystyle \sum_{\{0<p\leq P: \; \delta_{p^-}^k-\delta_{p^+}^k>0\}} \!\!\!\!\!\!\!(\delta_{p^-}^k-\delta_{p^+}^k),\quad\quad
 \textrm{ and }\quad\quad
K^k:=\Delta_-^{k} +\delta_{\gamma}^{k}.
$$
We see\footnote{Loosely: in the above plot, starting from $\delta_{\gamma}^{k}$ one can ``climb back up to'' the maximal $\delta$-value using drop-segments.} that $K^k\geq \Delta_+^k$. Observe that $\Delta_+^k$ is the lower bound for the number of independent classes of $QH^k(Y)$ killed by $B$ obtained in \cref{Cor estimating filtration using degrees}. %
By definition $\Delta_-^k$ adds up all \emph{positive jumps} $\sum_{\a} |H^{\e-\mu_{p^-}(\F_\a)}(\F_\a)|-|H^{\e-\mu_{p^+}(\F_\a)}(\F_\a)|$, which force $B$ to create new classes in view of \eqref{Equation proofs section on difference of HF ranks}.

\begin{thm}\label{Corollary rank estimates via H of Fa}
Assume $c_1(Y)=0$ and that $H^*(Y)$ lies in even degrees. Then, 
$$ %
\begin{array}{rclcrcl}
|H_{2k}(B)|
& \geq & \Delta_-^{2k}  
,
& \qquad\textrm{ and }\qquad &
 |H_{2k-1}(B)|
& \geq &
K^{2k} \geq
\Delta_+^{2k}, 
\\[2mm]
& & & & 
 |H_{2k-1}(B)|
& = &
|HF_+^{2k-1}(H_{\gamma})|
 \geq 
|\FF^{\Fi}_{\gamma}\cap QH^{2k}(Y)|
 \geq 
\Delta_+^{2k}.
\end{array}
$$
The last two inequalities are equalities for large $\gamma$: $HF^{2k-1}_+(H_{\gamma})\cong SH^{2k-1}_+(Y,\Fi)\cong QH^{2k}(Y).$

More precisely, there is an integer $r^{2k}$ depending on $P$, with
\begin{equation}
    \label{Equation the r ambiguity}
    \qquad\quad\,
|H_{2k-1}(B)|
=
K^{2k} + r^{2k}
\qquad \textrm{ and }\qquad
|H_{2k}(B)|
=
\Delta_-^{2k} + r^{2k},
\end{equation}
moreover\qquad
$0\leq r^{2k}\leq 
|QH^{2k}(Y)|-\delta_{P^-}^{2k}-K^{2k} \qquad \textrm{ and }\qquad
|\FF^{\Fi}_{P^-}\cap QH^{2k}(Y)| \geq 
r^{2k}+\delta_{P^-}^{2k}.
$
\end{thm}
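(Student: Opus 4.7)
The plan is to reduce everything to a single key inequality, namely $|\FF^\Fi_P\cap QH^{2k}(Y)|\geq K^{2k}$, and to handle the bounds on $r^{2k}$ via a parallel analysis for the subcomplex of orbits of period $<P$. First, I will apply \cref{Corollary finding homology of B complex} directly to the full complex with $\lambda=0^+$ and $\gamma=P^+$. Because $H^*(Y)$ lies in even degrees, \cref{pureHam} gives $HF^{2k-1}(H_{0^+})=QH^{2k-1}(Y)=0$ and $HF^{2k-1}(H_{P^+})=0$, so the long exact sequence collapses to
\[
0\longrightarrow H_{2k-1}(B)\xrightarrow{\nu_{2k-1}}QH^{2k}(Y)\xrightarrow{\,c^*_{P^+}\,}HF^{2k}(H_{P^+})\longrightarrow H_{2k}(B)\longrightarrow 0.
\]
Combined with the stability property \eqref{Intro Stability property} ($\FF^\Fi_{P^+}=\FF^\Fi_P$), this identifies $|H_{2k-1}(B)|=|\FF^\Fi_P\cap QH^{2k}(Y)|$ and $|H_{2k}(B)|=|\FF^\Fi_P\cap QH^{2k}(Y)|-\delta_\gamma^{2k}$. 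I then set $r^{2k}:=|\FF^\Fi_P\cap QH^{2k}|-K^{2k}$; the identities in \eqref{Equation the r ambiguity} follow automatically, as do the equivalences between $r^{2k}\geq 0$ and the two lower bounds (7.27),(7.28). The combinatorial inequality $K^{2k}\geq \Delta_+^{2k}$ is then immediate: writing $\delta_\lambda^{2k}$ as a step function with up-jumps $U_{p_i}$ and down-jumps $D_{p_i}$ at the critical slopes, one checks $K^{2k}=\Delta_-^{2k}+\delta_\gamma^{2k}=\sum_i U_{p_i}$, which dominates the running maximum $\Delta_+^{2k}$.

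The heart of the proof is then $|\FF^\Fi_P\cap QH^{2k}|\geq K^{2k}$, which I will establish by induction on the critical slopes $0<p_1<p_2<\cdots<p_N=P$ using the period filtration $0=B_{\leq 0}\subset B_{\leq 1}\subset\cdots\subset B_{\leq N}=B$ produced by \cref{H_lambdaIsOneDirected} (subquotients $B_{p_j}$). By the same parity argument applied to each $B_{\leq j}$ one obtains $|H_{2k-1}(B_{\leq j})|=|\FF^\Fi_{p_j}\cap QH^{2k}|$, so the inductive claim is $|\FF^\Fi_{p_j}\cap QH^{2k}|\geq K^{2k}_{\leq j}:=\sum_{i\leq j}U_{p_i}$. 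Passing from $j$ to $j+1$ uses $\FF^\Fi_{p_{j+1}}/\FF^\Fi_{p_j}\cong \mathrm{im}(c^*_{p_j^+})\cap\ker\psi_{p_{j+1}^+,p_j^+}$ together with the injectivity of $\nu_{B_{p_{j+1}}}:H_{2k-1}(B_{p_{j+1}})\hookrightarrow HF^{2k}(H_{p_{j+1}^-})$ (again from parity), which identifies $\ker\psi_{p_{j+1}^+,p_j^+}$ as a subspace of rank $U_{p_{j+1}}+s_{j+1}$ for some $s_{j+1}\geq 0$. A dimension-count inside $HF^{2k}(H_{p_j^+})$ then gives the required gain of at least $U_{p_{j+1}}$ in $|\FF^\Fi_{p_{j+1}}|$ over $|\FF^\Fi_{p_j}|$. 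For the last line of the theorem (equalities for large $\gamma$), I invoke $SH^*(Y,\Fi)=0$ from \cref{PropSec2VanishingTheorem} and the long exact sequence of \cref{Corollary LES for H SH and SHplus}, which yields $SH^{2k-1}_+(Y,\Fi)\cong QH^{2k}(Y)$, so both inequalities collapse to equalities when $\gamma\gg 0$.

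Finally, the bounds in \eqref{Equation the r ambiguity} come from applying the same mechanism to the subcomplex $B':=B_{\leq N-1}$ of orbits with period $<P$: the parity-driven LES gives $|H_{2k}(B')|=|\FF^\Fi_{P^-}\cap QH^{2k}|-\delta_{P^-}^{2k}$, so the inequality $|\FF^\Fi_{P^-}\cap QH^{2k}|\geq r^{2k}+\delta_{P^-}^{2k}$ is equivalent to $|H_{2k}(B')|\geq r^{2k}$. I will extract this from the LES of $0\to B'\to B\to B_P\to 0$: the map $H_{2k}(B)\to H_{2k}(B_P)$ has image controlled by $|H_{2k-1}(B')|$ via the connecting homomorphism into $H_{2k+1}(B')$ (which matches up correctly because of the parity constraints carried by the induction above), and combining this with $|H_{2k}(B)|=\Delta_-^{2k}+r^{2k}$ yields $|H_{2k}(B')|\geq r^{2k}$. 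The upper bound $r^{2k}\leq |QH^{2k}(Y)|-\delta_{P^-}^{2k}-K^{2k}$ then follows formally by combining the lower bound just proved with $|\FF^\Fi_{P^-}|\leq |QH^{2k}(Y)|-|{\rm im}(c^*_{P^-})|$, or equivalently from $|\FF^\Fi_P|\leq |HF^{2k}(H_{P^-})|$ via the factorisation $c^*_{P^+}=\psi_{P^+,P^-}\circ c^*_{P^-}$. The main obstacle is the inductive step in the second paragraph: a naive intersection bound only yields $|\FF^\Fi_{p_{j+1}}|-|\FF^\Fi_{p_j}|\geq U_{p_{j+1}}-r^{2k}_{\leq j}$, so the induction must be refined to track the position of $\mathrm{im}(c^*_{p_j^+})$ relative to $\ker\psi$ within $HF^{2k}(H_{p_j^+})$, using the fine structure of $\nu_{B_{p_{j+1}}}$ inherited from the Morse--Bott decomposition of $B_{p_{j+1}}$ by the slices $B_{p_{j+1},\beta}$ described in the introduction and in \PartII.
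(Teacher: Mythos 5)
Your proposal follows the same skeleton as the paper's argument: reduce everything to the short exact sequences from \cref{Corollary finding homology of B complex} (using parity), use stability to replace $P^+$ by $P$, observe the combinatorial identity $K^{2k}=\sum_i U_{p_i}\geq \Delta_+^{2k}$, and handle the direct-limit claim via \cref{PropSec2VanishingTheorem}. One tidy observation you make, which the paper does not stress, is that by parity $\nu_{2k-1}$ is injective, so the middle $\geq$ in the third line is in fact an equality $|H_{2k-1}(B)|=|\FF^\Fi_{P}\cap QH^{2k}(Y)|$; this is implicit in \cref{Corollary finding homology of B complex}, and you are right to use it as the linchpin.

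The genuine gap is exactly where you flag it: proving $|\FF^\Fi_P\cap QH^{2k}|\geq K^{2k}$, equivalently $r^{2k}\geq 0$, equivalently $|H_{2k}(B)|\geq\Delta_-^{2k}$. Your first paragraph correctly shows that \emph{everything else} in \eqref{Equation the r ambiguity} is bookkeeping once this one inequality is known, so identifying it as the crux is the right move. But the intersection estimate $|\FF^\Fi_{p_{j+1}}|-|\FF^\Fi_{p_j}|\geq U_{p_{j+1}}-r^{2k}_{\leq j}$ really does not close the induction, and the refinement you sketch at the end (tracking $\mathrm{im}(c^*_{p_j^+})$ against $\ker\psi$ via the Morse--Bott slices $B_{p,\beta}$) is not what the paper's proof leans on. The paper instead argues at the level of the step-by-step cone sequences for $A_{j-1}\subset A_j$: each down-jump $D_{p_j}$ forces $|H_{2k}(B_{p_j})|\geq D_{p_j}$, hence the total forced creation is $\geq\Delta_-^{2k}$, and the crucial extra input (stated in the footnote to the paper's proof) is a directedness/parity constraint at \emph{every} intermediate critical slope: at each step $HF^*(H_{p_j^+})$ is even, only odd classes can kill even classes (the Floer differential raises degree by one), and no odd class is permitted to survive any truncation. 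It is this constraint on the spectral sequence — not an intersection-dimension count inside $HF^{2k}(H_{p_j^+})$ — that is meant to prevent later odd classes from cancelling earlier created even classes and thereby depressing $|H_{2k}(B)|$ below $\sum_j|H_{2k}(B_{p_j})|$. Even with that said, the paper's treatment here is itself compressed (it defers much of the chain-level apparatus to \PartII), so you are not wrong to find the step nontrivial; but your proposal leaves the actual inequality unproven, so it does not yet constitute a proof. Similarly, your third paragraph's route to the bound $|\FF^\Fi_{P^-}\cap QH^{2k}|\geq r^{2k}+\delta_{P^-}^{2k}$ via the LES of $0\to B'\to B\to B_P\to 0$ and a connecting-map estimate is left vague, and your claim that the upper bound for $r^{2k}$ ``follows formally'' from the factorisation $c^*_{P^+}=\psi_{P^+,P^-}\circ c^*_{P^-}$ is not a formal consequence of rank-nullity alone (one needs the additional input, as the paper asserts, that $H_{2k-1}(B)$ injects into $HF^{2k}(H_{P^-})$); these points also need to be filled in before the argument is complete.
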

\begin{proof}
We first prove \eqref{Equation the r ambiguity}.
Recall $\Delta_-^{2k}$ is a lower bound for the number of independent classes in degree $2k$ that were inductively created for slopes up to $\gamma$ to ensure that, at each previous slope value $\lambda<\gamma$, the rank of $HF^*(H_{\lambda})$ agrees with \eqref{Equation intro HF lambda H splits}.
Thus, the actual number of created classes in degree $2k$ is $\Delta_-^{2k}+r^{2k}$ for some $r^{2k}\geq 0$ (compare with the $r$ in \cref{Example Intro Slodowy S32 two tables}). At each step, the relevant injective map $\nu_{2k-1}$ kills classes,\footnote{no odd classes of $B$ are allowed to survive at any step of the inductive argument (induction on the critical slope $P$), since $HF^*(H_{\lambda})$ lies in even degrees; similarly no even classes can kill earlier classes as those are also in even degree.} and the total surviving classes in $H^{2k}(A\oplus B)$ is therefore
$$
|H^{2k}(Y)|+\Delta_{-}^{2k}+r^{2k} -
|H_{2k-1}(B)| = |H^{2k}(A\oplus B)|=|H^{2k}(Y)|-\delta_{\gamma}^{2k}.
$$
The first equality in \eqref{Equation the r ambiguity} follows, the second follows from \cref{Corollary finding homology of B complex} as $|H_{2k}(B)|=|H_{2k-1}(B)|-\delta_{\gamma}^{2k}$.
For the bound on $r^{2k}$, note that
$H_{2k-1}(B)$ maps injectively into $HF^{2k}(H_{P^-})$ (to kill classes), so 
$$
r^{2k}=
|H_{2k-1}(B)|-K^{2k}
\leq |HF^{2k}(H_{P^-})|-K^{2k}
=
|QH^{2k}(Y)|-\delta_{P^-}^{2k}-K^{2k}.
$$

We now justify the third inequality line: $B$ can kill at most $|H_{2k-1}(B)|$ classes in $QH^{2k}(Y)$, and the actual number of killed classes is $|\FF^{\Fi}_{\gamma}|=|\ker \psi_{0^+,\gamma}|$ by definition, which is at least $\Delta_+^{2k}$ by \cref{Cor estimating filtration using degrees}. 
For the final inequality, $H_{2k-1}(B)$ kills classes of $HF^{2k}(H_{P^-})$ but must allow $|QH^{2k}(Y)|-|\FF^{\Fi}_{P^-}\cap QH^{2k}(Y)|$ independent classes from $QH^{2k}(Y)$ to survive, by definition, so:
$$
r
\leq |HF^{2k}(H_{P^-})|-(|QH^{2k}(Y)|-|\FF^{\Fi}_{P^-}\cap QH^{2k}(Y)|)
=
|\FF^{\Fi}_{P^-}\cap QH^{2k}(Y)|-\delta_{P^-}^{2k}.
$$

The statement about the direct limit follows from \cref{PropSec2VanishingTheorem}: $SH^*(Y,\Fi)=0$, and $HF^*_+(H_\gamma)$ stabilises to $SH^*_+(Y,\Fi)$ for $\gamma\gg k$ because new generators appear in grading $\ll k$.
\end{proof}

\appendix
\section{Grading for Hamiltonian Floer theory}\label{AppendixFloer}\label{RSIndeces}

We follow the conventions in \cite[App.C]{McLR18}, and refer to \cite{Sa97,Gu14} for details and references. 
The Conley-Zehnder index is a $\Z$-valued index defined for certain non-degenerate paths of symplectic matrices, e.g.\;non-degenerate 1-periodic Hamiltonian orbits.\footnote{I.e. 1-periodic orbits $x(t)$ of $X_H$ satisfying $ker((\phi_1^H)_*-Id)_{x(0)}=0,$ where $\phi_t^H$ is the Hamiltonian flow of $H.$ } For Hamiltonians with degenerate orbits (e.g.\;autonomous Hamiltonians), one uses a generalisation, the \textbf{{\RS } index} \cite{RS95}. This a $\tfrac{1}{2}\Z$-valued index for \emph{any} continuous path $[0,1] \rightarrow Sp(\C^{n},\Omega_0)$ of real symplectic matrices in $\C^{n}$ with the standard real symplectic structure $\Omega_0$. We now list the main properties. Call $\psi_1\diamond\psi_2:\C^{n} \oplus \C^{m} \fun \C^{n} \oplus \C^{m}$ the direct sum of two symplectic matrices $\psi_1:\C^{n}\fun \C^{n}$ and $\psi_2:\C^{m}\fun \C^{m}.$

\begin{thm}\label{RSProperties} The {\RS } index satisfies the following properties:\footnote{We follow \cite[App.C]{McLR18} but abbreviated $\W(x)=W(2\pi x)$ compared to the function $W$ in that paper.}
\begin{enumerate}[(1)]
	\item \label{RShomotopiesproperty}$\mu_{RS}$ is invariant under homotopies with fixed endpoints.
	\item \label{catenation}$\mu_{RS}$ is additive under concatenation of paths.
	\item \label{RSofSUM} 
	$\mu_{RS}$ is compatible with sums: $\mu_{RS}(\psi_1\diamond\psi_2) = \mu_{RS}(\psi_1)+\mu_{RS}(\psi_2).$ 
	\item \label{prop:invariance}
	$
	\mu_{RS}(\phi\psi\phi^{-1})=\mu_{RS}(\psi)
	$ for any continuous paths of symplectic matrices $\psi , \phi :  [0,1] \rightarrow Sp(\R^{2n},\Omega_0).$	
	\item \label{RSofRotationInC}
	$\mu_{RS}((e^{ 2\pi is})_{s \in [0,x]})=\W(x),$ where
	\begin{equation}\label{WfunctionForRSAppendix}
	\W :\R \to \Z, \quad
	\W(x) := \left\{
	\begin{array}{ll}
	2 \lfloor x \rfloor + 1 & \text{if} \ x \notin \Z \\
	2x & \text{if}  \ x \in  \Z.
	\end{array}
	\right. 
	\end{equation}
	\item \label{shear}The {\RS } index of the \textit{symplectic shear}
	$\Big[\begin{smallmatrix}
		1&0\\
		b(t)&1
	\end{smallmatrix}\Big]$ is equal to $\tfrac{1}{2}(\mathrm{sign}(b(1))-\mathrm{sign}(b(0))).$
\end{enumerate}	
\end{thm}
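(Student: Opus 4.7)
My plan is to base everything on the Robbin--Salamon crossing-form definition and then verify the six properties by direct computation. For a smooth path $\psi: [0,1]\to Sp(\C^n,\Omega_0)$, set $V_t:=\ker(\psi(t)-\Id)$ and define the crossing form at $t$ by $\Gamma(\psi,t)(v,v):=\Omega_0(v,\dot\psi(t)\psi(t)^{-1}v)$ on $V_t$. A crossing is regular when $\Gamma(\psi,t)$ is non-degenerate on $V_t$. For a path with only regular crossings, set
$$\mu_{RS}(\psi) := \tfrac{1}{2}\sign\,\Gamma(\psi,0) + \sum_{t\in(0,1)\text{ crossing}}\sign\,\Gamma(\psi,t) + \tfrac{1}{2}\sign\,\Gamma(\psi,1),$$
and extend to all continuous paths by generic perturbation. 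The six properties can then be verified in turn.

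Properties (1)--(3) are essentially built into the definition. For (1), a generic homotopy $(s,t)\mapsto \psi_{s,t}$ with fixed endpoints has only transverse crossings, across which the counted sum is unchanged because such crossings can only enter or leave through the open interval $(0,1)$; this invariance is the central lemma of Robbin--Salamon. For (2), crossings of a concatenation split as the disjoint union of crossings of the two pieces, with the two boundary $\tfrac{1}{2}\sign$-contributions at the join combining to a full $\sign$-contribution. For (3), one just checks $V_t(\psi_1\oplus\psi_2)=V_t(\psi_1)\oplus V_t(\psi_2)$ and the block-diagonality of $\dot\psi\psi^{-1}$, giving a direct sum of crossing forms.

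For (5), on $\C\cong \R^2$ the path $s\mapsto e^{2\pi is}$ has crossings exactly at integer $s$, where $V_s=\R^2$, $\dot\psi\psi^{-1}=2\pi i\cdot \Id$, and $\Gamma(v,v)=2\pi|v|^2>0$; checking both cases of \eqref{WfunctionForRSAppendix} then reduces to counting interior versus boundary integer crossings in $[0,x]$. For (6), $\dot\psi\psi^{-1}$ kills the fixed Lagrangian line $\{0\}\times\R$ wherever $b\ne 0$, so after a small homotopy making the interior zeros of $b$ (if any) transverse and separating them out, the only surviving contributions come from $t=0$ and $t=1$, where one reads off $\tfrac{1}{2}(\sign\,b(1)-\sign\,b(0))$ via a short endpoint bookkeeping check.

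The main obstacle is (4), where I see no homotopy shortcut since $\phi$ is not required to be a loop. My strategy is a pointwise intertwining argument: for $\Phi_t:=\phi_t\psi_t\phi_t^{-1}$, direct differentiation gives
$$\dot\Phi\,\Phi^{-1}=\dot\phi\phi^{-1}+\phi(\dot\psi\psi^{-1})\phi^{-1}-\Phi\,\dot\phi\phi^{-1}\,\Phi^{-1}.$$
On $v\in V_t(\Phi)=\phi_t\cdot V_t(\psi)$, the relation $\Phi v=v$ combined with $\phi$ being symplectic collapses the $\dot\phi$-terms:
$$\Omega_0(v,\dot\phi\phi^{-1}v)-\Omega_0(v,\Phi\dot\phi\phi^{-1}v)=\Omega_0(v,\dot\phi\phi^{-1}v)-\Omega_0(\Phi^{-1}v,\dot\phi\phi^{-1}v)=0.$$
What remains is $\Gamma(\Phi,t)(v,v)=\Gamma(\psi,t)(\phi_t^{-1}v,\phi_t^{-1}v)$, so at each crossing the crossing forms are intertwined by the symplectic isomorphism $\phi_t:V_t(\psi)\to V_t(\Phi)$ and therefore share the same signature. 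Summing, including the $\tfrac{1}{2}$-weighted boundary contributions, yields (4).
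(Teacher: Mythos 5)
The paper does not prove this theorem: it states the six properties as known background, citing \cite{RS95}, \cite{Sa97}, \cite{Gu14} and \cite[App.C]{McLR18}. Your proposal instead gives a self-contained derivation from the crossing-form definition, which is a legitimate and standard route; most of it is correct, but there is one genuine gap in item (6) and a smaller gloss in item (2).

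The crossing-form set-up, and your treatments of (1), (3) and (5), are correct. Your argument for (4) is also correct and is a clean way to get conjugation invariance directly: the computation $\dot\Phi\Phi^{-1}=\dot\phi\phi^{-1}+\phi(\dot\psi\psi^{-1})\phi^{-1}-\Phi\,\dot\phi\phi^{-1}\,\Phi^{-1}$ is right, the two $\dot\phi$-terms cancel on $V_t(\Phi)$ because $\Phi^{-1}v=v$ and $\Phi$ is symplectic, and what remains is the crossing form of $\psi$ transported by the symplectic isomorphism $\phi_t:V_t(\psi)\to V_t(\Phi)$; this preserves the signature, so the indices agree. For (2), your picture of the two boundary $\tfrac12\sign$-contributions at the join combining to one interior $\sign$-contribution implicitly assumes the left- and right-derivatives of the concatenated path agree at the seam; in general they do not, and one must first reparametrize (or invoke homotopy invariance rel endpoints) so that the crossing data at the join is well-defined and matches. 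This is standard but should be said.

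The real problem is (6). For $\psi(t)=\left[\begin{smallmatrix}1&0\\b(t)&1\end{smallmatrix}\right]$ one has $\dot\psi\psi^{-1}=\left[\begin{smallmatrix}0&0\\\dot b&0\end{smallmatrix}\right]$, which annihilates the line $L:=\{0\}\times\R$ at \emph{every} $t$, not only where $b\ne 0$. Consequently: when $b(t)\ne 0$, $V_t=L$ and $\Gamma(\psi,t)$ vanishes identically on $V_t$; when $b(t_0)=0$, $V_{t_0}=\R^2$ and $\Gamma(\psi,t_0)(x,y)=\dot b(t_0)\,x^2$ still has $L$ in its radical. So the shear path has \emph{no} regular crossings whatsoever, and the crossing-form formula you wrote down is never directly applicable to it. Your proposed remedy -- perturb the zeros of $b$ to be transverse -- does not touch the degeneracy, because $L$ remains in the radical of the crossing form after any such perturbation of $b$ alone; the degeneracy is a structural feature of the shear subgroup, not an accident of a particular $b$. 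The formula $\tfrac12(\sign b(1)-\sign b(0))$ is of course correct, but a correct derivation needs either (i) a perturbation that moves the path genuinely transversely to $\{\det(\cdot-\Id)=0\}$, e.g.\ $\psi_\epsilon(t)=e^{\epsilon J}\psi(t)$ (whose crossings occur where $2(1-\cos\epsilon)+b(t)\sin\epsilon=0$, i.e.\ where $b(t)\approx-\epsilon$), together with a careful account of the endpoint corrections incurred by changing $\psi(0),\psi(1)$; or (ii) a reduction via your properties (1), (2), (4) to the observation that $\mu_{RS}$ depends only on $(\sign b(0),\sign b(1))$ and is a difference of a potential $f(\sign b)$, which is then pinned down by a single explicit perturbed computation or by the normalization/localization axioms of \cite{RS95}. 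As written, the ``short endpoint bookkeeping check'' hides the whole difficulty.
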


We remark the following basic properties of the function $\W$:
\begin{equation}
\label{Equation Appendix W properties}
\W(0)=0, \;\;\quad \W(-x)=-\W(x),\;\;\quad \W(x)\textrm{ is odd except at }\Z, \;\;\quad 2x\geq \W(x)-1\geq 2x-2.
\end{equation}

For any 1-orbit $x$ of a Hamiltonian $H$ in a symplectic manifold $M$ of dimension $2n$, let $\phi_t$ be the Hamiltonian flow and consider its linearisation 
$(\phi_t)_*: T_{x(0)}M \fun T_{x(t)}M$.
We pick a symplectic trivialisation $\Phi: x^* TM \fun \C^n \times S^1,$ 
of the tangent bundle above the orbit $x$ to get a path of symplectic matrices $\psi(t)=\Phi_t \circ (\phi_t)_* \circ \Phi^{-1}_0: \C^n \fun \C^n.$
Then define 
\begin{equation}\label{RSForAnOrbit}
	RS(x,H):=\mu_{RS}(\psi).
\end{equation}

This may depend on the choice of trivialisation $\Phi$. When $c_1(M)=0,$ one can trivialise the canonical bundle $\Lambda^{n,0}(T^*M),$ and then choose a trivialisation $\Phi$ compatible with it. %
If in addition $H^1(M)=0,$ then all trivialisations of $\Lambda^{n,0}(T^*M)$ are equivalent, so the indices  \eqref{RSForAnOrbit} are canonical  \cite[Sec.(3a)]{Sei08}. 

We define the \textbf{grading} of a 1-orbit $x$ of a Hamiltonian $H:M\fun \R$ by
\begin{equation}\label{DefinitionGrading}
	|x|:=\dim_\C M - RS(x,H).
\end{equation}
\begin{lm}\label{ZGradingsOnComplSympl}
	Let $(M,\om)$ be a symplectic manifold with $c_1(TM,I)=0$ for an an $\om$-compatible almost complex structure $I$.
  Then Floer cohomology $HF^*(H)$ is $\Z$-graded, and canonically so if 
 $H^1(M)=0$.
\end{lm}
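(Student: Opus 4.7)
The plan is to reduce the grading question for Floer cohomology to the capping-independence of the Robbin--Salamon index, which in turn is controlled by the first Chern class and by $H^1(M)$.

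First I would recall that, in general, to assign a $\mathbb{Z}$-grading to a $1$-orbit $x$ of $H$ via \eqref{RSForAnOrbit}, one must trivialise the symplectic bundle $x^*TM \to S^1$. Any two such trivialisations differ by a loop $S^1 \to Sp(2n,\mathbb{R})$, and the Robbin--Salamon index changes by twice the Maslov class of that loop. Equivalently, after picking a compatible $I$, a trivialisation of $x^*TM$ as a symplectic bundle amounts to a trivialisation as a complex bundle, and the ambiguity is measured by the degree of the induced loop in $U(n)$, i.e.\ by the first Chern number. The standard resolution is to trivialise $x^*TM$ using a capping disc $u : \mathbb{D} \to M$ with $u|_{\partial \mathbb{D}} = x$: any two cappings $u_1, u_2$ glue along $x$ into a sphere $A \in \pi_2(M)$, and the RS index changes by $2\, c_1(TM,I)[A]$. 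Under the hypothesis $c_1(TM,I) = 0$, this change vanishes, so the RS index (and hence the grading) depends only on $x$ and on the trivialisation of the determinant line bundle $\Lambda^{n,0}T^*M$ in a neighbourhood of the orbit, not on the capping.

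Next I would upgrade this pointwise observation to a global $\mathbb{Z}$-grading of the chain complex. Since $c_1(TM,I) = 0$, the complex line bundle $\Lambda^{n,0}T^*M \to M$ is trivial; fix a smooth nowhere-vanishing section $\sigma$. For each non-degenerate $1$-orbit $x$, pull back $\sigma$ along $x$ and use it to trivialise $\Lambda^{n,0}x^*T^*M$; this canonically determines a homotopy class of symplectic trivialisations of $x^*TM$ (since the subgroup $SU(n) \subset Sp(2n)$ is connected and simply connected), and hence an unambiguous integer $RS(x,H)$, independent of capping. Define $|x| := \dim_{\mathbb{C}} M - RS(x,H)$ as in \eqref{DefinitionGrading}. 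For Floer trajectories, the linearised Cauchy--Riemann operator on $u^*TM$ has Fredholm index equal to $RS(x_-,H) - RS(x_+,H)$, once the trivialisations at the two ends are chosen compatibly along $u$: this compatibility holds precisely because $c_1(TM,I) = 0$ forces the pullback $u^*\Lambda^{n,0}T^*M$ to be trivialisable extending the choice made via $\sigma$ at the ends. Hence the differential shifts the grading by $+1$ and $HF^*(H)$ inherits a $\mathbb{Z}$-grading; the same argument applies to continuation maps, which are grading-preserving.

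Finally I would address canonicity under the additional assumption $H^1(M) = 0$. The trivialisation $\sigma$ of $\Lambda^{n,0}T^*M$ is unique up to a map $M \to \mathbb{C}^*$, i.e.\ up to multiplication by a smooth unit. The homotopy classes of such maps are classified by $H^1(M;\mathbb{Z})$, which vanishes by assumption. Thus any two trivialisations of $\Lambda^{n,0}T^*M$ are homotopic, and consequently the induced homotopy classes of trivialisations of $x^*TM$ agree for every orbit $x$. This shows that $RS(x,H)$, and therefore the $\mathbb{Z}$-grading on $HF^*(H)$, is canonical, i.e.\ independent of the choice of $\sigma$.

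The only delicate point in carrying this out is verifying that a trivialisation $\sigma$ of $\Lambda^{n,0}T^*M$ over all of $M$ really pins down, via the connectedness and simple-connectedness of $SU(n)$, a canonical \emph{homotopy class} of symplectic trivialisations of $x^*TM$; once this is granted, the argument is a direct application of the standard properties of the Robbin--Salamon index recalled in \cref{RSProperties} together with the capping-change formula expressed in terms of $c_1(TM,I)$.
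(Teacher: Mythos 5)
Your proof is correct and follows essentially the same route the paper indicates: trivialise $\Lambda^{n,0}T^*M$ using $c_1=0$, use that trivialisation to fix a homotopy class of trivialisations of $x^*TM$ along each orbit (via connectedness and simple-connectedness of $SU(n)$), and note that $H^1(M)=0$ kills the ambiguity $[M,\mathbb{C}^*]\cong H^1(M;\mathbb{Z})$ in the choice of trivialisation. The paper states this only as a brief remark citing Seidel \cite[Sec.(3a)]{Sei08}; your proposal fills in the details (capping-change by $2c_1[A]$, the Fredholm index formula, continuation maps) in a way consistent with that reference.
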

\section{Cotangent bundles and negative vector bundles}\label{Appendix Cotangent bundles and vector bundles}
\subsection{The moment map is a function of the radial coordinate for $T^*X$}
\label{Moment map is function of radial coordinate for TCPn}
Recall that $T^*\CP^{n-1}$ can be seen as the \HK reduction of the flat space 
$M:=\C^n \oplus \C^n$ with the action 
$G:=U(1) \dejstvo M$ given by 
$g \cdot (z,\xi) = (z g^{-1}, g \xi).$ 
The \HK moment map $\mu=(\mu_\R,\mu_\C)$ has real part
$\mu_\R=\xi\xi^*-z^*z$
and complex part
$\mu_\C=\xi z$ (viewing $\xi$ as a row vector).
Taking $\zeta_\R>0$, the \HK reduction gives 
$$\M:= \mu^{-1}(-\zeta_\R \Id,0)/G \iso T^*\CP^{n-1}.$$
There is an $S^1$-action induced by $t \cdot (z,\xi)= (z, t \xi)$ on $M,$ 
with moment map 
$H=\tr(\xi \xi^*).$
The projection 
$$\Psi: \M \fun \sl_n, \ [(z, \xi)] \mapsto z \xi$$
is an example of a Springer resolution. 
This map, together with the $S^1$-invariant \KH structure on $\M$ (induced from the \KH structure on $M$)
makes $\M$ a symplectic $\C^*$-manifold globally defined over the convex base $B=\sl_n.$
The pull-back of the radial coordinate on $B$ is
thus equal to
\begin{equation}\label{Radial_Coordinate_TPN}
\Phi=\tr(z \xi (z\xi)^*)=\tr(z \xi \xi^* z^*)=\tr(z^*z \xi \xi^*).
\end{equation}
Substituting the moment map equation
$\mu_\R=\xi\xi^*-z^*z=-\zeta_\R \Id$
into \eqref{Radial_Coordinate_TPN}
we get
\begin{align}\label{Radial_coordinate_2}
   \Phi=\tr((\xi\xi^*+\zeta_\R \Id)\xi\xi^*)
=\tr(\xi\xi^* \xi\xi^*)+ \zeta_\R \tr(\xi\xi^*) %
= \tr(\xi\xi^* \xi\xi^*) + \zeta_\R H.
\end{align}
Now, noticing that $A:=\xi \xi^*$ is actually a $1 \times 1$-matrix, the last term in 
\eqref{Radial_coordinate_2} 
becomes
\begin{equation}\label{key moment of proof TCPn radial=moment^2}
   \Phi=\tr(A^2) + \zeta_\R H = \tr(A)^2 + \zeta_\R H=H^2+\zeta_\R H, 
\end{equation}
thus\footnote{Since $H\geq 0,$ the other solution of the quadratic equation is invalid.} 
$H=-\frac{1}{2}\zeta_\R + \sqrt{\Phi+\frac{1}{4}\zeta_\R^2}$
is indeed a function of $\Phi.$

We remark that %
the same conclusion does not hold for cotangent bundles of other flag varieties of type A, with the symplectic structure from the analogous \HK reduction.\footnote{Constructed e.g. in \cite[Sec.7]{Nak94a}.}
For example, to get the cotangent bundle of a Grassmannian, $T^*Gr(k,n),$ one has to change $z$ and $\xi$ to $(n \times k)$ and $(k \times n)$-matrices. Then $A=\xi \xi^*$ is a $(k \times k)$-matrix, so the $\tr(A^2)=\tr(A)^2$ needed in \eqref{key moment of proof TCPn radial=moment^2} fails.
\subsection{$T^*\C\P^n$ and negative vector bundles}

A vector bundle $E\to X$ over a compact manifold is \emph{negative} in the sense of \cite[Lem.70]{R14} if
there is a (real) symplectic form on $E$ of type
$\omega=\pi^*\omega_X + \Omega,$
determined by a Hermitian connection on $E$, where $\Omega|_{\textrm{fibre}}=\textrm{(area form)}/\pi$ in a unitary frame, and $\Omega$ on horizontal vectors is determined by the curvature form of $E \to X$.\footnote{Moreover $\Omega(TX,\cdot)=0$, and $\Omega(v,h)=0$ if $v$ is vertical and $h$ is horizontal.}
By \cite[Lem.70]{R14}, it suffices\footnote{
Key observation: $\omega:=\pi^*\omega_{X} + \Omega$ on the total space $\mathrm{Tot}(E \to X)$ is symplectic because of the positivity of the form%
$$
\tfrac{1}{2\pi i} w^{\dagger}\mathcal{F}^E_{(\pi_*h,I \pi_* h)}w=\tfrac{r^2}{2\pi i} \mathcal{F}_{(h,Ih)}^{L}>0, \qquad \textrm{ for }w\in E\setminus\{0\},
$$
for horizontal $h\neq 0 \in T_w E$ (a horizontal vector of the projection $\P(E)\to X$), where $r=|w|$ is the radial coordinate for fibres of $E$. By \cite[Sec.11.2]{R14} the ``radial coordinate'' for $L$ as a convex symplectic manifold is $R^L=(1+r^2)/2$ (its moment map generates the flow $e^{\pi i t}$, not $e^{2\pi i t}$), and Hamiltonians used for $E$ are functions $c(2R^L)$ linear in $R^L$ at $\infty$.}
that the natural complex line bundle $L:=L(E)\to \mathbb{P}(E)$ over the complex projectivisation is a negative line bundle.\footnote{A complex line bundle $L\to B$ over a symplectic manifold is \emph{negative} if $c_1(L)\in \R_{<0}\cdot [\omega_B]$ \cite[Sec.7]{R14}. Equivalently, $\exists$ Hermitian metric and a compatible Hermitian connection, whose curvature induces a form
$-\tfrac{i}{2\pi}\mathcal{F}^{L}$ (representing  $-c_1(L)$) which
is positive on complex lines for any $\omega_B$-compatible almost complex structure. For holomorphic line bundles, it means a Hermitian metric yields a Chern connection whose curvature gives a K\"{a}hler form on $B$ via $-\tfrac{i}{2\pi}\mathcal{F}^{L}$.}
If $E$ is Griffiths weak-negative \cite{GriffithsComputation}, this holds for $L$ \cite[Prop.9.1]{GriffithsComputation} so $E$ is negative.\\
{\bf Examples.} $E=T^*\C\P^n$ is Griffiths weak-negative\footnote{weak-negativity of $T^*\C P^n$ follows from the ampleness of $T\C P^n$ in \cite[Sec.10.(iii)]{GriffithsComputation}, however ampleness of $TX$ for a projective variety only holds for $X=\C P^n$ by Mori \cite{Mori}.} for the Fubini--Study metric \cite[Sec.10.(iii)]{GriffithsComputation}, so it is negative. For $E$ a complex vector bundle, and $\mathcal{L}$ a negative line bundle, $E\otimes \mathcal{L}^{\otimes N}$ is negative for all large enough $N$
(\cite[Cor.9]{R14},
\cite[Prop.4.3]{GriffithsComputation}).
Duals of ample vector bundles are negative  \cite[Prop.7.3]{GriffithsComputation}, e.g.\;conormal\footnote{the dual of the normal bundle, so sections are cotangent vectors of the ambient that vanish on $TV$.} bundles to projective algebraic varieties $V\subset \C\P^n$ \cite[Cor.\,in Sec.10]{GriffithsComputation}. 

\subsection{The filtration for negative vector bundles}\label{Subsection The filtration for negative vector bundles}
We now prove \cref{Proposition filtration for negative line bundles}.
By \cite[Thm.72]{R14}, $Q_{\Fi}=c:=\pi^*c_{\mathrm{rank}_{\C}E}(E)$ (with a possible rescaling by an invertible element $1+T^{>0}\textrm{-terms}\in \k$ in the Calabi--Yau setting); this holds more generally if $E$ is weak+ monotone.
The $S^1$-periods are integers, so by stability \eqref{Intro Stability property} the filtration can only jump at full-rotations  (\cref{Subsection Computing the continuation map for full rotations: the QFi invariant}).
From \cref{Prop vanishing of SH}: $E_0 = \ker c^m$, where $x^m$ is the highest factor of $x$ in the minimal polynomial of quantum product by $c$ on $QH^*(Y)$.
By linear algebra, kernels of powers of an endomorphism on a finite dimensional vector space will strictly increase until they stabilise. Thus $\FF_{\lambda}^{\Fi}$ strictly increases in size precisely at $\lambda=1,2,\ldots,m$; in the Calabi-Yau setting $\FF_{m}^{\Fi}=QH^*(Y)$ as $c$ is nilpotent ($QH^*(Y)_c=SH^*(Y)=0$).

For $n_0\in H^*(Y;\mathbb{B})$, let $q=n_0 + T^{>0}\textrm{-terms}\in QH^*(Y)$ so $n_0=\mathrm{ini}(q)$ (\cref{Subsection specialisation intro}). 
Then $c^{\star j} \star q=0$ implies the lowest order $T$-term is $0=\mathrm{ini}(c^{\star j})\cup \mathrm{ini}(q)=c^j\cup n_0$, so $\FF_{\mathbb{B},j}^{\Fi} = \mathrm{ini} \FF_j^{\Fi}=\mathrm{ini}(\ker (c^{\star j}\star))\subset \ker (c^j\cup)\subset H^*(Y;\mathbb{B})$.
Assuming $\mathrm{Im}(c^{\star j}\star)\subset \mathrm{Im}(c^j\cup)$, we now show $\ker (c^j\cup) \subset \mathrm{ini}(\ker c^{\star j}\star)$, hence equality. By the assumption, $c^{\star j}\star n_0 - c^j\,\cup n_0 = c^j \, \cup T^{a_1}(n_1 +T^{>0}\textrm{-terms})$ for $n_1\in H^*(Y;\mathbb{B})$,  $a_1>0$. If $n_0\in \ker (c^j\cup)$, then $c^{\star j}\star (n_0 - n_1 T^{a_1})=T^{>a_1}$-terms. Thus inductively we can build $q=n_0-n_1T^{a_1}-n_2T^{a_2}-\cdots\in \ker (c^{\star j}\star)$ with $0<a_1<a_2<\cdots\to \infty$, satisfying $\mathrm{ini}(q)=n_0$. $\qed$
\bibliography{FZ}
\bibliographystyle{amsalpha}
\end{document}